\newtheorem{theorem}{Theorem}[chapter]
\newtheorem{corollary}[theorem]{Corollary}
\newtheorem{lemma}[theorem]{Lemma}
\newtheorem{proposition}[theorem]{Proposition}
\theoremstyle{definition}
\newtheorem{definition}[theorem]{Definition}
\newtheorem*{algorithm}{Algorithm}
\theoremstyle{remark}
\newtheorem{remark}[theorem]{Remark}
\newtheorem{example}[theorem]{Example}
\newenvironment{nindex}[1][51pt]%
    {\begin{list}{}%
        {%
            \setlength{\labelwidth}{#1}%
            \setlength{\leftmargin}{\labelwidth+\labelsep}%
            \setlength{\itemsep}{2.5pt}%
            \setlength{\parsep}{0pt}%
            \setlength{\rightmargin}{0pt}%
        }%
    }%
    {\end{list}}
\newcommand{\et}{\quad\text{and}\quad}
\newcommand{\fa}{\quad\text{for all\ \ }}
\newcommand{\omu}[3]{\overset{#1}{\underset{#3}{#2}}}
\newcommand{\pomu}[3]{\overset{\phantom{#1}}{\underset{\phantom{#3}}{#2}}}
\newcommand{\Exp}{\mathsf{Exp}}
\newcommand{\Poi}{\mathsf{Poi}}
\newcommand{\Normal}{\mathsf{N}}
\newcommand{\Xt}{(X_t)_{t\geq 0}}
\newcommand{\id}{\operatorname{id}}
\newcommand{\sgn}{\operatorname{sgn}}
\newcommand{\supp}{\operatorname{supp}}
\newcommand{\eup}{\mathrm{e}}
\newcommand{\iup}{\mathop{\textup{i}}}
\newcommand{\entier}[1]{\lfloor#1\rfloor}
\newcommand{\I}{\mathds{1}}
\newcommand{\comp}{\mathds{C}}
\newcommand{\Cov}{\mathds{C}\mathrm{ov}}
\newcommand{\Vv}{\mathds{V}}
\newcommand{\Ee}{\mathds{E}}
\newcommand{\Pp}{\mathds{P}}
\newcommand{\nat}{\mathds{N}}
\newcommand{\real}{\mathds{R}}
\newcommand{\rd}{\mathds{R}^d}
\newcommand{\integer}{\mathds{Z}}
\newcommand{\Ecirc}{E^\circ}
\newcommand{\Nhat}{\widehat{N}} 
\newcommand{\Ncirc}{N^\circ} 
\newcommand{\Ncirccont}{\mu^\circ} 
\newcommand{\Eecirc}{\Escr^\circ}
\newcommand{\Rrcirc}{\Rscr^\circ}
\newcommand{\Ntilde}{\widetilde{N}}
\newcommand{\Ncont}{\mu}
\newcommand{\Ascr}{\mathscr{A}}
\newcommand{\Bscr}{\mathscr{B}}
\newcommand{\Escr}{\mathscr{E}}
\newcommand{\Fscr}{\mathscr{F}}
\newcommand{\Iscr}{\mathscr{I}}
\newcommand{\Pscr}{\mathscr{P}}
\newcommand{\Rscr}{\mathscr{R}}
\newcommand{\Sscr}{\mathscr{S}}
\newcommand{\Xscr}{\mathscr{X}}
\newcommand{\Afrak}{\mathfrak{A}}
\newcommand{\Bcal}{\mathcal{B}}
\newcommand{\cont}{\mathcal{C}}
\newcommand{\dom}{\mathcal{D}}
\newcommand{\Fcal}{\mathcal{F}}
\newcommand{\Scal}{\mathcal{S}}
\newcommand{\bbot}{\mathop{\perp\!\!\!\!\!\perp}}
\newcommand{\emphh}[1]{\textbf{\upshape#1}}
\renewcommand{\Re}{\ensuremath{\operatorname{Re}}}
\renewcommand{\Im}{\ensuremath{\operatorname{Im}}}
\renewcommand{\geq}{\geqslant}
\renewcommand{\leq}{\leqslant}
\renewcommand{\le}{\leqslant}
\DeclareFontFamily{U}{mathx}{\hyphenchar\font45}
\DeclareFontShape{U}{mathx}{m}{n}{
      <5> <6> <7> <8> <9> <10>
      <10.95> <12> <14.4> <17.28> <20.74> <24.88>
      mathx10
      }{}
\DeclareSymbolFont{mathx}{U}{mathx}{m}{n}
\DeclareMathAccent{\widecheck}{0}{mathx}{"71}
\DeclareMathAccent{\wideparen}{0}{mathx}{"75}
\providecommand*{\dcup}{%
  \mathbin{%
    \mathpalette\@dcup{}%
  }%
}
\newcommand*{\@dcup}[2]{%
  \ooalign{%
    $\m@th#1\cup$\cr
    \sbox0{$#1\cup$}%
    \dimen@=\ht0 %
    \sbox0{$\m@th#1\cdot$}%
    \advance\dimen@ by -\ht0 %
    \dimen@=.5\dimen@
    \hidewidth\raise\dimen@\box0\hidewidth
  }%
}
\providecommand*{\bigdcup}{%
  \mathop{%
    \vphantom{\bigcup}%
    \mathpalette\@bigdcup{}%
  }%
}
\newcommand*{\@bigdcup}[2]{%
  \ooalign{%
    $\m@th#1\bigcup$\cr
    \sbox0{$#1\bigcup$}%
    \dimen@=\ht0 %
    \advance\dimen@ by -\dp0 %
    \sbox0{\scalebox{2}{$\m@th#1\cdot$}}%
    \advance\dimen@ by -\ht0 %
    \dimen@=.5\dimen@
    \hidewidth\raise\dimen@\box0\hidewidth
  }%
}
\begin{document}
\pagestyle{fancy}
\renewcommand{\chaptermark}[1]{\markboth{#1}{#1}}
%

\title{\bfseries An Introduction to L\'evy and Feller Processes}
\subtitle{-- Advanced Courses in Mathematics - CRM Barcelona 2014 --}

\author{Ren\'e L.\ Schilling}
\date{\normalsize
    TU Dresden, Institut f\"ur Mathematische Stochastik,\\01062 Dresden, Germany\\\mbox{}\\
    \url{rene.schilling@tu-dresden.de}
    \url{http://www.math.tu-dresden.de/sto/schilling}\\\mbox{}\\
    \small
    These course notes will be published, together Davar Khoshnevisan's notes on
    \emph{Invariance and Comparison Principles for Parabolic Stochastic Partial Differential Equations} as \emph{From L\'{e}vy-Type Processes to Parabolic SPDEs} by the CRM, Barcelona and Birk\"{a}user, Cham 2017 (ISBN: 978-3-319-34119-4).
    The arXiv-version and the published version may differ in layout, pagination and wording, but not in content.}

\maketitle
\setcounter{page}{1}
\tableofcontents
\cleardoublepage

\chapter*{Preface}
\addcontentsline{toc}{chapter}{Preface}
\markboth{Preface}{Preface}

These lecture notes are an extended version of my lectures on L\'evy and L\'evy-type processes given at the \emph{Second Barcelona Summer School on Stochastic Analysis} organized by the \emph{Centre de Recerca Matem\`atica} (CRM). The lectures are aimed at advanced graduate and PhD students. In order to read these notes, one should have sound knowledge of measure theoretic probability theory and some background in stochastic processes, as it is covered in my books \emph{Measures, Integals and Martingales} \cite{schilling-mims} and \emph{Brownian Motion} \cite{schilling-bm}.

\medskip
My purpose in these lectures is to give an introduction to L\'evy processes, and to show how one can extend this approach to space inhomogeneous processes which behave locally like L\'evy processes. After a brief overview (Chapter~\ref{orient}) I introduce L\'evy processes, explain how to characterize them (Chapter~\ref{levy}) and discuss the quintessential examples of L\'evy processes (Chapter~\ref{exlp}). The  Markov (loss of memory) property of L\'evy processes is studied in Chapter~\ref{lpmp}. A short analytic interlude (Chapter~\ref{semi}) gives an introduction to operator semigroups, resolvents and their generators from a probabilistic perspective. Chapter~\ref{gen} brings us back to generators of L\'evy processes which are identified as pseudo differential operators whose symbol is the characteristic exponent of the L\'evy process. As a by-product we obtain the L\'evy--Khintchine formula.

Continuing this line, we arrive at the first construction of L\'evy processes in Chapter~\ref{cons}. Chapter~\ref{spe} is devoted to two very special L\'evy processes: (compound) Poisson processes and Brownian motion. We give elementary constructions of both processes and show how and why they are special L\'evy processes, indeed. This is also the basis for the next chapter (Chapter~\ref{rm}) where we construct a random measure from the jumps of a L\'evy process. This can be used to provide a further construction of L\'evy processes, culminating in the famous L\'evy--It\^o decomposition and yet another proof of the L\'evy--Khintchine formula.

A second interlude (Chapter~\ref{si}) embeds these random measures into the larger theory of random orthogonal measures. We show how we can use random orthogonal measures to develop an extension of It\^o's theory of stochastic integrals for square-integrable (not necessarily continuous) martingales, but we restrict ourselves to the bare bones, i.e.\ the $L^2$-theory. In Chapter~\ref{feller} we introduce Feller processes as the proper spatially inhomogeneous brethren of L\'evy processes, and we show how our proof of the L\'evy--Khintchine formula carries over to this setting. We will see, in particular, that Feller processes have a \emphh{symbol} which is the state-space dependent analogue of the characteristic exponent of a L\'evy process. The symbol describes the process and its generator. A probabilistic way to calculate the symbol and some first consequences (in particular the semimartingale decomposition of Feller processes) is discussed in Chapter~\ref{symb}; we also show that the symbol contains information on global properties of the process, such as conservativeness. In the final Chapter~\ref{den}, we summarize (mostly without proofs) how other path properties of a Feller process can be obtained via the symbol. In order to make these notes self-contained, we collect in the appendix some material which is not always included in standard graduate probability courses.

\medskip
It is now about time to thank many individuals who helped to bring this enterprise on the way. I am grateful to the scientific board and the organizing committee for the kind invitation to deliver these lectures at the \emph{Centre de Recerca Matem\`atica} in Barcelona. The CRM is a wonderful place to teach and to do research, and I am very happy to acknowledge their support and hospitality. I would like to thank the students who participated in the CRM course as well as all students and readers who were exposed to earlier (temporally \& spatially inhomogeneous\dots) versions of my lectures; without your input these notes would look different!

I am greatly indebted to Ms.\ Franziska K\"{u}hn for her interest in this topic; her valuable comments pinpointed many mistakes and helped to make the presentation much clearer.

And, last and most, I thank my wife for her love, support and forbearance while these notes were being prepared.

\bigskip
\noindent Dresden, September 2015 \hfill \emph{Ren\'e L.\ Schilling}

\chapter*{Symbols and notation}
\addcontentsline{toc}{chapter}{Symbols and notation}
\markboth{Symbols and notation}{Symbols and notation}

\setlength{\columnseprule}{.5pt}
\noindent
This index is intended to aid cross-referencing, so notation that is
specific to a single chapter is generally not listed. Some symbols are
used locally, without ambiguity, in senses other than those given below;
numbers following an entry are page numbers.

Unless otherwise stated,
functions are real-valued and binary operations
between functions such as
$f\pm g$, $f\cdot g$, $f\wedge g$, $f\vee g$, comparisons $f\leq g$, $f
< g$ or limiting relations $f_n \xrightarrow{n\to\infty} f$, $\lim_n
f_n$, $\liminf_n f_n$, $\limsup_n f_n$, $\sup_n f_n$ or $\inf_n f_n$
are understood pointwise.

\begin{multicols}{2}
\raggedright
\parindent=0pt
\begin{nindex}\itemsep3pt\sloppy
\item[\sffamily\bfseries General notation: analysis\vphantom{$\int\limits_a$}]
\item[positive]                 always in the sense $\geq 0$
\item[negative]                 always in the sense $\leq 0$
\item[$\nat$]                   $1,2,3,\dots$
\item[$\inf\emptyset$]          $\inf\emptyset = +\infty$
\item[$a\vee b$]                maximum of $a$ and $b$
\item[$a\wedge b$]              minimum of $a$ and $b$
\item[$\entier x$]              largest integer $n\leq x$
\item[$|x|$]                    norm in $\rd$: $|x|^2=x_1^2+\cdots+x_d^2$
\item[$x\cdot y$]               scalar product in $\rd$: $\sum_{j=1}^d x_jy_j$
\item[$\I_A$]                   $\I_A(x) =
                                \begin{cases}
                                1, & x\in A\\
                                0, & x\notin A
                                \end{cases}$
\item[$\delta_x$]               point mass at $x$
\item[$\dom$]                   domain
\item[$\Delta$]                 Laplace operator
\item[$\partial_j$]             partial derivative $\frac{\partial}{\partial x_j}$
\item[$\nabla$, $\nabla_x$]     gradient $\big(\frac{\partial}{\partial x_1}, \ldots, \frac{\partial}{\partial x_d}\big)^\top$
\item[$\Fcal f, \;\widehat f$]    Fourier transform
                                $(2\pi)^{-d}\int \eup^{-\iup x\cdot\xi} f(x)\,dx$
\item[$\Fcal^{-1} f,\; \widecheck f$]    inverse Fourier transform
                                $\int \eup^{\iup x\cdot\xi} f(x)\,dx$
\item[$\eup_\xi(x)$]            $\eup^{-\iup x\cdot\xi}$

\itemsep3pt
\medskip
\item[\sffamily\bfseries General notation: probability\vphantom{$\int\limits_a$}]

\item[$\sim$]                   `is distributed as'
\item[$\bbot$]                  `is stochastically independent'
\item[a.s.]                     almost surely (w.\,r.\,t.\ $\Pp$)
\item[iid]                      independent and identically distributed
\item[$\Normal, \Exp, \Poi$]    normal, exponential, Poisson distribution
\item[$\Pp, \Ee$]               probability, expectation
\item[$\Vv, \Cov$]              variance, covariance
\item[\eqref{Lnull}--\eqref{Lcont}] definition of a L\'evy process,  \pageref{Lnull}
\item[\eqref{LindepAlt}]              \pageref{LindepAlt}

\smallskip
\item[\sffamily\bfseries Sets and $\sigma$-algebras\vphantom{$\int\limits_a$}]

\item[$A^c$]                        complement of the set $A$
\item[$\overline{A}$]               closure of the set $A$
\item[$A\dcup B$]                   disjoint union, i.e.\ $A\cup B$ for disjoint sets $A\cap B=\emptyset$
\item[$B_r(x)$]                     open ball,\linebreak centre $x$, radius $r$
\item[$\supp f$]                    support, $\overline{\{f\neq 0\}}$
\item[$\Bscr(E)$]                   Borel sets of $E$
\item[$\Fscr_t^X$]                  canonical filtration $\sigma(X_s\,:\, s\leq t)$
\item[$\Fscr_{\infty}$]             $\sigma\left(\bigcup_{t\geq 0}\Fscr_t\right)$
\item[$\Fscr_\tau$]                 \pageref{feller-05} 
\item[$\Fscr_{\tau+}$]              \pageref{lpmp-31} 
\item[$\Pscr$]                      predictable $\sigma$-algebra, \pageref{app-61}

\medskip

\item[\sffamily\bfseries Stochastic processes\vphantom{$\int\limits_a$}]

\item[$\Pp^x, \Ee^x$]           law and mean of a Markov process starting at $x$, \pageref{lpmp-07}
\item[$X_{t-}$]                 left limit $\lim_{s\uparrow t} X_s$
\item[$\Delta X_t$]             jump at time $t$: $X_t-X_{t-}$
\item[$\sigma, \tau$]           stopping times: $\{\sigma\leq t\}\in\Fscr_t$, $t\geq 0$
\item[$\tau_r^x, \tau_r$]       $\inf\{t>0\::\: |X_t-X_0|\geq r\}$, first exit time from the open ball $B_r(x)$ centered at $x=X_0$
\item[c\`adl\`ag]               right continuous on $[0,\infty)$ with finite left limits on $(0,\infty)$

\itemsep3pt
\medskip
\item[\sffamily\bfseries Spaces of functions\vphantom{$\int\limits_a$}]

\item[$\Bcal(E)$]          Borel functions on $E$
\item[$\Bcal_b(E)$]        ~--  ~-- , bounded
\item[$\cont(E)$]           continuous functions\ on $E$
\item[$\cont_b(E)$]         ~--  ~-- , bounded
\item[$\cont_\infty(E)$]    ~--  ~-- , $\lim\limits_{|x|\to\infty} f(x)=0$
\item[$\cont_c(E)$]         ~--  ~-- , compact support

\item[$\cont^n(E)$]         $n$ times continuously diff'ble functions on $E$
\item[$\cont^n_b(E)$]       ~--  ~-- , bounded (with all derivatives)
\item[$\cont^n_\infty(E)$]  ~--  ~-- , $0$ at infinity (with all derivatives)
\item[$\cont^n_c(E)$]       ~--  ~-- , compact support

\item[$L^p(E,\mu), L^p(\mu), L^p(E)$] \hfill\mbox{}
                            $L^p$ space w.\,r.\,t.\ the measure space $(E,\Ascr,\mu)$
\item[$\Scal(\rd)$]         rapidly decreasing smooth functions on $\rd$, \pageref{gen}
\end{nindex}
\end{multicols}


\chapter{Orientation}\label{orient}

Stochastic processes with stationary and independent increments are classical examples of Markov processes. Their importance both in theory and for applications justifies to study these processes and their history.

The origins of processes with independent increments reach back to the late 1920s and they are closely connected with the notion of infinite divisibility and the genesis of the L\'evy--Khintchine formula. Around this time, the limiting behaviour of sums of independent random variables
$$
    X_0:= 0 \et X_n := \xi_1+\xi_2+\dots+\xi_n,\quad n\in\nat,
$$
was well understood through the contributions of Borel, Markov, Cantelli, Lindeberg, Feller, de Finetti, Khintchine, Kolmogorov and, of course, L\'evy;  two new developments emerged, on the one hand the study of dependent random variables and, on the other, the study of continuous-time analogues of sums of independent random variables. In order to pass from $n\in\nat$ to a continuous parameter $t\in [0,\infty)$ we need to replace the steps $\xi_k$ by increments $X_t-X_s$. It is not hard to see that $X_t$, $t\in\nat$, with iid (independent and identically distributed) steps $\xi_k$ enjoys the following properties:
\index{independent increments \eqref{Lindep}}%
\index{stationary increments \eqref{Lstat}}%
\index{L@\eqref{Lnull}, \eqref{Lstat}, \eqref{Lindep}, \eqref{Lcont}}%
\begin{flalign}
&                                       & X_0 &= 0 \quad\text{a.s.} &&\tag{L0}\label{Lnull}\\
&\text{\emphh{stationary increments}}   & X_t-X_s &\sim X_{t-s}-X_0\qquad\forall s\leq t &&\tag{L1}\label{Lstat}\\
&\text{\emphh{independent increments}}  & X_t-X_s &\bbot \sigma(X_r, r\leq s)\quad \forall s\leq t &&\tag{L2}\label{Lindep}
\intertext{where `$\sim$' stands for `same distribution' and `$\bbot$' for stochastic independence. In the non-discrete setting we will also require a mild regularity condition}
&\text{\emphh{continuity in probability}}  & \lim_{t\to 0} \Pp(&|X_t-X_0| >\epsilon) = 0 \qquad\forall \epsilon>0 &&\tag{L3}\label{Lcont}
\index{continuous in probability}%
\end{flalign}
which rules out fixed discontinuities of the path $t\mapsto X_t$. Under \eqref{Lnull}--\eqref{Lindep} one has that
\begin{equation}\label{orient-e05}
    X_t = \sum_{k=1}^n \xi_{k,n}(t)
    \et \xi_{k,n}(t) = (X_{\frac{kt}{n}}-X_{\frac{(k-1)t}{n}})\text{\ \ are iid}
\end{equation}
for every $n\in\nat$. Letting $n\to\infty$ shows that $X_t$ arises as a suitable limit of (a triangular array of) iid random variables which transforms the problem into a question of limit theorems and \emphh{infinite divisibility}.
\index{L\'evy process!infinite divisibility}%

This was first observed in 1929 by de Finetti \cite{definetti29} who introduces (without naming it, the name is due to Bawly \cite{bawly36} and Khintchine \cite{khintchine37b}) the concept of infinite divisibility of a random variable $X$
\index{infinitely divisible}%
\begin{equation}\label{orient-e07}
    \forall n\quad \exists \text{\ iid random variables\ }\xi_{i,n}\::\: X\sim \sum_{i=1}^n \xi_{i,n}
\end{equation}
and asks for the general structure of infinitely divisible random variables. His paper contains two remarkable results on the characteristic function $\chi(\xi) = \Ee\,\eup^{\iup\xi\cdot X}$ of an infinite divisible random variable (taken from \cite{mai-rog06}):
\begin{description}\index{de Finetti's theorem}%
\item[\normalfont\bfseries De Finetti's first theorem.]\itshape
    A random variable $X$ is infinitely divisible if, and only if, its characteristic function is of the form $\chi(\xi) = \lim_{n\to\infty} \exp\big[-p_n(1-\phi_n(\xi))\big]$ where $p_n\geq 0$ and $\phi_n$ is a characteristic function.
\item[\normalfont\bfseries De Finetti's second theorem.]\itshape
    The characteristic function of an infinitely divisible random variable $X$ is the limit of finite products of Poissonian characteristic functions $$\chi_n(\xi) = \exp\big[-p_n(1-\eup^{\iup h_n\xi})\big],$$ and the converse is also true. In particular, all infinitely divisible laws are limits of convolutions of Poisson distributions.
\end{description}
Because of \eqref{orient-e05}, $X_t$ is infinitely divisible and as such one can construct, \emph{in principle}, all indepen\-dent-\/increment processes $X_t$ as limits of sums of Poisson random variables. The contributions of Kolmogorov \cite{kolmogorov32}, L\'evy \cite{levy34} and Khintchine \cite{khintchine37} show the exact form of the characteristic function of an infinitely divisible random variable
\begin{equation}\label{orient-e09}
    -\log \Ee\,\eup^{\iup\xi\cdot X}
    = -\iup l\cdot\xi + \frac 12\xi\cdot Q\xi + \int_{y\neq 0}\left(1-\eup^{\iup y\cdot\xi}+\iup\xi\cdot y\I_{(0,1)}(|y|)\right)\nu(dy)
\end{equation}
\index{L\'evy--Khintchine formula}%
where $l\in\rd$, $Q\in\real^{d\times d}$ is a positive semidefinite symmetric matrix, and $\nu$ is a measure on $\rd\setminus\{0\}$ such that $\int_{y\neq 0} \min\{1,|y|^2\}\,\nu(dy)<\infty$. This is the famous L\'evy--Khintchine formula. The exact knowledge of \eqref{orient-e09} makes it possible to find the approximating Poisson variables in de Finetti's theorem explicitly, thus leading to a construction of $X_t$.

A little later, and without knowledge of de Finetti's results, L\'evy came up in his seminal paper \cite{levy34} (see also \cite[Chap.~VII]{levy37}) with a decomposition of $X_t$ in four independent components: a deterministic drift, a Gaussian part, the compensated small jumps and the large jumps $\Delta X_s := X_s - X_{s-}$. This is now known as L\'evy--It\^o decomposition:
\index{L\'evy--It\^o decomposition}%
\begin{align}\label{orient-e17}
        X_t
        &= tl + \sqrt{Q}W_t
        + \lim_{\epsilon\to 0}
                \raisebox{-6pt}{\Bigg(}
                    \sum_{\begin{subarray}{c}0<s\leq t\\\epsilon\leq |\Delta X_s|<1\end{subarray}}
                        \Delta X_s - t\!\!\! \raisebox{-2pt}{$\displaystyle\int\limits_{\epsilon\leq |y| < 1}$}\!\!\!y\,\nu(dy)
                \raisebox{-6pt}{\Bigg)}
        + \sum_{\begin{subarray}{c}0<s\leq t\\|\Delta X_s| \geq 1\end{subarray}} \Delta X_s\\
    \label{orient-e18}
        &= tl + \sqrt{Q}W_t + \iint\limits_{(0,t]\times B_1(0)} y\,(N(ds, dy) - ds\,\nu(dy)) + \iint\limits_{(0,t]\times B_1(0)^c} y\,N(ds, dy).
\end{align}
L\'evy uses results from the convergence of random series, notably Kolmogorov's three series theorem, in order to explain the convergence of the series appearing in \eqref{orient-e17}. A rigorous proof based on the representation \eqref{orient-e18} are due to It\^o \cite{ito42} who completed L\'evy's programme to construct $X_t$. The coefficients $l, Q, \nu$ are the same as in \eqref{orient-e09}, $W$ is a $d$-dimensional standard Brownian motion, and $N_\omega((0,t]\times B)$ is the random measure $\#\{s\in (0,t]\,:\, X_s(\omega)-X_{s-}(\omega)\in B\}$ counting the jumps of $X$; it is a Poisson random variable with intensity $\Ee N((0,t]\times B) = t\nu(B)$ for all Borel sets $B\subset\rd\setminus\{0\}$ such that $0\notin\overline{B}$.

Nowadays there are at least six possible approaches to constructing processes with (stationary and) independent increments $X=(X_t)_{t\geq 0}$.

\paragraph{The de Finetti--L\'evy(--Kolmogorov--Khintchine) construction.} The starting point is the observation that each $X_t$ satisfies \eqref{orient-e05} and is, therefore, infinitely divisible. Thus, the characteristic exponent $\log \Ee\,\eup^{\iup\xi\cdot X_t}$ is given by the L\'evy--Khintchine formula \eqref{orient-e09}, and using the triplet $(l,Q,\nu)$ one can construct a drift $lt$, a Brownian motion $\sqrt{Q}W_t$ and compound Poisson processes, i.e.\ Poisson processes whose intensities $y\in\rd$ are mixed with respect to the finite measure $\nu_\epsilon(dy):=\I_{[\epsilon,\infty)}(|y|)\nu(dy)$. Using a suitable compensation (in the spirit of Kolmogorov's three series theorem) of the small jumps, it is possible to show that the limit $\epsilon\to 0$ exists locally uniformly in $t$. A very clear presentation of this approach can be found in Breiman \cite[Chapter 14.7--8]{breiman68}, see also Chapter~\ref{cons}.

\paragraph{The L\'evy--It\^o construction.} This is currently the most popular approach to in\-de\-pen\-dent-in\-crement processes, see e.g.\ Applebaum \cite[Chapter 2.3--4]{applebaum09} or Kyprianou \cite[Chapter 2]{kyprianou06}. Originally the idea is due to L\'evy \cite{levy34}, but It\^o \cite{ito42} gave the first rigorous construction. It is based on the observation that the jumps of a process with stationary and independent increments define a Poisson random measure $N_\omega([0,t]\times B)$ and this can be used to obtain the L\'evy--It\^o decomposition \eqref{orient-e18}. The L\'evy--Khintchine formula is then a corollary of the pathwise decomposition. Some of the best presentations can be found in Gikhman--Skorokhod \cite[Chapter VI]{gik-sko69}, It\^o \cite[Chapter 4.3]{ito61} and Bretagnolle \cite{bretagnolle73}. A proof based on additive functionals and martingale stochastic integrals is due to Kunita \& Watanabe \cite[Section 7]{kun-wat67}. We follow this approach in Chapter~\ref{rm}.

\paragraph{Variants of the L\'evy--It\^o construction.}
The L\'evy--It\^o decomposition \eqref{orient-e18} is, in fact, the semimartingale decomposition of a process with stationary and independent increments. Using the general theory of semimartingales -- which heavily relies on general random measures -- we can identify processes with independent increments as those semimartingales whose semimartingale characteristics are deterministic, cf.\ Jacod \& Shiryaev \cite[Chapter II.4c]{jacod-shiryaev87}. A further interesting derivation of the L\'evy--It\^o decomposition is based on stochastic integrals driven by martingales. The key is It\^o's formula and, again, the fact that the jumps of a process with stationary and independent increments defines a Poisson point process which can be used as a good stochastic integrator; this unique approach\footnote{It reminds of the elegant use of It\^o's formula in Kunita-and-Watanabe's proof of L\'evy's characterization of Brownian motion, see e.g.\ Schilling \& Partzsch \cite[Chapter 18.2]{schilling-bm}.} can be found in Kunita \cite[Chapter 2]{kunita04}.

\paragraph{Kolmogorov's construction.} This is the classic construction of stochastic processes starting from the finite-dimensional distributions. For a process with stationary and independent increments these are given as iterated convolutions of the form
\begin{align*}
    \Ee f(&X_{t_0},\dots,X_{t_n})\\
    &= \int\!\!\cdots\!\!\int f(y_0,y_0+y_1,\dots,y_0+\dots+y_n)\,p_{t_0}(dy_0)p_{t_1-t_0}(dy_1)\dots p_{t_n-t_{n-1}}(dy_n)
\end{align*}
with $p_t(dy) = \Pp(X_t\in dy)$ or $\int \eup^{\iup\xi\cdot y}p_t(dy) = \exp\left[-t\psi(\xi)\right]$ where $\psi$ is the characteristic exponent \eqref{orient-e09}. Particularly nice presentations are those of Sato \cite[Chapter 2.10--11]{sato99} and Bauer \cite[Chapter 37]{bauer96}.

\paragraph{The invariance principle.} Just as for a Brownian motion, it is possible to construct L\'evy processes as limits of (suitably interpolated) random walks. For finite dimensional distributions this is done in Gikhman \& Skorokhod \cite[Chapter IX.6]{gik-sko69}; for the whole trajectory, i.e.\ in the space of c\`adl\`ag\footnote{A french acronym meaning `right-continuous and finite limits from the left'.} functions $D[0,1]$ equipped with the Skorokhod topology, the proper references are Prokhorov \cite{prokhorov56} and Grimvall \cite{grimvall72}.

\paragraph{Random series constructions.} A series representation of an independent-increment process $(X_t)_{t\in [0,1]}$ is an expression of the form
$$
    X_t = \lim_{n\to\infty} \sum_{k=1}^n \left(J_k\I_{[0,t]}(U_k) - tc_k\right)
    \quad\text{a.s.}
$$
The random variables $J_k$ represent the jumps, $U_k$ are iid uniform random variables and $c_k$ are suitable deterministic centering terms. Compared with the L\'evy--It\^o decomposition \eqref{orient-e17}, the main difference is the fact that the jumps are summed over a deterministic index set $\{1,2,\dots n\}$ while the summation in \eqref{orient-e17} extends over the random set $\{s\,:\,|\Delta X_s|>1/n\}$. In order to construct a process with characteristic exponent \eqref{orient-e09} where $l=0$ and $Q=0$, one considers a disintegration
$$
    \nu(dy) = \int_0^\infty \sigma(r,dy)\,dr.
$$
It is possible, cf.\ Rosi\'nski \cite{rosinski01}, to choose $\sigma(r, dy) = \Pp(H(r,V_k)\in dy)$ where $V=(V_k)_{k\in\nat}$ is any sequence of $d$-dimensional iid random variables and $H:(0,\infty)\times \rd\to\rd$ is measurable. Now let $\Gamma=(\Gamma_k)_{k\in\nat}$ be a sequence of partial sums of iid standard exponential random variables and $U=(U_k)_{k\in\nat}$ iid uniform random variables on $[0,1]$ such that $U,V,\Gamma$ are independent. Then
$$
    J_k := H(\Gamma_k,V_k)
    \et
    c_k = \int_{k-1}^{k}\int_{|y|<1} y\,\sigma(r,dy)\,dr
$$
is the sought-for series representation, cf.\ Rosi\'nski \cite{rosinski01} and \cite{rosinski90}. This approach is important if one wants to simulate independent-increment processes. Moreover, it still holds for Banach space valued random variables.

\chapter{L\'evy processes}\label{levy}

Throughout this chapter, $(\Omega,\Ascr,\Pp)$ is a fixed probability space, $t_0=0 \leq t_1 \leq\dots\leq t_n$ and $0\leq s<t$ are positive real numbers, and $\xi_k,\eta_k$, $k=1,\dots,n$, denote vectors from $\rd$; we write $\xi\cdot\eta$ for the Euclidean scalar product.

\begin{definition}\label{levy-03}
\index{L\'evy process}%
    A \emphh{L\'evy process} $X=(X_t)_{t\geq 0}$ is a stochastic process $X_t:\Omega\to\rd$ satisfying \eqref{Lnull}--\eqref{Lcont}; this is to say that $X$ starts at zero, has stationary and independent increments and is continuous in probability. 
\end{definition}

One should understand L\'evy processes as continuous-time versions of sums of iid random variables. This can easily be seen from the telescopic sum
\begin{equation}\label{levy-e00}
    X_t - X_s = \sum_{k=1}^n \big(X_{t_k}-X_{t_{k-1}}\big),\quad s<t,\; n\in\nat,
\end{equation}
where $t_k = s+ \tfrac kn(t-s)$. Since the increments $X_{t_k}-X_{t_{k-1}}$ are iid random variables, we see that all $X_t$ of a L\'evy process are \emphh{infinitely divisible},
\index{L\'evy process!infinite divisibility}%
i.e.\ \eqref{orient-e07} holds. Many properties of a L\'evy process will, therefore, resemble those of sums of iid random variables.

Let us briefly discuss the conditions \eqref{Lnull}--\eqref{Lcont}.

\begin{remark}\label{levy-09}
\index{canonical filtration}%
    We have stated \eqref{Lindep} using the \emphh{canonical filtration} $\Fscr_t^X := \sigma(X_r,\; r\leq t)$ of the process $X$. Often this condition is written in the following way
\index{L1@\eqref{LindepAlt}}%
    \begin{equation}\label{LindepAlt}\tag{$\text{\ref{Lindep}}'$}
    \begin{gathered}
        X_{t_n}-X_{t_{n-1}}, \dots, X_{t_1}-X_{t_0}
        \quad\text{are independent random variables}\\
        \text{for all\ \ } n\in\nat,\; t_0=0 < t_1 < \dots < t_n.
    \end{gathered}
    \end{equation}
    It is easy to see that this is actually equivalent to \eqref{Lindep}: From
    $$
        (X_{t_1},\ldots,X_{t_n}) \xleftrightarrow{\:\;\text{bi-measurable}\;\:} (X_{t_1}-X_{t_0},\ldots,X_{t_n}-X_{t_{n-1}})
    $$
    it follows that
    \begin{equation}\label{levy-e02}
    \begin{aligned}
    		\Fscr_t^X
    		&= \sigma\big((X_{t_1},\dots,X_{t_n}),\; 0\leq t_1 \leq \ldots \leq t_n \leq t\big) \\
    		&= \sigma\big((X_{t_1}-X_{t_0},\dots,X_{t_n}-X_{t_{n-1}}),\; 0=t_0\leq t_1 \leq \ldots \leq t_n \leq t\big) \\
    		&= \sigma\big(X_u-X_v,\; 0 \leq v \leq u \leq t\big),
    \end{aligned}
    \end{equation}
    and we conclude that \eqref{Lindep} and \eqref{LindepAlt} are indeed equivalent.

    The condition \eqref{Lcont} is equivalent to either of the following
    \index{continuous in probability}%
    \begin{itemize}
    \item `$t\mapsto X_t$ is continuous in probability';
    \item `$t\mapsto X_t$ is a.s.\ c\`adl\`ag'\footnote{`Right-continuous and finite limits from the left'} (up to a modification of the process).
    \end{itemize}
    The equivalence with the first claim, and the direction `$\Leftarrow$' of the second claim are easy:
    \begin{equation}\label{levy-e04}
        \lim_{u\to t}\Pp(|X_u-X_t|> \epsilon)
        = \lim_{|t-u|\to 0}\Pp(|X_{|t-u|}|> \epsilon)
        \leq \lim_{h\to 0} \frac 1\epsilon \Ee(|X_h|\wedge \epsilon),
    \end{equation}
    but
    \index{L\'evy process!c\`adl\`ag modification}%
    it takes more effort to show that continuity in probatility \eqref{Lcont} guarantees that almost all paths are c\`adl\`ag.\footnote{More precisely: that there exists a modification of $X$ which has almost surely c\`adl\`ag paths.} Usually, this is proved by controlling the oscillations of the paths of a L\'evy process, cf.\ Sato \cite[Theorem 11.1]{sato99}, or by the fundamental regularization theorem for submartingales, see Revuz \& Yor \cite[Theorem II.(2.5)]{rev-yor05} and Remark~\ref{feller-04}; in contrast to the general martingale setting
    \cite[Theorem II.(2.9)]{rev-yor05}, we do not need to augment the natural filtration because of \eqref{Lstat} and \eqref{Lcont}. Since our construction of L\'evy processes gives directly a c\`adl\`ag version, we do not go into further detail.
\end{remark}

The condition \eqref{Lcont} has another consequence. Recall that the \emphh{Cauchy--Abel functional equations}
\index{Cauchy--Abel functional equation}%
have unique solutions if, say, \emphh{$\phi$, $\psi$ and $\theta$ are (right-)continuous}:
\begin{align}
       \phi(s+t) &= \phi(s)\cdot \phi(t) & \phi(t) &= \phi(1)^t, \notag\\
       \psi(s+t) &= \psi(s)+\psi(t) \quad     (s,t\geq 0)\!\!\!\!\!\!\!\!\!\!\!\!\!\!\!\!\!\!\!\!\!\!\!\!\!\!\!\!\!\!\!\!&\implies\quad \psi(t)&=\psi(1)\cdot t\label{levy-e06}\\
       \theta(st) &= \theta(s)\cdot\theta(t)      & \theta(t)&= t^c,\; c\geq 0.\notag
\end{align}
The first equation is treated in Theorem~\ref{app-31} in the appendix. For a thorough discussion on conditions ensuring uniqueness we refer to Aczel \cite[Chapter~2.1]{aczel66}.

\begin{proposition}\label{levy-11}
    Let $\Xt$ be a L\'evy process in $\rd$. Then
    \begin{equation}\label{levy-e10}
        \Ee\,\eup^{\iup\xi\cdot X_t}
        = \big[\Ee\,\eup^{\iup\xi\cdot X_1}\big]^t,\quad t\geq 0,\; \xi\in\rd.
    \end{equation}
\end{proposition}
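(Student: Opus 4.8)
The plan is to fix $\xi\in\rd$ and study the single function $\phi(t) := \Ee\,\eup^{\iup\xi\cdot X_t}$, $t\geq 0$, showing that it satisfies the multiplicative Cauchy--Abel equation and then invoking the uniqueness statement \eqref{levy-e06} (Theorem~\ref{app-31}). First I would record that $\phi(0)=1$, since $X_0=0$ a.s.\ by \eqref{Lnull}.

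The core computation is the semigroup identity $\phi(s+t)=\phi(s)\,\phi(t)$ for $s,t\geq 0$. To obtain it I split $X_{s+t} = (X_{s+t}-X_s) + X_s$ and write
\[
  \Ee\,\eup^{\iup\xi\cdot X_{s+t}} = \Ee\big[\eup^{\iup\xi\cdot(X_{s+t}-X_s)}\,\eup^{\iup\xi\cdot X_s}\big].
\]
Since $X_s$ is $\Fscr_s^X$-measurable while the increment $X_{s+t}-X_s$ is independent of $\Fscr_s^X$ by \eqref{Lindep}, the expectation factorises into a product; and by stationarity \eqref{Lstat} together with \eqref{Lnull} one has $X_{s+t}-X_s\sim X_t$, so the first factor equals $\phi(t)$ and the second $\phi(s)$. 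Next I would check that $\phi$ is continuous: continuity in probability \eqref{Lcont}, which by \eqref{Lstat} holds at every $t$ and not merely at $0$, gives $X_u\to X_t$ in probability as $u\to t$, and since $x\mapsto \eup^{\iup\xi\cdot x}$ is bounded and continuous, bounded convergence yields $\phi(u)\to\phi(t)$.

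The delicate point — and the step I expect to be the main obstacle — is that $\phi$ is complex-valued, so the expression $\phi(1)^t$ in \eqref{levy-e10} is a priori ambiguous; to pin down the branch I must first rule out zeros of $\phi$. I would do this by a self-improvement argument: iterating the multiplicative identity gives $\phi(t_0)=\phi(t_0/n)^n$ for every $n\in\nat$, so if $\phi(t_0)=0$ for some $t_0>0$ then $\phi(t_0/n)=0$ for all $n$, contradicting $\phi(t_0/n)\to\phi(0)=1$ by continuity. Hence $\phi$ never vanishes.

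With non-vanishing in hand, the fact that $[0,\infty)$ is an interval lets me choose a continuous logarithm $g$ with $\phi=\eup^{g}$ and $g(0)=0$; the multiplicative equation then forces $g(s+t)-g(s)-g(t)\in 2\pi\iup\integer$, and since this quantity is continuous in $(s,t)$ and vanishes at the origin it is identically $0$. Thus $g$ solves the additive Cauchy--Abel equation in \eqref{levy-e06}, so $g(t)=t\,g(1)$, and therefore $\phi(t)=\eup^{t g(1)}=\big(\eup^{g(1)}\big)^t=\phi(1)^t$, which is precisely \eqref{levy-e10} with the power read through this continuous branch. Equivalently, one may split $\phi$ into modulus and argument and apply the real version \eqref{levy-e06} (Theorem~\ref{app-31}) to each factor separately.
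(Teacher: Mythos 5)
Your proof is correct and takes essentially the same route as the paper: derive $\phi(s+t)=\phi(s)\,\phi(t)$ for $\phi(t)=\Ee\,\eup^{\iup\xi\cdot X_t}$ from \eqref{Lindep} and \eqref{Lstat}, check continuity of $\phi$ via \eqref{Lcont}, and conclude with the Cauchy--Abel equation \eqref{levy-e06}. The only deviation is that you prove the complex-valued uniqueness statement inline --- ruling out zeros via $\phi(t_0)=\phi(t_0/n)^n$ together with $\phi(t_0/n)\to\phi(0)=1$, then lifting to a continuous logarithm and reducing to the additive equation --- where the paper simply cites Theorem~\ref{app-31}; this is a careful and correct supplement to the same argument rather than a different approach.
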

\begin{proof}
	Fix $s,t \geq 0$. We get
    \begin{align*}
		\Ee\,\eup^{\iup \xi\cdot (X_{t+s}-X_s)+ \iup \xi\cdot X_s}
		\omu{\eqref{Lindep}}{=}{} \Ee\,\eup^{\iup \xi\cdot (X_{t+s}-X_s)} \, \Ee\,\eup^{\iup \xi\cdot X_s}
		\omu{\eqref{Lstat}}{=}{} \Ee\,\eup^{\iup\xi\cdot X_t} \, \Ee\,\eup^{\iup\xi\cdot X_s},
	\end{align*}
    or $\phi(t+s)=\phi(t)\cdot\phi(s)$, if we write $\phi(t)=\Ee\,\eup^{\iup\xi\cdot X_t}$. Since $x\mapsto \eup^{\iup\xi\cdot x}$ is continuous, there is for every $\epsilon>0$ some $\delta>0$ such that
    $$
        |\phi(t)-\phi(s)|
        \leq \Ee \big|\eup^{\iup\xi\cdot(X_t-X_s)}-1\big|
        \leq \epsilon + 2\Pp(|X_t-X_s|\geq \delta)
        = \epsilon + 2\Pp(|X_{|t-s|}|\geq \delta).
    $$
    Thus, \eqref{Lcont} guarantees that $t\mapsto \phi(t)$ is continuous, and the claim follows from \eqref{levy-e06}.
\end{proof}

Notice that any solution $f(t)$ of \eqref{levy-e06} also satisfies \eqref{Lnull}--\eqref{Lindep}; by Proposition~\ref{levy-11} $X_t+f(t)$ is a L\'evy process if, and only if, $f(t)$ is continuous. On the other hand, Hamel, cf.\ \cite[p.~35]{aczel66}, constructed discontinuous (non-measurable and locally unbounded) solutions to \eqref{levy-e06}. Thus, \eqref{Lcont} means that $t\mapsto X_t$ has no fixed discontinuities,
\index{L\'evy process!no fixed discontinuities}%
i.e.\ all jumps occur at random times.

\begin{corollary}\label{levy-13}
	The finite-dimensional distributions $\Pp(X_{t_1} \in dx_1,\ldots,X_{t_n} \in dx_n)$ of a L\'evy process are uniquely determined by
\index{L\'evy process!finite-dimensional distributions}%
    \begin{equation}\label{levy-e12}
		\Ee \exp \left( \iup \sum_{k=1}^n \xi_k\cdot X_{t_k} \right)
		= \prod_{k=1}^n \Big[ \Ee \exp \left(\iup (\xi_k+ \dots +\xi_n)\cdot X_1 \right) \Big]^{t_k-t_{k-1}}
	\end{equation}
	for all $\xi_1,\ldots,\xi_n \in \rd$, $n\in\nat$ and $0 = t_0 \leq t_1 \leq \dots \leq t_n$.
\end{corollary}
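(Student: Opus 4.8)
The plan is to re-express the joint characteristic function in terms of the increments of the process and then invoke Proposition~\ref{levy-11}. First I would introduce the increments $Y_j := X_{t_j}-X_{t_{j-1}}$ for $j=1,\dots,n$. Since $X_0=0$ by \eqref{Lnull}, a telescoping sum gives $X_{t_k}=\sum_{j=1}^k Y_j$; by \eqref{LindepAlt} the vectors $Y_1,\dots,Y_n$ are independent, and by \eqref{Lstat} each satisfies $Y_j\sim X_{t_j-t_{j-1}}$.

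The one step requiring care is an Abel-type rearrangement. Substituting the telescoping representation into the exponent and interchanging the order of summation yields
\[
    \sum_{k=1}^n \xi_k\cdot X_{t_k}
    = \sum_{k=1}^n \xi_k\cdot\sum_{j=1}^k Y_j
    = \sum_{j=1}^n (\xi_j+\dots+\xi_n)\cdot Y_j,
\]
so that the coefficient multiplying the $j$-th increment is the tail sum $\xi_j+\dots+\xi_n$. I expect this reindexing to be the only place needing attention; everything afterwards is routine bookkeeping.

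With this in hand I would factor the expectation using independence of the $Y_j$, then replace each $Y_j$ by $X_{t_j-t_{j-1}}$ via stationarity, obtaining
\[
    \Ee\exp\Big(\iup\sum_{k=1}^n \xi_k\cdot X_{t_k}\Big)
    = \prod_{j=1}^n \Ee\exp\big(\iup(\xi_j+\dots+\xi_n)\cdot X_{t_j-t_{j-1}}\big).
\]
Applying Proposition~\ref{levy-11} to each factor with frequency $\xi_j+\dots+\xi_n$ and time $t_j-t_{j-1}$ turns the right-hand side into exactly the product in \eqref{levy-e12}.

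It then remains to deduce uniqueness of the finite-dimensional distributions. Here I would invoke the uniqueness theorem for characteristic functions on $\real^{nd}$: the map $(\xi_1,\dots,\xi_n)\mapsto \Ee\exp(\iup\sum_k \xi_k\cdot X_{t_k})$ is precisely the characteristic function of the law of $(X_{t_1},\dots,X_{t_n})$ and hence determines that law. Since \eqref{levy-e12} expresses this characteristic function purely through the single function $\xi\mapsto\Ee\,\eup^{\iup\xi\cdot X_1}$ (equivalently, the distribution of $X_1$), the entire family of finite-dimensional distributions is fixed by the one-dimensional marginal $X_1$.
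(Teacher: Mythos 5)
Your proof is correct and in substance identical to the paper's: both decompose $(X_{t_1},\dots,X_{t_n})$ into increments, use \eqref{Lstat}, \eqref{LindepAlt} and Proposition~\ref{levy-11}, and settle uniqueness by noting that the left-hand side of \eqref{levy-e12} is the characteristic function of $(X_{t_1},\dots,X_{t_n})$. The only difference is presentational: you carry out the Abel rearrangement $\sum_{k=1}^n \xi_k\cdot X_{t_k}=\sum_{j=1}^n(\xi_j+\dots+\xi_n)\cdot(X_{t_j}-X_{t_{j-1}})$ and factor all increments at once, whereas the paper peels off the last increment and iterates.
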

\begin{proof}
    The left-hand side of \eqref{levy-e12} is just the characteristic function of $(X_{t_1},\dots,X_{t_n})$. Consequently, the assertion follows from \eqref{levy-e12}. Using Proposition~\ref{levy-11}, we have
    \begin{align*}
		\Ee\exp \left(\iup  \sum_{k=1}^n \xi_k\cdot X_{t_k} \right)
        &\pomu{\eqref{Lindep}}{=}{\eqref{Lstat}}
            \Ee \exp \left(\iup \sum_{k=1}^{n-2} \xi_k\cdot X_{t_k} + \iup (\xi_n+\xi_{n-1})\cdot X_{t_{n-1}} + \iup \xi_n\cdot (X_{t_n}-X_{t_{n-1}})\right)\\
		&\omu{\eqref{Lindep}}{=}{\eqref{Lstat}}
            \Ee\exp\left(\iup  \sum_{k=1}^{n-2} \xi_k\cdot X_{t_k} + \iup (\xi_n+\xi_{n-1})\cdot X_{t_{n-1}}  \right)
        \big(\Ee\,\eup^{\iup \xi_n\cdot X_1}\big)^{t_n-t_{n-1}}.
	\end{align*}
    Since the first half of the right-hand side has the same structure as the original expression, we can iterate this calculation and obtain \eqref{levy-e12}.
\end{proof}
It is not hard to invert the Fourier transform in \eqref{levy-e12}. Writing $p_t(dx) := \Pp(X_t\in dx)$ we get
\begin{align}\label{levy-e14}
    \Pp (X_{t_1} \in B_1,\dots,X_{t_n} \in B_n)
    &= \idotsint \prod_{k=1}^n 1_{B_k}(x_1+\dots+x_k) p_{t_k-t_{k-1}}(dx_k)\\
    \label{levy-e16}
    &= \idotsint \prod_{k=1}^n 1_{B_k}(y_k) p_{t_k-t_{k-1}}(dy_k - y_{k-1}).
\end{align}

Let us discuss the structure of the characteristic function $\chi(\xi)=\Ee\,\eup^{\iup\xi\cdot X_1}$ of $X_1$.
From \eqref{levy-e00} we see that each random variable $X_t$ of a L\'evy process is infinitely divisible.
Clearly, $|\chi(\xi)|^2$ is the (real-valued) characteristic function of the symmetrization $\widetilde X_1 = X_1-X_1'$ ($X_1'$ is an independent copy of $X_1$) and $\widetilde X_1$ is again infinitely divisible:
$$
    \widetilde X_1
    = \sum_{k=1}^n (\widetilde X_{\frac kn}-\widetilde X_{\frac{k-1}{n}})
    = \sum_{k=1}^n \left[(X_{\frac kn}-X_{\frac{k-1}{n}}) - (X_{\frac kn}'-X_{\frac{k-1}{n}}')\right].
$$
In particular, $|\chi|^2 = |\chi_{1/n}|^{2n}$ where $|\chi_{1/n}|^2$ is the characteristic function of $\widetilde X_{1/n}$. Since everything is real and $|\chi(\xi)|\leq 1$, we get
$$
    \theta(\xi):=\lim_{n\to\infty} |\chi_{1/n}(\xi)|^2 = \lim_{n\to\infty} |\chi(\xi)|^{2/n},\quad \xi\in\rd,
$$
which is $0$ or $1$ depending on $|\chi(\xi)|=0$ or $|\chi(\xi)|>0$, respectively. As $\chi(\xi)$ is continuous at $\xi=0$ with $\chi(0)=1$, we have $\theta\equiv 1$ in a neighbourhood $B_r(0)$ of $0$. Now we can use L\'evy's continuity theorem (Theorem~\ref{app-85}) and conclude that the limiting function $\theta(\xi)$ is continuous everywhere, hence $\theta\equiv 1$. In particular, $\chi(\xi)$ has no zeroes.

\begin{corollary}\label{levy-15}
\index{characteristic exponent}%
\index{L\'evy process!characteristic exponent}%
	Let $\Xt$ be a L\'evy process in $\rd$. There exists a unique continuous function $\psi:\rd \to \comp$ such that
    $$
		\Ee\exp(\iup   \xi\cdot X_t) =\eup^{-t\psi(\xi)},\quad t\geq 0,\;\xi\in\rd.
	$$
    The function $\psi$ is called the \emphh{characteristic exponent}.
\end{corollary}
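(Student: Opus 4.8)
The plan is to read the statement as a logarithm problem. By Proposition~\ref{levy-11} we already know that $\Ee\,\eup^{\iup\xi\cdot X_t}=\chi(\xi)^t$ with $\chi(\xi)=\Ee\,\eup^{\iup\xi\cdot X_1}$, where the honest meaning of the $t$-th power is the left-hand side $\Ee\,\eup^{\iup\xi\cdot X_t}$ itself. Hence it suffices to produce a continuous $\psi:\rd\to\comp$ with $\chi=\eup^{-\psi}$ and $\psi(0)=0$; once this is done, raising to the $t$-th power will formally give $\eup^{-t\psi}$, and the whole content of the corollary reduces to choosing a continuous branch of $-\log\chi$. First I would record the three properties of $\chi$ that make this possible: it is continuous (characteristic functions always are), it satisfies $\chi(0)=1$, and --- this is the crucial input, established in the paragraph immediately preceding the corollary --- it never vanishes. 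Non-vanishing is precisely what is needed to take a logarithm at all.

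For existence I would construct the distinguished logarithm of $\chi$. Since $\chi:\rd\to\comp\setminus\{0\}$ is continuous and $\rd$ is contractible (via $s\mapsto s\xi$, $s\in[0,1]$), a standard path-lifting argument produces a unique continuous $\psi:\rd\to\comp$ with $\psi(0)=0$ and $\eup^{-\psi(\xi)}=\chi(\xi)$: one lifts the path $s\mapsto\chi(s\xi)$ through the covering map $z\mapsto\eup^{-z}$ starting from $0$ and sets $\psi(\xi)$ equal to its endpoint, contractibility guaranteeing that the endpoint is path-independent and depends continuously on $\xi$. Concretely one may take $\psi(\xi)=-\log|\chi(\xi)|-\iup\,\theta(\xi)$, where $\theta$ is the continuously lifted argument of $\chi$ with $\theta(0)=0$; the modulus part is continuous because $|\chi|>0$, and the argument part is where the lifting is genuinely used.

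It then remains to match $\chi(\xi)^t$ with $\eup^{-t\psi(\xi)}$. For a fixed $\xi$, both $t\mapsto\Ee\,\eup^{\iup\xi\cdot X_t}$ and $t\mapsto\eup^{-t\psi(\xi)}$ are continuous, multiplicative in the sense $\phi(s+t)=\phi(s)\phi(t)$, and agree at $t=0$ and $t=1$; splitting each into its (positive) modulus and its continuously lifted argument and applying \eqref{levy-e06} to each real component forces the two to coincide for all $t\geq 0$, which is exactly \eqref{levy-e10} rewritten as $\eup^{-t\psi(\xi)}$. Uniqueness is then immediate and does not even require continuity in $\xi$: if $\eup^{-t\psi_1(\xi)}=\eup^{-t\psi_2(\xi)}$ for all $t\geq 0$, then for each fixed $\xi$ the map $t\mapsto t\,(\psi_1(\xi)-\psi_2(\xi))$ is continuous, takes values in $2\pi\iup\,\integer$, and vanishes at $t=0$, hence is identically zero; taking $t=1$ yields $\psi_1=\psi_2$.

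The only real obstacle is the continuous, global choice of the argument of $\chi$, i.e.\ that the branch of the logarithm can be pinned down consistently in $\xi$. This is exactly what the non-vanishing of $\chi$ (the fact just proved) together with the simple connectedness of $\rd$ provides; without either ingredient the logarithm could fail to exist globally. Everything else is routine bookkeeping around Proposition~\ref{levy-11} and the functional equation \eqref{levy-e06}.
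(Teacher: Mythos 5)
Your overall route is the same as the paper's: reduce to $t=1$ via Proposition~\ref{levy-11}, observe that $\chi(\xi)=\Ee\,\eup^{\iup\xi\cdot X_1}$ is continuous, equal to $1$ at $\xi=0$ and zero-free (the paragraph preceding the corollary), and take a distinguished continuous logarithm. The paper outsources the existence of the continuous argument to a footnote (Sato, Dieudonn\'e), whereas you supply the standard path-lifting argument through the covering map $z\mapsto\eup^{-z}$ using contractibility of $\rd$ --- that is a legitimate filling-in, not a different proof. Your uniqueness argument (for fixed $\xi$, the map $t\mapsto t(\psi_1(\xi)-\psi_2(\xi))$ is continuous, $2\pi\iup\,\integer$-valued and zero at $t=0$) is a nice variant of the paper's, which instead uses continuity in $\xi$ of the integer $k_\xi$.

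There is, however, a genuine hole in your step matching $\Ee\,\eup^{\iup\xi\cdot X_t}$ with $\eup^{-t\psi(\xi)}$. Two continuous multiplicative functions of $t$ agreeing at $t=0$ and $t=1$ need \emph{not} coincide: $\eup^{2\pi\iup t}$ and the constant $1$ is a counterexample. Splitting into modulus and lifted argument does not repair this by itself. Write $\phi_\xi(t):=\Ee\,\eup^{\iup\xi\cdot X_t}=|\phi_\xi(t)|\,\eup^{\iup\theta_\xi(t)}$ with $\theta_\xi$ the continuous lift, $\theta_\xi(0)=0$ (note that $\phi_\xi$ is zero-free since $\phi_\xi(t)=\phi_\xi(t/n)^n$ and $\phi_\xi$ is continuous at $0$ with value $1$). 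Then \eqref{levy-e06} gives $\theta_\xi(t)=t\,\theta_\xi(1)$, after the small extra observation that $\theta_\xi(s+t)-\theta_\xi(s)-\theta_\xi(t)$ is continuous, $2\pi\integer$-valued and zero at $s=t=0$. But the agreement $\phi_\xi(1)=\chi(\xi)=\eup^{-\psi(\xi)}$ pins down $\theta_\xi(1)$ only \emph{modulo} $2\pi$: a priori $\theta_\xi(1)=-\Im\psi(\xi)+2\pi k(\xi)$ with $k(\xi)\in\integer$, and if $k(\xi)\neq 0$ the two functions differ at every non-integer $t$. The fix is one more application of your own uniqueness device, now in the $\xi$-variable: $(t,\xi)\mapsto\phi_\xi(t)$ is jointly continuous (use \eqref{Lcont} as in the proof of Proposition~\ref{levy-11}), so lifts depend continuously on the parameter and $\xi\mapsto\theta_\xi(1)$ is continuous; hence $k(\xi)$ is a continuous integer-valued function vanishing at $\xi=0$, so $k\equiv 0$. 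Equivalently, define $\psi(\xi):=-\log|\chi(\xi)|-\iup\,\theta_\xi(1)$ directly via the $t$-lift and check its continuity in $\xi$; then \eqref{levy-e10} holds by construction. You were right that the $t$-th power in Proposition~\ref{levy-11} needs interpretation --- the paper silently reads it as ``reduce to $t=1$'' --- but your interpolation step, as written, does not close that loop.
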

\begin{proof}
    In view of Proposition~\ref{levy-11} it is enough to consider $t=1$.
    Set $\chi(\xi):= \Ee\exp(\iup\xi\cdot X_1)$. An obvious candidate for the exponent is $\psi(\xi) = -\log \chi(\xi)$, but with complex logarithms there is always the trouble which branch of the logarithm one should take. Let us begin with the uniqueness:
    $$
        \eup^{-\psi} = \eup^{-\phi}
        \implies
        \eup^{-(\psi-\phi)}=1
        \implies
        \psi(\xi)-\phi(\xi) = 2\pi \iup k_\xi
    $$
    for some integer $k_\xi\in\integer$. Since $\phi, \psi$ are continuous and $\phi(0)=\psi(0)=1$, we get $k_\xi\equiv 0$.

    To prove the existence of the logarithm, it is not sufficient to take the principal branch of the logarithm. As we have seen above, $\chi(\xi)$ is continuous and has no zeroes, i.e.\ $\inf_{|\xi|\leq r} |\chi(\xi)|> 0$ for any $r>0$; therefore, there is a `distinguished', continuous\footnote{A very detailed argument is given in Sato \cite[Lemma 7.6]{sato99}, a completely different proof can be found in Dieudonn\'e \cite[Chapter IX, Appendix 2]{dieudonne69}.} version of the argument $\arg_\circ\chi(\xi)$ such that $\arg_\circ\chi(0)=0$.

    This allows us to take a continuous version of $\log\chi(\xi) = \log|\chi(\xi)|+\arg_\circ\chi(\xi)$.
\end{proof}

\begin{corollary}\label{levy-17}
\index{L\'evy process!infinite divisibility}
	Let $Y$ be an infinitely divisible random variable. Then there exists at most one\footnote{We will see in Chapter~\ref{cons} how to construct this process. It is unique in the sense that its finite-dimensional distributions are uniquely determined by $Y$.} L\'evy process $\Xt$ such that $X_1\sim Y$.
\end{corollary}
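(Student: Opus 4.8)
The plan is to reduce the uniqueness statement to the fact, already established in Corollary~\ref{levy-13}, that the finite-dimensional distributions of a L\'evy process are completely determined by the single characteristic function $\xi\mapsto\Ee\,\eup^{\iup\xi\cdot X_1}$. Since ``at most one'' is meant in the sense spelled out in the footnote---uniqueness of the finite-dimensional distributions---this is precisely the handle we need.

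First I would take two L\'evy processes $\Xt$ and $(X_t')_{t\geq 0}$ with $X_1\sim Y$ and $X_1'\sim Y$, so that in particular $\Ee\,\eup^{\iup\xi\cdot X_1}=\Ee\,\eup^{\iup\xi\cdot Y}=\Ee\,\eup^{\iup\xi\cdot X_1'}$ for every $\xi\in\rd$. The right-hand side of \eqref{levy-e12} is assembled entirely from powers of this one function, so the characteristic functions of the vectors $(X_{t_1},\dots,X_{t_n})$ and $(X_{t_1}',\dots,X_{t_n}')$ agree for all $n\in\nat$, all $\xi_1,\dots,\xi_n\in\rd$ and all $0=t_0\leq t_1\leq\dots\leq t_n$. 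By the uniqueness theorem for characteristic functions this forces
$$
    \Pp(X_{t_1}\in dx_1,\dots,X_{t_n}\in dx_n)=\Pp(X_{t_1}'\in dx_1,\dots,X_{t_n}'\in dx_n),
$$
that is, the two processes share all finite-dimensional distributions.

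Equivalently, and perhaps more transparently, one can route the argument through the characteristic exponent. By Corollary~\ref{levy-15} there are continuous functions $\psi,\psi'$ with $\Ee\,\eup^{\iup\xi\cdot X_t}=\eup^{-t\psi(\xi)}$ and $\Ee\,\eup^{\iup\xi\cdot X_t'}=\eup^{-t\psi'(\xi)}$; evaluating at $t=1$ gives $\eup^{-\psi}=\eup^{-\psi'}$, and the uniqueness part of Corollary~\ref{levy-15} (continuity together with $\psi(0)=\psi'(0)=0$) yields $\psi\equiv\psi'$. Hence $X_t\sim X_t'$ for every $t\geq 0$, and feeding this back into \eqref{levy-e12} once more delivers equality of all finite-dimensional distributions.

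There is no genuine technical obstacle here: the entire content has been front-loaded into Corollaries~\ref{levy-13} and~\ref{levy-15}. The only point worth flagging is interpretive rather than computational---one must read ``at most one'' in the sense made precise in the footnote, namely uniqueness of the law of the process as pinned down by its finite-dimensional distributions, since two L\'evy processes agreeing on all finite-dimensional distributions are identical in exactly this sense.
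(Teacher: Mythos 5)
Your argument is correct and matches the paper's own proof, which likewise invokes Proposition~\ref{levy-11} and Corollary~\ref{levy-13} to conclude that the finite-dimensional distributions, hence the process (in the sense of the footnote), are determined by the law of $X_1$. Your additional detour through the characteristic exponent of Corollary~\ref{levy-15} is a harmless elaboration of the same idea.
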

\begin{proof}
    Since $X_1\sim Y$, infinite divisibility is a necessary requirement for $Y$. On the other hand, Proposition~\ref{levy-11} and Corollary~\ref{levy-13} show how to construct the finite-dimensional distributions of a L\'evy process, hence the process, from $X_1$.
\end{proof}
So far, we have seen the following one-to-one correspondences
$$
    \Xt\text{\ \ L\'evy process}
    \xleftrightarrow{1:1} \Ee\,\eup^{\iup\xi\cdot X_1}
    \xleftrightarrow{1:1} \psi(\xi) = -\log \Ee\,\eup^{\iup\xi\cdot X_1}
$$
and the next step is to find \emphh{all possible characteristic exponents}. This will lead us to the L\'evy--Khintchine formula.

\chapter{Examples}\label{exlp}

We begin with a useful alternative characterisation of L\'evy processes.
\begin{theorem}\label{exlp-03}
\index{L\'evy process!characterization}%
    Let $X=\Xt$ be a stochastic process with values in $\rd$, $\Pp(X_0=0)=1$ and $\Fscr_t = \Fscr_t^X = \sigma(X_r,\; r\leq t)$. The process $X$ is a L\'evy process if, and only if, there exists an exponent $\psi:\rd\to\comp$ such that
    \begin{equation}\label{exlp-e02}
        \Ee \left( \eup^{\iup\xi\cdot(X_t-X_s)} \:\middle|\: \Fscr_s\right)
        = \eup^{-(t-s)\psi(\xi)} \fa s<t,\; \xi\in\rd.
    \end{equation}
\end{theorem}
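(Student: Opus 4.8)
The plan is to prove both directions of the equivalence, with the forward direction being essentially a packaging of earlier results and the reverse direction requiring genuine work to recover \eqref{Lnull}--\eqref{Lcont} from the single identity \eqref{exlp-e02}.

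For the direction ``$\Rightarrow$'', I would argue as follows. Assume $X$ is a L\'evy process. By Corollary~\ref{levy-15} there is a continuous $\psi$ with $\Ee\,\eup^{\iup\xi\cdot X_u}=\eup^{-u\psi(\xi)}$ for all $u\geq 0$. To obtain the conditional version, fix $s<t$ and set $Y:=X_t-X_s$. By \eqref{Lindep}, $Y$ is independent of $\Fscr_s$, and by \eqref{Lstat}, $Y\sim X_{t-s}$. Hence for any $A\in\Fscr_s$,
\[
    \Ee\big(\eup^{\iup\xi\cdot(X_t-X_s)}\I_A\big)
    = \Ee\,\eup^{\iup\xi\cdot(X_t-X_s)}\,\Pp(A)
    = \eup^{-(t-s)\psi(\xi)}\,\Pp(A),
\]
using independence in the first step and \eqref{Lstat} together with Corollary~\ref{levy-15} in the second. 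Since $\eup^{-(t-s)\psi(\xi)}$ is $\Fscr_s$-measurable (it is constant), this is exactly the defining property of the conditional expectation, giving \eqref{exlp-e02}.

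For the direction ``$\Leftarrow$'', suppose \eqref{exlp-e02} holds for some $\psi$. First I would extract independence of increments: taking $A\in\Fscr_s$ and integrating \eqref{exlp-e02} shows that the conditional characteristic function of $X_t-X_s$ given $\Fscr_s$ is the deterministic function $\eup^{-(t-s)\psi(\xi)}$, which does not depend on the conditioning. By the characteristic-function criterion for conditional independence, this forces $X_t-X_s$ to be independent of $\Fscr_s$, which is \eqref{Lindep}. Next, stationarity \eqref{Lstat}: setting $s=0$ and using $\Pp(X_0=0)=1$ gives $\Ee\,\eup^{\iup\xi\cdot X_t}=\eup^{-t\psi(\xi)}$, so $X_t-X_s$ and $X_{t-s}-X_0=X_{t-s}$ share the characteristic function $\eup^{-(t-s)\psi(\xi)}$ and are therefore equal in law, which is \eqref{Lstat}. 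Condition \eqref{Lnull} is the hypothesis $\Pp(X_0=0)=1$. It remains to verify continuity in probability \eqref{Lcont}; here I would use the estimate from \eqref{levy-e04}, noting that $\Ee\,\eup^{\iup\xi\cdot X_h}=\eup^{-h\psi(\xi)}\to 1$ as $h\to 0$ for each fixed $\xi$, which by L\'evy's continuity theorem (Theorem~\ref{app-85}) forces $X_h\to 0$ in distribution, hence in probability since the limit is a constant.

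The main obstacle is the rigorous deduction of genuine independence \eqref{Lindep} from the factorized conditional characteristic function in \eqref{exlp-e02}. The point is that \eqref{exlp-e02} only controls the \emph{conditional} characteristic function of the single increment $X_t-X_s$, so one must argue carefully that a $\Fscr_s$-measurable conditional characteristic function which happens to be (almost surely) deterministic is equivalent to independence of $X_t-X_s$ from the whole $\sigma$-algebra $\Fscr_s$; I would phrase this via the identity $\Ee\big(\eup^{\iup\xi\cdot(X_t-X_s)}\I_A\big)=\eup^{-(t-s)\psi(\xi)}\Pp(A)$ for all $A\in\Fscr_s$ and then invoke \eqref{LindepAlt} to propagate this from a single increment to the joint independence of successive increments. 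A secondary subtlety is that for a clean definition of $\psi$ as a \emph{characteristic exponent} one should check $\psi$ inherits continuity and $\psi(0)=0$, but this follows from evaluating \eqref{exlp-e02} at $\xi=0$ and from the correspondence already established in Corollary~\ref{levy-15}.
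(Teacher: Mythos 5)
Your proof is correct and follows essentially the same route as the paper: the forward direction by \eqref{Lindep}, \eqref{Lstat} and Corollary~\ref{levy-15}; the converse by matching unconditional characteristic functions for \eqref{Lstat}, factorization over events $F\in\Fscr_s$ for \eqref{Lindep}, and L\'evy's continuity theorem (Theorem~\ref{app-85}) for \eqref{Lcont} (your passing mention of \eqref{levy-e04} there is not needed; the continuity-theorem argument you actually give is exactly the paper's). The one place you argue differently is the independence step: you invoke the general criterion that an a.s.\ deterministic conditional characteristic function forces independence, whereas the paper makes this self-contained via the identity $\eup^{\iup u\I_F}=\I_{F^c}+\eup^{\iup u}\I_F$, deducing from the tower-property identity \eqref{exlp-e04} that the \emph{joint} characteristic function of the pair factorizes,
$$
    \Ee\big(\eup^{\iup u\I_F}\,\eup^{\iup\xi\cdot(X_t-X_s)}\big)
    = \Ee\,\eup^{\iup u\I_F}\;\Ee\,\eup^{\iup\xi\cdot(X_t-X_s)},
$$
hence $\I_F\bbot(X_t-X_s)$ for every $F\in\Fscr_s$. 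Your cited criterion is true and admits an equally short proof (compare the law of $X_t-X_s$ under $\Pp(\cdot\mid F)$ with that under $\Pp$ and use uniqueness of Fourier transforms), so either route closes the argument; the paper's trick just avoids appealing to an external lemma. One simplification you can make: the worry about `propagating' to joint independence of successive increments is unnecessary, since $\Fscr_s=\sigma(X_r,\,r\leq s)$ is the full canonical past, so independence of the single increment $X_t-X_s$ from $\Fscr_s$ \emph{is} \eqref{Lindep} verbatim, and its equivalence with \eqref{LindepAlt} was already settled in Remark~\ref{levy-09}.
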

\begin{proof}
    If $X$ is a L\'evy process, we get
    $$
        \Ee \left( \eup^{\iup\xi\cdot(X_t-X_s)} \:\middle|\: \Fscr_s\right)
        \omu{\eqref{Lindep}}{=}{}
        \Ee\,\eup^{\iup\xi\cdot(X_t-X_s)}
        \omu{\eqref{Lstat}}{=}{}
        \Ee\,\eup^{\iup\xi\cdot X_{t-s}}
        \omu{\text{Cor.~\ref{levy-15}}}{=}{}
        \eup^{-(t-s)\psi(\xi)}.
    $$

    Conversely, assume that $X_0=0$ a.s.\ and \eqref{exlp-e02} holds. Then
    $$
        \Ee\,\eup^{\iup\xi\cdot(X_t-X_s)}
        = \eup^{-(t-s)\psi(\xi)}
        = \Ee\,\eup^{\iup\xi\cdot(X_{t-s}-X_0)}
    $$
    which shows $X_{t}-X_s\sim X_{t-s}-X_0 = X_{t-s}$, i.e.\ \eqref{Lstat}.

    For any $F\in\Fscr_s$ we find from the tower property of conditional expectation
    \begin{align}\label{exlp-e04}
        \Ee \left( \I_F \cdot \eup^{\iup\xi\cdot(X_t-X_s)}\right)
        = \Ee \left( \I_F  \Ee\left[ \eup^{\iup\xi\cdot(X_t-X_s)} \:\middle|\: \Fscr_s\right]\right)
        = \Ee \I_F\cdot \eup^{-(t-s)\psi(\xi)}.
    \end{align}
    Observe that $\eup^{\iup u \I_F} = \I_{F^c} + \eup^{\iup u}\I_F$ for any $u\in\real$; since both $F$ and $F^c$ are in $\Fscr_s$, we get
    \begin{align*}
        \Ee\left(\eup^{\iup u\I_F}\eup^{\iup\xi\cdot(X_t-X_s)}\right)
        &\pomu{\eqref{exlp-e04}}{=}{} \Ee\left(\I_{F^c} \eup^{\iup\xi\cdot(X_t-X_s)}\right) + \Ee\left(\I_{F} \eup^{\iup u} \eup^{\iup\xi\cdot(X_t-X_s)}\right)\\
        &\omu{\eqref{exlp-e04}}{=}{} \Ee\left(\I_{F^c} + \eup^{\iup u}\I_F\right) \eup^{-(t-s)\psi(\xi)}\\
        &\omu{\eqref{exlp-e04}}{=}{} \Ee\,\eup^{\iup u\I_F}\, \Ee\,\eup^{\iup\xi\cdot(X_t-X_s)}.
    \end{align*}
    Thus, $\I_F \bbot (X_t-X_s)$ for any $F\in\Fscr_s$, and \eqref{Lindep} follows.

    Finally, $\lim_{t\to 0} \Ee\,\eup^{\iup\xi\cdot X_t} = \lim_{t\to 0} \eup^{-t\psi(\xi)} = 1$ proves that $X_t \to 0$ in distribution, hence in probability. This gives \eqref{Lcont}.
\end{proof}

Theorem~\ref{exlp-03} allows us to give concrete examples of L\'evy processes.
\begin{example}\label{exlp-05}
The following processes are L\'evy processes.

\medskip\noindent a)
    \emphh{Drift} in direction $l/|l|$, $l\in\rd$, with speed $|l|$: $X_t = tl$ and $\psi(\xi) = -\iup l\cdot\xi$.

\medskip\noindent b) \index{Brownian motion}%
    \emphh{Brownian motion} with (positive semi-definite) covariance matrix $Q\in\real^{d\times d}$: Let $(W_t)_{t\geq 0}$ be a standard Wiener process on $\rd$ and set $X_t := \sqrt{Q}W_t$.

    \noindent
    Then $\psi(\xi) = \frac 12 \xi\cdot Q\xi$ and $\Pp(X_t\in dy) = (2\pi t)^{-d/2} (\det Q)^{-1/2} \exp(-y\cdot Q^{-1}y / 2t)\,dy$.

\medskip\noindent c)
\index{Poisson process}%
    \emphh{Poisson process} in $\real$ with jump height $1$ and intensity $\lambda$. This is an integer-valued counting process $(N_t)_{t\geq 0}$ which increases by $1$ after an independent exponential waiting time with mean $\lambda$. Thus,
    $$
        N_t = \sum_{k=1}^\infty \I_{[0,t]}(\tau_k),\quad
        \tau_k = \sigma_1+\dots+\sigma_k,\quad
        \sigma_k\sim\Exp(\lambda)\text{\ \ iid}.
    $$
    Using this definition, it is a bit messy to show that $N$ is indeed a L\'evy process (see e.g.\ \c{C}inlar \cite[Chapter 4]{cinlar75}). We will give a different proof in Theorem~\ref{exlp-13} below. Usually, the first step is to show that its law is a Poisson distribution
    $$
        \Pp(N_t = k) = \eup^{-t\lambda}\,\frac{(\lambda t)^k}{k!},\quad k=0,1,2,\dots
    $$
    (thus the name!) and from this one can calculate the characteristic exponent
    \begin{align*}
        \Ee\,\eup^{\iup uN_t}
        = \sum_{k=0}^\infty \eup^{\iup uk}\eup^{-t\lambda}\,\frac{(\lambda t)^k}{k!}
        = \eup^{-t\lambda} \exp\big[\lambda t \eup^{\iup u}\big]
        = \exp\big[-t\lambda(1-\eup^{\iup u})\big],
    \end{align*}
    i.e.\ $\psi(u) = \lambda (1-\eup^{\iup u})$. Mind that this is strictly weaker than \eqref{exlp-e02} and does not prove that $N$ is a L\'evy process.

\medskip\noindent d)
\index{compound Poisson process}%
    \emphh{Compound Poisson process} in $\rd$ with jump distribution $\mu$ and intensity $\lambda$. Let $N=(N_t)_{t\geq 0}$ be a Poisson process with intensity $\lambda$ and replace the jumps of size $1$ by independent iid jumps of random height $H_1, H_2, \dots$ with values in $\rd$ and $H_1\sim\mu$. This is a compound Poisson process:
    $$
        C_t = \sum_{k=1}^{N_t} H_k,\quad
        H_k\sim\mu \text{\ \ iid and independent of $(N_t)_{t\geq 0}$}.
    $$
    We will see in Theorem~\ref{exlp-13} that compound Poisson processes are L\'evy processes.
\end{example}

\index{Campbell's formula|(}%
Let us show that the Poisson and compound Poisson processes are L\'evy processes. For this we need the following auxiliary result. Since $t\mapsto C_t$ is a step function, the Riemann--Stieltjes integral $\int f(u)\,dC_u$ is well-defined.
\begin{lemma}[Campbell's formula]\label{exlp-11}
    Let $C_t = H_1+\dots +H_{N_t}$ be a compound Poisson process as in Example~\ref{exlp-05}.\textup{d)} with iid jumps $H_k\sim\mu$ and an independent Poisson process $(N_t)_{t\geq 0}$ with intensity $\lambda$. Then
    \begin{equation}\label{exlp-e12}
		\Ee\exp \left(\iup \int_0^{\infty} f(t+s) \, dC_t \right)
        = \exp \left(\lambda \int_0^{\infty} \!\! \int_{y \neq 0} (\eup^{\iup  y f(s+t)}-1) \, \mu(dy) \, dt \right)
	\end{equation}
    holds for all $s\geq 0$ and bounded measurable functions $f:[0,\infty)\to\rd$ with compact support.
\end{lemma}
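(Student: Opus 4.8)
The plan is to turn the Riemann--Stieltjes integral into a finite sum over the jumps of $C$, condition on the number of jumps, and exploit the order-statistics description of the jump times of a Poisson process. I would start by making the left-hand side explicit: since $t\mapsto C_t$ is a step function that jumps by $H_k$ at the $k$-th jump time $\tau_k$ of $N$,
$$
    \int_0^\infty f(t+s)\cdot dC_t = \sum_{k\geq 1} f(\tau_k+s)\cdot H_k .
$$
As $f$ has compact support, say $\supp f\subseteq[0,M]$, the $k$-th summand vanishes once $\tau_k+s>M$. Fixing $T\geq M$ so that $f(t+s)=0$ for $t>T$, only the (a.s.\ finitely many) jumps in $[0,T]$ contribute, so the quantity to compute is $\Ee\exp\big(\iup\sum_{k=1}^{N_T} f(\tau_k+s)\cdot H_k\big)$.

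Next I would condition on $\{N_T=n\}$. Writing $g(t):=\int_{y\neq 0}\eup^{\iup f(t+s)\cdot y}\,\mu(dy)=\Ee\,\eup^{\iup f(t+s)\cdot H_1}$ and using that the marks $H_k\sim\mu$ are iid and independent of $N$, the expectation over the marks factorises:
$$
    \Ee\Big[\exp\Big(\iup\sum_{k=1}^{n} f(\tau_k+s)\cdot H_k\Big)\,\Big|\,\tau_1,\dots,\tau_n\Big]
    = \prod_{k=1}^{n} g(\tau_k).
$$
Now comes the key input: given $N_T=n$, the jump times $(\tau_1,\dots,\tau_n)$ are distributed as the order statistics of $n$ iid random variables $V_1,\dots,V_n$ uniform on $[0,T]$. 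Since the product $\prod_k g(\tau_k)=\prod_k g(V_{(k)})=\prod_k g(V_k)$ is insensitive to the ordering, taking expectations and using independence gives
$$
    \Ee\Big[\prod_{k=1}^{n} g(\tau_k)\Big]
    = \prod_{k=1}^{n}\Ee\, g(V_k)
    = a^n,\qquad
    a:=\frac1T\int_0^T\!\!\int_{y\neq 0}\eup^{\iup f(t+s)\cdot y}\,\mu(dy)\,dt .
$$

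Finally I would remove the conditioning with the Poisson weights,
$$
    \Ee\exp\Big(\iup\int_0^\infty f(t+s)\cdot dC_t\Big)
    = \sum_{n=0}^\infty \eup^{-\lambda T}\frac{(\lambda T)^n}{n!}\,a^n
    = \eup^{\lambda T(a-1)} .
$$
Writing $\lambda T=\lambda\int_0^T\!\int_{y\neq 0}\mu(dy)\,dt$ (as $\mu$ is a probability measure) turns $\lambda T(a-1)$ into $\lambda\int_0^T\!\int_{y\neq 0}(\eup^{\iup f(t+s)\cdot y}-1)\,\mu(dy)\,dt$, and since this integrand vanishes for $t>T$ the range extends to $\infty$, yielding the claim. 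The only non-routine ingredient is the order-statistics representation of the jump times used in the conditioning step; it is precisely what allows the ordered times to be replaced by iid uniforms and the conditional characteristic function to factorise, after which the computation is the elementary Poisson series.
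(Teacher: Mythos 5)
Your proof is correct, and it takes a genuinely different route from the paper's. You convert the integral into the finite sum $\sum_{k\le N_T} f(\tau_k+s)\cdot H_k$, condition on the total number of jumps $N_T$ in a window covering $\supp f(\cdot+s)$, and use the order-statistics representation of the jump times; the symmetry of the product $\prod_k g(\tau_k)$ then reduces everything to iid uniforms and an elementary Poisson power series. The paper instead conditions only on the \emph{first} jump time $\sigma_1$ and exploits the renewal structure to derive the integral equation $\eup^{-\lambda s}\phi(s)=\lambda\int_s^\infty \phi(t)\,\eup^{-\lambda t}\gamma(t)\,dt$ with $\phi(\infty)=1$, whose (asserted unique) solution is then verified to be the right-hand side of \eqref{exlp-e12}. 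Your computation is more explicit and needs no uniqueness argument at all — the paper leaves that uniqueness as a ``routine exercise'' — but be aware of what it presupposes relative to the paper's logical order: you use both $N_T\sim\Poi(\lambda T)$ and the uniform order-statistics description of $(\tau_1,\dots,\tau_n)$ given $\{N_T=n\}$, whereas in this text the Poisson law of $N_t$ is deliberately \emph{not} assumed at this stage; it is derived afterwards (Corollary~\ref{exlp-15}) from Campbell's formula via Theorem~\ref{exlp-13}, and Example~\ref{exlp-05}.c) explicitly flags that the ``usual first step'' of computing the law of $N_t$ is being postponed. Spliced into the paper as it stands, your argument would therefore look circular; it is rescued by the fact that both inputs follow by direct gamma-density computations from the definition $\tau_k=\sigma_1+\dots+\sigma_k$, $\sigma_k\sim\Exp(\lambda)$ iid, independently of anything in this chapter — but you should say so and supply (or cite) those computations. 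In short: your route buys a self-verifying closed-form calculation at the price of two classical distributional facts about the Poisson process; the paper's renewal route buys independence from the law of $N_t$ (which it then obtains as a corollary) at the price of an unproved uniqueness claim for the integral equation. One cosmetic point: your identity $g(t)=\Ee\,\eup^{\iup f(t+s)\cdot H_1}=\int_{y\neq 0}\eup^{\iup f(t+s)\cdot y}\,\mu(dy)$ tacitly uses $\mu(\{0\})=0$; this is harmless since the integrand $\eup^{\iup f\cdot y}-1$ in \eqref{exlp-e12} vanishes at $y=0$ anyway, but it is worth a half-sentence.
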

\index{Campbell's formula|)}%
\begin{proof}
	Set $\tau_k = \sigma_1+\dots+\sigma_k$ where $\sigma_k \sim \Exp(\lambda)$ are iid. Then
    \begin{align*}
		\phi(s)
		\pomu{\text{iid}}{:=}{}&
            \Ee\exp \left(\iup \int_0^{\infty} f(s+t) \, dC_t \right) \\
		\pomu{\text{iid}}{=}{}&
            \Ee\exp \left(\iup \sum_{k=1}^\infty f(s+\sigma_1+\dots+\sigma_k) H_k \right) \\
		\omu{\text{iid}}{=}{}&
            \int_0^{\infty} \underbrace{\Ee\exp \left(\iup \sum_{k=2}^\infty f(s+x+\sigma_2+\dots+\sigma_k)H_k \right)}_{=\phi(s+x)}
            \underbrace{\Ee\exp(\iup  f(s+x) H_1)\vphantom{\left(\sum_{k=2}^\infty\right)}}_{=:\gamma(s+x)}
            \underbrace{\Pp(\sigma_1\in dx)\vphantom{\left(\sum_{k=2}^\infty\right)}}_{=\lambda \eup^{-\lambda x} \, dx} \\
		\pomu{\text{iid}}{=}{}&
            \lambda \int_0^{\infty} \phi(s+x) \gamma(s+x)\eup^{-\lambda x} \, dx \\
		\pomu{\text{iid}}{=}{}&
            \lambda \eup^{\lambda s} \int_s^{\infty} \gamma(t) \phi(t) \eup^{-\lambda t} \, dt.
	\end{align*}
	This is equivalent to
    $$
		\eup^{-\lambda s} \phi(s) = \lambda \int_s^{\infty} (\phi(t)\eup^{-\lambda t}) \gamma(t) \, dt
	$$
    and $\phi(\infty)=1$ since $f$ has compact support. This integral equation has a unique solution; it is now a routine exercise to verify that the right-hand side of \eqref{exlp-e12} is indeed a solution.
\end{proof}

\begin{theorem}\label{exlp-13}
\index{Poisson process!is a L\'evy process}%
\index{compound Poisson process!is a L\'evy process}%
    Let $C_t = H_1+\dots +H_{N_t}$ be a compound Poisson process as in Example~\ref{exlp-05}.\textup{d)} with iid jumps $H_k\sim\mu$ and an independent Poisson process $(N_t)_{t\geq 0}$ with intensity $\lambda$. Then $(C_t)_{t \geq 0}$ \textup{(}and also $(N_t)_{t \geq 0}$\textup{)} is a $d$-dimensional L\'evy process with characteristic exponent
    \begin{equation}\label{exlp-e14}
		\psi(\xi) = \lambda \int_{y \neq 0} (1-\eup^{\iup  y\cdot \xi}) \, \mu(dy).
	\end{equation}
\end{theorem}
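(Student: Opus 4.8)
The plan is to extract all four defining properties \eqref{Lnull}--\eqref{Lcont} at once from a single application of Campbell's formula (Lemma~\ref{exlp-11}), by feeding it a step function rather than testing one increment at a time. Fix $n\in\nat$, times $0=t_0<t_1<\dots<t_n$ and vectors $\xi_1,\dots,\xi_n\in\rd$, and consider the bounded, compactly supported, measurable function $f=\sum_{j=1}^n \xi_j\,\I_{(t_{j-1},t_j]}$. Since $t\mapsto C_t$ is a step function, $\int_0^\infty f(u)\cdot dC_u=\sum_{j=1}^n \xi_j\cdot(C_{t_j}-C_{t_{j-1}})$, so the left-hand side of \eqref{exlp-e12} (with $s=0$) is exactly the joint characteristic function of the increment vector $(C_{t_j}-C_{t_{j-1}})_{j=1}^n$.

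First I would evaluate the right-hand side of \eqref{exlp-e12}. Because $f$ vanishes outside $(0,t_n]$ and is constant equal to $\xi_j$ on $(t_{j-1},t_j]$, the integrand $\eup^{\iup y\cdot f(u)}-1$ vanishes for $u>t_n$ and the $u$-integral splits over the intervals $(t_{j-1},t_j]$, giving
$$
    \Ee\exp\Big(\iup\sum_{j=1}^n \xi_j\cdot(C_{t_j}-C_{t_{j-1}})\Big)
    =\prod_{j=1}^n \exp\Big(-\lambda(t_j-t_{j-1})\int_{y\neq 0}(1-\eup^{\iup y\cdot\xi_j})\,\mu(dy)\Big)
    =\prod_{j=1}^n \eup^{-(t_j-t_{j-1})\psi(\xi_j)}
$$
with $\psi$ as in \eqref{exlp-e14}.

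From this identity everything falls out. Specialising to $n=1$ on an interval $(0,r]$ gives $\Ee\,\eup^{\iup\xi\cdot C_r}=\eup^{-r\psi(\xi)}$, so each factor above is precisely the characteristic function of the corresponding increment; hence the joint characteristic function factorises into the product of the marginals, which is exactly independence of $C_{t_1}-C_{t_0},\dots,C_{t_n}-C_{t_{n-1}}$, i.e.\ \eqref{LindepAlt} (equivalent to \eqref{Lindep} by Remark~\ref{levy-09}). Moreover $\Ee\,\eup^{\iup\xi\cdot(C_{t_j}-C_{t_{j-1}})}=\eup^{-(t_j-t_{j-1})\psi(\xi)}=\Ee\,\eup^{\iup\xi\cdot C_{t_j-t_{j-1}}}$ yields stationarity \eqref{Lstat}; property \eqref{Lnull} holds since $N_0=0$ forces $C_0=0$; and $\Ee\,\eup^{\iup\xi\cdot C_t}=\eup^{-t\psi(\xi)}\to1$ as $t\to0$ forces $C_t\to0$ in distribution, hence in probability, giving \eqref{Lcont}. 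Thus $C$ is a L\'evy process with characteristic exponent \eqref{exlp-e14}, and $N$ is the special case $d=1$, $\mu=\delta_1$.

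The steps are routine once the setup is fixed; the only points needing care are the bookkeeping in the vector-valued Campbell formula --- reading $yf$ and $f\,dC$ as the Euclidean scalar products $y\cdot f$ and $f\cdot dC$, and checking that the step function $f$ is admissible --- together with the standard fact that factorisation of a joint characteristic function into functions of the single variables $\xi_j$ is equivalent to independence. The genuinely useful observation, and the reason Campbell's formula is the right tool, is that plugging in a step function turns it into a factorised product, so that independence of increments emerges for free rather than from a separate conditioning argument.
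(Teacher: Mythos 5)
Your proof is correct and follows essentially the same route as the paper: both apply Campbell's formula with $s=0$ to the step function $f=\sum_k \xi_k\I_{(t_{k-1},t_k]}$, read off the factorised product on the right-hand side, and deduce \eqref{Lstat} and \eqref{LindepAlt} from it, with the Poisson process as the case $d=1$, $\mu=\delta_1$. The only (harmless) variation is that the paper obtains \eqref{Lnull} and \eqref{Lcont} directly from the c\`adl\`ag step-function paths via \eqref{levy-e04}, whereas you derive \eqref{Lcont} from $\eup^{-t\psi(\xi)}\to 1$ as $t\to 0$.
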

\begin{proof}
    Since the trajectories of $t\mapsto C_t$ are c\`adl\`ag step functions with $C_0=0$, the properties \eqref{Lnull} and \eqref{Lcont}, see \eqref{levy-e04}, are satisfied. We will show \eqref{Lstat} and \eqref{Lindep}. Let $\xi_k \in \rd$, $0=t_0  \leq \dots \leq t_n$, and $a<b$. Then the Riemann--Stieltjes integral
    \begin{align*}
		\int_0^{\infty} \I_{(a,b]}(t) \, dC_t = \sum_{k=1}^\infty \I_{(a,b]}(\tau_k) H_k = C_b-C_a
	\end{align*}
    exists. We apply the Campbell formula \eqref{exlp-e12} to the function
	$$
		f(t) := \sum_{k=1}^n \xi_k \I_{(t_{k-1},t_k]}(t)
	$$
	and with $s=0$. Then the left-hand side of \eqref{exlp-e12} becomes the characteristic function of the increments
    $$
		\Ee\exp \left(\iup \sum_{k=1}^n \xi_k \cdot(C_{t_k}-C_{t_{k-1}}) \right),
	$$
    while the right-hand side is equal to
    \begin{align*}
		\exp \left[ \lambda \int_{y \neq 0} \sum_{k=1}^n \int_{t_{k-1}}^{t_k} (\eup^{\iup  \xi_k \cdot y}-1) \, dt \, \mu(dy) \right]
		&= \prod_{k=1}^n \exp \left[ \lambda (t_k-t_{k-1}) \int_{y \neq 0} (\eup^{\iup  \xi_k\cdot y}-1) \, \mu(dy) \right] \\
		&= \prod_{k=1}^n \Ee\exp \left[\iup   \xi_k\cdot C_{t_k-t_{k-1}} \right]
	\end{align*}
    (use Campbell's formula with $n=1$ for the last equality).
    This shows that the increments are independent, i.e.\ \eqref{LindepAlt} holds, as well as \eqref{Lstat}: $C_{t_k}-C_{t_{k-1}}\sim C_{t_k-t_{k-1}}$.

    If $d=1$ and $H_k\sim\delta_1$, $C_t$ is a Poisson process.
\end{proof}

Denote by $\mu^{*k}$ the $k$-fold convolution of the measure $\mu$; as usual, $\mu^{*0}:= \delta_0$.
\begin{corollary}\label{exlp-15}
\index{compound Poisson process!distribution}%
    Let $(N_t)_{t\geq 0}$ be a Poisson process with intensity $\lambda$ and $C_t = H_1+\dots +H_{N_t}$ a compound Poisson process with iid jumps $H_k\sim\mu$. Then, for all $t\geq 0$,
    \begin{align}
    \label{exlp-e16}
        \Pp(N_t = k)    &= \eup^{-\lambda t}\,\frac{(\lambda t)^k}{k!}, \quad k=0,1,2,\dots \\
    \label{exlp-e18}
        \Pp(C_t \in B)  &= \eup^{-\lambda t} \sum_{k=0}^\infty \frac{(\lambda t)^k}{k!}\,\mu^{*k}(B),\quad B\subset\rd\text{\ \ Borel}.
    \end{align}
\end{corollary}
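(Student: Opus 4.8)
The two identities are most quickly obtained by inverting characteristic functions that are already in hand, and I would treat them in turn.

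For the counting identity \eqref{exlp-e16}, recall that the Poisson process is the special case $\mu=\delta_1$ of Theorem~\ref{exlp-13}, so its characteristic exponent is $\psi(u)=\lambda(1-\eup^{\iup u})$ and
\[
    \Ee\,\eup^{\iup u N_t}
    = \eup^{-t\lambda(1-\eup^{\iup u})}
    = \eup^{-\lambda t}\sum_{k=0}^\infty \frac{(\lambda t)^k}{k!}\,\eup^{\iup u k}.
\]
The right-hand side is exactly the characteristic function of the integer-valued law that puts mass $\eup^{-\lambda t}(\lambda t)^k/k!$ on $k$; since these masses are nonnegative and sum to $1$, this is a genuine probability distribution, and the uniqueness of the characteristic function (equivalently, uniqueness of the Fourier coefficients of the $2\pi$-periodic function $u\mapsto\Ee\,\eup^{\iup uN_t}$) forces $\Pp(N_t=k)=\eup^{-\lambda t}(\lambda t)^k/k!$. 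One could instead argue directly from $\{N_t=k\}=\{\tau_k\leq t<\tau_{k+1}\}$ together with the fact that $\tau_k=\sigma_1+\dots+\sigma_k$ is Erlang/Gamma distributed, but the characteristic-function route avoids computing the convolution of the exponential waiting times.

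For the law of $C_t$ in \eqref{exlp-e18}, the plan is to condition on the number of jumps. Because the jump sequence $(H_k)_{k\in\nat}$ is independent of the driving Poisson process $N$, the event $\{N_t=k\}\in\sigma(N)$ is independent of $\sigma(H_1,\dots,H_k)$, and on this event $C_t=H_1+\dots+H_k$. Hence, for Borel $B\subset\rd$,
\[
    \Pp(C_t\in B)
    = \sum_{k=0}^\infty \Pp\big(N_t=k,\; H_1+\dots+H_k\in B\big)
    = \sum_{k=0}^\infty \Pp(N_t=k)\,\mu^{*k}(B),
\]
where the distribution of $H_1+\dots+H_k$ is the $k$-fold convolution $\mu^{*k}$ by the iid assumption, and the $k=0$ term uses the empty-sum convention $C_t=0$, i.e.\ $\mu^{*0}=\delta_0$. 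Substituting the Poisson weights from \eqref{exlp-e16} yields \eqref{exlp-e18}.

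The computations are routine; the points that need genuine care are the independence bookkeeping in the second part---one must use that the whole process $N$ is independent of $(H_k)_{k\in\nat}$, not merely that the $H_k$ are iid---and the edge case $k=0$. If instead one prefers to derive \eqref{exlp-e18} by inverting the exponent \eqref{exlp-e14}, then the candidate law $\eup^{-\lambda t}\sum_k (\lambda t)^k\mu^{*k}/k!$ must first be recognised as a probability measure and its characteristic function computed by interchanging summation and integration before invoking uniqueness; this interchange is justified by dominated convergence, since the partial sums are dominated by the summable weights $\eup^{-\lambda t}(\lambda t)^k/k!$.
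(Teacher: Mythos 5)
Your proof is correct and takes essentially the same route as the paper: for \eqref{exlp-e16} you read off the characteristic function of $N_t$ from Theorem~\ref{exlp-13} with $d=1$, $\mu=\delta_1$ and invoke uniqueness of characteristic functions, and for \eqref{exlp-e18} you decompose over the events $\{N_t=k\}$ using the independence $(H_k)_{k\in\nat}\bbot(N_t)_{t\geq 0}$, exactly as in the paper. Your additional remarks---spelling out the Fourier-uniqueness step and the $k=0$ case via $\mu^{*0}=\delta_0$---merely make explicit what the paper's proof leaves implicit.
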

\begin{proof}
    If we use Theorem~\ref{exlp-13} for $d=1$ and $\mu = \delta_1$, we see that the characteristic function of $N_t$ is $\chi_t(u) = \exp[-\lambda t(1-\eup^{\iup u})]$. Since this is also the characteristic function of the Poisson distribution (i.e.\ the r.h.s.\ of \eqref{exlp-e16}), we get $N_t\sim\Poi(\lambda t)$.

    Since $(H_k)_{k\in\nat}\bbot (N_t)_{t\geq 0}$, we have for any Borel set $B$
    \begin{align*}
        \Pp(C_t\in B)
        &= \sum_{k=0}^\infty \Pp(C_t\in B,\: N_t=k)\\
        &= \delta_0(B)\Pp(N_t=0) + \sum_{k=1}^\infty \Pp(H_1+\dots+H_k\in B)\Pp(N_t=k)\\
        &= \eup^{-\lambda t} \sum_{k=0}^\infty \frac{(\lambda t)^k}{k!}\mu^{*k}(B).
    \qedhere
    \end{align*}
\end{proof}

Example~\ref{exlp-05} contains the basic L\'evy processes which will also be the building blocks for all L\'evy processes. In order to define more specialized L\'evy processes, we need further assumptions on the distributions of the random variables $X_t$.
\begin{definition}\label{exlp-31}
    Let $\Xt$ be a stochastically continuous process in $\rd$. It is called \emphh{self-similar}, if
    \begin{equation}\label{exlp-e32}
        \forall a\geq 0\quad\exists b=b(a) \::\: (X_{at})_{t\geq 0} \sim (bX_t)_{t\geq 0}
    \end{equation}
    in the sense that both sides have the same finite-dimensional distributions.
\end{definition}
\begin{lemma}[Lamperti]\label{exlp-33}
\index{index of self-similarity}%
    If $\Xt$ is self-similar and non-degenerate, then there exists a unique \emphh{index of self-similarity} $H\geq 0$ such that $b(a)=a^H$. If $\Xt$ is a self-similar L\'evy process, then $H\geq \frac 12$.
\end{lemma}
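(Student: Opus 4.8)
The plan is to prove the two assertions separately, and throughout I work with the one–dimensional marginals at $t=1$: by \eqref{exlp-e32} we have $X_a\sim b(a)X_1$ for every $a>0$. Composing the scaling relation with itself gives $X_{ac}\sim b(a)X_c\sim b(a)b(c)X_1$, while directly $X_{ac}\sim b(ac)X_1$. Comparing the characteristic functions $\chi(\eta):=\Ee\,\eup^{\iup\eta\cdot X_1}$ and using that $X_1$ is non–degenerate forces $b(ac)=b(a)b(c)$: if $\chi(\beta\xi)=\chi(\gamma\xi)$ for all $\xi$ with $0<\beta<\gamma$, then writing $\gamma=r\beta$ ($r>1$) and iterating yields $\chi(\eta)=\chi(\eta/r^{n})\to\chi(0)=1$, whence $X_1\equiv0$; the sign is fixed by $b(a)=b(\sqrt a)^2\ge0$. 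So $b$ solves the multiplicative Cauchy equation.

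To apply \eqref{levy-e06} I check continuity of $b$. Stochastic continuity (Definition~\ref{exlp-31}) gives $X_a\to X_{a_0}$ in distribution as $a\to a_0$, hence $b(a)X_1\to b(a_0)X_1$; the same cancellation argument, now also ruling out $b(a)\to\infty$ (which would destroy tightness of $b(a)X_1$), yields $b(a)\to b(a_0)$. Thus $b$ is a continuous multiplicative function and \eqref{levy-e06} gives $b(a)=a^{H}$ for some $H$, with uniqueness immediate on evaluating at any $a\neq1$. Letting $a\to0$ and using $X_a\to X_0=0$ forces $b(a)\to0$, which excludes $H<0$; hence $H\ge0$, proving the first assertion (and in fact $H>0$ under non–degeneracy, since $H=0$ gives $X_t\sim X_1$ for all $t$, contradicting $X_t\to0$).

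For the second assertion let $X$ be a non–degenerate self–similar Lévy process with characteristic exponent $\psi$ from Corollary~\ref{levy-15}, so $\Ee\,\eup^{\iup\xi\cdot X_t}=\eup^{-t\psi(\xi)}$. Self–similarity of the marginals gives $\eup^{-at\psi(\xi)}=\eup^{-t\psi(a^{H}\xi)}$ for all $t\ge0$; since both sides equal $1$ at $t=0$ and have the form $\eup^{-tc}$, the exponents match, giving the homogeneity $a\psi(\xi)=\psi(a^{H}\xi)$, i.e.\ $\psi(s\xi)=s^{\alpha}\psi(\xi)$ with $\alpha=1/H\in(0,\infty)$. It remains to show $\alpha\le2$. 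Non–degeneracy means the symmetrisation $\widetilde X_1=X_1-X_1'$ is not a.s.\ $0$, so $\Re\psi\not\equiv0$; fix $\xi_0$ with $p_0:=\Re\psi(\xi_0)>0$, so that $\Re\psi(s\xi_0)=s^{\alpha}p_0$ along the ray $\xi=s\xi_0$.

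The key estimate is the elementary inequality $1-\cos2\theta\le4(1-\cos\theta)$, which upon taking expectations against $\widetilde X_1$ becomes $1-\widetilde\chi(2\xi)\le4\bigl(1-\widetilde\chi(\xi)\bigr)$ for the real characteristic function $\widetilde\chi=|\chi|^2=\eup^{-2\Re\psi}$ of $\widetilde X_1$. Evaluating along the ray and letting $s\to0$, so that $\Re\psi(s\xi_0)\to0$ and $1-\eup^{-x}\sim x$, this reads $2^{\alpha+1}p_0s^{\alpha}(1+o(1))\le 8\,p_0 s^{\alpha}(1+o(1))$; dividing by $s^{\alpha}$ and sending $s\to0$ gives $2^{\alpha+1}\le8$, i.e.\ $\alpha\le2$ and hence $H=1/\alpha\ge\tfrac12$. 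The \emph{main obstacle} is precisely this last step: for heavy–tailed (e.g.\ stable) processes $X_1$ has no second moment, so the naive variance scaling $\mathrm{Var}(X_{at})=a^{2H}\mathrm{Var}(X_t)$ is unavailable, and one must instead control the growth of $\psi$ at infinity by the moment–free doubling inequality above — equivalently, by the quadratic bound $|\psi(\xi)|\le c(1+|\xi|^{2})$ valid for every characteristic exponent.
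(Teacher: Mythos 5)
Your proposal is correct in substance, but the two halves relate to the paper differently. The first half (multiplicative Cauchy equation for $b$, continuity of $b$, then \eqref{levy-e06}) is the same route as the paper's, except that where you re-derive the cancellation/tightness mechanism by hand, the paper simply invokes the convergence of types theorem (Theorem~\ref{app-03}). The second half is genuinely different. The paper never passes to the homogeneity of the exponent: it observes via \eqref{exlp-e34} and Bienaym\'e's identity that any self-similar $X_1$ \emph{with finite second moment} must have $H=\tfrac12$, and then shows $H<\tfrac12$ would \emph{produce} such a moment --- from $X_n\sim n^H X_1$ and the symmetrization inequality (Theorem~\ref{app-13}) one gets $n\,\Pp(|X_1|>un^H)\leq c$, hence the tail bound $\Pp(|X_1|>x)\leq c'x^{-1/H}$ and $\Ee|X_1|^2<\infty$, a contradiction. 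You instead derive $\psi(s\xi)=s^{\alpha}\psi(\xi)$ with $\alpha=1/H$ and use the doubling inequality $1-\cos 2u\leq 4(1-\cos u)$ on the symmetrization, which in the small-$s$ limit amounts to $\Re\psi(2\xi)\leq 4\Re\psi(\xi)$ (the real part of the growth estimate underlying Theorem~\ref{gen-05}); against $\Re\psi(2s\xi_0)=2^{\alpha}\Re\psi(s\xi_0)$ this gives $2^{\alpha}\leq 4$, i.e.\ $\alpha\leq 2$, at once. Your route is shorter and directly moment-free; the paper's is more elementary (no Fourier homogeneity) and explains $H=\tfrac12$ as the finite-variance/Gaussian threshold. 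Both are sound.

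One concrete repair is needed in your first half: Definition~\ref{exlp-31} does not contain $X_0=0$, and your assertion ``$X_a\to X_0=0$'' is false in general --- the constant-in-time process $X_t\equiv Z$ with $Z$ non-degenerate is stochastically continuous and self-similar with $b\equiv 1$, $H=0$, and $X_0=Z\neq 0$. The same example refutes your parenthetical claim that $H>0$ for every non-degenerate self-similar process; that claim \emph{is} true for L\'evy processes, where \eqref{Lnull} and \eqref{Lcont} give $X_t\to 0$ in probability, so your use of $\alpha=1/H<\infty$ in the second half is legitimate. To exclude $H<0$ without invoking $X_0=0$, use the tightness observation you already made: stochastic continuity at $t=0$ makes $\{X_a\,:\,0<a\leq 1\}$ tight, while $X_a\sim a^H X_1$ with $a^H\to\infty$ and $\Pp(|X_1|>\delta)>0$ yields $\Pp(|X_a|>M)\geq \Pp(|X_1|>\delta)$ for every $M$ and all small $a$, contradicting tightness. (Alternatively: if $H\neq 0$, then $b(0)=b(0)b(a)$ with $b(a)\neq 1$ forces $b(0)=0$, hence $X_0=0$ a.s., and your argument as written applies.)
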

\begin{proof}
    Since $\Xt$ is self-similar, we find for $a, a'\geq 0$ and each $t>0$
    $$
        b(aa')X_t \sim X_{aa't} \sim b(a)X_{a't} \sim b(a)b(a')X_t,
    $$
    and so $b(aa')=b(a)b(a')$ as $X_t$ is non-degenerate.\footnote{We use here that $bX\sim cX\implies b=c$ if $X$ is non-degenerate. To see this, set $\chi(\xi)=\Ee\,\eup^{\iup\xi\cdot X}$ and notice
    $$
        |\chi(\xi)| = \big|\chi\big(\tfrac bc\xi\big)\big| = \dots = \big|\chi\big(\big(\tfrac bc\big)^n\xi\big)\big|.
    $$
    If $b<c$, the right-hand side converges for $n\to\infty$ to $\chi(0)=1$, hence $|\chi|\equiv 1$, contradicting the fact that $X$ is non-degenerate. Since $b,c$ play symmetric roles, we conclude that $b=c$.} By the convergence of types theorem (Theorem~\ref{app-03}) and the continuity in probability of $t\mapsto X_t$ we see that $a\mapsto b(a)$ is continuous. Thus, the Cauchy functional equation $b(aa') = b(a)b(a')$ has the unique continuous solution $b(a) = a^H$ for some $H\geq 0$.

    Assume now that $\Xt$ is a L\'evy process. We are going to show that $H \geq\frac 12$.
    Using self-similarity and the properties \eqref{Lstat}, \eqref{Lindep} we get (primes always denote iid copies of the respective random variables)
    \begin{equation}\label{exlp-e34}
        (n+m)^H X_1 \sim X_{n+m} = (X_{n+m}-X_m) + X_m \sim X_n'' + X_m' \sim n^H X_1'' + m^H X_1'.
    \end{equation}
    Any standard normal random variable $X_1$ satisfies \eqref{exlp-e34} with $H=\frac 12$. On the other hand, if $X_1$ has a second moment, we get $
        (n+m)\Vv X_1 = \Vv X_{n+m} = \Vv X_n'' + \Vv X_m' = n\Vv X_1'' + m\Vv X_1'
    $
    by Bienaym\'es identity for variances, i.e.\ \eqref{exlp-e34} can only hold with $H=\frac 12$. Thus, any self-similar $X_1$ with finite second moment has to satisfy \eqref{exlp-e34} with $H=\frac 12$. If we can show that $H<\frac 12$ implies the existence of a second moment, we have reached a contradiction.

    If $X_n$ is symmetric and $H<\frac 12$, we find because of $X_n\sim n^H X_1$ some $u>0$ such that
    $$
        \Pp(|X_n|> u n^H) = \Pp(|X_1|> u) < \frac 14.
    $$
    By the symmetrization inequality (Theorem~\ref{app-13}),
    $$
        \frac 12\left(1-\exp\{-n\Pp(|X_1|> u n^H)\}\right)
        \leq \Pp(|X_n|> u n^H)
        < \frac 14
    $$
    which means that $n\Pp(|X_1|>u n^H)\leq c$ for all $n\in\nat$. Thus, $\Pp(|X_1|>x)\leq c'x^{-1/H}$ for all $x>u+1$, and so
    $$
        \Ee |X_1|^2
        = 2\int_0^\infty x\, \Pp(|X_1|>x)\,dx
        \leq 2(u+1) + 2c'\int_{u+1}^\infty x^{1- 1/H}\,dx < \infty
    $$
    as $H<\frac 12$. If $X_n$ is not symmetric, we use its symmetrization $X_n-X_n'$ where $X_n'$ are iid copies of $X_n$.
\end{proof}

\begin{definition}\label{exlp-35}
\index{stable random variable}%
    A random variable $X$ is called \emphh{stable} if
    \begin{equation}\label{exlp-e36}
        \forall n\in\nat\quad
        \exists b_n\geq 0,\; c_n\in\rd\::\:
        X_1'+\dots+X_n' \sim b_n X + c_n
    \end{equation}
    where $X_1',\dots,X_n'$ are iid copies of $X$. If \eqref{exlp-e36} holds with $c_n=0$, the random variable is called \emphh{strictly stable}.
    A L\'evy process $\Xt$ is (strictly) stable if $X_1$ is a (strictly) stable random variable.
\end{definition}
Note that the symmetrization $X-X'$ of a stable random variable is strictly stable. Setting $\chi(\xi)=\Ee\,\eup^{\iup\xi\cdot X}$ it is easy to see that  \eqref{exlp-e36} is equivalent to
    \begin{gather}\tag{$\text{\ref{exlp-e36}}'$}\label{exlpE36Alt}
        \forall n\in\nat\quad
        \exists b_n\geq 0,\; c_n\in\rd\::\:
        \chi(\xi)^n = \chi(b_n\xi) \eup^{\iup c_n\cdot\xi}.
    \end{gather}
\begin{example}
\medskip\noindent a) \emphh{Stable processes.}
\index{stable process}\index{L\'evy process!stable process}%
    By definition, any stable random variable is infinitely divisible, and for every stable $X$ there is a unique L\'evy process on $\rd$ such that $X_1\sim X$, cf.\ Corollary~\ref{levy-17}.

    A L\'evy process $\Xt$ is stable if, and only if, all random variables $X_t$ are stable. This follows at once from \eqref{exlpE36Alt} if we use $\chi_t(\xi):= \Ee\,\eup^{\iup\xi\cdot X_t}$:
    $$
        \chi_t(\xi)^n
        \omu{\eqref{levy-e10}}{=}{} \big(\chi_1(\xi)^n\big)^t
        \omu{\eqref{exlpE36Alt}}{=}{} \chi_1(b_n\xi)^t \eup^{\iup(t c_n)\cdot\xi}
        \omu{\eqref{levy-e10}}{=}{} \chi_t(b_n\xi) \eup^{\iup(t c_n)\cdot\xi}.
    $$
    It is possible to determine the characteristic exponent of a stable process, cf.\ Sato \cite[Theorem 14.10]{sato99} and \eqref{exlp-e38} further down.

\medskip\noindent b) \emphh{Self-similar processes.}
\index{self-similar process}\index{L\'evy process!stable process}%
    Assume that $\Xt$ is a self-similar L\'evy process. Then
    $$
        \forall n\in\nat\::\: b(n) X_1 \sim X_n = \sum_{k=1}^n (X_{k}-X_{k-1}) \sim X'_{1,n}+\dots+X'_{n,n}
    $$
    where the $X_{k,n}'$ are iid copies of $X_1$. This shows that $X_1$, hence $\Xt$, is strictly stable. In fact, the converse is also true:

\medskip\noindent c)
    \emphh{A strictly stable L\'evy process is self-similar}. We have already seen in b) that self-similar L\'evy processes are strictly stable.
    Assume now that $\Xt$ is strictly stable. Since $X_{nt}\sim b_nX_t$ we get
    $$
        \eup^{-nt\psi(\xi)} = \Ee\,\eup^{\iup\xi\cdot X_{nt}} = \Ee\,\eup^{\iup b_n\xi\cdot X_t} = \eup^{-t\psi(b_n\xi)}.
    $$
    Taking $n=m$, $t\rightsquigarrow t/m$ and $\xi \rightsquigarrow b_m^{-1}\xi$ we see
    $$
        \eup^{-\frac tm\psi(\xi)} = \eup^{-t\psi(b_m^{-1}\xi)}.
    $$
    From these equalities we obtain for $q=n/m\in\mathds{Q}^+$ and $b(q):= b_n/b_m$
    $$
        \eup^{- qt\psi(\xi)} = \eup^{-t\psi(b(q)\xi)}
        \implies
        X_{qt} \sim b(q) X_t
        \implies
        X_{at} \sim b(a) X_t
    $$
    for all $t\geq 0$ because of the continuity in probability of $\Xt$. Since, by Corollary~\ref{levy-13}, the finite-dimensional distributions are determined by the one-dimensional distributions, we conclude that \eqref{exlp-e32} holds.

    This means, in particular, that strictly stable L\'evy processes have an index of self-similarity $H\geq\frac 12$. It is common to call $\alpha = 1/H \in (0,2]$ the \emphh{index of stability}
\index{index of stability}%
    of $\Xt$, and we have $X_{nt}\sim n^{1/\alpha}X_t$.

    If $X$ is `only' stable, its symmetrization is strictly stable and, thus, every stable L\'evy process has an index $\alpha\in (0,2]$. It plays an important role for the characteristic exponent. For a general stable process the characteristic exponent is of the form
\index{characteristic exponent!stable process@of a stable process}
    \begin{align}\label{exlp-e38}
        \psi(\xi) =
        \begin{cases}\displaystyle
            \int_{\mathds S^d}|z\cdot\xi|^\alpha \big(1-\iup\sgn(z\cdot\xi)\tan\tfrac {\alpha\pi}2\big)\,\sigma(dz) - \iup\mu\cdot\xi, & (\alpha \neq 1),\\[10pt]
        \displaystyle
            \int_{\mathds S^d}|z\cdot\xi| \big(1+\tfrac 2\pi \iup\sgn(z\cdot\xi)\log |z\cdot\xi|\big)\,\sigma(dz) - \iup\mu\cdot\xi, & (\alpha = 1),
        \end{cases}
    \end{align}
    where $\sigma$ is a finite measure on $\mathds S^d$ and $\mu\in\rd$. The strictly stable exponents have $\mu=0$ (if $\alpha\neq 1$) and $\int_{\mathds S^d} z_k\,\sigma(dz)=0$, $k=1,\dots, d$ (if $\alpha=1$). These formulae can be derived from the general L\'evy--Khintchine formula; a good reference is the monograph by Samorodnitsky \& Taqqu \cite[Chapters 2.3--4]{sam-taq94}.

    If $X$ is strictly stable such that the distribution of $X_t$ is rotationally invariant, it is clear that $\psi(\xi) = c|\xi|^\alpha$. If $X_t$ is symmetric, i.e.\ $X_t\sim -X_t$, then $\psi(\xi) = \int_{\mathds S^d} |z\cdot\xi|^\alpha\,\sigma(dz)$ for some finite, symmetric measure $\sigma$ on the unit sphere $\mathds S^d\subset\rd$.
\end{example}

Let us finally show Kolmogorov's proof of the L\'evy--Khintchine formula for one-dimensional L\'evy processes admitting second moments. We need the following auxiliary result.
\begin{lemma}\label{exlp-51}
\index{L\'evy process!moments}%
    Let $\Xt$ be a L\'evy process on $\real$. If $\Vv X_1<\infty$, then $\Vv X_t < \infty$ for all $t>0$ and
    $$
        \Ee X_t = t\Ee X_1 =: t\mu
        \et
        \Vv X_t = t\Vv X_1 =: t\sigma^2.
    $$
\end{lemma}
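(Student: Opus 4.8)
The plan is to work entirely with characteristic functions and to exploit the correspondence between finiteness of the second moment of a random variable and twice-differentiability of its characteristic function at the origin. By Corollary~\ref{levy-15} I may write $\chi_t(\xi) := \Ee\,\eup^{\iup\xi X_t} = \eup^{-t\psi(\xi)}$, so that all the $\chi_t$ are governed by the single exponent $\psi$. If I can show that $\psi$ is twice continuously differentiable near $\xi=0$ and compute $\psi'(0)$ and $\psi''(0)$, then the asserted formulas will drop out for \emph{every} $t\geq 0$ simultaneously.

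First I would use the hypothesis $\Vv X_1<\infty$, i.e.\ $\Ee X_1^2<\infty$. Differentiating $\chi_1(\xi)=\Ee\,\eup^{\iup\xi X_1}$ twice under the integral sign (legitimate since $|X_1|$ and $X_1^2$ are integrable) shows that $\chi_1$ is of class $\cont^2$ near $0$, with $\chi_1(0)=1$, $\chi_1'(0)=\iup\Ee X_1$ and $\chi_1''(0)=-\Ee X_1^2$. Recall from the discussion preceding Corollary~\ref{levy-15} that $\chi_1$ has no zeroes and admits a distinguished continuous logarithm; since $\chi_1$ is $\cont^2$ and non-vanishing near $0$, the exponent $\psi=-\log\chi_1$ is itself $\cont^2$ there. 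A direct computation gives $\psi(0)=0$, $\psi'(0)=-\chi_1'(0)=-\iup\Ee X_1$ and
\[
    \psi''(0)=-\chi_1''(0)+\big(\chi_1'(0)\big)^2=\Ee X_1^2-(\Ee X_1)^2=\Vv X_1 .
\]
Consequently $\chi_t=\eup^{-t\psi}$ is $\cont^2$ near $0$ as a composition of $\cont^2$ maps.

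The remaining point is to turn the smoothness of $\chi_t$ back into a genuine second moment of $X_t$; this converse direction is the step that needs care. I would use that for any real random variable $Y$ with characteristic function $\chi$,
\[
    \frac{2-\chi(\xi)-\chi(-\xi)}{\xi^2}
    = \int \frac{2\big(1-\cos(\xi y)\big)}{\xi^2}\,\Pp(Y\in dy),
\]
where the integrand is non-negative and tends to $y^2$ as $\xi\to 0$. Applying this with $Y=X_t$, invoking Fatou's lemma, and using the Taylor expansion $\chi_t(\xi)+\chi_t(-\xi)=2+\chi_t''(0)\xi^2+o(\xi^2)$ yields $\Ee X_t^2\leq -\chi_t''(0)<\infty$; in particular $\Vv X_t<\infty$. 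Once the second moment is known to be finite, differentiation under the integral is justified, so $\Ee X_t=-\iup\,\chi_t'(0)$ and $\Ee X_t^2=-\chi_t''(0)$. Reading these off from $\chi_t=\eup^{-t\psi}$ through $\chi_t'(0)=-t\psi'(0)$ and $\chi_t''(0)=-t\psi''(0)+t^2\big(\psi'(0)\big)^2$ gives $\Ee X_t=t\,\Ee X_1$ and $\Ee X_t^2=t\,\Vv X_1+t^2(\Ee X_1)^2$, whence $\Vv X_t=\Ee X_t^2-(\Ee X_t)^2=t\,\Vv X_1$.

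I expect the genuine obstacle to be precisely this converse implication, since the easy direction only produces smoothness \emph{from} moments. A more probabilistic alternative would be to note that for rational $t=m/n$ the increment decomposition of Chapter~\ref{levy} writes $X_1$ as a sum of $n$ i.i.d.\ copies of $X_{1/n}$, deduce $\Vv X_{1/n}<\infty$ by a Fubini/conditioning argument (if $Z_1+Z_2$ is square-integrable with $Z_1\bbot Z_2$, then each $Z_i$ is, because $\Ee[(Z_1+z)^2]<\infty$ for some $z$ forces $\Ee Z_1^2<\infty$ via $Z_1^2\leq 2(Z_1+z)^2+2z^2$), and then sum over i.i.d.\ blocks; but extending the identities from rational to all real $t$ would again require the continuity supplied by the characteristic-function computation, so I would keep the Fourier-analytic route as the main proof.
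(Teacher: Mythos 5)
Your proof is correct, but it takes a genuinely different route from the paper's. The paper argues purely probabilistically: Bienaym\'e's identity applied to the iid increment decomposition gives $\Vv X_m = m\,\Vv X_1$ and $\Vv X_1 = n\,\Vv X_{1/n}$, hence $\Ee X_q = q\mu$ and $\Vv X_q = q\sigma^2$ for all rational $q\geq 0$; then $\Vv(X_q-X_r) = (q-r)\sigma^2$ shows that $X_q - q\mu$ is an $L^2$-Cauchy family as $q\to t$ through the rationals, and continuity in probability \eqref{Lcont} identifies the $L^2$-limit as $X_t - t\mu$, giving the claim for all real $t$. Your Fourier-analytic argument --- transfer $\cont^2$-smoothness from $\chi_1$ to $\psi$ and then to $\chi_t = \eup^{-t\psi}$, and recover $\Ee X_t^2<\infty$ from $\chi_t''(0)$ via the symmetric second difference and Fatou --- is sound; in fact the converse step you flag as the genuine obstacle is exactly the content of Theorem~\ref{app-21} in the appendix, whose proof is the same Fatou device you use, so you could cite it and shorten your argument considerably (the paper itself invokes Theorem~\ref{app-21} in precisely this way in Lemma~\ref{cons-03} when differentiating $\eup^{-t\psi}$ to compute moments). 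What each approach buys: the paper's route is elementary, avoids the distinguished logarithm entirely, and produces the useful byproduct $X_q - q\mu \to X_t - t\mu$ in $L^2$; your route computes everything once from $\psi'(0)=-\iup\mu$ and $\psi''(0)=\sigma^2$, handles all $t$ simultaneously, and yields the explicit formula $\Ee X_t^2 = t\sigma^2 + t^2\mu^2$. One further point in your favour: your closing observation that $Z_1\bbot Z_2$ with $Z_1+Z_2\in L^2$ forces each $Z_i\in L^2$ addresses a step the paper's proof leaves implicit, since writing $\Vv X_1 = n\,\Vv X_{1/n}$ via Bienaym\'e presupposes that $X_{1/n}$ is square-integrable in the first place.
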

\begin{proof}
    If $\Vv X_1<\infty$, then $\Ee |X_1|<\infty$. With Bienaym\'e's identity, we get
    $$
        \Vv X_m = \sum_{k=1}^m \Vv (X_k-X_{k-1}) = m \Vv X_1
        \et
        \Vv X_1 = n \Vv X_{1/n}.
    $$
    In particular, $\Vv X_m, \Vv X_{1/n} < \infty$. This, and a similar argument for the expectation, show
    $$
        \Vv X_q = q\Vv X_1
        \et
        \Ee X_q = q\Ee X_1
        \fa q\in\mathds Q^+.
    $$
    Moreover, $\Vv (X_q - X_r) = \Vv X_{q-r} = (q-r)\Vv X_1$ for all rational numbers $r\leq q$, and this shows that $X_q - \Ee X_q = X_q - q\mu$ converges in $L^2$ as $q\to t$. Since $t\mapsto X_t$ is continuous in probability, we can identify the limit and find $X_q - q\mu \to X_t - t\mu$. Consequenctly, $\Vv X_t = t\sigma^2$ and $\Ee X_t = t\mu$.
\end{proof}
We have seen in Proposition~\ref{levy-11} that the characteristic function of a L\'evy process is of the form
$$
    \chi_t(\xi) = \Ee\,\eup^{\iup\xi X_t} = \big[\Ee\,\eup^{\iup\xi X_1}\big]^t = \chi_1(\xi)^t.
$$
Let us assume that $X$ is real-valued and has finite (first and) second moments $\Vv X_1=\sigma^2$ and $\Ee X_1=\mu$. By Taylor's formula
\begin{align*}
    \Ee\,\eup^{\iup\xi (X_t-t\mu)}
    &= \Ee \left[ 1 + \iup\xi (X_t-t\mu) - \int_0^1 \xi^2 (X_t-t\mu)^2\, (1-\theta)\eup^{\iup\theta\xi(X_t-t\mu)}\,d\theta \right]\\
    &= 1 - \Ee \left[ \xi^2 (X_t-t\mu)^2 \int_0^1 (1-\theta)\eup^{\iup\theta\xi(X_t-t\mu)}\,d\theta \right].
\end{align*}
Since
$$
    \left|\int_0^1 (1-\theta)\eup^{\iup\theta\xi(X_t-t\mu)}\,d\theta\right|
    \leq \int_0^1 (1-\theta)\,d\theta
    = \frac 12\,,
$$
we get
$$
    \big|\Ee\,\eup^{\iup\xi X_t}\big|
    = \big|\Ee\,\eup^{\iup\xi (X_t-t\mu)}\big|
    \geq 1- \frac{\xi^2}2\,t \sigma^2.
$$
Thus, $\chi_{1/n}(\xi)\neq 0$ if $n\geq N(\xi)\in\nat$ is large, hence $\chi_1(\xi)=\chi_{1/n}(\xi)^n\neq 0$. For $\xi\in\real$ we find (using a suitable branch of the complex logarithm)
\begin{align}
    \psi(\xi)
    := -\log\chi_1(\xi)
    \notag&=-\frac{\partial}{\partial t} \big[\chi_1(\xi)\big]^t\bigg|_{t=0} \\
    \notag&=\lim_{t\to 0} \frac{1-\Ee\,\eup^{\iup\xi X_t}}{t}\\
    \notag&= \lim_{t\to 0} \frac 1t\int_{-\infty}^{\infty} \big(1-\eup^{\iup y\xi}+\iup y\xi\big)\,p_t(dy) - \iup\xi\mu\\
    \label{exlp-e52}&= \lim_{t\to 0} \int_{-\infty}^{\infty} \frac{1-\eup^{\iup y\xi}+\iup y\xi}{y^2}\,\pi_t(dy) - \iup\xi\mu
\end{align}
where $p_t(dy)=\Pp(X_t\in dy)$ and $\pi_t(dy) := y^2 \,t^{-1}p_t(dy)$. Yet another application of Taylor's theorem shows that the integrand in the above integral is bounded, vanishes at infinity, and admits a continuous extension onto the whole real line if we choose the value $\frac 12\,\xi^2$ at $y=0$. The family $(\pi_t)_{t\in (0,1]}$ is uniformly bounded,
$$
    \frac 1t\int y^2\,p_t(dy)
    = \frac 1t\,\Ee (X_t^2) = \frac 1t\Big(\Vv X_t + \big[\Ee X_t\big]^2\Big)
    = \sigma^2 + t\mu^2\xrightarrow{t\to 0}\sigma^2,
$$
hence sequentially vaguely relatively compact (see Theorem~\ref{app-81}). We conclude that every sequence $(\pi_{t(n)})_{n\in\nat} \subset (\pi_t)_{t\in (0,1]}$ with $t(n)\to 0$ as $n\to\infty$ has a vaguely convergent subsequence. But since the limit \eqref{exlp-e52} exists, all subsequential limits coincide which means\footnote{Note that $\eup^{\iup y\xi} = \partial_\xi^2 \big(1-\eup^{\iup y\xi}+\iup y\xi\big)/y^2$, i.e.\ the kernel appearing in \eqref{exlp-e52} is indeed measure-determining.} that $\pi_t$ converges vaguely to a finite measure $\pi$ on $\real$. This proves that
\begin{align*}
    \psi(\xi)
    = -\log\chi_1(\xi)
    = \int_{-\infty}^{\infty} \frac{1-\eup^{\iup y\xi}+\iup y\xi}{y^2}\,\pi(dy) - \iup\xi\mu
\end{align*}
for some finite measure $\pi$ on $(-\infty,\infty)$ with total mass $\pi(\real) = \sigma^2$. This is sometimes called the \emphh{de Finetti--Kolmogorov formula}. If we set $\nu(dy):= y^{-2}\I_{\{y\neq 0\}}\,\pi(dy)$ and $\sigma_0^2:=\pi\{0\}$, we obtain the \emphh{L\'evy--Khintchine formula}
\index{L\'evy--Khintchine formula}%
\begin{align*}
    \psi(\xi)
    = -\iup\mu\xi + \frac 12\, \sigma_0^2\,\xi^2 + \int_{y\neq 0} \big(1-\eup^{\iup y\xi}+\iup y\xi\big)\,\nu(dy)
\end{align*}
where $\sigma^2 = \sigma_0^2 + \int_{y\neq 0} y^2\,\nu(dy)$.

\chapter{On the Markov property}\label{lpmp}

Let $(\Omega,\Ascr,\Pp)$ be a probability space with some filtration $(\Fscr_t)_{t\geq 0}$ and a $d$-dimensional \emphh{adapted} stochastic process \index{adapted}%
$X=(X_t)_{t\geq 0}$, i.e.\ each $X_t$ is $\Fscr_t$ measurable. We write $\Bscr(\rd)$ for the Borel sets and set $\Fscr_\infty := \sigma(\bigcup_{t\geq 0}\Fscr_t)$.

The process $X$ is said to be a \emphh{simple Markov process},
\index{Markov process}%
if
\begin{equation}\label{lpmp-e04}
    \Pp(X_t\in B\mid \Fscr_s) = \Pp(X_t\in B\mid X_s),\quad s\leq t,\; B\in\Bscr(\rd),
\end{equation}
holds true. This is pretty much the most general definition of a Markov process, but it is usually too general to work with. It is more convenient to consider Markov families.
\begin{definition}\label{lpmp-03} \index{transition function}%
    A (temporally homogeneous) \emphh{Markov transition function} is a measure kernel $p_t(x,B)$, $t\geq 0$, $x\in\rd$, $B\in \Bscr(\rd)$ such that
    \begin{enumerate}
    \item[\textup{a)}] $B\mapsto p_s(x,B)$ is a probability measure for every $s\geq 0$ and $x\in\rd$;
    \item[\textup{b)}] $(s,x)\mapsto p_s(x,B)$ is a Borel measurable function for every $B\in\Bscr(\rd)$;
    \item[\textup{c)}] the \emphh{Chapman--Kolmogorov equations}
    \index{Chapman--Kolmogorov equations}%
    hold
    \begin{equation}\label{lpmp-e06}
        p_{s+t}(x,B) = \int p_t(y,B)\,p_s(x,dy)
        \fa s,t\geq 0,\; x\in\rd,\; B\in\Bscr(\rd).
    \end{equation}
    \end{enumerate}
\end{definition}

\begin{definition}\label{lpmp-05}
    A stochastic process $\Xt$ is called a (temporally homogeneous) \emphh{Mar\-kov process with transition function} if there exists a Mar\-kov transition function $p_t(x,B)$ such that
    \begin{equation}\label{lpmp-e08}
        \Pp(X_t\in B \mid \Fscr_s) = p_{t-s}(X_s,B)\quad \text{a.s.\ for all\ \ } s\leq t,\; B\in\Bscr(\rd).
    \end{equation}
\end{definition}
Conditioning w.r.t.\ $\sigma(X_s)$ and using the tower property of conditional expectation shows that \eqref{lpmp-e08} implies the simple Markov property \eqref{lpmp-e04}. Nowadays the following definition of a Markov process is commonly used.

\begin{definition}\label{lpmp-07}
\index{Markov process!universal Markov process}\index{Markov process}%
    A \emphh{(universal) Markov process} is a tuple $(\Omega,\Ascr,\Fscr_t,X_t,t\geq 0,\Pp^x,x\in\rd)$ such that $p_t(x,B)=\Pp^x(X_t\in B)$ is a Markov transition function and $\Xt$ is for each $\Pp^x$ a Markov process in the sense of Definition~\ref{lpmp-05} such that $\Pp^x(X_0=x)=1$. In particular,
    \begin{equation}\label{lpmp-e10}
        \Pp^x(X_t\in B \mid \Fscr_s) = \Pp^{X_s}(X_{t-s}\in B)\quad \Pp^x\text{-a.s.\ for all\ \ } s\leq t,\; B\in\Bscr(\rd).
    \end{equation}
\end{definition}

We are going to show that a L\'evy process is a (universal) Markov process. Assume that $\Xt$ is a L\'evy process and set $\Fscr_t := \Fscr_t^X = \sigma(X_r,\: r\leq t)$. Define probability measures
$$
    \Pp^x(X_\bullet \in \Gamma)
    := \Pp(X_\bullet+x\in\Gamma),\quad x\in\rd,
$$
where $\Gamma$ is a Borel set of the path space $(\rd)^{[0,\infty)} = \{w \mid w:[0,\infty)\to\rd\}$.\footnote{Recall that $\Bscr((\rd)^{[0,\infty)})$ is the smallest $\sigma$-algebra containing the cylinder sets $Z = \mathop{\mbox{\large$\boldsymbol {\times}$}}_{t\geq 0} B_t$ where $B_t\in\Bscr(\rd)$ and only finitely many $B_t\neq\rd$.}
We set $\Ee^x := \int \dots d\Pp^x$. By construction, $\Pp=\Pp^0$ and $\Ee = \Ee^0$.

Note that $X^x_t := X_t + x$ satisfies the conditions \eqref{Lstat}--\eqref{Lcont}, and it is common to call $(X_t^x)_{t\geq 0}$ a \emphh{L\'evy process starting from $x$}.
\index{L\'evy process!starting from $x$}%
\begin{lemma}\label{lpmp-09}
    Let $\Xt$ be a L\'evy process on $\rd$. Then
    $$
        p_t(x,B) := \Pp^x(X_t\in B) := \Pp(X_t+x\in B),\quad t\geq 0,\; x\in\rd,\; B\in\Bscr(\rd),
    $$
    is a Markov transition function.
\end{lemma}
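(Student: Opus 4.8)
The plan is to verify, one at a time, the three defining properties of a Markov transition function in Definition~\ref{lpmp-03}, using throughout that $p_s(x,\cdot)$ is by construction the law of the translated variable $X_s+x$. With this push-forward viewpoint, property (a) is immediate: for fixed $s\geq0$ and $x\in\rd$ the map $B\mapsto p_s(x,B)=\Pp(X_s+x\in B)$ is the image of $\Pp$ under the $\Bscr(\rd)$-measurable map $\omega\mapsto X_s(\omega)+x$, hence a probability measure on $\Bscr(\rd)$. The genuine work will sit in the joint measurability (b), while the Chapman--Kolmogorov equation (c) reduces to the defining properties \eqref{Lstat}--\eqref{Lindep}.

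For property (b) I would argue by a functional monotone class theorem. Writing $P_sf(x):=\Ee f(X_s+x)$ for bounded Borel $f$, I first treat $f\in\cont_b(\rd)$: if $(s_n,x_n)\to(s,x)$, then continuity in probability \eqref{Lcont}, which gives continuity in probability at \emph{every} time (cf.\ Remark~\ref{levy-09}), yields $X_{s_n}+x_n\to X_s+x$ in probability, so boundedness and continuity of $f$ give $P_{s_n}f(x_n)\to P_sf(x)$. Thus $(s,x)\mapsto P_sf(x)$ is continuous, in particular Borel measurable. The class $\mathcal H$ of bounded Borel $f$ for which $(s,x)\mapsto P_sf(x)$ is measurable is a vector space, contains the constants, is closed under bounded monotone limits, and contains the multiplicatively closed, $\Bscr(\rd)$-generating family $\cont_b(\rd)$. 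The functional monotone class theorem then forces $\mathcal H$ to contain every bounded Borel function; taking $f=\I_B$ shows that $(s,x)\mapsto p_s(x,B)=P_s\I_B(x)$ is measurable.

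For property (c) I would split $X_{s+t}=X_s+(X_{s+t}-X_s)$. By \eqref{Lindep} the increment $X_{s+t}-X_s$ is independent of $\sigma(X_r,r\le s)$, hence of $X_s+x$, and by \eqref{Lstat} it is distributed like $X_t$. Integrating out the independent increment via Fubini then gives
\begin{align*}
p_{s+t}(x,B)
&= \Pp\big((X_{s+t}-X_s)+(X_s+x)\in B\big)\\
&= \int \Pp\big((X_{s+t}-X_s)+y\in B\big)\,p_s(x,dy)\\
&= \int \Pp(X_t+y\in B)\,p_s(x,dy)
 = \int p_t(y,B)\,p_s(x,dy),
\end{align*}
which is exactly the Chapman--Kolmogorov equation \eqref{lpmp-e06}; the inner integrand $y\mapsto p_t(y,B)$ is measurable by (b), so the integral is well defined.

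I expect the main obstacle to be precisely step (b). The push-forward description trivialises (a) and reduces (c) to the independence and stationarity of the increments together with Fubini, but joint Borel measurability of $(s,x)\mapsto p_s(x,B)$ cannot be read off directly: it genuinely requires \eqref{Lcont} (to get joint continuity on the continuous test functions) followed by the monotone-class passage from $\cont_b(\rd)$ to indicator functions.
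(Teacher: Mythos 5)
Your proof is correct, and for parts (a) and (c) it is essentially the paper's argument: the paper also treats (a) via the push-forward description and proves Chapman--Kolmogorov by splitting off an increment, using \eqref{Lindep} to decouple, \eqref{Lstat} to identify its law, and Fubini to integrate it out (the paper splits $X_{s+t}+x=(X_{s+t}-X_t)+x+X_t$ and integrates over the law of $(X_{s+t}-X_t)+x$, whereas you split at time $s$ and integrate over $p_s(x,dy)$; this is an immaterial rearrangement). Where you genuinely diverge is property (b). The paper disposes of measurability with one line --- ``(the proof of) Fubini's theorem shows that $x\mapsto p_t(x,B)$ is measurable'' --- which only yields measurability in $x$ for each fixed $t$, even though Definition~\ref{lpmp-03}.b) formally asks for joint Borel measurability of $(s,x)\mapsto p_s(x,B)$. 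Your argument actually delivers the joint statement: continuity in probability at every time (via \eqref{Lcont}, \eqref{Lstat} and \eqref{levy-e04}, as in Remark~\ref{levy-09}) gives $X_{s_n}+x_n\to X_s+x$ in probability, hence joint continuity of $(s,x)\mapsto \Ee f(X_s+x)$ for $f\in\cont_b(\rd)$, and the functional monotone class theorem transfers this to $f=\I_B$, $B\in\Bscr(\rd)$. So your route is slightly longer but strictly more careful, closing a gap the paper leaves implicit; the paper's route buys brevity at the cost of only verifying the $x$-measurability that is all one really needs downstream (e.g.\ for \eqref{lpmp-e22}).
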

\begin{proof}
    Since $p_t(x,B)=\Ee \I_B(X_t+x)$ (the proof of) Fubini's theorem shows that $x\mapsto p_t(x,B)$ is a measurable function and $B\mapsto p_t(x,B)$ is a probability measure. The Chapman--Kolmogorov equations follow from
    \begin{align*}
        p_{s+t}(x,B)
        =\Pp(X_{s+t}+x\in B)
        &\pomu{\eqref{Lindep}}{=}{} \Pp( (X_{s+t}-X_t) + x + X_t\in B)\\
        &\omu{\eqref{Lindep}}{=}{} \int_{\rd} \Pp( y + X_t \in B)\,\Pp((X_{s+t}-X_t) + x\in dy)\\
        &\omu{\eqref{Lstat}}{=}{\phantom{\eqref{Lindep}}} \int_{\rd} \Pp( y + X_t \in B)\,\Pp(X_s + x\in dy)\\
        &\pomu{\eqref{Lindep}}{=}{} \int_{\rd} p_t(y,B)\,p_s(x,dy).
    \qedhere
    \end{align*}
\end{proof}

\begin{remark}\label{lpmp-11}
\index{convolution semigroup}%
\index{translation invariant}%
\index{L\'evy process!translation invariance}%
    The proof of Lemma~\ref{lpmp-09} shows a bit more: From
    $$
        p_t(x,B)
        =\int \I_B(x+y)\,\Pp(X_t\in dy)
        =\int \I_{B-x}(y)\,\Pp(X_t\in dy)
        = p_t(0,B-x)
    $$
    we see that the kernels $p_t(x,B)$ are invariant under shifts in $\rd$ (\emphh{translation invariant}). In slight abuse of notation we write $p_t(x,B)=p_t(B-x)$. From this it becomes clear that the Chapman--Kolmogorov equations are convolution identities $p_{t+s}(B)=p_t*p_s(B)$, and $(p_t)_{t\geq 0}$ is a \emphh{convolution semigroup} of probability measures; because of \eqref{Lcont}, this semigroup is weakly continuous at $t=0$, i.e.\ $p_t \to \delta_0$ as $t\to 0$, cf.\ Theorem~\ref{app-81} \emph{et seq.}\ for the weak convergence of measures.
\end{remark}

L\'evy processes enjoy an even stronger version of the above Markov property.
\begin{theorem}[Markov property for L\'evy processes]\label{lpmp-13}
\index{L\'evy process!Markov property}%
    Let $X$ be a $d$-dimensional L\'evy process and set $Y:=(X_{t+a}-X_a)_{t \geq 0}$ for some fixed $a\geq 0$. Then $Y$ is again a L\'evy process satisfying
    \begin{enumerate}
	\item[\upshape a)] $Y \bbot (X_r)_{r \leq a}$, i.e.\ $\Fscr_{\infty}^Y \bbot \Fscr_a^X$.
    \item[\upshape b)] $Y \sim X$, i.e.\ $X$ and $Y$ have the same finite dimensional distributions.
	\end{enumerate}
\end{theorem}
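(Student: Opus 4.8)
The plan is to read everything off the characterisation in Theorem~\ref{exlp-03}, applied to $X$ with time arguments shifted by $a$. The two elementary observations are that $Y_0 = X_a - X_a = 0$ and that, for $s<t$, the increments of $Y$ are shifted increments of $X$:
$$
    Y_t - Y_s = X_{t+a} - X_{s+a}.
$$
Write $\Fscr_t^Y := \sigma(Y_r : r\leq t)$ for the canonical filtration of $Y$, and note $\Fscr_s^Y \subseteq \Fscr_{s+a}^X$ because each $Y_r = X_{r+a}-X_a$ is $\Fscr_{r+a}^X$-measurable. Since $X$ is a L\'evy process, \eqref{exlp-e02} applied at the times $s+a<t+a$ gives $\Ee\big(\eup^{\iup\xi\cdot(Y_t-Y_s)}\mid\Fscr_{s+a}^X\big)=\eup^{-(t-s)\psi(\xi)}$, and conditioning further down to $\Fscr_s^Y$ via the tower property leaves the right-hand side unchanged (it is deterministic). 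Thus $Y$ satisfies \eqref{exlp-e02} with its own filtration, so by the converse part of Theorem~\ref{exlp-03} it is a L\'evy process with the same characteristic exponent $\psi$ as $X$. In particular $\Ee\,\eup^{\iup\xi\cdot Y_1}=\eup^{-\psi(\xi)}=\Ee\,\eup^{\iup\xi\cdot X_1}$, and since Corollary~\ref{levy-13} expresses all finite-dimensional distributions of a L\'evy process through \eqref{levy-e12} in terms of the law at time $1$, the processes $X$ and $Y$ share their finite-dimensional distributions. This is assertion~b).

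For assertion~a) I would first specialise the conditional identity to $s=0$. Since $\Fscr_a^X$ is the conditioning $\sigma$-algebra and $Y_0=0$, this reads
$$
    \Ee\big(\eup^{\iup\xi\cdot Y_t}\mid\Fscr_a^X\big)=\eup^{-t\psi(\xi)},
$$
a deterministic quantity; hence each single random variable $Y_t$ is independent of $\Fscr_a^X$. The real content of a) is to upgrade this to independence of the whole $\sigma$-algebra $\Fscr_\infty^Y$ from $\Fscr_a^X$. I would establish, for every $n$, every $0=t_0\leq t_1\leq\dots\leq t_n$, every $F\in\Fscr_a^X$ and all $\xi_1,\dots,\xi_n\in\rd$, the factorisation
$$
    \Ee\Big[\I_F\,\eup^{\iup\sum_{k=1}^n \xi_k\cdot Y_{t_k}}\Big]
    = \Pp(F)\;\Ee\Big[\eup^{\iup\sum_{k=1}^n \xi_k\cdot Y_{t_k}}\Big].
$$
Abel summation rewrites the exponent as $\sum_{k=1}^n\eta_k\cdot(Y_{t_k}-Y_{t_{k-1}})$ with $\eta_k=\xi_k+\dots+\xi_n$, and I would argue by induction on $n$. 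The top increment $Y_{t_n}-Y_{t_{n-1}}=X_{t_n+a}-X_{t_{n-1}+a}$ is treated by conditioning on $\Fscr_{t_{n-1}+a}^X$, which contains both $F$ and all lower increments $Y_{t_k}-Y_{t_{k-1}}$, $k\leq n-1$; the conditional identity extracts the deterministic factor $\eup^{-(t_n-t_{n-1})\psi(\eta_n)}$, after which the induction hypothesis factorises the remaining $\Fscr_{t_{n-1}+a}^X$-measurable expectation. The base case $n=1$ (where $t_0=0$ and the conditioning $\sigma$-algebra is $\Fscr_a^X$) is exactly the single-time independence just obtained.

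This factorisation says that every finite-dimensional vector $(Y_{t_1},\dots,Y_{t_n})$ is independent of $\Fscr_a^X$. As the cylinder events $\{(Y_{t_1},\dots,Y_{t_n})\in B\}$ form a $\pi$-system generating $\Fscr_\infty^Y$, a routine Dynkin (monotone-class) argument promotes this to $\Fscr_\infty^Y\bbot\Fscr_a^X$, which is assertion~a). I expect this last promotion to be the main obstacle: the single-time independence is immediate, but making the inductive conditioning airtight requires careful bookkeeping of which factors are measurable with respect to the shifted filtration $\Fscr_{t_{n-1}+a}^X$ at each stage. An alternative route to a) bypasses the induction: writing every $X_r$, $r\leq a$, and every $Y_{t_k}$ through increments of $X$ over a common refinement of time points, the former depend only on increments inside $[0,a]$ and the latter only on increments inside $[a,\infty)$, and these two blocks are independent by \eqref{LindepAlt}; the same Dynkin argument then finishes the proof.
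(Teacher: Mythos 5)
Your proposal is correct, and its first half coincides with the paper's proof: the paper establishes that $Y$ is a L\'evy process with exponent $\psi$ by exactly your tower-property computation $\Ee\big(\eup^{\iup\xi\cdot(Y_t-Y_s)}\mid\Fscr_s^Y\big)=\Ee\big[\Ee\big(\eup^{\iup\xi\cdot(X_{t+a}-X_{s+a})}\mid\Fscr_{s+a}^X\big)\mid\Fscr_s^Y\big]=\eup^{-(t-s)\psi(\xi)}$, using $\Fscr_s^Y\subset\Fscr_{s+a}^X$ and Theorem~\ref{exlp-03}; part b) then follows since the exponent determines the finite-dimensional distributions (Corollary~\ref{levy-13}), just as you say. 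The only divergence is in part a). The paper disposes of it in one stroke: by \eqref{LindepAlt} (read together with the increment description $\Fscr_a^X=\sigma(X_u-X_v,\,v\leq u\leq a)$ from \eqref{levy-e02}) the vector of shifted increments $X_{t_n+a}-X_{t_{n-1}+a},\dots,X_{t_1+a}-X_a$ is jointly independent of $\Fscr_a^X$, whence $\sigma(Y_{t_1},\dots,Y_{t_n})=\sigma(Y_{t_n}-Y_{t_{n-1}},\dots,Y_{t_1}-Y_{t_0})\bbot\Fscr_a^X$, and the directed union over finite time sets gives $\Fscr_\infty^Y\bbot\Fscr_a^X$ --- this is precisely the ``alternative route'' you sketch at the end. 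Your primary route instead re-derives this joint independence by induction from the conditional identity \eqref{exlp-e02}, peeling off the top increment by conditioning on $\Fscr_{t_{n-1}+a}^X$ and concluding via uniqueness of Fourier transforms of the finite measures $B\mapsto\Pp(F\cap\{(Y_{t_1},\dots,Y_{t_n})\in B\})$ and $B\mapsto\Pp(F)\,\Pp((Y_{t_1},\dots,Y_{t_n})\in B)$. This is valid and, if anything, more self-contained: \eqref{LindepAlt} as literally stated only asserts mutual independence of the increments, so the paper's appeal to it silently uses the block-independence upgrade that your induction (or your alternative route via a common refinement of time points) makes explicit. Both versions finish with the same $\pi$-system/Dynkin step, so the proofs buy the same theorem; yours trades the paper's brevity for an airtight derivation from the single hypothesis \eqref{exlp-e02}.
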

\begin{proof}
    Observe that $\Fscr_s^Y = \sigma(X_{r+a}-X_a,\, r\leq s)\subset\Fscr_{s+a}^X$.
    Using Theorem~\ref{exlp-03} and the tower property of conditional expectation yields for all $s\leq t$
    \begin{align*}
        \Ee \left( \eup^{\iup\xi\cdot(Y_t-Y_s)} \:\middle|\: \Fscr_s^Y\right)
        &= \Ee \left[\Ee \left( \eup^{\iup\xi\cdot(X_{t+a}-X_{s+a})} \:\middle|\: \Fscr_{s+a}^X\right)\:\middle|\: \Fscr_s^Y\right]
        = \eup^{-(t-s)\psi(\xi)}.
    \end{align*}
    Thus, $(Y_t)_{t\geq 0}$ is a L\'evy process with the same characteristic function as $\Xt$. The property \eqref{LindepAlt} for $X$ gives
    $$
        X_{t_n+a}-X_{t_{n-1}+a}, \; X_{t_{n-1}+a}-X_{t_{n-2}+a}, \;\dots,\; X_{t_1+a}-X_{a} \bbot \Fscr_a^X.
    $$
    As $\sigma(Y_{t_1},\dots,Y_{t_n}) = \sigma(Y_{t_n}-Y_{t_{n-1}}, \dots, Y_{t_1}-Y_{t_0})\bbot\Fscr_a^X$ for all $t_0 = 0 <t_1<\dots<t_n$, we get
    \begin{gather*}
		\Fscr_\infty^Y
        = \sigma \left( \bigcup_{t_1 < \dots < t_n,\:n\in\nat} \sigma(Y_{t_1},\dots,Y_{t_n}) \right) \bbot \Fscr_a^X.
    \qedhere
	\end{gather*}
\end{proof}
Using the Markov transition function $p_t(x,B)$ we can define a linear operator on the bounded Borel measurable functions $f:\rd\to\real$:
\begin{equation}\label{lpmp-e22}
    P_tf(x) := \int f(y)\,p_t(x,dy) = \Ee^x f(X_t),
    \quad f\in\Bcal_b(\rd),\; t\geq 0,\; x\in\rd.
\end{equation}
For
\index{convolution semigroup}%
a L\'evy process, cf.\ Remark~\ref{lpmp-11}, we have $p_t(x,B)=p_t(B-x)$ and the operators $P_t$ are actually \emphh{convolution operators}:
\begin{equation}\label{lpmp-e24}
    P_tf(x) = \Ee f(X_t+x) = \int f(y+x)\,p_t(dy) = f*\widetilde p_t(x)
    \quad\text{where}\quad
    \widetilde p_t(B) := p_t(-B).
\end{equation}
\begin{definition}
\label{lpmp-21}
    Let $P_t$, $t\geq 0$, be defined by \eqref{lpmp-e22}. The operators are said to be
    \begin{enumerate}
    \item[\upshape a)]
        \emphh{acting on} $\Bcal_b(\rd)$, if $P_t:\Bcal_b(\rd) \to \Bcal_b(\rd)$.
    \item[\upshape b)]
        an operator \emphh{semigroup}, if $P_{t+s}=P_t\circ P_s$ for all $s,t\geq 0$ and $P_0=\id$.
        \index{semigroup}%
    \item[\upshape c)]
        \emphh{sub-Markovian} if $0\leq f\leq 1 \implies 0\leq P_tf\leq 1$.
        \index{semigroup!sub-Markovian}%
    \item[\upshape d)]
        \emphh{contractive} if $\|P_tf\|_\infty \leq \|f\|_\infty$ for all $f\in\Bcal_b(\rd)$.
        \index{semigroup!contractive}%
    \item[\upshape e)]
        \emphh{conservative} if $P_t1=1$.
        \index{semigroup!conservative}%
    \item[\upshape f)]
        \emphh{Feller operators}, if $P_t:\cont_\infty(\rd)\to \cont_\infty(\rd)$.\footnote{$\cont_\infty(\rd)$ denotes the space of continuous functions vanishing at infinity. It is a Banach space when equipped with the uniform norm $\|f\|_\infty = \sup_{x\in\rd}|f(x)|$.}
        \index{semigroup!Feller property}%
        \index{Feller property}%
    \item[\upshape g)]
        \emphh{strongly continuous} on $\cont_\infty(\rd)$, if $\lim_{t\to 0}\|P_tf - f\|_\infty=0$ for all $f\in\cont_\infty(\rd)$.
        \index{semigroup!strongly continuous}\index{strong continuity}%
    \item[\upshape h)]
    \index{strong Feller property}%
        \emphh{strong Feller operators}, if $P_t:\Bcal_b(\rd)\to \cont_b(\rd)$.
        \index{semigroup!strong Feller property}%
\end{enumerate}
\end{definition}

\begin{lemma}\label{lpmp-23}
    Let $(P_t)_{t\geq 0}$ be defined by \eqref{lpmp-e22}. The properties \textup{\ref{lpmp-21}.a)--e)} hold for any Markov process, \textup{\ref{lpmp-21}.a)--g)} hold for any L\'evy process, and \textup{\ref{lpmp-21}.a)--h)} hold for any L\'evy process such that all transition probabilities $p_t(dy) = \Pp(X_t\in dy)$, $t>0$, are absolutely continuous w.r.t.\ Lebesgue measure.
\end{lemma}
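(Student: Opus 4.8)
The plan is to verify the properties one group at a time, exploiting that a)--e) are immediate consequences of the defining properties of a Markov transition function, whereas f)--h) need the extra structure of a L\'evy process (respectively its absolutely continuous transition densities). Throughout I would use the representation $P_tf(x)=\int f(y)\,p_t(x,dy)=\Ee^x f(X_t)$ from \eqref{lpmp-e22}, and for the L\'evy parts the convolution form $P_tf(x)=\Ee f(X_t+x)$ from \eqref{lpmp-e24}.

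First I would dispatch a)--e) for an arbitrary Markov process. In a), the bound $|P_tf(x)|\le\|f\|_\infty\,p_t(x,\rd)=\|f\|_\infty$ gives boundedness, and measurability of $x\mapsto P_tf(x)$ follows from Definition~\ref{lpmp-03}.b), first for $f=\I_B$ and then for bounded Borel $f$ by approximation with simple functions. The semigroup property b) is exactly the Chapman--Kolmogorov equation \eqref{lpmp-e06}: inserting it into $P_{s+t}f(x)=\int f(z)\,p_{s+t}(x,dz)$ and applying Tonelli's theorem (to $f^{\pm}$) yields $P_{s+t}f=P_s(P_tf)=P_t(P_sf)$, while $P_0=\id$ comes from $p_0(x,\cdot)=\delta_x$, i.e.\ $\Pp^x(X_0=x)=1$. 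Properties c), d), e) are one-line estimates using that $p_t(x,\cdot)$ is a probability measure.

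Next I would treat f) and g) for a L\'evy process. For the Feller property f), fix $f\in\cont_\infty(\rd)$; continuity of $P_tf$ follows from dominated convergence ($|f(X_t+x_n)|\le\|f\|_\infty$) and continuity of $f$. For the decay at infinity I would split according to the size of $|X_t|$: given $\epsilon>0$ choose $M$ with $\Pp(|X_t|>M)<\epsilon$ (possible since $X_t$ is $\Pp$-a.s.\ finite) and $R$ with $|f|<\epsilon$ outside $B_R(0)$; then for $|x|>R+M$ one has $|f(X_t+x)|<\epsilon$ on $\{|X_t|\le M\}$, so $|P_tf(x)|\le\epsilon+\|f\|_\infty\epsilon$, giving $P_tf\in\cont_\infty(\rd)$. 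For strong continuity g) I would use that $f\in\cont_\infty(\rd)$ is \emph{uniformly} continuous: choosing $\delta>0$ with $|f(y)-f(z)|<\epsilon$ whenever $|y-z|<\delta$,
$$
    \|P_tf-f\|_\infty
    \le \sup_{x\in\rd}\Ee\big|f(X_t+x)-f(x)\big|
    \le \epsilon + 2\|f\|_\infty\,\Pp(|X_t|\ge\delta).
$$
By continuity in probability \eqref{Lcont}, $\Pp(|X_t|\ge\delta)\to 0$ as $t\to 0$, so $\limsup_{t\to 0}\|P_tf-f\|_\infty\le\epsilon$, and letting $\epsilon\downarrow 0$ gives g). The essential point is the \emph{uniformity} of the continuity, which is why the argument lives on $\cont_\infty$ rather than merely on $\cont_b$.

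Finally, for h) I would bring in absolute continuity: writing $p_t(dy)=\rho_t(y)\,dy$ and substituting $z=x+y$,
$$
    P_tf(x)=\int f(x+y)\rho_t(y)\,dy=\int f(z)\,\rho_t(z-x)\,dz .
$$
Boundedness is contained in d); for continuity I would invoke the continuity of translations in $L^1$: if $x_n\to x$ then $\|\rho_t(\cdot-x_n)-\rho_t(\cdot-x)\|_{L^1}\to 0$, whence $|P_tf(x_n)-P_tf(x)|\le\|f\|_\infty\,\|\rho_t(\cdot-x_n)-\rho_t(\cdot-x)\|_{L^1}\to 0$ for \emph{every} $f\in\Bcal_b(\rd)$, so $P_tf\in\cont_b(\rd)$. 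I expect the main (though standard) obstacle to be exactly this $L^1$-continuity of the shift, since it is the mechanism that upgrades a merely measurable bounded $f$ to a \emph{continuous} image; all the remaining steps reduce to dominated convergence together with the probabilistic inputs \eqref{Lcont} and the tightness of the law of $X_t$.
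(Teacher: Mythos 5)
Your proposal is correct and follows essentially the same route as the paper: a)--e) from the transition-function axioms, f) via dominated convergence, g) via uniform continuity of $f\in\cont_\infty(\rd)$ together with \eqref{Lcont}, exactly as in the text. For h) the paper delegates to Lemma~\ref{lpmp-25} and cites the smoothing property $L^\infty * L^1\subset\cont_b(\rd)$, whose standard proof is precisely your $L^1$-continuity-of-translations argument, so this too matches (you only prove the direction needed, which is all Lemma~\ref{lpmp-23} requires).
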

\begin{proof}
    We only show the assertions about L\'evy processes $\Xt$.
    \begin{enumerate}
    \item[\upshape a)] Since $P_t f(x) = \Ee f(X_t+x)$, the boundedness of $P_tf$ is obvious, and the measurability in $x$ follows from (the proof of) Fubini's theorem.
    \item[\upshape b)] By the tower property of conditional expectation, we get for $s,t\geq 0$
    \begin{align*}
		P_{t+s}f(x)
		= \Ee^x f(X_{t+s})
		&\pomu{\eqref{lpmp-e10}}{=}{} \Ee^x\big(\Ee^x[f(X_{t+s}) \mid \Fscr_s]\big) \\
		&\omu{\eqref{lpmp-e10}}{=}{} \Ee^x\big(\Ee^{X_s}f(X_t)\big)
        = P_s \circ P_t f(x).
	\end{align*}
	For the Markov transition functions this is the Chapman--Kolmogorov identity \eqref{lpmp-e06}.
	\item[\upshape c)] and d), e) follow directly from the fact that $B\mapsto p_t(x,B)$ is a probability measure.
    \item[\upshape f)] Let $f\in\cont_\infty(\rd)$. Since $x\mapsto f(x+X_t)$ is continuous and bounded, the claim follows from dominated convergence as $P_tf(x)=\Ee f(x+X_t)$.
    \item[\upshape g)] $f \in \cont_{\infty}$ is uniformly continuous, i.e.\ for every $\epsilon>0$ there is some $\delta>0$ such that $$|x-y|\leq \delta\implies |f(x)-f(y)|\leq \epsilon.$$ Hence,
    \begin{align*}
		\|P_tf-f\|_{\infty}
		&\leq \sup_{x \in \rd} \int |f(X_t)-f(x)| \, d\Pp^x \\
		&= \sup_{x \in \rd} \left(\int_{|X_t-x| \leq \delta} |f(X_t)-f(x)| \, d\Pp^x
            + \int_{|X_t-x|>\delta} |f(X_t)-f(x)| \, d\Pp^x\right)\\
		&\leq \epsilon + 2\|f\|_{\infty} \sup_{x \in \rd} \Pp ^x(|X_t-x|>\delta) \\
		&= \epsilon + 2\|f\|_{\infty} \,\Pp(|X_t|>\delta)
        \xrightarrow[\;t\to 0\;]{\eqref{Lcont}}\epsilon.
	\end{align*}
    Since $\epsilon>0$ is arbitrary, the claim follows. Note that this proof shows that \emphh{uniform continuity in probability} is responsible for the strong continuity of the semigroup.
    \item[\upshape h)] see Lemma~\ref{lpmp-25}.
    \qedhere
    \end{enumerate}
\end{proof}

\begin{lemma}[Hawkes]\label{lpmp-25}
\index{strong Feller property!of a L\'evy process}
    Let $X=(X_t)_{t\geq 0}$ be a L\'evy process on $\rd$. Then the operators $P_t$ defined by \eqref{lpmp-e22} are strong Feller if, and only if, $X_t\sim p_t(y)\,dy$ for all $t>0$.
\end{lemma}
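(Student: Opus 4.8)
The plan is to prove the two implications separately, using throughout the convolution representation $P_t f(x) = \int f(y+x)\,p_t(dy) = (f * \widetilde p_t)(x)$ from \eqref{lpmp-e24}, where $\widetilde p_t(B) = p_t(-B)$. The absolute continuity of $p_t$ is exactly what turns this convolution into a smoothing operation.

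First I would treat the easy direction: if $p_t(dy) = p_t(y)\,dy$ has a Lebesgue density, then $P_t$ is strong Feller. After the substitution $z = y+x$ one writes
\[
    P_t f(x) = \int_{\rd} f(z)\,p_t(z-x)\,dz ,
\]
so that for $f \in \Bcal_b(\rd)$ and any $x_0\in\rd$,
\[
    |P_t f(x) - P_t f(x_0)| \leq \|f\|_\infty \int_{\rd} |p_t(z-x) - p_t(z-x_0)|\,dz .
\]
Since the density lies in $L^1$, continuity of translation in $L^1$ forces the right-hand side to tend to $0$ as $x \to x_0$; together with the trivial bound $|P_t f| \leq \|f\|_\infty$ this gives $P_t f \in \cont_b(\rd)$, i.e.\ the strong Feller property.

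The harder direction is the converse: strong Feller forces $p_t$ to be absolutely continuous. Here the idea is to feed indicators of null sets into $P_t$. Let $N$ be an arbitrary Borel set with Lebesgue measure zero and put $f = \I_N$. Then
\[
    P_t \I_N(x) = \Ee\,\I_N(X_t + x) = p_t(N - x) =: g(x),
\]
and by the strong Feller property $g$ is continuous (and bounded, since $p_t$ is a probability measure). The key computation is to integrate $g$ against Lebesgue measure and apply Tonelli's theorem, exploiting translation invariance in the inner integral:
\[
    \int_{\rd} g(x)\,dx = \int_{\rd}\!\int_{\rd} \I_N(y+x)\,dx\,p_t(dy) = \int_{\rd} \lambda(N)\,p_t(dy) = 0 ,
\]
where $\lambda$ is Lebesgue measure. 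A nonnegative continuous function with vanishing integral must vanish identically, so $g\equiv 0$ and in particular $p_t(N) = g(0) = 0$. As $N$ was an arbitrary Lebesgue-null set, $p_t \ll \lambda$, and the Radon--Nikodym theorem produces a density $p_t(y)$, i.e.\ $X_t \sim p_t(y)\,dy$ for every $t>0$.

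I expect the main obstacle to be the converse direction, and specifically the choice of test functions. The trick of evaluating $P_t$ on indicators of null sets and then using that a nonnegative continuous function with zero integral is identically zero is what does the real work; by contrast, the forward direction reduces to the standard fact that convolving an $L^\infty$ function with an $L^1$ kernel yields a continuous function, so the only analytic input there is $L^1$-continuity of translations.
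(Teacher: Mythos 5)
Your proof is correct and follows essentially the same route as the paper's: the forward direction is the smoothing property $L^\infty * L^1 \subset \cont_b(\rd)$ (which you establish via $L^1$-continuity of translations rather than citing it), and the converse tests $P_t$ on indicators $\I_N$ of Lebesgue-null sets, combining Tonelli, the translation invariance of Lebesgue measure, and the continuity supplied by the strong Feller property. The only cosmetic difference is that you integrate the nonnegative function $P_t\I_N$ directly against Lebesgue measure, whereas the paper pairs it with $g = P_t\I_N$ to obtain $\int (P_t\I_N(x))^2\,dx = 0$; both arguments yield $P_t\I_N = 0$ Lebesgue-a.e., hence $P_t\I_N \equiv 0$ by continuity, and finally $p_t(N) = P_t\I_N(0) = 0$.
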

\begin{proof}
    `$\Leftarrow$': Let $X_t \sim p_t(y) \, dy$. Since $p_t \in L^1$ and since convolutions have a smoothing property (e.g.\
    \cite[Theorem 14.8]{schilling-mims} or \cite[Satz 18.9]{schilling-mi}), we get with $\widetilde p_t(y)=p_t(-y)$
    $$
	   P_t f
        = f*\widetilde{p}_t \in L^{\infty}*L^1 \subset \cont_b(\rd).
	$$

    `$\Rightarrow$': We show that $p_t(dy) \ll dy$. Let $N \in \Bscr(\rd)$ be a Lebesgue null set $\lambda^d(N)=0$ and $g \in \Bcal_b(\rd)$. Then, by the Fubini--Tonelli theorem
    \begin{align*}
		\int g(x) P_t\I_N(x)\,dx
		&= \iint g(x) \I_N(x+y) \, p_t(dy)\,dx \\
		&= \int \underbrace{\int g(x) \I_N(x+y)\,dx}_{=0} \: p_t(dy) =0.
	\end{align*}
	Take $g = P_t \I_N$, then the above calculation shows
    $$
		\int (P_t \I_N(x))^2 \, dx =0.
	$$
	Hence, $P_t\I_N = 0$ Lebesgue-a.e. By the strong Feller property, $P_t\I_N$ is continuous, and so $P_t\I_N\equiv 0$, hence
    \begin{gather*}
		p_t(N) = P_t\I_N(0)=0.
    \qedhere
	\end{gather*}
\end{proof}

\begin{remark}\label{lpmp-27}
    The existence and smoothness of densities for a L\'evy process are time-dependent properties, cf.\ Sato~\cite[Chapter V.23]{sato99}. The typical example is the \emphh{Gamma process}.
\index{Gamma process}%
    This is a (one-dimensional) L\'evy process with characteristic exponent
    $$
        \psi(\xi) = \frac 12\log(1+|\xi|^2) -\iup \arctan\xi,\quad\xi\in\real,
    $$
    and this process has the transition density
    $$
    	p_t(x) = \frac{1}{\Gamma(t)}\, x^{t-1} \eup^{-x} \,\I_{(0,\infty)}(x), \quad t>0.
    $$
    The factor $x^{t-1}$ gives a time-dependent condition for the property $p_t\in L^p(dx)$. One can show, cf.\ \cite{kno-sch13}, that
    $$
        \lim_{|\xi|\to\infty} \frac{\Re\psi(\xi)}{\log(1+|\xi|^2)} = \infty \implies \forall t>0\quad\exists p_t\in \cont^\infty(\rd).
    $$
    The converse direction remains true if $\psi(\xi)$ is rotationally invariant or if it is replaced by its symmetric rearrangement.
\end{remark}

\begin{remark}\label{lpmp-29}
\index{Feller process}%
\index{Feller semigroup}%
    If $(P_t)_{t\geq 0}$ is a \emphh{Feller semigroup}, i.e.\ a semigroup satisfying the conditions~\ref{lpmp-21}.a)-g), then there exists a unique stochastic process (a \emphh{Feller process}) with $(P_t)_{t\geq 0}$ as transition semigroup. The idea is to use Kolmogorov's consistency theorem for the following family of finite-dimensional distributions
    $$	
		p^x_{t_1,\dots,t_n}(B_1 \times \dots \times B_n)
        = P_{t_1} \Big( \I_{B_1} P_{t_2-t_1} \big( \I_{B_2} P_{t_3-t_2}( \dots P_{t_n-t_{n-1}}(\I_{B_n})) \big) \Big)(x)
	$$
	Here $X_{t_0} = X_0 = x$ a.s. Note: It is \emphh{not enough} to have a semigroup on $L^p$ as we need pointwise evaluations.

    If the operators $P_t$ are not \emph{a priori} given on $\Bcal_b(\rd)$ but only on $\cont_\infty(\rd)$, one still can use the Riesz representation theorem to construct Markov kernels $p_t(x,B)$ representing and extending $P_t$ onto $\Bcal_b(\rd)$, cf.\ Lemma~\ref{semi-05}.
\end{remark}

Recall that a \emphh{stopping time} is a random time $\tau : \Omega\to [0,\infty]$ such that $\{\tau \leq t\}\in\Fscr_t$ for all $t\geq 0$. It is not hard to see that $\tau_n := (\entier{2^n \tau} + 1)2^{-n}$, $n\in\nat$, is a sequence of stopping times with values $k 2^{-n}$, $k=1,2,\dots$, such that
$$
    \tau_1\geq \tau_2 \geq \dots \geq \tau_n \downarrow \tau = \inf_{n\in\nat}\tau_n.
$$
This approximation is the key ingredient to extend the Markov property (Theorem~\ref{lpmp-13}) to random times.
\begin{theorem}[Strong Markov property for L\'evy processes]\label{lpmp-31}
\index{strong Markov property!of a L\'evy process}%
\index{L\'evy process!strong Markov property}%
\index{Markov process!strong Markov property}%
    Let $X$ be a L\'evy process on $\rd$ and set $Y:=(X_{t+\tau}-X_\tau)_{t \geq 0}$ for some a.s.\ finite stopping time $\tau$. Then $Y$ is again a L\'evy process satisfying
    \begin{enumerate}
	\item[\upshape a)]
        $Y \bbot (X_r)_{r \leq \tau}$, i.e.\ $\Fscr_{\infty}^Y \bbot \Fscr_{\tau+}^X := \big\{F\in\Fscr_\infty^X\,:\, F\cap\{\tau< t\}\in\Fscr_t^X\;\:\forall t\geq 0\big\}$.
    \item[\upshape b)]
        $Y \sim X$, i.e.\ $X$ and $Y$ have the same finite dimensional distributions.
	\end{enumerate}
\end{theorem}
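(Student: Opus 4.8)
The plan is to use the dyadic approximation $\tau_n:=(\entier{2^n\tau}+1)2^{-n}\downarrow\tau$ introduced above (note $\tau_n>\tau$ and $\tau_n$ takes only the countably many values $k2^{-n}$), to prove the assertion at each discrete time $\tau_n$ by conditioning on its value and invoking the ordinary Markov property (Theorem~\ref{lpmp-13}), and finally to pass to the limit using the right-continuity of the paths. The target is the single master identity: for every $F\in\Fscr_{\tau+}^X$, every $n\in\nat$, $0=t_0<t_1<\dots<t_n$ and $\xi_1,\dots,\xi_n\in\rd$,
$$
    \Ee\left[\I_F \exp\left(\iup\sum_{l=1}^n \xi_l\cdot(Y_{t_l}-Y_{t_{l-1}})\right)\right]
    = \Pp(F)\prod_{l=1}^n \eup^{-(t_l-t_{l-1})\psi(\xi_l)}.
$$
Once this holds, putting $F=\Omega$ shows that $Y$ has stationary and independent increments with the same law as $X$; together with $Y_0=X_\tau-X_\tau=0$ and the fact that $\lim_{t\to0}\Ee\,\eup^{\iup\xi\cdot Y_t}=\lim_{t\to0}\eup^{-t\psi(\xi)}=1$ (giving \eqref{Lcont}), this proves that $Y$ is a L\'evy process with $Y\sim X$, i.e.\ b). The identity then reads $\Ee[\I_F\,g]=\Pp(F)\,\Ee[g]$ for all $g=\prod_l\eup^{\iup\xi_l\cdot(Y_{t_l}-Y_{t_{l-1}})}$; since these characters are measure-determining, $F$ is independent of $\sigma(Y_{t_1},\dots,Y_{t_n})$ for every finite time set, and letting the time set vary yields $\Fscr_\infty^Y\bbot\Fscr_{\tau+}^X$, which is a).

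First I would establish the decisive measurability fact: for $F\in\Fscr_{\tau+}^X$ one has $F\cap\{\tau_n=k2^{-n}\}\in\Fscr_{k2^{-n}}^X$. Indeed $\{\tau_n=k2^{-n}\}=\{(k-1)2^{-n}\leq\tau<k2^{-n}\}$, so
$$
    F\cap\{\tau_n=k2^{-n}\}
    =\big(F\cap\{\tau<k2^{-n}\}\big)\setminus\big(F\cap\{\tau<(k-1)2^{-n}\}\big),
$$
and both pieces lie in $\Fscr_{k2^{-n}}^X$ by the defining property $F\cap\{\tau<t\}\in\Fscr_t^X$ of $\Fscr_{\tau+}^X$. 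This is exactly the point where the strict inequality in the definition of $\Fscr_{\tau+}^X$ is matched by the strict inequality $\tau_n>\tau$ built into the approximation.

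With this in hand I would prove the master identity first for $Y^{(n)}:=(X_{t+\tau_n}-X_{\tau_n})_{t\geq0}$. Decomposing over the disjoint events $\{\tau_n=k2^{-n}\}$, on each of which $Y^{(n)}_{t_l}-Y^{(n)}_{t_{l-1}}=X_{t_l+k2^{-n}}-X_{t_{l-1}+k2^{-n}}$, the ordinary Markov property (Theorem~\ref{lpmp-13}) makes the shifted process $(X_{t+k2^{-n}}-X_{k2^{-n}})_{t\geq0}$ independent of $\Fscr_{k2^{-n}}^X$ and equal in law to $X$; since $F\cap\{\tau_n=k2^{-n}\}\in\Fscr_{k2^{-n}}^X$, each summand factors as $\Pp(F\cap\{\tau_n=k2^{-n}\})\prod_l\eup^{-(t_l-t_{l-1})\psi(\xi_l)}$, and summing over $k$ gives the identity for $Y^{(n)}$.

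The limit $n\to\infty$ is where right-continuity enters: since $t+\tau_n\downarrow t+\tau$ and, working with the c\`adl\`ag modification, $s\mapsto X_s$ is right-continuous, we get $Y^{(n)}_{t_l}\to Y_{t_l}$ almost surely, so bounded convergence (the integrands have modulus $1$) transfers the master identity from $Y^{(n)}$ to $Y$. I expect the main obstacle to be precisely this transfer — that is, arranging the approximation so that the fixed-time Markov property is genuinely applicable on each atom, which is exactly the $\Fscr_{\tau+}^X$-measurability of $F\cap\{\tau_n=k2^{-n}\}$ established above; the remainder is the characteristic-function bookkeeping already familiar from Theorems~\ref{exlp-03} and~\ref{lpmp-13}.
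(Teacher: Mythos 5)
Your proposal is correct and follows essentially the same route as the paper's proof: dyadic approximation $\tau_n=(\entier{2^n\tau}+1)2^{-n}\downarrow\tau$, decomposition over the atoms $\{\tau_n=k2^{-n}\}$ using the key fact that $F\cap\{(k-1)2^{-n}\leq\tau<k2^{-n}\}\in\Fscr^X_{k2^{-n}}$ for $F\in\Fscr^X_{\tau+}$, and passage to the limit by right-continuity of the paths and bounded convergence. Your only (cosmetic) deviations are invoking Theorem~\ref{lpmp-13} on each atom where the paper appeals directly to \eqref{Lindep} and \eqref{Lstat}, and establishing the multi-increment identity for $Y^{(n)}$ before taking limits rather than interleaving the two steps.
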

\begin{proof}
    Let $\tau_n := (\entier{2^n \tau}+1) 2^{-n}$. For all $0\leq s < t$, $\xi\in\rd$ and $F\in\Fscr^X_{\tau+}$ we find by the right-continuity of the sample paths (or by the continuity in probability \eqref{Lcont})
    \begin{align*}
        \Ee\left[ \eup^{\iup \xi\cdot (X_{t+\tau}-X_{s+\tau})}\, \I_F\right]
        &= \lim_{n\to\infty}\Ee\left[ \eup^{\iup \xi\cdot (X_{t+\tau_n}-X_{s+\tau_n})}\, \I_F\right]\\
        &= \lim_{n\to\infty}\sum_{k=1}^\infty \Ee\left[ \eup^{\iup \xi\cdot (X_{t+k2^{-n}}-X_{s+k2^{-n}})}\,
        \I_{\{\tau_n = k2^{-n}\}} \cdot \I_F\right]\\
        &= \lim_{n\to\infty}\sum_{k=1}^\infty
            \Ee\Big[\;
            \underbrace{ \eup^{\iup \xi\cdot (X_{t+k2^{-n}}-X_{s+k2^{-n}})}\vphantom{\I_{\left\{(k-1)2^{-n}\leq \tau < k2^{-n}\right\}}\, \I_F}}_{\mathclap{\bbot\,\Fscr^X_{k2^{-n}}\text{\ by \eqref{Lindep}}}} \,
            \underbrace{\I_{\left\{(k-1)2^{-n}\leq \tau < k2^{-n}\right\}}\, \I_F}_{\mathclap{\in\,\Fscr^X_{k2^{-n}}\text{\ as\ } F\in\Fscr^X_{\tau+}}}
            \;\Big]\\
        &= \lim_{n\to\infty}\sum_{k=1}^\infty \Ee\big[ \eup^{\iup \xi\cdot X_{t-s}}\big]\,
           \Pp\left(\big\{(k-1)2^{-n}\leq \tau < k2^{-n}\big\}\cap F\right)\\
        &= \Ee\big[ \eup^{\iup\xi\cdot X_{t-s}}\big] \,\Pp(F).
    \end{align*}
    In the last equality we use $\bigdcup_{k=1}^\infty\{(k-1)2^{-n}\leq \tau < k2^{-n}\} = \{\tau<\infty\}$ for all $n\geq 1$.

    The same calculation applies to finitely many increments. Let $F\in\Fscr^X_{\tau+}$, $t_0=0 < t_1 < \dots < t_n$ and $\xi_1, \dots, \xi_n\in\rd$. Then
    $$
        \Ee\left[\eup^{\iup \sum_{k=1}^n \xi_k\cdot (X_{t_k+\tau}-X_{t_{k-1}+\tau})}\, \I_F\right]
        = \prod_{k=1}^n \Ee\left[\eup^{\iup \xi_k\cdot X_{t_k-t_{k-1}}}\right] \Pp(F).
    $$
    This shows that the increments $X_{t_k+\tau }-X_{t_{k-1}+\tau}$ are independent and distributed like $X_{t_k-t_{k-1}}$. Moreover, all increments are independent of $F\in\Fscr^X_{\tau+}$.

    Therefore, all random vectors of the form $(X_{t_1+\tau}-X_{\tau}, \ldots, X_{ t_n+\tau}-X_{t_{n-1}+\tau})$ are independent of $\Fscr_{\tau+}^X$, and we conclude that $\Fscr_\infty^Y = \sigma(X_{t+\tau}-X_\tau,\, t\geq 0)\bbot\Fscr^X_{\tau+}$.
\end{proof}

\chapter{A digression: semigroups}\label{semi}

We have seen that the Markov kernel $p_t(x,B)$ of a L\'evy or Markov process induces a semigroup of linear operators $(P_t)_{t\geq 0}$. In this chapter we collect a few tools from functional analysis for the study of operator semigroups. By $\Bcal_b(\rd)$ we denote the bounded Borel  functions $f:\rd\to\real$, and $\cont_\infty(\rd)$ are the continuous functions vanishing at infinity, i.e.\ $\lim_{|x|\to\infty}f(x)=0$; when equipped with the uniform norm $\|\cdot\|_\infty$ both sets become Banach spaces.

\begin{definition}\label{semi-03}
\index{Feller semigroup}%
    A \emphh{Feller semigroup} is a family of linear operators $P_t:\Bcal_b(\rd)\to\Bcal_b(\rd)$ satisfying the properties a)--g) of Definition~\ref{lpmp-21}: $(P_t)_{t\geq 0}$ is a semigroup of conservative, sub-Markovian operators which enjoy the \emphh{Feller property}
\index{Feller property}\index{semigroup!Feller property}%
    $P_t(\cont_\infty(\rd))\subset\cont_\infty(\rd)$ and which are strongly continuous on $\cont_\infty(\rd)$.
\end{definition}
Notice that $(t,x)\mapsto P_tf(x)$ is for every $f\in\cont_\infty(\rd)$ continuous. This follows from
\begin{align*}
    |P_tf(x)-P_sf(y)|
    &\leq |P_tf(x)-P_tf(y)| + |P_tf(y)-P_sf(y)|\\
    &\leq |P_tf(x)-P_tf(y)| + \|P_{|t-s|} f- f\|_\infty,
\end{align*}
the Feller property~\ref{lpmp-21}.f) and the strong continuity~\ref{lpmp-21}.g).

\begin{lemma}\label{semi-05}
\index{Feller semigroup!extension to $\Bcal_b(\rd)$}%
    If $(P_t)_{t\geq 0}$ is a Feller semigroup, then there is a Markov transition function $p_t(x,dy)$ \textup{(}Definition~\ref{lpmp-03}\textup{)} such that $P_tf(x) = \int f(y)\,p_t(x,dy)$.
\end{lemma}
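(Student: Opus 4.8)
The plan is to build the kernel pointwise via the Riesz representation theorem and then to read off the three defining properties of a Markov transition function (Definition~\ref{lpmp-03}) from the structural properties a)--g) packaged in Definition~\ref{semi-03}.

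First I would fix $t\ge 0$ and $x\in\rd$ and consider the functional $\ell_{t,x}(f):=P_tf(x)$ on $\cont_\infty(\rd)$. The Feller property~\ref{lpmp-21}.f) guarantees $P_tf\in\cont_\infty(\rd)$, so $\ell_{t,x}$ is well defined and linear. It is positive: if $0\le f\in\cont_\infty(\rd)$, then $0\le f/\|f\|_\infty\le 1$, so sub-Markovianity~\ref{lpmp-21}.c) gives $P_tf\ge 0$, hence $\ell_{t,x}(f)\ge 0$. Sub-Markovianity together with conservativeness~\ref{lpmp-21}.e) also yields contractivity, since applying~\ref{lpmp-21}.c) to $(f+1)/2$ gives $-1\le P_tf\le 1$; thus $|\ell_{t,x}(f)|\le\|f\|_\infty$. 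The Riesz representation theorem then produces a unique finite Borel measure $p_t(x,\cdot)$ with
$$
  P_tf(x)=\int f(y)\,p_t(x,dy)\quad\text{for all }f\in\cont_\infty(\rd),
$$
and $p_t(x,\rd)\le 1$. To see that $p_t(x,\cdot)$ is in fact a probability measure (property~(a) of Definition~\ref{lpmp-03}) I would pick $u_n\in\cont_c(\rd)$ with $0\le u_n\uparrow 1$; monotone convergence gives $p_t(x,\rd)=\lim_n P_tu_n(x)$, and conservativeness $P_t1=1$ forces this limit to equal $1$. The case $t=0$ is handled separately: $P_0=\id$ and uniqueness in Riesz's theorem give $p_0(x,\cdot)=\delta_x$.

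Next I would establish the joint measurability, property~(b). The continuity of $(t,x)\mapsto P_tf(x)$ noted just before the lemma shows that $(s,x)\mapsto\int f\,dp_s(x,\cdot)$ is measurable for every $f\in\cont_\infty(\rd)$. A functional monotone class (Dynkin) argument then upgrades this: the family of bounded Borel $g$ for which $(s,x)\mapsto\int g\,dp_s(x,\cdot)$ is Borel measurable contains $\cont_c(\rd)$, is stable under bounded monotone limits, and hence contains $\I_U$ for every open $U$ and, via a $\pi$--$\lambda$ argument, $\I_B$ for every $B\in\Bscr(\rd)$. Thus $(s,x)\mapsto p_s(x,B)=\int\I_B\,dp_s(x,\cdot)$ is measurable.

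Finally, for the Chapman--Kolmogorov equations~\eqref{lpmp-e06} (property~(c)) I would transcribe the semigroup identity~\ref{lpmp-21}.b). For $f\in\cont_\infty(\rd)$ the Feller property gives $P_tf\in\cont_\infty(\rd)$, so $P_s$ may be applied and
$$
  \int f(z)\,p_{s+t}(x,dz)=P_{s+t}f(x)=P_s(P_tf)(x)=\int\!\!\int f(z)\,p_t(y,dz)\,p_s(x,dy),
$$
where the measurability from property~(b) makes the inner integral a Borel function of $y$ and the measure $q(x,\cdot):=\int p_t(y,\cdot)\,p_s(x,dy)$ well defined. Since $p_{s+t}(x,\cdot)$ and $q(x,\cdot)$ assign the same integral to every $f\in\cont_\infty(\rd)$, the uniqueness part of the Riesz representation forces $p_{s+t}(x,\cdot)=q(x,\cdot)$, which is exactly~\eqref{lpmp-e06}. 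The main obstacle is not any single computation but the bookkeeping that ties the three properties together: one must secure the measurability in~(b) before the iterated integral defining $q$ even makes sense, and the repeated passage from ``agreement on $\cont_\infty(\rd)$'' to ``equality of measures'' (used for both mass one and Chapman--Kolmogorov) relies throughout on the uniqueness in Riesz's theorem. The only genuinely delicate analytic point is the upgrade from a sub-probability to a probability measure, where conservativeness must be invoked through the monotone approximation of the constant function $1$ by $\cont_c(\rd)$ functions.
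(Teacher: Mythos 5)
Your proof is correct and rests on the same pillars as the paper's---Riesz representation to produce $p_t(x,\cdot)$, followed by a monotone-class extension for the measurability and the Chapman--Kolmogorov identity---but the extension mechanics differ. The paper approximates \emph{compact} sets from above, taking $f_n\downarrow\I_K$ with $f_n\in\cont_\infty(\rd)$ built from distance functions, which yields joint measurability of $(t,x)\mapsto p_t(x,K)$ as a countable infimum of continuous functions and, simultaneously, \eqref{lpmp-e06} for compact sets by passing to the limit in $P_{t+s}f_n=P_sP_tf_n$; a single Dynkin system then carries \emph{both} properties from the compacts to all of $\Bscr(\rd)$. You instead approximate open sets from below, settle measurability first by a functional monotone class, and then obtain Chapman--Kolmogorov in one stroke as an equality of measures via Riesz uniqueness---and this ordering is essential in your version (as you correctly note, the iterated integral defining $q(x,\cdot)$ only makes sense once property (b) is in hand), whereas the paper's simultaneous Dynkin system sidesteps that dependency. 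Your route also buys something the paper leaves implicit: an explicit verification that each $p_t(x,\cdot)$ is positive, a sub-probability, and finally a probability measure. One caveat on that last step: from $u_n\uparrow 1$ you get $p_t(x,\rd)=\lim_n P_tu_n(x)$ by monotone convergence \emph{for the measure}, but to identify this limit with $P_t1(x)=1$ you are tacitly using that $P_t$ respects increasing limits on $\Bcal_b(\rd)$---automatic when, as in Lemma~\ref{lpmp-23}, the semigroup comes from a Markov process via \eqref{lpmp-e22}, but not a formal consequence of properties a)--g) of Definition~\ref{lpmp-21} for an abstract operator family on $\Bcal_b(\rd)$; the paper's proof silently assumes the same thing when it declares the Riesz kernel to be a Markov kernel, so this is a shared gloss rather than a defect of your argument.
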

\begin{proof}
By the Riesz representation theorem we see that the operators $P_t$ are of the form $$P_t f(x) = \int f(y)\,p_t(x,dy)$$ where $p_t(x,dy)$ is a Markov kernel. The tricky part is to show the joint measurability of the transition function $(t,x)\mapsto p_t(x,B)$ and the Chapman--Kolmogorov identities \eqref{lpmp-e06}.

For every compact set $K\subset\rd$ the functions defined by
$$
    f_n(x) := \frac{d(x,U_n^c)}{d(x,K)+d(x,U_n^c)},\quad
    d(x,A) := \inf_{a\in A}|x-a|,\quad
    U_n := \{y\,:\, d(y,K) < 1/n\},
$$
are in $\cont_\infty(\rd)$ and $f_n\downarrow\I_K$. By monotone convergence, $p_t(x,K) = \inf_{n\in\nat} P_tf_n(x)$ which proves the joint measurability in $(t,x)$ for all compact sets.

By the same, the semigroup property $P_{t+s}f_n = P_sP_tf_n$ entails the Chapman--Kol\-mo\-go\-rov identities for compact sets: $p_{t+s}(x,K) = \int p_t(y,K)\,p_s(x,dy)$.
Since
$$
    \mathscr{D} := \left\{B\in\Bscr(\rd)\:\middle|\:
    \begin{aligned} &(t,x)\mapsto p_t(x,B)\text{\ \ is measurable \&\ }\\
                    &p_{t+s}(x,B) = \int p_t(y,B)\,p_s(x,dy)
    \end{aligned}\right\}
$$
is a Dynkin system containing the compact sets, we have $\mathscr{D}=\Bscr(\rd)$.
\end{proof}

To get an intuition for semigroups it is a good idea to view the semigroup property
$$
    P_{t+s} = P_s\circ P_t \et P_0 = \id
$$
as an operator-valued Cauchy functional equation. If $t\mapsto P_t$ is---in a suitable sense---continuous, the unique solution will be of the form $P_t = \eup^{tA}$ for some \emphh{operator} $A$. This can be easily made rigorous for matrices $A,P_t\in \real^{n\times n}$
since the matrix exponential is well defined by the uniformly convergent series
$$
    P_t = \exp(tA) := \sum_{k=0}^\infty \frac{t^k A^k}{k!}
    \et
    A = \frac{d}{dt} P_t\Big|_{t=0}
$$
with $A^0 := \id$ and $A^k = A\circ A\circ \dots\circ A$ ($k$ times). With a bit more care, this can be made to work also in general settings.
\begin{definition}\label{semi-07}
\index{generator}%
\index{Feller process!generator}%
    Let $(P_t)_{t\geq 0}$ be a Feller semigroup. The (infinitesimal) \emphh{generator} is a linear operator defined by
    \begin{align}
    \label{semi-e02}
        \dom(A) &:= \left\{f \in \cont_\infty(\rd)\;\:\middle|\;\: \exists g \in \cont_\infty(\rd) \::\: \lim_{t\to 0}\left\| \frac{P_t f-f}{t} -g \right\|_{\infty} = 0 \right\}\\
    \label{semi-e04}
          Af &:= \lim_{t \to 0} \frac{P_t f-f}{t},\quad f\in\dom(A).
	\end{align}
\end{definition}

The following lemma is the rigorous version for the symbolic notation `$P_t = \eup^{tA}$'.
\begin{lemma}\label{semi-09}
    Let $(P_t)_{t\geq 0}$ be a Feller semigroup with infinitesimal generator $(A,\dom(A))$. Then $P_t(\dom(A))\subset\dom(A)$ and
    \begin{equation}\label{semi-e06}
        \frac{d}{dt } P_t f = AP_t f = P_tA f
        \fa f\in \dom(A),\; t\geq 0.
    \end{equation}
    Moreover, $\int_0^t P_s f\,ds\in \dom(A)$ for any $f\in\cont_\infty(\rd)$, and
    \begin{align}
        P_tf - f
        &= A\int_0^t P_s f\,ds, \quad f\in \cont_\infty(\rd),\; t>0\label{semi-e08}\\
        &= \int_0^t P_s Af\,ds, \quad f\in \dom(A),\; t>0.\label{semi-e10}
    \end{align}
\end{lemma}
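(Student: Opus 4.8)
The plan is to follow the standard route of $C_0$-semigroup theory, using only contractivity (Definition~\ref{lpmp-21}.d), strong continuity (\ref{lpmp-21}.g) and the Feller property (\ref{lpmp-21}.f). A preliminary observation I would record is that strong continuity, stated only at $t=0$, propagates to every $t_0\geq 0$: for $g\in\cont_\infty(\rd)$ the semigroup property gives $\|P_tg-P_{t_0}g\|_\infty = \|P_{t_0}(P_{t-t_0}g-g)\|_\infty \leq \|P_{t-t_0}g-g\|_\infty$ when $t\geq t_0$, and symmetrically $\|P_{t_0}g-P_tg\|_\infty\leq\|P_{t_0-t}g-g\|_\infty$ when $t\leq t_0$, so $s\mapsto P_sg$ is norm-continuous on $[0,\infty)$. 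In particular the Riemann integral $\int_0^t P_sf\,ds$ of the continuous $\cont_\infty(\rd)$-valued map $s\mapsto P_sf$ exists and, by completeness of $\cont_\infty(\rd)$, is again an element of $\cont_\infty(\rd)$.

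The heart of the argument is a single difference-quotient identity read in two ways. For $f\in\dom(A)$ and $h>0$ the semigroup property gives
$$
    \frac{P_h(P_tf)-P_tf}{h} = P_t\!\left(\frac{P_hf-f}{h}\right).
$$
Since $P_t$ is a contraction and $\tfrac{P_hf-f}{h}\to Af$, letting $h\downarrow 0$ shows the right-hand side converges to $P_t(Af)$, which lies in $\cont_\infty(\rd)$ by the Feller property. Reading the \emph{left}-hand side through Definition~\ref{semi-07}, this convergence says precisely that $P_tf\in\dom(A)$ with $A(P_tf)=P_t(Af)$. This disposes of the inclusion $P_t(\dom(A))\subset\dom(A)$ and of the commutation $AP_tf=P_tAf$ in one stroke, and simultaneously identifies the right derivative $\lim_{h\downarrow 0}\tfrac{P_{t+h}f-P_tf}{h}=P_tAf$.

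For the full two-sided derivative in \eqref{semi-e06} I would estimate the backward quotient by
$$
    \left\|\frac{P_tf-P_{t-h}f}{h}-P_tAf\right\|_\infty
    \leq \left\|\frac{P_hf-f}{h}-Af\right\|_\infty + \|P_{t-h}Af-P_tAf\|_\infty,
$$
using contractivity of $P_{t-h}$ on the first term and the strong continuity established above on the second; both vanish as $h\downarrow 0$. Hence $t\mapsto P_tf$ is differentiable with continuous derivative $P_tAf=AP_tf$, which is \eqref{semi-e06}. The integral identities then follow quickly: writing the forward quotient of $\int_0^t P_sf\,ds$ as $\tfrac1h\big(\int_t^{t+h}P_sf\,ds-\int_0^h P_sf\,ds\big)$ and using continuity of $s\mapsto P_sf$ shows it tends to $P_tf-f$, proving $\int_0^t P_sf\,ds\in\dom(A)$ together with \eqref{semi-e08}; and for $f\in\dom(A)$, integrating the now-continuous derivative via the fundamental theorem of calculus for Banach-space-valued maps gives $\int_0^t P_sAf\,ds=P_tf-f$, which is \eqref{semi-e10}.

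The main obstacle is precisely the double reading of the difference quotient: one must resist trying to compute $AP_tf$ directly—this would presuppose $P_tf\in\dom(A)$—and instead recognise that the \emph{same} limit producing $P_tAf$ is, by the very definition of the generator, the assertion that $P_tf$ belongs to $\dom(A)$. Everything else is bookkeeping, provided one keeps track that it is the Feller property that keeps $P_tAf$ inside $\cont_\infty(\rd)$, so that it is an admissible value of $A$.
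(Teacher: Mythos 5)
Your proof is correct and follows essentially the same route as the paper's: the same difference quotient $P_h(P_tf)-P_tf = P_t(P_hf-f)$ read through the definition of the generator (yielding $P_t(\dom(A))\subset\dom(A)$ and $AP_tf=P_tAf$ in one step), the same contraction-plus-strong-continuity estimate for the backward quotient, and the same integral-shift computation for \eqref{semi-e08} and \eqref{semi-e10}. The only cosmetic difference is that you treat $\int_0^t P_sf\,ds$ as a Banach-space-valued Riemann integral, so $P_h$ passes through it by boundedness and the fundamental theorem of calculus directly gives convergence in the uniform norm, whereas the paper justifies the interchange $P_\epsilon\int_0^t P_sf\,ds=\int_0^t P_\epsilon P_sf\,ds$ pointwise via Fubini's theorem and the kernel representation of Lemma~\ref{semi-05}.
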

\begin{proof}
    Let $0<\epsilon<t$ and $f\in \dom(A)$. The semigroup and contraction properties give
    \begin{align*}
        \left\|\frac{P_tf-P_{t-\epsilon} f}{\epsilon} - P_t Af\right\|_\infty
        &\leq \left\|P_{t-\epsilon}\frac{P_\epsilon f-f}{\epsilon} - P_{t-\epsilon} Af\right\|_\infty
        + \big\|P_{t-\epsilon} Af - P_{t-\epsilon}P_\epsilon Af\big\|_\infty\\
        &\leq \left\|\frac{P_\epsilon f-f}{\epsilon} - Af\right\|_\infty
        + \big\|Af - P_\epsilon Af\big\|_\infty
        \xrightarrow[\epsilon\to 0]{} 0
    \end{align*}
    where we use the strong continuity in the last step.
    This shows $\frac{d^-}{dt }P_tf =AP_tf=P_tAf$; a similar (but simpler) calculation proves this also for $\frac{d^+}{dt }P_tf$.

\medskip
    Let $f\in \cont_\infty(\rd)$ and $t,\epsilon>0$. By Fubini's theorem and the representation of $P_t$ with a Markov transition function (Lemma~\ref{semi-05}) we get
    \begin{align*}
        P_\epsilon \int_0^t P_s f(x)\,ds
        = \int_0^t P_\epsilon P_s f(x)\,ds,
    \end{align*}
    and so,
    \begin{align*}
        \frac{P_\epsilon-\id}{\epsilon}\int_0^t P_sf(x)\,ds
        &= \frac 1\epsilon\int_0^t \big(P_{s+\epsilon}f(x)-P_sf(x)\big)\,ds\\
        &= \frac 1\epsilon\int_t^{t+\epsilon} P_sf(x)\,ds - \frac 1\epsilon \int_0^\epsilon P_sf(x)\,ds.
    \end{align*}
    Since $t\mapsto P_tf(x)$ is continuous, the fundamental theorem of calculus applies, and we get $$\lim_{\epsilon\to 0}\frac 1\epsilon\int_r^{r+\epsilon} P_sf(x)\,ds = P_rf(x)$$ for $r\geq 0$.
    This shows that $\int_0^t P_sf\,ds\in\dom(A)$ as well as \eqref{semi-e08}. If $f\in\dom(A)$, then we deduce \eqref{semi-e10} from
    \begin{gather*}
        \int_0^t P_sAf(x)\,ds
        \omu{\eqref{semi-e06}}{=}{} \int_0^t \frac{d}{ds}\,P_sf(x)\,ds
        \pomu{\eqref{semi-e06}}{=}{} P_tf(x)-f(x)
        \omu{\eqref{semi-e08}}{=}{} A\int_0^t P_sf(x)\,ds.
    \qedhere
    \end{gather*}
\end{proof}

\begin{remark}[Consequences of Lemma~\ref{semi-09}]\label{semi-11} Write $\cont_\infty := \cont_\infty(\rd)$.
    \begin{enumerate}
        \item[\upshape a)] \eqref{semi-e08} shows that $\dom(A)$ is dense in $\cont_\infty$, since $\dom(A)\ni t^{-1}\int_0^t P_s f\,ds \xrightarrow[t\to 0]{}f$ for any $f\in\cont_\infty$.
        \item[\upshape b)] \eqref{semi-e10} shows that $A$ is a \emphh{closed operator}, i.e.
\index{closed operator}%
        $$
            f_n\in\dom(A),\; (f_n,Af_n) \xrightarrow[n\to\infty]{\;\text{uniformly}\;} (f,g)\in\cont_\infty\times\cont_\infty
            \implies
            f \in \dom(A)\;\;\&\;\; Af=g.
        $$
    	\item[\upshape c)] \eqref{semi-e06} means that $A$ determines $(P_t)_{t \geq 0}$ uniquely.
    \end{enumerate}
\end{remark}

Let us now consider the Laplace transform of $(P_t)_{t\geq 0}$.
\begin{definition}\label{semi-21}
\index{resolvent}%
	Let $(P_t)_{t\geq 0}$ be a Feller semigroup. The \emphh{resolvent} is a linear operator on $\Bcal_b(\rd)$ given by
    \begin{equation}\label{semi-e22}
		R_{\lambda} f(x) := \int_0^{\infty} \eup^{-\lambda t} P_t f(x) \, dt, \quad f \in \Bcal_b(\rd),\;x\in\rd,\; \lambda>0.
	\end{equation}
\end{definition}

The following formal calculation can easily be made rigorous. Let $(\lambda - A) := (\lambda\id - A)$ for $\lambda>0$ and $f\in\dom(A)$. Then
\begin{align*}
	(\lambda-A) R_{\lambda} f
	&\pomu{\eqref{semi-e08},\eqref{semi-e10}}{=}{\phantom{\text{parts}}}
        (\lambda-A) \int_0^{\infty} \eup^{-\lambda t} P_t f \, dt \\
	&\omu{\eqref{semi-e08},\eqref{semi-e10}}{=}{\phantom{\text{parts}}}
        \int_0^{\infty} \eup^{-\lambda t} (\lambda-A) P_t f \, dt \\
	&\pomu{\eqref{semi-e08},\eqref{semi-e10}}{=}{\phantom{\text{parts}}}
        \lambda \int_0^{\infty}  \eup^{-\lambda t} P_t f\, dt - \int_0^{\infty} \eup^{-\lambda t} \left( \frac{d}{dt} P_t f \right)  dt \\
	&\omu{\text{parts}}{=}{\phantom{\eqref{semi-e08},\eqref{semi-e10}}}
        \lambda \int_0^{\infty}  \eup^{-\lambda t} P_t f \, dt - \lambda \int_0^{\infty}  \eup^{-\lambda t} P_t f \, dt - [\eup^{-\lambda t} P_t f]_{t=0}^{\infty}
	= f.
\end{align*}
A similar calculation for $R_\lambda (\lambda-A)$ gives
\begin{theorem}\label{semi-23}
    Let $(A,\dom(A))$ and $(R_\lambda)_{\lambda>0}$ be the generator and the resolvent of a Feller semigroup. Then
    $$
        R_{\lambda} = (\lambda-A)^{-1}\fa \lambda >0.
    $$
\end{theorem}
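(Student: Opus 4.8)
The plan is to show that $R_\lambda$ is a two-sided inverse of $\lambda - A$, viewed as operators on $\cont_\infty(\rd)$; that is, I would establish the two identities
\begin{equation*}
    (\lambda - A) R_\lambda f = f \quad (f\in\cont_\infty(\rd))
    \et
    R_\lambda(\lambda - A) f = f \quad (f\in\dom(A)).
\end{equation*}
The first forces $\lambda - A$ to be surjective (every $f\in\cont_\infty(\rd)$ lies in its range) and exhibits $R_\lambda$ as a right inverse; the second forces $\lambda - A$ to be injective on $\dom(A)$ and exhibits $R_\lambda$ as a left inverse. Together they say that $\lambda - A\colon\dom(A)\to\cont_\infty(\rd)$ is a bijection with inverse $R_\lambda$, which is the assertion. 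A preliminary point to record is that $R_\lambda$ maps $\cont_\infty(\rd)$ into itself: since $t\mapsto P_tf(x)$ is continuous, bounded by $\|f\|_\infty$, and $P_tf\in\cont_\infty(\rd)$, dominated convergence shows $R_\lambda f\in\cont_\infty(\rd)$.

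For the first identity I would \emph{not} rely on the formal manipulation preceding the theorem (which only pulls $A$ through the integral for $f\in\dom(A)$, since $P_tf\in\dom(A)$ is needed), but instead compute the generator of $R_\lambda f$ directly from the definition. Using the Markov-kernel representation of $P_h$ together with Fubini's theorem, exactly as in the proof of Lemma~\ref{semi-05} and Lemma~\ref{semi-09}, one gets $P_h R_\lambda f = \int_0^\infty \eup^{-\lambda t} P_{t+h} f\,dt$. Substituting $s=t+h$ and splitting the integral yields
\begin{equation*}
    \frac{P_h R_\lambda f - R_\lambda f}{h}
    = \frac{\eup^{\lambda h}-1}{h}\int_h^\infty \eup^{-\lambda s} P_s f\,ds
    - \frac 1h\int_0^h \eup^{-\lambda s} P_s f\,ds.
\end{equation*}
Letting $h\to 0$ and using strong continuity (so that $\tfrac1h\int_0^h \eup^{-\lambda s}P_s f\,ds\to f$ in $\|\cdot\|_\infty$), the right-hand side converges uniformly to $\lambda R_\lambda f - f\in\cont_\infty(\rd)$. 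By the definition of the generator this means $R_\lambda f\in\dom(A)$ with $A R_\lambda f = \lambda R_\lambda f - f$, i.e. $(\lambda - A) R_\lambda f = f$, now valid for every $f\in\cont_\infty(\rd)$.

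For the second identity I would take $f\in\dom(A)$ and integrate by parts. By Lemma~\ref{semi-09} we have $P_t A f = \tfrac{d}{dt}P_t f$, so
\begin{equation*}
    R_\lambda A f
    = \int_0^\infty \eup^{-\lambda t} P_t A f\,dt
    = \int_0^\infty \eup^{-\lambda t}\frac{d}{dt}P_t f\,dt
    = \big[\eup^{-\lambda t} P_t f\big]_0^\infty + \lambda\int_0^\infty \eup^{-\lambda t} P_t f\,dt,
\end{equation*}
where the boundary term vanishes at $\infty$ because $\|P_t f\|_\infty\leq\|f\|_\infty$ while $\eup^{-\lambda t}\to 0$, and equals $-f$ at $t=0$. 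Hence $R_\lambda A f = \lambda R_\lambda f - f$, which rearranges to $R_\lambda(\lambda - A) f = \lambda R_\lambda f - R_\lambda A f = f$. Combining the two identities gives the theorem.

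The step I expect to be the main obstacle is making the first identity fully rigorous in the uniform norm: one must justify commuting $P_h$ with the vector-valued integral defining $R_\lambda$ — which is exactly where the Markov-kernel representation from Lemma~\ref{semi-05} and Fubini's theorem enter — and then verify that the difference quotient converges to $\lambda R_\lambda f - f$ as an element of $\cont_\infty(\rd)$, not merely pointwise, so that the limit legitimately witnesses $R_\lambda f\in\dom(A)$. The integration-by-parts argument for the second identity, by contrast, is routine once Lemma~\ref{semi-09} is in hand.
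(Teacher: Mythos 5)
Your proposal is correct, and it is worth noting where it coincides with and where it strengthens the paper's argument. The paper proves the theorem by the Laplace-transform computation displayed just before the statement: for $f\in\dom(A)$ it pulls $(\lambda-A)$ through the integral $\int_0^\infty \eup^{-\lambda t}P_tf\,dt$ (justified via \eqref{semi-e08}, \eqref{semi-e10}, i.e.\ Lemma~\ref{semi-09}, which require $P_tf\in\dom(A)$, hence $f\in\dom(A)$) and integrates by parts; the identity $R_\lambda(\lambda-A)f=f$ is then dispatched with ``a similar calculation''. Your second identity is exactly that similar calculation, so there you match the paper. Your first identity, however, takes a genuinely different and more careful route: instead of commuting the unbounded operator with the integral, you compute the difference quotient of $R_\lambda f$ directly from the semigroup law,
\begin{equation*}
    \frac{P_h R_\lambda f - R_\lambda f}{h}
    = \frac{\eup^{\lambda h}-1}{h}\int_h^\infty \eup^{-\lambda s} P_s f\,ds
    - \frac 1h\int_0^h \eup^{-\lambda s} P_s f\,ds
    \xrightarrow[h\to 0]{\|\cdot\|_\infty} \lambda R_\lambda f - f,
\end{equation*}
which proves $R_\lambda f\in\dom(A)$ and $(\lambda-A)R_\lambda f=f$ for \emph{every} $f\in\cont_\infty(\rd)$, not merely $f\in\dom(A)$. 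This is precisely the point the paper glosses over with ``the formal calculation can easily be made rigorous'': as displayed, the paper's computation only gives the right-inverse identity on $\dom(A)$, which would show that the range of $(\lambda-A)$ contains $\dom(A)$ but not that it is all of $\cont_\infty(\rd)$; your Hille--Yosida-style difference-quotient argument supplies the missing surjectivity directly, at the modest cost of justifying $P_hR_\lambda f=\int_0^\infty\eup^{-\lambda t}P_hP_tf\,dt$ by the kernel representation and Fubini, exactly as you flag. Your identification of that interchange and of the uniform (not merely pointwise) convergence of the difference quotient as the crux is accurate, since Definition~\ref{semi-07} demands convergence in $\|\cdot\|_\infty$ to a limit in $\cont_\infty(\rd)$.
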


Since $R_\lambda$ is the Laplace transform of $(P_t)_{t\geq 0}$, the properties of $(R_\lambda)_{\lambda>0}$ can be found from $(P_t)_{t\geq 0}$ and vice versa. With some effort one can even invert the (operator-valued) Laplace transform which leads to the familiar expression for $\eup^x$:
\begin{equation}\label{semi-e24}
	\left(\frac{n}{t} R_{\frac{n}{t}} \right)^n
    = \left( \id - \frac{t}{n} A\right)^{-n}
    \xrightarrow[n\to\infty]{\;\text{strongly}\;} \eup^{tA} = P_t
\end{equation}
(the notation $\eup^{tA}= P_t$ is, for unbounded operators $A$, formal), see Pazy \cite[Chapter 1.8]{pazy83}.

\begin{lemma}\label{semi-24}
    Let $(R_\lambda)_{\lambda>0}$ be the resolvent of a Feller\footnote{This Lemma only needs that the operators $P_t$ are strongly continuous and contractive, Definition~\ref{lpmp-21}.g), d).} semigroup $(P_t)_{t\geq 0}$. Then
    \begin{align}\label{semi-e25}
        \frac{d^n}{d\lambda^n} R_\lambda = n!(-1)^n R_\lambda^{n+1}
        \quad n\in\nat_0.
    \end{align}
\end{lemma}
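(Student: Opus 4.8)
The plan is to establish the formula in two steps: first I would rewrite the $n$-th $\lambda$-derivative of $R_\lambda$ as a single integral against the semigroup, and then I would recognise that integral as $n!$ times $R_\lambda^{n+1}$. Both steps use only contractivity (Definition~\ref{lpmp-21}.d)) and strong continuity, as the footnote advertises; in particular no appeal to the generator or to Theorem~\ref{semi-23} is needed.

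For the first step I would differentiate the defining integral \eqref{semi-e22} under the integral sign $n$ times, obtaining
$$
    \frac{d^n}{d\lambda^n} R_\lambda f(x)
    = \int_0^\infty \frac{\partial^n}{\partial\lambda^n}\bigl(\eup^{-\lambda t}\bigr)\, P_t f(x)\,dt
    = (-1)^n \int_0^\infty t^n \eup^{-\lambda t}\, P_t f(x)\,dt.
$$
To justify moving the derivative inside, I would note that contractivity gives $\|P_t f\|_\infty \le \|f\|_\infty$, so the integrand is dominated in norm by $t^n \eup^{-\lambda t}\|f\|_\infty$, which is integrable on $(0,\infty)$ locally uniformly in $\lambda>0$; the standard differentiation-under-the-integral theorem then applies, pointwise in $x$ and in the uniform norm.

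For the second step I would compute $R_\lambda^{n+1}$ directly from the semigroup property $P_s\circ P_t = P_{s+t}$. Since each $P_s$ is a bounded linear operator it passes through the integral (via the kernel representation of Lemma~\ref{semi-05} and Fubini, or equivalently since bounded operators commute with vector-valued integrals), so the $(n+1)$-fold composition collapses to
$$
    R_\lambda^{n+1} f
    = \int_0^\infty\!\!\cdots\!\!\int_0^\infty \eup^{-\lambda(t_0+\cdots+t_n)}\, P_{t_0+\cdots+t_n} f\,dt_0\cdots dt_n.
$$
Substituting $u=t_0+\cdots+t_n$ and integrating the remaining variables over the simplex $\{t_i\ge 0,\ t_0+\cdots+t_{n-1}\le u\}$, whose Lebesgue measure equals $u^n/n!$, yields
$$
    R_\lambda^{n+1} f = \frac{1}{n!}\int_0^\infty u^n \eup^{-\lambda u}\, P_u f\,du.
$$
Comparing this with the first step gives exactly $\frac{d^n}{d\lambda^n} R_\lambda f = (-1)^n n!\,R_\lambda^{n+1} f$. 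An equivalent route is induction on $n$: the case $n=1$ reads $\frac{d}{d\lambda}R_\lambda = -R_\lambda^2$ (the formula above for $R_\lambda^2$), and the inductive step follows by differentiating $R_\lambda^{n+1}$ and using that the operators $R_\lambda$ commute.

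I expect the only delicate points to be the two interchanges—differentiation versus integration, and the Fubini/kernel step that collapses the composition—both of which rest solely on the contraction estimate and strong continuity. The genuinely non-automatic bookkeeping is the change of variables $u=\sum_i t_i$, i.e. correctly identifying the simplex volume $u^n/n!$ that produces the factorial; this is routine rather than conceptually hard.
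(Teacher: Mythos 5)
Your proposal is correct and is essentially the paper's own proof: both arguments differentiate \eqref{semi-e22} under the integral sign (justified by contractivity) to obtain $(-1)^n\frac{d^n}{d\lambda^n}R_\lambda f=\int_0^\infty t^n\eup^{-\lambda t}P_tf\,dt$, and both then identify this integral with $n!\,R_\lambda^{n+1}f$ using the semigroup property, Fubini, and the simplex identity $t^n/n!=\mathrm{vol}\{t_1,\dots,t_n\geq 0:\,t_1+\cdots+t_n\leq t\}$. The only difference is direction of travel --- the paper expands $t^n$ as an $n!$-fold iterated integral and reindexes to reveal $R_\lambda^{n+1}$, while you expand $R_\lambda^{n+1}$ and collapse it by the change of variables $u=t_0+\cdots+t_n$ --- which is the same computation read backwards.
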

\begin{proof}
    Using a symmetry argument we see
    $$
        t^n = \int_0^t\dots\int_0^t dt_1\dots dt_n = n!\int_0^t\int_0^{t_n}\dots\int_0^{t_2} dt_1\dots dt_n.
    $$
    Let $f\in\cont_\infty(\rd)$ and $x\in\rd$. Then
    \begin{align*}
        (-1)^n\frac{d^n}{d\lambda^n} R_\lambda f(x)
        &= \int_0^\infty (-1)^n \frac{d^n}{d\lambda^n} \eup^{-\lambda t}P_tf(x)\,dt
        = \int_0^\infty t^n \eup^{-\lambda t}P_tf(x)\,dt\\
        &= n!\int_0^\infty \int_0^t \int_0^{t_n}\dots\int_0^{t_2} \eup^{-\lambda t}P_tf(x)\,dt_1 \dots dt_n\,dt\\
        &= n!\int_0^\infty \int_{t_n}^\infty\dots\int_{t_1}^\infty \eup^{-\lambda t}P_tf(x)\,dt\,dt_1 \dots dt_n\\
        &= n!\int_0^\infty \int_0^\infty\dots\int_0^\infty \eup^{-\lambda (t+t_1+\dots+t_n)}P_{t+t_1+\dots+t_n}f(x)\,dt\,dt_1 \dots dt_n\\
        &= n!\,R_\lambda^{n+1}f(x).
    \qedhere
    \end{align*}
\end{proof}

The key result identifying the generators of Feller semigroups is the following theorem due to Hille, Yosida and Ray, a proof can be found in Pazy \cite[Chapter 1.4]{pazy83} or Ethier \& Kurtz \cite[Chapter 4.2]{eth-kur86}; a probabilistic approach is due to It\^o \cite{ito93}.

\begin{theorem}[Hille--Yosida--Ray]\label{semi-25}
\index{generator!Hille--Yosida--Ray theorem}%
\index{Hille--Yosida--Ray theorem}%
    A linear operator $(A,\dom(A))$ on $\cont_{\infty}(\rd)$ generates a Feller semigroup $(P_t)_{t \geq 0}$ if, and only if,
    \begin{enumerate}
	\item[\upshape a)] $\dom(A) \subset \cont_{\infty}(\rd)$ dense.
    \item[\upshape b)] $A$ is \emphh{dissipative}, i.e.\ $\|\lambda f -Af\|_{\infty} \geq \lambda \|f\|_{\infty}$ for some \textup{(}or all\textup{)} $\lambda>0$.
        \index{dissipative operator}%
    \item[\upshape c)] $(\lambda-A)(\dom(A)) = \cont_{\infty}(\rd)$ for some \textup{(}or all\textup{)} $\lambda>0$.
    \item[\upshape d)] $A$ satisfies the \emphh{positive maximum principle}:
    \index{generator!positive maximum principle}%
    \index{positive maximum principle (PMP)}%
    \index{PMP (positive maximum principle)}%
    \begin{equation}\label{PMP} \tag{\text{PMP}}
	       f\in \dom(A), \; f(x_0) = \sup_{x \in \rd} f(x) \geq 0 \implies Af(x_0) \leq 0.
	\end{equation}
	\end{enumerate}
\end{theorem}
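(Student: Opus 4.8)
Assume $(A,\dom(A))$ generates the Feller semigroup $(P_t)_{t\geq 0}$. Then (a) is precisely the density statement of Remark~\ref{semi-11}a), and (c) is immediate from Theorem~\ref{semi-23}, which gives $R_\lambda=(\lambda-A)^{-1}$ and hence $(\lambda-A)\dom(A)=\cont_\infty(\rd)$ for every $\lambda>0$. For (b) I would use the contractivity of $P_t$ to estimate
\[
  \|R_\lambda g\|_\infty \leq \int_0^\infty \eup^{-\lambda t}\,\|P_tg\|_\infty\,dt \leq \tfrac1\lambda\,\|g\|_\infty ,
\]
and then insert $g=(\lambda-A)f$, using $R_\lambda(\lambda-A)f=f$, to obtain $\lambda\|f\|_\infty\leq\|\lambda f-Af\|_\infty$. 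For (d), if $f\in\dom(A)$ satisfies $f(x_0)=\sup f\geq 0$, then $f(y)\leq f(x_0)$ together with sub-Markovianity gives $P_tf(x_0)=\int f\,p_t(x_0,dy)\leq f(x_0)\,P_t1(x_0)\leq f(x_0)$, so the difference quotient $(P_tf(x_0)-f(x_0))/t$ is nonpositive and $Af(x_0)\leq 0$ in the limit.

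\textbf{Sufficiency.} This is the substantial direction, and the plan is the classical Yosida approximation. First I would upgrade the range condition (c) from a single $\lambda_0$ to all $\lambda>0$: dissipativity (b) makes $\lambda-A$ injective with a bounded inverse of norm $\leq 1/\lambda$ on its range, and a resolvent-identity/Neumann-series continuation argument shows the range stays equal to $\cont_\infty(\rd)$, so $R_\lambda:=(\lambda-A)^{-1}$ is a genuine bounded operator for every $\lambda>0$. The Feller-specific input enters at this stage: the positive maximum principle \eqref{PMP} forces $R_\lambda$ to be positivity preserving. Indeed, if $g\geq 0$ and $f:=R_\lambda g$, then $Af=\lambda f-g$; were $\inf f<0$, the function $-f\in\cont_\infty(\rd)$ would attain a strictly positive maximum at some finite $x_0$, and \eqref{PMP} applied to $-f$ would give $Af(x_0)\geq 0$, contradicting $Af(x_0)=\lambda f(x_0)-g(x_0)<0$.

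\textbf{The approximating semigroups.} Next I would introduce the bounded Yosida approximants $A_\lambda:=\lambda AR_\lambda=\lambda^2R_\lambda-\lambda\,\id$. Using density (a) together with $\|\lambda R_\lambda\|\leq 1$ one shows $\lambda R_\lambda f\to f$ for all $f\in\cont_\infty(\rd)$, and hence $A_\lambda f=\lambda R_\lambda(Af)\to Af$ for $f\in\dom(A)$. Since $A_\lambda$ is bounded, $P_t^{(\lambda)}:=\eup^{tA_\lambda}$ is given by its power series, and writing
\[
  \eup^{tA_\lambda}=\eup^{-\lambda t}\sum_{n\geq 0}\frac{(t\lambda^2)^n}{n!}\,R_\lambda^n
\]
exhibits it as a sum of positive contractions; thus each $(P_t^{(\lambda)})_{t\geq 0}$ is a positive contraction semigroup mapping $\cont_\infty(\rd)$ into itself. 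The commuting-generators estimate $\|P_t^{(\lambda)}f-P_t^{(\mu)}f\|_\infty\leq t\,\|A_\lambda f-A_\mu f\|_\infty$ for $f\in\dom(A)$ then shows $P_t^{(\lambda)}f$ is uniformly Cauchy on compact $t$-intervals, so $P_tf:=\lim_{\lambda\to\infty}P_t^{(\lambda)}f$ exists, extends by density and $\|P_t^{(\lambda)}\|\leq 1$ to all of $\cont_\infty(\rd)$, and defines a strongly continuous, positive, contractive (sub-Markovian) semigroup with the Feller property $P_t(\cont_\infty(\rd))\subset\cont_\infty(\rd)$.

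\textbf{Main obstacle.} I expect two points to demand the most care. The first is identifying the generator of the limit semigroup with $A$ itself rather than a proper extension $\widetilde A$: one combines the closedness of $A$ (Remark~\ref{semi-11}b)) with the two surjectivity statements $(\lambda-A)\dom(A)=\cont_\infty(\rd)=(\lambda-\widetilde A)\dom(\widetilde A)$ to force $\dom(\widetilde A)=\dom(A)$. The second is the passage from the positive maximum principle to genuine positivity of the resolvent described above, which is exactly the ingredient distinguishing the Feller (Ray) version from the general Hille--Yosida theorem for contraction semigroups; the remaining Banach-space machinery is entirely standard, and for the full details I would refer to Pazy \cite[Chapter 1.4]{pazy83} or Ethier \& Kurtz \cite[Chapter 4.2]{eth-kur86}.
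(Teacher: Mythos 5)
Your proposal is correct in outline, and it is essentially the proof the paper delegates to its references: Theorem~\ref{semi-25} is stated \emph{without} proof in the text (the reader is pointed to Pazy \cite{pazy83}, Ethier \& Kurtz \cite{eth-kur86} and It\^o's probabilistic approach \cite{ito93}), while Remark~\ref{semi-27} isolates exactly the \eqref{PMP}-specific ingredients. Your necessity arguments (density via Remark~\ref{semi-11}.a), surjectivity via Theorem~\ref{semi-23}, dissipativity from the resolvent bound, \eqref{PMP} from sub-Markovianity) and your two Feller-specific steps --- \eqref{PMP} $\Rightarrow$ dissipativity, and \eqref{PMP} $\Rightarrow$ positivity of $R_\lambda$ by evaluating at a negative minimum of $R_\lambda g$ --- coincide with Remark~\ref{semi-27}, $1^\circ$--$3^\circ$; the only divergence is cosmetic: the paper deduces positivity of $P_t$ from positivity of $R_\lambda$ via complete monotonicity (Lemma~\ref{semi-24}), whereas you get it directly from the exponential series for $\eup^{tA_\lambda}$, which is cleaner once the Yosida approximants are in play. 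The remaining machinery (continuation of the range condition by Neumann series, the commuting-generators estimate, and the identification of the limit generator) is the standard route of the cited references, and your ``two surjectivities'' argument for ruling out a proper extension is precisely the Dynkin--Reuter Lemma~\ref{semi-29} of the paper. One caveat you should flag: under the paper's Definition~\ref{semi-03} a Feller semigroup acts on $\Bcal_b(\rd)$ and is \emph{conservative} ($P_t1=1$), and neither property follows from a)--d). The extension to $\Bcal_b(\rd)$ needs the Riesz-representation step mentioned at the end of Remark~\ref{semi-27}, and conservativeness genuinely fails in general: $A=\frac 12\Delta - c$ with $c>0$ satisfies a)--d) but generates $P_t = \eup^{-ct}\eup^{t\Delta/2}$ with $P_t1=\eup^{-ct}<1$. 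So the theorem --- and your proof, which correctly delivers a strongly continuous, sub-Markovian, positivity-preserving contraction semigroup on $\cont_\infty(\rd)$ --- should be read with the sub-Markovian (not necessarily conservative) convention, as in Ethier \& Kurtz; this is a gap in the paper's statement rather than in your argument.
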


This variant of the Hille--Yosida theorem is not the standard version from functional analysis since we are interested in positivity preserving (sub-Markov) semigroups. Let us briefly discuss the role of the positive maximum principle.
\begin{remark}\label{semi-27}
\index{semigroup!strongly continuous}\index{semigroup!contractive}\index{strong continuity}%
    Let $(P_t)_{t\geq 0}$ be a strongly continuous contraction semigroup on $\cont_\infty(\rd)$, i.e.\ $$\|P_t f\|_\infty\leq \|f\|_\infty \et \lim_{t\to 0}\|P_tf-f\|_\infty=0,$$ cf.\ Definition~\ref{lpmp-21}.d),g).\footnote{These properties are essential for the existence of a generator and the resolvent on $\cont_\infty(\rd)$.}
    \begin{description}
    \item[\normalfont $1^\circ$ Sub-Markov $\Rightarrow$ \eqref{PMP}.]
    Assume that $f\in\dom(A)$ is such that $f(x_0) = \sup f \geq 0$. Then
    \begin{gather*}
		P_t f(x_0)-f(x_0)
		\omu{f \leq f^+}{\leq}{} P_t^+f(x_0)-f^+(x_0)
		\leq \|f^+\|_{\infty}-f^+(x_0) = 0. \\
		\implies Af(x_0)
		= \lim_{t \to 0} \frac{P_t f(x_0)-f(x_0)}{t} \leq 0.
	\end{gather*}
    Thus, \eqref{PMP} holds.

    \item[\normalfont $2^\circ$ \eqref{PMP} $\Rightarrow$ dissipativity.]
    Assume that \eqref{PMP} holds and let $f\in\dom(A)$. Since $f\in\cont_\infty(\rd)$, we may assume that $f(x_0) = |f(x_0)| = \sup|f|$ (otherwise $f\rightsquigarrow -f$). Then
    $$
		\smash[b]{\|\lambda f-Af\|_{\infty}
		\geq \lambda f(x_0)- \underbrace{Af(x_0)}_{\leq 0}
		\geq \lambda f(x_0)  = \lambda \|f\|_{\infty}.}
	$$

    \item[\normalfont $3^\circ$ \eqref{PMP} $\Rightarrow$ sub-Markov.]
    Since $P_t$ is contractive, we have $P_tf(x) \leq \|P_tf\|_\infty \leq \|f\|_\infty \leq 1$ for all $f\in\cont_\infty(\rd)$ such that $|f|\leq 1$.
    In order to see positivity, let $f\in\cont_\infty(\rd)$ be non-negative. We distinguish between two cases:
    \begin{description}
    \item[\normalfont $1^\circ\ $  $R_\lambda f$ does not attain its infimum.] Since $R_\lambda f\in\cont_\infty(\rd)$ vanishes at infinity, we have necessarily $R_\lambda f\geq 0$.

    \item[\normalfont $2^\circ\ $ $\exists x_0\::\: R_\lambda f(x_0) = \inf R_\lambda f$.] Because of the \eqref{PMP} we find
    \begin{gather*}
        \lambda R_\lambda f(x_0)-f(x_0) = AR_\lambda f(x_0) \geq 0\\
        \implies
        \lambda R_\lambda f(x) \geq \inf \lambda R_\lambda f = \lambda R_\lambda f(x_0) \geq f(x_0) \geq 0.
    \end{gather*}
    \end{description}
    This proves that $f\geq 0\implies \lambda R_\lambda f \geq 0$. From \eqref{semi-e25} we see that $\lambda\mapsto R_\lambda f(x)$ is completely monotone, hence it is the Laplace transform of a positive measure. Since $R_\lambda f(x)$ has the integral representation \eqref{semi-e22}, we conclude that $P_tf(x)\geq 0$ (for all $t\geq 0$ as $t\mapsto P_t f$ is continuous).

    Using the Riesz representation theorem (as in Lemma~\ref{semi-05}) we can extend $P_t$ as a sub-Markov operator onto $\Bcal_b(\rd)$.
    \end{description}
\end{remark}

In order to determine the domain $\dom(A)$ of the generator the following `maximal dissipativity' result is handy.
\begin{lemma}[Dynkin, Reuter]\label{semi-29}
\index{generator!no proper extension}%
\index{Dynkin--Reuter lemma}%
    Assume that $(A,\dom(A))$ generates a Feller semigroup and that $(\mathfrak{A},\dom(\mathfrak{A}))$ extends $A$, i.e.\ $\dom(A) \subset \dom(\mathfrak{A})$ and $\mathfrak{A}|_{\dom(A)}=A$. If
    \begin{equation}\label{semi-e26}
        u\in\dom(\Afrak),\; u-\Afrak u=0 \implies u=0,
    \end{equation}
    then $(A,\dom(A))=(\mathfrak{A},\dom(\mathfrak{A}))$.
\end{lemma}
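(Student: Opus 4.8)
The plan is to prove the only nontrivial inclusion, namely $\dom(\Afrak)\subset\dom(A)$; combined with the standing assumption $\Afrak|_{\dom(A)}=A$, this yields the claimed equality of operators. The decisive tool is Theorem~\ref{semi-23}, which identifies $R_1=(1-A)^{-1}$ and thereby tells us that $1-A$ maps $\dom(A)$ \emph{onto} all of $\cont_\infty(\rd)$: for every $g\in\cont_\infty(\rd)$ the element $R_1 g\in\dom(A)$ satisfies $(1-A)R_1 g=g$. The whole argument is really a device for exploiting this surjectivity together with the uniqueness hypothesis \eqref{semi-e26}.

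First I would fix an arbitrary $u\in\dom(\Afrak)$ and introduce $g:=u-\Afrak u$, which lies in $\cont_\infty(\rd)$ since $\Afrak$ maps into $\cont_\infty(\rd)$. Using the surjectivity just noted, I would set $w:=R_1 g\in\dom(A)$, so that $w-Aw=g$. Because $w\in\dom(A)\subset\dom(\Afrak)$ and $\Afrak$ extends $A$, we have $\Afrak w=Aw$, and hence $w-\Afrak w=g$ as well. The point of this step is that the \emph{same} $g$ is now realized simultaneously as $(1-A)w$ with $w\in\dom(A)$ and as $(1-\Afrak)u$.

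Next I would form the difference $h:=u-w$. Linearity of $\Afrak$ and $u,w\in\dom(\Afrak)$ give $h\in\dom(\Afrak)$, and subtracting the two identities yields
$$
    h-\Afrak h=(u-\Afrak u)-(w-\Afrak w)=g-g=0.
$$
By the hypothesis \eqref{semi-e26} this forces $h=0$, i.e.\ $u=w\in\dom(A)$. Since $u\in\dom(\Afrak)$ was arbitrary, we obtain $\dom(\Afrak)\subset\dom(A)$, and as $\Afrak$ agrees with $A$ on $\dom(A)$ the two operators coincide.

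I do not expect any real obstacle here: the proof is a one-line computation once the correct auxiliary element $w=R_1 g$ is written down. The only thing to get right is the invocation of Theorem~\ref{semi-23} to guarantee that $1-A$ is surjective onto $\cont_\infty(\rd)$ — this is precisely what allows an arbitrary $(1-\Afrak)$-image to be produced already as a $(1-A)$-image, after which \eqref{semi-e26} closes the gap between the two domains.
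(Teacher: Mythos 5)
Your proof is correct and follows essentially the same route as the paper: the paper's proof simply states that $(\id-A):\dom(A)\to\cont_\infty(\rd)$ is bijective (Theorem~\ref{semi-23}) while \eqref{semi-e26} makes $(\id-\Afrak)$ injective, so $(\id-A)$ admits no proper injective extension, and your computation with $w=R_1(u-\Afrak u)$ and $h=u-w$ is exactly the unwinding of that abstract one-liner. Nothing is missing.
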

\begin{proof}
    Since $A$ is a generator, $(\id - A):\dom(A)\to\cont_\infty(\rd)$ is bijective. On the other hand, the relation \eqref{semi-e26} means that $(\id-\mathfrak{A})$ is injective, but $(\id-A)$ cannot have a proper injective extension.
\end{proof}

\begin{theorem}\label{semi-31}
	Let $(P_t)_{t \geq 0}$ be a Feller semigroup with generator $(A,\dom(A))$. Then
\index{generator!domain}%
\index{Feller process!generator}%
    \begin{equation}\label{semi-e28}
        \dom(A)
        = \left\{f \in \cont_{\infty}(\rd)\:\:\middle|\:\: \exists g \in \cont_{\infty}(\rd) \;\: \forall x\::\: \lim_{t\to 0} \frac{P_t f(x)-f(x)}{t} = g(x) \right\}.
	\end{equation}
\end{theorem}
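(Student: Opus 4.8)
The plan is to prove the two inclusions separately. Write $D$ for the set on the right-hand side of \eqref{semi-e28}. The inclusion $\dom(A)\subseteq D$ is immediate: if $f\in\dom(A)$ then $(P_tf-f)/t\to Af$ uniformly by \eqref{semi-e04}, hence pointwise, so $f\in D$ with $g=Af$. All the work is in the reverse inclusion $D\subseteq\dom(A)$, and the point is to upgrade the \emph{pointwise} convergence granted by membership in $D$ to the \emph{uniform} convergence required by Definition~\ref{semi-07}.

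So fix $f\in D$ with pointwise limit $g\in\cont_\infty(\rd)$ and fix any $\lambda>0$. Since $\lambda f-g\in\cont_\infty(\rd)$ and $R_\lambda=(\lambda-A)^{-1}$ by Theorem~\ref{semi-23}, the function $u:=R_\lambda(\lambda f-g)$ lies in $\dom(A)$ and satisfies $\lambda u-Au=\lambda f-g$, i.e.\ $Au=\lambda(u-f)+g$. Because $u\in\dom(A)$, the difference quotients $(P_tu-u)/t$ converge uniformly, in particular pointwise, to $Au$. Subtracting the hypothesis on $f$, the function $w:=f-u\in\cont_\infty(\rd)$ therefore satisfies
$$
    \lim_{t\to 0}\frac{P_tw(x)-w(x)}{t}
    = g(x)-Au(x)
    = \lambda\bigl(f(x)-u(x)\bigr)
    = \lambda w(x)\fa x\in\rd.
$$
It remains to show $w\equiv 0$, for then $f=u\in\dom(A)$ and $Af=Au=\lambda(u-f)+g=g$, which is exactly what \eqref{semi-e28} asserts.

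To see $w\equiv 0$ I would argue by contradiction with a pointwise positive-maximum-principle estimate. Suppose $w\not\equiv 0$; replacing $w$ by $-w$ if necessary (the displayed identity is invariant under this sign change), I may assume $\sup_x w(x)=w(x_0)>0$, the supremum being attained since $w\in\cont_\infty(\rd)$ vanishes at infinity. As $p_t(x_0,\cdot)$ is a (sub-)probability measure and $w(y)\le w(x_0)$ for all $y$, I get
$$
    P_tw(x_0)=\int w(y)\,p_t(x_0,dy)\le w(x_0)\int p_t(x_0,dy)\le w(x_0),
$$
so $(P_tw(x_0)-w(x_0))/t\le 0$ for every $t>0$; letting $t\to 0$ forces $\lambda w(x_0)\le 0$, contradicting $\lambda>0$ and $w(x_0)>0$. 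Hence $\sup w\le 0$ and, by the same argument applied to $-w$, $\inf w\ge 0$, so $w\equiv 0$.

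The only genuinely delicate point is the passage from pointwise to uniform convergence, and the maximum-principle argument above is exactly what lets me sidestep it. The direct route---trying to establish the integral formula $P_tf-f=\int_0^t P_sg\,ds$ by differentiating $t\mapsto P_tf(x)$ via $\frac{d}{dt}P_tf(x)=\lim_{h\to 0}\int\frac{P_hf(y)-f(y)}{h}\,p_t(x,dy)$---founders on the lack of an a priori uniform bound on the difference quotients $(P_hf-f)/h$, which pointwise convergence alone does not provide. Routing everything through the resolvent $R_\lambda$ and using the contractivity and positivity encoded in the kernels $p_t(x,\cdot)$ replaces that missing domination by a soft extremal argument.
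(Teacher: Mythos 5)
Your proof is correct and is in substance the paper's own argument: the paper defines the pointwise extension $\mathfrak{A}f(x):=\lim_{t\to 0}(P_tf(x)-f(x))/t$ on the right-hand side of \eqref{semi-e28}, observes that $\mathfrak{A}$ inherits \eqref{PMP} because the positive maximum principle is a pointwise assertion (Remark~\ref{semi-27}), deduces dissipativity and hence injectivity of $\lambda-\mathfrak{A}$, and concludes with the Dynkin--Reuter Lemma~\ref{semi-29}, whose proof is precisely your construction $u=R_\lambda(\lambda f-g)\in\dom(A)$ via Theorem~\ref{semi-23} together with $w:=f-u$ lying in the kernel of $\lambda-\mathfrak{A}$. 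Your contradiction at the maximum point of $w$ is just the unpacked form of the dissipativity estimate $\|\lambda w-\mathfrak{A}w\|_\infty\geq\lambda\|w\|_\infty$, so you have inlined the two lemmas the paper cites rather than taken a different route.
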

\begin{proof}
	Denote by $\dom(\mathfrak{A})$ the right-hand side of \eqref{semi-e28} and define
    $$
		\mathfrak{A}f(x) := \lim_{t \to 0} \frac{P_t f(x)-f(x)}{t}
        \fa f\in\dom(\mathfrak{A}), \; x\in\rd.
    $$
    Obviously, $(\mathfrak{A},\dom(\mathfrak{A}))$ is a linear operator which extends $(A,\dom(A))$. Since \eqref{PMP} is, essentially, a pointwise assertion (see Remark~\ref{semi-27}, $1^\circ$), $\mathfrak{A}$ inherits \eqref{PMP}; in particular, $\mathfrak{A}$ is dissipative (see Remark~\ref{semi-27}, $2^\circ$):
    $$
		\|\mathfrak{A} f-\lambda f\|_{\infty} \geq \lambda\|f\|_{\infty}.
	$$
	This implies \eqref{semi-e26}, and the claim follows from Lemma~\ref{semi-29}.
\end{proof}

\chapter{The generator of a L\'evy process}\label{gen}

We want to study the structure of the generator of (the semigroup corresponding to) a L\'evy process $X=\Xt$. This will also lead to a proof of the L\'evy--Khintchine formula.

Our approach uses some Fourier analysis. We denote by $\cont_c^\infty(\rd)$ and $\Scal(\rd)$ the smooth, compactly supported functions and the smooth, rapidly decreasing `Schwartz functions'.\footnote{To be precise, $f\in\Scal(\rd)$, if $f\in\cont^\infty(\rd)$ and if $\sup_{x\in\rd} (1+|x|^N) |\partial^\alpha f(x)|\leq c_{N,\alpha}$ for any $N\in\nat_0$ and any multiindex $\alpha\in\nat_0^d$.} The \emphh{Fourier transform}
\index{Fourier transform}%
is denoted by
$$
    \widehat f(\xi) = \Fcal f(\xi) := (2\pi)^{-d}\int_{\rd} f(x)\,\eup^{-\iup \xi\cdot x}\,dx,\quad f\in L^1(dx).
$$
Observe that $\Fcal f$ is chosen in such a way that the \emphh{characteristic function becomes the inverse Fourier transform}.

We have seen in Proposition~\ref{levy-11} and its Corollaries~\ref{levy-13} and~\ref{levy-15} that $X$ is completely characterized by the characteristic exponent $\psi:\rd\to\comp$
$$
	\Ee\,\eup^{\iup  \xi\cdot X_t} = \big[\Ee\,\eup^{\iup  \xi\cdot X_1}\big]^t = \eup^{-t \psi(\xi)},\quad t\geq 0,\;\xi\in\rd.
$$
We need a few more properties of $\psi$ which result from the fact that $\chi(\xi) = \eup^{-\psi(\xi)}$ is a characteristic function.
\begin{lemma}\label{gen-03}
    Let $\chi(\xi)$ be any characteristic function of a probability measure $\mu$. Then
    \begin{equation}\label{gen-e02}
        |\chi(\xi+\eta) -\chi(\xi)\chi(\eta)|^2
        \leq (1-|\chi(\xi)|^2)(1-|\chi(\eta)|^2),\quad \xi,\eta\in\rd.
    \end{equation}
\end{lemma}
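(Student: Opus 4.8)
The plan is to represent both sides of the claimed inequality as integrals against the product measure $\mu\otimes\mu$ and then invoke the Cauchy--Schwarz inequality. Writing $\chi(\xi)=\int \eup^{\iup\xi\cdot x}\,\mu(dx)$ and using that $\mu$ is a probability measure (so that $\int\mu(dy)=1$), I would first record the double-integral representations
$$
    \chi(\xi+\eta) = \iint \eup^{\iup(\xi+\eta)\cdot x}\,\mu(dx)\,\mu(dy)
    \et
    \chi(\xi)\chi(\eta) = \iint \eup^{\iup\xi\cdot x+\iup\eta\cdot y}\,\mu(dx)\,\mu(dy).
$$

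The crucial step is a symmetrization. Forming the difference $\chi(\xi+\eta)-\chi(\xi)\chi(\eta)$ from these representations, swapping the roles of the dummy variables $x$ and $y$ (which leaves $\mu\otimes\mu$ invariant), and averaging the two resulting expressions, I expect to obtain the factorized identity
$$
    2\big(\chi(\xi+\eta)-\chi(\xi)\chi(\eta)\big)
    = \iint \big(\eup^{\iup\xi\cdot x}-\eup^{\iup\xi\cdot y}\big)\big(\eup^{\iup\eta\cdot x}-\eup^{\iup\eta\cdot y}\big)\,\mu(dx)\,\mu(dy),
$$
which is verified by expanding the product into its four exponential terms and integrating each against $\mu\otimes\mu$. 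This is the heart of the argument: it converts the single-variable "defect" on the left into a genuinely bilinear double integral.

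Once the integrand is factorized, Cauchy--Schwarz on $L^2(\mu\otimes\mu)$ gives
$$
    2\big|\chi(\xi+\eta)-\chi(\xi)\chi(\eta)\big|
    \leq \Big(\iint \big|\eup^{\iup\xi\cdot x}-\eup^{\iup\xi\cdot y}\big|^2\,\mu(dx)\,\mu(dy)\Big)^{1/2}
          \Big(\iint \big|\eup^{\iup\eta\cdot x}-\eup^{\iup\eta\cdot y}\big|^2\,\mu(dx)\,\mu(dy)\Big)^{1/2}.
$$
It then remains to evaluate the two Gram-type integrals. Using $\big|\eup^{\iup\xi\cdot x}-\eup^{\iup\xi\cdot y}\big|^2 = 2 - 2\Re\,\eup^{\iup\xi\cdot(x-y)}$ together with $\iint \eup^{\iup\xi\cdot(x-y)}\,\mu(dx)\,\mu(dy) = \chi(\xi)\overline{\chi(\xi)} = |\chi(\xi)|^2$ yields
$$
    \iint \big|\eup^{\iup\xi\cdot x}-\eup^{\iup\xi\cdot y}\big|^2\,\mu(dx)\,\mu(dy) = 2\big(1-|\chi(\xi)|^2\big),
$$
and likewise with $\eta$ in place of $\xi$. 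Substituting these back, cancelling the factor $2$, and squaring produces exactly \eqref{gen-e02}.

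The only delicate point is the symmetrization trick that produces the bilinear double integral; everything afterwards is a direct Cauchy--Schwarz estimate together with bookkeeping from the definition of $\chi$. I would be careful to track the factor of $2$ and to confirm that the cross terms $\iint \eup^{\iup\xi\cdot y+\iup\eta\cdot x}\,\mu(dx)\,\mu(dy)=\chi(\xi)\chi(\eta)$ combine correctly when expanding the factorized product, since a sign slip there would spoil the identity.
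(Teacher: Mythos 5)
Your proposal is correct and follows essentially the same route as the paper's proof: the same symmetrization in the dummy variables $x$ and $y$ producing the factorized bilinear double integral, followed by Cauchy--Schwarz on $L^2(\mu\otimes\mu)$ and the evaluation $\iint\big|\eup^{\iup\xi\cdot x}-\eup^{\iup\xi\cdot y}\big|^2\,\mu(dx)\,\mu(dy)=2\big(1-|\chi(\xi)|^2\big)$. The only cosmetic difference is that the paper applies Cauchy--Schwarz after rewriting the moduli as real square roots $\sqrt{1-\cos(y-x)\cdot\xi}\,\sqrt{1-\cos(y-x)\cdot\eta}$, whereas you apply it directly to the complex factors; your factor-of-$2$ bookkeeping checks out.
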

\begin{proof}
    Since $\mu$ is a probability measure, we find from the definition of $\chi$
    \begin{align*}
        \chi(\xi+\eta)-\chi(\xi)\chi(\eta)
        &= \iint \big(\eup^{\iup x\cdot\xi}\eup^{\iup x\cdot\eta} - \eup^{\iup x\cdot\xi}\eup^{\iup y\cdot\eta}\big)\,\mu(dx)\,\mu(dy)\\
        &= \frac 12\iint \big(\eup^{\iup x\cdot\xi}-\eup^{\iup y\cdot\xi}\big) \big(\eup^{\iup x\cdot\eta}-\eup^{\iup y\cdot\eta}\big)\,\mu(dx)\,\mu(dy).
    \end{align*}
    In the last equality we use that the integrand is symmetric in $x$ and $y$, which allows us to interchange the variables. Using the elementary formula $|\eup^{\iup a}-\eup^{\iup b}|^2 = 2-2\cos(b-a)$ and the Cauchy--Schwarz inequality yield
    \begin{align*}
        |\chi&(\xi+\eta)-\chi(\xi)\chi(\eta)|\\
        &\leq \frac 12\iint \big|\eup^{\iup x\cdot\xi}-\eup^{\iup y\cdot\xi}\big| \cdot \big|\eup^{\iup x\cdot\eta}-\eup^{\iup y\cdot\eta}\big|\,\mu(dx)\,\mu(dy)\\
        &= \iint \sqrt{1-\cos(y-x)\cdot\xi} \sqrt{1-\cos (y-x)\cdot\eta}\:\mu(dx)\,\mu(dy)\\
        &\leq \sqrt{\iint \big(1-\cos(y-x)\cdot\xi\big)\,\mu(dx)\,\mu(dy)}\sqrt{\iint \big(1-\cos(y-x)\cdot\eta\big)\,\mu(dx)\,\mu(dy)} .
    \end{align*}
    This finishes the proof as
    \begin{gather*}
        \iint \cos(y-x)\cdot\xi\,\mu(dx)\,\mu(dy)
        = \Re \left[\int \eup^{\iup y\cdot\xi}\,\mu(dy)\int \eup^{-\iup x\cdot\xi}\,\mu(dx)\right]
        = |\chi(\xi)|^2.
    \qedhere
    \end{gather*}
\end{proof}

\begin{theorem}\label{gen-05}
    Let $\psi: \rd \to \mathbb{C}$ be the characteristic exponent of a L\'evy process. Then the function $\xi\mapsto\sqrt{|\psi(\xi)|}$ is subadditive and
\index{characteristic exponent!subadditive@is subadditive}%
\index{characteristic exponent!polynomially bounded@is polynomially bounded}%
    \begin{equation}\label{gen-e04}
        |\psi(\xi)| \leq c_\psi (1+|\xi|^2),\quad\xi \in \rd.
    \end{equation}
\end{theorem}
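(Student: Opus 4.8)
The plan is to apply Lemma~\ref{gen-03} not to the characteristic function $\chi=\eup^{-\psi}$ of $X_1$ directly, but to the characteristic function $\chi_t(\xi)=\eup^{-t\psi(\xi)}$ of $X_t$, and then to linearise in $t$ as $t\downarrow 0$. Working with $\chi$ itself is awkward because extracting $\psi=-\log\chi$ involves the logarithm, whereas the first-order expansion $\eup^{-t\psi(\xi)}=1-t\psi(\xi)+O(t^2)$---valid for fixed $\xi$, since we are merely expanding the entire function $t\mapsto\eup^{-ts}$ with $s=\psi(\xi)$, so no smoothness of $\psi$ is required---makes $\psi$ appear linearly and thus turns the quadratic inequality of Lemma~\ref{gen-03} into something usable.

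First I would fix $\xi,\eta\in\rd$ and apply Lemma~\ref{gen-03} to $\chi_t$, which gives
\[
    \big|\chi_t(\xi+\eta)-\chi_t(\xi)\chi_t(\eta)\big|^2
    \le \big(1-|\chi_t(\xi)|^2\big)\big(1-|\chi_t(\eta)|^2\big).
\]
Expanding both sides in powers of $t$, the left-hand side equals $t^2\,|\psi(\xi+\eta)-\psi(\xi)-\psi(\eta)|^2+O(t^3)$, while $1-|\chi_t(\xi)|^2=1-\eup^{-2t\Re\psi(\xi)}=2t\Re\psi(\xi)+O(t^2)$ turns the right-hand side into $4t^2\,\Re\psi(\xi)\,\Re\psi(\eta)+O(t^3)$. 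Dividing by $t^2$ and letting $t\downarrow 0$ yields the pointwise estimate
\[
    \big|\psi(\xi+\eta)-\psi(\xi)-\psi(\eta)\big|^2 \le 4\,\Re\psi(\xi)\,\Re\psi(\eta),
\]
where one notes $\Re\psi\ge 0$ because $|\chi_t|=\eup^{-t\Re\psi}\le 1$.

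From this estimate, subadditivity of $\sqrt{|\psi|}$ is a one-line consequence: by the triangle inequality and $\Re\psi\le|\psi|$,
\[
    |\psi(\xi+\eta)|
    \le |\psi(\xi)|+|\psi(\eta)|+2\sqrt{\Re\psi(\xi)\,\Re\psi(\eta)}
    \le \Big(\sqrt{|\psi(\xi)|}+\sqrt{|\psi(\eta)|}\Big)^2,
\]
so that $\sqrt{|\psi(\xi+\eta)|}\le\sqrt{|\psi(\xi)|}+\sqrt{|\psi(\eta)|}$. For the quadratic bound I would then feed this into the standard fact that a subadditive, locally bounded function grows at most linearly. Since $\psi$ is continuous with $\psi(0)=0$, the function $g:=\sqrt{|\psi|}$ is bounded on the closed unit ball, say $M:=\sup_{|\xi|\le 1}g(\xi)<\infty$. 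For $\xi\neq 0$ set $n:=\lceil|\xi|\rceil\in\nat$, so that $|\xi/n|\le 1$ and $n\le|\xi|+1$; applying subadditivity $n$ times gives $g(\xi)\le n\,g(\xi/n)\le nM\le M(1+|\xi|)$. Hence $|\psi(\xi)|=g(\xi)^2\le M^2(1+|\xi|)^2\le 2M^2(1+|\xi|^2)$, which is \eqref{gen-e04} with $c_\psi=2M^2$.

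The only genuinely delicate point is controlling the error terms in the $t$-expansion so as to justify passing to the limit, but this is harmless: $\xi$ and $\eta$ are held fixed throughout, so the $O(t^k)$ remainders are ordinary one-variable Taylor remainders of $\eup^{-ts}$ and $\eup^{-2t\Re\psi(\xi)}$ and cause no difficulty. The conceptual step that does the real work is the decision to linearise $\chi_t$ at $t=0$ rather than to wrestle with $\log\chi_1$.
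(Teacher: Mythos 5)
Your proposal is correct and follows essentially the same route as the paper: Lemma~\ref{gen-03} applied to $\chi_t=\eup^{-t\psi}$, the limit $t\to 0$ giving $|\psi(\xi+\eta)-\psi(\xi)-\psi(\eta)|^2\le 4\Re\psi(\xi)\,\Re\psi(\eta)$, and then subadditivity of $\sqrt{|\psi|}$ via the (lower) triangle inequality. The only cosmetic difference is the last step, where the paper deduces $|\psi(2\xi)|\le 4|\psi(\xi)|$ and uses dyadic scaling with $2^{n-1}\le|\xi|\le 2^n$, while you split $\xi$ into $n=\lceil|\xi|\rceil$ equal pieces---both rest on subadditivity plus boundedness of $\psi$ on the unit ball (i.e.\ its continuity) and give the same quadratic bound.
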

\begin{proof}
    We use \eqref{gen-e02} with $\chi = \eup^{-t\psi}$, divide by $t>0$ and let $t\to 0$. Since $|\chi| = \eup^{-t\Re\psi}$, this gives
    $$
        |\psi(\xi+\eta)-\psi(\xi) - \psi(\eta)|^2
        \leq 4\Re\psi(\xi)\Re\psi(\eta)
        \leq 4|\psi(\xi)|\cdot|\psi(\eta)|.
    $$
    By the lower triangle inequality,
    $$
        |\psi(\xi+\eta)|-|\psi(\xi)|-|\psi(\eta)| \leq 2\sqrt{|\psi(\xi)|}\sqrt{|\psi(\eta)|}
    $$
    and this is the same as subadditivity: $\sqrt{|\psi(\xi+\eta)|}\leq \sqrt{|\psi(\xi)|} + \sqrt{|\psi(\eta)|}$.

    In particular, $|\psi(2\xi)| \leq 4|\psi(\xi)|$. For any $\xi\neq 0$ there is some integer $n=n(\xi)\in\integer$ such that $2^{n-1}\leq |\xi|\leq 2^{n}$, so
    \begin{gather*}
        |\psi(\xi)|
        = |\psi(2^{n} 2^{-n}\xi)|
        \leq \max\{1,2^{2n}\} \sup_{|\eta|\leq 1}|\psi(\eta)|
        \leq 2\sup_{|\eta|\leq 1}|\psi(\eta)|(1+|\xi|^2).
    \qedhere
    \end{gather*}
\end{proof}

\begin{lemma}\label{gen-11}
\index{generator!domain}%
	Let $\Xt$ be a L\'evy process and denote by $(A,\dom(A))$ its infinitesimal generator. Then $\cont_c^{\infty}(\rd) \subset \dom(A)$.
\end{lemma}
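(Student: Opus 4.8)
The plan is to compute the action of $P_t$ on $f\in\cont_c^\infty(\rd)$ via the Fourier transform, exploiting that for a L\'evy process $P_t$ is a convolution operator (Remark~\ref{lpmp-11} and \eqref{lpmp-e24}) whose Fourier multiplier is exactly $\eup^{-t\psi(\xi)}$. Since $f\in\cont_c^\infty(\rd)\subset\Scal(\rd)$, Fourier inversion gives $f(x)=\int_\rd \widehat f(\xi)\,\eup^{\iup\xi\cdot x}\,d\xi$ with $\widehat f\in\Scal(\rd)$. Writing $P_tf(x)=\Ee f(X_t+x)$ and interchanging expectation and integral (Fubini applies since $\widehat f\in L^1$ and $|\eup^{\iup\xi\cdot(X_t+x)}|=1$), I obtain
$$
    P_tf(x) = \int_\rd \widehat f(\xi)\,\eup^{\iup\xi\cdot x}\,\Ee\,\eup^{\iup\xi\cdot X_t}\,d\xi = \int_\rd \widehat f(\xi)\,\eup^{-t\psi(\xi)}\,\eup^{\iup\xi\cdot x}\,d\xi.
$$
This immediately suggests the candidate $g(x):=-\int_\rd \widehat f(\xi)\,\psi(\xi)\,\eup^{\iup\xi\cdot x}\,d\xi$ for $Af$.

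First I would check $g\in\cont_\infty(\rd)$. By Theorem~\ref{gen-05} we have $|\psi(\xi)|\leq c_\psi(1+|\xi|^2)$, so $|\widehat f(\xi)\,\psi(\xi)|\leq c_\psi(1+|\xi|^2)|\widehat f(\xi)|$, which is integrable because $\widehat f$ decays faster than any polynomial. Hence $g$ is the inverse Fourier transform of an $L^1$ function, and therefore lies in $\cont_\infty(\rd)$ (continuity by dominated convergence, decay at infinity by Riemann--Lebesgue).

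The heart of the argument is a uniform estimate of the difference quotient. From the two displays,
$$
    \frac{P_tf(x)-f(x)}{t}-g(x) = \int_\rd \widehat f(\xi)\,\eup^{\iup\xi\cdot x}\left(\frac{\eup^{-t\psi(\xi)}-1}{t}+\psi(\xi)\right)d\xi,
$$
so the left-hand side is bounded, uniformly in $x$, by $\int_\rd |\widehat f(\xi)|\,\big|t^{-1}(\eup^{-t\psi(\xi)}-1+t\psi(\xi))\big|\,d\xi$. The key elementary fact is $\Re\psi\geq 0$ (because $|\eup^{-t\psi}|=\eup^{-t\Re\psi}\leq 1$), so $w:=-t\psi(\xi)$ satisfies $\Re w\leq 0$; the integral form of the Taylor remainder then yields $|\eup^{w}-1-w|\leq \tfrac12|w|^2$. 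Consequently the integrand is dominated by $\tfrac t2|\widehat f(\xi)|\,|\psi(\xi)|^2$, and
$$
    \left\|\frac{P_tf-f}{t}-g\right\|_\infty \leq \frac t2\int_\rd |\widehat f(\xi)|\,|\psi(\xi)|^2\,d\xi \leq \frac{c_\psi^2\,t}{2}\int_\rd |\widehat f(\xi)|\,(1+|\xi|^2)^2\,d\xi.
$$
The last integral is finite by the rapid decay of $\widehat f$, so the right-hand side tends to $0$ as $t\to 0$. This proves $f\in\dom(A)$ with $Af=g$, and hence $\cont_c^\infty(\rd)\subset\dom(A)$.

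The only genuine obstacle is upgrading the pointwise convergence $t^{-1}(\eup^{-t\psi(\xi)}-1)\to-\psi(\xi)$ to \emph{uniform} convergence of the difference quotient; everything hinges on combining the polynomial bound $|\psi(\xi)|\leq c_\psi(1+|\xi|^2)$ of Theorem~\ref{gen-05} with the Schwartz decay of $\widehat f$ to dominate the integrand, and on the sign condition $\Re\psi\geq 0$, which produces the clean prefactor $t$. I would also remark that $g$ is automatically real-valued, since each $t^{-1}(P_tf-f)$ is real (as $P_t$ maps real functions to real functions), consistent with $g$ being their uniform limit.
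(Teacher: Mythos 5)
Your proof is correct, and it shares the paper's Fourier-analytic core: both arguments rest on the multiplier identity $\widehat{P_tf}=\eup^{-t\psi}\widehat f$, the candidate $g=-\Fcal^{-1}(\psi\widehat f)$, the polynomial bound $|\psi(\xi)|\leq c_\psi(1+|\xi|^2)$ from Theorem~\ref{gen-05} to ensure $\psi\widehat f\in L^1$, and the Riemann--Lebesgue lemma to place $g$ in $\cont_\infty(\rd)$. Where you genuinely diverge is the closing step. The paper establishes only \emph{pointwise} convergence of the difference quotients and then invokes Theorem~\ref{semi-31} --- the pointwise characterization of $\dom(A)$, which rests on the positive maximum principle and the Dynkin--Reuter maximal-dissipativity lemma (Lemma~\ref{semi-29}) --- to conclude $f\in\dom(A)$. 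You instead verify Definition~\ref{semi-07} directly, upgrading to uniform convergence via the remainder estimate $|\eup^w-1-w|\leq\tfrac12|w|^2$ valid for $\Re w\leq 0$; this is legitimate here because $\Re\psi\geq 0$ (from $|\eup^{-t\psi}|=|\Ee\,\eup^{\iup\xi\cdot X_t}|\leq 1$), and it yields the integrable dominating bound $\tfrac t2\,c_\psi^2\,|\widehat f(\xi)|(1+|\xi|^2)^2$. Your route buys two things: the proof is self-contained, needing nothing from Chapter~\ref{semi} beyond the definition of the generator, and it is quantitative, giving the explicit rate $\big\|t^{-1}(P_tf-f)-g\big\|_\infty = O(t)$ as $t\to 0$. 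The paper's softer step, by contrast, trades this for reusability: the pointwise criterion of Theorem~\ref{semi-31} applies in situations where no clean multiplier formula, hence no such uniform estimate, is available (it is invoked again, e.g., in Lemma~\ref{symb-52} for solutions of L\'evy-driven SDEs). Your closing remark on realness of $g$ is fine as stated; alternatively it follows directly from $\overline{\psi(-\xi)}=\psi(\xi)$ and $\overline{\widehat f(-\xi)}=\widehat f(\xi)$ for real $f$.
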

\begin{proof}
    Let $f\in\cont_c^\infty(\rd)$. By definition, $P_t f(x) = \Ee f(X_t+x)$. Using the differentiation lemma for parameter-dependent integrals
    (e.g.\ \cite[Theorem 11.5]{schilling-mims} or \cite[12.2]{schilling-mi}) it is not hard to see that $P_t: \Scal(\rd) \to \Scal(\rd)$. Obviously,
    \begin{align}\label{gen-e12}
		\eup^{-t \psi(\xi)} = \Ee\,\eup^{\iup  \xi\cdot X_t} = \Ee^x \eup^{\iup  \xi\cdot (X_t-x)} = \eup_{-\xi}(x) P_t \eup_{\xi}(x)
	\end{align}
	for $\eup_{\xi}(x) := \eup^{\iup  \xi\cdot x}$. Recall that the Fourier transform of $f \in \Scal(\rd)$ is again in $\Scal(\rd)$. From
    \begin{align}
		P_t f
		\pomu{\eqref{gen-e12}}{=}{} P_t \int \widehat{f}(\xi) \eup_{\xi}(\cdot) \, d\xi
		&\pomu{\eqref{gen-e12}}{=}{} \int \widehat{f}(\xi) P_t \eup_{\xi}(\cdot) \, d\xi \label{gen-e13} \\
		&\omu{\eqref{gen-e12}}{=}{} \int \widehat{f}(\xi) \eup_{\xi}(\cdot) \eup^{-t \psi(\xi)} \, d\xi\notag
	\end{align}
	we conclude that $\widehat{P_t f} = \widehat{f} \eup^{-t\psi}$. Hence,
    \begin{equation}\label{gen-e14}
		P_t f = \Fcal^{-1}(\widehat{f} \eup^{-t \psi}).
	\end{equation}
	Consequently,
    \begin{align*}
		&\frac{\widehat{P_t f}-\widehat{f}}{t}
		= \frac{\eup^{-t\psi} \widehat{f}-\widehat{f}}{t}
		\xrightarrow[\;t\to 0\;]{} -\psi\widehat{f} \\
		\xRightarrow{\;\:\widehat{f} \in \Scal(\rd)\;\:}\quad &\frac{P_t f(x)-f(x)}{t}
		\xrightarrow[\;t\to 0\;]{} g(x) := \Fcal^{-1}(-\psi \widehat{f})(x).
	\end{align*}
    Since $\psi$ grows at most polynomially (Lemma~\ref{gen-05}) and $\widehat f\in\Scal(\rd)$, we see $\psi\widehat f\in L^1(dx)$ and, by the Riemann--Lebesgue lemma, $g\in\cont_{\infty}(\rd)$. Using Theorem~\ref{semi-31} it follows that $f\in\dom(A)$.
\end{proof}

\begin{definition}\label{gen-13}
\index{symbol}%
	Let $L : \cont_b^2(\rd)\to \cont_b(\rd)$ be a linear operator. Then
    \begin{equation}\label{gen-e16}
		L(x,\xi) := \eup_{-\xi}(x) L_x \eup_{\xi}(x)
	\end{equation}
	is the \emphh{symbol} of the operator $L=L_x$,  where $\eup_\xi(x) := \eup^{\iup \xi\cdot x}$.
\end{definition}

The proof of Lemma~\ref{gen-11} actually shows that we can recover an operator $L$ from its symbol $L(x,\xi)$ if, say, $L:\cont_b^2(\rd) \to \cont_b(\rd)$ is continuous:\footnote{As usual, $\cont_b^2(\rd)$ is endowed with the norm $\|u\|_{(2)} = \sum_{0 \leq |\alpha| \leq 2} \|\partial^{\alpha} u \|_{\infty}$.} Indeed, for all $u\in\cont_c^\infty(\rd)$
\begin{align*}
	Lu(x) &= L \int \widehat{u}(\xi) \eup_{\xi}(x) \, d\xi \\
	&= \int \widehat{u}(\xi) L_x \eup_{\xi}(x) \, d\xi \\
	&= \int \widehat{u}(\xi) L(x,\xi) \eup_{\xi}(x) d\xi = \Fcal^{-1}(L(x,\cdot)\Fcal u(\cdot))(x).
\end{align*}

\begin{example}\label{gen-15}
\index{Brownian motion!generator}%
\index{generator!Brownian motion@of Brownian motion}%
    A typical example would be the Laplace operator (i.e.\ the generator of a Brownian motion)
    $$
        \tfrac 12\Delta f(x)
        = -\tfrac 12(\tfrac 1\iup\partial_x)^2f(x)
        = \int \widehat f(\xi) \big(-\tfrac 12|\xi|^2\big) \eup^{\iup \xi\cdot x}\,d\xi,
        \quad\text{i.e.}\quad L(x,\xi) = -\frac 12|\xi|^2,
    $$
    or the fractional Laplacian of order $\frac 12 \alpha\in (0,1)$ which generates a rotationally symmetric $\alpha$-stable L\'evy process
    $$
        -(-\Delta)^{\alpha/2} f(x)
        = \int \widehat f(\xi) \big(-|\xi|^\alpha\big) \eup^{\iup \xi\cdot x}\,d\xi,
        \quad\text{i.e.}\quad L(x,\xi) = -|\xi|^\alpha.
    $$
    More generally, if $P(x,\xi)$ is a polynomial in $\xi$, then the corresponding operator is obtained by replacing $\xi$ by $\frac 1\iup \nabla_x$ and formally expanding the powers.
\end{example}

\begin{definition}\label{gen-17}
	An operator of the form
    \begin{equation}\label{gen-e18}
		L(x,D) f(x) = \int \widehat{f}(\xi) L(x,\xi) \eup^{\iup   x\cdot\xi}\, d\xi, \quad f \in \Scal(\rd),
	\end{equation}
	is called (if defined) a \emphh{pseudo differential operator} with (non-classical) symbol $L(x,\xi)$.
\index{pseudo differential operator}%
\index{symbol}%
\end{definition}

\begin{remark}\label{gen-19}
\index{L\'evy process!symbol}%
\index{symbol!of a L\'evy process}%
\index{translation invariant}%
\index{L\'evy process!translation invariance}%
    The \emphh{symbol of a L\'evy process} does not depend on $x$, i.e.\ $L(x,\xi)=L(\xi)$. This is a consequence of the spatial homogeneity of the process which is encoded in the translation invariance of the semigroup (cf.\ \eqref{lpmp-e24} and Lemma~\ref{lpmp-09}):
    $$
        P_tf(x) = \Ee f(X_t+x)
        \implies
        P_tf(x) = \vartheta_{x}(P_tf)(0) = P_t(\vartheta_{x}f)(0)
    $$
    where $\vartheta_{x}u(y) = u(y+x)$ is the shift operator. This property is obviously inherited by the generator, i.e.
    $$
        Af(x) = \vartheta_{x}(Af)(0) = A(\vartheta_{x}f)(0),\quad f\in\dom(A).
    $$

    As a matter of fact, the converse is also true: If $L:\cont_c^\infty(\rd)\to\cont(\rd)$ is a linear operator satisfying $\vartheta_x(Lf) = L(\vartheta_xf)$, then $Lf = f*\lambda$ where $\lambda$ is a distribution, i.e.\ a continuous linear functional $\lambda:\cont_c^\infty(\rd)\to\real$, cf.\ Theorem~\ref{app-91}.
\end{remark}

\begin{theorem}\label{gen-21}
\index{L\'evy process!generator}%
\index{generator!L\'evy process@of a L\'evy process}%
\index{L\'evy--Khintchine formula}%
	Let $\Xt$ be a L\'evy process with generator $A$. Then
    \begin{equation}\label{gen-e22}
		Af(x)
        = l\cdot \nabla f(x) + \frac{1}{2} \nabla\cdot Q \nabla f(x) + \int\limits_{y \neq 0} \big[f(x+y)-f(x)- \nabla f(x)\cdot y\I_{(0,1)}(|y|)\big]\, \nu(dy)
    \end{equation}
    for any $f\in\cont_c^\infty(\rd)$, where $l\in\rd$, $Q\in\real^{d\times d}$ is a positive semidefinite matrix, and $\nu$ is a measure on $\rd\setminus\{0\}$ such that $\int_{y\neq 0} \min\{1,|y|^2\}\,\nu(dy)<\infty$.

    Equivalently, $A$ is a pseudo differential operator
\index{pseudo differential operator}%
\index{symbol!of a L\'evy process}%
    \begin{equation}\label{gen-e34}
		Au(x) = - \psi(D) u(x) = - \int \widehat{u}(\xi) \psi(\xi) \eup^{\iup   x\cdot\xi} \, d\xi, \quad u \in \cont_c^\infty(\rd),
	\end{equation}
    whose symbol is the characteristic exponent $-\psi$ of the L\'evy process. It is given by the L\'evy--Khintchine formula
    \begin{equation}\label{gen-e36}
        \psi(\xi)
        = -\iup l\cdot\xi + \frac 12\xi\cdot Q\xi + \int_{y\neq 0}\left(1-\eup^{\iup y\cdot\xi}+\iup\xi\cdot y\I_{(0,1)}(|y|)\right)\nu(dy)
    \end{equation}
    where the triplet $(l,Q,\nu)$ as above.
\end{theorem}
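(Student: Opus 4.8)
The plan is to show the equivalence of the three representations by working in Fourier space, where the semigroup of a L\'evy process acts as multiplication by $\eup^{-t\psi}$, and then to extract the L\'evy--Khintchine formula \eqref{gen-e36} from the structural properties of $\psi$ established in Theorem~\ref{gen-05}. First I would record that, by the computation \eqref{gen-e14} in the proof of Lemma~\ref{gen-11}, we have $\widehat{P_tf} = \eup^{-t\psi}\widehat f$ for every $f\in\cont_c^\infty(\rd)\subset\Scal(\rd)$, so that the generator acts as
\[
    Af = \lim_{t\to 0}\frac{P_tf-f}{t} = \Fcal^{-1}\big(-\psi\,\widehat f\big).
\]
This is precisely the pseudo differential operator \eqref{gen-e34} with symbol $-\psi$. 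The polynomial bound \eqref{gen-e04} guarantees $\psi\widehat f\in L^1(dx)$, so the inverse Fourier transform is well defined and the limit exists in $\cont_\infty(\rd)$; this identifies $A$ on $\cont_c^\infty$ with the operator $-\psi(D)$, giving the equivalence of \eqref{gen-e22} and \eqref{gen-e34} once we know the integro-differential form corresponds to this symbol.

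The heart of the theorem is the L\'evy--Khintchine representation \eqref{gen-e36} of $\psi$. Here I would follow the strategy already rehearsed in Chapter~\ref{exlp} for the one-dimensional second-moment case, but now without moment assumptions and in $\rd$. Writing $\mu_n := \Pp(X_{1/n}\in\cdot\,)$, I start from
\[
    \psi(\xi) = \lim_{n\to\infty} n\int_{\rd}\big(1-\eup^{\iup y\cdot\xi}\big)\,\mu_n(dy),
\]
which follows from $\eup^{-\frac1n\psi(\xi)} = \int \eup^{\iup y\cdot\xi}\mu_n(dy)$ and a first-order expansion. The task is to show that the finite measures $n\,\mu_n$, suitably reweighted, converge. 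The key estimate is to test against a smooth compactly supported function and use that $\int_{|\eta|\le r}(1-\Re\,\eup^{\iup y\cdot\eta})\,d\eta \ge c\min\{1,|y|^2\}$, which via the bound \eqref{gen-e04} on $\int \psi\,d\eta$ forces
\[
    \limsup_{n\to\infty} n\int_{\rd}\min\{1,|y|^2\}\,\mu_n(dy) < \infty.
\]
This tightness lets me define $\nu$ as a vague limit of $n\,\mu_n$ on $\rd\setminus\{0\}$, satisfying $\int\min\{1,|y|^2\}\,\nu(dy)<\infty$; the Gaussian part $Q$ emerges from the mass accumulating near the origin (the limiting second-moment matrix of the truncated measures), and the drift $l$ from the first-order term $n\int_{|y|<1} y\,\mu_n(dy)$, which is controlled using the inequality $|\sin t - t|\le t^3/6$ exactly as in the scalar computation. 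The truncation function $\I_{(0,1)}(|y|)$ in the compensator is what makes both the drift limit and the integral against $\nu$ simultaneously finite.

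The main obstacle, as in the one-dimensional argument, is the \textbf{simultaneous control of the three components}: the small jumps (second-moment mass near $0$), the large jumps ($\nu$-mass away from $0$), and the drift, since none of these limits exists in isolation---only the compensated combination converges. The delicate point is justifying that the vague limit $\nu$ exists and that the leftover first-order term $n\int_{|y|<1}y\,\mu_n(dy)$ converges to a genuine vector $l$ after subtracting the compensation already absorbed into the $\nu$-integral; this requires the cubic estimate together with the uniform bounds from tightness. Once $(l,Q,\nu)$ are identified and \eqref{gen-e36} holds, substituting this $\psi$ into $-\psi(D)$ and recognizing that multiplication by $1-\eup^{\iup y\cdot\xi}$ in Fourier space corresponds to the difference operator $f(x)-f(x+y)$, that $\iup\xi$ corresponds to $-\nabla$, and that $\xi\cdot Q\xi$ corresponds to $-\nabla\cdot Q\nabla$, yields \eqref{gen-e22} directly, with all integrals absolutely convergent on $\cont_c^\infty(\rd)$ because $|f(x+y)-f(x)-\nabla f(x)\cdot y\I_{(0,1)}(|y|)| \le c_f\min\{1,|y|^2\}$.
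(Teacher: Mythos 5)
Your proposal is correct in outline, but it takes a genuinely different route from the paper. You prove the L\'evy--Khintchine formula \eqref{gen-e36} \emph{first}, via the classical triangular-array argument: $\psi(\xi)=\lim_{n\to\infty} n\int (1-\eup^{\iup y\cdot\xi})\,\mu_n(dy)$ with $\mu_n=\Pp(X_{1/n}\in\cdot)$, tightness of the measures $n\min\{1,|y|^2\}\,\mu_n(dy)$ by integrating the real part over $\{|\xi|\leq r\}$, vague subsequential limits (Theorem~\ref{app-81}), and then Fourier inversion as in Lemma~\ref{gen-11} to obtain \eqref{gen-e34} and \eqref{gen-e22}; this is exactly the moment-free, $d$-dimensional extension of the Kolmogorov computation around \eqref{exlp-e52}. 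The paper argues in the opposite order with a completely different key tool: it derives the integro-differential form \eqref{gen-e22} (with an a priori killing term $cf(x)$) from the positive maximum principle alone, \`a la Courr\`ege--Hirsch --- $A_0f:=Af(0)$ is almost positive \eqref{PP}, $f\mapsto A_0(|\cdot|^2f)$ is a positive linear functional so Riesz' representation theorem yields $\nu$ on $\rd\setminus\{0\}$, the remainder $L_0=A_0-S_0$ is a distribution of order $2$ supported in $\{0\}$ and gives $(l,Q,c)$, translation invariance transports $A_0$ to every $x$, and inserting $\eup_\xi$ yields \eqref{gen-e36} with $c=\psi(0)=0$. What each approach buys: the paper's proof uses homogeneity only in the very last step and therefore carries over verbatim to Feller processes (Theorem~\ref{feller-11}), whereas your proof is more elementary (L\'evy's continuity theorem, vague compactness, elementary inequalities) but is intrinsically tied to translation invariance and would not generalize to $x$-dependent symbols. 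If you write yours out in full, two spots you rightly flag need genuine care: the kernel $\big(1-\eup^{\iup y\cdot\xi}+\iup y\cdot\xi\big)/|y|^2$ has a direction-dependent limit $\frac 12(u\cdot\xi)^2$ as $y\to 0$ along $y/|y|\to u$, so $Q$ must be extracted by a two-step limit such as $\lim_{\epsilon\to 0}\limsup_{n}\, n\int_{|y|<\epsilon}(y\cdot\xi)^2\,\mu_n(dy)$, and the cut-off $\I_{(0,1)}(|y|)$ should be replaced by a continuous truncation while passing to vague limits; moreover, the cubic estimate gives only \emph{boundedness} of $l_n=n\int_{|y|<1}y\,\mu_n(dy)$ --- its convergence, and the coincidence of all subsequential triplets, follow from the existence of the full limit $\psi$ combined with a measure-determining kernel argument as in the footnote to \eqref{exlp-e52}.
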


I learned the following proof from Francis Hirsch; it is based on arguments by Courr\`ege \cite{courrege64} and Herz \cite{herz64}. The presentation below follows the version in B\"{o}ttcher, Schilling \& Wang \cite[Section 2.3]{schilling-lm}.
\begin{proof}
The proof is divided into several steps.
\begin{enumerate}
\item[$1^\circ$]
    We have seen in Lemma~\ref{gen-11} that $\cont_c^{\infty}(\rd) \subset \dom(A)$.

\item[$2^\circ$]
    Set $A_0f := (Af)(0)$ for $f \in \cont_c^{\infty}(\rd)$. This is a linear functional on $\cont_c^{\infty}$. Observe that
    $$
        f\in\cont_c^\infty(\rd),\quad f\geq 0,\quad f(0)=0
        \quad\xRightarrow{\eqref{PMP}}\quad A_0f \geq 0.
    $$

\item[$3^\circ$]  By $2^\circ$, $f \mapsto A_{00}f := A_0(|\cdot|^2 \cdot f)$ is a positive linear functional on $\cont_c^{\infty}(\rd)$. Therefore it is bounded. Indeed, let $f\in \cont_c^\infty(K)$ for a compact set $K\subset\rd$ and let $\phi\in\cont_c^\infty(\rd)$ be a cut-off function such that $\I_K\leq\phi\leq 1$. Then
    $$
        \|f\|_\infty\phi \pm f \geq 0.
    $$
    By linearity and positivity $\|f\|_\infty A_{00}\phi \pm A_{00}f\geq 0$ which shows $|A_{00}f|\leq C_K \|f\|_\infty$ with the constant $C_K = A_{00}\phi$.

    By Riesz' representation theorem, there exists a Radon measure\footnote{A Radon measure on a topological space $E$ is a Borel measure which is finite on compact subsets of $E$ and regular: for all open sets $\mu(U) = \sup_{K\subset U}\mu(K)$ and for all Borel sets $\mu(B) = \inf_{U\supset B}\mu(U)$ ($K$, $U$ are generic compact and open sets, respectively).} $\mu$ such that
    \begin{align*}
		\smash[b]{A_{0} (|\cdot|^2f)
		= \int f(y) \, \mu(dy)
		= \int |y|^2 f(y) \underbrace{\frac{\mu(dy)}{|y|^2}}_{=:\nu(dy)}
		= \int |y|^2 f(y) \, \nu(dy).}
	\end{align*}
	This implies that
    $$
		A_0 f_0 = \int_{y \neq 0} f_0(y) \, \nu(dy) \fa f_0\in \cont_c^{\infty}(\rd \setminus \{0\});
	$$
    since any compact subset of $\rd\setminus\{0\}$ is contained in an annulus $B_R(0)\setminus B_\epsilon(0)$, we have $\supp f_0 \cap B_\epsilon(0) = \emptyset$ for some sufficiently small $\epsilon>0$. The measure $\nu$ is a Radon measure on $\rd\setminus\{0\}$.
	
\item[$4^\circ$]  Let $f,g \in \cont_c^{\infty}(\rd)$, $0 \leq f,g \leq 1$, $\supp f \subset \overline{B_1(0)}$, $\supp g \subset \overline{ B_1(0)}^c$ and $f(0)=1$. From
    $$
		\sup_{y\in\rd} \big( \|g\|_{\infty} f(y)+ g(y) \big)
        = \|g\|_{\infty}
        = \|g\|_{\infty}f(0) + g(0)
	$$
	and \eqref{PMP}, it follows that $A_0(\|g\|_{\infty} f+g) \leq 0$. Consequently,
    $$
		A_0 g \leq - \|g\|_{\infty} A_0 f.
	$$
	If $g \uparrow 1-\I_{\overline{B_1(0)}}$, then this shows
    $$
		\int_{|y|> 1} \nu(dy) \leq - A_0 f < \infty.
	$$
	Hence, $\int_{y\neq 0} (|y|^2 \wedge 1) \, \nu(dy)<\infty$.

\item[$5^\circ$]  Let $f \in \cont_c^{\infty}(\rd)$ and $\phi(y) = \I_{(0,1)}(|y|)$. Define
    \begin{equation}\label{gen-e26}
		S_0 f := \int_{y \neq 0} \big[f(y)-f(0)- y\cdot\nabla f(0) \phi(y)\big] \, \nu(dy).
	\end{equation}
	By Taylor's formula, there is some $\theta\in (0,1)$ such that
    \begin{gather*}
		f(y)-f(0)- y\cdot \nabla f(0) \phi(y)
        = \frac 12\sum_{k,l=1}^d \frac{\partial^{2} f(\theta y)}{\partial x_k\partial x_l} y_ky_l.
    \end{gather*}
    Using the elementary inequality $2y_ky_l \leq y_k^2+y_l^2\leq |y|^2$, we obtain
    \begin{align*}
		|f(y)-f(0)- y\cdot\nabla f(0) \phi(y)|
        &\leq \begin{cases} \frac 14 \sum_{k,l=1}^d \big\|\frac{\partial^2}{\partial x_k \partial x_l} f\big\|_{\infty} |y|^2, & |y| < 1\\
        2 \|f\|_{\infty}, & |y|\geq 1 \end{cases} \\
		&\leq 2 \|f\|_{(2)} (|y|^2 \wedge 1).
	\end{align*}
    This means that $S_0$ defines a distribution (generalized function) of order $2$.

\item[$6^\circ$]
    Set $L_0 := A_0-S_0$. The steps $2^\circ$ and $5^\circ$ show that $A_0$ is a distribution of order $2$. Moreover,
    $$
		L_0f_0 = \int_{y \neq 0} \big[f_0(0)- y\cdot\nabla f_0(0) \phi(y)\big] \, \nu(dy) = 0
	$$
	for any $f_0 \in \cont_c^{\infty}(\rd)$ with $f_0|_{B_\epsilon(0)}=0$ for some $\epsilon>0$. Hence, $\supp(L_0) \subset \{0\}$.

	Let us show that $L_0$ is \emphh{almost positive}
    \index{almost positive (PP)}%
    \index{PP (almost positive)}%
    (also: `fast positiv', `pr\`esque positif'):
    \begin{equation}\label{PP} \tag{PP}
        f_0 \in \cont_c^{\infty}(\rd),\quad f_0(0)=0,\quad f_0 \geq 0
        \quad\implies\quad L_0f_0 \geq 0.
	\end{equation}
    Indeed: Pick $0 \leq \phi_n \in \cont_c^{\infty}(\rd \setminus \{0\})$, $\phi_n \uparrow \I_{\rd \setminus \{0\}}$ and let $f_0$ be as in \eqref{PP}. Then
    \begin{eqnarray*}
		L_0 f_0
		&\omu{\supp L_0 \subset \{0\}}{=}{}& L_0[(1-\phi_n) f_0)] \\
		&=& A_0[(1-\phi_n) f_0] - S_0[(1-\phi_n) f_0] \\
		&\omu{f_0(0)=0}{=}{\nabla f_0(0)=0}& A_0[(1-\phi_n)f_0] - \int_{y \neq 0} (1-\phi_n(y)) f_0(y) \, \nu(dy) \\
		&\omu{2^\circ}{\geq}{\eqref{PMP}}& - \int (1-\phi_n(y)) f_0(y) \, \nu(dy) \xrightarrow[n\to\infty]{} 0
	\end{eqnarray*}
	by the monotone convergence theorem.

\item[$7^\circ$ ]  As in $5^\circ$ we find with Taylor's formula for $f \in \cont_c^{\infty}(\rd)$, $\supp f\subset K$ and $\phi\in \cont_c^\infty(\rd)$ satisfying $\I_{K}\leq\phi\leq 1$
    $$
		\left(f(y)-f(0)-\nabla f(0) \cdot y\right)\phi(y) \leq 2 \|f\|_{(2)} |y|^2\phi(y).
    $$
    (As usual, $\|f\|_{(2)} = \sum_{0\leq|\alpha|\leq 2} \|\partial^\alpha f\|_\infty$.) Therefore,
    $$
        2 \|f\|_{(2)} |y|^2\phi(y) + f(0)\phi(y) + \nabla f(0) \cdot y\phi(y) - f(y) \geq 0,
    $$
    and \eqref{PP} implies
	$$
        L_0 f \leq f(0) L_0\phi + |\nabla f(0)| L_0(|\cdot|\phi)+ 2\|f\|_{(2)} L_0(|\cdot|^2\phi)
        \leq C_K \|f\|_{(2)}.
	$$

\item[$8^\circ$]
    We have seen in $6^\circ$ that $L_0$ is of order $2$ and $\supp L_0 \subset \{0\}$. Therefore,
    \begin{equation}\label{gen-e28}
        L_0 f
        = \frac{1}{2} \sum_{k,l=1}^d q_{kl} \frac{\partial^2 f(0)}{\partial x_k\partial x_l} + \sum_{k=1}^d l_k \frac{\partial f(0)}{\partial x_k} - c f(0).
	\end{equation}
    We will show that $(q_{kl})_{k,l}$ is positive semidefinite. Set $g(y) := (y\cdot \xi)^2 f(y)$ where $f\in\cont_c^\infty(\rd)$ is such that $\I_{B_1(0)}\leq f\leq 1$. By \eqref{PP}, $L_0 g \geq 0$. It is not difficult to see that this implies
    $$
		\sum_{k,l=1}^d q_{kl} \xi_k \xi_l \geq 0, \fa \xi = (\xi_1,\dots,\xi_d)\in\rd.
	$$

\item[$9^\circ$]
    Since L\'evy processes and their semigroups are invariant under translations, cf.\ Remark~\ref{gen-19}, we get $Af(x) = A_0[f(x+\cdot)]$. If we replace $f$ by $f(x+\cdot)$, we get
    \begin{equation}\label{gen-e22-alt}\tag{$\text{\ref{gen-e22}}'$}\begin{aligned}
		Af(x)
        &= cf(x) + l\cdot \nabla f(x) + \frac{1}{2} \nabla\cdot Q \nabla f(x) \\
        &\qquad\mbox{} + \int\limits_{y \neq 0} \big[f(x+y)-f(x)- y\cdot\nabla f(x)\I_{(0,1)}(|y|)\big]\, \nu(dy).
    \end{aligned}\end{equation}
    We will show in the next step that $c=0$.

\item[$10^\circ$]
    So far, we have seen in $5^\circ, 7^\circ$ and $9^\circ$ that
    $$
        \|Af\|_\infty \leq C \|f\|_{(2)} = C \sum_{|\alpha|\leq 2} \|\partial^\alpha f\|_\infty,\quad f\in \cont_c^\infty(\rd),
    $$
    which means that $A$ (has an extension which) is continuous as an operator from $\cont_b^2(\rd)$ to $\cont_b(\rd)$.
    Therefore, $(A,\cont_c^\infty(\rd))$ is a pseudo differential operator with symbol
    $$
    	-\psi(\xi) = \eup_{-\xi}(x) A_x \eup_{\xi}(x), \quad \eup_\xi(x) = \eup^{\iup \xi\cdot x}.
    $$
    Inserting $\eup_\xi$ into \eqref{gen-e22-alt} proves \eqref{gen-e36} and, as $\psi(0)=0$, $c=0$.
\qedhere
\end{enumerate}
\end{proof}

\begin{remark}\label{gen-23}
    In step $8^\circ$ of the proof of Theorem~\ref{gen-21} one can use the \eqref{PMP} to show that the coefficient $c$ appearing in \eqref{gen-e22-alt} is positive. For this, let $(f_n)_{n \in \nat} \subset \cont_c^{\infty}(\rd)$, $f_n \uparrow 1$ and $f_n|_{B_1(0)}=1$. By \eqref{PMP}, $A_0 f_n \leq 0$. Moreover, $\nabla f_n(0)=0$ and, therefore, $$
		S_0 f_n = - \int (1-f_n(y)) \, \nu(dy) \xrightarrow[n \to \infty]{} 0.
	$$
	Consequently,
    $$
		\limsup_{n \to \infty} L_0 f_n = \limsup_{n \to \infty} (A_0 f_n-S_0 f_n) \leq 0
        \implies
        c\geq 0.
	$$

    For L\'evy processes we have $c=\psi(0)=0$ and this is a consequence of the \emphh{infinite life-time} of the process:
\index{L\'evy process!infinite life-time}
    $$
        \Pp(X_t\in\rd) = P_t 1 = 1\fa t\geq 0,
    $$
    and we can use the formula $P_tf - f = A\int_0^t P_s f\,ds$, cf.\ Lemma~\ref{semi-09}, for $f\equiv 1$ to show that $$c= A1 =0 \iff P_t1 = 1.$$
\end{remark}

\begin{definition}\label{gen-25}
    A \emphh{L\'evy measure} is a Radon measure $\nu$ on $\rd\setminus\{0\}$ s.t.\ $\int_{y\neq 0} (|y|^2\wedge 1)\,\nu(dy)<\infty$. A \emphh{L\'evy triplet} is a triplet $(l,Q,\nu)$ consisting of a vector $l\in\rd$, positive semi-definite matrix $Q\in\real^{d\times d}$ and a L\'evy measure $\nu$.
\index{L\'evy measure}%
\index{L\'evy triplet}%
\end{definition}

The proof of Theorem~\ref{gen-21} incidentally shows that the L\'evy triplet defining the exponent \eqref{gen-e36} or the generator \eqref{gen-e22} is unique. The following corollary can easily be checked using the representation \eqref{gen-e22}.
\begin{corollary}\label{gen-27}
    Let $A$ be the generator and $(P_t)_{t\geq 0}$ the semigroup of a L\'evy process. Then the L\'evy triplet is given by
    \begin{equation}\label{gen-e38}
    \begin{gathered}
        \int f_0\,d\nu = Af_0(0) = \lim_{t\to 0} \frac{P_tf_0(0)}{t}\quad\forall f_0\in \cont_c^\infty(\rd\setminus\{0\}),\\
        l_k = A\phi_k(0) - \int y_k\big[\phi(y)-\I_{(0,1)}(|y|)\big]\,\nu(dy),\quad k=1,\dots,d, \\
        q_{kl} = A(\phi_k\phi_l)(0) - \int_{y\neq 0} \phi_k(y)\phi_l(y)\,\nu(dy),\quad k,l=1,\dots d,
    \end{gathered}
    \end{equation}
    where $\phi\in \cont_c^\infty(\rd)$ satisfies $\I_{B_1(0)}\leq\phi\leq 1$ and $\phi_k(y):= y_k\phi(y)$.
    In particular, $(l,Q,\nu)$ is uniquely determined by $A$ or the characteristic exponent $\psi$.
\end{corollary}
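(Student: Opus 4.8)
The strategy is to recover each ingredient of the triplet $(l,Q,\nu)$ by substituting suitable test functions into the explicit generator formula~\eqref{gen-e22} from Theorem~\ref{gen-21} and evaluating at $x=0$. Everything hinges on the normalisation $\I_{B_1(0)}\leq\phi\leq 1$: this forces $\phi\equiv 1$ on $B_1(0)$, so that $\phi$ is constant near the origin and therefore $\phi(0)=1$ while $\partial^\alpha\phi(0)=0$ for every multiindex $\alpha$ with $|\alpha|\geq 1$.

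I would begin with $\nu$. For $f_0\in\cont_c^\infty(\rd\setminus\{0\})$ the function vanishes on a neighbourhood of $0$, hence $f_0(0)=0$ and $\nabla f_0(0)=0$. Putting $x=0$ in~\eqref{gen-e22} then kills the drift, the diffusion part and the compensating term $\nabla f_0(0)\cdot y\,\I_{(0,1)}(|y|)$, leaving $Af_0(0)=\int_{y\neq 0}f_0(y)\,\nu(dy)$. Since $\supp f_0\subset\{|y|\geq\epsilon\}$ for some $\epsilon>0$, and $\nu$ is finite on that set, the integral is well defined; the identity $Af_0(0)=\lim_{t\to 0}P_tf_0(0)/t$ is just the definition of the generator combined with $f_0(0)=0$.

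For the drift I would apply~\eqref{gen-e22} to $\phi_k(y)=y_k\phi(y)$. Since $\phi\equiv 1$ near $0$, one has $\phi_k(y)=y_k$ there, so $\phi_k(0)=0$, $\nabla\phi_k(0)=e_k$ (the $k$-th unit vector) and $\partial_i\partial_j\phi_k(0)=0$ for all $i,j$; in particular the diffusion term disappears. What survives is $A\phi_k(0)=l_k+\int_{y\neq 0}y_k\big[\phi(y)-\I_{(0,1)}(|y|)\big]\,\nu(dy)$, which rearranges to the stated formula for $l_k$. The integrand here vanishes identically on $B_1(0)$, where $\phi=1=\I_{(0,1)}(|y|)$, and is compactly supported, so the integral again lives on a set of finite $\nu$-mass. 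Finally, for $Q$ I would use $\phi_k\phi_l$, which agrees with $y_ky_l$ near the origin; thus $(\phi_k\phi_l)(0)=0$, $\nabla(\phi_k\phi_l)(0)=0$, and $\partial_i\partial_j(\phi_k\phi_l)(0)=\delta_{ik}\delta_{jl}+\delta_{il}\delta_{jk}$. The drift and compensator drop out, the symmetry of $Q$ makes the diffusion term equal to $q_{kl}$, and we obtain $A(\phi_k\phi_l)(0)=q_{kl}+\int_{y\neq 0}\phi_k(y)\phi_l(y)\,\nu(dy)$, giving the third formula.

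The computations are all routine, and the only genuine care is needed for the integrability of the integrands against $\nu$ under the condition $\int(|y|^2\wedge 1)\,\nu(dy)<\infty$: near $0$ the integrand $y_ky_l\phi^2$ is $O(|y|^2)$, while the drift integrand vanishes on $B_1(0)$ altogether. The uniqueness assertion is then immediate, since $\nu$, $l$ and each $q_{kl}$ have been written explicitly in terms of $A$ (and the already-determined $\nu$), and $A$ is in turn determined by $\psi$ via the pseudo differential representation $A=-\psi(D)$ of~\eqref{gen-e34}.
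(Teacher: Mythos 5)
Your proposal is correct and is precisely the verification the paper intends: the text states that Corollary~\ref{gen-27} ``can easily be checked using the representation \eqref{gen-e22}'', and your substitution of $f_0$, $\phi_k$ and $\phi_k\phi_l$ at $x=0$ (using that $\phi\equiv 1$ on $B_1(0)$ kills all derivatives of $\phi$ at the origin) carries this out, including the integrability checks against $\int (|y|^2\wedge 1)\,\nu(dy)<\infty$. Your uniqueness argument --- $\nu$ from the first formula, then $l$ and $Q$ from the others, with $A$ determined by $\psi$ via \eqref{gen-e34} --- likewise matches the paper's reasoning.
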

We will see an alternative uniqueness proof in the next Chapter~\ref{cons}.

\begin{remark}\label{gen-29}
Setting $p_t(dy) = \Pp(X_t\in dy)$, we can recast the formula for the L\'evy measure as
$$
    \nu(dy) = \lim_{t\to 0}\frac{p_t(dy)}{t}
    \qquad (\text{vague limit of measures on the set $\rd\setminus\{0\}$}).
$$
Moreover, a direct calculation using the L\'evy--Khintchine formula \eqref{gen-e38} gives the following alternative representation for the $q_{kl}$:
$$
    \frac 12\xi\cdot Q\xi
    = \lim_{n\to\infty} \frac{\psi(n\xi)}{n^2},\quad \xi\in\rd.
$$
\end{remark}

\chapter{Construction of L\'evy processes}\label{cons}

Our starting point is now the L\'evy--Khintchine formula for the characteristic exponent $\psi$ of a L\'evy process
\begin{equation}\label{cons-e02}
    \psi(\xi)
        = -\iup l\cdot\xi + \frac 12\xi\cdot Q\xi + \int_{y\neq 0}\big[1-\eup^{\iup y\cdot\xi}+\iup\xi\cdot y\I_{(0,1)}(|y|)\big]\,\nu(dy)
\end{equation}
where $(l,Q,\nu)$ is a L\'evy triplet in the sense of Definition~\ref{gen-25}; a proof of \eqref{cons-e02} is contained in Theorem~\ref{gen-21}, but the exposition below is independent of this proof, see however Remark~\ref{cons-21} at the end of this chapter.

What will be needed is that a compound Poisson process is a L\'evy process with c\`adl\`ag paths and characteristic exponent of the form
\index{compound Poisson process}%
\begin{equation}\label{cons-e04}
    \phi(\xi) = \int_{y\neq 0} \big[1-\eup^{\iup y\cdot\xi}\big]\,\rho(dy)
\end{equation}
($\rho$ is any finite measure), see Example~\ref{exlp-05}.d), where $\rho(dy) = \lambda\cdot\mu(dy)$.

Let $\nu$ be a L\'evy measure and denote by $A_{a,b} = \{y\,:\, a\leq |y|<b\}$ an annulus. Since we have $\int_{y\neq 0}\big[|y|^2\wedge 1\big]\,\nu(dy)<\infty$, the measure $\rho(B) := \nu(B\cap A_{a,b})$ is a finite measure, and there is a corresponding compound Poisson process. Adding a drift with $l = -\int y\,\rho(dy)$ shows that for every exponent
\begin{equation}\label{cons-e06}
    \psi^{a,b}(\xi) = \int_{a\leq |y|<b} \big[1-\eup^{\iup y\cdot\xi} + \iup y\cdot\xi\big]\,\nu(dy)
\end{equation}
there is some L\'evy process $X^{a,b} = (X_t^{a,b})_{t\geq 0}$. In fact,
\begin{lemma}\label{cons-03}
    Let $0<a<b\leq\infty$ and $\psi^{a,b}$ given by \eqref{cons-e06}. Then the corresponding L\'evy process $X^{a,b}$ is an $L^2(\Pp)$-martingale with c\`adl\`ag paths such that
    $$
		\Ee \big[X_t^{a,b}\big]=0
    \et
        \Ee\big[X_t^{a,b} \cdot (X_t^{a,b})^\top\big] = \left( t\int_{a\leq |y| < b} y_k y_l \, \nu(dy) \right)_{k,l}.
	$$
\end{lemma}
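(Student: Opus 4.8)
The plan is to exhibit $X^{a,b}$ explicitly as a compound Poisson process together with a compensating drift, and then to read off every assertion from the elementary structure of such a process. Because $a>0$ and $\nu$ is a L\'evy measure, the restriction $\rho(dy):=\I_{A_{a,b}}(y)\,\nu(dy)$ is a \emph{finite} measure: on $A_{a,b}$ one has $|y|^2\wedge 1\geq\min\{a^2,1\}>0$, so $\rho(\rd)\leq\min\{a^2,1\}^{-1}\int(|y|^2\wedge1)\,\nu(dy)<\infty$. Hence, by Theorem~\ref{exlp-13}, the compound Poisson process $C_t=H_1+\dots+H_{N_t}$ with $N_t\sim\Poi(\rho(\rd)\,t)$ and jump law $\mu=\rho/\rho(\rd)$ is a L\'evy process with exponent $\int_{y\neq0}(1-\eup^{\iup y\cdot\xi})\,\rho(dy)$. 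Adding the deterministic drift $lt$ with $l=-\int y\,\rho(dy)$ produces a L\'evy process $X^{a,b}_t=C_t+lt$ whose exponent is exactly \eqref{cons-e06}, so by Corollary~\ref{levy-17} this is the process in the statement; its paths are c\`adl\`ag since $t\mapsto C_t$ is a c\`adl\`ag step function and $t\mapsto lt$ is continuous. I would record at the outset the finiteness $\int_{A_{a,b}}|y|^2\,\nu(dy)<\infty$ underlying the $L^2$ claim; it is automatic when $b<\infty$ (the integrand is then $\leq b^2$ and $\nu(A_{a,b})<\infty$).

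With this representation the moments follow by conditioning on $N_t$. Writing $m:=\int_{A_{a,b}}y\,\nu(dy)=-l$ and using that the $H_k$ are i.i.d.\ and independent of $N_t$, I get $\Ee C_t=\Ee[N_t]\,\Ee H_1=tm$, whence $\Ee X^{a,b}_t=\Ee C_t+lt=tm-tm=0$. For the second moments I expand $(C_t)_i(C_t)_j=\sum_{k,p\leq N_t}(H_k)_i(H_p)_j$ and separate the diagonal $k=p$ from the $n(n-1)$ off-diagonal pairs; conditioning on $N_t$ gives
\begin{equation*}
    \Ee\big[(C_t)_i(C_t)_j\big]
    = \Ee[N_t]\,\Ee\big[(H_1)_i(H_1)_j\big]
      + \Ee\big[N_t(N_t-1)\big]\,\Ee(H_1)_i\,\Ee(H_1)_j .
\end{equation*}
Inserting the Poisson factorial moments $\Ee[N_t]=\rho(\rd)t$ and $\Ee[N_t(N_t-1)]=(\rho(\rd)t)^2$, together with $\rho(\rd)\,\Ee[(H_1)_i(H_1)_j]=\int_{A_{a,b}}y_iy_j\,\nu(dy)$ and $\rho(\rd)\,\Ee(H_1)_i=m_i$, yields $\Ee[(C_t)_i(C_t)_j]=t\int_{A_{a,b}}y_iy_j\,\nu(dy)+t^2m_im_j$. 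Since $X^{a,b}_t=C_t-mt$ has mean zero, the $t^2m_im_j$ cross-terms cancel and I obtain $\Ee[(X^{a,b}_t)_i(X^{a,b}_t)_j]=t\int_{A_{a,b}}y_iy_j\,\nu(dy)$, which is the asserted covariance matrix; in particular $\Ee|X^{a,b}_t|^2=t\int_{A_{a,b}}|y|^2\,\nu(dy)<\infty$, so $X^{a,b}_t\in L^2(\Pp)$.

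It remains to check the martingale property relative to the natural filtration $\Fscr_t=\Fscr_t^{X^{a,b}}$. As $X^{a,b}_t\in L^1(\Pp)$ and $X^{a,b}$ has stationary and independent increments, for $s<t$ I condition using \eqref{Lindep} and \eqref{Lstat}:
\begin{equation*}
    \Ee\big[X^{a,b}_t\mid\Fscr_s\big]
    = X^{a,b}_s + \Ee\big[X^{a,b}_t-X^{a,b}_s\mid\Fscr_s\big]
    = X^{a,b}_s + \Ee\big[X^{a,b}_{t-s}\big]
    = X^{a,b}_s,
\end{equation*}
the last expectation vanishing by the mean-zero computation. The only genuinely delicate points are bookkeeping: justifying the interchange of summation and expectation in the second-moment expansion (legitimate because, conditionally on $N_t=n$, only finitely many terms appear and $H_1\in L^2$), and---when $b=\infty$---the a priori finiteness of $\int_{A_{a,b}}|y|^2\,\nu(dy)$, which is precisely the integrability making the asserted covariance, and hence the $L^2$-martingale property, meaningful. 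Everything else is the standard arithmetic of Poisson factorial moments.
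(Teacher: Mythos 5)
Your proof is correct, but it reaches the moment formulas by a genuinely different route than the paper. The compound-Poisson-plus-drift representation you begin with is not itself a departure --- it is exactly how the text immediately preceding the lemma constructs $X^{a,b}$, with $\rho(dy)=\I_{A_{a,b}}(y)\,\nu(dy)$ finite and drift $l=-\int y\,\rho(dy)$ --- but where the paper then computes the moments analytically, it does so by noting (via the differentiation lemma for parameter-dependent integrals) that $\psi^{a,b}$ is twice continuously differentiable with $\nabla\psi^{a,b}(0)=0$ and $\frac{\partial^2\psi^{a,b}(0)}{\partial\xi_k\partial\xi_l}=\int_{a\leq|y|<b}y_ky_l\,\nu(dy)$, and then invoking Theorem~\ref{app-21} to read off $\Ee X_t^{(k)}$ and $\Ee(X_t^{(k)}X_t^{(l)})$ from derivatives of the characteristic function $\eup^{-t\psi^{a,b}(\xi)}$ at $\xi=0$. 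You instead compute probabilistically, conditioning on $N_t$ and using the Poisson factorial moments; your algebra is accurate (the diagonal/off-diagonal split, $\Ee[N_t(N_t-1)]=(\rho(\rd)t)^2$, and the cancellation of the $t^2m_im_j$ terms against the compensating drift), and your martingale step coincides with the paper's. What each approach buys: the paper's Fourier-analytic argument is shorter modulo the appendix and applies verbatim to any L\'evy process whose exponent happens to be $\cont^2$, without ever touching the jump structure; your bare-hands computation makes the mechanism of compensation visible and, commendably, surfaces the integrability $\int_{A_{a,b}}|y|^2\,\nu(dy)<\infty$ needed when $b=\infty$ --- a hypothesis the paper's proof also uses, but silently, since its differentiation under the integral sign requires the same $|y|^2$-domination; in the paper's applications this is harmless because the compensated pieces always have $b\leq 1$, and the case $b=\infty$ occurs only for the uncompensated large-jump part.
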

\begin{proof}
    Set $X_t := X_t^{a,b}$, $\psi := \psi^{a,b}$ and $\Fscr_t:= \sigma(X_r,\,r\leq t)$.
    Using the differentiation lemma for parameter-dependent integrals we see that $\psi$ is twice continuously differentiable and
    $$
        \frac{\partial\psi(0)}{\partial \xi_k} = 0
        \et
        \frac{\partial^2\psi(0)}{\partial \xi_k\partial \xi_l} = \int_{a\leq |y|<b} y_ky_l\,\nu(dy).
    $$
    Since the characteristic function $\eup^{-t\psi(\xi)}$ is twice continuously differentiable, $X$ has first and second moments, cf.\ Theorem~\ref{app-21}, and these can be obtained by differentiation:
    \begin{gather*}
    	\Ee X_t^{(k)}
        = \frac{1}{\iup}\frac{\partial}{\partial \xi_k} \Ee\,\eup^{\iup   \xi\cdot X_t} \bigg|_{\xi=0}
        = \iup t \frac{\partial\psi(0)}{\partial \xi_k} = 0
    \intertext{and}
    \begin{aligned}
		\Ee(X_t^{(k)} X_t^{(l)})
        = - \frac{\partial^2}{\partial \xi_k \partial \xi_l} \Ee\,\eup^{\iup \xi\cdot X_t} \bigg|_{\xi=0}
        &= \frac{t\partial^2\psi(0)}{\partial \xi_k\partial \xi_l} - \frac{t\partial\psi(0)}{\partial \xi_k}\frac{t\partial\psi(0)}{\partial \xi_l}\\
        &= t\int_{a\leq |y|<b} y_ky_l\,\nu(dy).
    \end{aligned}
    \end{gather*}
    The martingale property now follows from the independence of the increments: Let $s\leq t$, then
    \begin{gather*}
        \Ee(X_t\mid\Fscr_s)
        =\Ee(X_t-X_s + X_s\mid\Fscr_s)
        =\Ee(X_t-X_s \mid\Fscr_s) + X_s
        \omu{\eqref{Lindep}}{=}{\eqref{Lstat}} \Ee(X_{t-s}) + X_s
        = X_s.
    \qedhere
    \end{gather*}
\end{proof}

We will use the processes from Lemma~\ref{cons-03} as main building blocks for the L\'evy process. For this we need some preparations.
\begin{lemma}\label{cons-05}
\index{L\'evy process!sum of two LP}%
    Let $(X_t^k)_{t \geq 0}$ be L\'evy processes with characteristic exponents $\psi_k$. If $X^1 \bbot X^2$, then $X:=X^1+X^2$ is a L\'evy process with characteristic exponent $\psi=\psi_1+\psi_2$.
\end{lemma}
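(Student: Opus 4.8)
The plan is to verify the conditional-expectation characterisation of L\'evy processes from Theorem~\ref{exlp-03} for the sum $X=X^1+X^2$, working with the enlarged filtration $\Fscr_t := \sigma(X_r^1, X_r^2\,:\, r\leq t)$, which contains the canonical filtration $\Fscr_t^X$. First I would record that $X_0 = X_0^1+X_0^2 = 0$ a.s., so that \eqref{Lnull} holds. For the main point, fix $s<t$ and $\xi\in\rd$ and split the increment as $X_t-X_s = U + V$ with $U := X_t^1-X_s^1$ and $V := X_t^2-X_s^2$, so that $\eup^{\iup\xi\cdot(X_t-X_s)} = \eup^{\iup\xi\cdot U}\,\eup^{\iup\xi\cdot V}$.

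The heart of the argument is the claim that the pair $(U,V)$ is independent of $\Fscr_s$ and that, moreover, $U\bbot V$. For the former, note that $U$ is a measurable function of the process $X^1$ and, by the independent increments of $X^1$, is independent of $\sigma(X_r^1\,:\, r\leq s)$; likewise $V$ depends only on $X^2$ and is independent of $\sigma(X_r^2\,:\, r\leq s)$. Combining this with the hypothesis $X^1\bbot X^2$ and the decomposition $\Fscr_s = \sigma(X_r^1\,:\,r\leq s)\vee\sigma(X_r^2\,:\,r\leq s)$, a short factorisation of $\Ee\big[f(U)\,g(V)\,h_1\,h_2\big]$ over bounded measurable test functions (using $X^1\bbot X^2$ once to split the $X^1$- and $X^2$-factors, then the increment independence of each process) shows that $(U,V)\bbot\Fscr_s$; the same independence $X^1\bbot X^2$ gives $U\bbot V$ directly.

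Granting the claim, and using that the deterministic number $\eup^{-(t-s)\psi_k(\xi)}$ equals $\Ee\,\eup^{\iup\xi\cdot(X_t^k-X_s^k)}$ by Corollary~\ref{levy-15}, I would compute
\begin{align*}
    \Ee\left(\eup^{\iup\xi\cdot(X_t-X_s)}\:\middle|\:\Fscr_s\right)
    &= \Ee\left(\eup^{\iup\xi\cdot U}\,\eup^{\iup\xi\cdot V}\right)
    = \Ee\,\eup^{\iup\xi\cdot U}\;\Ee\,\eup^{\iup\xi\cdot V}
    = \eup^{-(t-s)\psi_1(\xi)}\,\eup^{-(t-s)\psi_2(\xi)},
\end{align*}
where the first equality uses $(U,V)\bbot\Fscr_s$ and the second uses $U\bbot V$. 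Thus \eqref{exlp-e02} holds for $X$ relative to $\Fscr_s$ with the continuous exponent $\psi := \psi_1+\psi_2$. Since the right-hand side is a constant and $\Fscr_s^X\subset\Fscr_s$, the tower property immediately yields the same identity with $\Fscr_s$ replaced by $\Fscr_s^X$, so Theorem~\ref{exlp-03} applies and $X$ is a L\'evy process with characteristic exponent $\psi_1+\psi_2$.

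The step I expect to demand the most care is the independence claim $(U,V)\bbot\Fscr_s$: it is genuinely insufficient to know that each process has independent increments and that the two processes are independent in isolation; one must combine both hypotheses at once, and the cleanest route is the explicit factorisation over test functions sketched above, which invokes $X^1\bbot X^2$ together with the per-process increment independence. Everything else is routine once this is in place.
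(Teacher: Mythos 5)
Your proof is correct and takes essentially the same route as the paper's: both verify the conditional characterisation \eqref{exlp-e02} of Theorem~\ref{exlp-03} with respect to the enlarged filtration $\Fscr_s = \sigma(\Fscr_s^1,\Fscr_s^2)$, factorise using $X^1\bbot X^2$ together with the per-process increment independence over a $\cap$-stable generating family (your test-function computation $\Ee[f(U)g(V)h_1h_2]$ is just the functional form of the paper's computation with $\I_{F_1\cap F_2}$), and then pass from $\Fscr_s$ to the canonical filtration $\Fscr_s^X$ by the tower property before invoking Theorem~\ref{exlp-03}. No gaps.
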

\begin{proof}
    Set $\Fscr_t^k := \sigma(X_s^k,\; s\leq t)$ and $\Fscr_t = \sigma(\Fscr_t^1, \Fscr_t^2)$. Since $X^1 \bbot X^2$, we get for $F=F_1\cap F_2$, $F_k\in\Fscr_s^k$,
    \begin{align*}
        \Ee\left( \eup^{\iup \xi\cdot (X_t-X_s)} \I_F\right)
        &\pomu{\eqref{Lindep}}{=}{} \Ee\left( \eup^{\iup \xi\cdot (X_t^1-X_s^1)} \I_{F_1}\cdot \eup^{\iup \xi\cdot (X_t^2-X_s^2)} \I_{F_2}\right)\\
        &\pomu{\eqref{Lindep}}{=}{} \Ee\left( \eup^{\iup \xi\cdot (X_t^1-X_s^1)} \I_{F_1}\right) \Ee\left(\eup^{\iup \xi\cdot (X_t^2-X_s^2)} \I_{F_2}\right)\\
        &\omu{\eqref{Lindep}}{=}{} \eup^{-(t-s)\psi_1(\xi)}\Pp(F_1)\cdot \eup^{-(t-s)\psi_2(\xi)}\Pp(F_2)\\
        &\pomu{\eqref{Lindep}}{=}{} \eup^{-(t-s)(\psi_1(\xi)+\psi_2(\xi))}\Pp(F).
    \end{align*}
    As $\{F_1\cap F_2\,:\, F_k\in\Fscr_s^k\}$ is a $\cap$-stable generator of $\Fscr_s$, we find
    \begin{align*}
        \Ee\left( \eup^{\iup \xi\cdot (X_t-X_s)} \:\middle|\: \Fscr_s\right)
        &= \eup^{-(t-s)\psi(\xi)}.
    \end{align*}
    Observe that $\Fscr_s$ could be larger than the canonical filtration $\Fscr_s^X$. Therefore, we first condition w.r.t.\ $\Ee (\dots\mid\Fscr_s^X)$ and then use Theorem~\ref{exlp-03}, to see that $X$ is a L\'evy process with characteristic exponent $\psi = \psi_1+\psi_2$.
\end{proof}

\begin{lemma}\label{cons-07}
\index{L\'evy process!limit of LPs}%
    Let $(X^n)_{n \in\nat}$ be a sequence of L\'evy processes with characteristic exponents $\psi_n$. Assume that $X^n_t\to X_t$ converges in probability for every $t\geq 0$. If
    \begin{description}
    \item[\normalfont\itshape either:] the convergence is uniform in probability, i.e.
    $$
		\forall\epsilon>0\;\;\forall t\geq 0\::\: \lim_{n \to \infty} \Pp  \left( \sup_{s \leq t} |X^n_s-X_s| > \epsilon \right)=0,
	$$
    \item[\normalfont\itshape or:] the limiting process $X$ has c\`adl\`ag paths,
    \end{description}
    then $X$ is a L\'evy process with characteristic exponent $\psi:= \lim_{n \to \infty} \psi_n$.
\end{lemma}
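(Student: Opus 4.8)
The plan is to verify the defining properties \eqref{Lnull}--\eqref{Lcont} for the limit process $X$ by passing to the limit in the corresponding properties of the $X^n$, and then to read off the exponent. The key preliminary observation I would use is that coordinatewise convergence in probability, $X^n_t\to X_t$, automatically yields joint convergence in probability of $(X^n_{t_1},\dots,X^n_{t_m})\to(X_{t_1},\dots,X_{t_m})$ for any finite collection of times, hence joint convergence in distribution. From $X^n_0=0$ a.s.\ I immediately get $X_0=0$ a.s., i.e.\ \eqref{Lnull}. Since each $X^n$ is a L\'evy process, Corollary~\ref{levy-13} (or directly \eqref{LindepAlt} together with \eqref{Lstat}) gives that the joint characteristic function of its increments factorises as $\prod_{k=1}^m \eup^{-(t_k-t_{k-1})\psi_n(\xi_k)}$. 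Passing to the limit along the finite-dimensional convergence, I obtain
\begin{equation*}
    \Ee\exp\Big(\iup\textstyle\sum_{k=1}^m \xi_k\cdot(X_{t_k}-X_{t_{k-1}})\Big)
    = \prod_{k=1}^m \Ee\,\eup^{\iup\xi_k\cdot(X_{t_k}-X_{t_{k-1}})},
\end{equation*}
which is exactly the mutual independence of the increments, i.e.\ \eqref{LindepAlt}, hence \eqref{Lindep} by Remark~\ref{levy-09}. Comparing the single-increment factor over $[s,t]$ with the one over $[0,t-s]$ gives $\Ee\,\eup^{\iup\xi\cdot(X_t-X_s)}=\lim_n \eup^{-(t-s)\psi_n(\xi)}=\Ee\,\eup^{\iup\xi\cdot X_{t-s}}$, so $X_t-X_s\sim X_{t-s}$, which is \eqref{Lstat}.

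The only property that is not a formal consequence of coordinatewise convergence in probability is continuity in probability \eqref{Lcont}, and this is precisely where the two alternative hypotheses enter; I regard this as the heart of the argument. Because \eqref{Lstat} has already been secured, it suffices to show $X_h\to 0$ in probability as $h\to 0$. If the limit $X$ has c\`adl\`ag paths, then $t\mapsto X_t$ is right-continuous at $0$, so $X_h\to X_0=0$ almost surely, hence in probability, and \eqref{Lcont} follows. Under the uniform-in-probability hypothesis I would argue by a three-$\epsilon$ splitting: for $h\le 1$,
\begin{equation*}
    \Pp(|X_h|>\epsilon)
    \leq \Pp\Big(\sup_{s\le 1}|X^n_s-X_s|>\tfrac\epsilon2\Big) + \Pp\big(|X^n_h|>\tfrac\epsilon2\big).
\end{equation*}
Fixing $n=N$ large makes the first term smaller than any prescribed $\delta$, \emph{uniformly} in $h\le 1$; the second term tends to $0$ as $h\to 0$ because the fixed L\'evy process $X^N$ is continuous in probability with $X^N_0=0$. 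Letting $h\to 0$ and then $\delta\to 0$ yields \eqref{Lcont}.

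With \eqref{Lnull}--\eqref{Lcont} in hand, $X$ is a L\'evy process, and Corollary~\ref{levy-15} provides a unique continuous exponent $\psi$ with $\Ee\,\eup^{\iup\xi\cdot X_t}=\eup^{-t\psi(\xi)}$; in particular $\eup^{-t\psi(\xi)}$ never vanishes. It remains to identify $\psi=\lim_n\psi_n$. For fixed $\xi$ the relation $\eup^{-t\psi_n(\xi)}\to\eup^{-t\psi(\xi)}$ holds for every $t\ge 0$; taking moduli gives $\Re\psi_n(\xi)\to\Re\psi(\xi)$, and the convergence $\eup^{-\iup t\,\Im\psi_n(\xi)}\to\eup^{-\iup t\,\Im\psi(\xi)}$ for all $t$ forces $\Im\psi_n(\xi)\to\Im\psi(\xi)$: the imaginary parts cannot escape to $\pm\infty$, since $|\int_0^s\eup^{-\iup t\,\Im\psi_n(\xi)}\,dt|\le 2/|\Im\psi_n(\xi)|$ would then tend to $0$, contradicting $\int_0^s \eup^{-\iup t\,\Im\psi(\xi)}\,dt\neq 0$ for small $s$, while every finite accumulation point $a$ must satisfy $\eup^{-\iup t a}=\eup^{-\iup t\,\Im\psi(\xi)}$ for all $t$, i.e.\ $a=\Im\psi(\xi)$. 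Hence $\psi_n(\xi)\to\psi(\xi)$, completing the identification. As flagged above, the genuine obstacle is \eqref{Lcont}: the entire increment structure comes for free from coordinatewise convergence in probability, whereas continuity in probability really does need one of the two regularity assumptions to prevent mass of $X_h$ from escaping as $h\to 0$.
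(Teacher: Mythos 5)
Your proof is correct and follows essentially the same route as the paper's: pass the factorized finite-dimensional characteristic functions of the increments to the limit to obtain \eqref{LindepAlt} and \eqref{Lstat}, deduce \eqref{Lcont} from either hypothesis, and then identify the exponent. You merely fill in details the paper leaves implicit --- notably the three-$\epsilon$ argument for \eqref{Lcont} under uniform convergence in probability and the careful deduction of $\psi_n(\xi)\to\psi(\xi)$ from $\eup^{-t\psi_n(\xi)}\to\eup^{-t\psi(\xi)}$, which the paper simply asserts.
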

\begin{proof}
    Let $0=t_0<t_1<\dots <t_m$ and $\xi_1,\dots,\xi_m\in\rd$. Since the $X^n$ are L\'evy processes,
    $$
         \Ee \exp\left[\iup \sum_{k=1}^m \xi_k\cdot(X_{t_k}^n - X_{t_{k-1}}^n)\right]
         \omu{\eqref{Lindep}, \eqref{Lstat}}{=}{}
         \prod_{k=1}^m \Ee \exp\left[\iup \xi_k\cdot X_{t_k-t_{k-1}}^n\right]
    $$
    and, because of convergence in probability, this equality is inherited by the limiting process $X$. This proves that $X$ has independent \eqref{LindepAlt} and stationary \eqref{Lstat} increments.

    The condition \eqref{Lcont} follows either from the uniformity of the convergence in probability or the c\`adl\`ag property. Thus, $X$ is a L\'evy process. From
    $$
			\lim_{n\to\infty} \Ee\,\eup^{\iup  \xi\cdot X_1^n}
			= \Ee\,\eup^{\iup  \xi\cdot X_1}
    $$
    we get that the limit $\lim_{n\to\infty}\psi_n = \psi$ exists.
\end{proof}

\begin{lemma}[Notation of Lemma~\ref{cons-03}]\label{cons-09}
    Let $(a_n)_{n\in\nat}$ be a sequence $a_1 > a_2 > \dots$ decreasing to zero and assume that the processes $(X^{a_{n+1},a_n})_{n \in\nat}$ are independent L\'evy processes with characteristic exponents $\psi_{a_{n+1},a_n}$. Then $X := \sum_{n=1}^{\infty} X^{a_{n+1},a_n}$ is a L\'evy process with exponent $\psi := \sum_{n=1}^{\infty} \psi_{a_{n+1},a_n}$ and c\`adl\`ag paths. Moreover, $X$ is an $L^2(\Pp)$-martingale.
\end{lemma}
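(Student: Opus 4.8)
The plan is to realise $X$ as the $L^2$-limit of the partial sums $S^N := \sum_{n=1}^N X^{a_{n+1},a_n}$ and to transfer the structural properties of the $S^N$ to the limit. First I would record that each $S^N$ is a L\'evy process: this follows by induction on $N$ from Lemma~\ref{cons-05}, using that the summands are independent, and that its characteristic exponent is $\sum_{n=1}^N \psi_{a_{n+1},a_n}$. Moreover, by Lemma~\ref{cons-03} every $X^{a_{n+1},a_n}$ is a centred $L^2$-martingale, and since the summands are independent, $S^N$ is a centred $L^2$-martingale with respect to the common filtration $\Fscr_s := \sigma(X^{a_{k+1},a_k}_r : r\leq s,\ k\in\nat)$; indeed, each increment $S^N_t-S^N_s$ is a sum of increments that are independent of $\Fscr_s$ and have mean zero.

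The core computation is the $L^2$-Cauchy estimate. For $M<N$ the difference $S^N_t-S^M_t=\sum_{n=M+1}^N X^{a_{n+1},a_n}_t$ is a sum of independent centred random variables, so by Lemma~\ref{cons-03} and the disjointness of the annuli $A_{a_{n+1},a_n}$,
\begin{equation*}
    \Ee\big|S^N_t-S^M_t\big|^2
    = \sum_{n=M+1}^N \Ee\big|X^{a_{n+1},a_n}_t\big|^2
    = t\int_{a_{N+1}\leq |y| < a_{M+1}} |y|^2\,\nu(dy).
\end{equation*}
Because $\nu$ is a L\'evy measure, $\int_{0<|y|<a_1}|y|^2\,\nu(dy)<\infty$, so the right-hand side is the tail of a convergent integral and tends to $0$ as $M,N\to\infty$. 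Hence $(S^N_t)_N$ is Cauchy in $L^2$ for each fixed $t$ and converges to some $X_t\in L^2$; in particular $\Ee X_t=\lim_N \Ee S^N_t=0$.

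To obtain uniformity and c\`adl\`ag paths I would apply Doob's $L^2$-maximal inequality to the c\`adl\`ag martingale $S^N-S^M$ (via the submartingale $|S^N-S^M|$), which yields
\begin{equation*}
    \Ee\Big[\sup_{s\leq t}\big|S^N_s-S^M_s\big|^2\Big]
    \leq 4\,\Ee\big|S^N_t-S^M_t\big|^2
    \xrightarrow[M,N\to\infty]{} 0.
\end{equation*}
Thus $S^N$ converges uniformly on compact time intervals in $L^2$, hence uniformly in probability; passing to a subsequence (Borel--Cantelli applied to the maximal inequality) one gets a.s.\ uniform convergence on each $[0,m]$, and since a uniform limit of c\`adl\`ag functions is c\`adl\`ag, the limit $X$ admits a c\`adl\`ag version. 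Lemma~\ref{cons-07} now applies---both branches of its hypothesis are met---and shows that $X$ is a L\'evy process with exponent $\psi=\lim_N \sum_{n=1}^N\psi_{a_{n+1},a_n}=\sum_{n=1}^\infty \psi_{a_{n+1},a_n}$.

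Finally, the martingale property of $X$ transfers from the $S^N$: since conditional expectation is an $L^2$-contraction, $\Ee[X_t\mid\Fscr_s]=\lim_N \Ee[S^N_t\mid\Fscr_s]=\lim_N S^N_s=X_s$ in $L^2$, so $X$ is an $\Fscr$-martingale (and, by the tower property, a martingale for its own filtration $\Fscr_s^X\subseteq\Fscr_s$). The main obstacle is the third step: upgrading the pointwise-in-$t$ $L^2$-convergence to a statement strong enough to produce genuinely c\`adl\`ag paths of the limit. This is exactly where the martingale structure, through Doob's inequality, is indispensable, and where one must take some care to extract a single subsequence that converges uniformly on all compact intervals simultaneously.
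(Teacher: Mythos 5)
Your proposal is correct and follows essentially the same route as the paper: the second-moment formula of Lemma~\ref{cons-03} gives the $L^2$-Cauchy estimate over the shrinking annuli, Doob's maximal inequality upgrades this to locally uniform convergence (hence uniform convergence in probability, with an a.s.\ uniformly convergent subsequence yielding the c\`adl\`ag paths), and Lemma~\ref{cons-07} then identifies the limit as a L\'evy process with exponent $\sum_n \psi_{a_{n+1},a_n}$. Your write-up merely makes explicit a few points the paper leaves implicit, such as the common filtration for the partial-sum martingales and the transfer of the martingale property via the $L^2$-contractivity of conditional expectation.
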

\begin{proof}
    Lemmas~\ref{cons-03},~\ref{cons-05} show that $X^{a_{n+m},a_n} = \sum_{k=1}^m X^{a_{n+k},a_{n+k-1}}$ is a L\'evy process with characteristic exponent $\psi_{a_{n+m},a_n} = \sum_{k=1}^m \psi_{a_{n+k},a_{n+k-1}}$, and $X^{a_{n+m},a_n}$ is an $L^2(\Pp)$-martingale. By Doob's inequality and Lemma~\ref{cons-03}
    \begin{align*}
		\Ee \left( \sup_{s \leq t} |X_s^{a_{n+m},a_n}|^2 \right)
		&\leq 4\, \Ee\big(|X_t^{a_{n+m},a_n}|^2\big)\\
		&= 4t \int_{a_{n+m} \leq |y|<a_n} y^2 \, \nu(dy)
		\xrightarrow[m,n\to\infty]{\text{dom.\ convergence}} 0.
	\end{align*}
    Hence, the limit $X=\lim_{n \to \infty} X^{a_n,a_1}$ exists (uniformly in $t$) in $L^2$, i.e.\ $X$ is an $L^2(\Pp)$-martingale;
    since the convergence is also uniform in probability, Lemma~\ref{cons-07} shows that $X$ is a L\'evy process with exponent $\psi = \sum_{n=1}^\infty\psi_{a_{n+1},a_n}$. Taking a uniformly convergent subsequence, we also see that the limit inherits the c\`adl\`ag property from the approximating L\'evy processes $X^{a_n,a_1}$.
\end{proof}

We can now prove the main result of this chapter.
\begin{theorem}\label{cons-11}
    Let $(l,Q,\nu)$ be a L\'evy triplet and $\psi$ be given by \eqref{cons-e02}. Then there exists a L\'evy process $X$ with c\`adl\`ag paths and characteristic exponent $\psi$.
\end{theorem}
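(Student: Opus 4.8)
The plan is to read off the four terms of the L\'evy--Khintchine exponent \eqref{cons-e02} as the characteristic exponents of four independent, explicitly constructible L\'evy processes, and then to assemble them using the summation lemmas of this chapter. Concretely, I split
$$
    \psi = \psi_{\mathrm{drift}} + \psi_{\mathrm{G}} + \psi_{\mathrm{s}} + \psi_{\mathrm{b}},
$$
where $\psi_{\mathrm{drift}}(\xi) = -\iup l\cdot\xi$, $\psi_{\mathrm{G}}(\xi) = \frac 12\xi\cdot Q\xi$, the small-jump part is $\psi_{\mathrm{s}}(\xi) = \int_{0<|y|<1}\bigl[1-\eup^{\iup y\cdot\xi}+\iup\xi\cdot y\bigr]\,\nu(dy)$, and the big-jump part is $\psi_{\mathrm{b}}(\xi) = \int_{|y|\geq 1}\bigl[1-\eup^{\iup y\cdot\xi}\bigr]\,\nu(dy)$. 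This splitting is legitimate because $\nu$ restricted to $\{|y|\geq 1\}$ is finite (the L\'evy measure integrates $|y|^2\wedge 1$), so $\psi_{\mathrm{b}}$ is of the compound-Poisson form \eqref{cons-e04}, and on $\{0<|y|<1\}$ the truncation indicator equals $1$, so the compensating term $\iup\xi\cdot y$ is exactly the one appearing in \eqref{cons-e06}.

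Next I would exhibit each piece as a L\'evy process with c\`adl\`ag paths. The drift $tl$ (Example~\ref{exlp-05}.a) and the Brownian part $\sqrt Q\,W_t$ (Example~\ref{exlp-05}.b) have continuous paths and exponents $\psi_{\mathrm{drift}}$ and $\psi_{\mathrm{G}}$ respectively; the matrix $\sqrt Q$ exists since $Q$ is symmetric positive semidefinite. For $\psi_{\mathrm{b}}$, Theorem~\ref{exlp-13} provides a compound Poisson process with c\`adl\`ag paths and exponent $\psi_{\mathrm{b}}$, built from the finite measure $\rho := \nu|_{\{|y|\geq 1\}}$. For the delicate small-jump part I choose a sequence $a_1 = 1 > a_2 > \dots \downarrow 0$ and observe that $\psi_{\mathrm{s}} = \sum_{n\geq 1}\psi^{a_{n+1},a_n}$, each summand being of the form \eqref{cons-e06} on an annulus $A_{a_{n+1},a_n}\subset B_1(0)$. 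Lemma~\ref{cons-09} then yields a single L\'evy process $X^{\mathrm{s}} := \sum_{n\geq 1} X^{a_{n+1},a_n}$, which is an $L^2(\Pp)$-martingale with c\`adl\`ag paths and exponent $\psi_{\mathrm{s}}$.

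To finish, I would realise all of these processes jointly independently on one probability space (a product construction) and then combine them using Lemma~\ref{cons-05}. Since that lemma adds two independent L\'evy processes and adds their exponents, iterating it over the four independent components $tl$, $\sqrt Q\,W_t$, $X^{\mathrm{s}}$ and the compound Poisson process produces a L\'evy process $X$ with exponent $\psi_{\mathrm{drift}}+\psi_{\mathrm{G}}+\psi_{\mathrm{s}}+\psi_{\mathrm{b}} = \psi$. A finite sum of c\`adl\`ag functions is again c\`adl\`ag, so $X$ has c\`adl\`ag paths.

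I expect the main obstacle to be organisational rather than analytic: the genuine content has already been isolated in Lemma~\ref{cons-09}, namely the $L^2$-convergence and the c\`adl\`ag property of the compensated small-jump series, established there through Doob's inequality and the second-moment estimate of Lemma~\ref{cons-03}. What remains to be handled with care is the bookkeeping of independence---constructing the countably many annular processes, the Brownian motion, and the compound Poisson process jointly independent on a common space---and verifying that Lemma~\ref{cons-05}, stated for two summands, may be iterated to the four components present here. One should also note explicitly that the independence hypothesis required by Lemma~\ref{cons-09} is furnished by this joint construction.
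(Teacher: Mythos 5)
Your proposal is correct and follows essentially the same route as the paper's own proof: decompose $\psi$ into the continuous (drift plus Gaussian) part, a compound Poisson part for jumps of size $\geq 1$ via Theorem~\ref{exlp-13}, and the compensated small-jump series over annuli handled by Lemma~\ref{cons-09}, then assemble the independent pieces with Lemma~\ref{cons-05}. Your closing remarks on realising the components jointly independent and on iterating Lemma~\ref{cons-05} match exactly the bookkeeping the paper carries out in steps $3^\circ$ and $4^\circ$.
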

\begin{proof}
	Because of Lemma~\ref{cons-03},~\ref{cons-05} and~\ref{cons-09} we can construct $X$ piece by piece.
\begin{enumerate}
\item[$1^\circ$]
    Let $(W_t)_{t \geq 0}$ be a Brownian motion and set
    $$
        X_t^c := tl + \sqrt{Q} W_t
        \et
        \psi_c(\xi) := -\iup  l\cdot\xi + \frac{1}{2} \xi\cdot Q \xi.
    $$

\item[$2^\circ$]
    Write $\rd \setminus \{0\} = \bigdcup_{n=0}^{\infty} A_n$ with $A_0 := \{|y| \geq 1\}$ and $A_n := \left\{ \frac{1}{n+1} \leq |y| < \frac{1}{n} \right\}$; set $\lambda_n := \nu(A_n)$ and $\mu_n := \nu(\cdot \cap A_n)/\nu(A_n)$.

\item[$3^\circ$]
\index{compound Poisson process!building block of LP}%
    Construct, as in Example~\ref{exlp-05}.d), a compound Poisson process comprising the large jumps
    $$
        X_t^0 := X^{1,\infty}_t \et
        \psi_0(\xi)
        := \int_{1\leq |y| <\infty} \big[1-\eup^{\iup y\cdot \xi}\big] \, \nu(dy)
	$$
    and compensated compound Poisson processes taking account of all small jumps
	$$
        X_t^n := X^{a_{n+1},a_n}_t,\quad a_n := \frac 1n
        \et
        \psi_n(\xi) := \int_{A_n} \big[1-\eup^{\iup   y\cdot \xi}+\iup  y\cdot \xi\big] \, \nu(dy).
    $$
    We can construct the processes $X^n$ stochastically independent (just choose independent jump time processes and independent iid jump heights when constructing the compound Poisson processes) and independent of the Wiener process $W$.

\item[$4^\circ$]
    Setting $\psi = \psi_0+\psi_c + \sum_{n = 1}^\infty \psi_n$, Lemma~\ref{cons-05} and~\ref{cons-09} prove the theorem. Since all approximating processes have c\`adl\`ag paths, this property is inherited by the sums and the limit (Lemma~\ref{cons-09}).
\qedhere
\end{enumerate}
\end{proof}

The proof of Theorem~\ref{cons-11} also implies the following pathwise decomposition of a L\'evy process. We write $\Delta X_t := X_t-X_{t-}$ for the jump at time $t$. From the construction we know that
\begin{flalign}
\label{cons-e12}
&\text{(large jumps)} &    J_t^{[1,\infty)} &= \sum_{s\leq t}\Delta X_s \I_{[1,\infty)}(|\Delta X_s|)\\
\label{cons-e14}
&\text{(small jumps)} &    J_t^{[1/n,1)}    &= \sum_{s\leq t} \Delta X_s \I_{[1/n,1)}(|\Delta X_s|)\\
\label{cons-e16}
&\text{(compensated small jumps)} & \widetilde{J_t}^{[1/n,1)}    &= J_t^{[1/n,1)} - \Ee J_t^{[1/n,1)}\\
\notag
&&                                &= \sum_{s\leq t} \Delta X_s \I_{[1/n,1)}(|\Delta X_s|) - t\!\!\!\int\limits_{\frac 1n \leq |y| < 1} \!\!\!y\,\nu(dy).&&
\end{flalign}
are L\'evy processes and $J^{[1,\infty)}\bbot \widetilde{J}^{[1/n,1)}$.
\begin{corollary}\label{cons-13}
\index{L\'evy--It\^o decomposition}%
    Let $\psi$ be a characteristic exponent given by \eqref{cons-e02} and let $X$ be the L\'evy process constructed in Theorem~\ref{cons-11}. Then
    $$\begin{aligned}
        X_t
        &= \sqrt Q W_t \;+\: \lim_{n\to\infty}\bigg(\sum_{s\leq t} \Delta X_s \I_{[1/n,1)}(|\Delta X_s|) - t\!\!\!\!\int\limits_{[1/n,1)} \!\!\!\!y\,\nu(dy)\bigg)
        &&\pmb{\bigg]}\text{\ $=: M_t$, \ $L^2$-martingale}\\
        &\phantom{=}\underbracket{\;\,\mbox{}\quad tl\quad\, \vphantom{\sum_{s\leq t}\Delta X_s\I_{\{u\::\: |\Delta X_u|\geq 1\}}(s)}}_{\mathclap{\begin{gathered}\text{continuous}\\[-5pt]\text{Gaussian}\end{gathered}}}
        \;+ \;\; \underbracket{\quad\sum_{s\leq t}\Delta X_s\I_{[1,\infty)}(|\Delta X_s|).\qquad\qquad\qquad\qquad}_{\begin{gathered}\text{pure jump part}\end{gathered}}
        &&\pmb{\bigg]}\text{\ $=: A_t$, \ bdd.\ variation}
    \end{aligned}$$
    where all appearing processes are independent.
\end{corollary}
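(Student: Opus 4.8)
The plan is to read off the decomposition directly from the construction in Theorem~\ref{cons-11}, where $X$ was assembled as the sum of independent pieces
\[
    X_t = X_t^c + X_t^0 + \sum_{n=1}^\infty X_t^n,
    \qquad X_t^c = tl + \sqrt Q\,W_t,
\]
with $X^0 = X^{1,\infty}$ the compound Poisson process carrying the large jumps and $X^n = X^{a_{n+1},a_n}$ (where $a_n = 1/n$) the compensated compound Poisson process carrying the jumps in the annulus $A_n = \{1/(n+1)\leq|y|<1/n\}$. Since independence of all summands is already guaranteed by step $3^\circ$ of that proof, the only real work is to match these algebraically defined building blocks with the pathwise jump sums in the statement and to verify the claimed regularity of the two groups $M_t$ and $A_t$.

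First I would establish that the pathwise jumps of $X$ are partitioned cleanly among the building blocks. The continuous part $X^c$ contributes no jumps. The remaining summands are independent compound Poisson (resp.\ compensated compound Poisson) processes built from independent Poisson jump-time processes; two independent Poisson processes have almost surely disjoint jump times, and a countable union of null sets is null, so with probability one at most one building block jumps at any given instant. Consequently $\Delta X_s = \Delta X^0_s$ when the jump lies in $A_0 = \{|y|\geq 1\}$ and $\Delta X_s = \Delta X^n_s$ when it lies in $A_n$; in particular $|\Delta X_s|$ lands in the corresponding annulus. This is the crux of the argument and I expect it to be the main obstacle, since it is the point where the purely algebraic superposition of exponents is upgraded to a pathwise statement about jumps.

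Granting this, the identifications are immediate. For the large jumps, $X^0$ is a finite-activity compound Poisson process all of whose jumps have size $\geq 1$, whence
\[
    X_t^0 = \sum_{s\leq t}\Delta X_s\,\I_{[1,\infty)}(|\Delta X_s|).
\]
For the small jumps I use $[1/n,1) = A_1\cup\dots\cup A_{n-1}$ together with the representation of each $X^k$ from Lemma~\ref{cons-03} as an uncompensated jump sum minus its mean; additivity over these disjoint annuli gives
\[
    \sum_{k=1}^{n-1} X_t^k
    = \sum_{s\leq t}\Delta X_s\,\I_{[1/n,1)}(|\Delta X_s|)
        - t\!\!\int\limits_{[1/n,1)}\!\! y\,\nu(dy).
\]
Letting $n\to\infty$ and invoking Lemma~\ref{cons-09} (which gives convergence of $\sum_k X^k$ in $L^2$, uniformly in $t$, to an $L^2$-martingale) yields the bracketed limit in the statement.

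Finally I would collect terms and check regularity. Writing $M_t := \sqrt Q\,W_t + \sum_{n\geq 1} X_t^n$, this is an $L^2$-martingale: $\sqrt Q\,W$ is a martingale, the jump limit is an $L^2$-martingale by Lemma~\ref{cons-09}, and the two are independent, so their sum is again a martingale. Writing $A_t := tl + X_t^0$, the drift $tl$ has bounded variation and $X^0$ is, on every compact interval, a step function with only finitely many jumps (finite-activity compound Poisson, as $\nu(A_0)<\infty$), hence of bounded variation as well; thus $A$ has bounded variation on compacts. Independence of $W$, the large-jump process and the small-jump limit is inherited verbatim from Theorem~\ref{cons-11}, completing the proof.
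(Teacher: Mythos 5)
Your proposal is correct and takes essentially the same route as the paper: the paper's proof likewise reads the decomposition off the construction in Theorem~\ref{cons-11}, invokes Lemma~\ref{cons-09} to identify $M$ as an $L^2(\Pp)$-martingale (adding the independent Wiener part), and bounds the variation of $A$ by $|l|t$ plus the a.s.\ finitely many jumps of size $\geq 1$. The only difference is one of explicitness: you spell out the pathwise identification of $\Delta X_s$ with the jump of the unique building block active at time $s$ (via a.s.\ disjoint jump times of the independent Poisson components and uniform convergence of the partial sums), a step the paper subsumes under ``from the construction we know'' in \eqref{cons-e12}--\eqref{cons-e16}.
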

\begin{proof}
    The decomposition follows directly from the construction in Theorem~\ref{cons-11}. By Lemma~\ref{cons-09}, $\lim_{n\to\infty} X^{1/n,1}$ is an $L^2(\Pp)$-martingale, and since the (independent!) Wiener process $W$ is also an $L^2(\Pp)$-martingale, so is their sum $M$.

    The paths $t\mapsto A_t(\omega)$ are a.s.\ of bounded variation since, by construction, on any time-interval $[0,t]$ there are $N_t^0(\omega)$ jumps of size $\geq 1$. Since $N_t^0(\omega)<\infty$ a.s., the total variation of $A_t(\omega)$ is less or equal than $|l| t + \sum_{s\leq t} |\Delta X_s| \I_{[1,\infty)}(|\Delta X_s|) < \infty$ a.s.
\end{proof}

\begin{remark}\label{cons-21}
    A word of \emphh{caution}: Theorem~\ref{cons-11} associates with any $\psi$ given by the L\'evy--Khin\-tchi\-ne formula \eqref{cons-e02} a L\'evy process. Unless we know that all characteristic exponents are of this form (this was proved in Theorem~\ref{gen-21}), it does not follow that we have constructed \emphh{all} L\'evy processes.

    On the other hand, Theorem~\ref{cons-11} shows that the L\'evy triplet determining $\psi$ is \emphh{unique}. Indeed, assume that $(l,Q,\nu)$ and $(l',Q',\nu')$ are two L\'evy triplets which yield the same exponent $\psi$. Now we can associate, using Theorem~\ref{cons-11}, with each triplet a L\'evy process $X$ and $X'$ such that
    $$
        \Ee\,\eup^{\iup \xi\cdot X_t} = \eup^{-t\psi(\xi)} = \Ee\,\eup^{\iup \xi\cdot X_t'}.
    $$
    Thus, $X\sim X'$ and so these processes have (in law) the same pathwise decomposition, i.e.\ the same drift, diffusion and jump behaviour. This, however, means that $(l,Q,\nu)=(l',Q',\nu')$.
\end{remark}

\chapter{Two special L\'evy processes}\label{spe}
We will now study the structure of the paths of a L\'evy process. We begin with two extreme cases: L\'evy processes which only grow by jumps of size $1$ and L\'evy processes with continuous paths.

Throughout this chapter we assume that
\begin{centering}
    all paths $[0,\infty) \ni t\mapsto X_t(\omega)$ are right-continuous with finite left-hand limits (c\`adl\`ag).
\end{centering}
This is a bit stronger than \eqref{Lcont}, but it is always possible to construct a c\`adl\`ag version of a L\'evy process (see the discussion on page \pageref{levy-e04}). This allows us to consider the \emphh{jumps} of the process $X$
$$
    \Delta X_t := X_t - X_{t-} = X_t - \lim_{s\uparrow t}X_s.
$$

\begin{theorem}\label{spe-11}
\index{Poisson process!characterization among LP}
    Let $X$ be a one-dimensional L\'evy process which moves only by jumps of size $1$. Then $X$ is a Poisson process.
\end{theorem}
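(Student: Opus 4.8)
The plan is to exploit the hypothesis that $X$ ``moves only by jumps of size $1$'': this forces the paths to be nondecreasing, $\nat_0$-valued step functions, so there is no continuous or drift part and $X_t = \#\{s\in(0,t] : \Delta X_s = 1\}$ simply counts the jumps. The goal is then to recover the definition of a Poisson process from Example~\ref{exlp-05}.c), i.e.\ to show that the successive jump times have i.i.d.\ exponential interarrival times. No appeal to the L\'evy--Khintchine formula is needed; monotonicity of the paths does the work instead.

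First I would analyse the first jump time $T_1 := \inf\{t>0 : X_t\geq 1\}$, which is an a.s.\ finite stopping time and satisfies $X_{T_1}=1$. Since $X$ is nondecreasing and increases only by unit jumps, $\{X_t=0\}=\{T_1>t\}$ and $\{X_{s+t}=0\}=\{X_s=0\}\cap\{X_{s+t}-X_s=0\}$. Combining the independence \eqref{Lindep} and stationarity \eqref{Lstat} of the increments, the function $p_0(t):=\Pp(X_t=0)$ satisfies the multiplicative Cauchy equation $p_0(s+t)=p_0(s)p_0(t)$; it is nonincreasing, and \eqref{Lcont} (with $\epsilon=\tfrac12$, so that $\Pp(X_t\geq 1)\to 0$) forces $p_0(t)\to 1$ as $t\downarrow 0$. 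By the monotone solution of the Cauchy--Abel equation \eqref{levy-e06} this yields $p_0(t)=\eup^{-\lambda t}$ for some $\lambda\in[0,\infty)$, that is, $T_1\sim\Exp(\lambda)$.

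If $\lambda=0$ then $X\equiv 0$, the degenerate Poisson process of intensity $0$, and we are done; so assume $\lambda>0$. The key step is to apply the strong Markov property (Theorem~\ref{lpmp-31}) at $T_1$: the shifted process $Y:=(X_{T_1+t}-X_{T_1})_{t\geq 0}$ is again a L\'evy process with $Y\sim X$ and $\Fscr_\infty^Y\bbot\Fscr_{T_1+}^X$. Since $Y$ itself moves only by unit jumps, its first jump time $T_2-T_1$ is, by the computation just made applied to $Y\sim X$, $\Exp(\lambda)$-distributed and independent of $\Fscr_{T_1+}^X\supseteq\sigma(T_1)$. Iterating at the successive jump times $0<T_1<T_2<\dots$ and setting $\sigma_k:=T_k-T_{k-1}$, an induction shows the $\sigma_k$ are i.i.d.\ $\Exp(\lambda)$: at the $k$-th step $\sigma_{k+1}$ is the first jump time of $X_{T_k+\cdot}-X_{T_k}\sim X$, hence $\Exp(\lambda)$ and independent of $\Fscr_{T_k+}^X\supseteq\sigma(\sigma_1,\dots,\sigma_k)$. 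Therefore $X_t=\sum_{k\geq 1}\I_{[0,t]}(T_k)$ with $T_k=\sigma_1+\dots+\sigma_k$ and $\sigma_k\sim\Exp(\lambda)$ i.i.d., which is precisely a Poisson process of intensity $\lambda$ in the sense of Example~\ref{exlp-05}.c).

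The main obstacle is the passage from the law of the single first jump time to the full i.i.d.\ structure of all jump times; this is exactly where the strong Markov property is essential. In writing the details one should verify that each $T_k$ is an a.s.\ finite stopping time and that $\Fscr_{T_k+}^X$ contains $\sigma(\sigma_1,\dots,\sigma_k)$, so that the independence invoked at every inductive step is legitimate.
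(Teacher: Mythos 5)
Your proposal is correct and follows essentially the same route as the paper: establish the multiplicative Cauchy--Abel equation for the law of the first jump time (the paper does this via the Markov property $\Pp^{X_s}$, you do it directly from \eqref{Lstat}--\eqref{Lindep} on the event $\{X_s=0\}$, which is the same computation), deduce $T_1\sim\Exp(\lambda)$ from right-continuity, and then iterate the strong Markov property (Theorem~\ref{lpmp-31}) at the successive jump times to get i.i.d.\ exponential interarrival times, concluding via Example~\ref{exlp-05}.c) and Theorem~\ref{exlp-13}. Your explicit treatment of the degenerate case $\lambda=0$ and the remark about $\sigma(\sigma_1,\dots,\sigma_k)\subseteq\Fscr_{T_k+}^X$ are minor refinements the paper leaves implicit.
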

\begin{proof}
    Set $\Fscr_t^X := \sigma(X_s,\,s\leq t)$ and let $T_1 = \inf\{t>0\,:\, \Delta X_t = 1\}$ be the time of the first jump. Since $\{T_1 > t\} = \{X_t = 0\}\in \Fscr_t^X$, $T_1$ is a stopping time.

    Let $T_0=0$ and $T_k = \inf\{t>T_{k-1}\,:\, \Delta X_t = 1\}$, be the time of the $k$th jump; this is also a stopping time.
    By the Markov property (\eqref{lpmp-e10} and Lemma~\ref{lpmp-09}),
    \begin{align*}
        \Pp(T_1 > s+t)
        &= \Pp(T_1>s,\; T_1>s+t)\\
        &= \Ee\big[\I_{\{T_1>s\}} \Pp^{X_s}(T_1>t)\big]\\
        &= \Ee\big[\I_{\{T_1>s\}} \Pp^{0}(T_1>t)\big]\\
        &= \Pp(T_1>s)\Pp(T_1>t)
    \end{align*}
    where we use that $X_s = 0$ if $T_1>s$ (the process hasn't yet moved!) and $\Pp = \Pp^0$.

    Since $t\mapsto \Pp(T_1>t)$ is right-continuous, $\Pp(T_1>t)=\exp[t\log\Pp(T_1>1)]$ is the unique solution of this functional equation (Theorem~\ref{app-31}). Thus, the sequence of inter-jump times $\sigma_k := T_k-T_{k-1}$, $k\in\nat$, is an iid sequence of exponential times. This follows immediately from the strong Markov property (Theorem~\ref{lpmp-31}) for L\'evy processes and the observation that
    $$
        T_{k+1} - T_k = T_1^{Y}\quad\text{where}\quad Y = (Y_{t+T_k}-Y_{T_k})_{t\geq 0}
    $$
    and $T_1^Y$ is the first jump time of the process $Y$.

    Obviously, $X_t = \sum_{k=1}^\infty \I_{[0,t]}(T_k)$, $T_k = \sigma_1+\dots+\sigma_k$, and Example~\ref{exlp-05}.c) (and Theorem~\ref{exlp-13}) show that $X$ is a Poisson process.
\end{proof}

A L\'evy process with uniformly bounded jumps admits moments of all orders.
\begin{lemma}\label{spe-15}
\index{L\'evy process!moments}%
    Let $\Xt$ be a L\'evy process such that $|\Delta X_t(\omega)|\leq c$ for all $t\geq 0$ and some constant $c>0$. Then $\Ee(|X_t|^p) < \infty$ for all $p\geq 0$.
\end{lemma}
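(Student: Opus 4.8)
The plan is to control the running maximum $X_t^*:=\sup_{s\leq t}|X_s|$ (which is finite a.s.\ since we work with c\`adl\`ag paths) by a geometrically distributed counting variable, exploiting that bounded jumps permit only a bounded \emph{overshoot} each time the process travels a fixed distance $a$. Fix $t>0$. Since $\Pp(X_t^*>a)\to 0$ as $a\to\infty$, I would first fix a radius $a>0$ with $p_0:=\Pp(X_t^*>a)\leq\tfrac12$.

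Next I would introduce the iterated exit times $T^{(0)}:=0$ and
$$
    T^{(k)} := \inf\big\{s>T^{(k-1)} \,:\, |X_s-X_{T^{(k-1)}}|>a\big\},\quad k\geq 1,
$$
(with $\inf\emptyset=+\infty$), each of which is a stopping time. The key estimate is $\Pp(T^{(n)}\leq t)\leq p_0^{\,n}$, which I would prove by induction. The event $\{T^{(n-1)}\leq t\}$ belongs to $\Fscr_{T^{(n-1)}+}^X$, and on it $T^{(n-1)}$ is finite, so by the strong Markov property (Theorem~\ref{lpmp-31}) the process $Y:=(X_{s+T^{(n-1)}}-X_{T^{(n-1)}})_{s\geq 0}$ is a copy of $X$ independent of $\Fscr_{T^{(n-1)}+}^X$. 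Hence $\Delta_n:=T^{(n)}-T^{(n-1)}$ equals the first exit time of $Y$ from $B_a(0)$, so it is distributed as $\tau_a:=\inf\{s>0:|X_s|>a\}$, with $\Pp(\Delta_n\leq t)=\Pp(X_t^*>a)=p_0$, and it is independent of $\Fscr_{T^{(n-1)}+}^X$. Since $\{T^{(n)}\leq t\}\subset\{T^{(n-1)}\leq t\}\cap\{\Delta_n\leq t\}$, this gives $\Pp(T^{(n)}\leq t)\leq p_0\,\Pp(T^{(n-1)}\leq t)$, and the bound follows.

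The bounded-jumps hypothesis enters in the overshoot estimate: just before $T^{(k)}$ the displacement from $X_{T^{(k-1)}}$ has modulus $\leq a$, so by the left limit and $|\Delta X_{T^{(k)}}|\leq c$ one obtains $|X_{T^{(k)}}-X_{T^{(k-1)}}|\leq a+c$. Setting $N:=\#\{k\geq 1:T^{(k)}\leq t\}$, so that $T^{(N)}\leq t<T^{(N+1)}$, every $s\leq t$ lies in some $[T^{(k-1)},T^{(k)})$ with $k\leq N+1$; summing the at most $N$ full increments and the final partial one of modulus $\leq a$ gives $|X_s|\leq N(a+c)+a$, whence $X_t^*\leq (N+1)(a+c)$.

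Finally I would combine the two. As $\{N\geq n\}=\{T^{(n)}\leq t\}$, the induction gives $\Pp(N\geq n)\leq p_0^{\,n}$ with $p_0\leq\tfrac12$, so $N$ has geometric tails and $\Ee[(N+1)^p]<\infty$ for every $p\geq 0$. Therefore
$$
    \Ee(|X_t|^p)\leq \Ee\big[(X_t^*)^p\big]\leq (a+c)^p\,\Ee\big[(N+1)^p\big]<\infty,
$$
which is the claim; the same estimate even yields finite exponential moments $\Ee\,\eup^{\lambda X_t^*}<\infty$ for $\lambda<-\log p_0$. I expect the only genuinely delicate point to be the bookkeeping that makes the strong Markov property applicable on $\{T^{(n-1)}\leq t\}$ (finiteness of the exit time there and the membership of this event in $\Fscr_{T^{(n-1)}+}^X$); the overshoot bound and the geometric decay are then routine.
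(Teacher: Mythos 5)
Your proof is correct and is essentially the paper's own argument: iterated exit times, the strong Markov property (Theorem~\ref{lpmp-31}) to make the successive excursion lengths iid copies of the first exit time, the bounded-jump overshoot bound, and geometric decay of the tail of the number of excursions completed before time $t$. The only differences are cosmetic: the paper takes exit radius $c$ and extracts the geometric decay via $\Ee\,\eup^{-\tau_n} = \big(\Ee\,\eup^{-\tau_1}\big)^n = q^n$ combined with Markov's inequality, $\Pp(\tau_n < t)\leq \eup^t q^n$, after first showing $\Pp(\tau_1=\infty)=0$ (unless the statement is trivial) so that Theorem~\ref{lpmp-31} applies verbatim, whereas you choose the radius $a$ so that the one-step probability is at most $\tfrac 12$ and handle the finiteness issue by restricting to $\{T^{(n-1)}\leq t\}$ --- precisely the truncation bookkeeping you flag, which is routine.
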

\begin{proof}
    Let $\Fscr_t^X := \sigma(X_s,\,s\leq t)$ and define the stopping times
    $$
        \tau_0 := 0,\quad \tau_n := \inf\left\{t>\tau_{n-1}\,:\, |X_t-X_{\tau_{n-1}}|\geq c\right\}.
    $$
    Since $X$ has c\`adl\`ag paths, $\tau_0<\tau_1<\tau_2<\dots$. Let us show that $\tau_1<\infty$ a.s. For fixed $t>0$ and $n\in\nat$ we have
    \begin{align*}
        \Pp(\tau_1=\infty)
        \leq \Pp(\tau_1\geq nt)
        &\pomu{\eqref{Lindep}}{\leq}{\eqref{LindepAlt}} \Pp(|X_{kt}-X_{(k-1)t}|\leq 2c,\;\;\forall k=1,\dots,n)\\
        &\omu{\eqref{Lindep}}{=}{\eqref{LindepAlt}} \prod_{k=1}^n \Pp(|X_{kt}-X_{(k-1)t}|\leq 2c)
        \omu{\eqref{Lstat}}{=}{\phantom{\eqref{LindepAlt}}} \Pp(|X_t|\leq 2c)^n.
    \end{align*}
    Letting $n\to\infty$ we see that $\Pp(\tau_1=\infty)=0$ if $\Pp(|X_t|\leq 2c)<1$ for some $t>0$. (In the alternative case, we have $\Pp(|X_t|\leq 2c)=1$ for all $t>0$ which makes the lemma trivial.)

    By the strong Markov property (Theorem~\ref{lpmp-31}) $\tau_n - \tau_{n-1}\sim \tau_1$ and $\tau_n-\tau_{n-1}\bbot\Fscr_{\tau_{n-1}}^X$, i.e.\ $(\tau_n-\tau_{n-1})_{n\in\nat}$ is an iid sequence. Therefore,
    $$
        \Ee\,\eup^{-\tau_n} = \left(\Ee\,\eup^{-\tau_1}\right)^n = q^n
    $$
    for some $q\in [0,1)$. From the very definition of the stoppping times we infer
    $$
        |X_{t\wedge\tau_n}|
        \leq \sum_{k=1}^n |X_{\tau_k}-X_{\tau_{k-1}}|
        \leq \sum_{k=1}^n \big(\underbrace{|\Delta X_{\tau_k}|}_{\leq c} + \underbrace{|X_{\tau_k-}-X_{\tau_{k-1}}|}_{\leq c}\big)
        \leq 2nc.
    $$
    Thus, $|X_t|>2nc$ implies that $\tau_n<t$, and by Markov's inequality
    $$
        \Pp(|X_t|>2nc)
        \leq \Pp(\tau_n < t)
        \leq \eup^t\Ee\,\eup^{-\tau_n}
        = \eup^t \,q^n.
    $$
    Finally,
    \begin{align*}
        \Ee\big(|X_t|^p\big)
        &= \sum_{n=0}^\infty \Ee\big(|X_t|^p \I_{\{ 2nc< |X_t|\leq 2(n+1)c\}}\big)\\
        &\leq (2c)^p\sum_{n=0}^\infty (n+1)^p\Pp(|X_t|> 2nc)
        \leq (2c)^p\eup^t\sum_{n=0}^\infty (n+1)^p q^n
        < \infty.
    \qedhere
    \end{align*}
\end{proof}

Recall that a Brownian motion $(W_t)_{t\geq 0}$ on $\rd$ is a L\'evy process such that $W_t$ is a normal random variable with mean $0$ and covariance matrix $t\id$. We will need Paul L\'evy's characterization of Brownian motion which we state without proof. An elementary proof can be found in \cite[Chapter 9.4]{schilling-bm}.
\begin{theorem}[L\'evy]\label{spe-17}
\index{Brownian motion!martingale characterization}%
\index{L\'evy's theorem}%
    Let $M=(M_t,\Fscr_t)$, $M_0=0$, be a one-dimensional martingale with continuous sample paths such that $(M_t^2-t,\Fscr_t)_{t\geq 0}$ is also a martingale. Then $M$ is a one-dimensional standard Brownian motion.
\end{theorem}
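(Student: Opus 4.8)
The plan is to reduce the statement to the characterisation of L\'evy processes in Theorem~\ref{exlp-03}. Specifically, I will show that
\begin{equation*}
    \Ee\left(\eup^{\iup\xi(M_t-M_s)}\:\middle|\:\Fscr_s\right)
    = \eup^{-\frac 12 (t-s)\xi^2}
    \fa s<t,\;\xi\in\real,
\end{equation*}
which is exactly \eqref{exlp-e02} with exponent $\psi(\xi)=\tfrac 12\xi^2$. Theorem~\ref{exlp-03} then shows that $M$ is a L\'evy process with this characteristic exponent; since $\psi(\xi)=\tfrac 12\xi^2$ is the exponent of a one-dimensional Brownian motion (Example~\ref{exlp-05}.b) with $Q=1$) and $M$ has continuous paths by hypothesis, $M$ must be a standard Brownian motion.

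The computation rests on It\^o's formula. First I would note that $M$ is a continuous $L^2$-martingale: the assumption that $M_t^2-t$ is a martingale with $M_0=0$ gives $\Ee(M_t^2)=t<\infty$, and by uniqueness of the quadratic variation the continuous increasing process $t$ must coincide with $\langle M\rangle_t$, i.e.\ $\langle M\rangle_t=t$. Applying It\^o's formula to the (complex) function $x\mapsto\eup^{\iup\xi x}$ and using $d\langle M\rangle_r=dr$ yields, for fixed $s$ and $t\geq s$,
\begin{equation*}
    \eup^{\iup\xi(M_t-M_s)}
    = 1 + \iup\xi\int_s^t \eup^{\iup\xi(M_r-M_s)}\,dM_r
    - \frac{\xi^2}{2}\int_s^t \eup^{\iup\xi(M_r-M_s)}\,dr,
\end{equation*}
where the integrand $\eup^{\iup\xi(M_r-M_s)}$ is bounded and adapted, so the stochastic integral is a martingale vanishing at $r=s$.

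To extract the conditional characteristic function I would fix $A\in\Fscr_s$ and set $\phi(t):=\Ee\big(\I_A\,\eup^{\iup\xi(M_t-M_s)}\big)$ for $t\geq s$. Multiplying the It\^o expansion by the $\Fscr_s$-measurable factor $\I_A$ and taking expectations annihilates the martingale term by the tower property, leaving the linear integral relation
\begin{equation*}
    \phi(t) = \Pp(A) - \frac{\xi^2}{2}\int_s^t \phi(r)\,dr,
\end{equation*}
an ODE whose unique solution is $\phi(t)=\Pp(A)\,\eup^{-\frac12(t-s)\xi^2}$. As this holds for every $A\in\Fscr_s$, it is precisely the asserted conditional expectation, and the proof concludes via Theorem~\ref{exlp-03}.

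The main obstacle is not the algebra but the analytic machinery it presupposes: one needs It\^o's formula for continuous semimartingales together with the existence and uniqueness of the quadratic variation $\langle M\rangle$, neither of which is developed in this excerpt (this is exactly the Kunita--Watanabe argument alluded to in the introductory chapter). A self-contained route avoiding stochastic calculus, controlling the path oscillations directly, exists as well (cf.\ the elementary proof referenced in the statement) but is considerably longer.
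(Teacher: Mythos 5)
Your proof is correct, but there is nothing in the paper to compare it against line by line: Theorem~\ref{spe-17} is explicitly stated \emph{without proof}, with a pointer to an elementary argument in \cite[Chapter 9.4]{schilling-bm}. The route you take --- It\^o's formula applied to $x\mapsto\eup^{\iup\xi x}$, killing the stochastic-integral term against $\I_A$ with $A\in\Fscr_s$, and solving the resulting integral equation $\phi(t)=\Pp(A)-\tfrac{\xi^2}{2}\int_s^t\phi(r)\,dr$ --- is precisely the Kunita--Watanabe argument that these notes name-check in a footnote in Chapter~\ref{orient} (referring to \cite[Chapter 18.2]{schilling-bm}), so you have reconstructed the ``intended'' alternative proof rather than the elementary one the author cites. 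Two small points deserve to be made explicit. First, Theorem~\ref{exlp-03} is formulated for the \emph{canonical} filtration $\Fscr_t^M$, whereas your computation conditions on the possibly larger given filtration $\Fscr_s$; this is harmless --- condition down by the tower property, exactly as the paper does in the proof of Lemma~\ref{cons-05} --- and in fact your identity $\Ee\big(\eup^{\iup\xi(M_t-M_s)}\mid\Fscr_s\big)=\eup^{-\frac12(t-s)\xi^2}$ gives the stronger conclusion that $M$ is a Brownian motion with respect to the larger filtration. Second, the claim that $\int_s^t\eup^{\iup\xi(M_r-M_s)}\,dM_r$ is a true martingale should be justified: the integrand is left-continuous, adapted and of modulus $1$, hence predictable with $\Ee\int_s^t\big|\eup^{\iup\xi(M_r-M_s)}\big|^2\,d\langle M\rangle_r=t-s<\infty$, which places it in the $L^2$-framework of Chapter~\ref{si}; the identification $\langle M\rangle_t=t$ via Doob--Meyer uniqueness and It\^o's formula itself are imported from outside the notes, as you candidly acknowledge (the notes only quote It\^o's formula at the end of Chapter~\ref{si}, citing Ikeda--Watanabe). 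As for what each approach buys: your stochastic-calculus proof is short and conceptual but presupposes machinery the notes deliberately do not develop, while the elementary proof the paper points to avoids It\^o calculus at the cost of considerable length --- which is presumably why the author chose to state the theorem without proof rather than build the prerequisites.
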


\begin{theorem}\label{spe-21}
\index{Brownian motion!characterization among LP}%
    Let $\Xt$ be a L\'evy process in $\rd$ whose sample paths are a.s.\ continuous. Then $X_t \sim tl + \sqrt{Q}W_t$ where $l\in\rd$, $Q$ is a positive semidefinite symmetric matrix, and $W$ is a standard Brownian motion in $\rd$.
\end{theorem}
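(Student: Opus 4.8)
The plan is to reduce the statement to Paul Lévy's martingale characterisation of Brownian motion (Theorem~\ref{spe-17}), applied in each direction $\xi\in\rd$, after first securing enough integrability. Since $X$ has almost surely continuous paths, every jump vanishes, so trivially $|\Delta X_t|\leq c$ for any $c>0$; Lemma~\ref{spe-15} then guarantees that $X_t$ has moments of all orders, and in particular $\Vv X_1<\infty$. I set $l:=\Ee X_1\in\rd$ and let $Q:=\Cov(X_1)\in\real^{d\times d}$ be the (symmetric, positive semidefinite) covariance matrix of $X_1$. The goal is then to identify the characteristic function of $X_t$ with that of a Gaussian of mean $tl$ and covariance $tQ$.

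First I would check that $M_t:=X_t-tl$ is a continuous, mean-zero martingale for $\Fscr_t^X$: by stationarity and independence of the increments, $\Ee(X_t-X_s\mid\Fscr_s)=\Ee X_{t-s}=(t-s)l$, so $\Ee(M_t-M_s\mid\Fscr_s)=0$. Next, fixing $\xi\in\rd$, I consider the one-dimensional process $N_t:=\xi\cdot M_t=\xi\cdot X_t-t(\xi\cdot l)$. Since $\xi\cdot X$ is itself a one-dimensional L\'evy process with continuous paths and $\Vv(\xi\cdot X_1)<\infty$, Lemma~\ref{exlp-51} yields $\Ee(\xi\cdot X_t)=t(\xi\cdot l)$ and $\Vv(\xi\cdot X_t)=t(\xi\cdot Q\xi)$, so $N$ is a continuous martingale with $N_0=0$. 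I then verify that $N_t^2-t(\xi\cdot Q\xi)$ is a martingale: writing $N_t^2-N_s^2=(N_t-N_s)^2+2N_s(N_t-N_s)$, the last term has vanishing conditional expectation because $N$ is a martingale, while stationary independent increments give $\Ee((N_t-N_s)^2\mid\Fscr_s)=\Vv(N_{t-s})=(t-s)(\xi\cdot Q\xi)$.

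With both martingale properties in hand I apply Theorem~\ref{spe-17}. When $\xi\cdot Q\xi>0$, the process $N_t/\sqrt{\xi\cdot Q\xi}$ is a continuous martingale whose square minus $t$ is a martingale, hence a standard Brownian motion, so $N_t\sim\Normal(0,t\,\xi\cdot Q\xi)$. The degenerate case $\xi\cdot Q\xi=0$ is handled directly: $N$ is then a continuous martingale with $\Vv N_t=0$, forcing $N_t=0$ a.s., which is the same (degenerate) Gaussian law. In either case $\Ee\,\eup^{\iup N_t}=\eup^{-\frac12 t(\xi\cdot Q\xi)}$, whence
\[
    \Ee\,\eup^{\iup\xi\cdot X_t}
    = \eup^{\iup t(\xi\cdot l)}\,\Ee\,\eup^{\iup N_t}
    = \eup^{\iup t\,l\cdot\xi-\frac12 t\,\xi\cdot Q\xi}.
\]
This is exactly the characteristic function of $tl+\sqrt{Q}W_t$ (Example~\ref{exlp-05}.a),b)), so the two L\'evy processes share the characteristic exponent $\psi(\xi)=-\iup l\cdot\xi+\frac12\xi\cdot Q\xi$; by the uniqueness in Corollary~\ref{levy-15} and Corollary~\ref{levy-17} their finite-dimensional distributions coincide, i.e.\ $X_t\sim tl+\sqrt{Q}W_t$.

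The main obstacle I anticipate is establishing the second martingale property cleanly, since this is where both the integrability from Lemma~\ref{spe-15} and the exact variance scaling $\Vv(\xi\cdot X_t)=t\,\xi\cdot Q\xi$ from Lemma~\ref{exlp-51} are indispensable, and where the \emph{continuity} of the paths is precisely the hypothesis that unlocks Theorem~\ref{spe-17}. A secondary point needing care is the degenerate direction $\xi\cdot Q\xi=0$, which must be treated separately because one cannot normalise $N$ to unit quadratic variation there.
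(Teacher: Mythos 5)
Your proposal is correct and takes essentially the same route as the paper's first proof of Theorem~\ref{spe-21}: moments via Lemma~\ref{spe-15}, the two martingale verifications for $\xi\cdot(X_t-\Ee X_t)$ and its square using Lemma~\ref{exlp-51}, and then L\'evy's characterization (Theorem~\ref{spe-17}) direction by direction. Your explicit treatment of the degenerate directions $\xi\cdot Q\xi=0$ and the characteristic-function (Cram\'er--Wold style) conclusion merely spell out details the paper leaves implicit.
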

We will give two proofs of this result.
\begin{proof}[Proof \textup{(}using Theorem~\ref{spe-17}\textup{)}]
    By Lemma~\ref{spe-15}, the process $\Xt$ has moments of all orders. Therefore, $M_t := M_t^\xi := \xi\cdot(X_t - \Ee X_t)$ exists for any $\xi\in\rd$ and is a martingale for the canonical filtration $\Fscr_t := \sigma(X_s,\,s\leq t)$. Indeed, for all $s\leq t$
    $$
        \Ee(M_t\mid\Fscr_s)
        =\Ee(M_t-M_s\mid\Fscr_s) - M_s
        \omu{\eqref{Lindep}}{=}{\eqref{Lstat}} \Ee M_{t-s} + M_s = M_s.
    $$
    Moreover
    \begin{align*}
        \Ee(M_t^2-M_s^2 \mid \Fscr_s)
        &\pomu{\text{Lemma}~\ref{exlp-51}}{=}{} \Ee((M_t-M_s)^2 +2M_s(M_t-M_s) \mid \Fscr_s)\\
        &\omu{\eqref{Lindep}}{=}{\phantom{\text{Lemma}~\ref{exlp-51}}} \Ee((M_t-M_s)^2) + 2M_s \Ee(M_t-M_s)\\
        &\omu{\text{Lemma}~\ref{exlp-51}}{=}{} (t-s)\Ee M_1^2
        = (t-s)\Vv M_1,
    \end{align*}
    and so $(M_t^2 - t\Vv M_1)_{t\geq 0}$ and $(M_t)_{t\geq 0}$ are martingales with continuous paths.

    Now we can use Theorem~\ref{spe-17} and deduce that $\xi\cdot(X_t - \Ee X_t)$ is a one-dimensional Brownian motion with variance $\xi\cdot Q\xi$ where $tQ$ is the covariance matrix of $X_t$ (cf.\ the proof of Lemma~\ref{cons-03} or Lemma~\ref{exlp-51}). Thus, $X_t - \Ee X_t = \sqrt Q W_t$ where $W_t$ is a $d$-dimensional standard Brownian motion. Finally, $\Ee X_t = t\Ee X_1 =: tl$.
\end{proof}

\begin{proof}[Standard proof \textup{(}using the CLT\textup{)}]
    Fix $\xi\in\rd$ and set $M(t) := \xi\cdot(X_t-\Ee X_t)$. Since $X$ has moments of all orders, $M$ is well-defined and it is again a L\'evy process. Moreover,
    $$
        \Ee M(t) = 0
        \et
        t\sigma^2 = \Vv M(t) = \Ee [(\xi\cdot(X_t-\Ee X_t))^2] = t\xi\cdot Q\xi
    $$
    where $Q$ is the covariance matrix of $X$, cf.\ the proof of Lemma~\ref{cons-03}. We proceed as in the proof of the CLT: Using a Taylor expansion we get
    \begin{align*}
        \Ee\,\eup^{\iup M(t)}
        = \Ee\,\eup^{\iup \sum_{k=1}^n \left[M(tk/n)-M(t(k-1)/n)\right]}
        \omu{\eqref{LindepAlt}}{=}{\eqref{Lstat}} \left(\Ee\,\eup^{\iup M(t/n)}\right)^n
        = \left(1-\frac 12\,\Ee M^2(t/n) + R_n\right)^n.
    \end{align*}
    The remainder term $R_n$ is estimated by $\frac 16 \Ee|M^3(\tfrac tn)|$. If we can show that $|R_n| \leq \epsilon\frac tn$ for large $n=n(\epsilon)$ and any $\epsilon>0$, we get because of $\Ee M^2(\tfrac tn) = \frac tn\sigma^2$
    \begin{align*}
        \Ee\,\eup^{\iup M(t)}
        = \lim_{n\to\infty} \left(1- \frac 12 (\sigma^2 + 2\epsilon)\frac tn\right)^n
        = \eup^{-\frac 12(\sigma^2+2\epsilon) t}
        \xrightarrow[\;\epsilon\to 0\;]{} \eup^{-\frac 12\sigma^2t}.
    \end{align*}
    This shows that $\xi\cdot (X_t-\Ee X_t)$ is a centered Gaussian random variable with variance $\sigma^2$. Since $\Ee X_t = t\Ee X_1$ we conclude that $X_t$ is Gaussian with mean $tl$ and covariance $tQ$.

    We will now estimate $\Ee|M^3(\frac tn)|$. For every $\epsilon>0$ we can use the uniform continuity of $s\mapsto M(s)$ on $[0,t]$ to get
    $$
        \lim_{n\to\infty} \max_{1\leq k\leq n} |M(\tfrac kn t) - M(\tfrac{k-1}{n}t)| = 0.
    $$
    Thus, we have for all $\epsilon>0$
    \begin{align*}
        1
        &\pomu{\eqref{Lindep}}{=}{\eqref{Lstat}}
            \lim_{n\to\infty} \Pp\Big( \max_{1\leq k\leq n} |M(\tfrac kn t) - M(\tfrac{k-1}{n}t)| \leq \epsilon\Big)\\
        &\pomu{\eqref{Lindep}}{=}{\eqref{Lstat}}
            \lim_{n\to\infty} \Pp\bigg( \bigcap_{k=1}^{n} \big\{|M(\tfrac kn t) - M(\tfrac{k-1}{n}t)| \leq \epsilon\big\}\bigg)\\
        &\omu{\eqref{Lindep}}{=}{\eqref{Lstat}}
            \lim_{n\to\infty} \prod_{k=1}^{n}\Pp(|M(\tfrac tn)|\leq \epsilon)\\
        &\pomu{\eqref{Lindep}}{=}{\eqref{Lstat}}
            \lim_{n\to\infty}\big[1-\Pp(|M(\tfrac tn)|> \epsilon)\big]^n\\
        &\pomu{\eqref{Lindep}}{\leq}{\eqref{Lstat}}
            \lim_{n\to\infty} \eup^{-n\,\Pp(|M(t/n)|> \epsilon)}
        \;\leq\; 1
    \end{align*}
    where we use the inequality $1+x\leq \eup^x$. This proves $\lim_{n\to\infty} n\,\Pp(|M(t/n)|> \epsilon)=0$.
    Therefore,
    \begin{align*}
        \Ee |M^3(\tfrac tn)|
        &\leq \epsilon \Ee M^2(\tfrac tn) + \int_{|M(t/n)| > \epsilon} |M^3(\tfrac tn)|\,d\Pp\\
        &\leq \epsilon \frac tn\,\sigma^2 + \sqrt{\Pp(|M(\tfrac tn)| > \epsilon)\vphantom{M^6(\tfrac tn)}} \sqrt{\Ee M^6(\tfrac tn)}.
    \end{align*}
    It is not hard to see that $\Ee M^6(s) = a_1s +\dots + a_6 s^6$ (differentiate $\Ee \eup^{\iup u M(s)} = \eup^{-s \psi(u)}$ six times at $u=0$), and so
    \begin{gather*}
        \Ee |M^3(\tfrac tn)|
        \leq \epsilon \frac tn\,\sigma^2 + c\frac tn \sqrt{\frac{\Pp(|M(t/n)| > \epsilon)}{t/n}}
        = \frac tn \big(\epsilon\sigma^2 + o(1)\big)
    \qedhere
    \end{gather*}
\end{proof}

\index{Brownian motion!construction|(}%
We close this chapter with Paul L\'evy's construction of a standard Brownian motion $(W_t)_{t\geq 0}$. Since $W$ is a L\'evy process which has the Markov property, it is enough to construct a Brownian motion $W(t)$ only for $t\in [0,1]$, then produce independent copies $(W^n(t))_{t\in [0,1]}, n=0,1,2,\dots$, and join them continuously:
$$
    W_t :=
    \begin{cases}
        W^0(t), & t\in [0,1),\\
        W^0(1)+\dots + W^{n-1}(1) + W^n(t-n), & t\in [n,n+1).
    \end{cases}
$$
Since each $W_t$ is normally distributed with mean $0$ and variance $t$, we will get a L\'evy process with characteristic exponent $\frac 12\xi^2$, $\xi\in\real$.
In the same vein we get a $d$-dimensional Brownian motion by making a vector $(W_t^{(1)}, \dots, W^{(d)}_t)_{t\geq 0}$ of $d$ independent copies of $(W_t)_{t\geq 0}$. This yields a L\'evy process with exponent $\frac 12(\xi_1^2+\dots+\xi_d^2)$, $\xi_1,\dots,\xi_d \in \real$.

Denote a one-dimensional normal distribution with mean $m$ and variance $\sigma^2$ as $\mathsf{N}(m,\sigma^2)$. The motivation for the construction is the observation that a Brownian motion satisfies the following mid-point law (cf.\ \cite[Chapter 3.4]{schilling-bm}):
$$
    \Pp(W_{(s+t)/2}\in \bullet \mid W_s = x, W_t = y) = \mathsf N\big(\tfrac 12(x+y),\: \tfrac14(t-s)\big),\quad s\leq t,\; x,y\in\real.
$$
This can be turned into the following construction method:
\begin{algorithm}
    Set $W(0)=0$ and let $W(1)\sim \Normal(0,1)$. Let $n\geq 1$ and assume that the random variables $W(k2^{-n})$, $k=1,\ldots, 2^n-1$ have already been constructed. Then
    $$
        W(l{2^{-n-1}})
        := \begin{cases}
            W(k2^{-n}), & l = 2k,\\
            \frac 12 \big(W(k2^{-n}) + W((k+1)2^{-n})\big) + \Gamma_{2^n+k}, & l = 2k+1,
        \end{cases}
    $$
    where $\Gamma_{2^n+k}$ is an independent (of everything else) $\mathsf{N}(0,2^{-n}/4)$ Gaussia random variable, cf.\ Figure~\ref{fig-bm-construction}.
    In-between the nodes we use piecewise linear interpolation:
    $$
        W_{2^n}(t,\omega)
        := \text{Linear interpolation of\ }\big(W(k2^{-n},\omega),\; k=0,1,\ldots, 2^n\big),
        \quad n\geq 1.
    $$
\end{algorithm}

At the dyadic points $t=k2^{-j}$ we get the `true' value of $W(t,\omega)$, while the linear interpolation is an approximation, see Figure~\ref{fig-bm-construction}.
\begin{figure}
    \includegraphics[height = 0.3\textheight]{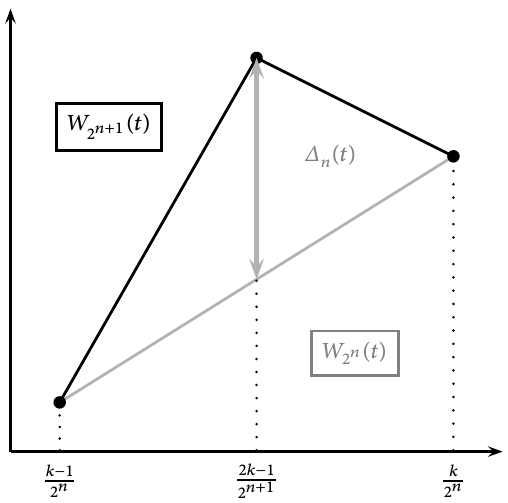}\hfill
    \includegraphics[height = 0.3\textheight]{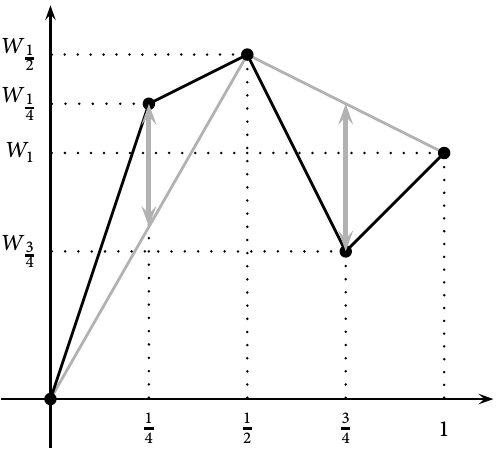}
    \caption{Interpolation of order four in L\'evy's construction of Brownian motion.}\label{fig-bm-construction}
\end{figure}

\begin{theorem}[L\'evy 1940]\label{spe-51}
The series
$$
    W(t,\omega) := \sum_{n=0}^\infty \big(W_{2^{n+1}}(t,\omega) - W_{2^{n}}(t,\omega)\big) + W_1(t,\omega),\quad t\in [0,1],
$$
converges a.s.\ uniformly. In particular $(W(t))_{t\in [0,1]}$ is a one-dimensional Brownian motion.
\end{theorem}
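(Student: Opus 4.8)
The plan is to read the telescoping series as the limit of the piecewise linear interpolations. Writing $D_n := W_{2^{n+1}}(\cdot,\omega) - W_{2^{n}}(\cdot,\omega)$, the partial sums satisfy $\sum_{n=0}^{N} D_n + W_1 = W_{2^{N+1}}$, so the assertion is that $W_{2^{N+1}}$ converges a.s.\ uniformly on $[0,1]$ to a continuous limit $W$ which is a Brownian motion. I would split the argument into two independent parts: (i) a.s.\ uniform convergence, hence continuity of the limit; (ii) identification of the finite-dimensional distributions.

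For (i) the key observation is that $D_n$ is an explicit function of the displacements. At the new dyadic point $(2k+1)2^{-(n+1)}$ the level-$(n+1)$ interpolant takes the value $\tfrac12\big(W(k2^{-n})+W((k+1)2^{-n})\big)+\Gamma_{2^n+k}$, whereas the level-$n$ interpolant, being linear on $[k2^{-n},(k+1)2^{-n}]$, takes the value $\tfrac12\big(W(k2^{-n})+W((k+1)2^{-n})\big)$ there; at the old nodes the two interpolants agree. Hence on each interval $[k2^{-n},(k+1)2^{-n}]$ the difference $D_n$ is a tent function peaking at the midpoint with height $\Gamma_{2^n+k}$, so $\|D_n\|_\infty = \max_{0\le k<2^n}|\Gamma_{2^n+k}|$. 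Since $\Gamma_{2^n+k}\sim\Normal(0,2^{-n}/4)$, the Gaussian tail bound gives $\Pp\big(\max_{0\le k<2^n}|\Gamma_{2^n+k}|>\sqrt{n}\,2^{-n/2}\big)\le 2^{n+1}\eup^{-2n}$, which is summable; by Borel--Cantelli we get $\|D_n\|_\infty\le\sqrt n\,2^{-n/2}$ for all large $n$, a.s. As $\sum_n\sqrt n\,2^{-n/2}<\infty$, the series $\sum_n D_n$ converges absolutely in the uniform norm on a full-measure set, so $W(\cdot,\omega)$ is a uniform limit of continuous functions and thus continuous.

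For (ii) I would prove by induction on $n$ that the dyadic increments $W((k+1)2^{-n})-W(k2^{-n})$, $k=0,\dots,2^n-1$, are independent and $\Normal(0,2^{-n})$-distributed. The step rests on the midpoint identities $W((2k+1)2^{-(n+1)})-W(k2^{-n})=\tfrac12\Delta_{k}+\Gamma_{2^n+k}$ and $W((k+1)2^{-n})-W((2k+1)2^{-(n+1)})=\tfrac12\Delta_k-\Gamma_{2^n+k}$, where $\Delta_k:=W((k+1)2^{-n})-W(k2^{-n})\sim\Normal(0,2^{-n})$ is independent of the fresh $\Gamma_{2^n+k}\sim\Normal(0,2^{-n}/4)$. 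Both halves then have variance $2^{-(n+1)}$, and $\Cov\big(\tfrac12\Delta_k+\Gamma_{2^n+k},\,\tfrac12\Delta_k-\Gamma_{2^n+k}\big)=\tfrac14\Vv\Delta_k-\Vv\Gamma_{2^n+k}=0$; being jointly Gaussian they are independent, and since distinct $k$ use distinct displacements all $2^{n+1}$ increments are jointly independent. Because $W_{2^m}(t)=W(t)$ for every $m\ge n$ at a dyadic time $t=k2^{-n}$, the limit $W$ has exactly these values at dyadic times, so on the dyadic rationals $W$ starts at $0$ with independent $\Normal(0,\,\text{length})$ increments. For arbitrary $0\le s<t\le 1$ I would approximate $s,t$ by dyadic points and use the established continuity (convergence in probability, upgraded to $L^2$ by boundedness of Gaussian variances) to obtain $W_t-W_s\sim\Normal(0,t-s)$ and independence over disjoint intervals. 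With $W_0=0$ and path-continuity this is exactly the defining property of a one-dimensional Brownian motion. I expect the main obstacle to be part (i): the whole point is that the $\sqrt n$ growth of the Gaussian maximum at level $n$ is dominated by the geometric decay $2^{-n/2}$ of the standard deviations, and this balance is what forces absolute uniform summability; the covariance bookkeeping in (ii) and the dyadic-to-continuous passage are routine once continuity is secured.
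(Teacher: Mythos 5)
Your proof is correct and follows essentially the same route as the paper's: the uniform-norm distance between consecutive interpolants equals the maximum of the fresh midpoint displacements, a Gaussian tail estimate plus Borel--Cantelli gives a.s.\ uniform summability of the series, and the finite-dimensional distributions are identified on dyadic times and extended by path continuity. The only differences are cosmetic: you make explicit the covariance induction showing that the half-increments $\tfrac12\Delta_k\pm\Gamma_{2^n+k}$ are independent $\Normal(0,2^{-(n+1)})$ variables --- a step the paper compresses into ``by construction'' --- and you use the threshold $\sqrt{n}\,2^{-n/2}$ where the paper takes $c\sqrt{2n\log 2}\,/\sqrt{2^{n+2}}$ with $c>1$.
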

\begin{proof}
Set $\Delta_n(t,\omega):=W_{2^{n+1}}(t,\omega) - W_{2^{n}}(t,\omega)$. By construction,
$$
            \Delta_n\big((2k-1)2^{-n-1},\omega)
            = \Gamma_{2^n+(k-1)}(\omega),
            \quad k=1,2,\ldots, 2^{n},
$$
are iid $\Normal(0,2^{-(n+2)})$ distributed random variables. Therefore,
$$
            \Pp\left(\max_{1\leq k\leq 2^{n}} \big|\Delta_n\big((2k-1)2^{-n-1}\big)\big| > \frac{x_n}{\sqrt{2^{n+2}}}\right)
            \leq 2^{n} \Pp\left(\big|\sqrt{2^{n+2}}\, \Delta_n\big(2^{-n-1}\big)\big| > x_n\right),
$$
and the right-hand side equals
$$
            \frac{2\cdot 2^{n}}{\sqrt{2\pi}} \int_{x_n}^\infty \eup^{-r^2/2}\,dr
            \leq \frac{2^{n+1}}{\sqrt{2\pi}} \int_{x_n}^\infty  \frac{r}{x_n}\,\eup^{-r^2/2}\,dr
            = \frac{2^{n+1}}{x_n\,\sqrt{2\pi}}\,\eup^{-x_n^2/2}.
$$
Choose $c>1$ and $x_n := c\sqrt{2n\log 2}$. Then
\begin{align*}
            \sum_{n=1}^\infty \Pp\left(\max_{1\leq k\leq 2^{n}} \big|\Delta_n\big((2k-1)2^{-n-1}\big)\big| > \frac{x_n}{\sqrt{2^{n+2}}}\right)
            &\leq \sum_{n=1}^\infty \frac{2^{n+1}}{c\sqrt{2\pi}}\,\eup^{-c^2\log 2^n}\\
            &= \frac{2}{c\sqrt{2\pi}}\sum_{n=1}^\infty 2^{-(c^2-1)n}
            < \infty.
\end{align*}
Using the Borel--Cantelli lemma we find a set $\Omega_0\subset\Omega$ with $\Pp(\Omega_0)=1$ such that for every $\omega\in\Omega_0$ there is some $N(\omega)\geq 1$ with
$$
            \max_{1\leq k\leq 2^{n}}\big|\Delta_n\big((2k-1)2^{-n-1}\big)\big|
            \leq c\sqrt{\frac{n\log 2}{2^{n+1}}}
            \fa n\geq N(\omega).
$$
$\Delta_n(t)$ is the distance between the polygonal arcs $W_{2^{n+1}}(t)$ and $W_{2^n}(t)$; the maximum is attained at one of the midpoints of the intervals $[(k-1)2^{-n},k2^{-n}]$, $k=1,\ldots,2^n$, see Figure~\ref{fig-bm-construction}. Thus,
\begin{align*}
    \sup_{0\leq t\leq 1}\big|W_{2^{n+1}}(t,\omega) - W_{2^{n}}(t,\omega)\big|
            \leq \max_{1\leq k\leq 2^{n}}\big|\Delta_n\big((2k-1)2^{-n-1},\omega\big)\big|
            \leq c\sqrt{\frac{n\log 2}{2^{n+1}}},
\end{align*}
for all $n\geq N(\omega)$ which means that the limit
$$
            W(t,\omega) := \lim_{N\to\infty} W_{2^N}(t,\omega) = \sum_{n=0}^\infty \big(W_{2^{n+1}}(t,\omega) - W_{2^{n}}(t,\omega)\big) + W_1(t,\omega)
$$
exists for all $\omega\in\Omega_0$ uniformly in $t\in [0,1]$. Therefore, $t\mapsto W(t,\omega)$, $\omega\in\Omega_0$, inherits the continuity of the polygonal arcs $t\mapsto W_{2^n}(t,\omega)$. Set
$$
            \widetilde W(t,\omega) := W(t,\omega)\I_{\Omega_0}(\omega).
$$
By construction, we find for all $0\leq k\leq l\leq 2^n$
\begin{align*}
            \widetilde W(l2^{-n}) - \widetilde W(k2^{-n})
            &\pomu{\text{iid}}{=}{} W_{2^n}(l2^{-n}) - W_{2^n}(k2^{-n})\\
            &\pomu{\text{iid}}{=}{} \sum_{l= k+1}^l \big(W_{2^n}(l 2^{-n}) - W_{2^n}((l-1)2^{-n})\big)\\
            &\omu{\text{iid}}{\sim}{} \Normal(0,(l-k)2^{-n}).
\end{align*}
Since $t\mapsto \widetilde W(t)$ is continuous and the dyadic numbers are dense in $[0,t]$, we conclude that the increments $\widetilde W(t_k)-\widetilde W(t_{k-1})$, $0= t_0 < t_1 < \cdots < t_N \leq 1$ are independent $\Normal(0,t_k-t_{k-1})$ distributed random variables. This shows that $(\widetilde W(t))_{t\in [0,1]}$ is a Brownian motion.
\end{proof}
\index{Brownian motion!construction|)}%

\chapter{Random measures}\label{rm}

We continue our investigations of the paths of c\`adl\`ag L\'evy processes. Independently of Chapters~\ref{semi} and~\ref{gen} we will show in Theorem~\ref{rm-51} that the processes constructed in Theorem~\ref{cons-11} are indeed \emphh{all} L\'evy processes; this gives also a new proof of the L\'evy--Khintchine formula, cf.\ Corollary~\ref{rm-53}. As before, we denote the jumps of $\Xt$ by
$$
    \Delta X_t := X_t - X_{t-} = X_t - \lim_{s\uparrow t}X_s.
$$
\begin{definition}\label{rm-03}
\index{jump measure}%
\index{L\'evy process!jump measure}%
    Let $X$ be a L\'evy process. The counting measure
    \begin{equation}\label{rm-e04}
        N_t(B,\omega) := \#\left\{s\in (0,t] \,:\, \Delta X_s(\omega)\in B\right\},\quad B\in\Bscr(\rd\setminus\{0\})
    \end{equation}
    is called the \emphh{jump measure} of the process $X$.
\end{definition}
Since a c\`adl\`ag function $x:[0,\infty)\to\rd$ has on any compact interval $[a,b]$ at most finitely many jumps $|\Delta x_t|>\epsilon$ exceeding a fixed size,\footnote{Otherwise we would get an accumulation point of jumps within $[a,b]$, and $x$ would be unbounded on $[a,b]$.} we see that
$$
    N_t(B,\omega)<\infty\quad\forall t>0,\; B\in\Bscr(\rd)\text{\ \ such that\ \ } 0\notin \overline{B}.
$$
Notice that $0\notin\overline{B}$ is equivalent to $B_\epsilon(0)\cap B=\emptyset$ for some $\epsilon>0$. Thus, $B\mapsto N_t(B,\omega)$ is for every $\omega$ a locally finite Borel measure on $\rd\setminus\{0\}$.
\begin{definition}\label{rm-05}
    Let $N_t(B)$ be the jump measure of the L\'evy process $X$. For every Borel function $f:\rd\to\real$ with $0\notin\supp f$ we define
    \begin{equation}\label{rm-e06}
        N_t(f,\omega) := \int f(y)\,N_t(dy,\omega).
    \end{equation}
\end{definition}
Since $0\notin\supp f$, it is clear that $N_t(\supp f,\omega)<\infty$, and for every $\omega$
\begin{gather}\label{rm-e06-alt}\tag{$\ref{rm-e06}'$}
    N_t(f,\omega) = \sum_{0<s\leq t} f(\Delta X_s(\omega)) = \sum_{n=1}^\infty f(\Delta X_{\tau_n}(\omega))\I_{(0,t]}(\tau_n(\omega)).
\end{gather}
Both sums are finite sums, extending only over those $s$ where $\Delta X_s(\omega)\neq 0$. This is obvious in the second sum where $\tau_1(\omega), \tau_2(\omega), \tau_3(\omega) \dots$ are the jump times of $X$.

\begin{lemma}\label{rm-11}
    Let $N_t(\cdot)$ be the jump measure of a L\'evy process $X$, $f\in\cont_c(\rd\setminus\{0\},\real^m)$ \textup{(}i.e.\ $f$ takes values in $\real^m$\textup{)}, $s<t$ and $t_{k,n}:= s+ \frac kn(t-s)$. Then
    \begin{equation}\label{rm-e12}
        N_t(f,\omega)-N_s(f,\omega)
        = \sum_{s<u\leq t} f\big(\Delta X_u(\omega)\big)
        = \lim_{n\to\infty} \sum_{k=0}^{n-1} f\big(X_{t_{k+1,n}}(\omega) - X_{t_{k,n}}(\omega)\big).
    \end{equation}
\end{lemma}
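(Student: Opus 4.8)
The first equality in \eqref{rm-e12} is essentially a matter of definition: by \eqref{rm-e06-alt} both $N_t(f,\omega)$ and $N_s(f,\omega)$ are \emph{finite} sums $\sum f(\Delta X_u(\omega))$ over the jump times in $(0,t]$ and $(0,s]$ respectively, so their difference is exactly $\sum_{s<u\le t} f(\Delta X_u(\omega))$. I would dispose of this in one line and concentrate on the second equality, which is a purely pathwise statement about the c\`adl\`ag trajectory $u\mapsto X_u(\omega)$; accordingly I fix $\omega$ throughout and suppress it from the notation.

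\emph{Setup.} Since $f\in\cont_c(\rd\setminus\{0\},\real^m)$, its support lies in an annulus $\{a\le|y|\le b\}$ with $0<a<b<\infty$, so $f\equiv 0$ on $\{|y|<a\}$, and $f$ is bounded and \emph{uniformly} continuous. The crucial finiteness input is that a c\`adl\`ag path has only finitely many jumps exceeding any fixed size on the compact interval $[s,t]$ (the footnote on page~\pageref{rm-e04}); let $u_1<\dots<u_M$ be the finitely many times in $(s,t]$ with $|\Delta X_{u_j}|\ge a/2$. The number $M$ is now \emph{fixed} by the path. Given $\epsilon>0$, uniform continuity supplies an $\eta_0>0$ with $|x-y|\le 2\eta_0\Rightarrow|f(x)-f(y)|<\epsilon/(M+1)$, and I set $\eta:=\min\{\eta_0,a/8\}$.

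\emph{Oscillation partition and case analysis.} I would invoke the standard fact that for a c\`adl\`ag path there is a finite partition $s=r_0<\dots<r_p=t$ with $\sup_{u,v\in[r_{i-1},r_i)}|X_u-X_v|<\eta$ on each block; in particular every jump of size $\ge\eta$ (hence each $u_j$) sits at a partition point. For $n$ so large that the mesh $(t-s)/n$ is smaller than half the smallest gap $r_i-r_{i-1}$, each subinterval $I_k=(t_{k,n},t_{k+1,n}]$ meets at most one $r_i$, and I split the increments $z_k:=X_{t_{k+1,n}}-X_{t_{k,n}}$ into two cases. If $I_k$ lies inside a single block $[r_{i-1},r_i)$ then $|z_k|<\eta<a$, so $f(z_k)=0$. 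If $I_k$ straddles a partition point $r_i$, then writing $z_k=(X_{t_{k+1,n}}-X_{r_i})+\Delta X_{r_i}+(X_{r_i-}-X_{t_{k,n}})$ and bounding the two bracketed terms by the adjacent block oscillations gives $|z_k-\Delta X_{r_i}|\le 2\eta$. When $|\Delta X_{r_i}|<a/2$ this forces $|z_k|\le a/2+2\eta<a$ and again $f(z_k)=0$; only the straddles of the $M$ big jumps $u_j$ survive, and for those $|z_k-\Delta X_{u_j}|\le 2\eta\le 2\eta_0$ yields $|f(z_k)-f(\Delta X_{u_j})|<\epsilon/(M+1)$. Summing, the Riemann sum reduces to $\sum_{j=1}^M f(z_{k_j})$, which differs from $\sum_{j=1}^M f(\Delta X_{u_j})=\sum_{s<u\le t}f(\Delta X_u)$ by at most $M\cdot\epsilon/(M+1)<\epsilon$; the last identity holds because jumps of size $<a/2$ lie outside $\supp f$. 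Letting $n\to\infty$ and then $\epsilon\downarrow 0$ gives the claim.

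The main obstacle is a potential \emph{circularity}: blocks straddling a ``medium'' jump could each contribute an error of order $\epsilon$, and a priori their number might grow as $\eta\to 0$. The resolution is to commit to the threshold $a/2$ \emph{before} choosing $\eta$: increments near any jump of size $<a/2$ automatically have modulus $<a$ and are annihilated by $f$, so only the fixed number $M$ of genuinely large jumps can contribute, and $\eta$ may legitimately be chosen in dependence on $M$. The remaining routine points---existence of the oscillation partition and the verification that for large $n$ the $M$ big jumps fall into $M$ distinct straddling subintervals---I would treat only briefly.
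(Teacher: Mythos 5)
Your proof is correct and follows essentially the same route as the paper's: both exploit the positive distance from $\supp f$ to the origin to annihilate every grid increment that does not straddle one of the finitely many large jumps, and then identify the surviving increments with the values $f(\Delta X_{u_j})$. The only difference is one of implementation: where you import the standard c\`adl\`ag oscillation-partition lemma and use the uniform continuity of $f$ with an explicit $\epsilon/(M+1)$ budget (nicely flagging and resolving the threshold/partition circularity via the fixed cutoff $a/2$), the paper obtains the same uniform smallness of jump-free increments by a direct compactness/contradiction argument on the grid and concludes from the convergence $X_{t_{k+1,n}}-X_{t_{k,n}}\to\Delta X_{\tau_j}$ together with plain continuity of $f$.
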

\begin{proof}
    Throughout the proof $\omega$ is fixed and we will omit it in our notation. Since $0\notin\supp f$, there is some $\epsilon>0$ such that $B_\epsilon(0)\cap\supp f=\emptyset$; therefore, we need only consider jumps of size $|\Delta X_t|\geq \epsilon$. Denote by $J=\{\tau_1, \dots, \tau_N\}$ those jumps. For sufficiently large $n$ we can achieve that
    \begin{itemize}
    \item $\#\big(J\cap (t_{k,n},t_{k+1,n}]\big) \leq 1$ for all $k=0,\dots, n-1$;
    \item $|X_{t_{\kappa+1,n}} - X_{t_{\kappa,n}}|<\epsilon$ if $\kappa$ is such that $J\cap (t_{\kappa,n},t_{\kappa+1,n}] = \emptyset$.

        \noindent
        \emph{Indeed:} Assume this is not the case, then we could find sequences $s < s_k<t_k\leq t$ such that $t_k-s_k\to 0$, $J\cap (s_k,t_k]=\emptyset$ and $|X_{t_k}-X_{s_k}|\geq\epsilon$. Without loss of generality we may assume that $s_k\uparrow u$ and $t_k\downarrow u$ for some $u\in (s,t]$; $u=s$ can be ruled out because of right-continuity. By the c\`adl\`ag property of the paths, $|\Delta X_u|\geq\epsilon$, i.e.\ $u\in J$, which is a contradiction.
    \end{itemize}
    Since we have $f(X_{t_{\kappa+1,n}} - X_{t_{\kappa,n}})=0$ for intervals of the `second kind', only the intervals containing some jump contribute to the (finite!) sum \eqref{rm-e12}, and the claim follows.
\end{proof}

\begin{lemma}\label{rm-13}
\index{Poisson process!jump measure of LP}
    Let $N_t(\cdot)$ be the jump measure of a L\'evy process $X$.
    \begin{enumerate}
    \item[\upshape a)] $(N_t(f))_{t\geq 0}$ is a L\'evy process on $\real^m$ for all $f\in\cont_c(\rd\setminus\{0\},\real^m)$.
    \item[\upshape b)] $(N_t(B))_{t\geq 0}$ is a Poisson process for all $B\in\Bscr(\rd)$ such that $0\notin\overline{B}$.
    \item[\upshape c)] $\nu(B) := \Ee N_1(B)$ is a locally finite measure on $\rd\setminus\{0\}$.
    \end{enumerate}
\end{lemma}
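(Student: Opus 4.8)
The plan is to bootstrap all three assertions from a single observation: the increment of $N(\cdot)$ over a time window $(s,t]$ is a measurable functional of the jumps of $X$ inside that window. Stationarity and independence of increments will then come from the L\'evy/Markov structure of $X$, and the Poisson claim from the characterisation in Theorem~\ref{spe-11}.

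For part a) I would start from Lemma~\ref{rm-11}, which writes $N_t(f)-N_s(f) = \sum_{s<u\le t} f(\Delta X_u)$ as the a.s.\ limit of $\sum_{k=0}^{n-1} f(X_{t_{k+1,n}}-X_{t_{k,n}})$ over the partition $t_{k,n}=s+\frac kn(t-s)$ of $(s,t]$. By \eqref{Lstat} and \eqref{Lindep} the summands $X_{t_{k+1,n}}-X_{t_{k,n}}$ are i.i.d.\ (each distributed as $X_{(t-s)/n}$) and independent of $\Fscr_s^X$. Passing to the limit, independence of $\Fscr_s^X\supseteq\Fscr_s^{N(f)}$ is preserved, giving \eqref{Lindep}; and since the partition sum for $(s,t]$ has the same law as the one for $(0,t-s]$, I obtain $N_t(f)-N_s(f)\sim N_{t-s}(f)$, i.e.\ \eqref{Lstat}. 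Property \eqref{Lnull} is immediate, and \eqref{Lcont} follows because $\supp f$ is bounded away from $0$: on $(0,1]$ there are a.s.\ only finitely many $u$ with $\Delta X_u\in\supp f$, so $N_h(f)\to 0$ a.s.\ (hence in probability) as $h\downarrow 0$. This makes $(N_t(f))_{t\ge0}$ a L\'evy process on $\real^m$.

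For part b) the hypothesis $0\notin\overline B$ forces $B\subset\{|y|\ge\epsilon\}$ for some $\epsilon>0$, so $N_t(B,\omega)<\infty$ and $N_t(B)-N_s(B)=\#\{u\in(s,t]:\Delta X_u\in B\}$. Here I would avoid the limit representation (which is unstable at $\partial B$ for the discontinuous $\I_B$) and instead use this exact identity: the count is a measurable functional of the jumps of the shifted process $Y=(X_{s+r}-X_s)_{r\ge0}$ on $(0,t-s]$, since $\Delta Y_r=\Delta X_{s+r}$. By the Markov property for L\'evy processes (Theorem~\ref{lpmp-13}), $Y\sim X$ and $\Fscr_\infty^Y\bbot\Fscr_s^X$; the first yields \eqref{Lstat} ($N_t(B)-N_s(B)\sim N_{t-s}(B)$), the second yields \eqref{Lindep}, and \eqref{Lnull}, \eqref{Lcont} follow exactly as in a). Thus $(N_t(B))_{t\ge0}$ is a one-dimensional L\'evy process. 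Because a c\`adl\`ag path has at most one jump at each instant, every jump of $N_\cdot(B)$ has size exactly $1$, so $N(B)$ moves only by jumps of size $1$, and Theorem~\ref{spe-11} identifies it as a Poisson process.

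For part c), each $N_1(\cdot,\omega)$ is already a locally finite counting measure on $\rd\setminus\{0\}$, so $\nu(B)=\Ee N_1(B)=\int N_1(B,\omega)\,\Pp(d\omega)$ is a nonnegative set function whose $\sigma$-additivity is inherited by Tonelli's theorem (monotone convergence); hence $\nu$ is a measure on $\rd\setminus\{0\}$ with values in $[0,\infty]$. For local finiteness, any compact $K\subset\rd\setminus\{0\}$ satisfies $0\notin\overline K$, so by part b) $N_1(K)$ is Poisson distributed and $\nu(K)=\Ee N_1(K)<\infty$. The main obstacle I anticipate is the rigorous bookkeeping in part b): making precise that the jump count in a window is a measurable functional of the increment path (so that Theorem~\ref{lpmp-13} applies) and handling \emph{unbounded} $B$ with $0\notin\overline B$ uniformly---which is exactly why I route b) through the Markov property rather than through an approximation of $\I_B$ by functions in $\cont_c(\rd\setminus\{0\})$.
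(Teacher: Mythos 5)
Your proposal is correct, and in parts a) and c) it follows essentially the paper's own argument: a) rests on the partition representation of Lemma~\ref{rm-11} together with \eqref{LindepAlt} (the paper obtains stationarity via the shifted process $Y=(X_{t+a}-X_a)_{t\geq 0}$ of Theorem~\ref{lpmp-13} and the identity $N_t(f)-N_s(f)=N^Y_{t-s}(f)$, which is the same computation as your iid partition sums), and c) is Tonelli plus the finiteness supplied by b). The genuine difference is in b). The paper does \emph{not} first prove that $(N_t(B))_{t\geq 0}$ is a L\'evy process; it re-runs the waiting-time argument from the proof of Theorem~\ref{spe-11} for the jump times $\tau_k=\inf\{t>\tau_{k-1}\,:\,\Delta X_t\in B\}$: the Markov and strong Markov properties (Theorems~\ref{lpmp-13}, \ref{lpmp-31}) show that the inter-jump times are iid exponential, the condition $0\notin\overline{B}$ gives $\Pp(N_t(B)>0)=1-\eup^{-t\nu(B)}\xrightarrow{t\to 0}0$, hence a finite rate, and then $N_t(B)=\sum_k\I_{(0,t]}(\tau_k)$ is a Poisson process by Example~\ref{exlp-05}.c) and Theorem~\ref{exlp-13}. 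You instead establish the L\'evy property of $N(B)$ directly, via the exact identity $N_t(B)-N_s(B)=N^Y_{t-s}(B)$ and Theorem~\ref{lpmp-13}, and then invoke Theorem~\ref{spe-11} as a characterization; this is legitimate and arguably more economical, since Theorem~\ref{spe-11} is already available, and it yields the finiteness of the intensity for free from the Poisson law instead of through the separate rate estimate (which is what c) needs). The price is exactly the bookkeeping you flag: you must know that the jump count of the restarted path over a window is measurable with respect to $\Fscr_\infty^Y$ for an arbitrary Borel set $B$ with $0\notin\overline{B}$; this is covered by the enumeration \eqref{rm-e06-alt} through the (finitely many, per compact time interval) jump times of size exceeding a fixed $\epsilon$, which are measurable functionals of the c\`adl\`ag path, so no approximation of $\I_B$ by $\cont_c$ functions is required --- and your instinct to avoid such an approximation is sound, since the partition limit of Lemma~\ref{rm-11} is only proved (and only reliable) for continuous $f$, as increments may sit on $\partial B$.
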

\begin{proof}
    Set $\Fscr_t := \sigma(X_s,\,s\leq t)$.

\smallskip\noindent
    a)\ \
    Let $f\in\cont_c(\rd\setminus\{0\},\real^m)$. From Lemma~\ref{rm-11} and \eqref{LindepAlt} we see that $N_t(f)$ is $\Fscr_t$ measurable and $N_t(f)-N_s(f)\bbot\Fscr_s$, $s\leq t$. Moreover, if $N_t^Y(\cdot)$ denotes the jump measure of the L\'evy process $Y = (X_{t+s}-X_s)_{t\geq 0}$, we see that $N_t(f)-N_s(f)=N_{t-s}^Y(f)$. By the Markov property (Theorem~\ref{lpmp-13}), $X\sim Y$, and we get $N^Y_{t-s}(f)\sim N_{t-s}(f)$. Since $t\mapsto N_t(f)$ is c\`adl\`ag, $(N_t(f))_{t\geq 0}$ is a L\'evy process.

\medskip\noindent
    b)\ \
    By definition, $N_0(B)=0$ and $t\mapsto N_t(B)$ is c\`adl\`ag. Since $X$ is a L\'evy process, we see as in the proof of Theorem~\ref{spe-11} that the jump times
    \begin{align*}
        \tau_0 := 0,\quad
        \tau_1 := \inf\big\{t>0 : \Delta X_t \in B\big\},\quad
        \tau_k := \inf\big\{t>\tau_{k-1} : \Delta X_t \in B\big\}
    \end{align*}
    satisfy $\tau_1 \sim \Exp(\nu(B))$, and the inter-jump times $(\tau_k-\tau_{k-1})_{k\in\nat}$ are an iid sequence. The condition $0\notin\overline{B}$ ensures that $N_t(B)<\infty$ a.s., which means that the intensity $\nu(B)$ is finite. Indeed, we have
    $$
        1-\eup^{-t\nu(B)}
        = \Pp(\tau_1 \leq t)
        = \Pp(N_t(B) > 0)
        \xrightarrow[t\to 0]{} 0;
    $$
    this shows that $\nu(B)<\infty$. Thus,
    $$
        N_t(B) = \sum_{k=1}^\infty \I_{(0,t]}(\tau_k)
    $$
    is a Poisson process (Example~\ref{exlp-05}) and, in particular, a L\'evy process (Theorem~\ref{exlp-13}).

\medskip\noindent
    c)\ \
    The intensity of $(N_t(B))_{t\geq 0}$ is $\nu(B) = \Ee N_1(B)$. By Fubini's theorem it is clear that $\nu$ is a measure.
\end{proof}

\begin{definition}\label{rm-17}
    Let $N_t(\cdot)$ be the jump measure of a L\'evy process $X$. The \emphh{intensity measure} is the measure $\nu(B):=\Ee N_1(B)$ from Lemma~\ref{rm-13}.
\index{intensity measure}%
\index{L\'evy process!intensity measure}%
\end{definition}
We will see in Corollary~\ref{rm-53} that $\nu$ is the L\'evy measure of $\Xt$ appearing in the L\'evy--Khintchine formula.

\begin{lemma}\label{rm-19}
    Let $N_t(\cdot)$ be the jump measure of a L\'evy process $X$ and $\nu$ the intensity measure. For every $f\in L^1(\nu)$, $f:\rd\to\real^m$, the random variable $N_t(f) := \int f(y)\,N_t(dy)$ exists as $L^1$-limit of integrals of simple functions and satisfies
    \begin{equation}\label{rm-e18}
        \Ee N_t(f) = \Ee \int f(y)\,N_t(dy) = t\int_{y\neq 0} f(y)\,\nu(dy).
    \end{equation}
\end{lemma}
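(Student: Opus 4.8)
The plan is to construct $N_t(f)$ by the standard measure-theoretic extension, starting from indicators, passing through simple functions, and using the completeness of $L^1(\Pp)$ to reach general $f\in L^1(\nu)$; the Poisson structure established in Lemma~\ref{rm-13} supplies the base case. First I would record the base identity: for a Borel set $B$ with $0\notin\overline B$ we have $N_t(\I_B)=N_t(B)$, which by Lemma~\ref{rm-13}.b) is a Poisson process, in particular a L\'evy process with finite first moment, so that $\Ee N_t(B)=t\,\Ee N_1(B)=t\,\nu(B)$ by the very definition of the intensity measure (Definition~\ref{rm-17}). By the additivity of $B\mapsto N_t(B,\omega)$ and the linearity of $f\mapsto N_t(f,\omega)=\int f\,dN_t$, this extends immediately to simple functions $s=\sum_j c_j\I_{B_j}$ with $0\notin\overline{B_j}$: each $N_t(s)$ is then a genuine finite pathwise sum (cf.\ \eqref{rm-e06-alt}) with $\Ee N_t(s)=t\int s\,d\nu$. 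Applying this to $|s|$ (again simple and supported away from $0$) together with the pointwise bound $|N_t(s)|\leq N_t(|s|)$ gives the crucial contraction estimate
$$
    \Ee\,|N_t(s)| \;\leq\; \Ee\,N_t(|s|) \;=\; t\int |s|\,d\nu \;=\; t\,\|s\|_{L^1(\nu)}.
$$

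Next I would verify that the simple functions supported away from the origin are dense in $L^1(\nu)$. Writing $A_m:=\{y:|y|\geq 1/m\}$, the integrability $\int(|y|^2\wedge 1)\,\nu(dy)<\infty$ forces $\nu(A_m)<\infty$, so $\nu$ restricted to $A_m$ is a finite measure. Hence any $f\in L^1(\nu)$ is approximated in $L^1(\nu)$ first by $f\I_{A_m}$ (dominated convergence, since $f\I_{A_m}\to f$ pointwise on $\rd\setminus\{0\}$ with $|f\I_{A_m}|\leq|f|$), and then, on the finite measure space $(A_m,\nu|_{A_m})$, by ordinary simple functions $\sum_j c_j\I_{B_j}$ with $B_j\subset A_m$; these automatically satisfy $0\notin\overline{B_j}$, so they fall under the base case above.

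Finally I would extend by continuity. Given $f\in L^1(\nu)$, choose simple functions $s_k\to f$ in $L^1(\nu)$, each supported away from $0$. The contraction estimate yields $\Ee|N_t(s_k)-N_t(s_l)|=\Ee|N_t(s_k-s_l)|\leq t\|s_k-s_l\|_{L^1(\nu)}\to 0$, so $(N_t(s_k))_k$ is Cauchy in $L^1(\Pp)$; I define $N_t(f)$ to be its limit, and the same estimate shows the limit is independent of the approximating sequence, so $f\mapsto N_t(f)$ is a well-defined linear extension. Passing to the limit in $\Ee N_t(s_k)=t\int s_k\,d\nu$ --- using the continuity of $\Ee(\cdot)$ on $L^1(\Pp)$ on the left and of $g\mapsto\int g\,d\nu$ on $L^1(\nu)$ on the right --- gives $\Ee N_t(f)=t\int_{y\neq 0}f(y)\,\nu(dy)$, which is \eqref{rm-e18}. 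The only genuinely delicate point is organising the two-stage approximation so that every approximant remains supported away from the origin: this is exactly what makes each $N_t(s_k)$ a legitimate finite pathwise sum and lets the Poisson mean computation apply, after which the argument is the routine extension of a bounded linear map from a dense subspace.
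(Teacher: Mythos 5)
Your proof is correct and follows essentially the same route as the paper: establish \eqref{rm-e18} for step functions supported away from the origin via $\Ee N_t(B)=t\nu(B)$, then extend using the contraction estimate $\Ee|N_t(s)|\leq t\|s\|_{L^1(\nu)}$ and the completeness of $L^1(\Pp)$ --- you merely spell out the density and contraction steps that the paper's proof leaves implicit. One small repair: justify $\nu(A_m)<\infty$ directly from Lemma~\ref{rm-13}.b) (applicable since $0\notin\overline{A_m}$) rather than from $\int_{y\neq 0}(|y|^2\wedge 1)\,\nu(dy)<\infty$, because the latter is only proved later in Corollary~\ref{rm-35}.c), whose proof sits downstream of the present lemma.
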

\begin{proof}
    For any step function $f$ of the form $f(y) = \sum_{k=1}^M \phi_k\I_{B_k}(y)$ with $0\notin\overline{B}_k$ the formula \eqref{rm-e18} follows from $\Ee N_t(B_k) = t\nu(B_k)$ and the linearity of the integral.

    Since $\nu$ is defined on $\rd\setminus\{0\}$, any $f\in L^1(\nu)$ can be approximated by a sequence of step functions $(f_n)_{n\in\nat}$ in $L^1(\nu)$-sense, and we get
    $$
        \Ee|N_t(f_n)-N_t(f_m)| \leq t\int |f_n-f_m|\,d\nu \xrightarrow[m,n\to\infty]{}0.
    $$
    Because of the completeness of $L^1(\Pp)$, the limit $\lim_{n\to\infty} N_t(f_n)$ exists, and with a routine argument we see that it is independent of the approximating sequence $f_n \to f\in L^1(\nu)$. This allows us to define $N_t(f)$ for $f\in L^1(\nu)$ as $L^1(\Pp)$-limit of stochastic integrals of simple functions; obviously, \eqref{rm-e18} is preserved under this limiting procedure.
\end{proof}

\begin{theorem}\label{rm-21}
    Let $N_t(\cdot)$ be the jump measure of a L\'evy process $X$ and $\nu$ the intensity measure.
    \begin{enumerate}
    \item[\upshape a)]
    $N_t(f) := \int f(y)\,N_t(dy)$ is a L\'evy process for every $f\in L^1(\nu)$, $f:\rd\to\real^m$. In particular, $(N_t(B))_{t\geq 0}$ is a Poisson process for every $B\in\Bscr(\rd)$ such that $0\notin\overline{B}$.

    \item[\upshape b)]
    $X^B_t := N_t(y\I_B(y))$ and $X_t-X_t^B$ are for every $B\in\Bscr(\rd)$, $0\notin\overline{B}$, L\'evy processes.
    \end{enumerate}
\end{theorem}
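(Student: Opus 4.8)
The plan is to bootstrap from three facts already in hand: $N_\cdot(f)$ is a L\'evy process whenever $f\in\cont_c(\rd\setminus\{0\},\real^m)$ (Lemma~\ref{rm-13}a)), the identity $\Ee\,N_t(f)=t\int f\,d\nu$ holds for every $f\in L^1(\nu)$ (Lemma~\ref{rm-19}), and the class of L\'evy processes is stable under limits (Lemma~\ref{cons-07}). The only genuinely new idea is needed for the remainder $X-X^B$, where approximation by $\nu$-integrable functionals fails.

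For part a) I would first record, for $g\in L^1(\nu)$, the pathwise identity $N_t(g)=\sum_{0<s\le t}g(\Delta X_s)$ as an a.s.\ absolutely convergent sum (this is the natural extension of \eqref{rm-e06-alt} and agrees with the $L^1(\Pp)$-limit of Lemma~\ref{rm-19}, since $\Ee\sum_{0<s\le t}|g(\Delta X_s)|=t\int|g|\,d\nu<\infty$). This yields linearity of $g\mapsto N_\cdot(g)$ and the maximal bound $\sup_{t\le T}|N_t(g)|\le N_T(|g|)$. Now, given $f\in L^1(\nu)$, choose $f_n\in\cont_c(\rd\setminus\{0\},\real^m)$ with $f_n\to f$ in $L^1(\nu)$ (possible as $\nu$ is a Radon measure on the locally compact space $\rd\setminus\{0\}$). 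Each $N_\cdot(f_n)$ is a L\'evy process, and the bound together with Lemma~\ref{rm-19} gives
$$
  \Ee\,\sup_{t\le T}\big|N_t(f_n)-N_t(f)\big|
  \le \Ee\,N_T\big(|f_n-f|\big)
  = T\int|f_n-f|\,d\nu\xrightarrow[n\to\infty]{}0.
$$
Hence $N_\cdot(f_n)\to N_\cdot(f)$ uniformly in probability (and the limit is c\`adl\`ag, being a uniform-in-probability limit of the c\`adl\`ag finite sums $N_\cdot(f_n)$), so Lemma~\ref{cons-07} shows $N_\cdot(f)$ is a L\'evy process. The ``in particular'' is just Lemma~\ref{rm-13}b), as $\I_B\in L^1(\nu)$ whenever $0\notin\overline B$.

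For part b), note that when $0\notin\overline B$ the sum $X^B_t=\sum_{0<s\le t}\Delta X_s\,\I_B(\Delta X_s)$ is a.s.\ finite on compact time sets (only finitely many jumps have $|\Delta X_s|\ge\epsilon$), so $X^B$ is well defined pathwise with c\`adl\`ag paths even if $y\I_B\notin L^1(\nu)$. To see $X^B$ is a L\'evy process I would truncate: $y\I_{B\cap\{|y|\le R\}}\in L^1(\nu)$, so $N_\cdot(y\I_{B\cap\{|y|\le R\}})$ is L\'evy by part a); letting $R\to\infty$ gives $N_t(y\I_{B\cap\{|y|\le R\}})\to X^B_t$ a.s.\ (hence in probability) with c\`adl\`ag limit $X^B$, and Lemma~\ref{cons-07} finishes it.

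The hard part is that $W:=X-X^B$ is again a L\'evy process: part a) is useless here because the surviving jumps of $X$ accumulate at the origin and are not $\nu$-integrable. I would argue through the Markov property instead. For $0=t_0<\dots<t_n$ write
$$
  W_{t_k}-W_{t_{k-1}}
  = \Phi_{t_k-t_{k-1}}\big((X_{t_{k-1}+r}-X_{t_{k-1}})_{r\ge0}\big),
  \qquad
  \Phi_h(y):=y_h-\sum_{0<r\le h}\Delta y_r\,\I_B(\Delta y_r),
$$
so each increment is a measurable functional of the process shifted past $t_{k-1}$. By Theorem~\ref{lpmp-13} this shifted process is distributed like $X$ and is independent of $\Fscr_{t_{k-1}}^X$; an induction over $k$ then gives that the increments $W_{t_k}-W_{t_{k-1}}$ are independent, i.e.\ \eqref{LindepAlt}, and that $W_{t_k}-W_{t_{k-1}}\sim\Phi_{t_k-t_{k-1}}(X)=W_{t_k-t_{k-1}}$, i.e.\ \eqref{Lstat}. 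Since $W$ is c\`adl\`ag, \eqref{Lcont} holds (cf.\ \eqref{levy-e04}) and $W_0=0$ is clear, so $W$ is a L\'evy process. The main obstacle throughout is keeping the two regimes apart---the $\nu$-integrable functionals treated by approximation, versus the non-integrable remainder $X-X^B$ treated by the functional/Markov argument---and ensuring the maximal estimate in part a) is strong enough to feed Lemma~\ref{cons-07}.
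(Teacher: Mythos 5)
Your proposal is correct, but it departs from the paper's proof in two places, most substantially in the treatment of $X-X^B$. For part a) the paper only establishes $N_t(f_n)\to N_t(f)$ in $L^1(\Pp)$ for each fixed $t$ and checks continuity in probability directly via $\Pp(|N_t(f)|>\epsilon)\leq \frac t\epsilon\int|f|\,d\nu$, then invokes Lemma~\ref{cons-07}; you instead exploit the pathwise representation $N_t(g)=\sum_{0<s\leq t}g(\Delta X_s)$ to get the maximal bound $\sup_{t\leq T}|N_t(f_n)-N_t(f)|\leq N_T(|f_n-f|)$ and hence \emph{uniform} convergence in probability --- a genuine strengthening that feeds the first branch of Lemma~\ref{cons-07} (the paper's route uses less, but needs the separate continuity-in-probability check). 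Your truncation argument for $X^B$ is the paper's argument verbatim (it uses $B_n=B\cap B_n(0)$ where you use $B\cap\{|y|\leq R\}$). For $W=X-X^B$ the paper stays inside its approximation framework: it approximates $y\I_B(y)$ by $\phi_l\in\cont_c(\rd\setminus\{0\},\rd)$, uses the Riemann-sum representation of Lemma~\ref{rm-11} to show that $X-N(\phi_l)$ inherits stationary independent increments from $X$, and then passes to the limit $l\to\infty$ with Lemma~\ref{cons-07} a second time. You bypass both the continuous approximation and this second limit by writing each increment as $W_{t_k}-W_{t_{k-1}}=\Phi_{t_k-t_{k-1}}\big((X_{t_{k-1}+r}-X_{t_{k-1}})_{r\geq 0}\big)$ and applying the Markov property (Theorem~\ref{lpmp-13}): the shifted process is independent of $\Fscr_{t_{k-1}}^X$ and distributed like $X$, so \eqref{LindepAlt} and \eqref{Lstat} follow by induction. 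This is cleaner and arguably more conceptual --- it makes transparent that $X-X^B$ is L\'evy simply because it is a causal, shift-covariant functional of the increments. The one point you assert rather than prove is the measurability of the jump functional $\Phi_h$ (equivalently, of $y\mapsto\sum_{0<r\leq h}\Delta y_r\,\I_B(\Delta y_r)$) on path space with respect to the $\sigma$-algebra generated by the coordinates $y_r$, $r\leq h$; since $0\notin\overline B$ the sum runs over finitely many jumps of size $\geq\epsilon$, which can be enumerated by measurable first-passage functionals (or reached via the truncations of your part a) together with Lemma~\ref{rm-11}), so this is standard but deserves a sentence. The same remark applies to your pathwise identity $N_t(g)=\sum_{0<s\leq t}g(\Delta X_s)$ for $g\in L^1(\nu)$: it follows because $N_t(\cdot,\omega)$ is a genuine measure, $\Ee\int|g|\,dN_t=t\int|g|\,d\nu<\infty$ by monotone approximation from simple functions, and $\nu$-null sets are a.s.\ $N_t$-null, so the pathwise integral agrees with the $L^1(\Pp)$-limit of Lemma~\ref{rm-19} --- worth spelling out, but no gap.
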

\begin{proof}
    a)\ \
    Note that $\nu$ is a locally finite measure on $\rd\setminus\{0\}$. This means that, by standard density results from integration theory, the family $\cont_c(\rd\setminus\{0\})$ is dense in $L^1(\nu)$. Fix $f\in L^1(\nu)$ and choose $f_n\in\cont_c(\rd\setminus\{0\})$ such that $f_n\to f$ in $L^1(\nu)$. Then, as in Lemma~\ref{rm-19},
    $$
        \Ee|N_t(f)-N_t(f_n)| \leq t\int |f-f_n|\,d\nu \xrightarrow[n\to\infty]{}0.
    $$
    Since $\Pp(|N_t(f)|>\epsilon)\leq \frac t\epsilon\int |f|\,d\nu\to 0$ for every $\epsilon>0$ as $t\to 0$, the process $N_t(f)$ is continuous in probability. Moreover, it is the limit (in $L^1$, hence in probability) of the L\'evy processes $N_t(f_n)$ (Lemma~\ref{rm-13}); therefore it is itself a L\'evy process, see Lemma~\ref{cons-07}.

    In view of Lemma~\ref{rm-13}.c), the indicator function $\I_{B}\in L^1(\nu)$ whenever $B$ is a Borel set satisfying $0\notin\overline{B}$. Thus, $N_t(B)=N_t(\I_B)$ is a L\'evy process which has only jumps of unit size, i.e.\ it is by Theorem~\ref{spe-11} a Poisson process.

\medskip\noindent
    b)\ \
    Set $f(y):=y\I_{B}(y)$ and $B_n := B\cap B_n(0)$. Then $f_n(y) = y\I_{B_n}(y)$ is bounded and $0\notin\supp f_n$, hence $f_n\in L^1(\nu)$. This means that $N_t(f_n)$ is for every $n\in\nat$ a L\'evy process. Moreover,
    $$
        N_t(f_n) = \int_{B_n} y\,N_t(dy) \xrightarrow[n\to\infty]{} \int_{B} y\,N_t(dy) = N_t(f)\quad\text{a.s.}
    $$
    Since $N_t(f)$ changes its value only by jumps,
    \begin{align*}
        \Pp(|N_t(f)|>\epsilon)
        &\leq \Pp(X\text{\ has at least one jump of size $B$ in $[0,t]$})\\
        &= \Pp(N_t(B)>0)
        = 1-\eup^{-t\nu(B)},
    \end{align*}
    which proves that the process $N_t(f)$ is continuous in probability. Lemma~\ref{cons-07} shows that $N_t(f)$ is a L\'evy process.

    Finally, approximate $f(y):= y\I_B(y)$ by a sequence $\phi_l\in\cont_c(\rd\setminus\{0\},\rd)$. Now we can use Lemma~\ref{rm-11} to get
    $$
        X_t - N_t(\phi_l)
        = \lim_{n\to\infty} \sum_{k=0}^{n-1} \big[(X_{t_{k+1,n}} - X_{t_{k,n}}) - \phi_l(X_{t_{k+1,n}} - X_{t_{k,n}})\big],
    $$
    The increments of $X$ are stationary and independent, and so we conclude from the above formula that $X-N(\phi_l)$ has also stationary and independent increments. Since both $X$ and $N(\phi_l)$ are continuous in probability, so is their difference, i.e.\ $X-N(\phi_l)$ is a L\'evy process. Finally,
    $$
        N_t(\phi_l) \xrightarrow[l\to\infty]{} N_t(f)
        \et
        X_t - N_t(\phi_l) \xrightarrow[l\to\infty]{} X_t - N_t(f),
    $$
    and since $X$ and $N(f)$ are continuous in probability, Lemma~\ref{cons-07} tells us that $X-N(f)$ is a L\'evy process.
\end{proof}

We will now show that L\'evy processes with `disjoint jump heights' are independent. For this we need the following immediate consequence of Theorem~\ref{exlp-03}:
\begin{lemma}[Exponential martingale]\label{rm-31}
\index{exponential martingale}%
    Let $\Xt$ be a L\'evy process. Then
    $$
        M_t := \frac{\eup^{\iup \xi\cdot X_t}}{\Ee\,\eup^{\iup \xi\cdot X_t}} = \eup^{\iup \xi\cdot X_t} \eup^{t\psi(\xi)},\quad t\geq 0,
    $$
    is a martingale for the filtration $\Fscr_t^X = \sigma(X_s,\,s\leq t)$ such that $\sup_{s\leq t}|M_s|\leq \eup^{t\Re\psi(\xi)}$.
\end{lemma}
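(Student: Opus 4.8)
The statement to prove is Lemma~\ref{rm-31}: that $M_t := \eup^{\iup\xi\cdot X_t}\eup^{t\psi(\xi)}$ is a martingale for the canonical filtration, with the uniform bound $\sup_{s\leq t}|M_s|\leq\eup^{t\Re\psi(\xi)}$. The key is that this is billed as an "immediate consequence of Theorem~\ref{exlp-03}", so the plan is to feed the conditional characteristic function formula \eqref{exlp-e02} directly into the definition of a martingale. The whole proof should be two short computations: one for integrability and the bound, one for the martingale property.

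Let me sketch how I would prove it.

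First I would check integrability and the uniform bound. Since $|\eup^{\iup\xi\cdot X_s}|=1$, we have $|M_s| = \eup^{s\Re\psi(\xi)}$, which is deterministic, so each $M_s$ is trivially integrable and $\sup_{s\leq t}|M_s|\leq\eup^{t\Re\psi(\xi)}$ holds provided $\Re\psi(\xi)\geq 0$ (equivalently $|\eup^{-s\psi(\xi)}|=\eup^{-s\Re\psi(\xi)}\leq 1$, which follows because $\eup^{-s\psi(\xi)}=\Ee\,\eup^{\iup\xi\cdot X_s}$ is a characteristic value of modulus at most $1$). I would note this sign fact explicitly, since it is what makes the bound monotone in $s$.

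Next I would verify the martingale property. The process $M$ is adapted because $X_s$ is $\Fscr_s^X$-measurable. For $s<t$, I would condition on $\Fscr_s^X$ and split off the increment:
\begin{align*}
    \Ee\left(M_t\mid\Fscr_s^X\right)
    &= \eup^{t\psi(\xi)}\,\Ee\left(\eup^{\iup\xi\cdot X_t}\mid\Fscr_s^X\right)
     = \eup^{t\psi(\xi)}\,\eup^{\iup\xi\cdot X_s}\,\Ee\left(\eup^{\iup\xi\cdot(X_t-X_s)}\mid\Fscr_s^X\right)\\
    &= \eup^{t\psi(\xi)}\,\eup^{\iup\xi\cdot X_s}\,\eup^{-(t-s)\psi(\xi)}
     = \eup^{\iup\xi\cdot X_s}\,\eup^{s\psi(\xi)}
     = M_s,
\end{align*}
where the pull-out of $\eup^{\iup\xi\cdot X_s}$ uses its $\Fscr_s^X$-measurability and the third equality is exactly \eqref{exlp-e02} from Theorem~\ref{exlp-03}. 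This gives $\Ee(M_t\mid\Fscr_s^X)=M_s$, completing the proof.

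I do not anticipate a genuine obstacle here: the content is entirely carried by Theorem~\ref{exlp-03}, and the only point requiring a word of care is the direction of the inequality in the uniform bound, i.e.\ confirming $\Re\psi(\xi)\geq 0$ so that $s\mapsto\eup^{s\Re\psi(\xi)}$ is nondecreasing and the supremum over $s\leq t$ is attained (as a bound) at $t$. Everything else is the standard "increment is conditionally independent with a known characteristic function" manipulation.
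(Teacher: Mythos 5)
Your proof is correct and is exactly the argument the paper intends: the lemma is stated there without proof as an ``immediate consequence of Theorem~\ref{exlp-03}'', and your computation---pulling out the $\Fscr_s^X$-measurable factor $\eup^{\iup\xi\cdot X_s}$ and applying \eqref{exlp-e02} to the increment---is that consequence spelled out. Your extra remark that $\Re\psi(\xi)\geq 0$ (since $|\eup^{-s\psi(\xi)}|=|\Ee\,\eup^{\iup\xi\cdot X_s}|\leq 1$), which makes $s\mapsto\eup^{s\Re\psi(\xi)}$ nondecreasing and justifies the uniform bound, is a correct and welcome detail the paper leaves implicit.
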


\begin{theorem}\label{rm-33}
\index{L\'evy process!independent components}%
    Let $N_t(\cdot)$ be the jump measure of a L\'evy process $X$ and $U,V\in\Bscr(\rd)$, $0\notin\overline{U}, 0\notin\overline{V}$ and $U\cap V=\emptyset$. Then the processes
    $$
        X_t^U := N_t(y\I_U(y)),\quad
        X_t^V := N_t(y\I_V(y)),\quad
        X_t-X_t^{U\cup V}
    $$
    are independent L\'evy processes in $\rd$.
\end{theorem}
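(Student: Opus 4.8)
The plan is to bundle the three processes into a single $\real^{3d}$-valued process and reduce everything to a factorisation of its characteristic exponent. Set $W := X^{U\cup V} = X^U + X^V$ (the jump heights add since $U\cap V=\emptyset$) and $Y := X - W$, and consider $Z := (X^U, X^V, Y)$ with values in $\real^{3d}$. First I would check that $Z$ is itself a Lévy process. Each increment $Z_t - Z_s$ is the \emph{same} measurable functional applied to the shifted path $(X_{s+r}-X_s)_{0\le r\le t-s}$: for instance $X_t^U - X_s^U = \sum_{s<u\le t}\Delta X_u\,\I_U(\Delta X_u)$ depends only on the jumps of $X$ in $(s,t]$, and likewise for the $V$- and $Y$-components. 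By the Markov property for Lévy processes (Theorem~\ref{lpmp-13}) this shifted path is a Lévy process independent of $\Fscr_s^X$ and distributed as $X$; hence $Z$ has stationary and independent increments, starts at $0$, and is continuous in probability (each coordinate is). Thus $Z$ is a Lévy process in $\real^{3d}$, and by Corollary~\ref{levy-13} its finite-dimensional laws — and therefore the independence of its three coordinate processes — are completely determined by the time-$1$ characteristic function. So it suffices to prove that for all $\xi,\eta,\zeta\in\rd$ the joint exponent splits,
\[
    \Psi(\xi,\eta,\zeta) := -\log\Ee\,\eup^{\iup[\xi\cdot X_1^U+\eta\cdot X_1^V+\zeta\cdot Y_1]} = \psi_U(\xi)+\psi_V(\eta)+\psi_Y(\zeta),
\]
with no cross terms; factorisation of the characteristic functions then follows, and with it the asserted independence.

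Next I would establish the factorisation for the two pure-jump parts. Using Theorem~\ref{rm-21} and the $L^1(\nu)$-approximation of Lemma~\ref{rm-19}, I approximate $X^U_t = N_t(y\I_U)$ and $X^V_t=N_t(y\I_V)$ by $N_t(f)$ and $N_t(g)$ with $f,g\in\cont_c(\rd\setminus\{0\})$, $\supp f\subset U$, $\supp g\subset V$. The pair $(N_t(f),N_t(g))$, regarded as an $\real^{2d}$-valued process, is compound Poisson, so by Theorem~\ref{exlp-13} its exponent is $\int_{y\neq0}\big(1-\eup^{\iup[\xi\cdot f(y)+\eta\cdot g(y)]}\big)\,\nu(dy)$. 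Because $\supp f\cap\supp g=\emptyset$, at most one of $f(y),g(y)$ is nonzero for each $y$, which gives the elementary identity
\[
    \eup^{\iup[\xi\cdot f(y)+\eta\cdot g(y)]}-1 = \big(\eup^{\iup\xi\cdot f(y)}-1\big)+\big(\eup^{\iup\eta\cdot g(y)}-1\big).
\]
Splitting the integral accordingly shows the exponent of $(N_t(f),N_t(g))$ is the sum of the separate exponents, i.e.\ $N(f)\bbot N(g)$; passing to the limit $f\uparrow y\I_U$, $g\uparrow y\I_V$ yields $X^U\bbot X^V$ with exponents $\psi_U,\psi_V$.

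Finally I would bring in $Y=X-N_t(f)-N_t(g)$ (again in the limit $f\to y\I_U$, $g\to y\I_V$). Here I use the discretisation of Lemma~\ref{rm-11}: on a partition $t_{k,n}$ of $[0,t]$ with increments $\Delta_k X := X_{t_{k+1,n}}-X_{t_{k,n}}$ one has simultaneously $X_t=\sum_k\Delta_k X$ (telescoping), $N_t(f)=\lim_n\sum_k f(\Delta_k X)$ and $N_t(g)=\lim_n\sum_k g(\Delta_k X)$, so all three components of $Z$ are built from the \emph{same} independent increments. Independence of increments (\eqref{LindepAlt}) factorises the joint characteristic function over $k$, and for a single increment value $w$ the disjoint-support identity above again separates the $\xi$- and $\eta$-contributions from the rest. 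The decisive point is that, as the mesh $\to0$ and $f\to y\I_U$, $g\to y\I_V$, an increment interval containing a jump of height in $U$ (resp.\ $V$) contributes that jump's full height to $X^U$ (resp.\ $X^V$) and nothing to $Y$ in the limit — the residual Brownian and small-jump mass over a shrinking interval vanishes, exactly as in the proof of Lemma~\ref{rm-11} — so $\zeta$ decouples from $\xi$ and $\eta$ and the joint exponent collapses to $\psi_U(\xi)+\psi_V(\eta)+\psi_Y(\zeta)$. I expect this last interchange of limits — the simultaneous mesh refinement, the approximations $f\to y\I_U$, $g\to y\I_V$, and the infinite product over $k$ — to be the \textbf{main obstacle}; everything preceding it is bookkeeping built on results already proved. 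One can organise the $Y$-decoupling through the exponential martingale of Lemma~\ref{rm-31} (checking that the product of the three exponential martingales is again a martingale), but this reroutes rather than removes the same technical core.
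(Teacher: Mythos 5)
Your reduction in the first paragraph is sound and matches the paper's own skeleton: the paper also bundles the three processes into the $\real^{3d}$-valued L\'evy process $(X^U,X^V,X-X^{U\cup V})$ and derives independence from a factorisation of the characteristic function of a single increment, propagated to finite-dimensional distributions via \eqref{LindepAlt}. The problems lie in how you produce that factorisation. First, your appeal to Theorem~\ref{exlp-13} for the pair $(N_t(f),N_t(g))$ is a genuine gap: that theorem computes the exponent of a process \emph{constructed} from iid jumps $H_k\sim\mu$ independent of a Poisson clock, and the assertion that the jumps of a general L\'evy process falling in $A=\supp f\cup\supp g$ have this structure (heights iid with law $\nu(\cdot\cap A)/\nu(A)$, independent of the jump times) is proved nowhere at this stage. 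Within the paper's development the exponent formula for $N_t(f)$ is Corollary~\ref{rm-35}.b), whose proof rests on Theorem~\ref{rm-33} itself, so your route is circular relative to the paper's logic; as a standalone argument it requires a separate strong-Markov identification of the joint law of $(\tau_1,\Delta X_{\tau_1})$, which you do not carry out. (Before Theorem~\ref{rm-33}, all that is available is that $N_t(B)$ is Poisson with intensity $\nu(B)$ and that $N_t(f)$, $X^B$, $X-X^B$ are L\'evy processes; even the joint law of $(N_t(B_1),\dots,N_t(B_n))$ for disjoint $B_k$ is unknown.)

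Second, and more decisively, the decoupling of $Y$ — which you yourself flag as the main obstacle — is the actual substance of the theorem, and your sketch of it fails as stated: at fixed mesh, $f(\Delta_kX)$, $g(\Delta_kX)$ and the residual $\Delta_kX-f(\Delta_kX)-g(\Delta_kX)$ are functions of the \emph{same} random vector, so the disjoint-support identity for exponents of sums does not factorise the expectation of a single-increment term; factorisation only emerges in the simultaneous triple limit (mesh $\to 0$, $f\to y\I_U$, $g\to y\I_V$, infinite product over $k$), and controlling that interchange is exactly the open problem. The paper closes precisely this gap with the device you mention only in passing: it forms the three bounded, mean-zero exponential martingales $C$, $D$, $E$ of Lemma~\ref{rm-31} for the increments of $X^U$, $X^V$, $Y$ over $(s,t]$, kills all off-diagonal terms in the discretised expansion of $\Ee(C_tD_tE_t)$ by orthogonality of martingale increments (tower property), and identifies the surviving diagonal sum, by the Lemma~\ref{rm-11} argument, with $\Ee\big(\sum_{s<u\leq t}\Delta C_u\,\Delta D_u\,\Delta E_u\big)=0$, because $U$, $V$ and $\rd\setminus(U\cup V)$ are mutually disjoint and so the three processes never jump simultaneously. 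This (with its pairwise analogues) yields \eqref{rm-e30} directly, with no compound-Poisson input and no delicate limit interchange. To complete your proposal you would have to execute this martingale computation in full — it is not an optional rerouting of the same technical core, it \emph{is} the proof.
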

\begin{proof}
    Set $W:=U\cup V$. By Theorem~\ref{rm-21}, $X^U, X^V$ and $X-X^W$ are L\'evy processes. In fact, a slight variation of that argument even shows that $(X^U,X^V,X-X^W)$ is a L\'evy process in $\real^{3d}$.

    In order to see their independence, fix $s>0$ and define for $t>s$ and $\xi, \eta, \theta \in \rd$ the processes
    \begin{gather*}
        C_t := \frac{\eup^{\iup \xi\cdot(X_t^U - X_s^U)}}{\Ee\big[\eup^{\iup \xi\cdot(X_t^U - X_s^U)}\big]} - 1,\qquad
        D_t := \frac{\eup^{\iup \eta\cdot(X_t^V - X_s^V)}}{\Ee\big[\eup^{\iup \eta\cdot(X_t^V - X_s^V)}\big]} - 1, \\
        E_t := \frac{\eup^{\iup \theta\cdot(X_t - X_t^W - X_s + X_s^W)}}{\Ee \big[\eup^{\iup \theta\cdot (X_t - X_t^W - X_s + X_s^W)}\big]} - 1.
    \end{gather*}
    By Lemma~\ref{rm-31}, these processes are bounded martingales 
    satisfying $\Ee C_t = \Ee D_t = \Ee E_t = 0$. Set $t_{k,n} = s+\tfrac kn\,(t-s)$. Observe that
    \begin{align*}
        \Ee(C_t D_t E_t)
        &= \Ee\left( \sum_{k,l,m = 0}^{n-1} (C_{t_{k+1,n}}-C_{t_{k,n}}) (D_{t_{l+1,n}}-D_{t_{l,n}})(E_{t_{m+1,n}}-E_{t_{m,n}}) \right) \\
        &= \Ee\left( \sum_{k = 0}^{n-1} (C_{t_{k+1,n}}-C_{t_{k,n}}) (D_{t_{k+1,n}}-D_{t_{k,n}})(E_{t_{k+1,n}}-E_{t_{k,n}}) \right). 
    \end{align*}
    In the second equality we use that martingale increments $C_t-C_s, D_t-D_s, E_t-E_s$ are independent of $\Fscr_s^X$, and by the tower property
    $$
        \Ee\big[(C_{t_{k+1,n}}-C_{t_{k,n}}) (D_{t_{l+1,n}}-D_{t_{l,n}}) (E_{t_{m+1,n}}-E_{t_{m,n}})\big] = 0 \quad\text{unless $k=l=m$}.
    $$
    An argument along the lines of Lemma~\ref{rm-11} gives
    $$
        \Ee(C_t D_t E_t)
        =
        \Ee\bigg( \sum_{s<u\leq t} \underbrace{\Delta C_u \,\Delta D_u\,\Delta E_u}_{=\,0} \bigg)
        = 0
    $$
    as $X_t^U$, $X_t^V$ and $Y_t := X_t-X_t^W$ cannot jump simultaneously since $U$, $V$ and $\rd\setminus W$ are mutually disjoint. Thus,
    \begin{equation}\label{rm-e30}\begin{aligned}
        \Ee\Big[&\eup^{\iup \xi\cdot(X_t^U - X_s^U)} \eup^{\iup\eta\cdot(X_t^V - X_s^V)} \eup^{\iup\theta\cdot(Y_t - Y_s)}\Big]\\
        &=
        \Ee\Big[\eup^{\iup\xi\cdot(X_t^U - X_s^U)}\Big]\cdot
        \Ee\Big[\eup^{\iup\eta\cdot(X_t^V - X_s^V)}\Big]\cdot
        \Ee\Big[\eup^{\iup\theta\cdot(Y_t-Y_s)}\Big].
    \end{aligned}\end{equation}
    Since all processes are L\'evy processes, \eqref{rm-e30} already proves the independence of $X^U$, $X^V$ and $Y = X-X^W$. Indeed, we find for
    $0 = t_0 < t_1 < \ldots < t_m = t$ and $\xi_k, \eta_k, \theta_k \in \rd$
    \begin{align*}
    \Ee \Big( &\eup^{\iup\sum_k \xi_k\cdot(X_{t_{k+1}}^U- X_{t_{k}}^U)}
                \eup^{\iup\sum_k \eta_k\cdot(X_{t_{k+1}}^V- X_{t_{k}}^V)}
                \eup^{\iup\sum_k \theta_k\cdot(Y_{t_{k+1}}- Y_{t_{k}})}\Big) \\
    &\qquad\pomu{\eqref{LindepAlt}}{=}{\eqref{rm-e30}} \Ee\Big(\textstyle\prod\limits_k
                \eup^{\iup\xi_k\cdot(X_{t_{k+1}}^U- X_{t_{k}}^U)}
                \eup^{\iup\eta_k\cdot(X_{t_{k+1}}^V- X_{t_{k}}^V)}
                \eup^{\iup\theta_k\cdot(Y_{t_{k+1}}- Y_{t_{k}})} \Big) \\
    &\qquad\omu{\eqref{LindepAlt}}{=}{\phantom{\eqref{rm-e30}}} \textstyle\prod\limits_k\Ee\Big(
                \eup^{\iup\xi_k\cdot(X_{t_{k+1}}^U- X_{t_{k}}^U)}
                \eup^{\iup\eta_k\cdot(X_{t_{k+1}}^V- X_{t_{k}}^V)}
                \eup^{\iup\theta_k\cdot(Y_{t_{k+1}}- Y_{t_{k}})} \Big) \\
    &\qquad\omu{\eqref{rm-e30}}{=}{\phantom{\eqref{LindepAlt}}} \textstyle\prod\limits_k
                \Ee\Big( \eup^{\iup\xi_k\cdot(X_{t_{k+1}}^U- X_{t_{k}}^U)}\Big)
                \Ee\Big( \eup^{\iup\eta_k\cdot(X_{t_{k+1}}^V- X_{t_{k}}^V)}\Big)
                \Ee\Big(\eup^{\iup\theta_k\cdot(Y_{t_{k+1}}- Y_{t_{k}})} \Big).
    \end{align*}
    The last equality follows from \eqref{rm-e30}; the second equality uses \eqref{LindepAlt} for the L\'evy process $$(X_t^U, X_t^V, X_t-X_t^W).$$

    This shows that the families $(X_{t_{k+1}}^U - X_{t_{k}}^U)_{k}$, $(X_{t_{k+1}}^V - X_{t_{k}}^V)_{k}$ and $(Y_{t_{k+1}} - Y_{t_{k}})_{k}$ are independent, hence the $\sigma$-algebras $\sigma(X_t^U,\,t\geq 0)$, $\sigma(X_t^V,\,t\geq 0)$ and $\sigma(X_t-X_t^W,\,t\geq 0)$ are independent.
\end{proof}

\begin{corollary}\label{rm-35}
\index{Poisson process!jump measure of LP}%
    Let $N_t(\cdot)$ be the jump measure of a L\'evy process $X$ and $\nu$ the intensity measure.
    \begin{enumerate}
    \item[\upshape a)]
            $(N_t(U))_{t\geq 0} \bbot (N_t(V))_{t\geq 0}$ \ for \ $U,V\in\Bscr(\rd)$, $0\notin\overline{U}$, $0\notin\overline{V}$, $U\cap V=\emptyset$.
    \item[\upshape b)]
        For all measurable $f:\rd\to\real^m$ satisfying $f(0)=0$ and $f\in L^1(\nu)$
        \begin{equation}\label{rm-e32}
            \Ee \left(\eup^{\iup\xi\cdot  N_t(f)}\right)
            = \Ee \left(\eup^{\iup \int \xi\cdot f(y)\,N_t(dy)}\right)
            = \eup^{-t \int_{y\neq 0} \left[1-\eup^{\iup \xi\cdot f(y)}\right]\,\nu(dy)}.
        \end{equation}
    \item[\upshape c)]
        $\smash[t]{\displaystyle\int_{y\neq 0} \left(|y|^2\wedge 1\right)\,\nu(dy)<\infty}$.
    \end{enumerate}
\end{corollary}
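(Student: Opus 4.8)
My plan is to obtain all three parts from Theorem~\ref{rm-33} and the moment bound of Lemma~\ref{spe-15}, with part c) carrying the real work.

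Part a) I would read off directly from Theorem~\ref{rm-33}. With the given $U,V$ the processes $X^U=N_\bullet(y\I_U)$ and $X^V=N_\bullet(y\I_V)$ are independent L\'evy processes; since $0\notin\overline U$, every jump of $X^U$ has size $\geq\epsilon>0$, so these jumps are isolated and the counting process $(N_t(U))_{t\geq0}$ — the number of discontinuities of $X^U$ in $(0,t]$ — is a measurable path functional of $(X^U_t)_{t\geq0}$, and likewise for $V$. Independence of the counting processes is then inherited. The identical reasoning (disjoint jump heights never jump simultaneously, plus the exponential-martingale argument in the proof of Theorem~\ref{rm-33}) extends verbatim to finitely many pairwise disjoint sets $B_1,\dots,B_M$ bounded away from $0$, yielding mutual independence of $N_\bullet(B_1),\dots,N_\bullet(B_M)$, which I use in part b).

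For b) I would first treat a simple $f=\sum_{k=1}^M\phi_k\I_{B_k}$ with $\phi_k\in\real^m$ and $B_k$ pairwise disjoint, $0\notin\overline{B_k}$. Then $N_t(f)=\sum_k\phi_k N_t(B_k)$, the $N_t(B_k)$ are independent $\Poi(t\nu(B_k))$ variables (Lemma~\ref{rm-13} and part a)), and
\[
\Ee\,\eup^{\iup\xi\cdot N_t(f)}=\prod_k\Ee\,\eup^{\iup(\xi\cdot\phi_k)N_t(B_k)}=\exp\Big[-t\sum_k\nu(B_k)\big(1-\eup^{\iup\xi\cdot\phi_k}\big)\Big],
\]
which is exactly the right-hand side of \eqref{rm-e32} because the integrand vanishes off $\bigcup_k B_k$. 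To reach a general $f\in L^1(\nu)$ with $f(0)=0$, I approximate by such simple functions $f_n\to f$ in $L^1(\nu)$ (possible since $\nu$ is locally finite on $\rd\setminus\{0\}$). On the left, Lemma~\ref{rm-19} gives $N_t(f_n)\to N_t(f)$ in $L^1(\Pp)$, hence in probability, so the bounded exponentials converge in expectation; on the right $|\eup^{\iup\xi\cdot f}-\eup^{\iup\xi\cdot f_n}|\leq|\xi|\,|f-f_n|\to0$ in $L^1(\nu)$, so the exponents converge. This passes \eqref{rm-e32} to the limit.

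Part c) is the substantial one. I would split $\rd\setminus\{0\}$ into $\{|y|\geq1\}$ and $\{0<|y|<1\}$. The large-jump set is bounded away from $0$, so $\nu(\{|y|\geq1\})<\infty$ by Lemma~\ref{rm-13}b), which settles $\int_{|y|\geq1}(|y|^2\wedge1)\,d\nu$. For the small jumps put $f_\epsilon(y):=y\I_{\{\epsilon\leq|y|<1\}}(y)$; then $X^\epsilon_t:=N_t(f_\epsilon)$ is a compound Poisson process with bounded jumps, so it has finite second moments by Lemma~\ref{spe-15}, and differentiating the exponent $\psi_\epsilon(\xi)=\int_{\epsilon\leq|y|<1}(1-\eup^{\iup\xi\cdot y})\,\nu(dy)$ from part b) twice (in the coordinate directions and summing) gives
\[
\sum_{k=1}^d\Vv\big(X^{\epsilon,(k)}_t\big)=t\int_{\epsilon\leq|y|<1}|y|^2\,\nu(dy).
\]
The hard part is a bound uniform in $\epsilon$. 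For this I set $Y_t:=X_t-X^{\{|y|\geq1\}}_t$, a L\'evy process all of whose jumps have size $<1$, so $\Vv Y^{(k)}_t<\infty$ by Lemma~\ref{spe-15}. Applying Theorem~\ref{rm-33} with $U=\{\epsilon\leq|y|<1\}$, $V=\{|y|\geq1\}$ exhibits $Y=(X-X^{U\cup V})+X^\epsilon$ as a sum of two independent L\'evy processes, whence $\Vv Y^{(k)}_t=\Vv(X-X^{U\cup V})^{(k)}_t+\Vv X^{\epsilon,(k)}_t\geq\Vv X^{\epsilon,(k)}_t$. Thus $\sum_k\Vv X^{\epsilon,(k)}_t\leq\sum_k\Vv Y^{(k)}_t<\infty$ independently of $\epsilon$; taking $t=1$ and letting $\epsilon\downarrow0$ by monotone convergence yields $\int_{0<|y|<1}|y|^2\,\nu(dy)<\infty$, which combined with the large-jump bound proves c).
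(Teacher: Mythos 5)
Your parts a) and b) follow the same route as the paper: independence of the counting processes is read off from Theorem~\ref{rm-33} because $(N_t(U))_{t\geq0}$ and $(N_t(V))_{t\geq0}$ are path functionals of the independent processes $X^U$ and $X^V$, and \eqref{rm-e32} is first verified for step functions over disjoint sets bounded away from the origin and then extended by approximation in $L^1(\nu)$ using Lemma~\ref{rm-19} and dominated convergence; your explicit remark that b) needs \emph{mutual} independence of finitely many $N_t(B_k)$, obtained by running the argument of Theorem~\ref{rm-33} with several disjoint sets, is a point the paper passes over with the bare citation of a). For part c), however, you take a genuinely different and correct route. The paper picks $U=\{\delta<|y|<1\}$, uses $X^U\bbot(X-X^U)$ to factorize the characteristic function, invokes the fact from Chapter~\ref{levy} that $\Ee\,\eup^{\iup\xi\cdot X_t}$ has no zeros, and estimates for $|\xi|\leq1$
\[
    0<\big|\Ee\,\eup^{\iup\xi\cdot X_t}\big|
    \leq \eup^{-t\int_U(1-\cos\xi\cdot y)\,\nu(dy)}
    \leq \eup^{-\frac t4\int_U(\xi\cdot y)^2\,\nu(dy)},
\]
letting $\delta\to0$; the only ingredients are the inequality $1-\cos u\geq u^2/4$ for $|u|\leq1$ and the nonvanishing of the characteristic function. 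You instead argue through second moments: the variance identity $\sum_k\Vv X_t^{\epsilon,(k)}=t\int_{\epsilon\leq|y|<1}|y|^2\,\nu(dy)$ (correct, and the same computation as in Lemma~\ref{cons-03}), the decomposition $Y=X-X^{\{|y|\geq1\}}=(X-X^{U\cup V})+X^\epsilon$ into independent summands via Theorem~\ref{rm-33}, Bienaym\'e's identity, and monotone convergence, with Lemma~\ref{spe-15} supplying the $\epsilon$-uniform bound $\Vv Y_t^{(k)}<\infty$. The one step you leave implicit is that Bienaym\'e, as you write it, needs $\Vv(X-X^{U\cup V})_t^{(k)}<\infty$ as well; this again follows from Lemma~\ref{spe-15}, since that process has jumps of size $<\epsilon$. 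Comparing the two: the paper's argument is lighter, resting only on the nonvanishing of $\chi$ (an easy consequence of infinite divisibility and L\'evy's continuity theorem) and a pointwise cosine bound; yours imports the heavier Lemma~\ref{spe-15} (whose proof uses the strong Markov property), but in exchange it is purely real-variable, yields the quantitative estimate $\int_{0<|y|<1}|y|^2\,\nu(dy)\leq\sum_k\Vv Y_1^{(k)}$, and exhibits c) as the exact converse of the moment computation in Lemma~\ref{cons-03}, making transparent the variance interpretation of the small-jump part of the intensity measure.
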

\begin{proof}
    a) Since $(N_t(U))_{t\geq 0}$ and $(N_t(V))_{t\geq 0}$ are completely determined by the independent processes $(X_t^U)_{t\geq 0}$ and $(X_t^V)_{t\geq 0}$, cf.\ Theorem~\ref{rm-33}, the independence is clear.

\medskip\noindent
    b) Let us first prove \eqref{rm-e32} for step functions $f(x) = \sum_{k=1}^n \phi_k\I_{U_k}(x)$ with $\phi_k\in\real^m$ and disjoint sets $U_1, \ldots, U_n\in\Bscr(\rd)$ such that $0\not\in\overline U_k$. Then
    \begin{align*}
        \Ee\exp\left[\iup\int \xi\cdot f(y)\,N_t(dy)\right]
        &\pomu{\text{\ref{rm-21}.a)}}{=}{} \Ee \exp\bigg[\iup \sum_{k=1}^n \int \xi\cdot \phi_k\I_{U_k}(y)\,N_t(dy)\bigg]\\
        &\omu{\text{a)}}{=}{\phantom{\text{\ref{rm-21}.a)}}} \prod_{k=1}^n \Ee \exp\left[\iup \xi\cdot \phi_k \,N_t(U_k)\right]\\
        &\omu{\text{\ref{rm-21}.a)}}{=}{} \prod_{k=1}^n \exp\left[t\nu(U_k)\big[\eup^{\iup\xi\cdot \phi_k}-1\big]\right]\\
        &\pomu{\text{\ref{rm-21}.a)}}{=}{} \exp\bigg[t \sum_{k=1}^n\big[\eup^{\iup\xi\cdot \phi_k}-1\big]\nu(U_k)\bigg]\\
        &\pomu{\text{\ref{rm-21}.a)}}{=}{} \exp\bigg[-t \int \big[1-\eup^{\iup\xi\cdot f(y)}\big]\,\nu(dy)\bigg].
    \end{align*}
    For any $f\in L^1(\nu)$ the integral on the right-hand side of \eqref{rm-e32} exists. Indeed, the elementary inequality $|1-\eup^{\iup u}| \leq |u|\wedge 2$ and $\nu\{|y|\geq 1\}<\infty$ (Lemma~\ref{rm-13}.c)) yield
    \begin{align*}
        \left|\int_{y\neq 0} \big[1-\eup^{\iup\xi\cdot f(y)}\big]\,\nu(dy)\right|
        &\leq |\xi| \int_{0< |y|< 1} |f(y)|\,\nu(dy) + 2\int_{|y|\geq 1}\nu(dy)
        < \infty.
    \end{align*}
    Therefore, \eqref{rm-e32} follows with a standard approximation argument and dominated convergence.

\medskip\noindent
    c) We have already seen in Lemma~\ref{rm-13}.c) that $\nu\{|y|\geq 1\}<\infty$.

    Let us show that $\int_{0<|y|< 1}|y|^2\,\nu(dy)<\infty$. For this we take $U = \{\delta<|y|< 1\}$. Again by Theorem~\ref{rm-33}, the processes $X_t^U$ and $X_t-X_t^U$ are independent, and we get
    $$
        0<
        \big|\Ee\,\eup^{\iup \xi\cdot X_t}\big|
        = \big|\Ee\,\eup^{\iup \xi\cdot X_t^U}\big|\cdot \big|\Ee\,\eup^{\iup \xi\cdot (X_t-X_t^U)}\big|
        \leq \big|\Ee\,\eup^{\iup \xi\cdot X_t^U}\big|.
    $$
    Since $X_t^U$ is a compound Poisson process---use part b) with $f(y)=y\I_U(y)$---we get for all $|\xi|\leq 1$
\index{compound Poisson process!building block of LP}%
    $$
        0
        < \big|\Ee\,\eup^{\iup \xi\cdot X_t}\big|
        \leq \big|\Ee\,\eup^{\iup \xi\cdot X_t^U}\big|
        = \eup^{-t\int_U (1-\cos\xi\cdot y)\,\nu(dy)}
        \leq \eup^{-t\int_{\delta < |y|\leq 1} \frac 14 (\xi\cdot y)^2\,\nu(dy)}.
    $$
    For the equality we use $|\eup^z| = \eup^{\Re z}$, the inequality follows 
    $\frac 14 u^2\leq 1-\cos u$ if $|u|\leq 1$. Letting $\delta\to 0$ we see that $\int_{0<|y|< 1} |y|^2\,\nu(dy)<\infty$.
\end{proof}

\begin{corollary}\label{rm-36}
    Let $N_t(\cdot)$ be the jump measure of a L\'evy process $X$ and $\nu$ the intensity measure. For all $f:\rd\to\real^m$ satisfying $f(0)=0$ and $f\in L^2(\nu)$ we have\footnote{This is a special case of an It\^o isometry, cf.\ \eqref{si-e22} in the following chapter.}
    \begin{equation}\label{rm-e36}
        \Ee\bigg( \left|\int f(y)\,\big[N_t(dy)-t\nu(dy)\big]\right|^2 \bigg)
        = t\int_{y\neq 0} |f(y)|^2\,\nu(dy).
    \end{equation}
\end{corollary}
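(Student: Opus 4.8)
The plan is to first establish the identity for simple integrands and then extend it to all of $L^2(\nu)$ by an isometric (It\^o-type) completion argument. Indeed, for integrands $f\in L^2(\nu)\setminus L^1(\nu)$ (which exist, since near the origin $\nu$ may blow up so fast that $|f|^2$ is $\nu$-integrable while $|f|$ is not) the object $\int f\,[N_t(dy)-t\nu(dy)]$ is \emph{defined} precisely through such an $L^2(\Pp)$-limit, so the corollary simultaneously gives meaning to the compensated integral and proves its isometry property.

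First I would treat a simple function $f=\sum_{k=1}^n\phi_k\I_{U_k}$ with $\phi_k\in\real^m$ and pairwise disjoint Borel sets $U_k$ satisfying $0\notin\overline{U_k}$. Writing $Z_k:=N_t(U_k)-t\nu(U_k)$ one has $\int f\,[N_t(dy)-t\nu(dy)]=\sum_{k=1}^n\phi_k Z_k$. By Lemma~\ref{rm-13}.b) each $(N_t(U_k))_{t\geq0}$ is a Poisson process of intensity $\nu(U_k)$, so $N_t(U_k)\sim\Poi(t\nu(U_k))$ and hence $\Ee Z_k=0$, $\Ee Z_k^2=\Vv N_t(U_k)=t\nu(U_k)$; by Corollary~\ref{rm-35}.a) the variables $Z_1,\dots,Z_n$ are independent. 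Expanding the square and using $\Ee(Z_kZ_l)=\delta_{kl}\,t\nu(U_k)$ gives
\begin{equation*}
\Ee\Big|\sum_{k=1}^n\phi_k Z_k\Big|^2=\sum_{k,l=1}^n(\phi_k\cdot\phi_l)\,\Ee(Z_kZ_l)=\sum_{k=1}^n|\phi_k|^2\,t\nu(U_k)=t\int_{y\neq0}|f(y)|^2\,\nu(dy),
\end{equation*}
which is \eqref{rm-e36} for simple $f$.

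For the extension I would use that such simple functions are dense in $L^2(\nu)$: given $f\in L^2(\nu)$ with $f(0)=0$, the truncations $f\I_{\{|y|\geq\delta\}}$ converge to $f$ in $L^2(\nu)$ as $\delta\to0$ (dominated convergence, since $|f|^2\in L^1(\nu)$), and on each set $\{|y|\geq\delta\}$ the measure $\nu$ is finite (local finiteness together with $\nu\{|y|\geq1\}<\infty$ from Corollary~\ref{rm-35}.c)), so $f\I_{\{|y|\geq\delta\}}$ is an $L^2(\nu)$-limit of simple functions of the above type. Choosing simple $f_n\to f$ in $L^2(\nu)$ and applying the already-proved identity to the \emph{simple} difference $f_n-f_m$ yields
\begin{equation*}
\Ee\Big|\textstyle\int(f_n-f_m)\,[N_t(dy)-t\nu(dy)]\Big|^2=t\int_{y\neq0}|f_n-f_m|^2\,\nu(dy)\xrightarrow[m,n\to\infty]{}0.
\end{equation*}
Hence $\big(\int f_n\,[N_t(dy)-t\nu(dy)]\big)_n$ is Cauchy in $L^2(\Pp)$; its limit defines $\int f\,[N_t(dy)-t\nu(dy)]$, and passing to the limit in the isometry for $f_n$ gives \eqref{rm-e36}. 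One should check that this limit is independent of the approximating sequence and is consistent with $N_t(f)-t\int f\,d\nu$ from Lemma~\ref{rm-19} whenever $f\in L^1(\nu)\cap L^2(\nu)$; both are routine, the latter because the $L^1$- and $L^2$-constructions agree on simple functions.

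The main obstacle is not the algebra but the bookkeeping of the extension: verifying that the compensated integral is well defined on $L^2(\nu)$ (where $f$ need not be $\nu$-integrable near the origin) and that the isometry survives the two independent limiting procedures — the $L^1(\nu)$-approximation used to define $N_t(f)$ in Lemma~\ref{rm-19} and the $L^2(\nu)$-approximation used here. An alternative to the moment computation for simple $f$ is to differentiate the characteristic functional of Corollary~\ref{rm-35}.b) twice at $\xi=0$: from $\Ee\,\eup^{\iup\xi\cdot N_t(f)}=\exp[-t\int(1-\eup^{\iup\xi\cdot f(y)})\,\nu(dy)]$ one reads off $\Ee N_t(f)=t\int f\,d\nu$ and $\Vv N_t(f)=t\int|f|^2\,d\nu$ directly, using Theorem~\ref{app-21} to justify that the smoothness of the characteristic function produces the required moments; this route, however, still needs $f\in L^1(\nu)\cap L^2(\nu)$ together with the density argument to reach general $f\in L^2(\nu)$.
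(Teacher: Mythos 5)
Your proof is correct and follows essentially the same route as the paper: reduce to simple functions $f=\sum_k\phi_k\I_{U_k}$ with disjoint sets bounded away from $0$, compute $\Ee\big[(N_t(U_k)-t\nu(U_k))(N_t(U_l)-t\nu(U_l))\big]=\delta_{kl}\,t\nu(U_k)$ from the independence (Corollary~\ref{rm-35}.a)) and the Poisson variance (Lemma~\ref{rm-13}.b)), expand the square, and extend by an $L^2$-approximation. The paper dismisses the extension with ``use an approximation argument''; your careful spelling-out of the density of simple functions in $L^2(\nu)$, the Cauchy argument defining the compensated integral for $f\in L^2(\nu)\setminus L^1(\nu)$, and the consistency check with Lemma~\ref{rm-19} is a faithful completion of that sketch, not a departure from it.
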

\begin{proof}
    It is clearly enough to show \eqref{rm-e36} for step functions of the form
    $$
        f(x) = \sum_{k=1}^n \phi_k\I_{B_k}(x),\quad
        B_k\text{\ disjoint},\; 0\not\in\overline{B}_k,\;\phi_k\in\real^m,
    $$
    and then use an approximation argument.

    Since the processes $N_t(B_k,\cdot)$ are independent Poisson processes with mean $\Ee N_t(B_k) = t\nu(B_k)$ and variance $\Vv N_t(B_k) = t\nu(B_k)$, we find
    \begin{align*}
        \Ee\big[&(N_t(B_k)-t\nu(B_k))(N_t(B_l)-t\nu(B_l))\big]\\
        &=  \left\{
            \begin{array}{ll}
                0, &\text{if\ } B_k\cap B_l=\emptyset,\text{\ i.e.\ }k\neq l,\\[\medskipamount]
                \Vv N_t(B_k) = t\nu(B_k), &\text{if\ } k=l,
            \end{array}\right\}\\
        &=t\nu(B_k\cap B_l).
    \end{align*}
    Therefore,
    \begin{align*}
        \Ee\bigg( &\bigg|\int f(y)\,\big(N_t(dy)-t\nu(dy)\big)\bigg|^2 \bigg)\\
        &= \Ee\bigg(\iint f(y)f(z)\,\big(N_t(dy)-t\nu(dy)\big)\big(N_t(dz)-t\nu(dz)\big) \bigg)\\
        &= \sum_{k,l=1}^n \phi_k\phi_l
           \underbrace{\Ee\Big(\big(N_t(B_k)-t\nu(B_k)\big)\big(N_t(B_l)-t\nu(B_l)\big) \Big)}_{=\,t\nu(B_k\cap B_l)}\\
        &= t\sum_{k=1}^n |\phi_k|^2\,\nu(B_k)
         = t\int |f(y)|^2\,\nu(dy).
    \qedhere
    \end{align*}
\end{proof}

In contrast to Corollary~\ref{cons-13} the following theorem does not need (but constructs) the L\'evy triplet $(l,Q,\nu)$.
\index{L\'evy--It\^o decomposition}%
\begin{theorem}[L\'evy--It\^o decomposition]\label{rm-51}
    Let $X$ be a L\'evy process and denote by $N_t(\cdot)$ and $\nu$ the jump and intensity measures. Then
    \begin{equation}\label{rm-e52}\begin{aligned}
        X_t
        &= \sqrt Q W_t \;+\: \int_{0<|y|<1} y \big(N_t(dy)-t\nu(dy)\big)
        &&\pmb{\bigg]}\text{\ $=: M_t$, \ $L^2$-martingale}\\
        &\phantom{=}\underbracket{\;\mbox{}\quad\; tl\quad \vphantom{\int_{|y|\geq 1}}}_{\mathclap{\begin{gathered}\text{continuous}\\[-5pt]\text{Gaussian}\end{gathered}}}
        \;+ \;\; \underbracket{\int_{|y|\geq 1} y\,N_t(dy).\qquad\qquad\quad}_{\begin{gathered}\text{pure jump part}\end{gathered}}
        &&\pmb{\bigg]}\text{\ $=: A_t$, \ bdd.\ variation}
    \end{aligned}\end{equation}
    where $l\in\rd$ and $Q\in\real^{d\times d}$ is a positive semidefinite symmetric matrix and $W$ is a standard Brownian motion in $\rd$. The processes on the right-hand side of \eqref{rm-e52} are independent L\'evy processes.
\end{theorem}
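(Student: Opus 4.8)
The plan is to split $X$ into three independent pieces according to the size of its jumps, and then to identify the jump-free remainder as a drift plus a Brownian motion via Theorem~\ref{spe-21}. Write $U_0 := \{|y|\geq 1\}$ and, for $n\geq 1$, $U_n := \{\frac{1}{n+1}\leq |y| < \frac 1n\}$, so that the $U_n$ are pairwise disjoint, satisfy $0\notin\overline{U_n}$, and exhaust $\rd\setminus\{0\}$. First I would set $A_t^{\mathrm{jmp}} := \int_{|y|\geq 1} y\,N_t(dy) = N_t(y\I_{U_0})$; since $\nu(U_0)<\infty$ (Lemma~\ref{rm-13}.c)), this is by Theorem~\ref{rm-21} a compound Poisson process collecting the finitely many large jumps of $X$ on each compact interval, with paths of bounded variation.

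Next I would build the compensated small-jump martingale. For $0<\epsilon<1$ the process $\int_{\epsilon\leq|y|<1} y\,(N_t(dy)-t\nu(dy))$ is a Lévy process and, by Corollary~\ref{rm-36}, an $L^2$-martingale. By the Itô isometry \eqref{rm-e36} together with $\int_{0<|y|<1}|y|^2\,\nu(dy)<\infty$ (Corollary~\ref{rm-35}.c)), the family is Cauchy in $L^2$ as $\epsilon\downarrow 0$; Doob's maximal inequality upgrades this to uniform-in-$t$ $L^2$-convergence, hence to uniform convergence in probability, so by Lemma~\ref{cons-07} the limit $M_t^{\mathrm{sm}} := \int_{0<|y|<1} y\,(N_t(dy)-t\nu(dy))$ is a càdlàg $L^2$-martingale Lévy process. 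Now set $R_t := X_t - A_t^{\mathrm{jmp}} - M_t^{\mathrm{sm}}$. For each $\epsilon$ the process $X^{(\epsilon)}_t := X_t - \int_{|y|\geq\epsilon} y\,N_t(dy) + t\int_{\epsilon\leq|y|<1} y\,\nu(dy)$ is a Lévy process (Theorem~\ref{rm-21}.b) plus a deterministic drift) all of whose jumps have size $<\epsilon$, and a computation gives $X^{(\epsilon)}_t-R_t=\int_{0<|y|<\epsilon}y\,(N_t(dy)-t\nu(dy))\to 0$ uniformly in probability. Passing to an a.s.\ uniformly convergent subsequence then shows that $R$ has continuous paths, and Lemma~\ref{cons-07} shows $R$ is a Lévy process.

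Since $R$ is a continuous Lévy process, Lemma~\ref{spe-15} gives moments of all orders, so Theorem~\ref{spe-21} applies and yields $R_t = tl + \sqrt Q\,W_t$ with $l=\Ee R_1$, $Q$ the covariance of $R_1$, and $W$ a standard Brownian motion. This produces the two displayed groupings $M_t=\sqrt Q\,W_t + M_t^{\mathrm{sm}}$ and $A_t = tl + A_t^{\mathrm{jmp}}$ with $X=M+A$. For independence I would apply Theorem~\ref{rm-33}, extended from two sets to the countable disjoint family $U_0,U_1,U_2,\dots$: for each finite $N$ the processes $X^{U_0},\dots,X^{U_N}$ and $Z^N := X - X^{\bigcup_{n\leq N} U_n}$ are independent. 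Since the compensators are deterministic, $\sigma(X^{U_n})=\sigma(Y^n)$ for $Y^n := \int_{U_n} y\,(N_t(dy)-t\nu(dy))$, and letting $N\to\infty$ (after subtracting the deterministic drift $t\int_{U_n}y\,\nu(dy)$, which need not be summable but only shifts $\sigma$-algebras) shows that $X^{U_0}, Y^1, Y^2,\dots$ and $R$ are mutually independent. As $A^{\mathrm{jmp}}=X^{U_0}$, $M^{\mathrm{sm}}=\sum_{n\geq 1}Y^n$, and $\sqrt Q\,W = R - tl$, the three summands in \eqref{rm-e52} are independent.

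The main obstacle is twofold. First, turning the $L^2$-convergence of the compensated small-jump sums into the \emph{pathwise continuity} of $R$: one must interchange $\epsilon\downarrow 0$ with the jump operation, which requires the a.s.-uniform (not merely in-probability) convergence furnished by Doob's inequality along a subsequence, after which $\sup_{s}|\Delta R_s|\leq\epsilon$ for every $\epsilon$ forces continuity. Second, the independence of the countable family needs the multi-set generalization of Theorem~\ref{rm-33} (whose martingale argument already extends, as noted in its proof), together with the observation that the possibly divergent centering $\int_{\epsilon\leq|y|<1}y\,\nu(dy)$ enters only as a deterministic shift and so obstructs neither the limiting independence nor the final identification of the drift $l$.
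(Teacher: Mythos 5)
Your proposal is correct and follows essentially the same route as the paper's proof: split off the large jumps via $N_t(y\I_{\{|y|\geq 1\}})$, build the compensated small-jump martingale by an It\^o-isometry/Doob $L^2$-Cauchy argument (Corollaries~\ref{rm-35}, \ref{rm-36}, Lemma~\ref{cons-07}), identify the continuous remainder as $tl+\sqrt Q W_t$ via Theorem~\ref{spe-21}, and obtain independence from Theorem~\ref{rm-33} preserved under $L^2$-limits. The only difference is cosmetic: you work with the disjoint annuli $\{\frac{1}{n+1}\leq|y|<\frac 1n\}$ and a countable-family extension of Theorem~\ref{rm-33}, whereas the paper uses the nested sets $\{\frac 1n<|y|<1\}$ so that at each stage only three processes are involved -- which slightly simplifies the independence step you flag as your second obstacle.
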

\begin{proof}
    $1^\circ$\ \
    Set $U_n := \{\frac 1n <|y|<1\}$, $V = \{|y|\geq 1\}$, $W_n := U_n\dcup V$ and define
    $$
        X^V_t := \int_V y\,N_t(dy)
        \et
        \widetilde X^{U_n}_t := \int_{U_n} y\,N_t(dy) - t\int_{U_n} y\,\nu(dy).
    $$
    By Theorem~\ref{rm-33} $(X^V_t)_{t\geq 0}$, $(\widetilde X^{U_n}_t)_{t\geq 0}$ and $\big(X_t-X_t^{W_n}+t\int_{U_n}y\,\nu(dy)\big)_{t\geq 0}$ are independent L\'evy processes. Since
    $$
        X = (X - \widetilde X^{U_n} - X^V) + \widetilde X^{U_n} + X^V,
    $$
    the theorem follows if we can show that the three terms on the right-hand side converge separately as $n\to\infty$.

\medskip\noindent
    $2^\circ$\ \
    Lemma~\ref{rm-19} shows $\Ee \widetilde X^{U_n}_t = 0$; since $\widetilde X^{U_n}$ is a L\'evy process, it is a martingale: for $s\leq t$
    \begin{align*}
        \Ee\left( \widetilde X^{U_n}_t \:\middle|\:\Fscr_s\right)
        &\pomu{\eqref{Lindep}}{=}{\eqref{Lstat}}
            \Ee\left(\widetilde X^{U_n}_t - \widetilde X^{U_n}_s \:\middle|\:\Fscr_s\right) + \widetilde X^{U_n}_s\\
        &\omu{\eqref{Lindep}}{=}{\eqref{Lstat}}
            \Ee\left(\widetilde X^{U_n}_{t-s}\right) + \widetilde X^{U_n}_s
        = \widetilde X^{U_n}_s.
    \end{align*}
    ($\Fscr_s$ can be taken as the natural filtration of $X^{U_n}$ or $X$).
    By Doob's $L^2$ martingale inequality we find for any $t>0$ and $m< n$
    \begin{align*}
        \Ee\left(\sup_{s\leq t} \big|\widetilde X^{U_n}_s - \widetilde X^{U_m}_s\big|^2\right)
        &\leq 4\Ee\left(\big|\widetilde X^{U_n}_t - \widetilde X^{U_m}_t\big|^2\right)\\
        &= 4t\int_{\frac 1n <|y|\leq \frac 1m} |y|^2\,\nu(dy)
        \xrightarrow[m,n\to\infty]{}0.
    \end{align*}
    Therefore, the limit $\int_{0<|y|<1} y\,\big(N_t(dy)-t\nu(dy)\big) = L^2$-$\lim_{n\to\infty}\widetilde X^{U_n}_t$ exists locally uniformly (in $t$). The limit is still an $L^2$ martingale with c\`adl\`ag paths (take a locally uniformly a.s.\ convergent subsequence) and, by Lemma~\ref{cons-07}, also a L\'evy process.

\medskip\noindent
    $3^\circ$\ \
    Observe that
    $$
        (X - \widetilde X^{U_n} - X^V) - (X - \widetilde X^{U_m} - X^V)
        = \widetilde X^{U_m} - \widetilde X^{U_n},
    $$
    and so $X_t^c := L^2\text{-}\lim_{n\to\infty}(X_t - \widetilde X^{U_n}_t - X^V_t)$ exists locally uniformly (in $t$) Since, by construction $|\Delta(X_t - \widetilde X^{U_n}_t - X^V_t)|\leq \frac 1n$, it is clear that $X^c$ has a.s.\ continuous sample paths. By Lemma~\ref{cons-07} it is a L\'evy process. From Theorem~\ref{spe-21} we know that all L\'evy processes with continuous sample paths are of the form $tl + \sqrt{Q}W_t$ where $W$ is a Brownian motion, $Q\in\real^{d\times d}$ a symmetric positive semidefinite matrix and $l\in\rd$.

\medskip\noindent
    $4^\circ$\ \
    Since independence is preserved under $L^2$-limits, the decomposition \eqref{rm-e52} follows. Finally,
    $$
        \int_{|y|\geq 1} y\,N_t(dy,\omega)
        = \sum_{0<s\leq t} \Delta X_s(\omega)\I_{\{\Delta X_s(\omega) \geq 1\}} - \sum_{0<s\leq t} |\Delta X_s(\omega)|\I_{\{\Delta X_s(\omega) \leq -1\}}
    $$
    is the difference of two increasing processes, i.e.\ it is of bounded variation.
\end{proof}

\index{L\'evy--Khintchine formula}%
\begin{corollary}[L\'evy--Khintchine formula]\label{rm-53}
    Let $X$ be a L\'evy process. Then the characteristic exponent $\psi$ is given by
    \begin{equation}\label{rm-e54}
        \psi(\xi)
        = -\iup l\cdot\xi + \frac 12\xi\cdot Q\xi + \int_{y\neq 0}\left[1-\eup^{\iup y\cdot\xi}+\iup\xi\cdot y\I_{(0,1)}(|y|)\right]\nu(dy)
    \end{equation}
    where $\nu$ is the intensity measure, $l\in\rd$ and $Q\in\real^{d\times d}$ is symmetric and positive semidefinite.
\end{corollary}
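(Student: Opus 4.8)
The plan is to read the characteristic exponent straight off the L\'evy--It\^o decomposition of Theorem~\ref{rm-51}. That theorem writes
$$
    X_t = tl + \sqrt Q W_t + \int_{0<|y|<1} y\,\big(N_t(dy)-t\nu(dy)\big) + \int_{|y|\geq 1} y\,N_t(dy)
$$
as a sum of four \emph{independent} L\'evy processes. Since the characteristic exponent of a sum of independent L\'evy processes is the sum of the individual exponents (Lemma~\ref{cons-05}), it suffices to identify the four exponents and add them. The first two are immediate from Example~\ref{exlp-05}: the drift $tl$ contributes $-\iup l\cdot\xi$ and the Gaussian part $\sqrt Q W_t$ contributes $\tfrac 12\xi\cdot Q\xi$.

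For the large jumps, I would note that $\int_{|y|\geq 1} y\,N_t(dy)$ is the compound Poisson process whose jumps of size $\geq 1$ arrive with the \emph{finite} intensity measure $\nu(\cdot\cap\{|y|\geq 1\})$ (finite by Corollary~\ref{rm-35}.c)). By Theorem~\ref{exlp-13} its exponent is $\int_{|y|\geq 1}[1-\eup^{\iup y\cdot\xi}]\,\nu(dy)$, a finite integral because the integrand is bounded and $\nu$ is finite there. For the compensated small jumps I would recover the exponent from the approximating processes $\widetilde X^{U_n}_t := \int_{U_n} y\,N_t(dy) - t\int_{U_n} y\,\nu(dy)$, $U_n=\{1/n<|y|<1\}$, used in the proof of Theorem~\ref{rm-51}. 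For each $n$ the truncated integrand $y\I_{U_n}(y)$ is bounded and supported away from the origin, hence lies in $L^1(\nu)$, so Corollary~\ref{rm-35}.b) applies to $N_t(y\I_{U_n})$; together with the deterministic compensating drift this gives $\widetilde X^{U_n}$ the exponent $\int_{U_n}[1-\eup^{\iup y\cdot\xi}+\iup\xi\cdot y]\,\nu(dy)$. Letting $n\to\infty$ and using that convergence in $L^2$ (hence in distribution) forces convergence of characteristic functions, the small-jump exponent is $\int_{0<|y|<1}[1-\eup^{\iup y\cdot\xi}+\iup\xi\cdot y]\,\nu(dy)$. Summing the four pieces and merging the two jump integrals over $\{y\neq 0\}$ --- the term $\iup\xi\cdot y$ appearing only on $\{|y|<1\}$, which is exactly $\I_{(0,1)}(|y|)$ --- yields \eqref{rm-e54}.

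The one point that is genuinely more than bookkeeping is integrability, and it is precisely here that the compensation matters: neither $y\I_{(0,1)}(|y|)$ nor $y\I_{\{|y|\geq 1\}}(y)$ need belong to $L^1(\nu)$, so Corollary~\ref{rm-35}.b) cannot be applied to them directly. The large-jump integral is handled by viewing it as a genuine (finite-intensity) compound Poisson process, while the small-jump integral only converges after subtracting its compensator; the relevant limiting integrand $1-\eup^{\iup y\cdot\xi}+\iup\xi\cdot y$ is $O(|y|^2)$ near the origin by Taylor's formula and is therefore $\nu$-integrable on $\{|y|<1\}$ thanks to $\int_{y\neq 0}(|y|^2\wedge 1)\,\nu(dy)<\infty$ from Corollary~\ref{rm-35}.c). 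Once this finiteness is in hand the remaining manipulations are routine, so the decomposition of Theorem~\ref{rm-51} together with the moment and integrability bounds of Corollary~\ref{rm-35} do essentially all the work.
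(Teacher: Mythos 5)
Your proposal is correct and follows essentially the same route as the paper: both read \eqref{rm-e54} off the L\'evy--It\^o decomposition of Theorem~\ref{rm-51}, exploiting the independence of the pieces and the exponential formula \eqref{rm-e32} of Corollary~\ref{rm-35}, with the truncation $U_n=\{1/n<|y|<1\}$, subtraction of the compensator, and a passage to the limit $n\to\infty$ for the small-jump part. The only (harmless) deviation is the large-jump term, which you treat as a finite-intensity compound Poisson process via Theorem~\ref{exlp-13} where the paper applies \eqref{rm-e32} directly with $f(y)=y\I_{\{|y|\geq 1\}}(y)$; your variant is in fact slightly more careful on this point, since that $f$ need not lie in $L^1(\nu)$ as Corollary~\ref{rm-35}.b) formally requires.
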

\begin{proof}
    Since the processes appearing in the L\'evy--It\^o decomposition \eqref{rm-e52} are independent, we see
    $$
        \eup^{-\psi(\xi)}
        = \Ee\,\eup^{\iup \xi\cdot X_1}
        = \Ee\,\eup^{\iup \xi\cdot (-l + \sqrt Q W_1)}
            \cdot \Ee\,\eup^{\iup\int_{0<|y|<1} \xi\cdot y (N_1(dy)-\nu(dy))}
            \cdot \Ee\,\eup^{\iup \int_{|y|\geq 1} \xi\cdot y\,N_1(dy)}.
    $$
    Since $W$ is a standard Brownian motion,
    \begin{align*}
        \Ee\,\eup^{\iup \xi\cdot (l + \sqrt Q W_1)}
        &= \eup^{\iup l\cdot\xi - \frac 12 \xi\cdot Q\xi}.
    \intertext{Using \eqref{rm-e32} with $f(y) = y\I_{U_n}(y)$, $U_n = \{\frac 1n< |y|< 1\}$, subtracting $\int_{U_n}y\,\nu(dy)$ and letting $n\to\infty$ we get}
        \Ee\exp\left[\iup\int_{0<|y|<1} \xi\cdot y (N_1(dy)-\nu(dy))\right]
        &= \exp\left[-\int_{0<|y|<1}\left[1-\eup^{\iup y\cdot\xi}+\iup\xi\cdot y\right]\nu(dy)\right];
    \intertext{finally, \eqref{rm-e32} with $f(y) = y\I_{V}(y)$, $V= \{|y|\geq 1\}$, once again yields}
        \Ee\exp\left[\iup\int_{|y|\geq 1} \xi\cdot y\,N_1(dy)\right]
        &= \exp\left[-\int_{|y|\geq 1}\left[1-\eup^{\iup y\cdot\xi}\right]\nu(dy)\right]
    \end{align*}
    finishing the proof.
\end{proof}

\chapter{A digression: stochastic integrals}\label{si}

In this chapter we explain how one can integrate with respect to (a certain class of) random measures. Our approach is based on the notion of \emphh{random orthogonal measures} and it will include the classical It\^o integral with respect to square-integrable martingales. Throughout this chapter, $(\Omega,\Ascr,\Pp)$ is a probability space, $(\Fscr_t)_{t\geq 0}$ some filtration, $(E,\Escr)$ is a measurable space and $\Ncont$ is a (positive) measure on $(E,\Escr)$. Moreover, $\Rscr\subset\Escr$ is a semiring, i.e.\ a family of sets such that $\emptyset\in\Rscr$, for all $R,S\in\Rscr$ we have $R\cap S\in\Rscr$, and $R\setminus S$ can be represented as a finite union of disjoint sets from $\Rscr$, cf.\ \cite[Chapter 6]{schilling-mims} or \cite[Definition 5.1]{schilling-mi}. It is not difficult to check that $\Rscr_0 := \{R\in\Rscr\::\: \Ncont(R)<\infty\}$ is again a semiring.

\begin{definition}\label{si-03}
    Let $\Rscr$ be a semiring on the measure space $(E,\Escr,\Ncont)$. A \emphh{random orthogonal measure} with \emphh{control measure} $\Ncont$
\index{random orthogonal measure}%
\index{control measure}%
    is a family of random variables $N(\omega,R)\in\real$, $R\in\Rscr_0$, such that
    \begin{gather}
    \label{si-e02}
        \Ee \left[|N(\cdot,R)|^2\right] < \infty\quad \forall R\in\Rscr_0\\
    \label{si-e04}
        \Ee \left[N(\cdot,R)N(\cdot,S)\right] = \Ncont(R\cap S)\quad \forall R,S\in\Rscr_0.
    \end{gather}
\end{definition}

The following Lemma explains why $N(R)=N(\omega,R)$ is called a (random) measure.
\begin{lemma}\label{si-05}
    The random set function $R\mapsto N(R):=N(\omega,R)$, $R\in\Rscr_0$, is countably additive in $L^2$, i.e.
    \begin{equation}\label{si-e06}
        N\left(\bigdcup_{n=1}^\infty R_n\right) = L^2\text{-}\lim_{n\to\infty}\sum_{k=1}^n N(R_k)\quad\text{a.s.}
    \end{equation}
    for every sequence $(R_n)_{n\in\nat}\subset\Rscr_0$ of mutually disjoint sets such that $R:=\bigdcup_{n=1}^\infty R_n\in\Rscr_0$. In particular, $N(R\dcup S)=N(R)+N(S)$ a.s.\ for disjoint $R,S\in\Rscr_0$ such that $R\dcup S\in\Rscr_0$ and $N(\emptyset)=0$ a.s. \textup{(}notice that the exceptional set may depend on the sets $R,S$\textup{)}.
\end{lemma}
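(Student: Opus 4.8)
The plan is to reduce everything to second-moment computations, exploiting that the only structural information available is the orthogonality relation \eqref{si-e04} together with the countable additivity of the control measure $\Ncont$. The guiding principle is that for any finite linear combination $\sum_j c_j N(R_j)$ with $R_j\in\Rscr_0$, its squared $L^2$-norm has the closed form $\sum_{j,k} c_jc_k\,\Ncont(R_j\cap R_k)$, so every additivity claim becomes the assertion that a certain explicit sum of measures vanishes or converges.

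First I would settle finite additivity and $N(\emptyset)=0$. For the latter, \eqref{si-e04} gives $\Ee[N(\emptyset)^2]=\Ncont(\emptyset)=0$. For disjoint $R,S\in\Rscr_0$ with $R\dcup S\in\Rscr_0$, I would expand
\[
    \Ee\big[(N(R\dcup S)-N(R)-N(S))^2\big]
\]
into its covariance terms, replace each $\Ee[N(U)N(V)]$ by $\Ncont(U\cap V)$ via \eqref{si-e04}, and use $\Ncont(R\cap S)=\Ncont(\emptyset)=0$ together with $\Ncont(R\dcup S)=\Ncont(R)+\Ncont(S)$. The terms cancel to $0$, whence $N(R\dcup S)=N(R)+N(S)$ almost surely.

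For countable additivity, let $(R_n)\subset\Rscr_0$ be disjoint with $R:=\bigdcup_n R_n\in\Rscr_0$, and write $S_n:=\sum_{k=1}^n N(R_k)$. The key step is the identity
\[
    \Ee\big[(N(R)-S_n)^2\big]
    = \Ncont(R) - 2\sum_{k=1}^n \Ncont(R\cap R_k) + \sum_{k,l=1}^n \Ncont(R_k\cap R_l),
\]
obtained by expanding the square and applying \eqref{si-e04} term by term. Since $R_k\subset R$ and the $R_k$ are pairwise disjoint, $\Ncont(R\cap R_k)=\Ncont(R_k)$ and $\Ncont(R_k\cap R_l)=\delta_{kl}\Ncont(R_k)$, so the right-hand side collapses to $\Ncont(R)-\sum_{k=1}^n\Ncont(R_k)$. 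By the countable additivity of the positive measure $\Ncont$ this equals the tail $\sum_{k>n}\Ncont(R_k)$, which tends to $0$ as $n\to\infty$ because $\Ncont(R)<\infty$. This is exactly the $L^2$-convergence \eqref{si-e06}.

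I expect the only real subtlety to be bookkeeping about the domain of $N$: since $\Rscr_0$ is merely a semiring, the finite unions $\bigdcup_{k\leq n}R_k$ need not lie in $\Rscr$, so $N$ is \emph{not} a priori defined on them. The resolution is to phrase everything through the partial sums $S_n$ and the single admissible set $R\in\Rscr_0$, never evaluating $N$ on a union; all cross terms are legitimate because they only require $N$ on the individual sets $R_k,R\in\Rscr_0$, where \eqref{si-e04} applies. A minor point worth flagging is that the exceptional null set in the finite-additivity statement genuinely depends on $R$ and $S$, since the computation produces one almost-sure identity per pair.
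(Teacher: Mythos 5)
Your proof is correct and takes essentially the same route as the paper: expand $\Ee\big[\big(N(R)-\sum_{k=1}^n N(R_k)\big)^2\big]$ via the orthogonality relation \eqref{si-e04} into control-measure terms, which collapse to $\Ncont(R)-\sum_{k=1}^n\Ncont(R_k)\to 0$ by the $\sigma$-additivity of $\Ncont$, and get $N(\emptyset)=0$ from $\Ee[N(\emptyset)^2]=\Ncont(\emptyset)=0$. The only cosmetic difference is that you verify finite additivity by a separate direct expansion of $\Ee\big[(N(R\dcup S)-N(R)-N(S))^2\big]$, whereas the paper deduces it from the countable case by padding with empty sets, $(R_1,R_2,R_3,\dots)=(R,S,\emptyset,\emptyset,\dots)$; both are sound, and your remark about never evaluating $N$ outside $\Rscr_0$ is consistent with the lemma's hypotheses.
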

\begin{proof}
    From $R=S=\emptyset$ and $\Ee[N(\emptyset)^2]=\Ncont(\emptyset)=0$ we get $N(\emptyset)=0$ a.s. It is enough to prove \eqref{si-e06} as finite additivity follows if we take $(R_1,R_2,R_3,R_4\dots)=(R,S,\emptyset,\emptyset,\dots)$. If $R_n\in\Rscr_0$ are mutually disjoint sets such that $R:=\bigdcup_{n=1}^\infty R_n\in\Rscr_0$, then
    \begin{align*}
        \Ee &\left[ \Big(N(R) - \sum_{k=1}^n N(R_k)\Big)^2\right]\\
        &\pomu{\eqref{si-e04}}{=}{} \Ee N^2(R) + \sum_{k=1}^n \Ee N^2(R_k) - 2\sum_{k=1}^n \Ee\left[N(R)N(R_k)\right] + \sum_{j\neq k, j,k=1}^n \Ee\left[N(R_j)N(R_k)\right]\\
        &\omu{\eqref{si-e04}}{=}{} \Ncont(R) - \sum_{k=1}^n \Ncont(R_k) \xrightarrow[n\to\infty]{}0
    \end{align*}
    where we use the $\sigma$-additivity of the measure $\Ncont$.
\end{proof}

\begin{example}\label{si-07}
\textup{a)}\ \
\index{Brownian motion!white noise}\index{white noise}%
    \emphh{(White noise)} Let $\Rscr=\{(s,t] \::\: 0\leq s<t<\infty\}$ and $\Ncont = \lambda$ be Lebesgue measure on $(0,\infty)$. Clearly, $\Rscr=\Rscr_0$ is a semiring. Let $W=(W_t)_{t\geq 0}$ be a one-dimensional standard Brownian motion. The random set function
    $$
        N(\omega,(s,t]) := W_t(\omega)-W_s(\omega),\quad 0\leq s<t<\infty
    $$
    is a random orthogonal measure with control measure $\lambda$. This follows at once from
    $$
        \Ee \big[(W_t-W_s)(W_v-W_u)\big] = t\wedge v - s\vee u= \lambda \big[(s,t]\cap (u,v]\big]
    $$
    for all $0\leq s<t<\infty$ and $0\leq u<v<\infty$.

    Mind, however, that $N$ is \emphh{not $\sigma$-additive}. To see this, take $R_n := (1/(n+1),1/n]$, where $n\in\nat$, and observe that $\bigdcup_n R_n = (0,1]$. Since $W$ has stationary and independent increments, and scales like $W_t \sim \sqrt t W_1$, we have
    \begin{align*}
        \Ee \exp\left[-\sum\nolimits_{n=1}^\infty |N(R_n)|\right]
        &\pomu{\text{Jensen's}}{=}{\text{ineq.}} \Ee \exp\left[-\sum\nolimits_{n=1}^\infty |W_{1/(n+1)} - W_{1/n}|\right]\\
        &\pomu{\text{Jensen's}}{=}{\text{ineq.}} \prod\nolimits_{n=1}^\infty \Ee \exp\left[-(n(n+1))^{-1/2}|W_1|\right]\\
        &\omu{\text{Jensen's}}{\leq}{\text{ineq.}}
        \prod\nolimits_{n=1}^\infty \alpha^{(n(n+1))^{-1/2}},\quad \alpha:= \Ee \eup^{-|W_1|}\in (0,1).
    \end{align*}
    As the series $\sum_{n=1}^\infty (n(n+1))^{-1/2}$ diverges, we get $\Ee \exp\left[-\sum_{n=1}^\infty |N(R_n)|\right]=0$ which means that $\sum_{n=1}^\infty |N(\omega,R_n)|=\infty$ for almost all $\omega$. This shows that $N(\cdot)$ cannot be countably additive. Indeed, countable additivity implies that the series
\index{white noise!is not $\sigma$-additive}%
    $$
        N\left(\omega,\bigcup_{n=1}^\infty R_n\right) = \sum_{n=1}^\infty N(\omega,R_n)
    $$
    converges. The left-hand side, hence the summation, is independent under rearrangements. This, however, entails absolute convergence of the series $\sum_{n=1}^\infty |N(\omega,R_n)|<\infty$ which does not hold as we have seen above.

\medskip\noindent\textup{b)}\ \
\index{second-order process}%
    \emphh{(2nd order orthogonal noise)} Let $X = (X_t)_{t\in T}$ be a complex-valued stochastic process defined on a bounded or unbounded interval $T\subset\real$. We assume that $X$ has a.s.\ c\`adl\`ag paths. If $\Ee(|X_t|^2)<\infty$, we call $X$ a \emphh{second-order process}; many properties of $X$ are characterized by the \emphh{correlation function}
\index{correlation function}%
    $K(s,t)=\Ee\big(X_s\overline X_t\big)$, $s,t\in T$.

    If $\Ee\big[(X_t-X_s)(\overline X_v - \overline X_u)\big]=0$ for all $s\leq t\leq u\leq v$, $s,t,u,v\in T$, then $X$ is said to have \emphh{orthogonal increments}.
\index{orthogonal increments}%
    Fix $t_0\in T$ and define for all $t\in T$
    $$
        F(t) :=
        \begin{cases}
            \Ee(|X_t-X_{t_0}|^2), &\text{if \ } t\geq t_0,\\
            -\Ee(|X_{t_0}-X_t|^2), &\text{if \ } t\leq t_0.
        \end{cases}
    $$
    Clearly, $F$ is increasing and, since $t\mapsto X_t$ is a.s.\ right-continuous, it is also right-continuous. Moreover,
    \begin{equation}
        F(t)-F(s) = \Ee(|X_t-X_s|^2)\quad\fa s\leq t,\; s,t\in T.
    \end{equation}
    To see this, we assume without loss of generality that $s\leq t_0\leq t$. We have
    \begin{align*}
        F(t)-F(s)
        &\pomu{\text{orth.}}{=}{\text{incr.}} \Ee(|X_t-X_{t_0}|^2) - \Ee(|X_s-X_{t_0}|^2)\\
        &\pomu{\text{orth.}}{=}{\text{incr.}} \Ee(|(X_t-X_s) + (X_s - X_{t_0})|^2) - \Ee(|X_s-X_{t_0}|^2)\\
        &\omu{\text{orth.}}{=}{\text{incr.}} \Ee(|X_t-X_s|^2).
    \end{align*}
    This shows that $\mu(s,t] := F(t)-F(s)$ defines a measure on $\Rscr=\Rscr_0 = \{(s,t] \::\: -\infty < s<t < \infty, \, s,t\in T\}$, which is the control measure of $N(\omega,(s,t]) := X_t(\omega)-X_s(\omega)$. In fact, for $s <t, u<v$, $s,t,u,v\in T$, we have
    \begin{align*}
        X_t-X_s &=
            \big(X_t-X_{t\wedge v}\big)
            +\big(X_{t\wedge v}-X_{s\vee u}\big)
            +\big(X_{s\vee u}-X_s\big)\\
        \overline X_v-\overline X_u &=
            \big(\overline X_u-\overline X_{t\wedge v}\big)
            +\big(\overline X_{t\wedge v}-\overline X_{s\vee u}\big)
            +\big(\overline X_{s\vee u}-\overline X_u\big).
    \end{align*}
    Using the orthogonality of the increments we get
    \begin{align*}
        \Ee\big[(X_t-X_s)(\overline X_v - \overline X_u)\big]
        &= \Ee\big[\big(X_{t\wedge v}-X_{s\vee u}\big) \big(\overline X_{t\wedge v}-\overline X_{s\vee u}\big)\big]\\
        &= F(t\wedge v)-F(s\vee u) = \mu\big((s,t]\cap (u,v]\big),
    \end{align*}
    i.e.\ $N(\omega,\bullet)$ is a random orthogonal measure.

\medskip\noindent\textup{c)}\ \
    \emphh{(Martingale noise)} Let $M=(M_t)_{t\geq 0}$ be a square-integrable martingale with respect to the filtration $(\Fscr_t)_{t\geq 0}$, $M_0=0$, and with c\`adl\`ag paths.
\index{martingale noise}%
    Denote by $\langle M\rangle$ the predictable quadratic variation, i.e.\ the unique $(\langle M\rangle_0:=0)$ increasing predictable process such that $M^2-\langle M\rangle$ is a martingale. The random set function
    $$
        N(\omega,(s,t]):= M_t(\omega)-M_s(\omega), \quad s\leq t,
    $$
    is a random orthogonal measure on $\Rscr=\{(s,t] \::\: 0\leq s<t<\infty\}$ with control measure $\Ncont(s,t] = \Ee (\langle M\rangle_t-\langle M\rangle_s)$. This follows immediately from the tower property of conditional expectation
    $$
        \Ee [M_tM_v]
        \omu{\text{tower}}{=}{} \Ee [M_t \Ee(M_v \mid \Fscr_t)]
        = \Ee [M_t^2]
        = \Ee \langle M\rangle_t
        \text{\ \ if\ \ } t\leq v
    $$
    which, in turn, gives for all $0\leq s<t$ and $0\leq u< v$
    \begin{align*}
        \Ee[(M_t-M_s)(M_v-M_u)]
        &= \Ee\langle M\rangle_{t\wedge v} - \Ee\langle M\rangle_{s\wedge v} - \Ee\langle M\rangle_{t\wedge u} + \Ee\langle M\rangle_{s\wedge u}\\
        &= \Ncont\big((s,t]\cap (0,v]\big) - \Ncont\big((s,t]\cap (0,u]\big)\\
        &= \Ncont\big((s,t]\cap (u,v]\big).
    \end{align*}

\medskip\noindent\textup{d)}\ \
    \emphh{(Poisson random measure)} Let $X$ be a $d$-dimensional L\'evy process,
\index{Poisson~random measure}%
    $$
        \Sscr := \big\{B\in\Bscr(\rd)\::\: 0\notin\overline{B}\big\},\quad
        \Rscr := \big\{(s,t]\times B \::\: 0\leq s < t <\infty,\; B\in\Sscr\big\},
    $$
    and $N_t(B)$ the jump measure (Definition~\ref{rm-03}). The random set function
    $$
        \Ntilde(\omega,(s,t]\times B) := [N_t(\omega,B) - t\nu(B)] - [N_s(\omega,B) - s\nu(B)] ,\quad R = (s,t]\times B\in\Rscr,
    $$
    is a random orthogonal measure with control measure $\lambda\times\nu$ where $\lambda$ is Lebesgue measure on $(0,\infty)$ and $\nu$ is the L\'evy measure of $X$. Indeed, by definition $\Rscr = \Rscr_0$, and it is not hard to see that $\Rscr$ is a semiring\footnote{Both $\Sscr$ and $\Iscr := \{(s,t]\::\: 0\leq s<t<\infty\}$ are semirings, and so is their cartesian product $\Rscr = \Iscr\times\Sscr$,
    see \cite[Lemma 13.1]{schilling-mims} or \cite[Lemma 15.1]{schilling-mi} for the straightforward proof.}.

    Set $\Ntilde_t(B):= \Ntilde((0,t]\times B)$ and let $B,C\in\Sscr$, $t,v\geq 0$.
    As in the proof of Corollary~\ref{rm-36} we have
    \begin{equation*}
        \Ee \big[\Ntilde_t(B)\Ntilde_t(C)\big] = t\nu(B\cap C).
    \end{equation*}
    Since $\Sscr$ is a semiring, we get $B = (B\cap C)\dcup (B\setminus C) = (B\cap C)\dcup B_1\dcup\dots\dcup B_n$ with finitely many mutually disjoint $B_k\in\Sscr$ such that $B_k\subset B\setminus C$.

    The processes $\Ntilde(B_k)$ and $\Ntilde(C)$ are independent (Corollary~\ref{rm-35}) and centered. Therefore we have for $t\leq v$
    \begin{align*}
        \Ee \big[\Ntilde_t(B)\Ntilde_v(C)\big]
        &= \Ee \big[\Ntilde_t(B\cap C)\Ntilde_v(C)\big] + \sum_{k=1}^n\Ee \big[\Ntilde_t(B_k)\Ntilde_v(C)\big]\\
        &= \Ee \big[\Ntilde_t(B\cap C) \Ntilde_v(C)\big] + \sum_{k=1}^n\Ee \Ntilde_t(B_k)\cdot \Ee\Ntilde_v(C)\\
        &= \Ee \big[\Ntilde_t(B\cap C)\Ntilde_v(C)\big].
    \end{align*}
    Use the same argument over again, as well as the fact that $\Ntilde_t(B\cap C)$ has independent and centered increments (Lemma~\ref{rm-13}), to get
    \begin{equation}\label{si-e08}\begin{aligned}
        \Ee \big[&\Ntilde_t(B)\Ntilde_v(C)\big]\\
        &= \Ee \big[\Ntilde_t(B\cap C)\Ntilde_v(B\cap C)\big]\\
        &= \smash[t]{\Ee \big[\Ntilde_t(B\cap C)\Ntilde_t(B\cap C)\big] + \overbrace{\Ee \big[\Ntilde_t(B\cap C)\big\{\Ntilde_v(B\cap C)-\Ntilde_t(B\cap C)\big\}\big]}^{=\Ee\Ntilde_t(B\cap C)\, \Ee[\Ntilde_v(B\cap C)-\Ntilde_t(B\cap C)] = 0}}\\
        &= \Ee \big[\Ntilde_t(B\cap C)\Ntilde_t(B\cap C)\big]\\
        &= t\nu(B\cap C)\\
        &= \lambda((0,t]\cap (0,v])\nu(B\cap C).
    \end{aligned}\end{equation}
    For $s\leq t$, $u\leq v$ and $B,C\in\Sscr$ a lengthy, but otherwise completely elementary, calculation based on \eqref{si-e08} shows
    $$
        \Ee\big[\Ntilde((s,t]\times B)\Ntilde((u,v]\times C)\big]
        = \lambda((s,t]\cap(u,v])\nu(B\cap C).
    $$

\medskip\noindent\textup{e)}\ \
    \emphh{(Space-time white noise)} Let $\Rscr := \left\{(0,t]\times B\::\: t> 0,\; B\in\Bscr(\rd)\right\}$ and $\Ncont = \lambda$ Le\-bes\-gue measure on the half-space $\mathds H^+ := [0,\infty)\times\rd$.
\index{space-time noise}%
\index{white noise}%

    Consider the mean-zero, real-valued Gaussian process $(\mathds W(R))_{R\in\Bscr(\mathds H^+)}$ whose covariance function is given by $\Cov(\mathds W(R)\mathds W(S)) = \lambda(R\cap S)$.\footnote{The map $(R,S)\mapsto\lambda(R\cap S)$ is positive semidefinite, i.e.\ for $R_1,\dots, R_n\in\Bscr(\mathds H^+)$ and $\xi_1, \dots,\xi_n \in\real$
    $$
        \sum_{j,k=1}^n \xi_j\xi_k \lambda(R_j\cap R_k)
        =\sum_{j,k=1}^n \int \xi_j\I_{R_j}(x) \xi_k\I_{R_k}(x)\,\lambda(dx)
        = \int\Big(\sum_{k=1}^n  \xi_k\I_{R_k}(x)\Big)^2 \lambda(dx)
        \geq 0.
    $$}
    By its very definition $\mathds W(R)$ is a random orthogonal measure on $\Rscr_0$ with control measure $\lambda$.
\end{example}
We will now define a stochastic integral in the spirit of It\^o's original construction.
\begin{definition}\label{si-11}
\index{stochastic integral}\index{random orthogonal measure!stochastic integral}%
    Let $\Rscr$ be a semiring and $\Rscr_0 = \{R\in\Rscr\::\: \Ncont(R)<\infty\}$. A \emphh{simple function} is a deterministic function of the form
    \begin{equation}\label{si-e10}
        f(x) = \sum_{k=1}^n c_k\I_{R_k}(x),\quad n\in\nat,\; c_k\in\real,\; R_k\in\Rscr_0.
    \end{equation}
\end{definition}
Intuitively, $I_N(f) = \sum_{k=1}^n c_k N(R_k)$ should be the stochastic integral of a simple function $f$.
The only problem is the well-definedness. Since a random orthogonal measure is a.s.\ finitely additive, the following lemma has exactly the same proof as the usual well-definedness result for the Lebesgue integral of a step function, see e.g.\ Schilling~\cite[Lemma~9.1]{schilling-mims} or \cite[Lemma~8.1]{schilling-mi}; note that finite unions of null sets are again null sets.
\begin{lemma}\label{si-13}
    Let $f$ be a simple function and assume that $f = \sum_{k=1}^n c_k\I_{R_k} = \sum_{j=1}^m b_j\I_{S_j}$ has two representations as step-function. Then
    $$
        \sum_{k=1}^n c_k N(R_k)
        = \sum_{j=1}^m b_j N(S_j)
        \quad\text{a.s.}
    $$
\end{lemma}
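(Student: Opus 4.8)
The plan is to mirror the classical proof of well-definedness for the Lebesgue integral of step functions, exploiting the fact (Lemma~\ref{si-05}) that a random orthogonal measure is almost surely finitely additive on $\Rscr_0$. The only structural subtlety compared to the deterministic case is that the exceptional null set in finite additivity may depend on the particular sets involved; since we deal with only finitely many sets $R_k, S_j$, there are only finitely many additivity relations to invoke, so their union is still a null set and can be discarded.

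First I would reduce to a common refinement. Since $\Rscr_0$ is a semiring, for each pair $(k,j)$ the intersection $R_k\cap S_j$ lies in $\Rscr_0$, and the differences $R_k\setminus S_j$, $S_j\setminus R_k$ can be written as finite disjoint unions of $\Rscr_0$-sets. Using these one constructs a finite family of pairwise disjoint sets $T_1,\dots,T_p\in\Rscr_0$ such that every $R_k$ and every $S_j$ is a finite disjoint union of $T_i$'s. By finite additivity of $N(\omega,\cdot)$ — valid simultaneously for all the finitely many decompositions involved, off a single null set — we have almost surely $N(R_k)=\sum_{i:\,T_i\subset R_k} N(T_i)$ and likewise $N(S_j)=\sum_{i:\,T_i\subset S_j} N(T_i)$.

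Next I would compare coefficients on the common refinement. Because the $T_i$ are disjoint and every point of $\supp f$ lies in exactly one $T_i$, the two representations of $f$ force, for each $i$, the identity $\sum_{k:\,T_i\subset R_k} c_k = \sum_{j:\,T_i\subset S_j} b_j$ (both equal the value of $f$ on $T_i$). Substituting the additive expansions and interchanging the order of the finite sums then gives, almost surely,
\begin{equation*}
    \sum_{k=1}^n c_k N(R_k)
    = \sum_{i=1}^p \Big(\sum_{k:\,T_i\subset R_k} c_k\Big) N(T_i)
    = \sum_{i=1}^p \Big(\sum_{j:\,T_i\subset S_j} b_j\Big) N(T_i)
    = \sum_{j=1}^m b_j N(S_j).
\end{equation*}

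I do not expect a genuine obstacle here; the statement is essentially bookkeeping once finite additivity is in hand. The one point demanding care — and the only place the randomness enters — is the handling of the exceptional sets: I would make explicit that finite additivity holds off a null set for each of the finitely many decompositions used, so that a single null set (the finite union) can be excluded, after which all manipulations are pointwise-deterministic identities between finite sums of real-valued random variables. The construction of the disjoint refinement $T_1,\dots,T_p$ from the semiring axioms is standard (cf.\ \cite[Lemma~9.1]{schilling-mims}) and I would simply cite it rather than reproduce the combinatorial details.
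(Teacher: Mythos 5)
Your proof is correct and follows exactly the route the paper takes: the paper does not spell the argument out but explicitly reduces Lemma~\ref{si-13} to the classical well-definedness proof for the Lebesgue integral of step functions (citing \cite[Lemma~9.1]{schilling-mims}), relying on the a.s.\ finite additivity from Lemma~\ref{si-05} and the remark that finite unions of null sets are null. Your explicit construction of the common disjoint refinement and your careful handling of the finitely many exceptional null sets is precisely the intended argument, written out in full.
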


\begin{definition}\label{si-15}
    Let $N(R)$, $R\in\Rscr_0$, be a random orthogonal measure with control measure $\Ncont$. The \emphh{stochastic integral} of a simple function $f$ given by \eqref{si-e10} is the random variable
    \begin{equation}\label{si-e12}
        I_N(\omega,f) := \sum_{k=1}^n c_k N(\omega,R_k).
    \end{equation}
\end{definition}
The following properties of the stochastic integral are more or less immediate from the definition.
\begin{lemma}\label{si-17}
    Let $N(R)$, $R\in\Rscr_0$, be a random orthogonal measure with control measure $\Ncont$, $f,g$ simple functions, and $\alpha, \beta\in \real$.
    \begin{enumerate}
    \item[\textup{a)}]
        $I_N(\I_R) = N(R)$ for all $R\in\Rscr_0$;
    \item[\textup{b)}]
        $S\mapsto I_N(\I_S)$ extends $N$ uniquely to $S\in\rho(\Rscr_0)$, the ring generated by $\Rscr_0$;\footnote{A ring is a family of sets which contains $\emptyset$ and which is stable under unions and differences of finitely many sets. Since $R\cap S = R\setminus (R\setminus S)$, it is automatically stable under finite intersections. The ring generated by $\Rscr_0$ is the smallest ring containing $\Rscr_0$.}
    \item[\textup{c)}]
        $I_N(\alpha f + \beta g) = \alpha I_N(f) + \beta I_N(g)$; \hfill \textup{(}linearity\textup{)}
    \item[\textup{d)}]
        $\Ee \left[I_N(f)^2\right] = \int f^2\,d\Ncont$; \hfill \textup{(}It\^o's isometry\textup{)}
    \end{enumerate}
\end{lemma}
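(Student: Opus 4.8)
The plan is to dispatch the four claims by unwinding Definition~\ref{si-15}, relying throughout on the well-definedness furnished by Lemma~\ref{si-13}; I would treat them in the order a), c), b), d). Claim a) is immediate: viewing $\I_R$ as the simple function \eqref{si-e10} with $n=1$, $c_1=1$, $R_1=R$, formula \eqref{si-e12} gives $I_N(\I_R)=N(R)$. For the linearity c) I would write $f=\sum_{k=1}^n c_k\I_{R_k}$ and $g=\sum_{j=1}^m b_j\I_{S_j}$; then $\alpha f+\beta g=\sum_k \alpha c_k\I_{R_k}+\sum_j \beta b_j\I_{S_j}$ is again a simple function (concatenate the two lists of sets and coefficients), and applying \eqref{si-e12} to this representation yields $\sum_k \alpha c_k N(R_k)+\sum_j \beta b_j N(S_j)=\alpha I_N(f)+\beta I_N(g)$. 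Here it is Lemma~\ref{si-13} that guarantees the value does not depend on the (generally non-disjoint) representation chosen.

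For b) I would exploit the semiring structure of $\Rscr_0$. Every set $S$ in the generated ring $\rho(\Rscr_0)$ is a finite disjoint union $S=\bigdcup_{k=1}^n R_k$ with $R_k\in\Rscr_0$, so $\I_S=\sum_k\I_{R_k}$ is simple and $I_N(\I_S)=\sum_k N(R_k)$; by Lemma~\ref{si-13} this value is independent of the chosen decomposition, so $S\mapsto I_N(\I_S)$ is a well-defined set function on $\rho(\Rscr_0)$. It agrees with $N$ on $\Rscr_0$ by a) and is finitely additive by c). Uniqueness is then the standard fact that a finitely additive set function on a ring is determined by its values on a generating semiring, since additivity forces its value on every finite disjoint union.

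Claim d) is the only genuine computation. I would first invoke Lemma~\ref{si-13} to pass to a representation $f=\sum_{k=1}^n c_k\I_{R_k}$ with the $R_k\in\Rscr_0$ pairwise disjoint (possible because $\Rscr_0$ is a semiring). Expanding the square and using bilinearity of the expectation together with \eqref{si-e04},
\[
    \Ee\big[I_N(f)^2\big]=\sum_{k,l=1}^n c_kc_l\,\Ee\big[N(R_k)N(R_l)\big]=\sum_{k,l=1}^n c_kc_l\,\Ncont(R_k\cap R_l).
\]
For disjoint sets $R_k\cap R_l=\emptyset$ when $k\neq l$, so only the diagonal survives and the right-hand side reduces to $\sum_k c_k^2\,\Ncont(R_k)$. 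On the other hand, with disjoint sets $f^2=\sum_k c_k^2\I_{R_k}$, whence $\int f^2\,d\Ncont=\sum_k c_k^2\,\Ncont(R_k)$, and the two agree.

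The one point to watch is precisely this reduction to disjoint sets, and the proviso that $R_k\cap R_l\in\Rscr_0$ whenever we invoke \eqref{si-e04}: this holds because $R_k\cap R_l$ lies in the semiring $\Rscr$ and is contained in $R_k$, so $\Ncont(R_k\cap R_l)\leq\Ncont(R_k)<\infty$. In fact, once one notes this, the isometry holds for an arbitrary representation without any disjointness reduction, since the off-diagonal contributions $\sum_{k\neq l}c_kc_l\Ncont(R_k\cap R_l)$ appear identically on both sides (the right-hand integrand being $f^2=\sum_{k,l}c_kc_l\I_{R_k\cap R_l}$); the disjoint choice merely makes the bookkeeping transparent.
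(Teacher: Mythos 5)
Your proposal is correct and takes essentially the same route as the paper: a) and c) are dismissed as immediate, b) is handled via finite disjoint unions of sets from the generating semiring, and for d) the paper's proof is precisely your closing observation — it expands $\Ee\big[I_N(f)^2\big]=\sum_{j,k}c_jc_k\,\Ncont(R_j\cap R_k)$ for an \emph{arbitrary} representation and identifies this with $\int f^2\,d\Ncont$ via $\I_{R_j}\I_{R_k}=\I_{R_j\cap R_k}$, with no disjointification. Your preliminary reduction to disjoint sets is valid (and your check that $R_k\cap R_l\in\Rscr_0$ is the right point to flag), but, as you note yourself, it is an unnecessary detour.
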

\begin{proof}
    The properties a) and c) are clear. For b) we note that $\rho(\Rscr_0)$ can be constructed from $\Rscr_0$ by adding all possible finite unions of (disjoint) sets (see e.g.\ \cite[Proof of Theorem 6.1, Step 2]{schilling-mims}). In order to see d), we use \eqref{si-e10} and the orthogonality relation $\Ee \left[N(R_j)N(R_k)\right] = \Ncont(R_j\cap R_k)$ to get
    \begin{align*}
        \Ee \big[I_N(f)^2\big]
        &= \sum_{j,k=1}^n c_j c_k \Ee \left[N(R_j)N(R_k)\right]\\
        &= \sum_{j,k=1}^n c_j c_k \Ncont(R_j\cap R_k)\\
        &= \int \sum_{j,k=1}^n c_j \I_{R_j}(x)\, c_k \I_{R_k}(x) \,\Ncont(dx)\\
        &= \int f^2(x)\,\Ncont(dx).
    \qedhere
    \end{align*}
\end{proof}
It\^o's isometry now allows us to extend the stochastic integral to the $L^2(\Ncont)$-closure of the simple functions:  $L^2(E,\sigma(\Rscr),\Ncont)$. For this take $f\in L^2(E,\sigma(\Rscr),\Ncont)$ and any approximating sequence $(f_n)_{n\in\nat}$ of simple functions, i.e.
$$
    \lim_{n\to\infty} \int |f-f_n|^2\,d\Ncont = 0.
$$
In particular, $(f_n)_{n\in\nat}$ is an $L^2(\Ncont)$ Cauchy sequence, and It\^o's isometry shows that the random variables $(I_N(f_n))_{n\in\nat}$ are a Cauchy sequence in $L^2(\Pp)$:
$$
    \Ee\left[(I_N(f_n)-I_N(f_m))^2\right]
    = \Ee\left[ I_N(f_n-f_m)^2\right]
    = \int (f_n-f_m)^2\,d\Ncont
    \xrightarrow[m,n\to\infty]{}0.
$$
Because of the completeness of $L^2(\Pp)$, the limit $\lim_{n\to\infty} I_N(f_n)$ exists and, by a standard argument, it does not depend on the approximating sequence.
\begin{definition}\label{si-19}
    Let $N(R)$, $R\in\Rscr_0$, be a random orthogonal measure with control measure $\Ncont$. The \emphh{stochastic integral} of a function $f\in L^2(E,\sigma(\Rscr),\Ncont)$ is the random variable
\index{stochastic integral}%
\index{random orthogonal measure!stochastic integral}%
    \begin{equation}\label{si-e20}
        \int f(x)\,N(\omega,dx) := L^2(\Pp)\text{-}\lim_{n\to\infty} I_N(\omega,f_n)
    \end{equation}
    where $(f_n)_{n\in\nat}$ is any sequence of simple functions which approximate $f$ in $L^2(\Ncont)$.
\end{definition}
It is immediate from the definition of the stochastic integral, that $f\mapsto \int f\,dN$ is linear and enjoys \emphh{It\^o's isometry}
\index{It\^o isometry}%
\begin{equation}\label{si-e22}
    \Ee \left[\Big(\int f(x)\,N(dx)\Big)^2\right] = \int f^2(x)\,\Ncont(dx).
\end{equation}
\begin{remark}\label{si-21}
\index{space-time noise}%
    Assume that the random orthogonal measure $N$ is of \emphh{space-time type}, i.e.\ $E = (0,\infty)\times X$ where $(X,\Xscr)$ is some measurable space, and $\Rscr = \{(0,t]\times B\::\: B\in\Sscr\}$ where $\Sscr$ is a semiring in $\Xscr$. If for $B\in\Sscr$ the stochastic process $N_t(B):= N((0,t]\times B)$ is a martingale w.r.t.\ the filtration $\Fscr_t := \sigma(N((0,s]\times B), \:s\leq t, B\in\Sscr)$, then
    $$
        N_t(f) := \iint \I_{(0,t]}(s)f(x)\,N(ds,dx),\quad \I_{(0,t]}\otimes f\in L^2(\Ncont),\; t\geq 0,
    $$
    is again a(n $L^2$-)martingale. For simple functions $f$ this follows immediately from the fact that sums and differences of finitely many martingales (with a common filtration) are again a martingale. Since $L^2(\Pp)$-limits preserve the martingale property, the claim follows.
\qedhere
\end{remark}
At first sight, the stochastic integral defined in \ref{si-19} looks rather restrictive since we can only integrate deterministic functions $f$. As all randomness can be put into the random orthogonal measure, we have considerable flexibility, and the following construction shows that Definition~\ref{si-19} covers pretty much the most general stochastic integrals.

\bigskip\noindent
\textbf{From now on we assume} that
\begin{itemize}
\item the random measure $N(dt,dx)$ on $(E,\Escr)=((0,\infty)\times X,\:\Bscr(\rd)\otimes\Xscr)$ is of space-time type, cf.\ Remark~\ref{si-21}, with control measure $\Ncont(dt,dx)$;
\item $(\Fscr_t)_{t\geq 0}$ is some filtration in $(\Omega,\Ascr,\Pp)$.
\end{itemize}
Let $\tau$ be a stopping time; the set $\rrbracket 0,\tau\rrbracket := \{(\omega,t)\::\: 0<t\leq\tau(\omega)\}$ is called \emphh{stochastic interval}.
\index{stochastic interval}%
We define
\begin{itemize}
    \item $\Ecirc:= \Omega\times (0,\infty)\times X$;
    \item $\Eecirc := \Pscr\otimes\Xscr$ where $\Pscr$ is the predictable $\sigma$-algebra in $\Omega\times(0,\infty)$, see Definition~\ref{app-61} in the appendix;
    \item $\Rrcirc := \left\{\rrbracket 0,\tau\rrbracket\times B\::\:\tau\text{\ bounded stopping time},\; B\in\Sscr\right\}$;
    \item $\Ncirccont(d\omega,dt,dx) := \Pp(d\omega)\Ncont(dt,dx)$ as control measure; 
    \item $\Ncirc\big(\omega,\rrbracket 0,\tau\rrbracket\times B\big) := N\big(\omega,(0,\tau(\omega)]\times B\big)$ as random orthogonal measure\footnote{To see that it is indeed a random orthogonal measure, use a discrete approximation of $\tau$.}.
\end{itemize}

\begin{lemma}\label{si-41}
    Let $\Ncirc$, $\Rrcirc_0$ and $\Ncirccont$ be as above. The $\Rrcirc_0$-simple processes
    $$
        f(\omega,t,x) := \sum_{k=1}^n c_k \I_{\rrbracket 0,\tau_k\rrbracket}(\omega,t)\I_{B_k}(x),
        \qquad c_k\in\real,\;\; \rrbracket 0,\tau_k\rrbracket\times B_k\in\Rrcirc_0
    $$
    are $L^2(\Ncirccont)$-dense in $L^2(\Ecirc,\Pscr\otimes\sigma(\Sscr),\Ncirccont)$.
\end{lemma}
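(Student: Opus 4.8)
The assertion is a density statement in the Hilbert space $L^2(\Ecirc,\Pscr\otimes\sigma(\Sscr),\Ncirccont)$, and I would prove it with the standard $\pi$--$\lambda$ (Dynkin) machinery, in the same spirit as the proof that step functions over a semiring are dense in $L^2$. Write $V$ for the $L^2(\Ncirccont)$-closure of the linear span of the $\Rrcirc_0$-simple processes; then $V$ is a closed linear subspace. Since the indicators $\I_A$ of sets $A\in\Pscr\otimes\sigma(\Sscr)$ with $\Ncirccont(A)<\infty$ have dense linear span in $L^2(\Ncirccont)$, it suffices to show that $\I_A\in V$ for every such $A$. The plan is therefore to verify this first for a generating $\pi$-system and then to propagate it to all finite-measure sets.

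For the structural input I would record two facts. First, $\Rrcirc$ is $\cap$-stable: since $\Sscr$ is a semiring and $\sigma\wedge\tau$ is again a bounded stopping time,
\[
  (\rrbracket 0,\sigma\rrbracket\times B)\cap(\rrbracket 0,\tau\rrbracket\times C)
  = \rrbracket 0,\sigma\wedge\tau\rrbracket\times(B\cap C)\in\Rrcirc .
\]
Second, $\sigma(\Rrcirc)=\Pscr\otimes\sigma(\Sscr)$: the stochastic intervals $\rrbracket 0,\tau\rrbracket$ with $\tau$ a bounded stopping time generate the predictable $\sigma$-algebra $\Pscr$ (Definition~\ref{app-61}; e.g.\ for $F\in\Fscr_s$ and $s<t\leq T$ the set $F\times(s,t]$ equals $\rrbracket 0,\tau\rrbracket\setminus\rrbracket 0,\varrho\rrbracket$ with $\tau=t\I_F+T\I_{F^c}$ and $\varrho=s\I_F+T\I_{F^c}$), while $\Sscr$ generates $\sigma(\Sscr)$; as both generating families contain exhausting sequences, their products generate the product $\sigma$-algebra. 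Using $\sigma$-finiteness of $\Ncont$ I would also fix an exhausting sequence $E_m:=\rrbracket 0,m\rrbracket\times X_m\uparrow\Ecirc$ with $E_m\in\Rrcirc_0$ (take $X_m\in\Sscr$, $X_m\uparrow X$, $\Ncont((0,m]\times X_m)<\infty$).

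Finally I would run the good-sets argument on each finite-measure block $E_m$. Put $\mathscr D_m:=\{A\subseteq E_m : A\in\Pscr\otimes\sigma(\Sscr),\ \I_A\in V\}$. It is a $\lambda$-system: $E_m\in\mathscr D_m$ because $E_m\in\Rrcirc_0$; it is closed under proper differences because $V$ is a vector space ($\I_{A'\setminus A}=\I_{A'}-\I_A$); and it is closed under increasing limits because $\I_{A_n}\to\I_A$ in $L^2$ (dominated by $\I_{E_m}$) and $V$ is closed. By $\cap$-stability, every $R\cap E_m$ with $R\in\Rrcirc$ lies in $\Rrcirc_0$, hence $\I_{R\cap E_m}\in V$, so the $\pi$-system $\{R\cap E_m : R\in\Rrcirc\}$ is contained in $\mathscr D_m$ and generates the trace $(\Pscr\otimes\sigma(\Sscr))\cap E_m$; Dynkin's theorem then yields $\mathscr D_m=(\Pscr\otimes\sigma(\Sscr))\cap E_m$. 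For arbitrary $A$ with $\Ncirccont(A)<\infty$ the sets $A\cap E_m\uparrow A$ satisfy $\I_{A\cap E_m}\in V$ and $\I_{A\cap E_m}\to\I_A$ in $L^2$; as $V$ is closed, $\I_A\in V$, which is what remained to be shown. The one genuinely non-routine ingredient is the identification of $\Pscr$ as the $\sigma$-algebra generated by the bounded stochastic intervals $\rrbracket 0,\tau\rrbracket$ (the appendix's Definition~\ref{app-61}); everything else is $\sigma$-finite bookkeeping and Dynkin's lemma.
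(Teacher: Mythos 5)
Your proof is correct and takes essentially the same route as the paper: the paper's entire proof consists of the observation that $\Pscr\otimes\sigma(\Sscr)$ is generated by the sets $\rrbracket 0,\tau\rrbracket\times B$, $\tau$ a bounded stopping time, $B\in\Sscr$ (Theorem~\ref{app-63}), followed by an appeal to ``standard arguments from measure and integration'', and your $\cap$-stability check, Dynkin-system argument and $\sigma$-finite exhaustion are precisely a worked-out version of those standard arguments. The only point worth noting is that the exhausting sequence $X_m\in\Sscr$, $X_m\uparrow X$, $\Ncont((0,m]\times X_m)<\infty$ is an implicit standing ($\sigma$-finiteness) assumption, which the paper leaves just as tacit but which holds in all the examples treated (e.g.\ the Poisson jump measure with $X_m=\{|y|>1/m\}$).
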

\begin{proof}
    This follows from standard arguments from measure and integration; notice that the predictable $\sigma$-algebra $\Pscr\otimes\sigma(\Sscr)$ is generated by sets of the form  
    $\rrbracket 0,\tau\rrbracket\times B$ where $\tau$ is a bounded stopping time and $B\in\Sscr$, cf.\ Theorem~\ref{app-63} in the appendix.
\end{proof}
Observe that for simple processes appearing in Lemma~\ref{si-41}
$$
    \iint f(\omega,t,x)\,N(\omega,dt,dx)
    := \sum_{k=1}^n c_k \,\Ncirc(\omega,\rrbracket 0,\tau_k\rrbracket\times B_k)
    = \sum_{k=1}^n c_k \,N(\omega,(0,\tau_k(\omega)]\times B_k)
$$
is a stochastic integral which satisfies
$$
    \Ee\left[\Big(\iint f(\cdot,t,x)\,N(\cdot,dt,dx)\Big)^2\right]
    = \sum_{k=1}^n c_k^2 \,\Ncirccont(\rrbracket 0,\tau_k\rrbracket\times B_k)
    = \sum_{k=1}^n c_k^2 \,\Ee \Ncont((0,\tau_k]\times B_k).
$$
Just as above we can now extend the stochastic integral to $L^2(\Ecirc,\Pscr\otimes\sigma(\Sscr),\Ncirccont)$.
\begin{corollary}\label{si-43}
    Let $N(\omega,dt,dx)$ be a random orthogonal measure on $E$ of space-time type \textup{(}cf.\ Remark~\ref{si-21}\textup{)} with control measure $\Ncont(dt,dx)$ and $f:\Omega\times(0,\infty)\times X\to\real$ be an element of $L^2(\Ecirc,\Pscr\otimes\sigma(\Sscr),\Ncirccont)$. Then the stochastic integral
    $$
        \iint f(\omega,t,x)\,N(\omega,dt,dx)
    $$
    exists and satisfies the following \emphh{It\^o isometry}
\index{It\^o isometry}%
    \begin{align}\label{si-e42}
        \Ee \left[\Big(\iint f(\cdot,t,x)\,N(\cdot,dt,dx)\Big)^2\right]
        &= \iint \Ee f^2(\cdot,t,x)\,\Ncont(dt,dx).
    \end{align}
\end{corollary}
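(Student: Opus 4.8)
The plan is to recognize that $\Ncirc$ is itself a random orthogonal measure on the semiring $\Rrcirc_0$ over the enlarged measure space $(\Ecirc,\Eecirc,\Ncirccont)$, so that the whole construction carried out in Definitions~\ref{si-11}--\ref{si-19} and Lemma~\ref{si-17} applies \emph{mutatis mutandis}, with $(E,\Escr,\Ncont)$ replaced by $(\Ecirc,\Eecirc,\Ncirccont)$ and $\sigma(\Rscr)$ replaced by $\sigma(\Rrcirc)=\Pscr\otimes\sigma(\Sscr)$. Concretely, I would first check the two defining properties \eqref{si-e02} and \eqref{si-e04} for $\Ncirc$; this is the content of the footnote, and the resulting isometry for $\Rrcirc_0$-simple processes is precisely the display preceding the statement (its diagonal case $R=S$ already delivers the square-integrability \eqref{si-e02}).

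Granting this, I would run the usual density-and-completeness argument verbatim. Given $f\in L^2(\Ecirc,\Pscr\otimes\sigma(\Sscr),\Ncirccont)$, Lemma~\ref{si-41} furnishes $\Rrcirc_0$-simple processes $f_n\to f$ in $L^2(\Ncirccont)$; in particular $(f_n)$ is Cauchy there. By linearity of the integral on simple processes and the simple-process isometry applied to $f_n-f_m$,
\begin{equation*}
    \Ee\Big[\Big(\iint (f_n-f_m)\,dN\Big)^2\Big] = \int (f_n-f_m)^2\,d\Ncirccont \xrightarrow[m,n\to\infty]{} 0,
\end{equation*}
so the random variables $\iint f_n\,dN$ form a Cauchy sequence in $L^2(\Pp)$. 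Completeness of $L^2(\Pp)$ yields a limit, and the standard interlacing argument (compare two approximating sequences) shows the limit is independent of the chosen $(f_n)$; this defines $\iint f\,dN$.

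It remains to pass the isometry to the limit. For each $n$, the simple-process isometry together with Tonelli's theorem, unfolding $\Ncirccont(d\omega,dt,dx)=\Pp(d\omega)\,\Ncont(dt,dx)$, gives
\begin{equation*}
    \Ee\Big[\Big(\iint f_n\,dN\Big)^2\Big] = \int f_n^2\,d\Ncirccont = \iint \Ee\, f_n^2(\cdot,t,x)\,\Ncont(dt,dx).
\end{equation*}
As $n\to\infty$ the left-hand side converges, since $L^2(\Pp)$-convergence of $\iint f_n\,dN$ forces convergence of second moments, while the right-hand side converges because $f_n\to f$ in $L^2(\Ncirccont)$; this yields \eqref{si-e42}. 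The only genuinely non-routine step is the verification that $\Ncirc$ satisfies \eqref{si-e04}, i.e.\ that stopping the space-time martingales $N_t(B)$ at bounded stopping times preserves orthogonality. I expect to handle it by approximating $\tau$ from above through the dyadic stopping times $\tau^{(m)}:=(\entier{2^m\tau}+1)2^{-m}$, exactly as in the proof of Theorem~\ref{lpmp-31}, invoking the martingale property of $N_t(B)$ recorded in Remark~\ref{si-21} together with optional stopping; everything else is the same completeness argument already used to pass from Lemma~\ref{si-17} to Definition~\ref{si-19}.
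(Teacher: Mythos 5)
Your proposal is correct and follows essentially the same route as the paper: the text preceding Corollary~\ref{si-43} treats $\Ncirc$ as a random orthogonal measure on $(\Ecirc,\Pscr\otimes\sigma(\Sscr),\Ncirccont)$, relegating the orthogonality check \eqref{si-e04} to a footnote (``use a discrete approximation of $\tau$'' --- exactly your dyadic argument via the martingale property of Remark~\ref{si-21}), and then extends by Lemma~\ref{si-41} and It\^o's isometry just as you do. Your passage to the limit in \eqref{si-e42} and the Tonelli unfolding of $\Ncirccont=\Pp\otimes\Ncont$ match the paper's construction verbatim.
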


Let us show that the stochastic integral w.r.t.\ a space-time random orthogonal measure extends the usual It\^o integral. To do so we need the following auxiliary result.
\begin{lemma}\label{si-51}
    Let $N(\omega,dt,dx)$ be a random orthogonal measure on $E$ of space-time type \textup{(}cf.\ Remark~\ref{si-21}\textup{)} with control measure $\Ncont(dt,dx)$ and $\tau$ a stopping time. Then
    \begin{equation}\label{si-e52}\begin{aligned}
        \iint \phi(\omega) &\I_{\rrbracket \tau,\infty\llbracket}(\omega,t) f(\omega,t,x)\,N(\omega,dt,dx)\\
        &= \phi(\omega)\iint \I_{\rrbracket \tau,\infty\llbracket}(\omega,t) f(\omega,t,x)\,N(\omega,dt,dx)
    \end{aligned}\end{equation}
    for all $\phi\in L^\infty(\Fscr_\tau)$ and $f\in L^2(\Ecirc,\Pscr\otimes\sigma(\Sscr),\Ncirccont)$.
\end{lemma}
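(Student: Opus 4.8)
The plan is to establish the identity for a rich enough class of integrands and multipliers and then extend it by the usual density and isometry arguments. First I would note that both sides of \eqref{si-e52} are well defined: the process $\phi\,\I_{\rrbracket\tau,\infty\llbracket}f$ is predictable, since $\I_{\rrbracket\tau,\infty\llbracket}$ is a left-continuous adapted process and $\phi\in L^\infty(\Fscr_\tau)$, and it lies in $L^2(\Ncirccont)$ because $\phi$ is bounded and $\I_{\rrbracket\tau,\infty\llbracket}f\in L^2(\Ncirccont)$. For fixed $\phi$ both sides are linear in $f$ and $L^2(\Pp)$-continuous in $f$: the left-hand side by the It\^o isometry \eqref{si-e42} and the bound $\|\phi\,\I_{\rrbracket\tau,\infty\llbracket}f\|_{L^2(\Ncirccont)}\leq\|\phi\|_\infty\|f\|_{L^2(\Ncirccont)}$, the right-hand side because multiplication by the bounded $\phi$ is continuous on $L^2(\Pp)$. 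Likewise, for fixed $f$ both sides are continuous in $\phi$ in the $L^\infty(\Pp)$-norm. Hence, using Lemma~\ref{si-41} to approximate $f$ by $\Rrcirc_0$-simple processes and approximating $\phi$ uniformly by $\Fscr_\tau$-simple functions, it suffices to prove \eqref{si-e52} for $\phi=\I_A$ with $A\in\Fscr_\tau$ and for a single generator $f=\I_{\rrbracket0,\sigma\rrbracket}\I_B$, where $\sigma$ is a bounded stopping time and $B\in\Sscr$.

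For this core case I would first compute $\I_{\rrbracket\tau,\infty\llbracket}f=\I_{\rrbracket\tau\wedge\sigma,\sigma\rrbracket}\I_B$, the stochastic interval being empty on $\{\tau>\sigma\}$. Since it vanishes there, I may replace $A$ by $A_0:=A\cap\{\tau\leq\sigma\}$ without changing either side, and the point is that $A_0\in\Fscr_{\tau\wedge\sigma}$ (a standard measurability fact, which follows from $A\cap\{\tau\leq r\}\in\Fscr_r$ and $\{\tau\leq\sigma\}\cap\{\tau\leq r\}\in\Fscr_r$, using $\tau\wedge\sigma=\tau$ on $\{\tau\leq\sigma\}$). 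This lets me define $\rho:=(\tau\wedge\sigma)\I_{A_0^c}+\sigma\,\I_{A_0}$ and check that $\rho$ is a bounded stopping time with $\tau\wedge\sigma\leq\rho\leq\sigma$, because $\{\rho\leq r\}=(A_0^c\cap\{\tau\wedge\sigma\leq r\})\cup(A_0\cap\{\sigma\leq r\})\in\Fscr_r$. By construction $\I_{\rrbracket\tau\wedge\sigma,\rho\rrbracket}=\I_{A_0}\,\I_{\rrbracket\tau\wedge\sigma,\sigma\rrbracket}$, so $\I_A\,\I_{\rrbracket\tau,\infty\llbracket}f=(\I_{\rrbracket0,\rho\rrbracket}-\I_{\rrbracket0,\tau\wedge\sigma\rrbracket})\I_B$ is again an $\Rrcirc_0$-simple process. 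Applying the definition of the stochastic integral on stochastic intervals and writing $N_\rho(B):=\Ncirc(\rrbracket0,\rho\rrbracket\times B)$, I obtain
\begin{equation*}
    \iint\I_A\,\I_{\rrbracket\tau,\infty\llbracket}f\,dN
    =N_\rho(B)-N_{\tau\wedge\sigma}(B)
    =\I_{A_0}\bigl(N_\sigma(B)-N_{\tau\wedge\sigma}(B)\bigr),
\end{equation*}
the last equality because $N_\rho(B)=N_\sigma(B)$ on $A_0$ and $N_\rho(B)=N_{\tau\wedge\sigma}(B)$ on $A_0^c$. As the right-hand side equals $\I_A\iint\I_{\rrbracket\tau,\infty\llbracket}f\,dN$, this is exactly \eqref{si-e52} for the generators.

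The hard part is the measurability input $A_0\in\Fscr_{\tau\wedge\sigma}$, which is what makes $\rho$ a genuine stopping time and is the mechanism by which the $\Fscr_\tau$-measurable factor is ``pulled out'' of the integral; everything else is bookkeeping with linearity, the isometry, and density. If one prefers to avoid this input, I would instead approximate $\tau$ from above by the dyadic stopping times $\tau_m=(\entier{2^m\tau}+1)2^{-m}\downarrow\tau$ used before Theorem~\ref{lpmp-31}. On each level set $\{\tau_m=k2^{-m}\}$ the multiplier is $\Fscr_{k2^{-m}}$-measurable, so the elementary deterministic identity $\iint\I_A\I_{(s,t]}\I_B\,dN=\I_A(N_t(B)-N_s(B))$ for $A\in\Fscr_s$ applies termwise; since $\I_{\rrbracket\tau_m,\infty\llbracket}f\to\I_{\rrbracket\tau,\infty\llbracket}f$ in $L^2(\Ncirccont)$ by dominated convergence, passing to the limit yields \eqref{si-e52}.
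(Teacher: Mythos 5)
Your proof is correct and takes essentially the same route as the paper's: reduce by linearity, It\^o's isometry and uniform approximation of $\phi$ to the generator case $\phi=\I_F$, $F\in\Fscr_\tau$, $f=\I_{\rrbracket 0,\sigma\rrbracket}\I_B$, and there absorb the $\Fscr_\tau$-set into a stopping time so that the integrand becomes a difference of stochastic intervals whose integral can be evaluated $\omega$-wise. The only difference is cosmetic: the paper encodes $F$ into the \emph{lower} endpoint via the stopping time $\tau_F:=\tau\I_F+\infty\I_{F^c}$, which gives $\I_F\I_{\rrbracket\tau,\infty\llbracket}f=\I_{\rrbracket\tau_F\wedge\sigma,\sigma\rrbracket}\I_B$ in one line and bypasses your auxiliary measurability step $A_0=A\cap\{\tau\leq\sigma\}\in\Fscr_{\tau\wedge\sigma}$, whereas you modify the \emph{upper} endpoint via $\rho$ (your dyadic-approximation alternative is also sound, but not needed).
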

\begin{proof}
    Since $t\mapsto\I_{\rrbracket \tau,\infty\llbracket}(\omega,t)$ is adapted to the filtration $(\Fscr_t)_{t\geq 0}$ and left-continuous, the integrands appearing in \eqref{si-e52} are predictable, hence all stochastic integrals are well-defined.

\medskip\noindent $1^\circ$ \
    Assume that $\phi(\omega) = \I_F(\omega)$ for some $F\in\Fscr_\tau$ and $f(\omega,t,x) = \I_{\rrbracket 0,\sigma\rrbracket}(\omega,t)\I_B(x)$ for some bounded stopping time $\sigma$ and $\rrbracket 0,\sigma\rrbracket \times B\in\Rscr_0^\circ$. Define
    $$
        N_\sigma(\omega,B):= N(\omega, (0,\sigma(\omega)]\times B)= \Ncirc(\omega, \rrbracket 0,\sigma\rrbracket\times B)
    $$
    for any $\rrbracket 0,\sigma\rrbracket \times B\in\Rscr_0^\circ$. The random time $\tau_F := \tau\I_F + \infty\I_{F^c}$ is a stopping time\footnote{Indeed,
    $
        \{\tau_F\leq t\}
        = \{\tau\leq t\}\cap F
        = \left\{
            \begin{aligned}
                \emptyset, &\quad \tau>t\\
                F, &\quad \tau\leq t
            \end{aligned}
        \right\}\in\Fscr_t
    $ for all $t\geq 0$.},
    and we have
    $$
        \phi\I_{\rrbracket \tau,\infty\llbracket} f
        = \I_F \I_{\rrbracket \tau,\infty\llbracket} \I_{\rrbracket 0,\sigma\rrbracket} \I_B
        = \I_{\rrbracket \tau_F,\infty\llbracket} \I_{\rrbracket 0,\sigma\rrbracket} \I_B
        = \I_{\rrbracket \tau_F\wedge\sigma,\sigma\rrbracket} \I_B.
    $$
    From this we get \eqref{si-e52} for our choice of $\phi$ and $f$:
    \begin{align*}
        \iint \phi\I_{\rrbracket \tau,\infty\llbracket}(t) f(t,x)\,N(dt,dx)
        &= \iint \I_{\rrbracket \tau_F\wedge\sigma,\sigma\rrbracket}(t)\I_B(x)\, N(dt,dx)\\
        &= N_\sigma(B) - N_{\tau_F\wedge\sigma}(B) \\
        &= \I_F\cdot(N_\sigma(B)-N_{\tau\wedge\sigma}(B))\\
        &= \I_F \iint \I_{\rrbracket \tau\wedge\sigma,\sigma\rrbracket}(t)\I_B(x)\,N(dt,dx)\\
        &= \I_F \iint \I_{\rrbracket \tau,\infty\llbracket} \I_{\rrbracket 0,\sigma\rrbracket}(t)\I_B(x)\,N(dt,dx)\\
        &= \phi \iint \I_{\rrbracket \tau,\infty\llbracket}(t) f(t,x) \,N(dt,dx).
    \end{align*}

\noindent $2^\circ$ \
    If $\phi = \I_F$ for some $F\in\Fscr_\tau$ and $f$ is a simple process, then \eqref{si-e52} follows from $1^\circ$ because of the linearity of the stochastic integral.

\medskip\noindent $3^\circ$ \
    If $\phi = \I_F$ for some $F\in\Fscr_\tau$ and $f\in L^2(\Ncirccont)$, then \eqref{si-e52} follows from $2^\circ$ and It\^o's isometry: Let $f_n$ be a sequence of simple processes which approximate $f$. Then
    \begin{align*}
        &\Ee\left[ \Big(\iint \big(\phi \I_{\rrbracket \tau,\infty\llbracket}(t) f_n(t,x) - \phi \I_{\rrbracket \tau,\infty\llbracket}(t) f(t,x)\big)\,N(dt,dx)\Big)^2\right]\\
        &\qquad=
        \iint \Ee\left[\big(\phi \I_{\rrbracket \tau,\infty\llbracket}(t) f_n(t,x) - \phi \I_{\rrbracket \tau,\infty\llbracket}(t) f(t,x)\big)^2\right]\,\Ncont(dt,dx)\\
        &\qquad\leq
        \iint \Ee\left[\big(f_n(t,x) - f(t,x)\big)^2\right]\,\Ncont(dt,dx)
        \xrightarrow[n\to\infty]{}0.
    \end{align*}

\noindent $4^\circ$ \
    If $\phi$ is an $\Fscr_\tau$ measurable step-function and $f\in L^2(\Ncirccont)$, then \eqref{si-e52} follows from $3^\circ$ because of the linearity of the stochastic integral.

\medskip\noindent $5^\circ$ \
    Since we can approximate $\phi\in L^\infty(\Fscr_\tau)$ uniformly by $\Fscr_\tau$ measurable step functions $\phi_n$, \eqref{si-e52} follows from $4^\circ$ and It\^o's isometry because of the following inequality:
    \begin{align*}
        \Ee&\left[\Big(\iint \left[\phi_n \I_{\rrbracket\tau,\infty\llbracket}(t) f(t,x) - \phi \I_{\rrbracket\tau,\infty\llbracket}(t) f(t,x)\right]\,N(dt,dx)\Big)^2\right]\\
        &\qquad=
        \iint \Ee\left(\left[\phi_n \I_{\rrbracket\tau,\infty\llbracket}(t) f(t,x) - \phi \I_{\rrbracket\tau,\infty\llbracket}(t) f(t,x)\right]^2\right) \Ncont(dt,dx)\\
        &\qquad\leq \|\phi_n-\phi\|^2_{L^\infty(\Pp)} \iint \Ee \big[f^2(t,x)\big]\,\Ncont(dt,dx).
    \qedhere
    \end{align*}
\end{proof}

We will now consider `martingale noise' random orthogonal measures, see Example~\ref{si-07}.c), which are given by (the predictable quadratic variation of) a square-integrable martingale $M$. For these random measures our definition of the stochastic integral coincides with It\^o's definition. Recall that the It\^o integral driven by $M$ is first defined for simple, left-continuous processes of the form
\begin{equation}\label{si-e54}
    f(\omega,t):= \sum_{k=1}^n \phi_k(\omega)\I_{\rrbracket \tau_k, \tau_{k+1}\rrbracket}(\omega,t), \quad t\geq 0,
\end{equation}
where $0\leq \tau_1\leq \tau_2\leq \dots\leq \tau_{n+1}$ are bounded stopping times and $\phi_k$ bounded $\Fscr_{\tau_k}$ measurable random variables. The It\^o integral for such simple processes is
$$
    \int f(\omega,t)\,dM_t(\omega) := \sum_{k=1}^n \phi_k(\omega) \big(M_{\tau_{k+1}}(\omega) - M_{\tau_k}(\omega)\big)
$$
and it is extended by It\^o's isometry to all integrands from $L^2(\Omega\times(0,\infty),\Pscr, d\Pp\otimes d\langle M\rangle_t)$. For details we refer to any standard text on It\^o integration, e.g.\ Protter \cite[Chapter II]{protter04} or Revuz \& Yor \cite[Chapter IV]{rev-yor05}.

We will now use Lemma~\ref{si-51} in the particular situation where the space component $dx$ is not present.
\begin{theorem}\label{si-53}
    Let $N(dt)$ be a `martingale noise' random orthogonal measure induced by the square-integrable martingale $M$ \textup{(}Example~\ref{si-07}\textup{)}. The stochastic integral w.r.t.\ the random orthogonal measure $N(dt)$ and It\^o's stochastic integral w.r.t.\ $M$ coincide.
\end{theorem}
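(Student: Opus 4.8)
The plan is to exploit that both objects are constructed by the same recipe: each is the $L^2(\Pp)$-isometric extension of a map defined on a class of simple predictable integrands, the target being $L^2(\Pp)$ and the domain the space $L^2(\Ecirc,\Pscr,\Ncirccont)$ (here there is no space variable, so $X$ is absent). Since an isometry defined on a dense subspace has a unique continuous extension, it suffices to (i) check that the two constructions rest on a common dense class of simple integrands equipped with the same It\^o isometry, and (ii) verify that they agree on that class. Point (i) is essentially bookkeeping: the It\^o simple processes \eqref{si-e54} are the $\Rrcirc_0$-simple processes of Lemma~\ref{si-41} once one writes $\I_{\rrbracket\tau_k,\tau_{k+1}\rrbracket}=\I_{\rrbracket 0,\tau_{k+1}\rrbracket}-\I_{\rrbracket 0,\tau_k\rrbracket}$, and both families are dense in $L^2(\Ecirc,\Pscr,\Ncirccont)$ by Theorem~\ref{app-63}; the relevant isometry is in both cases It\^o's isometry \eqref{si-e42} of Corollary~\ref{si-43}.

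For (ii) I would reduce, by linearity, to a single building block $f(\omega,t)=\phi(\omega)\I_{\rrbracket\sigma,\tau\rrbracket}(\omega,t)$ with $\sigma\leq\tau$ bounded stopping times and $\phi\in L^\infty(\Fscr_\sigma)$. On the It\^o side the definition gives directly
$$
    \int \phi\,\I_{\rrbracket\sigma,\tau\rrbracket}(t)\,dM_t = \phi\,(M_\tau-M_\sigma).
$$
On the random-orthogonal-measure side I would first treat $\phi\equiv 1$: since $\int\I_{\rrbracket 0,\rho\rrbracket}\,dN=\Ncirc(\rrbracket 0,\rho\rrbracket)=N((0,\rho])=M_\rho$ for every bounded stopping time $\rho$ (using $M_0=0$), the additivity of the integral together with $\I_{\rrbracket\sigma,\tau\rrbracket}=\I_{\rrbracket 0,\tau\rrbracket}-\I_{\rrbracket 0,\sigma\rrbracket}$ yields $\int\I_{\rrbracket\sigma,\tau\rrbracket}\,dN=M_\tau-M_\sigma$. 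To reinstate the factor $\phi$ I would write $\I_{\rrbracket\sigma,\tau\rrbracket}=\I_{\rrbracket\sigma,\infty\llbracket}\,\I_{\rrbracket 0,\tau\rrbracket}$ and invoke Lemma~\ref{si-51} with the stopping time $\sigma$ and $\phi\in L^\infty(\Fscr_\sigma)$, obtaining
$$
    \int \phi\,\I_{\rrbracket\sigma,\infty\llbracket}\,\I_{\rrbracket 0,\tau\rrbracket}\,dN
    = \phi\int \I_{\rrbracket\sigma,\infty\llbracket}\,\I_{\rrbracket 0,\tau\rrbracket}\,dN
    = \phi\,(M_\tau-M_\sigma),
$$
which coincides with the It\^o integral above. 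By linearity the two integrals then agree on all processes of the form \eqref{si-e54}.

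The final step is then automatic: both integrals are isometric, agree on the dense class of It\^o simple processes, and map into the same $L^2(\Pp)$, so their unique isometric extensions coincide on all of $L^2(\Ecirc,\Pscr,\Ncirccont)$. The main obstacle is precisely the reinstatement of the $\Fscr_\sigma$-measurable factor $\phi$: because the integral with respect to $N$ is defined through $L^2$-approximation rather than pathwise, one cannot pull $\phi$ out formally, and this is exactly the content of Lemma~\ref{si-51} (whose proof runs through the five approximation steps already carried out there). A secondary point that must be checked with care is that the two notions of ``simple process'' generate the same closed subspace and that the control measure $\Ncirccont$ underlying the It\^o isometry \eqref{si-e42} genuinely matches the measure $d\Pp\otimes d\langle M\rangle$ used to close up the classical It\^o integral, so that $L^2$-convergence in one framework is $L^2$-convergence in the other.
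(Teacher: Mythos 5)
Your proposal is correct and follows essentially the same route as the paper's proof: both reduce to simple processes of the form \eqref{si-e54}, use the factorization $\I_{\rrbracket\sigma,\tau\rrbracket}=\I_{\rrbracket\sigma,\infty\llbracket}\,\I_{\rrbracket 0,\tau\rrbracket}$ together with Lemma~\ref{si-51} to pull out the $\Fscr_\sigma$-measurable factor $\phi$ and obtain $\phi\,(M_\tau-M_\sigma)$, and conclude via the common It\^o-isometric extension from the dense class of simple integrands. Your explicit remarks on the density bookkeeping and on matching $\Ncirccont$ with $d\Pp\otimes d\langle M\rangle_t$ make precise what the paper compresses into the single sentence ``since both integrals are extended by It\^o's isometry, the assertion follows,'' but they do not change the argument.
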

\begin{proof}
    Let $0\leq \tau_1\leq \tau_2\leq \dots\leq \tau_{n+1}$ be bounded stopping times, $\phi_k\in L^\infty(\Fscr_{\tau_k})$ bounded random variables and $f(\omega,t)$ be a simple stochastic process of the form \eqref{si-e54}.
    From Lemma~\ref{si-51} we get
    \begin{align*}
        \int f(t)\,N(dt)
        &= \int \sum_{k=1}^n \phi_k\I_{\rrbracket\tau_k,\tau_{k+1}\rrbracket}(t)\,N(dt)\\
        &= \sum_{k=1}^n \int  \phi_k\I_{\rrbracket\tau_k,\infty\llbracket}(t) \I_{\rrbracket0,\tau_{k+1}\rrbracket}(t)\,N(dt)\\
        &= \sum_{k=1}^n \phi_k \int  \I_{\rrbracket\tau_k,\infty\llbracket}(t) \I_{\rrbracket0,\tau_{k+1}\rrbracket}(t)\,N(dt)\\
        &= \sum_{k=1}^n \phi_k (M_{\tau_{k+1}}-M_{\tau_k}).
    \end{align*}
    This means that both stochastic integrals coincide on the simple stochastic processes.
    Since both integrals are extended by It\^o's isometry, the assertion follows.
\end{proof}

\begin{example}\label{si-55}
Using random orthogonal measures we can re-state the L\'evy-It\^o decomposition appearing in Theorem~\ref{rm-51}. For this, let $\widetilde N(dt,dx)$ be the Poisson random orthogonal measure (Example~\ref{si-07}.d) on $E=(0,\infty)\times(\rd\setminus\{0\})$ with control measure $dt\times\nu(dx)$ ($\nu$ is a L\'evy measure). Additionally, we \emphh{define} for all deterministic functions $h:(0,\infty)\times\rd\to\real$
$$
    \iint h(s,x)\,N(\omega,ds,dx) := \sum_{0<s<\infty} h(s,\Delta X_s(\omega))\quad\forall\omega\in\Omega
$$
provided that the sum $\sum_{0<s<\infty} |h(s,\Delta X_s(\omega))|<\infty$ for each $\omega$.\footnote{This is essentially an $\omega$-wise Riemann--Stieltjes integral. A sufficient condition for the absolute convergence is, e.g.\ that $h$ is continuous and $h(t,\cdot)$ vanishes uniformly in $t$ in some neighbourhood of $x=0$. The reason for this is the fact that $N_t(\omega,B_\epsilon^c(0))=N(\omega,(0,t]\times B_\epsilon^c(0))<\infty$, i.e.\ there are at most finitely many jumps of size exceeding $\epsilon>0$.} If $X$ is a L\'evy process with characteristic exponent $\psi$ and L\'evy triplet $(l,Q,\nu)$, then
\index{L\'evy--It\^o decomposition}%
    \begin{equation*}
    \begin{aligned}
        X_t
        &= \sqrt Q W_t \;+\: \iint \I_{(0,t]}(s) y\I_{(0,1)}(|y|)\, \Ntilde(ds,dy)
        &&\pmb{\bigg]}\text{\ $=: M_t$, \ $L^2$-martingale}\\
        &\phantom{=}\underbracket{\;\mbox{}\quad\; tl\quad \vphantom{\int}}_{\mathclap{\begin{gathered}\text{continuous}\\[-5pt]\text{Gaussian}\end{gathered}}}
        \;+ \;\; \underbracket{\iint \I_{(0,t]}(s) y\I_{\{|y|\geq 1\}}\,N(ds,dy).\quad}_{\begin{gathered}\text{pure jump part}\end{gathered}}
        &&\pmb{\bigg]}\text{\ $=: A_t$, \ bdd.\ variation}
    \end{aligned}
    \end{equation*}
\end{example}

Example~\ref{si-55} is quite particular in the sense that $N(\cdot,dt,dx)$ is a bona fide positive measure, and the control measure $\Ncont(dt,dx)$ is also the compensator, i.e.\ a measure such that $$\Ntilde((0,t]\times B) = N((0,t]\times B)-\Ncont((0,t]\times B)$$ is a square-integrable martingale.

\bigskip
Following Ikeda \& Watanabe \cite[Chapter II.4]{ikeda-watanabe89} we can generalize the set-up of Example~\ref{si-55} in the following way: Let $N(\omega,dt,dx)$ be for each $\omega$ a positive measure of space-time type. Since $t\mapsto N(\omega,(0,t]\times B)$ is increasing, there is a unique \emphh{compensator} $\Nhat(\omega,dt,dx)$ such that for all $B$ with $\Ee\Nhat((0,t]\times B)<\infty$
\index{Poisson~random measure!compensator}%
$$
    \Ntilde(\omega,(0,t]\times B) := N(\omega,(0,t]\times B)-\Nhat(\omega,(0,t]\times B),\quad t\geq 0,
$$
is a square-integrable martingale. If $t\mapsto\Nhat((0,t]\times B)$ is continuous and $B\mapsto\Nhat((0,t]\times B)$ a $\sigma$-finite measure, then one can show that the angle bracket satisfies
$$
    \big\langle\: \Ntilde((0,\cdot]\times B),\:\Ntilde((0,\cdot]\times C)\:\big\rangle_t = \Nhat\big((0,t]\times(B\cap C)\big).
$$
This means, in particular, that $\Ntilde(\omega,dt,dx)$ is a random orthogonal measure with control measure $\mu((0,t]\times B) = \Ee \Nhat((0,t]\times B)$, and we are back in the theory which we have developed in the first part of this chapter.

It is possible to develop a fully-fledged stochastic calculus for this kind of random measures.
\begin{definition}\label{si-71}
\index{semimartingale}
    Let $(\Omega,\Ascr,\Pp)$ be a probability space with a filtration $(\Fscr_t)_{t\geq 0}$. A \emphh{semimartingale} is a stochastic process $X$ of the form
    $$
       \smash[b]{X_t = X_0 + A_t + M_t + \int_0^t\!\!\int f(\cdot,s,x)\,\Ntilde(ds,dx) + \int_0^t\!\!\int g(\cdot,s,x)\,N(ds,dx)}
    $$
    ($\int_0^t := \int_{(0,t]}$) where
    \begin{itemize}
    \item $X_0$ is an $\Fscr_0$ measurable random variable,
    \item $M$ is a continuous square-integrable local martingale (w.r.t.\ $\Fscr_t$),
    \item $A$ is a continuous $\Fscr_t$ adapted process of bounded variation,
    \item $N(ds,dx)$, $\Ntilde(ds,dx)$ and $\Nhat(ds,dx)$ are as described above,
    \item $f\I_{\rrbracket 0,\tau_n\rrbracket}\in L^2(\Omega\times (0,\infty)\times X,\Pscr\otimes\Xscr,\Ncirccont)$ for some increasing sequence $\tau_n\uparrow\infty$ of bounded stopping times,
    \item $g$ is such that $\int_0^t\!\!\int g(\omega,s,x)\,N(\omega,ds,dx)$ exists as an $\omega$-wise integral,
    \item $f(\cdot,s,x)g(\cdot,s,x)\equiv 0$.
    \end{itemize}
\end{definition}

\noindent
In this case, we even have \emphh{It\^o's formula}, \index{It\^o's formula}%
see \cite[Chapter II.5]{ikeda-watanabe89}, for any $F\in \cont^2(\real,\real)$:
\begin{align*}
    F(X_t)-F(X_0)
    &= \int_0^t F'(X_{s-})\,dA_s + \int_0^t F'(X_{s-})\,dM_s + \frac 12\int_0^t F''(X_{s-})\,d\langle M\rangle_s\\
    &\quad\mbox{}+ \int_0^t\!\!\int \big[F(X_{s-}+f(s,x)) - F(X_{s-})\big]\,\Ntilde(ds,dx)\\
    &\quad\mbox{}+ \int_0^t\!\!\int \big[F(X_{s-}+g(s,x)) - F(X_{s-})\big]\,N(ds,dx)\\
    &\quad\mbox{}+ \int_0^t\!\!\int \big[F(X_{s}+f(s,x))- F(X_{s}) - f(s,x)F'(X_{s})\big]\,\Nhat(ds,dx)
\end{align*}
where we use again the convention that $\int_0^t := \int_{(0,t]}$.

\chapter{From L\'evy to Feller processes}\label{feller}

We have seen in Lemma~\ref{lpmp-23} that the semigroup $P_t f(x) := \Ee^xf(X_t) = \Ee f(X_t+x)$ of a L\'evy process $\Xt$ is a Feller semigroup. Moreover, the convolution structure of the transition semigroup $\Ee f(X_t+x) = \int f(x+y)\Pp(X_t\in dy)$ is a consequence of the spatial homogeneity (translation invariance) of the L\'evy process, see Remark~\ref{lpmp-11} and the characterization of translation invariant linear functionals (Theorem~\ref{app-91}). Lemma~\ref{lpmp-09} shows that the translation invariance of a L\'evy process is due to the assumptions \eqref{Lstat} and \eqref{Lindep}.

It is, therefore, a natural question to ask what we get if we consider stochastic processes whose semigroups are Feller semigroups which are \emphh{not} translation invariant. Since every Feller semigroup admits a Markov transition kernel (Lemma~\ref{semi-05}), we can use Kolmogorov's construction to obtain a Markov process. Thus, the following definition makes sense.
\begin{definition}\label{feller-03}
\index{Feller process}%
    A \emphh{Feller process} is a c\`adl\`ag Markov process $\Xt$, $X_t : \Omega\to\rd$, $t\geq 0$, whose transition semigroup $P_tf(x)=\Ee^xf(X_t)$ is a Feller semigroup.
\end{definition}

\begin{remark}\label{feller-04}
\index{Feller process!c\`adl\`ag modification}%
\index{L\'evy process!c\`adl\`ag modification}%
It is no restriction to require that a Feller process has c\`adl\`ag paths. By a fundamental result in the theory of stochastic processes we can construct such modifications. Usually, one argues like this: It is enough to study the coordinate processes, i.e.\ $d=1$. Rather than looking at $t\mapsto X_t$ we consider a (countable, point-separating) family of functions $u:\real\to\real$ and show that each $t\mapsto u(X_t)$ has a c\`adl\`ag modification. One way of achieving this is to use martingale regularization techniques (e.g.\ Revuz \& Yor \cite[Chapter II.2]{rev-yor05}) which means that we should pick $u$ in such a way that $u(X_t)$ is a supermartingale. The usual candidate for this is the resolvent $\eup^{-\lambda t} R_\lambda f(X_t)$ for some $f\in\cont^+_\infty(\real)$. Indeed, if $\Fscr_t = \sigma(X_s,\,s\leq t)$ is the natural filtration, $f\geq 0$ and $s\leq t$, then
\begin{align*}
    \Ee^x\big[R_\lambda f(X_t) \mid \Fscr_s\big]
    &= \Ee^{X_s} \int_0^\infty \eup^{-\lambda r} P_r f(X_{t-s}) \,dr
    =  \int_0^\infty \eup^{-\lambda r} P_{r}P_{t-s} f(X_s)\,dr \\
    &= \eup^{\lambda (t-s)}  \int_{t-s}^\infty \eup^{-\lambda u} P_{u} f(X_s)\,du
    \leq \eup^{\lambda (t-s)}  \int_{0}^\infty \eup^{-\lambda u} P_{u} f(X_s)\,du \\
    &= \eup^{\lambda (t-s)}  R_\lambda f(X_s).
\end{align*}
\end{remark}

Let $\Fscr_t = \Fscr_t^X := \sigma(X_s,\,s\leq t)$  be the canonical filtration.
\begin{lemma}\label{feller-05}
\index{Feller process!strong Markov property}%
\index{strong Markov property!of a Feller process}%
\index{Markov process!strong Markov property}%
    Every Feller process $\Xt$ is a \emphh{strong Markov process}, i.e.
    \begin{equation}\label{feller-e04}
        \Ee^x \big[f(X_{t+\tau}) \mid \Fscr_\tau\big]
        = \Ee^{X_\tau}f(X_t),\quad
        \Pp^x\text{-a.s.\ on $\{\tau<\infty\},\; t\geq 0$},
    \end{equation}
    holds for any stopping time $\tau$, $\Fscr_\tau := \{F\in\Fscr_\infty\::\: F\cap\{\tau\leq t\}\in\Fscr_t\:\:\forall t\geq 0\}$ and $f\in\cont_\infty(\rd)$.
\end{lemma}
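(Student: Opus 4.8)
The plan is to prove the integrated form of \eqref{feller-e04}: for every $f\in\cont_\infty(\rd)$, every $t\geq 0$ and every $F\in\Fscr_\tau$ with $F\subset\{\tau<\infty\}$, I want to show
$$
    \Ee^x\big[\I_F\, f(X_{t+\tau})\big] = \Ee^x\big[\I_F\, P_tf(X_\tau)\big].
$$
Since $X_\tau$ is $\Fscr_\tau$ measurable and $P_tf$ is Borel, the right-hand integrand $P_tf(X_\tau)=\Ee^{X_\tau}f(X_t)$ is $\Fscr_\tau$ measurable, so testing against all such $F$ identifies the conditional expectation on $\{\tau<\infty\}$ and yields \eqref{feller-e04}. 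First I would discretise $\tau$ by the dyadic stopping times $\tau_n:=(\entier{2^n\tau}+1)2^{-n}$ used in the proof of Theorem~\ref{lpmp-31}; they take values in $\{k2^{-n}:k\in\nat\}$ and decrease to $\tau$. Because $\tau\leq\tau_n$ one checks $\Fscr_\tau\subset\Fscr_{\tau_n}$ and, more precisely, that for $F\in\Fscr_\tau$ one has $F\cap\{\tau_n=k2^{-n}\}=\big(F\cap\{\tau<k2^{-n}\}\big)\setminus\big(F\cap\{\tau<(k-1)2^{-n}\}\big)\in\Fscr_{k2^{-n}}$.

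Next I would establish the identity at the discrete level. Decomposing over the countably many values of $\tau_n$, and using $t+\tau_n=t+k2^{-n}$ on $\{\tau_n=k2^{-n}\}$, the ordinary Markov property \eqref{lpmp-e10} at the deterministic time $k2^{-n}$ gives
\begin{align*}
    \Ee^x\big[\I_F\, f(X_{t+\tau_n})\big]
    &= \sum_{k=1}^\infty \Ee^x\Big[\I_{F\cap\{\tau_n=k2^{-n}\}}\,\Ee^x\big[f(X_{t+k2^{-n}})\mid\Fscr_{k2^{-n}}\big]\Big]\\
    &= \sum_{k=1}^\infty \Ee^x\big[\I_{F\cap\{\tau_n=k2^{-n}\}}\, P_tf(X_{k2^{-n}})\big]
     = \Ee^x\big[\I_F\, P_tf(X_{\tau_n})\big],
\end{align*}
where pulling $\I_{F\cap\{\tau_n=k2^{-n}\}}$ inside is legitimate precisely because that event lies in $\Fscr_{k2^{-n}}$, and the last step recombines $P_tf(X_{k2^{-n}})=P_tf(X_{\tau_n})$ on each slice.

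Finally I would pass to the limit $n\to\infty$. Since a Feller process has c\`adl\`ag paths (Definition~\ref{feller-03}) and $\tau_n\downarrow\tau$, right-continuity of $s\mapsto X_s$ forces $X_{t+\tau_n}\to X_{t+\tau}$ and $X_{\tau_n}\to X_\tau$ pointwise on $\{\tau<\infty\}$. As $f$ is bounded and continuous, $f(X_{t+\tau_n})\to f(X_{t+\tau})$ boundedly; the decisive point is that $P_tf\in\cont_\infty(\rd)$ by the Feller property~\ref{lpmp-21}.f), so $P_tf$ too is bounded and continuous and hence $P_tf(X_{\tau_n})\to P_tf(X_\tau)$ boundedly. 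Dominated convergence then upgrades the discrete identity to $\Ee^x[\I_F f(X_{t+\tau})]=\Ee^x[\I_F P_tf(X_\tau)]$, which is the claim. The main obstacle is exactly this last limit: it is \emph{only} the continuity of $x\mapsto P_tf(x)$ — the Feller property, which fails for a merely Borel transition semigroup — that allows us to transport the almost sure convergence $X_{\tau_n}\to X_\tau$ through the kernel and identify the limit as the correct $\Fscr_\tau$ measurable random variable $P_tf(X_\tau)$.
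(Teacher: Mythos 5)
Your proposal is correct and follows essentially the same route as the paper's proof: the dyadic approximation $\tau_n=(\entier{2^n\tau}+1)2^{-n}$, the decomposition over the slices $\{\tau_n=k2^{-n}\}$ combined with the Markov property at deterministic times (using $F\cap\{\tau_n=k2^{-n}\}\in\Fscr_{k2^{-n}}$), and the passage to the limit via right-continuity of the paths, dominated convergence, and the Feller continuity of $P_tf$. Your write-up only adds detail the paper leaves implicit (the explicit measurability check for $F\cap\{\tau_n=k2^{-n}\}$ and the remark that the Feller property is exactly what identifies the limit), so nothing further is needed.
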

    A routine approximation argument shows that \eqref{feller-e04} extends to $f(y)=\I_K(y)$ (where $K$ is a compact set) and then, by a Dynkin-class argument, to any $f(y)=\I_B(y)$ where $B\in\Bscr(\rd)$.
\begin{proof}
    To prove \eqref{feller-e04}, approximate $\tau$ from above by discrete stopping times $\tau_n = \big(\entier{2^n\tau}+1\big)2^{-n}$ and observe that for $F\in\Fscr_\tau \cap \{\tau<\infty\}$
    \begin{align*}
        \Ee^x [\I_F f(X_{t+\tau})]
        \omu{\text{(i)}}{=}{} \lim_{n\to\infty} \Ee^x [\I_F f(X_{t+\tau_n})]
        \omu{\text{(ii)}}{=}{} \lim_{n\to\infty} \Ee^x \big[\I_F \Ee^{X_{\tau_n}}f(X_t)\big]
        \omu{\text{(iii)}}{=}{} \Ee^x\big[\I_F \Ee^{X_\tau}f(X_t)\big].
    \end{align*}
    Here we use that $t\mapsto X_t$ is right-continuous, plus (i) dominated convergence and (iii) the Feller continuity~\ref{lpmp-21}.f); (ii) is the strong Markov property for discrete stopping times which follows directly from the Markov property: Since $\{\tau_n<\infty\}=\{\tau<\infty\}$, we get
    \begin{align*}
        \Ee^x[\I_F f(X_{t+\tau_n})]
        &= \sum_{k=1}^\infty \Ee^x \big[\I_{F\cap \{\tau_n = k2^{-n}\}} f(X_{t+ k2^{-n}})\big]\\
        &= \sum_{k=1}^\infty \Ee^x\big[\I_{F\cap \{\tau_n = k2^{-n}\}}\Ee^{X_{k2^{-n}}} f(X_t)\big]\\
        &= \Ee^x\big[\I_F \Ee^{X_{\tau_n}}f(X_t)\big].
    \end{align*}
    In the last calculation we use that $F\cap\{\tau_n=k2^{-n}\}\in\Fscr_{k2^{-n}}$ for all $F\in\Fscr_\tau$.
\end{proof}

Once we know the generator of a Feller process, we can construct many important martingales with respect to the canonical filtration of the process.
\begin{corollary}\label{feller-07}
    Let $\Xt$ be a Feller process with generator $(A,\dom(A))$ and semigroup $(P_t)_{t\geq 0}$. For every $f\in\dom(A)$ the process
    \begin{equation}\label{feller-e06}
        M_t^{[f]} := f(X_t) - \int_0^t Af(X_r)\,dr,\quad t\geq 0,
    \end{equation}
    is a martingale for the canonical filtration $\Fcal_t^X :=\sigma(X_s,\;s\leq t)$ and any $\Pp^x$, $x\in\rd$.
\end{corollary}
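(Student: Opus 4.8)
The plan is to verify the martingale property directly from the definition, namely to show that $\Ee^x\bigl[M_t^{[f]}\mid\Fcal_s^X\bigr]=M_s^{[f]}$ for all $0\le s\le t$ and all $x\in\rd$. Since $f\in\dom(A)\subset\cont_\infty(\rd)$ and $Af\in\cont_\infty(\rd)$ are both bounded, the random variable $f(X_t)$ and the path integral $\int_0^t Af(X_r)\,dr$ (whose absolute value is dominated by $t\,\|Af\|_\infty$) are integrable, so $M_t^{[f]}\in L^1(\Pp^x)$ and each $M_t^{[f]}$ is $\Fcal_t^X$ measurable. It therefore suffices to prove that the conditional expectation of the increment vanishes, i.e.
\begin{equation*}
    \Ee^x\Bigl[f(X_t)-f(X_s)-\int_s^t Af(X_r)\,dr \:\Big|\: \Fcal_s^X\Bigr] = 0.
\end{equation*}

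First I would treat the two contributions separately. For the first term the (simple) Markov property \eqref{lpmp-e10}, in its expectation form $\Ee^x[g(X_t)\mid\Fcal_s^X]=\Ee^{X_s}g(X_{t-s})=P_{t-s}g(X_s)$, gives immediately
\begin{equation*}
    \Ee^x\bigl[f(X_t)\mid\Fcal_s^X\bigr] = P_{t-s}f(X_s).
\end{equation*}
For the integral term I would pull the conditional expectation inside the time integral by a (conditional) Fubini argument, using that $(r,\omega)\mapsto Af(X_r(\omega))$ is bounded and jointly measurable, and then apply the Markov property again to each $r\in[s,t]$:
\begin{equation*}
    \Ee^x\Bigl[\int_s^t Af(X_r)\,dr \:\Big|\: \Fcal_s^X\Bigr]
    = \int_s^t \Ee^x\bigl[Af(X_r)\mid\Fcal_s^X\bigr]\,dr
    = \int_s^t P_{r-s}Af(X_s)\,dr
    = \int_0^{t-s} P_u Af(X_s)\,du,
\end{equation*}
where the last equality is the substitution $u=r-s$.

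The two pieces are then reconciled by the semigroup calculus of Lemma~\ref{semi-09}. By \eqref{semi-e10}, applied with the parameter $t-s$ and evaluated at the point $X_s(\omega)$, we have $\int_0^{t-s}P_u Af\,du = P_{t-s}f - f$, hence $\int_0^{t-s}P_u Af(X_s)\,du = P_{t-s}f(X_s)-f(X_s)$. Subtracting this from $P_{t-s}f(X_s)-f(X_s)$ yields exactly $0$, which is the desired identity. This proves the martingale property under every $\Pp^x$.

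I expect the only genuine technical point to be the justification of interchanging conditional expectation and the $dr$-integral; this is where one must check joint measurability of $r\mapsto Af(X_r)$ and uniform integrability, both of which follow from the boundedness $\|Af\|_\infty<\infty$ and the right-continuity of $r\mapsto X_r$. Everything else is a clean application of the Markov property \eqref{lpmp-e10} together with the integrated generator formula \eqref{semi-e10}, and no branch-of-logarithm or regularity subtleties arise because $f$ and $Af$ already live in $\cont_\infty(\rd)$.
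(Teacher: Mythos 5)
Your proof is correct and follows essentially the same route as the paper: condition the increment $M_t^{[f]}-M_s^{[f]}$ on $\Fcal_s^X$, apply the Markov property to both $f(X_t)$ and the time integral, and cancel via the semigroup identity \eqref{semi-e10}. Your extra remarks on integrability and the conditional-Fubini interchange are sound refinements of details the paper leaves implicit, not a different argument.
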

\begin{proof}
    Let $s\leq t$, $f\in\dom(A)$ and write, for short, $\Fcal_s := \Fcal_s^X$ and $M_t := M_t^{[f]}$.  By the Markov property
    \begin{align*}
        \Ee^x\left[M_t-M_s\mid\Fcal_s\right]
        &= \Ee^x\left[f(X_t)-f(X_s) -\int_s^t Af(X_r)\,dr\;\middle|\;\Fcal_s\right]\\
        &= \Ee^{X_s}f(X_{t-s}) - f(X_s) - \int_0^{t-s} \Ee^{X_s} Af(X_u)\,du.
    \end{align*}
    On the other hand, we get from the semigroup identity \eqref{semi-e10}
    $$
        \int_0^{t-s}\Ee^{X_s}Af(X_u)\,du
        = \int_0^{t-s} P_uAf(X_s)\,du
        = P_{t-s}f(X_s) - f(X_s)
        = \Ee^{X_s}f(X_{t-s}) - f(X_s)
    $$
    which shows that $\Ee^x\left[M_t-M_s\mid\Fcal_s\right] = 0$.
\end{proof}

Our approach from Chapter~\ref{gen} to prove the structure of a L\'evy generator `only' uses the positive maximum principle. Therefore, it can be adapted to Feller processes provided that the domain $\dom(A)$ is rich in the sense that $\cont_c^\infty(\rd)\subset\dom(A)$. All we have to do is to take into account that Feller processes are not any longer invariant under translations. The following theorem is due to Courr\`ege \cite{courrege65} and von Waldenfels \cite{waldenfels61,waldenfels65}.
\begin{theorem}[von Waldenfels, Courr\`ege]\label{feller-11}
\index{Feller process!generator}%
\index{generator!Feller process@of a Feller process}%
\index{Courr\`ege--von Waldenfels theorem}%
\index{pseudo differential operator}%
\index{symbol!of a Feller process}%
    Let $(A,\dom(A))$ be the generator of a Feller process such that $\cont_c^{\infty}(\rd) \subset \dom(A)$. Then $A|_{\cont_c^{\infty}(\rd)}$ is a \emphh{pseudo differential operator}
    \begin{equation}\label{feller-e12}
		Au(x) = -q(x,D)u(x) := - \int q(x,\xi) \widehat{u}(\xi) \eup^{\iup   x\cdot \xi} \, d\xi
	\end{equation}
	whose \emphh{symbol} $q:\rd\times\rd\to\comp$ is a measurable function of the form
    \begin{equation}\label{feller-e14}
		q(x,\xi)
        = \underbrace{q(x,0)}_{\geq 0}
             -\iup l(x)\cdot\xi + \frac 12 \xi \cdot Q(x)\xi + \int_{y\neq 0} \big[1-\eup^{\iup   y\cdot\xi}+ \iup  y\cdot\xi \I_{(0,1)}(|y|)\big] \, \nu(x,dy)
	\end{equation}
	and $(l(x),Q(x),\nu(x,dy))$ is a L\'evy triplet\footnote{Cf.\ Definition~\ref{gen-25}} for every fixed $x \in \rd$.
\end{theorem}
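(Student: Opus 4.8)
The plan is to adapt, nearly verbatim, the proof of Theorem~\ref{gen-21} for L\'evy processes, since that argument rested only on the positive maximum principle~\eqref{PMP} and on the inclusion $\cont_c^\infty(\rd)\subset\dom(A)$ --- both of which are available here, the latter by hypothesis and the former because any Feller generator satisfies~\eqref{PMP} (Remark~\ref{semi-27}, $1^\circ$). The essential difference is that we can no longer exploit translation invariance to reduce to a single functional $A_0f=(Af)(0)$; instead we must carry the base point $x$ along as a parameter throughout. So for each fixed $x\in\rd$ I would define the linear functional $A_xf:=(Af)(x)$ on $\cont_c^\infty(\rd)$ and run the Courr\`ege--Herz machinery pointwise in $x$.

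First I would record the pointwise positivity: if $f\in\cont_c^\infty(\rd)$, $f\geq 0$ and $f(x)=0$, then $x$ is a global minimum of $f$, so $-f$ attains a nonnegative maximum at $x$, and~\eqref{PMP} gives $A(-f)(x)\leq 0$, i.e.\ $A_xf\geq 0$. This is the analogue of step $2^\circ$. Next, exactly as in steps $3^\circ$--$4^\circ$, the map $f\mapsto A_x\big(|\cdot-x|^2 f\big)$ is a positive linear functional on $\cont_c^\infty(\rd)$, hence bounded on each $\cont_c^\infty(K)$; by the Riesz representation theorem it is given by a Radon measure, which after dividing by $|y-x|^2$ produces a kernel $\nu(x,dy)$ on $\rd\setminus\{x\}$ (equivalently, centering at the origin, a L\'evy-type kernel $\nu(x,dy)$ on $\rd\setminus\{0\}$). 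The comparison argument of step $4^\circ$, applied with the maximum taken at the point $x$, yields $\int_{y\neq 0}(|y|^2\wedge 1)\,\nu(x,dy)<\infty$ for each $x$. Then steps $5^\circ$--$8^\circ$ go through unchanged for fixed $x$: one writes $S_xf:=\int[f(x+y)-f(x)-y\cdot\nabla f(x)\I_{(0,1)}(|y|)]\,\nu(x,dy)$, shows $L_x:=A_x-S_x$ is a distribution of order $2$ supported at $\{0\}$ (after recentering) which is almost positive~\eqref{PP}, and concludes via the Schwartz structure theorem that $L_x$ is a second-order differential operator with coefficients $c(x):=q(x,0)\geq 0$, $l(x)$, and a positive semidefinite matrix $Q(x)$. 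Reassembling gives~\eqref{feller-e14} with the stated triplet for each $x$, and inserting $\eup_\xi$ into $A_x$ identifies the symbol $q(x,\xi)=-\eup_{-\xi}(x)A_x\eup_\xi(x)$ and hence the pseudo differential representation~\eqref{feller-e12}, just as in step $10^\circ$.

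The place where genuine extra care is needed --- and what I expect to be the main obstacle --- is the \textbf{measurability of the symbol} $q:\rd\times\rd\to\comp$ in the joint variable $x$. In the L\'evy case the coefficients were literally constant in $x$, so this issue was invisible; here each ingredient $c(x),l(x),Q(x),\nu(x,dy)$ is constructed separately for each $x$ via Riesz representation, and one must check that $x\mapsto q(x,\xi)$ is Borel measurable. The clean way to do this is to avoid proving measurability of the triplet directly and instead observe that $q(x,\xi)=-\eup^{-\iup x\cdot\xi}A\big(\eup_\xi\phi\big)(x)$ for a suitable cutoff approximation of $\eup_\xi$, so that $q(\cdot,\xi)$ is (a limit of) values of $Af$ for fixed test inputs, and $Af\in\cont_\infty(\rd)$ is continuous; a limiting/cutoff argument then transfers measurability in $\xi$ and continuity in $x$ to the full symbol. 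I would also need to verify the uniform bound $\|Au\|_\infty\leq C\|u\|_{(2)}$ of step $10^\circ$ holds with a constant uniform in $x$, which follows because the local constants $C_K$ produced in steps $5^\circ$ and $7^\circ$ can be taken from the almost-positivity estimate and the boundedness of $A:\cont_b^2\to\cont_b$; this is what legitimises writing $A|_{\cont_c^\infty(\rd)}$ as a bona fide pseudo differential operator. The remaining verifications --- positive semidefiniteness of $Q(x)$ from~\eqref{PP} (step $8^\circ$) and $q(x,0)=c(x)\geq 0$ (Remark~\ref{gen-23}) --- are pointwise and carry over without change.
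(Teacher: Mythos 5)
Your proposal is correct and coincides with the paper's own proof: the paper likewise runs the Courr\`ege machinery of Theorem~\ref{gen-21} pointwise in the base point, substituting $A_xf:=(Af)(x)$ and $A_{xx}f:=A_x(|\cdot-x|^2f)$ for $A_0$ and $A_{00}$, repeating steps $1^\circ$--$4^\circ$ for $\nu(x,dy)$ and $6^\circ$--$9^\circ$ for $(l(x),Q(x))$, with \eqref{PP} supplying the order-$2$ distribution estimate in steps $3^\circ$ and $7^\circ$ and Remark~\ref{gen-23} giving $q(x,0)\geq 0$. Two small overstatements in your measurability discussion deserve correction: the cutoff argument $q(x,\xi)=-\lim_{r\to\infty}\eup_{-\xi}(x)A[\chi_r\eup_\xi](x)$ yields Borel measurability of $x\mapsto q(x,\xi)$ as a pointwise limit of continuous functions, but \emph{not} continuity --- Remarks~\ref{feller-20} and~\ref{feller-21} show that $q(\cdot,\xi)$ is in general only upper semicontinuous; and the bound $\|Au\|_\infty\leq C\|u\|_{(2)}$ is uniform only for $x$ in compact sets (Banach--Steinhaus, Corollary~\ref{feller-12}), since a globally uniform constant would force bounded coefficients (Lemma~\ref{feller-15}). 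Neither slip affects the theorem as stated, which requires only measurability of $q$ and the locally bounded estimate $|q(x,\xi)|\leq\gamma(x)(1+|\xi|^2)$ of Corollary~\ref{feller-13}.
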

If we insert \eqref{feller-e14} into \eqref{feller-e12} and invert the Fourier transform we obtain the following \emphh{integro-differential representation} of the Feller generator $A$:
\index{generator!Feller process@of a Feller process}%
\begin{equation}\label{feller-e15}\begin{aligned}
		Af(x)
        &= l(x)\cdot \nabla f(x) + \frac{1}{2} \nabla\cdot Q(x) \nabla f(x) \\
        &\quad\mbox{}+ \int\limits_{y \neq 0} \big[f(x+y)-f(x)- \nabla f(x)\cdot y\I_{(0,1)}(|y|)\big]\, \nu(x,dy).
\end{aligned}\end{equation}
This formula obviously extends to all functions $f\in \cont_b^2(\rd)$. In particular, we may use the function $f(x) = \eup_\xi(x) = \eup^{\iup x\cdot\xi}$, and get\footnote{This should be compared with Definition~\ref{gen-13} and the subsequent comments.}
\begin{equation}\label{feller-e16}
		\eup_{-\xi}(x)A\eup_\xi(x)
        = -q(x,\xi).
\end{equation}

\begin{proof}[Proof of Theorem~\ref{feller-11} \textup{(}sketch\textup{)}. For a worked-out version see\ \mbox{\upshape\cite[Chapter 2.3]{schilling-lm}}]
    In the proof of Theorem~\ref{gen-21} use, instead of $A_0$ and $A_{00}$
    $$
        A_0f \rightsquigarrow A_xf := (Af)(x)
        \et
        A_{00}f \rightsquigarrow A_{xx}f := A_x(|\cdot - x|^2 f)
    $$
    for every $x\in\rd$. This is needed since $P_t$ and $A$ are not any longer translation invariant, i.e.\ we cannot shift $A_0f$ to get $A_xf$. Then follow the steps $1^\circ$--$\,4^\circ$ to get $\nu(dy) \rightsquigarrow \nu(x,dy)$ and $6^\circ$--$\,9^\circ$ for $(l(x),Q(x))$. Remark~\ref{gen-23} shows that the term $q(x,0)$ is non-negative.

    The key observation is, as in the proof of Theorem~\ref{gen-21}, that we can use in steps $3^\circ$ and $7^\circ$ the positive maximum principle\footnote{To be precise: its weakened form \eqref{PP}, cf.\ page \pageref{PP}.} to make sure that $A_xf$ is a distribution of order $2$, i.e.\
    $$
        |A_xf| = |L_xf + S_xf| \leq C_K\|f\|_{(2)}\fa f\in\cont_c^\infty(K) \text{\ \ and all compact sets\ \ } K\subset\rd.
    $$
    Here $L_x$ is the local part with support in $\{x\}$ accounting for $(q(x,0),l(x),Q(x))$, and $S_x$ is the non-local part supported in $\rd\setminus\{x\}$ giving $\nu(x,dy)$.
\end{proof}

With some abstract functional analysis we can show some (local) boundedness properties of $x\mapsto Af(x)$ and $(x,\xi)\mapsto q(x,\xi)$.
\begin{corollary}\label{feller-12}
    In the situation of Theorem~\ref{feller-11}, the condition \eqref{PP} shows that
    \begin{gather}\label{feller-e17}
        \sup_{|x|\leq r} |Af(x)| \leq C_r\|f\|_{(2)} \fa f\in \cont_c^\infty(\overline{B_r(0)})
    \intertext{and the positive maximum principle \eqref{PMP} gives}
    \label{feller-e18}
        \sup_{|x|\leq r} |Af(x)| \leq C_{r,A}\|f\|_{(2)} \fa f\in \cont_c^\infty(\rd),\; r> 0.
    \end{gather}
\end{corollary}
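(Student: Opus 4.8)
The plan is to reduce both estimates to the integro--differential representation \eqref{feller-e15} of $A$ on $\cont_c^\infty(\rd)$, the positive maximum principle, and the single soft fact that for every \emph{fixed} $g\in\cont_c^\infty(\rd)\subseteq\dom(A)$ the function $Ag\in\cont_\infty(\rd)$ is bounded on every ball. For a fixed $x$ the representation \eqref{feller-e15} already gives $|Af(x)|\le\kappa(x)\|f\|_{(2)}$ with
$$\kappa(x)=|l(x)|+\tfrac12\|Q(x)\|+\tfrac12\int_{0<|y|<1}|y|^2\,\nu(x,dy)+2\,\nu(x,\{|y|\ge1\}),$$
so everything comes down to bounding these coefficients uniformly over $|x|\le r$. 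The whole difficulty is this uniformity in $x$.

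\textbf{Step 1 (proof of \eqref{feller-e17} from \eqref{PP}).} I would fix a cut-off $\phi\in\cont_c^\infty(\rd)$ with $\phi\equiv1$ on $\overline{B_{r+1}(0)}$ and $\supp\phi\subseteq\overline{B_{r+2}(0)}$, together with the fixed functions $g_{00}:=\phi$, $g_k:=(y\mapsto y_k\phi(y))$, $g_0:=(y\mapsto|y|^2\phi(y))$ in $\cont_c^\infty(\rd)$. For $|x|\le r$ the $x$-centred test functions $h^{(0)}_x:=(y\mapsto|y-x|^2\phi(y))$ and $h^{(k)}_x:=(y\mapsto(y_k-x_k)\phi(y))$ expand as finite linear combinations of $g_0,g_k,g_{00}$ with coefficients that are polynomials in $x$; hence $Ah^{(0)}_x(x)$ and $Ah^{(k)}_x(x)$ are the same finite combinations of the bounded continuous functions $Ag_0,Ag_k,Ag_{00}$ evaluated at $x$, and are therefore bounded uniformly in $|x|\le r$ by a constant $C_r$ depending only on $r$ and on $\|Ag_0\|_\infty,\|Ag_k\|_\infty,\|Ag_{00}\|_\infty$. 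Given $f\in\cont_c^\infty(\overline{B_r(0)})$ I would form
$$w_x:=2\|f\|_{(2)}\,h^{(0)}_x+f(x)\,g_{00}+\sum_{k=1}^d\partial_kf(x)\,h^{(k)}_x-f\ \in\cont_c^\infty(\rd)$$
and verify, by a two-region Taylor estimate (on $\overline{B_{r+1}(0)}$, where $\phi\equiv1$ and $\supp f\subseteq\overline{B_r(0)}$, and on its complement, where $f\equiv0$), that $w_x\ge0=w_x(x)$ and $\nabla w_x(x)=0$. Then $x$ is a global minimum, the non-local part of $A$ contributes $\int w_x(x+y)\,\nu(x,dy)\ge0$, and \eqref{PP} makes the local part non-negative, so $Aw_x(x)\ge0$. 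By linearity this reads $Af(x)\le 2\|f\|_{(2)}Ah^{(0)}_x(x)+f(x)A\phi(x)+\sum_k\partial_kf(x)\,Ah^{(k)}_x(x)\le C_r'\|f\|_{(2)}$; applying the same to $-f$ yields \eqref{feller-e17}.

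\textbf{Step 2 (proof of \eqref{feller-e18} from \eqref{PMP}).} Here the one genuinely new ingredient is a uniform bound on the large-jump mass $\nu(x,\{|y|\ge1\})$. I would run Courr\`ege's comparison (step $4^\circ$ of Theorem~\ref{gen-21}): fix $f_1,g_1\in\cont_c^\infty(\rd)$ with $0\le f_1,g_1\le1$, $f_1(0)=1$, $\supp f_1\subseteq\overline{B_1(0)}$, $\supp g_1\subseteq\overline{B_1(0)}^{\,c}$; then $y\mapsto\|g_1\|_\infty f_1(y-x)+g_1(y-x)$ attains its supremum at $x$, so \eqref{PMP} gives $A[g_1(\cdot-x)](x)\le-A[f_1(\cdot-x)](x)$, and letting $g_1\uparrow\I_{\{|y|>1\}}$ yields $\nu(x,\{|y|>1\})\le-A[f_1(\cdot-x)](x)$. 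The right-hand side is $A$ applied to a \emph{translate} of the fixed function $f_1$, whose support lies in $\overline{B_{r+1}(0)}$ when $|x|\le r$; translates do not collapse to finitely many fixed functions, and this is exactly why I prove \eqref{feller-e17} first: applied at the enlarged radius $r+1$ it bounds $-A[f_1(\cdot-x)](x)$ and gives $\sup_{|x|\le r}\nu(x,\{|y|\ge1\})\le C_{r+1}\|f_1\|_{(2)}=:N_r$. The local coefficients $|l(x)|$, $\|Q(x)\|$ and $\int_{0<|y|<1}|y|^2\nu(x,dy)$ are then read off (as in the Feller analogue of Corollary~\ref{gen-27}) from $A$ applied at $x$ to the translated fixed functions $y\mapsto|y-x|^2\psi(y-x)$ and $y\mapsto(y_k-x_k)\psi(y-x)$, with $\psi\equiv1$ on $\overline{B_1(0)}$; each such value is again bounded uniformly by \eqref{feller-e17} at radius $r+2$, while the leftover integrals over $\{|y|\ge1\}$ are dominated by $N_r$. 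Positive semidefiniteness of $Q(x)$ and non-negativity of the small-jump second moment let me peel off each term, so $\sup_{|x|\le r}\kappa(x)\le C_{r,A}<\infty$, which combined with $|Af(x)|\le\kappa(x)\|f\|_{(2)}$ gives \eqref{feller-e18}.

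\textbf{Main obstacle.} The crux everywhere is uniformity in $x$, and a naive Banach--Steinhaus argument on the $\|\cdot\|_{(2)}$-completion of $\cont_c^\infty(\rd)$ is circular, since pointwise boundedness of $f\mapsto Af(x)$ is only available on the dense smooth subspace. The device that breaks the circle is the observation that all $x$-centred \emph{polynomial} test functions reduce to finitely many fixed $\cont_c^\infty$ functions with polynomial-in-$x$ coefficients, so their $A$-images are automatically bounded on balls; the only inputs that cannot be so expanded are the \emph{translated} peak functions needed to control the large-jump mass in Step 2, and the role of \eqref{feller-e17}, invoked at a slightly larger radius, is precisely to supply the bound for those translates.
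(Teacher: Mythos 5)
Your proposal is correct, and for \eqref{feller-e17} it takes a genuinely different route from the paper. The paper gets \eqref{feller-e17} from the pointwise bounds $|A_xf|\leq c_{r,x}\|f\|_{(2)}$ (supplied by the Courr\`ege-type proof of Theorem~\ref{feller-11}) together with the Banach--Steinhaus theorem on the $\|\cdot\|_{(2)}$-completion of $\cont_c^\infty(\overline{B_r(0)})$; your barrier $w_x$, assembled from finitely many \emph{fixed} test functions with polynomial-in-$x$ coefficients, replaces this by the soft fact that $Ag\in\cont_\infty(\rd)$ is bounded for each fixed $g\in\cont_c^\infty(\rd)$. This buys something real: as you observe, the pointwise boundedness $\sup_{|x|\leq r}|A_xf|<\infty$ required by Banach--Steinhaus is only available on the dense smooth subspace of the completion, a point the paper's one-line appeal glosses over (it can be repaired, e.g.\ by equicontinuity on the Fr\'echet space $\cont_c^\infty(\overline{B_r(0)})$ followed by a reduction of the order of the estimate, but your construction avoids the detour). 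For \eqref{feller-e18} you are in substance doing what the paper does: it, too, controls the large-jump mass through the step-$4^\circ$ comparison of Theorem~\ref{gen-21} applied to translated peak functions --- its display $\int_{|y|>1}\nu(x,dy)\leq A\phi_0(x)$ must be read as $-A[\phi_0(\cdot-x)](x)$, and the finiteness of its supremum over $|x|\leq r$ rests, exactly as you say, on \eqref{feller-e17} at an enlarged radius --- and then splits $Af=A[\chi f]+A[(1-\chi)f]$ with a cut-off $\chi$, whereas you bound the full coefficient family $(l(x),Q(x),\nu(x,dy))$ of \eqref{feller-e15} uniformly on the ball; slightly longer, but it makes the dependence of $C_{r,A}$ on the triplet explicit. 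Two cosmetic repairs: in Step~1 check the constant $2$ against your Taylor estimate (with $\|f\|_{(2)}$ the full sum of sup-norms it does work), and in Step~2 the limit $g_1\uparrow\I_{\{|y|>1\}}$ controls $\nu(x,\{|y|>1\})$ rather than $\nu(x,\{|y|\geq 1\})$, so run the comparison with peak functions supported in $\overline{B_{1/2}(0)}$ or use $\nu(x,\{|y|\geq 1\})\leq\nu(x,\{|y|>1/2\})$.
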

\begin{proof}
	In the above sketched analogue of the proof of Theorem~\ref{gen-21} we have seen that the family of linear functionals
    $$
		\left\{\cont_c^{\infty}(\overline{B_r(0)}) \ni f \mapsto A_x f\::\: x \in \overline{B_r(0)}\right\}
	$$
	where $A_xf := (Af)(x)$ satisfies
    $$
		|A_x f| \leq c_{r,x} \|f\|_{(2)}, \quad f \in \cont_c^{\infty}(\overline{B_r(0)}),
	$$
    i.e.\ $A_x: (\cont_b^2(\overline{B_r(0)}),\|\cdot\|_{(2)}) \to(\real,|\cdot|)$ is bounded. By the Banach--Steinhaus theorem (uniform boundedness principle)
    $$
		\sup_{|x| \leq r} |A_x f| \leq C_r \|f\|_{(2)}.
	$$

    Since $A$ also satisfies the positive maximum principle \eqref{PMP}, we know from step $4^\circ$ of the (suitably adapted) proof of Theorem~\ref{gen-21} that
    $$
        \int_{|y|>1}\nu(x,dy) \leq A\phi_0(x)
        \text{\ \ for some\ \ } \phi_0\in\cont_c(\overline{B_1(0)}).
    $$
    Let $r>1$, pick $\chi=\chi_r\in \cont_c^\infty(\rd)$ such that $\I_{B_{2r}(0)}\leq\chi\leq\I_{B_{3r}(0)}$. We get for $|x|\leq r$
    \begin{align*}
        Af(x)
        &= A[\chi f](x) + A[(1-\chi)f](x)\\
        &= \smash[b]{A[\chi f](x) + \int_{|y|\geq r} \big[(1-\chi(x+y))f(x+y)-\underbrace{(1-\chi(x))f(x)}_{=0}\big]\,\nu(x,dy)},
    \end{align*}
    and so
    \begin{gather*}
        \sup_{|x|\leq r}|Af(x)|
        \leq C_r\|\chi f\|_{(2)} + \|f\|_\infty \|A\phi_0\|_\infty
        \leq C_{r,A}\|f\|_{(2)}.
    \qedhere
    \end{gather*}
\end{proof}
\begin{corollary}\label{feller-13}
    In the situation of Theorem~\ref{feller-11} there exists a locally bounded nonnegative function $\gamma:\rd\to[0,\infty)$ such that
    \begin{equation}\label{feller-e19}
	   |q(x,\xi)| \leq \gamma(x) (1+|\xi|^2), \quad x,\xi \in \rd.
	\end{equation}
\end{corollary}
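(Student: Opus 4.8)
The plan is to separate the estimate into a pointwise (in $x$) polynomial bound with an $x$-dependent constant, and a verification that this constant may be chosen locally bounded. For the pointwise bound I would fix $x$ and use that, by Theorem~\ref{feller-11}, the triplet $(l(x),Q(x),\nu(x,\cdot))$ is a genuine L\'evy triplet. Consequently $\tilde q(x,\xi):=q(x,\xi)-q(x,0)$ is precisely of the form \eqref{cons-e02}, so by Theorem~\ref{cons-11} it is the characteristic exponent of some L\'evy process, and Theorem~\ref{gen-05} applies verbatim to give $|\tilde q(x,\xi)|\le 2\sup_{|\eta|\le1}|\tilde q(x,\eta)|\,(1+|\xi|^2)$. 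Since $q(x,0)\ge0$ and $q(x,0)\le\sup_{|\eta|\le1}|q(x,\eta)|$, a short estimate yields $|q(x,\xi)|\le \gamma(x)(1+|\xi|^2)$ with the nonnegative choice $\gamma(x):=5\sup_{|\eta|\le1}|q(x,\eta)|$. It then remains only to show that $\gamma$ is locally bounded, i.e.\ that $\sup_{|x|\le r,\,|\eta|\le1}|q(x,\eta)|<\infty$ for every $r>0$.

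To control this supremum I would use the symbol identity \eqref{feller-e16}, which gives $|q(x,\eta)|=|A\eup_\eta(x)|$ since $|\eup_{-\eta}(x)|=1$. The difficulty is that $\eup_\eta\notin\cont_c^\infty(\rd)$, so \eqref{feller-e18} of Corollary~\ref{feller-12} cannot be applied directly; I would therefore localize. Fix $\chi\in\cont_c^\infty(\rd)$ with $\I_{B_{r+1}(0)}\le\chi\le\I_{B_{r+2}(0)}$ and split $A\eup_\eta(x)=A[\chi\eup_\eta](x)+A[(1-\chi)\eup_\eta](x)$. For the first term $\chi\eup_\eta\in\cont_c^\infty(\rd)$, and \eqref{feller-e18} together with the Leibniz rule gives $\sup_{|x|\le r}|A[\chi\eup_\eta](x)|\le C_{r,A}\|\chi\eup_\eta\|_{(2)}\le c_\chi(1+|\eta|^2)$, which is bounded uniformly for $|\eta|\le1$. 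For the second term I would note that $(1-\chi)\eup_\eta$, together with its first and second derivatives, vanishes on $B_{r+1}(0)$; hence for $|x|\le r$ the local (drift, diffusion, killing) part of \eqref{feller-e15} contributes nothing, and only the integral term survives, so that $|A[(1-\chi)\eup_\eta](x)|\le\int_{y\neq0}|(1-\chi)(x+y)|\,\nu(x,dy)\le\int_{|y|\ge1}\nu(x,dy)$, because $(1-\chi)(x+y)\neq0$ forces $|y|\ge1$.

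Everything thus reduces to the uniform tail estimate $\sup_{|x|\le r}\int_{|y|\ge1}\nu(x,dy)<\infty$, which I expect to be the main obstacle. I would obtain it from the positive maximum principle, exactly as in step $4^\circ$ of the proof of Theorem~\ref{gen-21}: pick $\psi\in\cont_c^\infty(\rd)$ with $\I_{B_{1/2}(0)}\le\psi\le\I_{B_1(0)}$ and set $h_x:=\psi(\cdot-x)$, so that $h_x(x)=1=\sup h_x$. Since $h_x\equiv1$ on $B_{1/2}(x)$ its gradient and Hessian vanish at $x$, and \eqref{feller-e15} (with the killing contribution $-q(x,0)h_x(x)\le0$) reduces to $Ah_x(x)=-q(x,0)+\int_{y\neq0}[\psi(y)-1]\,\nu(x,dy)\le-\int_{|y|\ge1}\nu(x,dy)$, because $\psi(y)-1=-1$ for $|y|\ge1$ and $\psi(y)-1\le0$ elsewhere. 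Hence $\int_{|y|\ge1}\nu(x,dy)\le|Ah_x(x)|$, and applying \eqref{feller-e18} to the compactly supported $h_x$, whose norm $\|h_x\|_{(2)}=\|\psi\|_{(2)}$ is translation invariant, gives $\sup_{|x|\le r}\int_{|y|\ge1}\nu(x,dy)\le C_{r,A}\|\psi\|_{(2)}<\infty$. Combining the three bounds shows $\sup_{|x|\le r,\,|\eta|\le1}|q(x,\eta)|<\infty$, so $\gamma$ is locally bounded and the proof is complete. The delicate point is that the test function $h_x$ moves with $x$; this is exactly why one needs \eqref{feller-e18} (uniform over $|x|\le r$ with an $x$-independent constant, furnished by the Banach--Steinhaus argument of Corollary~\ref{feller-12}) rather than the weaker support-restricted bound \eqref{feller-e17}.
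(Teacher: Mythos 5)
Your proof is correct, but it takes a longer route than the paper's. The paper's actual proof is a two-liner: using \eqref{feller-e18} one extends $A$ by continuity to $\cont_b^2(\rd)$, so that $-q(x,\xi)=\eup_{-\xi}(x)A\eup_\xi(x)$ makes sense directly, and then $\sup_{|x|\leq r}|A\eup_\xi(x)|\leq C_{r,A}\|\eup_\xi\|_{(2)}$ finishes the job, since $\|\eup_\xi\|_{(2)}$ is a quadratic polynomial in $\xi$ -- the local boundedness in $x$ and the quadratic growth in $\xi$ come out of the \emph{same} estimate in one stroke. You instead refuse to apply \eqref{feller-e18} to the non-compactly-supported $\eup_\eta$ and localize by hand: the split $A\eup_\eta=A[\chi\eup_\eta]+A[(1-\chi)\eup_\eta]$, the observation that the second term is controlled by the tail mass $\int_{|y|\geq 1}\nu(x,dy)$, and the uniform tail bound via \eqref{PMP} applied to the translated bump $h_x=\psi(\cdot-x)$ (where, as you rightly stress, the $x$-independence of $C_{r,A}$ in \eqref{feller-e18} and the translation invariance of $\|h_x\|_{(2)}$ are what save the day) are all sound; this is in effect a self-contained justification of the extension step that the paper declares ``obvious''. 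What your route buys is transparency about \emph{why} $A$ acts on $\cont_b^2$-functions and where the tail of $\nu(x,dy)$ enters; what it costs is length, and one genuinely superfluous detour: the first paragraph through Theorems~\ref{cons-11} and~\ref{gen-05} is not needed, because your own localization already yields $|q(x,\xi)|\leq c_\chi(1+|\xi|^2)+\int_{|y|\geq 1}\nu(x,dy)$ uniformly for $|x|\leq r$ and \emph{all} $\xi\in\rd$ (the Leibniz bound $\|\chi\eup_\xi\|_{(2)}\leq c_\chi(1+|\xi|^2)$ holds for every $\xi$, and the tail term is $\xi$-free), so the pointwise reduction to the L\'evy-process estimate of Theorem~\ref{gen-05} can simply be deleted. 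A small point in your favour: you correctly reinstate the killing term $-q(x,0)f(x)$, which is suppressed in the printed formula \eqref{feller-e15}, and check that it has the right sign in the $Ah_x(x)$ computation.
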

\begin{proof}
    Using \eqref{feller-e18} we can extend $A$ by continuity to $\cont_b^2(\rd)$ and, therefore,
    $$
		-q(x,\xi) = \eup_{-\xi}(x) A \eup_{\xi}(x),\quad \eup_\xi(x) = \eup^{\iup x\cdot\xi}
	$$
    makes sense. Moreover, we have $\sup_{|x|\leq r} |A\eup_\xi(x)|\leq C_{r,A}\|\eup_\xi\|_{(2)}$ for any $r\geq 1$; since $\|\eup_\xi\|_{(2)}$ is a polynomial of order $2$ in the variable $\xi$, the claim follows.
\end{proof}

For a L\'evy process we have $\psi(0)=0$ since $\Pp(X_t\in\rd)=1$ for all $t\geq 0$, i.e.\ the process $X_t$ does not explode in finite time. For Feller processes the situation is more complicated. We need the following technical lemmas.

\begin{lemma}\label{feller-15}
    Let $q(x,\xi)$ be the symbol of \textup{(}the generator of\textup{)} a Feller process as in Theorem~\ref{feller-11} and $F\subset \rd$ be a closed set. Then the following assertions are equivalent.
    \begin{enumerate}
	\item[\upshape a)]
        $\displaystyle |q(x,\xi)| \leq C (1+|\xi|^2)$ \ for all $x,\xi \in \rd$ where $\displaystyle C = 2\sup_{|\xi| \leq 1}\sup_{x \in F} |q(x,\xi)|$.
    \item[\upshape b)]
        $\displaystyle\sup_{x \in F} q(x,0) + \sup_{x \in F} |l(x)|+ \sup_{x \in F} \|Q(x)\|+\sup_{x \in F} \int_{y \neq 0} \frac{|y|^2}{1+|y|^2} \, \nu(x,dy)<\infty$.
	\end{enumerate}
\end{lemma}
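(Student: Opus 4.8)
The plan is to reduce the equivalence to a single scalar condition and then treat the two resulting implications separately. Write
\[
  K_F := \sup_{x\in F}\ \sup_{|\xi|\le 1}|q(x,\xi)|.
\]
The reduction rests on the observation that, for each fixed $x$, the map $\xi\mapsto q(x,\xi)$ is a continuous negative definite function: by \eqref{feller-e14} it is the L\'evy--Khintchine exponent of the triplet $(l(x),Q(x),\nu(x,\cdot))$ together with the non-negative constant $q(x,0)$, so $\eup^{-tq(x,\cdot)}$ is the characteristic function of a sub-probability measure for every $t>0$. The subadditivity argument of Theorem~\ref{gen-05}---with Lemma~\ref{gen-03} applied to finite instead of probability measures---then carries over and gives, for every $x$,
\[
  |q(x,\xi)| \le 2\sup_{|\eta|\le 1}|q(x,\eta)|\,(1+|\xi|^2),\qquad \xi\in\rd .
\]
Since this holds unconditionally, statement a) (with its constant $C=2K_F$) is nothing but the assertion $K_F<\infty$. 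It therefore suffices to prove $K_F<\infty \Longleftrightarrow$ b).

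For b)$\Rightarrow$a) I would argue directly from \eqref{feller-e14}. Abbreviating by $M$ the (finite) sum of the four suprema in b) and using the triangle inequality together with the elementary bounds $|1-\eup^{\iup y\cdot\xi}+\iup y\cdot\xi\,\I_{(0,1)}(|y|)|\le c\,(1+|\xi|^2)\,\tfrac{|y|^2}{1+|y|^2}$ (a second-order Taylor estimate for $|y|<1$, and $\tfrac{|y|^2}{1+|y|^2}\ge\tfrac12$ for $|y|\ge1$), one obtains $|q(x,\xi)|\le C'(1+|\xi|^2)$ with $C'$ depending only on $M$ and $d$, uniformly in $x\in F$. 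In particular $K_F\le 2C'<\infty$, and the reduction of the first paragraph yields a).

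The substantial direction is a)$\Rightarrow$b), i.e.\ $K_F<\infty\Rightarrow$ b), where I would recover the triplet from the symbol. Taking $\xi=0$ gives $q(x,0)=|q(x,0)|\le K_F$. Splitting off the real part and integrating over the unit ball,
\[
  \int_{|\xi|\le 1}\!\big(\Re q(x,\xi)-q(x,0)\big)\,d\xi
  = \tfrac12\!\int_{|\xi|\le 1}\!\xi\cdot Q(x)\xi\,d\xi
    + \int_{y\neq 0}\!\Big(\int_{|\xi|\le 1}\!\big(1-\cos(y\cdot\xi)\big)\,d\xi\Big)\nu(x,dy),
\]
whose left-hand side is at most $2K_F|B_1(0)|$. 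The coercivity estimates $\int_{|\xi|\le1}\xi\cdot Q(x)\xi\,d\xi \ge c_d\operatorname{tr}Q(x)\ge c_d\|Q(x)\|$ (using that $Q(x)$ is positive semidefinite) and $\int_{|\xi|\le1}(1-\cos(y\cdot\xi))\,d\xi\ge c\,\tfrac{|y|^2}{1+|y|^2}$ then bound $\sup_{x\in F}\|Q(x)\|$ and $\sup_{x\in F}\int\tfrac{|y|^2}{1+|y|^2}\nu(x,dy)$. Finally, evaluating the imaginary part at $\xi=e_j$ and noting that $\int\bigl|\xi\cdot y\,\I_{(0,1)}(|y|)-\sin(y\cdot\xi)\bigr|\,\nu(x,dy)$ is controlled by the jump integral just bounded (Taylor for $|y|<1$, $|\sin|\le1$ for $|y|\ge1$), one gets $|l_j(x)|\le K_F+\mathrm{const}$, hence $\sup_{x\in F}|l(x)|<\infty$.

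The main obstacle is this last direction: one must extract four separate quantities from the single scalar bound $K_F<\infty$, and the crux is the pair of coercivity lower bounds---that the unit-ball average of $\Re q(x,\cdot)-q(x,0)$ simultaneously dominates the Gaussian part $\|Q(x)\|$ and the truncated second moment $\int\tfrac{|y|^2}{1+|y|^2}\nu(x,dy)$---together with the clean separation of the drift $l(x)$ from the jump contribution in the imaginary part. A secondary, routine point is checking that Lemma~\ref{gen-03} and the growth estimate of Theorem~\ref{gen-05} survive the passage from probability measures to the sub-probability measures $\eup^{-tq(x,\cdot)}$ forced by the killing term $q(x,0)$.
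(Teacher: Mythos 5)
Your proposal is correct and follows essentially the same route as the paper's (outlined) proof: the direction b)$\Rightarrow$a) is, as in the paper, the per-$x$ subadditivity bound of Theorem~\ref{gen-05} together with direct estimates in the L\'evy--Khintchine representation \eqref{feller-e14}, and for a)$\Rightarrow$b) both arguments integrate $\Re q(x,\xi)-q(x,0)$ against a finite weight in $\xi$ with finite second moment so as to dominate the Gaussian part and the truncated second moment of $\nu(x,\cdot)$ simultaneously --- the paper via the exact identity $\frac{|y|^2}{1+|y|^2}=\int\left[1-\cos(y\cdot\xi)\right]g(\xi)\,d\xi$ with a Gaussian-mixture weight $g$, you via the indicator of the unit ball plus the (true and standard) coercivity lower bounds, which is the same mechanism in different clothing. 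Your two flagged points are indeed routine and resolve as you expect: the separation of $l(x)$ through the imaginary part is how the complete proof in \cite{schilling-schnurr} proceeds, and the subadditivity of $\sqrt{|q(x,\cdot)|}$ survives the killing term because $q(x,0)\geq 0$ and $\Re q(x,\xi)\geq q(x,0)$ allow one to drop the extra $q(x,0)$ terms arising when the Cauchy--Schwarz inequality of Lemma~\ref{gen-03} is redone for sub-probability measures, so even the exact constant $C=2\sup_{|\xi|\leq 1}\sup_{x\in F}|q(x,\xi)|$ is preserved.
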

If
\index{bounded coefficients}%
\index{symbol!bounded coefficients}%
$F=\rd$, then the equivalent properties of Lemma~\ref{feller-15} are often referred to as `the symbol has \emphh{bounded coefficients}'.
\begin{proof}[Outline of the proof \textup{(}see \textup{\cite[\emph{Appendix}]{schilling-schnurr}} for a complete proof\textup{)}]
    The direction b)$\Rightarrow$a) is proved as Theorem~\ref{gen-05}. Observe that $\xi\mapsto q(x,\xi)$ is for fixed $x$ the characteristic exponent of a L\'evy process. The finiteness of the constant $C$ follows from the assumption b) and the L\'evy--Khintchine formula \eqref{feller-e14}.

    For the converse a)$\Rightarrow$b) we note that the integrand appearing in \eqref{feller-e14} can be estimated by $c\frac{|y|^2}{1+|y|^2}$ which is itself a L\'evy exponent:
    $$
		\frac{|y|^2}{1+|y|^2} = \int \left[1-\cos(y\cdot \xi)\right] g(\xi) \, d\xi,\quad
        g(\xi) = \frac 12\int_0^\infty (2\pi\lambda)^{-d/2} \eup^{-|\xi|^2/2\lambda} \eup^{-\lambda/2}\,d\lambda.
	$$
	Therefore, by Tonelli's theorem,
    \begin{align*}
		\int_{y \neq 0} \frac{|y|^2}{1+|y|^2} \, \nu(x,dy)
		&= \iint_{y\neq 0} \left[1-\cos(y\cdot \xi)\right] \,\nu(x,dy) \, g(\xi) \, d\xi \\
		&= \int g(\xi) \left( \Re q(x,\xi)- \frac{1}{2} \xi \cdot Q(x) \xi - q(x,0) \right) \, d\xi.
    \qedhere
	\end{align*}
\end{proof}

\begin{lemma}\label{feller-17}
    Let $A$ be the generator of a Feller process, assume that $\cont_c^\infty(\rd)\subset\dom(A)$ and denote by $q(x,\xi)$ the symbol of $A$.
    For any cut-off function $\chi\in \cont_c^\infty(\rd)$ satisfying $\I_{B_1(0)}\leq\chi\leq\I_{B_2(0)}$ and $\chi_r(x) := \chi(x/r)$ one has
    \begin{gather}
    \label{feller-e20}
        \left|q(x,D)(\chi_r\eup_\xi)(x)\right|
        \leq 4\sup_{|\eta|\leq 1}|q(x,\eta)| \int_{\rd} \left[1+ r^{-2}|\rho|^2+|\xi|^2\right] \big|\widehat\chi(\rho)\big|\,d\rho,\\
    \label{feller-e22}
        \lim_{r\to\infty} A(\chi_r \eup_\xi)(x) = -\eup_\xi(x)q(x,\xi)\quad\text{for all \ } x,\xi\in\rd.
    \end{gather}
\end{lemma}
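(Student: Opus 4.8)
The plan is to compute $q(x,D)(\chi_r\eup_\xi)(x)$ explicitly from the Fourier representation \eqref{feller-e12}, reduce everything to a single integral against $\widehat\chi$, and then read off both \eqref{feller-e20} and \eqref{feller-e22} from that one expression. Since $\chi_r\eup_\xi\in\cont_c^\infty(\rd)\subset\dom(A)$, we have $A(\chi_r\eup_\xi)=-q(x,D)(\chi_r\eup_\xi)$, so it suffices to analyse the latter. First I would compute the Fourier transform of the modulated cut-off: with the convention $\eup_\xi(x)=\eup^{\iup\xi\cdot x}$ and the substitution $x=ry$ one finds $\widehat{\chi_r\eup_\xi}(\eta)=r^d\widehat\chi\big(r(\eta-\xi)\big)$. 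Inserting this into \eqref{feller-e12} and substituting $\rho=r(\eta-\xi)$ (so $\eta=\xi+\rho/r$, $d\eta=r^{-d}\,d\rho$) yields the key identity
\begin{equation*}
    q(x,D)(\chi_r\eup_\xi)(x)=\eup_\xi(x)\int_{\rd} q\big(x,\xi+\tfrac\rho r\big)\,\widehat\chi(\rho)\,\eup^{\iup x\cdot\rho/r}\,d\rho,
\end{equation*}
which carries both assertions.

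For \eqref{feller-e20} I would bound the integrand pointwise. As $|\eup_\xi(x)|=|\eup^{\iup x\cdot\rho/r}|=1$, only $|q(x,\xi+\rho/r)|$ needs to be controlled. For fixed $x$ the function $q(x,\cdot)-q(x,0)$ is the characteristic exponent of a L\'evy process (its L\'evy--Khintchine form is \eqref{feller-e14}), so the subadditivity and polynomial-growth estimate of Theorem~\ref{gen-05} applies and gives $|q(x,\zeta)|\le 2\sup_{|\eta|\le1}|q(x,\eta)|(1+|\zeta|^2)$. Combining this with the elementary inequality $1+|\xi+\rho/r|^2\le 2\big(1+|\xi|^2+r^{-2}|\rho|^2\big)$ produces $|q(x,\xi+\rho/r)|\le 4\sup_{|\eta|\le1}|q(x,\eta)|\,(1+|\xi|^2+r^{-2}|\rho|^2)$, and integrating against $|\widehat\chi(\rho)|$ gives exactly \eqref{feller-e20}.

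For \eqref{feller-e22} I would pass to the limit $r\to\infty$ in the key identity by dominated convergence. Pointwise in $\rho$ we have $q(x,\xi+\rho/r)\to q(x,\xi)$ by continuity of $q(x,\cdot)$ and $\eup^{\iup x\cdot\rho/r}\to1$; for $r\ge1$ the integrand is dominated by $4\sup_{|\eta|\le1}|q(x,\eta)|\,(1+|\xi|^2+|\rho|^2)\,|\widehat\chi(\rho)|$, which is integrable because $\widehat\chi\in\Scal(\rd)$. Hence the limit equals $\eup_\xi(x)\,q(x,\xi)\int_{\rd}\widehat\chi(\rho)\,d\rho$. By Fourier inversion and $\I_{B_1(0)}\le\chi\le\I_{B_2(0)}$ one has $\int_{\rd}\widehat\chi(\rho)\,d\rho=\chi(0)=1$, so $q(x,D)(\chi_r\eup_\xi)(x)\to\eup_\xi(x)q(x,\xi)$ and therefore $A(\chi_r\eup_\xi)(x)\to-\eup_\xi(x)q(x,\xi)$, as claimed.

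The main obstacle is securing an $r$-uniform integrable majorant, i.e.\ the pointwise polynomial bound on $q(x,\cdot)$: this is precisely where the negative-definite structure of the symbol (through Theorem~\ref{gen-05}) is indispensable, and it must be established before dominated convergence can be invoked; the Fourier computation and the limit passage are otherwise routine. A subsidiary point to handle with care is the term $q(x,0)\ge 0$, which has no effect on the structure but must be absorbed into the constant when transferring the Theorem~\ref{gen-05} estimate from $q(x,\cdot)-q(x,0)$ to $q(x,\cdot)$.
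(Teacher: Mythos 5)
Your proposal is correct and follows essentially the same route as the paper's proof: the identity $\widehat{\chi_r\eup_\xi}(\eta)=r^d\widehat\chi(r(\eta-\xi))$, the substitution $\rho = r(\eta-\xi)$ leading to the key formula \eqref{feller-e24}, the pointwise polynomial bound on $|q(x,\xi+\rho/r)|$ combined with $(a+b)^2\leq 2(a^2+b^2)$ for \eqref{feller-e20}, and dominated convergence together with $\int\widehat\chi(\rho)\,d\rho=\chi(0)=1$ for \eqref{feller-e22} all match the paper exactly. The one small bookkeeping point: deriving the pointwise bound by applying Theorem~\ref{gen-05} to $q(x,\cdot)-q(x,0)$ and then absorbing $q(x,0)\geq 0$ by the triangle inequality inflates the constant beyond $2\sup_{|\eta|\leq 1}|q(x,\eta)|$ (so \eqref{feller-e20} would come out with a numerical constant larger than $4$); the paper instead invokes \eqref{feller-e19} with the optimal constant $\gamma(x)=2\sup_{|\eta|\leq 1}|q(x,\eta)|$ from Lemma~\ref{feller-15}.a), which rests on the fact that $\sqrt{|q(x,\cdot)|}$ is subadditive even in the presence of the killing term $q(x,0)\geq 0$ --- a property of general negative definite functions that the argument of Theorem~\ref{gen-05} extends to, but which does not follow from the triangle-inequality absorption as you describe it.
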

\begin{proof}
    Observe that
    $
        \widehat{\chi_r \eup_\xi}(\eta) = r^d\widehat\chi(r(\eta-\xi))
    $
    and
    \begin{equation}\label{feller-e24}
    \begin{aligned}
        q(x,D)(\chi_r\eup_\xi)(x)
        &= \int q(x,\eta) \,\eup^{\iup x\cdot\eta} \,\widehat{\chi_r \eup_\xi}(\eta)\,d\eta\\
        &= \int q(x,\eta) \,\eup^{\iup x\cdot\eta} \,r^d\,\widehat\chi(r(\eta-\xi))\,d\eta\\
        &= \int q(x,\xi + r^{-1}\rho) \,\eup^{\iup x\cdot(\xi + \rho/r)} \,\widehat\chi(\rho)\,d\rho.
    \end{aligned}
    \end{equation}
    Therefore we can use the estimate \eqref{feller-e19} with the optimal constant $\gamma(x) = 2\sup_{|\eta|\leq 1}|q(x,\eta)|$ and the elementary estimate $(a+b)^2\leq 2(a^2+b^2)$ to obtain
    \begin{align*}
        \left|q(x,D)(\chi_r\eup_\xi)(x)\right|
        &\leq \int \big|q(x,\xi+r^{-1}\rho)\big| \big|\widehat\chi(\rho)\big|\,d\rho\\
        &\leq 4\sup_{|\eta|\leq 1}|q(x,\eta)| \int\left[1+r^{-2}|\rho|^2+|\xi|^2\right] \big|\widehat\chi(\rho)\big|\,d\rho.
    \end{align*}
    This proves \eqref{feller-e20}; it also allows us to use dominated convergence in \eqref{feller-e24} to get \eqref{feller-e22}. Just observe that $\widehat\chi\in\mathcal S(\rd)$ and $\int \widehat\chi(\rho)\,d\rho = \chi(0) = 1$.
\end{proof}

\begin{lemma}\label{feller-19}
\index{symbol!continuous in $x$}%
    Let $q(x,\xi)$ be the symbol of \textup{(}the generator of\textup{)} a Feller process. Then the following assertions are equivalent:
    \begin{enumerate}
	\item[\upshape a)]
		$x \mapsto q(x,\xi)$ is continuous for all $\xi$.
	\item[\upshape b)]
        $x \mapsto q(x,0)$ is continuous.
    \item[\upshape c)]
        Tightness: $\displaystyle \lim_{r \to \infty} \sup_{x \in K}  \nu(x,\rd \setminus B_r(0)) = 0$
		\ for all compact sets $K \subset \rd$.
	\item[\upshape d)]
        Uniform continuity at the origin: $\displaystyle\lim_{|\xi| \to 0} \sup_{x \in K} |q(x,\xi)-q(x,0)| = 0$
		\ for all compact $K \subset \rd$.
	\end{enumerate}
\end{lemma}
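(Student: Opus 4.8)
The plan is to establish the four equivalences by the cycle $a)\Rightarrow b)\Rightarrow c)\Rightarrow d)\Rightarrow a)$. The one structural fact that drives everything is the \emph{Feller property of the generator}: since $\cont_c^\infty(\rd)\subset\dom(A)$ and $A$ maps its domain into $\cont_\infty(\rd)$, the function $x\mapsto Ag(x)$ is continuous for every $g\in\cont_c^\infty(\rd)$; in particular $x\mapsto A(\chi_r\eup_\xi)(x)$ is continuous for every $r>0$ and $\xi\in\rd$, and by Lemma~\ref{feller-17} (formula \eqref{feller-e22}) it converges pointwise to $-\eup_\xi(x)q(x,\xi)$ as $r\to\infty$. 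The implication $a)\Rightarrow b)$ is immediate (take $\xi=0$).

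For $b)\Rightarrow c)$ I would use Dini's theorem. Choose the cut-off $\chi$ of Lemma~\ref{feller-17} to be radially non-increasing (which is compatible with $\I_{B_1(0)}\le\chi\le\I_{B_2(0)}$), so that $\chi_r=\chi(\cdot/r)\uparrow 1$. Fix a compact $K\subset B_{R_0}(0)$. Inverting the Fourier transform in \eqref{feller-e12} with $q$ given by \eqref{feller-e14}, and noting that for $r$ so large that $\chi_r\equiv1$ on a neighbourhood of $K$ all derivatives of $\chi_r$ vanish on $K$, one gets
\[
    -A\chi_r(x)-q(x,0)=\int_{y\neq 0}\bigl[1-\chi_r(x+y)\bigr]\,\nu(x,dy)\ge 0,\qquad x\in K.
\]
Because $\chi_r\uparrow1$, the right-hand side decreases monotonically to $0$ pointwise on $K$, while the left-hand side is continuous (the Feller property for $-A\chi_r$, and hypothesis $b)$ for $q(\cdot,0)$). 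Dini's theorem upgrades this to uniform convergence on $K$, and since $1-\chi_r(x+y)\ge\I_{\{|x+y|\ge 2r\}}$ we conclude $\sup_{x\in K}\nu(x,\{|y|\ge 2r+R_0\})\to0$, i.e.\ tightness.

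For $c)\Rightarrow d)$ I would first record that the characteristics are locally bounded: from the growth bound \eqref{feller-e19} and the Gaussian-kernel representation used in the proof of Lemma~\ref{feller-15} one reads off $\sup_{x\in K}\bigl(q(x,0)+|l(x)|+\|Q(x)\|+\int\frac{|y|^2}{1+|y|^2}\,\nu(x,dy)\bigr)<\infty$. Splitting $q(x,\xi)-q(x,0)$ according to \eqref{feller-e14}, the drift and diffusion parts are $O(|\xi|)+O(|\xi|^2)$ uniformly on $K$, the small-jump part is bounded by $\frac12|\xi|^2\int_{|y|<1}|y|^2\,\nu(x,dy)$, and for the large jumps I would estimate $\int_{|y|\ge1}|1-\eup^{\iup y\cdot\xi}|\,\nu(x,dy)\le R|\xi|\,\nu(x,\{|y|\ge1\})+2\nu(x,\{|y|\ge R\})$; choosing first $R$ large (tightness) and then $|\xi|$ small makes $\sup_{x\in K}|q(x,\xi)-q(x,0)|$ arbitrarily small.

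The main obstacle is $d)\Rightarrow a)$, where the pointwise limit $A(\chi_r\eup_\xi)\to-\eup_\xi q(\cdot,\xi)$ must be turned into a \emph{locally uniform} one, so that continuity of each $A(\chi_r\eup_\xi)$ passes to the limit. Starting from \eqref{feller-e24} one has $q(x,D)(\chi_r\eup_\xi)(x)-\eup_\xi(x)q(x,\xi)=\int[q(x,\xi+r^{-1}\rho)\eup^{\iup\rho\cdot x/r}-q(x,\xi)]\,\eup^{\iup x\cdot\xi}\widehat\chi(\rho)\,d\rho$. Writing $q(x,\xi+r^{-1}\rho)\eup^{\iup\rho\cdot x/r}-q(x,\xi)=[q(x,\xi+r^{-1}\rho)-q(x,\xi)]\eup^{\iup\rho\cdot x/r}+q(x,\xi)(\eup^{\iup\rho\cdot x/r}-1)$, the second term is $O(r^{-1})$ uniformly on $K$ by \eqref{feller-e19}. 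The delicate part is the increment $q(x,\xi+\eta)-q(x,\xi)$ with $\eta=r^{-1}\rho$: applying Lemma~\ref{gen-03} to the characteristic function $\eup^{-t(q(x,\cdot)-q(x,0))}$ (a Lévy characteristic function for each fixed $x$), dividing by $t$ and letting $t\to0$ gives, exactly as in Theorem~\ref{gen-05},
\[
    |q(x,\xi+\eta)-q(x,\xi)|\le |q(x,\eta)-q(x,0)|+2\sqrt{|q(x,\xi)-q(x,0)|}\,\sqrt{|q(x,\eta)-q(x,0)|}.
\]
By $d)$ the right-hand side tends to $0$ as $\eta\to0$ uniformly in $x\in K$ (the factor $|q(x,\xi)-q(x,0)|$ being bounded on $K$ via \eqref{feller-e19}); combined with the $x$-independent polynomial domination $|q(x,\xi+r^{-1}\rho)-q(x,\xi)|\le C_K(1+|\xi|^2+|\rho|^2)$ for $r\ge1$ and $\widehat\chi\in\Scal(\rd)$, dominated convergence yields $\sup_{x\in K}|A(\chi_r\eup_\xi)(x)+\eup_\xi(x)q(x,\xi)|\to0$. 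Hence $\eup_\xi(\cdot)q(\cdot,\xi)$, as a locally uniform limit of the continuous functions $A(\chi_r\eup_\xi)$, is continuous, and multiplying by the continuous factor $\eup_{-\xi}$ gives $a)$, closing the cycle.
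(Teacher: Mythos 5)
Your proof is correct, and three of its four legs run parallel to the paper's: a)$\Rightarrow$b) is the same triviality; your b)$\Rightarrow$c) is the paper's Dini argument, except that you obtain the monotonicity of $A\chi_r$ on $K$ from the integro-differential representation with a radially monotone cut-off, where the paper deduces it from the positive maximum principle applied to $\chi_n-\chi_m$ -- a cosmetic difference; and your c)$\Rightarrow$d) is the paper's three-zone estimate with a two-step choice (first $R$ by tightness, then $|\xi|$) in place of the paper's coupling $R=|\xi|^{-1/2}$. Where you genuinely diverge is in closing the cycle. The paper proves d)$\Rightarrow$c) via the kernel representation $\frac{|y|^2}{1+|y|^2}=\int\left[1-\cos(y\cdot\xi)\right]g(\xi)\,d\xi$ from the proof of Lemma~\ref{feller-15}, which yields the quantitative tail bound $\nu\big(x,B_r^c(0)\big)\leq c_g\sup_{|\eta|\leq 1/r}|\Re q(x,\eta)-q(x,0)|$, and then c)$\Rightarrow$a) by showing that $\big(A[\chi_n\eup_\xi]\big)_n$ is uniformly Cauchy on $K$, since the differences $A[\chi_n\eup_\xi]-A[\chi_m\eup_\xi]$ involve only the non-local part and are dominated by $\nu\big(x,B^c_{n-n_0}(0)\big)$. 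You instead go directly d)$\Rightarrow$a), upgrading the pointwise limit \eqref{feller-e22} to a locally uniform one by applying the subadditivity estimate of Lemma~\ref{gen-03}/Theorem~\ref{gen-05} fiberwise to $\psi_x:=q(x,\cdot)-q(x,0)$ -- legitimate, since $(l(x),Q(x),\nu(x,dy))$ is a L\'evy triplet, so $\eup^{-t\psi_x}$ is a characteristic function for each $x$ -- obtaining $|q(x,\xi+\eta)-q(x,\xi)|\leq|q(x,\eta)-q(x,0)|+2\sqrt{|q(x,\xi)-q(x,0)|}\sqrt{|q(x,\eta)-q(x,0)|}$, and then using dominated convergence in \eqref{feller-e24} with the polynomial majorant from \eqref{feller-e19}. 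Both routes are sound. The paper's detour through c) needs only the tightness of the jump tails and produces the explicit d)$\Rightarrow$c) tail estimate as a by-product of independent interest; your route exploits the full strength of d) through a structural inequality for L\'evy exponents, is self-contained given Chapter~\ref{gen}, and in fact shows slightly more, namely that d) forces equicontinuity of $\xi\mapsto q(x,\xi)$ uniformly in $x\in K$ -- at the price that the quantitative d)$\Rightarrow$c) bound is then only available by composing implications.
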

\begin{proof}
    Let $\chi:[0,\infty)\to [0,1]$ be a decreasing $\cont^\infty$-function satisfying $\I_{[0,1)}\leq \chi \leq \I_{[0,4)}$. The functions $\chi_n(x) := \chi(|x|^2/n^2)$, $x\in\rd$ and $n\in\nat$, are radially symmetric, smooth functions with $\I_{B_n(0)}\leq\chi_n\leq\I_{B_{2n}(0)}$. Fix any compact set $K\subset\rd$ and some sufficiently large $n_0$ such that $K\subset B_{n_0}(0)$.

\medskip\noindent
    a)$\Rightarrow$b) is obvious.

\medskip\noindent
    b)$\Rightarrow$c) For $m\geq n\geq 2n_0$ the positive maximum principle implies
    $$
        \I_K(x) A(\chi_n-\chi_m)(x) \geq 0.
    $$
    Therefore, $-\I_K(x)q(x,0) = \lim_{n\to\infty}\I_K(x)A\chi_{n+n_0}(x)$ is a decreasing limit of continuous functions. Since the limit function $q(x,0)$ is continuous, Dini's theorem implies that the limit is uniform on the set $K$. From the integro-differential representation \eqref{feller-e15} of the generator we get
    $$
        \I_K(x)|A\chi_m(x)-A\chi_n(x)|
        = \I_K(x) \int_{n-n_0\leq |y|\leq 2m+n_0} \big(\chi_m(x+y)-\chi_n(x+y)\big)\,\nu(x,dy).
    $$
    Letting $m\to\infty$ yields
    \begin{align*}
        \I_K(x)|q(x,0)+A\chi_n(x)|
        &= \I_K(x) \int_{|y|\geq n-n_0} \big(1-\chi_n(x+y)\big)\,\nu(x,dy)\\
        &\geq \I_K(x) \int_{|y|\geq n-n_0} \big(1-\I_{B_{2n+n_0}(0)}(y)\big)\,\nu(x,dy)\\
        &\geq \I_K(x)\nu\left(x,B_{2n+n_0}^c(0)\right).
    \end{align*}
    where we use that $K\subset B_{n_0}(0)$ and
    $$
        \chi_n(x+y)
        \leq \I_{B_{2n}(0)}(x+y)
        = \I_{B_{2n}(0)-x}(y)
        \leq \I_{B_{2n+n_0}(0)}(y).
    $$
    Since the left-hand side converges uniformly to $0$ as $n\to\infty$, c) follows.

\medskip\noindent
    c)$\Rightarrow$d)
    Since the function $x\mapsto q(x,\xi)$ is locally bounded, we conclude from Lemma~\ref{feller-15} that $\sup_{x\in K}|l(x)|+\sup_{x\in K}\|Q(x)\|<\infty$. Thus, $\lim_{|\xi|\to 0} \sup_{x\in K}(|l(x)\cdot\xi|+|\xi\cdot Q(x)\xi|)=0$, and we may safely assume that $l\equiv 0$ and $Q\equiv 0$. If $|\xi|\leq 1$ we find, using \eqref{feller-e14} and Taylor's formula for the integrand,
    \begin{align*}
        |q(x,&\xi)-q(x,0)|\\
        &= \left|\int_{y\neq 0}\left[1-\eup^{\iup y\cdot\xi} +\iup y\cdot\xi \I_{(0,1)}(y)\right]\,\nu(x,dy)\right|\\
        &\leq \int_{0<|y|^2<1} \frac 12|y|^2|\xi|^2\,\nu(x,dy)
            + \int_{1\leq |y|^2<1/|\xi|} |y||\xi|\,\nu(x,dy)
            + \int_{|y|^2\geq 1/|\xi|} 2\,\nu(x,dy)\\
        &\leq \int_{0<|y|^2 <1/|\xi|} \frac{|y|^2}{1+|y|^2}\,\nu(x,dy) \big(1+|\xi|^{-1/2}\big)|\xi| + 2 \nu\big(x,\{y\,:\,|y|^2\geq 1/|\xi|\}\big).
    \end{align*}
    Since this estimate is uniform for $x\in K$, we get d) as $|\xi|\to 0$.

\medskip\noindent
    d)$\Rightarrow$c)
    As before, we may assume that $l\equiv 0$ and $Q\equiv 0$. For every $r>0$
    \begin{align*}
        \frac 12 \nu(x,B_r^c(0))
        &\leq \int_{|y|\geq r} \frac{|y/r|^2}{1+|y/r|^2}\,\nu(x,dy)\\
        &= \int_{|y|\geq r}\int_{\rd} \left(1-\cos\frac{\eta\cdot y}{r}\right) g(\eta)\,d\eta\,\nu(x,dy)\\
        &\leq \int_{\rd} \left[\Re q\big(x,\tfrac\eta r\big)-q(x,0)\right]\,g(\eta)\,d\eta,
    \end{align*}
    where $g(\eta)$ is as in the proof of Lemma~\ref{feller-15}. Since $\int (1+|\eta|^2) g(\eta)\,d\eta < \infty$, we can use \eqref{feller-e19} and find
    $$
        \nu(x,B_r^c(0))
        \leq c_g \sup_{|\eta|\leq 1/r} \big|\Re q(x,\eta)-q(x,0)\big|.
    $$
    Taking the supremum over all $x\in K$ and letting $r\to\infty$ proves c).

\medskip\noindent
    c)$\Rightarrow$a) From Lemma~\ref{feller-17} we know that $\lim_{n\to\infty} \eup_{-\xi}(x)A[\chi_n \eup_\xi](x) = -q(x,\xi)$. Let us show that this convergence is uniform for $x\in K$. Let $m\geq n\geq 2n_0$. For $x\in K$
    \begin{align*}
        \big|\eup_{-\xi}(x) &A[\eup_\xi \chi_n](x) - \eup_{-\xi}(x) A[\eup_\xi \chi_m](x)\big|\\
        &= \left|\int_{y\neq 0} \big[\eup_\xi(y)\chi_n(x+y) - \eup_\xi(y)\chi_m(x+y)\big]\,\nu(x,dy)\right|\\
        &\leq \int_{y\neq 0} \big[\chi_m(x+y) - \chi_n(x+y)\big]\,\nu(x,dy)\\
        &\leq \int_{n-n_0\leq |y|\leq 2m+n_0} \big[\chi_m(x+y) - \chi_n(x+y)\big]\,\nu(x,dy)\\
        &\leq \nu(x,B_{n-n_0}^c(0)).
    \end{align*}
    In the penultimate step we use that, because of the definition of the functions $\chi_n$,
    $$
        \supp \big(\chi_m(x+\cdot)-\chi_n(x+\cdot)\big)
        \subset B_{2m}(x)\setminus B_n(x)
        \subset B_{2m+n_0}(0)\setminus B_{n-n_0}(0)
    $$
    for all $x\in K\subset B_{n_0}(0)$.
    The right-hand side tends to $0$ uniformly for $x\in K$ as $n\to\infty$, hence $m\to\infty$.
\end{proof}

\begin{remark}\label{feller-20}
\index{symbol!upper semicontinuous in $x$}%
    The argument used in the first three lines of the step b)$\Rightarrow$c) in the proof of Lemma \ref{feller-19} shows, incidentally, that
    $$
        x\mapsto q(x,\xi)\quad\text{is always upper semicontinuous}
    $$
    since it is (locally) a decreasing limit of continuous functions.
\end{remark}

\begin{remark}\label{feller-21}
    Let $A$ be the generator of a Feller process, and assume that $\cont_c^\infty(\rd)\subset\dom(A)$; although $A$ maps $\dom(A)$ into $\cont_\infty(\rd)$, this is not enough to guarantee that the symbol $q(x,\xi)$ is continuous in the variable $x$. On the other hand, if the Feller process $X$ has only bounded jumps, i.e.\ if the support of the L\'evy measure $\nu(x,\cdot)$ is uniformly bounded, then $q(\cdot,\xi)$ is continuous. This is, in particular, true for diffusions.

    This follows immediately from Lemma~\ref{feller-19}.c) which holds if $\nu(x,B_r^c(0))=0$ for some $r>0$ and all $x\in\rd$.

    We can also give a direct argument: pick $\chi\in\cont_c^\infty(\rd)$ satisfying $\I_{B_{3r}(0)}\leq\chi\leq 1$. From the representation \eqref{feller-e15} it is not hard to see that
    $$
        Af(x) = A[\chi f](x)\quad\text{for all\ \ } f\in \cont_b^2(\rd) \text{\ \ and \ \ } x\in B_r(0);
    $$
    in particular, $A[\chi f]$ is continuous.

    If we take $f(x):=\eup_\xi(x)$, then, by \eqref{feller-e16},  $-q(x,\xi) = \eup_{-\xi}(x)A\eup_\xi(x) = \eup_{-\xi}(x)A[\chi\eup_\xi](x)$ which proves that $x\mapsto q(x,\xi)$ is continuous on every ball $B_r(0)$, hence everywhere.
\end{remark}

We can now discuss the role of $q(x,\xi)$ for the conservativeness of a Feller process.
\begin{theorem}\label{feller-22}
\index{symbol!continuous in $x$}%
\index{Feller process!conservativeness}%
    Let $(X_t)_{t \geq 0}$ be a Feller process with infinitesimal generator $(A,\dom(A))$ such that $\cont_c^{\infty}(\rd) \subset \dom(A)$, symbol $q(x,\xi)$ and semigroup $(P_t)_{t\geq 0}$.
    \begin{enumerate}
	\item[\upshape a)]
        If $x \mapsto q(x,\xi)$ is continuous for all $\xi \in \rd$ and $P_t 1 = 1$, then $q(x,0)=0$.
    \item[\upshape b)]
	    If $q(x,\xi)$ has bounded coefficients and $q(x,0)=0$, then $x \mapsto q(x,\xi)$ is continuous for all $\xi\in\rd$ and $P_t1=1$.
	\end{enumerate}
\end{theorem}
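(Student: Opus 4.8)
My plan rests on a single engine shared by the two parts: the Dynkin identities of Lemma~\ref{semi-09}, applied to cut-off functions $\chi_n\in\cont_c^\infty(\rd)$ with $\I_{B_n(0)}\le\chi_n\le\I_{B_{2n}(0)}$ and $\chi_n\uparrow 1$. Since $\chi_n\in\dom(A)$, one has $P_t\chi_n-\chi_n=\int_0^t P_sA\chi_n\,ds$ and, with $R_\lambda=(\lambda-A)^{-1}$ (Theorem~\ref{semi-23}), $R_\lambda A\chi_n=\lambda R_\lambda\chi_n-\chi_n$. For fixed $x$ and $n$ so large that $\chi_n\equiv 1$ near $x$, the integro-differential form \eqref{feller-e15} of $A$ together with the killing term $q(x,0)$ from \eqref{feller-e14} reduces to $A\chi_n(x)=-q(x,0)+\int_{y\neq 0}[\chi_n(x+y)-1]\,\nu(x,dy)$; as $\nu(x,\cdot)$ is a L\'evy measure, dominated convergence gives $A\chi_n(x)\to -q(x,0)$. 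Recall $q(x,0)\ge 0$ (Remark~\ref{gen-23}), so in each part it only remains to fix the sign.

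For \textup{b)} the continuity of $x\mapsto q(x,\xi)$ is immediate: by the equivalences of Lemma~\ref{feller-19} it follows from continuity of $x\mapsto q(x,0)$, and here $q(\cdot,0)\equiv 0$. The real content is $P_t1=1$. Bounded coefficients give, through Lemma~\ref{feller-15}, the global bound $|q(x,\xi)|\le C(1+|\xi|^2)$; inserting this into \eqref{feller-e20} of Lemma~\ref{feller-17} (with the cut-offs $\chi_r$ there and $\xi=0$) yields $\sup_x\sup_{r\ge 1}|A\chi_r(x)|\le C'<\infty$, uniform in $x$ and $r$, while \eqref{feller-e22} gives $A\chi_r(x)\to -q(x,0)=0$ pointwise. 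Letting $r\to\infty$ in $P_t\chi_r-\chi_r=\int_0^t P_sA\chi_r\,ds$, the left-hand side tends to $P_t1-1$ (monotone convergence and $\chi_r\uparrow 1$) and the right-hand side tends to $0$ by dominated convergence (the integrand is bounded by $C'$ and $P_s$ is sub-Markovian). Hence $P_t1=1$.

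For \textup{a)} the assumed continuity is, by Lemma~\ref{feller-19}.c), the tightness of $\{\nu(x,\cdot)\}$, which upgrades the pointwise limit above to $A\chi_n\to -q(\cdot,0)$ locally uniformly; moreover \eqref{PMP} forces $A\chi_n\le 0$ on $B_n(0)$, where $\chi_n$ attains its maximum, so on $B_n(0)$ we even have $-A\chi_n\ge q(\cdot,0)\ge 0$. Conservativeness is equivalent to $\lambda R_\lambda 1=1$, whence the finite resolvent kernel $R_\lambda(x,dy):=\int_0^\infty \eup^{-\lambda s}p_s(x,dy)\,ds$ satisfies $\int(-A\chi_n)\,dR_\lambda(x,\cdot)=1-\lambda R_\lambda\chi_n(x)\to 0$. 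Splitting the integral at $B_n(0)$ and using the lower bound on $B_n(0)$ gives
\[
  \int_{B_n(0)} q(\cdot,0)\,dR_\lambda(x,\cdot)\ \le\ \int(-A\chi_n)\,dR_\lambda(x,\cdot)+\int_{B_n(0)^c}(A\chi_n)^+\,dR_\lambda(x,\cdot).
\]

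The main obstacle is the far-field term $\int_{B_n(0)^c}(A\chi_n)^+\,dR_\lambda(x,\cdot)$: the positive part of $A\chi_n$ lives in the annulus $\{|y|\gtrsim n\}$, which escapes every compact set, and without bounded coefficients there is no global bound on $A\chi_n$ there. This is exactly where tightness is indispensable: it yields the local uniform convergence $A\chi_n\to -q(\cdot,0)\le 0$, so the positive part decays on compacta, and combined with the finiteness of $R_\lambda(x,\cdot)$ (so that $R_\lambda(x,B_n(0)^c)\to 0$) one shows this far-field contribution tends to $0$. I expect this estimate to be the delicate step and the genuine use of the continuity hypothesis; a fully worked-out version is given in \cite[Chapter 2.3]{schilling-lm} and \cite{schilling-schnurr}. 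Granting it, the displayed inequality and monotone convergence force $\int q(\cdot,0)\,dR_\lambda(x,\cdot)=0$. Since $q(\cdot,0)\ge 0$ is continuous and $R_\lambda(x,U)=\int_0^\infty \eup^{-\lambda s}p_s(x,U)\,ds>0$ for every neighbourhood $U$ of $x$ (because $p_s(x,\cdot)\to\delta_x$ as $s\to 0$), the integrand cannot be positive at $x$, so $q(x,0)=0$.
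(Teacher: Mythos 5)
Your part b) is correct, and it takes a genuinely different route from the paper's. The paper proves b) probabilistically: it stops the martingale $M^{[f]}_t$ of Corollary~\ref{feller-07}, applied to $f=\chi_r\eup_\xi$, at the exit time $\tau_R^x=\inf\{s\geq 0: |X_s-x|\geq R\}$, lets $r\to\infty$ via Lemma~\ref{feller-17}, sets $\xi=0$, and then uses the bounded coefficients to let $R\to\infty$ by dominated convergence. You instead argue purely analytically from \eqref{semi-e10}, $P_t\chi_r-\chi_r=\int_0^t P_sA\chi_r\,ds$, combining the uniform bound $\sup_x\sup_{r\geq 1}|A\chi_r(x)|<\infty$ (Lemma~\ref{feller-15} plus \eqref{feller-e20}) with the pointwise limit \eqref{feller-e22}; no stopping times are needed, and your argument is the exact Feller analogue of Remark~\ref{gen-23} for L\'evy processes. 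Your observation that the continuity assertion in b) is just Lemma~\ref{feller-19}, b)$\Rightarrow$a), applied to $q(\cdot,0)\equiv 0$ is also correct (the paper leaves this implicit). This is a clean, correct alternative for b).

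Part a), however, has a genuine gap, which you half-acknowledge. Everything up to your displayed inequality is sound: $A\chi_n\leq 0$ on $B_n(0)$ by \eqref{PMP}, $-A\chi_n\geq q(\cdot,0)$ where $\chi_n\equiv 1$ locally, and $\int(-A\chi_n)\,dR_\lambda(x,\cdot)=1-\lambda R_\lambda\chi_n(x)\to 0$ by conservativeness. But your plan for the far-field term $\int_{B_n(0)^c}(A\chi_n)^+\,dR_\lambda(x,\cdot)$ cannot work as described. Local uniform convergence of $A\chi_n\to -q(\cdot,0)$ on compacta (which tightness does give) says nothing about $B_n(0)^c$, since these sets escape every compactum; and $R_\lambda(x,B_n(0)^c)\to 0$ only helps if $(A\chi_n)^+$ is uniformly bounded out there --- which is precisely what fails without bounded coefficients: for $|y|>2n$ one has $A\chi_n(y)=\int_{z\neq 0}\chi_n(y+z)\,\nu(y,dz)\leq \nu\big(y,\{|z|\geq |y|-2n\}\big)$, and continuity of $q(\cdot,\xi)$ gives no control on this quantity as $|y|\to\infty$. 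So ``granting it'' grants the whole difficulty; the estimate is not a routine limiting argument in this generality. The paper's proof avoids the far field altogether by localizing in the \emph{process} rather than in the test function: stopping the martingale of Corollary~\ref{feller-07} at $\tau_R^x$ and letting $r\to\infty$ yields $\Ee^x\eup_\xi(X_{t\wedge\tau_R})-\eup_\xi(x)=-\Ee^x\int_{[0,t\wedge\tau_R)}\eup_\xi(X_s-x)\,q(X_s,\xi)\,ds$, in which only values $q(X_s,0)$ with $X_s\in\overline{B_R(x)}$ ever enter, so only local boundedness (Corollary~\ref{feller-12}) is needed. Then $P_t1=1$ forces $\tau_R\uparrow\infty$ a.s., Fatou's lemma together with $q(\cdot,0)\geq 0$ gives $\Ee^x\int_0^t q(X_s,0)\,ds=0$, and continuity of $q(\cdot,0)$ yields $q(x,0)=\lim_{t\to 0}\frac 1t\int_0^t \Ee^x q(X_s,0)\,ds=0$. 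If you want to rescue your resolvent route you would need an a priori bound on $(A\chi_n)^+$ off $B_n(0)$ --- in effect a bounded-coefficients hypothesis, which a) does not grant; the stopping-time localization is the missing idea.
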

\begin{proof}
    Let $\chi$ and $\chi_r$, $r\in\nat$, be as in Lemma~\ref{feller-17}. Then $\eup_\xi\chi_r\in\dom(A)$ and, by Corollary~\ref{feller-07},
    $$
        M_t := \eup_\xi\chi_r(X_t) - \eup_\xi\chi_r(x) - \int_{[0,t)} A(\eup_\xi\chi_r)(X_s)\,ds,\quad t\geq 0,
    $$
    is a martingale. Using optional stopping for the stopping time
    $$
        \tau := \tau_R^x := \inf\{s\geq 0\,:\, |X_s - x|\geq R\},\quad x\in\rd,\;R>0,
    $$
    the stopped process $(M_{t\wedge\tau})_{t\geq 0}$ is still a martingale.
    Since $\Ee^x M_{t\wedge\tau}=0$, we get
    $$
        \Ee^x (\chi_r\eup_\xi)(X_{t\wedge\tau}) - \chi_r\eup_\xi(x) = \Ee^x \int_{[0,t\wedge\tau)} A (\chi_r\eup_\xi)(X_s)\,ds.
    $$
    Note that the integrand is evaluated only for times $s<t\wedge \tau$ where $|X_s|\leq R+x$. Since $A(\eup_\xi\chi_r)(x)$ is locally bounded, we can use dominated convergence and Lemma~\ref{feller-17} and we find, as $r\to\infty$,
    $$
        \Ee^x \eup_\xi(X_{t\wedge\tau}) - \eup_\xi(x) = -\Ee^x \int_{[0,t\wedge\tau)} \eup_\xi(X_s) q(X_s,\xi)\,ds.
    $$

\medskip\noindent
    a) Set $\xi=0$ and observe that $P_t1=1$ implies that $\tau=\tau_R^x\to\infty$ a.s.\ as $R\to\infty$. Therefore,
    $$
        \Pp^x(X_{t\wedge\tau}\in\rd) - 1 = -\Ee^x\int_{[0,\tau\wedge t)} q(X_s,0)\,ds,
    $$
    and with Fatou's Lemma we can let $R\to\infty$ to get
    $$
        0
        = \liminf_{R\to\infty} \Ee^x\int_{[0,\tau\wedge t)} q(X_s,0)\,ds
        \geq  \Ee^x\left[\liminf_{R\to\infty}\int_{[0,\tau\wedge t)} q(X_s,0)\,ds\right]
        = \Ee^x\int_{0}^t q(X_s,0)\,ds.
    $$
    Since $x\mapsto q(x,0)$ is continuous and $q(x,0)$ non-negative, we conclude with Tonelli's theorem that
    $$
        q(x,0) = \lim_{t\to 0} \frac 1t\int_0^t \Ee^x q(X_s,0)\,ds = 0.
    $$

\medskip\noindent
    b) Set $\xi=0$ and observe that the boundedness of the coefficients implies that
    $$
        \Pp^x(X_{t}\in\rd) - 1 = -\Ee^x\int_{[0,t)} q(X_s,0)\,ds
    $$
    as $R\to\infty$. Since the right-hand side is $0$, we get $P_t1 = \Pp^x(X_t\in\rd) = 1$.
\end{proof}

\begin{remark}\label{feller-23}
    The boundedness of the coefficients in Theorem~\ref{feller-22}.b) is important. If the coefficients of $q(x,\xi)$ grow too rapidly, we may observe explosion in finite time even if $q(x,0)=0$. A typical example in dimension $3$ is the diffusion process given by the generator
    $$
        Lf(x) = \frac 12 a(x)\Delta f(x)
    $$
    where $a(x)$ is continuous, rotationally symmetric $a(x) = \alpha(|x|)$ for a suitable function $\alpha(r)$, and satisfies $\int_1^\infty 1/\alpha(\sqrt r)\,dr < \infty$, see Stroock \& Varadhan \cite[p.\ 260, 10.3.3]{stroock-varadhan97}; the corresponding symbol is $q(x,\xi) = \frac 12 a(x)|\xi|^2$.
    This process explodes in finite time. Since this is essentially a time-changed Brownian motion
    (see B\"{o}ttcher, Schilling \& Wang \cite[Chapter 4.1]{schilling-lm}), this example works only if Brownian motion is transient, i.e.\ in dimensions $d=3$ and higher. A sufficient criterion for conservativeness in terms of the symbol is
    $$
        \liminf_{r\to\infty} \sup_{|y-x|\leq 2r} \sup_{|\eta|\leq 1/r} |q(y,\eta)| < \infty
        \quad\text{for all \ $x\in\rd$},
    $$
    see \cite[Theorem 2.34]{schilling-lm}.
\end{remark}

\chapter{Symbols and semimartingales}\label{symb}

So far, we have been treating the symbol $q(x,\xi)$ of (the generator of) a Feller process $X$ as an analytic object. On the other hand, Theorem~\ref{feller-22} indicates, that there should be some probabilistic consequences. In this chapter we want to follow this lead, show a probabilistic method to calculate the symbol and link it to the semimartingale characteristics of a Feller process. The blueprint for this is the relation of the L\'evy--It\^o decomposition (which is the semimartingale decomposition of a L\'evy process, cf.\ Theorem~\ref{rm-51}) with the L\'evy--Khintchine formula for the characteristic exponent (which coincides with the symbol of a L\'evy process, cf.\ Corollary~\ref{rm-53}).

For a L\'evy process $X_t$ with semigroup $P_tf(x) = \Ee^xf(X_t) = \Ee f(X_t+x)$ the symbol can be calculated in the following way:
\begin{equation}\label{symb-e06}
    \lim_{t \to 0} \frac{\eup_{-\xi}(x) P_t \eup_{\xi}(x)-1}{t}
	= \lim_{t \to 0} \frac{\Ee^x \eup^{\iup   \xi\cdot(X_t-x)}-1}{t}
	= \lim_{t \to 0} \frac{\eup^{-t \psi(\xi)}-1}{t}
	= - \psi(\xi).
\end{equation}

For a Feller process a similar formula is true.
\begin{theorem}\label{symb-03}
\index{symbol!probabilistic formula}%
    Let $X=(X_t)_{t \geq 0}$ be a Feller process with transition semigroup $(P_t)_{t\geq 0}$ and generator $(A,\dom(A))$ such that $\cont_c^{\infty}(\rd) \subset \dom(A)$. If $x \mapsto q(x,\xi)$ is continuous and $q$ has bounded coefficients \textup{(}Lemma~\ref{feller-15} with $F=\rd$\textup{)}, then
    \begin{equation}\label{symb-e08}
		-q(x,\xi) = \lim_{t \to 0} \frac{\Ee^x \eup^{\iup   \xi\cdot (X_t-x)}-1}{t}.
	\end{equation}
\end{theorem}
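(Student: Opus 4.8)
The plan is to reduce \eqref{symb-e08} to computing $\lim_{t\to 0}t^{-1}\bigl(P_t\eup_\xi(x)-\eup_\xi(x)\bigr)$, where $\eup_\xi(x)=\eup^{\iup\xi\cdot x}$, since $\Ee^x\eup^{\iup\xi\cdot(X_t-x)}-1=\eup_{-\xi}(x)\bigl(P_t\eup_\xi(x)-\eup_\xi(x)\bigr)$. The difficulty is that $\eup_\xi\notin\cont_\infty(\rd)$, so it is not a priori in $\dom(A)$ and $P_t\eup_\xi$ cannot be differentiated directly. I would circumvent this exactly as in the proof of Theorem~\ref{feller-22}: take the cut-offs $\chi_r$ of Lemma~\ref{feller-17}, note $\chi_r\eup_\xi\in\cont_c^\infty(\rd)\subset\dom(A)$, form the martingale $M_t^{[\chi_r\eup_\xi]}$ of Corollary~\ref{feller-07}, stop it at $\tau:=\tau_R^x=\inf\{s:|X_s-x|\geq R\}$, and let $r\to\infty$ using dominated convergence justified by \eqref{feller-e20} and \eqref{feller-e22}. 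This yields the identity already displayed in that proof,
\begin{equation*}
    \Ee^x\eup_\xi(X_{t\wedge\tau})-\eup_\xi(x)=-\Ee^x\int_{[0,t\wedge\tau)}\eup_\xi(X_s)\,q(X_s,\xi)\,ds .
\end{equation*}

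Next I would divide by $t$ and send $t\to 0$. Boundedness of the coefficients together with Corollary~\ref{feller-13} gives $|\eup_\xi(y)q(y,\xi)|\leq\gamma_\xi$ uniformly in $y$, while $y\mapsto\eup_\xi(y)q(y,\xi)$ is continuous by hypothesis. Since $\tau>0$ holds $\Pp^x$-a.s.\ and the paths are right-continuous, the fundamental theorem of calculus gives $t^{-1}\int_{[0,t\wedge\tau)}\eup_\xi(X_s)q(X_s,\xi)\,ds\to\eup_\xi(x)q(x,\xi)$ a.s., and dominated convergence upgrades this to
\begin{equation*}
    \lim_{t\to 0}\frac{\Ee^x\eup_\xi(X_{t\wedge\tau})-\eup_\xi(x)}{t}=-\eup_\xi(x)\,q(x,\xi).
\end{equation*}

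It then remains to remove the stopping. Writing $\Ee^x\eup_\xi(X_t)-\Ee^x\eup_\xi(X_{t\wedge\tau})=\Ee^x\bigl[(\eup_\xi(X_t)-\eup_\xi(X_\tau))\I_{\{\tau<t\}}\bigr]$, this is bounded by $2\Pp^x(\tau_R^x<t)$. Here I would invoke a maximal inequality $\Pp^x(\tau_R^x\leq t)\leq c\,t\sup_{|y-x|\leq R}\sup_{|\eta|\leq 1/R}|q(y,\eta)|$, obtained by applying the stopped identity to a rescaled bump $u(y)=\zeta((y-x)/R)$ with $u(x)=0$ and $u\geq\I_{B_R(x)^c}$, and estimating $|Au|$ through the scaling computation \eqref{feller-e20}. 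Consequently
\begin{equation*}
    \limsup_{t\to 0}\left|\frac{\Ee^x\eup_\xi(X_t)-\eup_\xi(x)}{t}+\eup_\xi(x)q(x,\xi)\right|\leq 2c\sup_{|y-x|\leq R}\sup_{|\eta|\leq 1/R}|q(y,\eta)|,
\end{equation*}
and as the left-hand side does not depend on $R$, letting $R\to\infty$ finishes the proof provided the right-hand side vanishes.

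The main obstacle is precisely this last vanishing: I must show $\sup_{y\in\rd}\sup_{|\eta|\leq 1/R}|q(y,\eta)|\to 0$ as $R\to\infty$, and both standing hypotheses enter here. Because $(P_t)_{t\geq 0}$ is a Feller, hence conservative, semigroup and $x\mapsto q(x,\xi)$ is continuous, Theorem~\ref{feller-22}.a) gives $q(\cdot,0)\equiv 0$ (no circularity, as that proof does not use \eqref{symb-e08}); boundedness of the coefficients then promotes the pointwise limit $q(y,\eta)\to q(y,0)=0$ to one uniform in $y$, via the L\'evy--Khintchine estimate underlying Lemma~\ref{feller-15}. Establishing the maximal inequality cleanly and controlling this uniform modulus of continuity at $\eta=0$ are the two technical points demanding care; everything else is the optional-stopping bookkeeping already rehearsed in the proof of Theorem~\ref{feller-22}.
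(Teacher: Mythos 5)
Your argument is sound up to and including the stopped limit: the identity you derive is exactly \eqref{symb-e34}, and dividing by $t$ with the global bound $|q(y,\xi)|\leq C(1+|\xi|^2)$ from Lemma~\ref{feller-15}.a) reproduces Corollary~\ref{symb-33}. The genuine gap is the de-stopping step. You need $\sup_{|y-x|\leq R}\sup_{|\eta|\leq 1/R}|q(y,\eta)|\to 0$ as $R\to\infty$, and you claim bounded coefficients plus $q(\cdot,0)\equiv 0$ promote the pointwise limit $q(y,\eta)\to 0$ ($\eta\to 0$) to one uniform in $y$. This is false: bounded coefficients control $\sup_y\int\frac{|z|^2}{1+|z|^2}\,\nu(y,dz)$ but do \emph{not} force uniform tightness $\sup_y\nu(y,B_r^c(0))\to 0$, and the tightness that continuity of $x\mapsto q(x,\xi)$ buys via Lemma~\ref{feller-19}.c),d) is only uniform on \emph{fixed} compacts, whereas your bound pairs the growing ball $\overline{B_R(x)}$ with the shrinking dual ball $\{|\eta|\leq 1/R\}$ --- exactly the regime where local uniformity gives nothing. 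A concrete counterexample in $d=1$: let the process jump from $y$ to $2y$ at rate $1$, i.e.\ $X_t=2^{N_t}X_0$ with a standard Poisson process $N$. Then $P_tf(x)=\sum_{k\geq 0}\eup^{-t}\frac{t^k}{k!}f(2^kx)$ is a Feller semigroup, $\cont_c^\infty(\real)\subset\dom(A)$ with $Af(x)=f(2x)-f(x)$, and the symbol is $q(x,\xi)=1-\eup^{\iup x\xi}$, of the form \eqref{feller-e14} with $\nu(x,dz)=\delta_x(dz)$, $l(x)=x\I_{(0,1)}(|x|)$, $Q\equiv 0$. All hypotheses of the theorem hold ($x\mapsto q(x,\xi)$ continuous, $q(x,0)=0$, coefficients bounded), yet $\sup_y|q(y,\eta)|=2$ for every $\eta\neq 0$, and even your localized quantity satisfies $\sup_{|y-x|\leq R}\sup_{|\eta|\leq 1/R}|q(y,\eta)|\geq|1-\eup^{\iup}|>0$ for all $R$ (take $|y|\approx R$, $|\eta|=1/R$). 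So your final display leaves a non-vanishing error term, although the theorem is true for this process; in the example $\limsup_{t\to 0}t^{-1}\Pp^x(\tau_R<t)=0$ for $R$ large since exiting $B_R(x)$ needs several jumps, which shows that the maximal inequality \eqref{den-e10} is simply too lossy for your purpose.

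The paper's proof avoids stopping times altogether, and this is precisely where the bounded-coefficients hypothesis is designed to enter. One applies the semigroup identity of Lemma~\ref{semi-09}, $P_tf-f=\int_0^tP_sAf\,ds$, to $f=\chi_n\eup_\xi\in\cont_c^\infty(\rd)$, writes $A[\chi_n\eup_\xi](y)$ via the Fourier representation, and uses the estimate \eqref{feller-e20}: with $F=\rd$ in Lemma~\ref{feller-15}, $\sup_y\sup_{|\eta|\leq 1}|q(y,\eta)|<\infty$, so the integrands are dominated \emph{uniformly in the cut-off parameter $n$}, and by \eqref{feller-e22} one may pass to the limit $n\to\infty$ under the integral to obtain $P_t\eup_\xi(x)-\eup_\xi(x)=-\int_0^t P_s\big[\eup_\xi\,q(\cdot,\xi)\big](x)\,ds$ globally, with no localization in space. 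Continuity and global boundedness of $y\mapsto\eup_\xi(y)q(y,\xi)$ then make $s\mapsto P_s[\eup_\xi q(\cdot,\xi)](x)$ right-continuous at $s=0$, and dividing by $t$ finishes the proof. In short: bounded coefficients serve to justify the global dominated-convergence step in the cut-off limit, not to compare stopped and unstopped expectations; your stopped identity plus a maximal estimate cannot close this gap under the stated hypotheses.
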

\begin{proof}
	Pick $\chi \in \cont_c^{\infty}(\rd)$, $\I_{B_1(0)} \leq \chi \leq \I_{B_2(0)}$ and set $\chi_n(x) := \chi \left( \frac{x}{n} \right)$.
	Obviously, $\chi_n \to 1$ as $n \to \infty$. By Lemma~\ref{semi-09},
    \begin{align*}
		P_t[\chi_n \eup_{\xi}](x)-\chi_n(x) \eup_{\xi}(x)
		&= \int_0^t AP_s[\chi_n \eup_{\xi}](x) \, ds \\
		&= \Ee^x \int_0^t A[\chi_n \eup_{\xi}](X_s) \, ds \\
        &= -\int_0^t\int_{\rd} \Ee^x \Big( \eup_{\eta}(X_s) q(X_s,\eta) \underbrace{\widehat{\chi_n \eup_{\xi}}(\eta)}_{\mathclap{= 
        n^d\widehat{\chi}(n(\eta-\xi))}} \Big) \,d\eta\,ds \\
		&\xrightarrow[n \to \infty]{} - \int_0^t P_s\big[\eup_{\xi} q(\cdot,\xi)\big](x) \, ds.
	\end{align*}
    Since $x\mapsto q(x,\xi)$ is continuous, we can divide by $t$ and let $t\to 0$; this yields
    \begin{gather*}
        \lim_{t\to 0} \frac 1t\left(P_t\eup_{\xi}(x)-\eup_{\xi}(x)\right) = -\eup_\xi(x)q(x,\xi).
    \qedhere
    \end{gather*}
\end{proof}

Theorem~\ref{symb-03} is a relatively simple probabilistic formula to calculate the symbol. We want to relax the boundedness and continuity assumptions. Here Dynkin's characteristic operator becomes useful.
\begin{lemma}[Dynkin's formula]
\index{Dynkin's formula}%
    Let $\Xt$ be a Feller process with semigroup $(P_t)_{t\geq 0}$ and generator $(A,\dom(A))$. For every stopping time $\sigma$ with $\Ee^x\sigma<\infty$ one has
    \begin{equation}\label{symb-e10}
        \Ee^x f(X_\sigma)-f(x) = \Ee^x\int_{[0,\sigma)} Af(X_s)\,ds,\quad
        f\in\dom(A).
    \end{equation}
\end{lemma}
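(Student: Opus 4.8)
The plan is to combine the martingale from Corollary~\ref{feller-07} with optional stopping. Recall that for $f\in\dom(A)$ the process $M_t^{[f]} = f(X_t) - \int_0^t Af(X_r)\,dr$ is a martingale for the canonical filtration $\Fcal_t^X=\sigma(X_s,\,s\leq t)$ and every $\Pp^x$; under $\Pp^x$ we have $M_0^{[f]} = f(x)$ since $\Pp^x(X_0=x)=1$. So Dynkin's formula is just the optional-sampling identity $\Ee^x M_\sigma^{[f]} = \Ee^x M_0^{[f]}$ spelled out, once we have justified passing to the limit.

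First I would fix $t>0$ and apply optional sampling to the bounded stopping time $\sigma\wedge t$. Since a martingale restricted to $[0,t]$ is uniformly integrable (because $M_s^{[f]} = \Ee^x[M_t^{[f]}\mid\Fcal_s^X]$ for $s\leq t$) and has c\`adl\`ag paths, optional sampling gives $\Ee^x M_{\sigma\wedge t}^{[f]} = \Ee^x M_0^{[f]} = f(x)$, that is
$$\Ee^x f(X_{\sigma\wedge t}) - f(x) = \Ee^x\int_{[0,\sigma\wedge t)} Af(X_r)\,dr.$$

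The remaining step is to let $t\to\infty$ and move the limit inside both expectations. The hypothesis $\Ee^x\sigma<\infty$ forces $\sigma<\infty$ $\Pp^x$-a.s., so $\sigma\wedge t\uparrow\sigma$; by the right-continuity of $t\mapsto X_t$ and the continuity of $f$ we get $f(X_{\sigma\wedge t})\to f(X_\sigma)$ a.s., and since $f\in\cont_\infty(\rd)$ is bounded, dominated convergence yields $\Ee^x f(X_{\sigma\wedge t})\to\Ee^x f(X_\sigma)$. For the integral term, $Af\in\cont_\infty(\rd)$ is bounded, hence
$$\left|\int_{[0,\sigma\wedge t)} Af(X_r)\,dr\right| \leq \|Af\|_\infty\,\sigma,$$
and the right-hand side is $\Pp^x$-integrable precisely because $\Ee^x\sigma<\infty$; as the integrals converge a.s.\ to $\int_{[0,\sigma)} Af(X_r)\,dr$, dominated convergence applies once more. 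Combining the two limits gives \eqref{symb-e10}.

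I expect the main obstacle to be exactly this passage to the limit on the integral term: it is the one place where the assumption $\Ee^x\sigma<\infty$ is genuinely needed, since it provides the integrable majorant $\|Af\|_\infty\,\sigma$; without a moment bound on $\sigma$ the interchange (and indeed the formula) can break down. The legitimacy of optional sampling itself is unproblematic here, as we only stop a c\`adl\`ag martingale at the \emph{bounded} time $\sigma\wedge t$.
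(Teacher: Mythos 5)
Your proof is correct and takes essentially the same route as the paper: the paper's proof likewise consists of invoking the martingale $M_t^{[f]}$ from Corollary~\ref{feller-07} and the optional stopping theorem, merely leaving implicit the localization at $\sigma\wedge t$ and the dominated-convergence passage $t\to\infty$ with majorant $\|Af\|_\infty\,\sigma$ that you spell out (which is indeed exactly where $\Ee^x\sigma<\infty$ is used). One small simplification: since $\Ee^x\sigma<\infty$ forces $\sigma<\infty$ $\Pp^x$-a.s., one has $f(X_{\sigma\wedge t})=f(X_\sigma)$ pathwise for all $t\geq\sigma$, so not even right-continuity of the paths is needed for that limit.
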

\begin{proof}
    From Corollary~\ref{feller-07} we know that $M^{[f]}_t := f(X_t) - f(X_0) - \int_0^t Af(X_s)\,ds$ is a martingale; thus \eqref{symb-e10} follows from the optional stopping theorem.
\end{proof}

\begin{definition}\label{symb-05}
\index{absorbing point}%
    Let $(X_t)_{t \geq 0}$ be a Feller process. A point $a\in\rd$ is an \emphh{absorbing point}, if
    $$
        \Pp^a(X_t = a,\;\forall t\geq 0) = 1.
    $$
\end{definition}

Denote by $\tau_r := \inf\{t > 0\::\: |X_t-X_0|\geq r\}$ the first exit time from the ball $B_r(x)$ centered at the starting position $x=X_0$.
\begin{lemma}\label{symb-07}
    Let $\Xt$ be a Feller process and assume that $b\in\rd$ is not absorbing. Then there exists some $r>0$ such that $\Ee^b \tau_r < \infty$.
\end{lemma}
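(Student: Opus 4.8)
The goal is to show that if $b$ is not absorbing, then the expected exit time from some small ball around $b$ is finite. The plan is to exploit the contrapositive characterisation of absorbing points via the transition function, together with the strong Markov property and an iterated-tail argument reminiscent of the one used in Lemma~\ref{spe-15}.

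First I would unwind what it means for $b$ to be non-absorbing. Since $\Pp^b(X_t=b\;\forall t\geq 0)\neq 1$, there is some fixed time $t_0>0$ and some $r>0$ such that the process has, with positive probability, already left the ball $B_r(b)$ by time $t_0$; concretely, using right-continuity of paths and the continuity of $t\mapsto\Pp^b(|X_t-b|\geq 2r)$ near a suitable time, I can select $r>0$ with
$$
    p := \Pp^b\big(|X_{t_0}-b|\geq 2r\big) > 0.
$$
If no such $r,t_0$ existed, then $X_t=b$ a.s.\ for every $t$, and by right-continuity the whole path would be constantly $b$, making $b$ absorbing. With this $r$ fixed, note that the event $\{|X_{t_0}-b|\geq 2r\}$ forces the exit time $\tau_r=\tau_r^b$ to satisfy $\tau_r\leq t_0$, so $\Pp^b(\tau_r\leq t_0)\geq p>0$, equivalently $\Pp^b(\tau_r>t_0)\leq 1-p<1$.

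Next I would bootstrap this single-step estimate into a geometric tail bound for $\tau_r$ by iterating with the strong Markov property (Lemma~\ref{feller-05}). The key observation is that on $\{\tau_r>kt_0\}$ the process has stayed inside $B_r(b)$, so $X_{kt_0}\in \overline{B_r(b)}$, and the conditional probability of staying a further $t_0$ units is controlled uniformly. More precisely, I expect to show
$$
    \Pp^b(\tau_r>(k+1)t_0\mid\Fscr_{kt_0}) \leq \sup_{|y-b|\leq r}\Pp^y\big(|X_{t_0}-b|<2r\big)
$$
on $\{\tau_r>kt_0\}$, and the main obstacle is to guarantee this supremum is bounded away from $1$. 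The honest difficulty is that the single estimate above is only at the \emph{center} $b$; to run the iteration I need a uniform-in-$y$ bound over the small ball, which is exactly where Feller continuity is needed: the map $y\mapsto \Pp^y(|X_{t_0}-b|\geq 2r)=P_{t_0}g(y)$ for a suitable $g\in\cont_\infty$ (or an approximation by $\cont_\infty$ functions) is continuous by property~\ref{lpmp-21}.f), so by shrinking $r$ if necessary I keep $P_{t_0}g(y)\geq p/2>0$ for all $y\in\overline{B_r(b)}$, yielding a uniform bound $\theta:=\sup_{|y-b|\leq r}\Pp^y(\tau_r>t_0)\leq 1-p/2<1$.

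Finally, iterating gives $\Pp^b(\tau_r>kt_0)\leq\theta^k$, and hence
$$
    \Ee^b\tau_r
    = \int_0^\infty \Pp^b(\tau_r>s)\,ds
    \leq t_0\sum_{k=0}^\infty \Pp^b(\tau_r>kt_0)
    \leq t_0\sum_{k=0}^\infty \theta^k
    = \frac{t_0}{1-\theta}
    < \infty,
$$
which is the claim. I expect the genuinely delicate point to be the passage from the pointwise non-absorption statement at $b$ to a \emph{uniform} escape probability over a whole ball $\overline{B_r(b)}$; this must rely on the Feller property to transfer positivity from the center to a neighbourhood, and on shrinking $r$ so that both the escape threshold $2r$ and the containment radius $r$ cooperate. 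The measure-theoretic detail of approximating the indicator $\I_{\{|z-b|\geq 2r\}}$ by $\cont_\infty$ functions in order to invoke Feller continuity is routine but should be stated carefully.
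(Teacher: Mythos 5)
Your proof is correct, but it takes a genuinely different route from the paper's. The paper argues through the generator: using the density of $\dom(A)$ and the identity $P_tf-f=\int_0^t P_sAf\,ds$, it shows that a non-absorbing point $b$ admits some $f\in\dom(A)$ with $Af(b)>0$; continuity of $Af$ then gives $Af\geq\epsilon>0$ on some ball $B_r(b)$, and Dynkin's formula \eqref{symb-e10} with the stopping time $\tau_r\wedge n$ yields $\epsilon\,\Ee^b(\tau_r\wedge n)\leq\Ee^b f(X_{\tau_r\wedge n})-f(b)\leq 2\|f\|_\infty$, whence $\Ee^b\tau_r\leq 2\|f\|_\infty/\epsilon$ by monotone convergence. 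You instead work entirely on the semigroup and path level: a positive escape probability at the centre, transferred to a uniform bound over a small ball by the Feller property~\ref{lpmp-21}.f), followed by a geometric iteration via the Markov property. Your route uses no information about $\dom(A)$ and actually proves more---the tail estimate $\Pp^b(\tau_r>kt_0)\leq\theta^k$ shows that $\tau_r$ even has exponential moments, not merely $\Ee^b\tau_r<\infty$---whereas the paper's argument is shorter (Dynkin's formula is already in stock via Corollary~\ref{feller-07} and optional stopping) and produces an explicit quantitative bound.

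Two details that you yourself flag should be settled as follows. First, decouple the escape threshold from the containment radius rather than shrinking both at once: fix $t_0$ and $r_0$ with $\Pp^b(|X_{t_0}-b|>2r_0)>0$, pick $g\in\cont_c(\rd)$ with $0\leq g\leq\I_{\{|z-b|\geq 2r_0\}}$ and $p':=P_{t_0}g(b)>0$ (possible since $\Pp^b(2r_0<|X_{t_0}-b|<R)>0$ for $R$ large), and use continuity of $P_{t_0}g$ to find $\rho\leq r_0$ with $P_{t_0}g\geq p'/2$ on $\overline{B_\rho(b)}$; then run the iteration for $\tau_\rho$, noting that staying in $B_\rho(b)$ forces $|X_{t_0}-b|\leq\rho<2r_0$, so $\theta\leq 1-p'/2<1$. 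Second, you need neither the strong Markov property nor the continuity of $t\mapsto\Pp^b(|X_t-b|\geq 2r)$ (which may in fact fail): the inclusion $\{\tau_\rho>(k+1)t_0\}\subset\{\tau_\rho>kt_0\}\cap\{|X_{(k+1)t_0}-b|<2r_0\}$ reduces the conditional step to a single-time event, handled by the simple Markov property at time $kt_0$, and the existence of some $t_0$ with $\Pp^b(X_{t_0}\neq b)>0$ follows from rational times plus right-continuity, exactly as in the paper's own step $1^\circ$. With these adjustments your argument is complete.
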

\begin{proof}
$1^\circ$ \
    If $b$ is not absorbing, then there is some $f\in\dom(A)$ such that $Af(b)\neq 0$. Assume the contrary, i.e.\
    $$
        Af(b) = 0\fa f\in\dom(A).
    $$
    By Lemma~\ref{semi-09}, $P_sf\in\dom(A)$ for all $s\geq 0$, and
    $$
        P_tf(b) - f(b) = \int_0^t A(P_sf)(b)\,ds = 0.
    $$
    So, $P_tf(b)=f(b)$ for all $f\in\dom(A)$. Since the domain $\dom(A)$ is dense in $\cont_\infty(\rd)$ (Remark~\ref{semi-11}), we get $P_tf(b)=f(b)$ for all $f\in\cont_\infty(\rd)$, hence $\Pp^b(X_t=b)=1$ for any $t\geq 0$. Therefore,
    $$
        \Pp^b(X_q = b,\;\forall q\in\mathds{Q},\; q\geq 0) = 1
        \et
        \Pp^b(X_t=b,\;\forall t\geq 0)=1,
    $$
    because of the right-continuity of the sample paths. This means that $b$ is an absorbing point, contradicting our assumption.

\medskip\noindent $2^\circ$ \
    Pick $f\in\dom(A)$ such that $Af(b)>0$. Since $Af$ is continuous, there exist $\epsilon>0$ and $r>0$ such that $Af|_{B_r(b)}\geq \epsilon > 0$. From Dynkin's formula \eqref{symb-e10} with $\sigma=\tau_r\wedge n$, $n\geq 1$, we deduce
    \begin{gather*}
        \epsilon\Ee^b (\tau_r\wedge n)
        \leq \Ee^b \int_{[0,\tau_r\wedge n)} Af(X_s)\,ds
        = \Ee^b f(X_{\tau_r\wedge n}) - f(b)
        \leq 2\|f\|_\infty.
    \end{gather*}
    Finally, monotone convergence shows that $\Ee^b \tau_r \leq 2\|f\|_\infty/\epsilon < \infty$.
\end{proof}

\begin{definition}[Dynkin's operator]\label{symb-09}
\index{characteristic operator}%
\index{Dynkin's operator}%
\index{generator!Dynkin's operator}%
    Let $X$ be a Feller process. The linear operator $(\Afrak,\dom(\Afrak))$ defined by
    \begin{align*}
		\Afrak f(x) &:=
        \begin{cases}
            \displaystyle\lim_{r \to 0} \frac{\Ee^x f(X_{\tau_r})-f(x)}{\Ee^x\tau_r}, &\text{if $x$ is not absorbing},\\
            \displaystyle 0, & \text{otherwise},
        \end{cases} \\
		\dom(\Afrak) &:= \left\{f \in \cont_{\infty}(\rd)\::\: \text{the above limit exists pointwise}\right\},
	\end{align*}
    is called \emphh{Dynkin's (characteristic) operator}.
\end{definition}

\begin{lemma}\label{symb-11}
    Let $\Xt$ be a Feller process with generator $(A,\dom(A))$ and characteristic operator $(\Afrak,\dom(\Afrak))$.
    \begin{enumerate}
    \item[\upshape a)] $\Afrak$ is an extension of $A$, i.e.\ $\dom(A)\subset \dom(\Afrak)$ and $\Afrak|_{\dom(A)}=A$.
    \item[\upshape b)] $(\Afrak,\dom)=(A,\dom(A))$ \ if \ $\dom = \{f\in \dom(\Afrak)\,:\, f,\: \Afrak f\in \cont_\infty(\rd)\}$.
    \end{enumerate}
\end{lemma}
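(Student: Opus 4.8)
The plan is to establish a) directly from Dynkin's formula and then deduce b) from the Dynkin--Reuter lemma (Lemma~\ref{semi-29}) combined with a maximum-principle argument.

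For a), I fix $f\in\dom(A)$ and distinguish two cases for a point $x$. If $x$ is absorbing, then $X_t\equiv x$ holds $\Pp^x$-a.s., so $P_tf(x)=f(x)$ and hence $Af(x)=0=\Afrak f(x)$ by the very definition of $\Afrak$. If $x$ is not absorbing, Lemma~\ref{symb-07} provides some $r_0>0$ with $\Ee^x\tau_{r_0}<\infty$; since $\tau_r\leq\tau_{r_0}$ for $r\leq r_0$, we have $0<\Ee^x\tau_r<\infty$, where strict positivity comes from $\tau_r>0$ a.s.\ (the right-continuity of the paths started at $x$ keeps $X_s$ inside $B_r(x)$ for small $s$). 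Dynkin's formula~\eqref{symb-e10} applied with $\sigma=\tau_r$ then gives
\[
    \frac{\Ee^x f(X_{\tau_r})-f(x)}{\Ee^x\tau_r}
    = \frac{1}{\Ee^x\tau_r}\,\Ee^x\!\int_{[0,\tau_r)} Af(X_s)\,ds.
\]
On $\{s<\tau_r\}$ one has $|X_s-x|<r$, so using the continuity of $Af$ at $x$ I would bound the difference between the right-hand side and $Af(x)$ by $\sup_{|y-x|<r}|Af(y)-Af(x)|$, which tends to $0$ as $r\to0$. Hence the limit defining $\Afrak f(x)$ exists and equals $Af(x)$, proving $\dom(A)\subset\dom(\Afrak)$ and $\Afrak|_{\dom(A)}=A$.

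For b), I first observe that $\dom(A)\subset\dom$: if $f\in\dom(A)$, then $f\in\cont_\infty(\rd)$ and, by a), $\Afrak f=Af\in\cont_\infty(\rd)$. Thus $(\Afrak,\dom)$ is an extension of the generator $(A,\dom(A))$ in the sense of Lemma~\ref{semi-29}, and it remains to verify the uniqueness hypothesis~\eqref{semi-e26}: every $u\in\dom$ with $u-\Afrak u=0$ must vanish. Suppose not; replacing $u$ by $-u$ if necessary, we may assume $u(x_0)=\max_{x}u(x)>0$ for some $x_0$, which is possible because $u\in\cont_\infty(\rd)$. If $x_0$ is absorbing, then $\Afrak u(x_0)=0$, contradicting $\Afrak u(x_0)=u(x_0)>0$. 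If $x_0$ is not absorbing, then $u(X_{\tau_r})\leq u(x_0)$ forces $\Ee^{x_0}u(X_{\tau_r})-u(x_0)\leq0$, whence $\Afrak u(x_0)\leq0$, again contradicting $\Afrak u(x_0)=u(x_0)>0$. Therefore $u\equiv0$, and Lemma~\ref{semi-29} yields $(\Afrak,\dom)=(A,\dom(A))$.

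The routine parts are the existence of the Dynkin limit in a) and the bookkeeping with absorbing points. The crux of the argument, and the place where care is needed, is the kernel computation in b): one must recognise that the quotient defining $\Afrak u(x_0)$ is automatically $\leq0$ at a maximum (a built-in positive-maximum-principle feature of the characteristic operator), and one must treat absorbing starting points separately so that the ratio of $\Ee^{x_0}u(X_{\tau_r})-u(x_0)$ over $\Ee^{x_0}\tau_r$ is well defined with a strictly positive, finite denominator. Once this dichotomy is handled, Dynkin--Reuter closes the proof with no further analysis.
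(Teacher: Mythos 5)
Your proof is correct and takes essentially the same route as the paper: part a) is the identical Dynkin-formula argument (split into absorbing/non-absorbing points, with $\Ee^x\tau_r<\infty$ from Lemma~\ref{symb-07} and the continuity of $Af$ controlling the error on $B_r(x)$), and part b) is the same reduction to the Dynkin--Reuter Lemma~\ref{semi-29}. Your explicit maximum-point dichotomy verifying \eqref{semi-e26} is simply an unpacked version of the paper's one-line observation that $(\Afrak,\dom)$ satisfies \eqref{PMP}, so the two arguments coincide in substance.
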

\begin{proof}
a)
    Let $f\in\dom(A)$ and assume that $x\in\rd$ is not absorbing. By Lemma~\ref{symb-07} there is some $r=r(x)>0$ with $\Ee^x \tau_r < \infty$. Since we have $Af\in \cont_\infty(\rd)$, there exists for every $\epsilon>0$ some $\delta>0$ such that
    $$
        |Af(y)-Af(x)| < \epsilon\fa y\in B_\delta(x).
    $$
    Without loss of generality let $\delta < r$. Using Dynkin's formula \eqref{symb-e10} with $\sigma = \tau_\delta$, we see
    \begin{align*}
        \big| \Ee^x f(X_{\tau_\delta}) - f(x) - Af(x)\Ee^x \tau_\delta\big|
        \leq \Ee^x \int_{[0,\tau_\delta)} \underbrace{|Af(X_s)-Af(x)|}_{\leq \epsilon}\,ds
        \leq \epsilon \Ee^x \tau_\delta.
    \end{align*}
    Thus, $\lim_{r\to 0} \big(\Ee^x f(X_{\tau_r}) - f(x)\big)\big/\Ee^x \tau_r = Af(x)$.

\smallskip
    If $x$ is absorbing and $f\in\dom(A)$, then $Af(x)=0$ and so $Af(x)=\Afrak f(x)$.

\medskip\noindent
b)
    Since $(\Afrak,\dom)$ satisfies the 
    \eqref{PMP}, the claim follows from Lemma~\ref{semi-29}.
\end{proof}

\begin{theorem}\label{symb-31}
\index{symbol!probabilistic formula}%
    Let $(X_t)_{t \geq 0}$ be a Feller process with infinitesimal generator $(A,\dom(A))$ such that $\cont_c^{\infty}(\rd) \subset \dom(A)$ and $x\mapsto q(x,\xi)$ is continuous\footnote{For instance, if $X$ has bounded jumps, see Lemma~\ref{feller-19} and Remark~\ref{feller-21}. Our proof will show that it is actually enough to assume that $s\mapsto q(X_s,\xi)$ is right-continuous.}. Then
    \begin{equation}\label{symb-e32}
		-q(x,\xi) = \lim_{r \to 0} \frac{\Ee^x \eup^{\iup(X_{\tau_r}-x)\cdot\xi}-1}{\Ee^x \tau_r}
	\end{equation}
	for all $x\in\rd$ \textup{(}as usual, $\frac 1\infty := 0$\textup{)}. In particular, $q(a,\xi)=0$ for all absorbing states $a\in\rd$.
\end{theorem}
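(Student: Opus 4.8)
The plan is to recognise the right-hand side of \eqref{symb-e32} as $\eup_{-\xi}(x)$ times Dynkin's characteristic operator $\Afrak$ (Definition~\ref{symb-09}) applied to $\eup_\xi$, and to match it against the target value $-q(x,\xi)=\eup_{-\xi}(x)A\eup_\xi(x)$, i.e.\ the extension \eqref{feller-e16} of the generator evaluated at $\eup_\xi$. Writing $\eup^{\iup(X_{\tau_r}-x)\cdot\xi}=\eup_{-\xi}(x)\eup_\xi(X_{\tau_r})$ and $1=\eup_{-\xi}(x)\eup_\xi(x)$, the quotient in \eqref{symb-e32} equals $\eup_{-\xi}(x)\big(\Ee^x\eup_\xi(X_{\tau_r})-\eup_\xi(x)\big)/\Ee^x\tau_r$, so everything reduces to computing $\Afrak\eup_\xi(x)$. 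The recurring difficulty is that $\eup_\xi\notin\cont_\infty(\rd)$, so $\eup_\xi$ lies neither in $\dom(A)$ nor, a priori, in $\dom(\Afrak)$; the device is to run the argument on the truncations $\chi_R\eup_\xi\in\cont_c^\infty(\rd)\subset\dom(A)$, with $\chi_R$ as in Lemma~\ref{feller-17}, and pass to the limit $R\to\infty$.

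First I would treat a non-absorbing $x$. By Lemma~\ref{symb-07} there is $r_0>0$ with $\Ee^x\tau_{r_0}<\infty$, hence $\Ee^x\tau_r<\infty$ for all $r\le r_0$ since $\tau_r\le\tau_{r_0}$; in particular $\tau_r<\infty$ $\Pp^x$-a.s. Dynkin's formula \eqref{symb-e10} applied to $\chi_R\eup_\xi\in\dom(A)$ with $\sigma=\tau_r$ gives
$$\Ee^x(\chi_R\eup_\xi)(X_{\tau_r}) - (\chi_R\eup_\xi)(x) = \Ee^x\int_{[0,\tau_r)} A(\chi_R\eup_\xi)(X_s)\,ds.$$
Now I would let $R\to\infty$. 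On the left $\chi_R\to1$ pointwise with $|\chi_R\eup_\xi|\le1$, so dominated convergence (under the probability measure $\Pp^x$) yields $\Ee^x\eup_\xi(X_{\tau_r})-\eup_\xi(x)$. On the right, for $s<\tau_r$ one has $X_s\in B_r(x)$, and by \eqref{feller-e20} the integrand is bounded for $R\ge1$ by $4\sup_{|\eta|\le1}|q(X_s,\eta)|\int[1+|\rho|^2+|\xi|^2]\,|\widehat\chi(\rho)|\,d\rho$; since $z\mapsto\sup_{|\eta|\le1}|q(z,\eta)|$ is locally bounded (Corollary~\ref{feller-13}) and $\widehat\chi\in\Scal(\rd)$, this is dominated by a constant over $\overline{B_r(x)}$, and $\Ee^x\tau_r<\infty$ lets me apply dominated convergence with respect to $\Pp^x(d\omega)\times ds$. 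Using the pointwise limit \eqref{feller-e22} I obtain
$$\Ee^x\eup_\xi(X_{\tau_r}) - \eup_\xi(x) = -\Ee^x\int_{[0,\tau_r)} \eup_\xi(X_s)\,q(X_s,\xi)\,ds.$$

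Next I would divide by $\Ee^x\tau_r$ and send $r\to0$. Writing $g(z):=\eup_\xi(z)q(z,\xi)$, which is continuous at $x$ because $\eup_\xi$ is continuous and $q(\cdot,\xi)$ is continuous by hypothesis, and using $\Ee^x\int_{[0,\tau_r)}g(x)\,ds=g(x)\Ee^x\tau_r$, I get
$$\left|\frac{\Ee^x\int_{[0,\tau_r)}g(X_s)\,ds}{\Ee^x\tau_r} - g(x)\right| \le \sup_{z\in B_r(x)}|g(z)-g(x)| \xrightarrow[r\to0]{} 0,$$
since $X_s\in B_r(x)$ for $s<\tau_r$. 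Hence $\big(\Ee^x\eup_\xi(X_{\tau_r})-\eup_\xi(x)\big)/\Ee^x\tau_r\to-\eup_\xi(x)q(x,\xi)$, and multiplying by $\eup_{-\xi}(x)$ (note $\eup_{-\xi}(x)\eup_\xi(x)=1$) gives \eqref{symb-e32} at $x$.

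Finally, the absorbing case and the main difficulty. If $a$ is absorbing, then $\tau_r=\infty$ $\Pp^a$-a.s., so $\Ee^a\tau_r=\infty$ and the right-hand side of \eqref{symb-e32} is $0$ by the convention $\tfrac1\infty:=0$; it remains to check $q(a,\xi)=0$. Since $\Pp^a(X_t=a)=1$ for all $t$, we have $P_t(\chi_R\eup_\xi)(a)=(\chi_R\eup_\xi)(a)$, whence $A(\chi_R\eup_\xi)(a)=0$; letting $R\to\infty$ in \eqref{feller-e22} gives $-\eup_\xi(a)q(a,\xi)=0$, and $|\eup_\xi(a)|=1$ forces $q(a,\xi)=0$. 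The step I expect to be most delicate is the interchange $R\to\infty$ inside Dynkin's formula: because $A$ is non-local one cannot simply localise $\chi_R\eup_\xi$ to $\eup_\xi$ on $B_r(x)$, so this passage rests genuinely on the uniform-in-$R$ bound \eqref{feller-e20} together with local boundedness of $\sup_{|\eta|\le1}|q(\cdot,\eta)|$; once that is secured, the remaining $r\to0$ limit is a routine continuity (mean-value) estimate, and in fact only right-continuity of $s\mapsto q(X_s,\xi)$ at $s=0$ is used there.
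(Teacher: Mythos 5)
Your proof is correct and follows essentially the same route as the paper: truncate $\eup_\xi$ by $\chi_R\eup_\xi\in\cont_c^\infty(\rd)$, apply Dynkin's formula, pass to the limit $R\to\infty$ using the uniform bound \eqref{feller-e20} together with the local boundedness from Corollary~\ref{feller-13}, and then let $r\to 0$ by continuity of $q(\cdot,\xi)$ at $x$. The only (harmless) variations are that you apply Dynkin's formula directly with $\sigma=\tau_r$ --- legitimate, since $\Ee^x\tau_r<\infty$ for small $r$ by Lemma~\ref{symb-07} and monotonicity --- where the paper stops at $\tau_r\wedge t$ and lets $t\to\infty$ afterwards, and that you make explicit the argument $A(\chi_R\eup_\xi)(a)=0\Rightarrow q(a,\xi)=0$ for absorbing $a$, which the paper's proof leaves implicit.
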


\begin{proof}
    Let $\chi_n\in\cont_c^\infty(\rd)$ such that $\I_{B_n(0)}\leq\chi_n\leq 1$. By Dynkin's formula \eqref{symb-e10}
    \begin{align*}
		\eup_{-\xi}(x) \Ee^x[\chi_n(X_{\tau_r\wedge t}) \eup_{\xi}(X_{\tau_r\wedge t})] - \chi_n(x)
        &= \Ee^x \int_{[0,\tau_r\wedge t)} \eup_{-\xi}(x) A[\chi_n \eup_\xi](X_s)\,ds.
    \end{align*}
    Observe that $A[\chi_n \eup_\xi](X_s)$ is bounded if $s<\tau_r$, see Corollary~\ref{feller-12}. Using the dominated convergence theorem, we can let $n\to\infty$ to get
    \begin{equation}\label{symb-e34}
    \begin{aligned}
		\eup_{-\xi}(x) \Ee^x \eup_{\xi}(X_{\tau_r\wedge t}) - 1
        &\pomu{\eqref{feller-e16}}{=}{} \Ee^x\int_{[0,\tau_r\wedge t)} \eup_{-\xi}(x) A \eup_\xi(X_s)\,ds\\
        &\omu{\eqref{feller-e16}}{=}{}  -\Ee^x\int_{[0,\tau_r\wedge t)} \eup_\xi(X_s-x)q(X_s,\xi)\,ds.
    \end{aligned}
    \end{equation}
    If $x$ is absorbing, we have $q(x,\xi)=0$, and \eqref{symb-e32} holds trivially. For non-absorbing $x$, we pass to the limit $t\to \infty$ and get, using $\Ee^x \tau_r< \infty$ (see Lemma~\ref{symb-07}),
    \begin{align*}
		\frac{\eup_{-\xi}(x) \Ee^x \eup_{\xi}(X_{\tau_r}) - 1}{\Ee^x \tau_r}
        = -\frac 1{\Ee^x\tau_r} \Ee^x \int_{[0,\tau_r)} \eup_\xi(X_s-x)q(X_s,\xi)\,ds.
    \end{align*}
    Since $s\mapsto q(X_s,\xi)$ is right-continuous at $s=0$, the limit $r\to 0$ exists, and \eqref{symb-e32} follows.
\end{proof}

A small variation of the above proof yields
\begin{corollary}[Schilling, Schnurr \cite{schilling-schnurr}]\label{symb-33}
\index{symbol!probabilistic formula}%
    Let $X$ be a Feller process with generator $(A,\dom(A))$ such that $\cont_c^{\infty}(\rd) \subset \dom(A)$ and $x\mapsto q(x,\xi)$ is continuous. Then
    \begin{equation}\label{symb-e36}
		-q(x,\xi) = \lim_{t \to 0} \frac{\Ee^x \eup^{\iup (X_{t\wedge\tau_r} - x)\cdot\xi}-1}{t}
	\end{equation}
	for all $x\in\rd$ and $r>0$.
\end{corollary}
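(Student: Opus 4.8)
The plan is to run the argument of Theorem~\ref{symb-31} but to divide by $t$ and let $t\to 0$ (with $r$ fixed) instead of letting $t\to\infty$ and then $r\to 0$. The crucial observation is that the identity \eqref{symb-e34} derived in that proof,
$$\eup_{-\xi}(x)\Ee^x\eup_\xi(X_{\tau_r\wedge t}) - 1 = -\Ee^x\int_{[0,\tau_r\wedge t)}\eup_\xi(X_s-x)\,q(X_s,\xi)\,ds,$$
holds for \emph{every} $x\in\rd$ and \emph{all} $t,r>0$: it is obtained before the passage $t\to\infty$ and therefore does not require $\Ee^x\tau_r<\infty$. Since $\eup_{-\xi}(x)\eup_\xi(y)=\eup^{\iup(y-x)\cdot\xi}$, the left-hand side equals $\Ee^x\eup^{\iup(X_{t\wedge\tau_r}-x)\cdot\xi}-1$. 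Dividing by $t$, it remains to show
$$\frac 1t\,\Ee^x\int_{[0,\tau_r\wedge t)}\eup_\xi(X_s-x)\,q(X_s,\xi)\,ds\xrightarrow[\;t\to 0\;]{}q(x,\xi).$$

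First I would collect the three facts that drive the limit. Writing $g(s):=\eup_\xi(X_s-x)\,q(X_s,\xi)$, note $g(0)=q(x,\xi)$ under $\Pp^x$. (i) Since the paths are right-continuous and $X_0=x$, one has $|X_s-x|\to 0$ as $s\downarrow 0$, so $\tau_r>0$ $\Pp^x$-a.s.; hence $\tau_r\wedge t=t$ for all small $t$ (depending on $\omega$), so $(\tau_r\wedge t)/t\to 1$ a.s., and as $0\le(\tau_r\wedge t)/t\le 1$, dominated convergence gives $\Ee^x(\tau_r\wedge t)/t\to 1$. (ii) For $s<\tau_r$ we have $X_s\in\overline{B_r(x)}$, and since $x\mapsto q(x,\xi)$ is continuous the constant $C_r:=\sup_{|y-x|\le r}|q(y,\xi)|$ is finite; because $|\eup_\xi(\cdot)|=1$, this yields $|g(s)|\le C_r$ on $[0,\tau_r)$. (iii) By right-continuity of $s\mapsto X_s$ and continuity of $q(\cdot,\xi)$ and $\eup_\xi$, the map $s\mapsto g(s)$ is right-continuous at $s=0$.

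Then I would split the difference quotient as
$$\frac 1t\,\Ee^x\!\!\int_{[0,\tau_r\wedge t)}\!\! g(s)\,ds
= \frac 1t\,\Ee^x\!\!\int_{[0,\tau_r\wedge t)}\!\! \big[g(s)-q(x,\xi)\big]\,ds
\;+\; q(x,\xi)\,\frac{\Ee^x(\tau_r\wedge t)}{t}.$$
By (i) the second summand tends to $q(x,\xi)$. The first is bounded in modulus by $\Ee^x\big[\sup_{0\le s<\tau_r\wedge t}|g(s)-q(x,\xi)|\big]$, which is dominated by $2C_r$ by (ii) and tends to $0$ pointwise by (iii); dominated convergence forces it to $0$. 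Combining the two pieces proves the displayed limit, and hence \eqref{symb-e36}. If $x=a$ is absorbing the claim is trivial, since $X_{t\wedge\tau_r}\equiv a$ makes the quotient vanish identically while $q(a,\xi)=0$ by Theorem~\ref{symb-31}.

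I expect the only real subtlety to be the two dominated-convergence passages and, underlying them, the fact that stopping at $\tau_r$ confines $X_s$ to the compact ball $\overline{B_r(x)}$; this is exactly what lets us replace the global bounded-coefficients hypothesis of Theorem~\ref{symb-03} by the purely local requirement that $q(\cdot,\xi)$ be continuous, and what makes the formula valid for every $x$ (including absorbing states, where $\Ee^x\tau_r=\infty$) and every $r>0$.
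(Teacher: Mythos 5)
Your proposal is correct and takes essentially the same route as the paper: the paper's proof likewise re-uses the identity \eqref{symb-e34} from the proof of Theorem~\ref{symb-31} (valid for every $x$, $t$ and $r$, since Dynkin's formula is applied with the bounded stopping time $\tau_r\wedge t$), divides by $t$, and lets $t\to 0$ using exactly your two ingredients -- $X_s$ is confined to $\overline{B_r(x)}$ for $s<\tau_r$, and $s\mapsto q(X_s,\xi)$ is right-continuous. Your write-up merely makes explicit the dominated-convergence details (including $\Ee^x(\tau_r\wedge t)/t\to 1$) that the paper compresses into ``therefore, the limit $t\to 0$ exists''; the separate treatment of absorbing states is harmless but unnecessary, since $\tau_r=\infty$ there and the general argument applies verbatim.
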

\begin{proof}
    We follow the proof of Theorem~\ref{symb-31} up to \eqref{symb-e34}. This relation can be rewritten as
    $$
        \frac{\Ee^x \eup^{\iup (X_{t\wedge\tau_r}-x)\cdot\xi}-1}{t}
        = -\frac 1t\Ee^x\int_0^t \eup_\xi(X_s-x)q(X_s,\xi)\I_{[0,\tau_r)}(s)\,ds.
    $$
    Observe that $X_s$ is bounded if $s<\tau_r$ and that $s\mapsto q(X_s,\xi)$ is right-continuous. Therefore, the limit $t\to 0$ exists and yields \eqref{symb-e36}.
\end{proof}

Every Feller process $\Xt$ such that $\cont_c^\infty(\rd)\subset\dom(A)$ is a semimartingale. Moreover, the semimartingale characteristics can be expressed in terms of the L\'evy triplet $(l(x),Q(x),\nu(x,dy))$ of the symbol. Recall that a ($d$-dimensional) \emphh{semimartingale} is a stochastic process of the
\index{semimartingale}%
form
$$
        X_t = X_0 + X_t^{\textsf{c}} + \int_0^t\!\!\int y\I_{(0,1)}(|y|)\big[\mu^X(\cdot,ds,dy)-\nu(\cdot,ds,dy)\big] + \sum_{s\leq t} \I_{[1,\infty)}(|\Delta X_s|)\Delta X_s + B_t
$$
where $X^{\textsf{c}}$ is the continuous martingale part, $B$ is a previsible process with paths of finite variation (on compact time intervals) and with the jump measure
$$
    \mu^X(\omega,ds,dy) = \sum_{s\,:\,\Delta X_s(\omega)\neq 0} \delta_{(s,\Delta X_s(\omega))}(ds,dy)
$$
whose compensator is $\nu(\omega,ds,dy)$. The triplet $(B,C,\nu)$ with the (predictable) quadratic variation $C=[X^\textsf{c},X^\textsf{c}]$ of $X^\textsf{c}$ is called the \emphh{semimartingale characteristics}.
\begin{theorem}[Schilling \cite{schilling-ptrf}, Schnurr \cite{schnurr12}]\label{symb-35}
\index{Feller process!semimartingale characteristics}
    Let $(X_t)_{t \geq 0}$ be a Feller process with infinitesimal generator $(A,\dom(A))$ such that $\cont_c^{\infty}(\rd) \subset \dom(A)$ and symbol $q(x,\xi)$ given by \eqref{feller-e14}. If $q(x,0)=0$,\footnote{A sufficient condition is, for example, that $X$ has infinite life-time and $x\mapsto q(x,\xi)$ is continuous (either for all $\xi$ or just for $\xi=0$), cf.\ Theorem~\ref{feller-22} and Lemma~\ref{feller-19}.} then $X$ is a semimartingale whose semimartingale characteristics can be expressed by the L\'evy triplet $(l(x),Q(x),\nu(x,dy))$
    \begin{align*}
        B_t = \int_0^t l(X_s)\,ds,\quad
        C_t = \int_0^t Q(X_s)\,ds,\quad
        \nu(\cdot,ds,dy) = \nu(X_s(\cdot),dy)\,ds.
    \end{align*}
\end{theorem}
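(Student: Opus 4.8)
The plan is to reduce everything to the martingale identity of Corollary~\ref{feller-07} and the integro-differential representation \eqref{feller-e15} of the generator, and then to match the resulting expression against the defining property of the semimartingale characteristics. The hypothesis $q(x,0)=0$ enters at the very start: it says that the local part of $A$ carries no zeroth-order (killing) term, so that $X$ has infinite life-time and $P_t1=1$; in particular the first exit times $\sigma_m:=\inf\{t\geq 0\::\:|X_t|\geq m\}$ increase to $\infty$ almost surely, since the c\`adl\`ag paths are bounded on compact time intervals. It therefore suffices to (A) prove that $X$ is a semimartingale, and (B) identify its characteristics $(B,C,\nu)$ by reading them off from $A$.

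For (A) I would argue by localisation with coordinate functions. Fix $k\in\{1,\dots,d\}$ and $m\in\nat$ and choose $g_m\in\cont_c^\infty(\rd)$ with $g_m(x)=x_k$ for $|x|\leq m+1$. By Corollary~\ref{feller-07} the process $g_m(X_t)-g_m(X_0)-\int_0^t Ag_m(X_s)\,ds$ is a martingale, and on the stochastic interval $[0,\sigma_m)$ one has $g_m(X_t)=X_t^{(k)}$; since $Ag_m\in\cont_\infty(\rd)$ is bounded, this exhibits $X^{(k)}$ on $[0,\sigma_m)$ as the sum of a martingale and an absolutely continuous (hence finite-variation) process. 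As $\sigma_m\uparrow\infty$ and being a semimartingale on each $[0,\sigma_m)$ with $\sigma_m\uparrow\infty$ upgrades to the global property, $X^{(k)}$ is a semimartingale; letting $k$ vary shows that $X=(X^{(1)},\dots,X^{(d)})$ is an $\rd$-valued semimartingale with respect to (the usual augmentation of) $\Fcal_t^X$.

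For (B) I would first upgrade the martingale identity from $\cont_c^\infty(\rd)$ to the exponentials $\eup_\xi(x)=\eup^{\iup\xi\cdot x}$. Corollary~\ref{feller-12}, via the estimate \eqref{feller-e18}, extends $A$ continuously from $\cont_c^\infty(\rd)$ to $(\cont_b^2(\rd),\|\cdot\|_{(2)})$, so that both \eqref{feller-e15} and the martingale property of $\eup_\xi(X_t)-\eup_\xi(X_0)-\int_0^t A\eup_\xi(X_s)\,ds$ persist for $\eup_\xi\in\cont_b^2(\rd)$ (localise by $\sigma_m$ and approximate $\eup_\xi$ by $\cont_c^\infty$-functions, using that $A\eup_\xi$ is bounded on each ball). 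Rewriting \eqref{feller-e16} as $A\eup_\xi(x)=-\eup_\xi(x)q(x,\xi)$, this says precisely that
$$\eup^{\iup\xi\cdot X_t}-\eup^{\iup\xi\cdot X_0}+\int_0^t\eup^{\iup\xi\cdot X_s}\,q(X_s,\xi)\,ds$$
is a local martingale for every $\xi\in\rd$.

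Finally I would compare this with the characterisation of the semimartingale characteristics through the ``characteristic-function'' martingale (Jacod \& Shiryaev \cite[Theorem~II.2.42]{jacod-shiryaev87}): the triplet $(B,C,\nu)$ is the set of characteristics of the semimartingale $X$ exactly when, for each $\xi$, the process $\eup^{\iup\xi\cdot X_t}$ minus the $ds$-drift built from $(B,C,\nu)$ is a local martingale. Inserting the L\'evy--Khintchine form \eqref{feller-e14} of $q$ and using $q(x,0)=0$—which guarantees that $-q(x,\xi)$ matches the L\'evy-type integrand of the triplet $(l(x),Q(x),\nu(x,\cdot))$ with \emph{no} constant (killing) term—shows that the drift integrand at time $s$ is $\eup^{\iup\xi\cdot X_s}$ times that symbol. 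By the uniqueness of the characteristics, and since $X_{s-}=X_s$ for Lebesgue-almost every $s$ (there are only countably many jumps), one reads off $B_t=\int_0^t l(X_s)\,ds$, $C_t=\int_0^t Q(X_s)\,ds$ and $\nu(\cdot,ds,dy)=\nu(X_s(\cdot),dy)\,ds$. The main obstacle is the bookkeeping in step (A) together with the extension to $\eup_\xi$: verifying that the localised martingale identity survives the passage from $\cont_c^\infty(\rd)$ to $\cont_b^2(\rd)$, and that ``semimartingale on each $[0,\sigma_m)$'' genuinely yields ``semimartingale''; once these technical points are settled, the identification of the characteristics is a direct comparison.
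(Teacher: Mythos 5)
Your strategy is essentially the paper's: derive Dynkin-type local martingales from Corollary~\ref{feller-07} together with the integro-differential representation \eqref{feller-e15}, localize by stopping, and identify the characteristics via Jacod \& Shiryaev. The differences are organisational. The paper works with the $\cont_b^2$ martingale-problem form of \cite[Theorem II.2.42]{jacod-shiryaev87} directly: it first proves $\cont_c^2(\rd)\subset\dom(A)$ (closedness of $A$ plus the local estimate \eqref{feller-e17}), then for $f\in\cont^2(\rd)\cap\cont_b(\rd)$ replaces $f$ by the truncation $\chi_r f$ and stops at $\sigma_r$, the minimum of the exit time from the ball \emph{and} the first jump time of size $\geq r$ --- the jump-size cut is what makes $M^{[f^x]}$ and $M^{[\chi_r f^x]}$ agree up to $\sigma_r$. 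You instead (A) establish the semimartingale property separately from coordinate functions and (B) test only against the exponentials $\eup_\xi$ and use the exponential version of the Jacod--Shiryaev characterization; both are legitimate, and your extension of the stopped martingale identity from $\cont_c^\infty(\rd)$ to $\eup_\xi$ via $\chi_n\eup_\xi$, Corollary~\ref{feller-12}, Lemma~\ref{feller-17} and dominated convergence is exactly the computation already carried out in the proofs of Theorems~\ref{feller-22} and~\ref{symb-31}. In step (A) note the pre-stopping subtlety you yourself flag: $X^{(k)}$ agrees with the true semimartingale $g_m(X)$ only strictly before $\sigma_m$; the standard fix is to add the single jump at $\sigma_m$ (a one-jump finite-variation term) so that the stopped process $X^{(k),\sigma_m}$ is a semimartingale, and then to localize.

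One assertion in your opening is wrong as stated: $q(x,0)=0$ does \emph{not} by itself imply $P_t1=1$ or non-explosion. Remark~\ref{feller-23} gives a diffusion with symbol $q(x,\xi)=\frac12 a(x)|\xi|^2$, hence $q(x,0)=0$, which explodes in finite time; conservativeness needs additional hypotheses such as bounded coefficients (Theorem~\ref{feller-22}.b)). The fact you actually need, $\sigma_m\uparrow\infty$ a.s., is available in this paper's framework because Feller semigroups are conservative by definition (Definition~\ref{semi-03} with Definition~\ref{lpmp-21}.e)), and the paper's proof invokes precisely this infinite life-time; the hypothesis $q(x,0)=0$ enters only where you do use it correctly in (B), namely to ensure that $-q(x,\xi)$ carries no killing term and matches the L\'evy--Khintchine integrand built from $(l(x),Q(x),\nu(x,dy))$. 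With the justification of $\sigma_m\uparrow\infty$ repaired along these lines, your argument goes through.
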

\begin{proof}
    $1^\circ$ \ Combining Corollary~\ref{feller-07} with the integro-differential representation \eqref{feller-e15} of the generator shows that
    \begin{align*}
        M_t^{[f]}
        &= f(X_t) - \int_{[0,t)} Af(X_s)\,ds\\
        &= f(X_t)
            - \int_{[0,t)} l(X_s)\cdot\nabla f(X_s)\,ds
            - \frac 12\int_{[0,t)} \nabla\cdot Q(X_s)\nabla f(X_s)\,ds\\
        &\qquad\mbox{} - \int_{[0,t)} \int_{y\neq 0} \left[f(X_s+y)-f(X_s)-\nabla f(X_s)\cdot y\I_{(0,1)}(|y|)\right]\nu(X_s,dy)\,ds
    \end{align*}
    is for all $f\in\cont_b^2(\rd)\cap\dom(A)$ a martingale.

\medskip\noindent $2^\circ$ \
    We claim that $\cont_c^2(\rd)\subset\dom(A)$.
    Indeed, let $f\in \cont_c^2(\rd)$ with $\supp f\subset B_r(0)$ for some $r>0$ and pick a sequence $f_n\in \cont_c^\infty(B_{2r}(0))$ such that $\lim_{n\to\infty}\|f-f_n\|_{(2)} = 0$. Using \eqref{feller-e17} we get
    $$
        \sup_{|x|\leq 3r}|Af_n(x) - Af_m(x)| \leq c_{3r}\|f_n-f_m\|_{(2)} \xrightarrow[m,n\to\infty]{}0.
    $$
    Since $\supp f_n\subset B_{2r}(0)$ and $f_n \to f$ uniformly, there is some $u\in \cont_c^\infty(B_{3r}(0))$ with $|f_n(x)|\leq u(x)$. Therefore, we get for $|x|\geq 2r$
    \begin{align*}
        |Af_n(x)-Af_m(x)|
        &\leq \int_{y\neq 0} |f_n(x+y)-f_m(x+y)|\,\nu(x,dy)\\
        &\leq 2\int_{y\neq 0} u(x+y)\,\nu(x,dy)
        = 2Au(x)
        \xrightarrow[|x|\to\infty]{}0
    \end{align*}
    uniformly for all $m,n$. This shows that $(Af_n)_{n\in\nat}$ is a Cauchy sequence in $\cont_\infty(\rd)$. By the closedness of $(A,\dom(A))$ we gather that $f\in\dom(A)$ and $Af = \lim_{n\to\infty}Af_n$.

\medskip\noindent $3^\circ$ \
    Fix $r>1$, pick $\chi_r \in \cont_c^\infty(\rd)$ such that $\I_{B_{3r}(0)}\leq\chi\leq 1$, and set
    $$
        \sigma = \sigma_r := \inf\{t>0\::\: |X_t-X_0|\geq r\}\wedge\inf\{t>0\::\: |\Delta X_t|\geq r\}.
    $$
    Since $\Xt$ has c\`adl\`ag paths and infinite life-time, $\sigma_r$ is a family of stopping times with $\sigma_r \uparrow \infty$.
    For any $f\in \cont^2(\rd)\cap \cont_b(\rd)$ and $x\in\rd$ we set $f_r := \chi_r f$, $f^x := f(\cdot-x)$, $f_{r}^{x}(\cdot-x)$, and consider
    \begin{align*}
        M_{t\wedge\sigma}^{[f^x]}
        &= f^x(X_{t\wedge\sigma})
            - \int_{[0,{t\wedge\sigma})} l(X_s)\cdot\nabla f^x(X_s)\,ds
            - \frac 12\int_{[0,{t\wedge\sigma})} \nabla\cdot Q(X_s)\nabla f^x(X_s)\,ds\\
        &\qquad\mbox{} - \int_{[0,{t\wedge\sigma})} \int_{y\neq 0} \left[f^x(X_s+y)-f^x(X_s)-\nabla f^x(X_s)\cdot y\I_{(0,1)}(|y|)\right]\nu(X_s,dy)\,ds\\
        &= f^x_r(X_{t\wedge\sigma})
            - \int_{[0,{t\wedge\sigma})} l(X_s)\cdot\nabla f^x_r(X_s)\,ds
            - \frac 12\int_{[0,{t\wedge\sigma})} \nabla\cdot Q(X_s)\nabla f^x_r(X_s)\,ds\\
        &\qquad\mbox{} - \int_{[0,{t\wedge\sigma})} \int_{0<|y|< r} \left[f^x_r(X_s+y)-f^x_r(X_s)-\nabla f^x_r(X_s)\cdot y\I_{(0,1)}(|y|)\right]\nu(X_s,dy)\,ds\\
        &= M_{t\wedge\sigma}^{[f^x_r]}.
    \end{align*}
    Since $f_r\in \cont_c^2(\rd)\subset\dom(A)$, we see that $M_t^{[f^x]}$ is a local martingale (with reducing sequence $\sigma_r$, $r>0$), and by a general result of Jacod \& Shiryaev \cite[Theorem II.2.42]{jacod-shiryaev87}, it follows that $X$ is a semimartingale with the characteristics mentioned in the theorem.
\end{proof}

We close this chapter with the discussion of a very important example: L\'evy driven stochastic differential equations. From now on we assume that
\begin{gather*}
    \Phi : \rd \to \real^{d\times n}\text{\ \ is a matrix-valued, measurable function},\\
    L = (L_t)_{t\geq 0} \text{\ \ is an $n  $-dimensional L\'evy process with exponent $\psi(\xi)$},
\end{gather*}
and consider the following It\^o stochastic differential equation (SDE)
\begin{equation}\label{symb-e51}
    dX_t = \Phi(X_{t-})\,dL_t,\quad X_0=x.
\end{equation}
If $\Phi$ is globally Lipschitz continuous, then the SDE \eqref{symb-e51} has a unique solution which is a strong Markov process, see Protter~\cite[Chapter V, Theorem 32]{protter04}\footnote{Protter requires that $L$ has independent coordinate processes, but this is not needed in his proof. For existence and uniqueness the local Lipschitz and linear growth conditions are enough; the strong Lipschitz condition is used for the Markov nature of the solution.}. If we write $X^x_t$ for the solution of \eqref{symb-e51} with initial condition $X_0= x=X_0^x$, then the flow $x\mapsto X_t^x$ is continuous \cite[Chapter V, Theorem 38]{protter04}.

If we use $L_t = (t,W_t,J_t)^\top$ as driving L\'evy process where $W$ is a Brownian motion and $J$ is a pure-jump L\'evy process (we assume\footnote{This assumption can be relaxed under suitable assumptions on the (joint) filtration, see for example Ikeda \& Watanabe \cite[Theorem II.6.3]{ikeda-watanabe89}} that $W\bbot J$), and if $\Phi$ is a block-matrix, then we see that \eqref{symb-e51} covers also SDEs of the form
$$
    dX_t = f(X_{t-})\,dt + F(X_{t-})\,dW_t + G(X_{t-})\,dJ_t.
$$

\begin{lemma}\label{symb-52}
    Let $\Phi$ be bounded and Lipschitz, $X$ the unique solution of the SDE \eqref{symb-e51}, and denote by $A$ the generator of $X$. Then $\cont_c^{\infty}(\rd) \subset \dom(A)$.
\end{lemma}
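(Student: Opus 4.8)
The plan is to read off the generator of $X$ directly from It\^o's formula and then to invoke the pointwise characterisation of $\dom(A)$ provided by Theorem~\ref{semi-31}. Since $\Phi$ is bounded and Lipschitz, the cited results of Protter guarantee that $X$ is a strong Markov, indeed Feller, process, so that $P_tf(x)=\Ee^xf(X_t)$ is a Feller semigroup and Theorem~\ref{semi-31} is applicable. Throughout, let $(l,Q,\nu)$ denote the L\'evy triplet of the driving process $L$ and write $\|\Phi\|_\infty:=\sup_{x}\|\Phi(x)\|$.

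First I would fix $f\in\cont_c^\infty(\rd)$ and apply It\^o's formula (cf.\ the formula closing Chapter~\ref{si}) to $f(X_t)$, using that $X$ solves $dX_t=\Phi(X_{t-})\,dL_t$, so that the continuous martingale part and the jumps of $X$ are $\Phi(X_{t-})$ applied to those of $L$. Collecting the bounded-variation terms produces the integro-differential operator
\begin{equation*}
  Gf(x) = \Phi(x)l\cdot\nabla f(x) + \tfrac12\nabla\cdot\big(\Phi(x)Q\Phi(x)^\top\big)\nabla f(x) + \int_{y\neq0}\big[f(x+\Phi(x)y)-f(x)-\nabla f(x)\cdot\Phi(x)y\,\I_{(0,1)}(|y|)\big]\,\nu(dy),
\end{equation*}
which is precisely $-q(\cdot,D)f$ for the symbol $q(x,\xi)=\psi(\Phi(x)^\top\xi)$, while the remaining stochastic-integral terms are local martingales. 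Because $\Phi$ is bounded and $f$ has compact support, the integrands of those stochastic integrals are bounded, so they are genuine martingales; hence $M^{[f]}_t:=f(X_t)-f(X_0)-\int_0^tGf(X_s)\,ds$ is a true martingale under every $\Pp^x$. Taking $\Pp^x$-expectations and using Fubini (legitimate once $Gf$ is bounded) then gives
\begin{equation*}
  P_tf(x)-f(x)=\int_0^t P_sGf(x)\,ds .
\end{equation*}

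The crux is to show $Gf\in\cont_\infty(\rd)$, for then $s\mapsto P_sGf(x)$ is right-continuous at $0$ (dominated convergence, using the c\`adl\`ag paths and $\|Gf\|_\infty<\infty$), and dividing the last identity by $t$ and letting $t\to0$ yields the pointwise limit $\lim_{t\to0}t^{-1}(P_tf(x)-f(x))=Gf(x)$ for every $x$; Theorem~\ref{semi-31} then places $f\in\dom(A)$ with $Af=Gf$. Boundedness, $\|Gf\|_\infty\le C\|f\|_{(2)}$, follows from Taylor's formula via the estimate $|f(x+\Phi(x)y)-f(x)-\nabla f(x)\cdot\Phi(x)y\,\I_{(0,1)}(|y|)|\le c\|f\|_{(2)}\big(\|\Phi\|_\infty^2|y|^2\wedge1\big)$ together with $\int_{y\neq0}(|y|^2\wedge1)\,\nu(dy)<\infty$, and continuity of $Gf$ follows from the continuity of $\Phi$ and dominated convergence. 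The decay at infinity is the essential point: if $|x|$ is so large that $x\notin\supp f$, then $f(x)=0$ and $\nabla f(x)=0$, so $Gf(x)=\int_{y\neq0}f(x+\Phi(x)y)\,\nu(dy)$, and $f(x+\Phi(x)y)\neq0$ forces $|\Phi(x)y|\ge\operatorname{dist}(x,\supp f)$, hence $|y|\ge\operatorname{dist}(x,\supp f)/\|\Phi\|_\infty$; therefore $|Gf(x)|\le\|f\|_\infty\,\nu\big(\{|y|\ge\operatorname{dist}(x,\supp f)/\|\Phi\|_\infty\}\big)\to0$ as $|x|\to\infty$, since $\nu$ is a L\'evy measure and is finite away from the origin.

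I expect the main obstacle to be the rigorous passage through It\^o's formula for the jump SDE and, more delicately, the verification that the stochastic-integral terms are genuine rather than merely local martingales, which is exactly where boundedness of $\Phi$ and the compact support of $f$ enter. Once $M^{[f]}$ is established as a true martingale and $Gf\in\cont_\infty(\rd)$ is verified, the convergence argument together with Theorem~\ref{semi-31} completes the proof.
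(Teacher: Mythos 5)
Your skeleton is exactly the paper's: its own proof of this lemma is a three-line sketch --- invoke Theorem~\ref{semi-31} (in the strengthened form noted there), use It\^o's formula to obtain $\Ee^x f(X_t)-f(x)=\Ee^x\int_0^t Gf(X_s)\,ds$, and then perform ``a calculation similar to the one in the proof of the next theorem'', deferring the complete argument to Schilling \& Schnurr. Your identification of $Gf$ with $-q(\cdot,D)f$ for $q(x,\xi)=\psi(\Phi^\top(x)\xi)$, the bound $\|Gf\|_\infty\leq C\|f\|_{(2)}$ via Taylor's formula and $\int_{y\neq 0}(|y|^2\wedge 1)\,\nu(dy)<\infty$, and in particular the decay estimate $|Gf(x)|\leq\|f\|_\infty\,\nu\big(\{|y|\geq\operatorname{dist}(x,\supp f)/\|\Phi\|_\infty\}\big)$ are correct and are precisely the details the paper leaves to the reference; the last estimate also shows cleanly where the boundedness of $\Phi$ is used analytically.

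Two caveats. First, the assertion that Protter's results make $X$ ``indeed Feller'' is an over-claim: they yield existence, uniqueness, the strong Markov property and continuity of the flow $x\mapsto X_t^x$, which only gives $P_t:\cont_\infty(\rd)\to\cont_b(\rd)$; the vanishing of $P_tf$ at infinity requires $\lim_{|x|\to\infty}|X_t^x|=\infty$ a.s., which the paper itself flags as needing ``longer calculations'' (see the proof of Theorem~\ref{symb-55} and the references given there). This matters because Theorem~\ref{semi-31} presupposes the semigroup structure on $\cont_\infty(\rd)$; the paper sidesteps the issue by noting that Theorem~\ref{semi-31} holds for any strongly continuous contraction semigroup satisfying the positive maximum principle, so you should either invoke that weakened form (checking strong continuity, which follows from the Lipschitz/Gronwall estimates for the SDE) or supply the non-explosion argument --- citing Protter alone does not close this step. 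Second, a small repair: for the compensated small-jump integral, boundedness of the integrand does not by itself upgrade a local martingale to a true one when $\nu$ is infinite; you need the $L^2(ds\,\nu(dy))$ bound, which your own Taylor estimate $|f(X_{s-}+\Phi(X_{s-})y)-f(X_{s-})|\leq\|\nabla f\|_\infty\|\Phi\|_\infty|y|$ for $0<|y|<1$ together with $\int_{0<|y|<1}|y|^2\,\nu(dy)<\infty$ provides via the It\^o isometry \eqref{si-e42}. With these two points addressed, your write-up is a legitimate filled-in version of the argument the paper outsources.
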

\begin{proof}
    Because of Theorem~\ref{semi-31} (this theorem does not only hold for Feller semigroups, but for any strongly continuous contraction semigroup satisfying the positive maximum principle), it suffices to show
    $$
		\lim_{t\to 0} \frac{1}{t} \big( \Ee^x f(X_t)-f(x) \big) = g(x)
        \et g\in \cont_{\infty}(\rd)
	$$
	for any $f \in \cont_c^{\infty}(\rd)$. For this we use, as in the proof of the following theorem, It\^o's formula to get
    $$
		\Ee^x f(X_t)-f(x) = \Ee^x \int_0^t Af(X_s) \, ds,
	$$
    and a calculation similar to the one in the proof of the next theorem. A complete proof can be found in Schilling \& Schnurr
    \cite[Theorem 3.5]{schilling-schnurr}.
\end{proof}

\begin{theorem}\label{symb-55}
\index{symbol!of an SDE}%
    Let $(L_t)_{t \geq 0}$ be an $n$-dimensional L\'evy process with exponent $\psi$ and assume that $\Phi$ is Lipschitz continuous. Then the unique Markov solution $X$ of the SDE \eqref{symb-e51} admits a generalized symbol in the sense that
    $$
		\lim_{t\to 0} \frac{\Ee^x \eup^{\iup (X_{t\wedge\tau_r}-x)\cdot\xi}-1}{t} = -\psi(\Phi^\top(x) \xi),\quad r>0.
	$$
    If $\Phi \in \cont_b^1(\rd)$, then $X$ is Feller, $\cont_c^{\infty}(\rd) \subset \dom(A)$ and $q(x,\xi)=\psi(\Phi^\top(x)\xi)$ is the symbol of the generator.
\end{theorem}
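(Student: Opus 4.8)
The plan is to apply It\^o's formula to the bounded smooth function $g_\xi(z):=\eup_\xi(z-x)=\eup^{\iup\xi\cdot(z-x)}$ along the solution $X$ stopped at the first exit time $\tau_r$, and to read the sought limit off the resulting drift. First I would insert the L\'evy--It\^o decomposition of $L$ (Theorem~\ref{rm-51}, Example~\ref{si-55}), with triplet $(l,Q,\nu)$, into the SDE \eqref{symb-e51}. This exhibits $X$ as a semimartingale whose continuous finite-variation part has density $\Phi(X_{s-})l$, whose continuous local martingale part is $\Phi(X_{s-})\sqrt Q\,dW_s$ with quadratic-variation density $\Phi(X_{s-})Q\Phi(X_{s-})^\top$, and whose jumps are $\Delta X_s=\Phi(X_{s-})\Delta L_s$, governed by the Poisson random measure of $L$.

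Next I would apply the semimartingale It\^o formula (as stated at the end of Chapter~\ref{si}) to $g_\xi$ in its compensated form. Collecting the $ds$-terms gives
\begin{equation*}
    g_\xi(X_{t\wedge\tau_r})-1 = (\text{local martingale}) + \int_{[0,t\wedge\tau_r)} \mathcal{G}g_\xi(X_{s-})\,ds,
\end{equation*}
where $\mathcal{G}$ is the frozen-coefficient integro-differential operator of type \eqref{feller-e15} built from $(l,Q,\nu)$ transported through $\Phi(z)$. The heart of the matter is the algebraic identity obtained by plugging in $g_\xi$: using $\nabla g_\xi=\iup\xi\,g_\xi$, the relation $\xi\cdot\Phi(z)w=(\Phi^\top(z)\xi)\cdot w$, and the L\'evy--Khintchine formula \eqref{cons-e02} for $\psi$, every term reorganises into $\psi$ evaluated at $\eta:=\Phi^\top(z)\xi$, namely
\begin{equation*}
    \mathcal{G}g_\xi(z) = \Big[\iup l\cdot\eta - \tfrac12\eta\cdot Q\eta + \int_{y\neq 0}\big(\eup^{\iup\eta\cdot y}-1-\iup\eta\cdot y\I_{(0,1)}(|y|)\big)\,\nu(dy)\Big]g_\xi(z) = -\psi(\Phi^\top(z)\xi)\,g_\xi(z).
\end{equation*}

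After stopping at $\tau_r$ the argument $X_{s-}$ stays in $\overline{B_r(x)}$ for $s\le\tau_r$, where $\Phi$ is bounded by continuity; hence both $g_\xi$ and the drift $\psi(\Phi^\top(\cdot)\xi)g_\xi$ are bounded, the local martingale is a genuine martingale of zero mean, and taking $\Ee^x$ leaves
\begin{equation*}
    \Ee^x g_\xi(X_{t\wedge\tau_r})-1 = -\Ee^x\int_{[0,t\wedge\tau_r)}\psi(\Phi^\top(X_{s-})\xi)\,g_\xi(X_{s-})\,ds.
\end{equation*}
Dividing by $t$ and letting $t\to0$, the right-continuity of the paths (so $X_{s-}\to x$ as $s\downarrow0$ and $\tau_r>0$ a.s.) together with bounded convergence yield $\tfrac1t\int_{[0,t\wedge\tau_r)}(\cdots)\,ds\to\psi(\Phi^\top(x)\xi)$, which is exactly the asserted generalized symbol formula.

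For the second statement, when $\Phi\in\cont_b^1(\rd)$ the existence, uniqueness, Feller property and the inclusion $\cont_c^\infty(\rd)\subset\dom(A)$ are supplied by Lemma~\ref{symb-52} and the results cited there. Since $\Phi$ is now bounded, the coefficients of $\mathcal{G}$ are bounded (Lemma~\ref{feller-15}) and no stopping is needed: the same It\^o computation, taken without $\tau_r$, shows that $f(X_t)-\int_0^t\mathcal{G}f(X_s)\,ds$ is a martingale for $f\in\cont_c^\infty(\rd)$, so comparison with Corollary~\ref{feller-07} and differentiation at $t=0$ give $Af=\mathcal{G}f$. Extending $A$ to $\cont_b^2(\rd)$ (Corollaries~\ref{feller-12},~\ref{feller-13}) and inserting $f=\eup_\xi$ into \eqref{feller-e16} produces $-q(x,\xi)=\eup_{-\xi}(x)A\eup_\xi(x)=-\psi(\Phi^\top(x)\xi)$; continuity of $x\mapsto q(x,\xi)$ is clear from that of $\Phi$ and $\psi$ (alternatively one quotes Corollary~\ref{symb-33} together with the first part). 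The main obstacle I expect is purely technical: justifying It\^o's formula for the complex exponential and the vanishing of the compensated martingale terms --- that is, the integrability granted by stopping at $\tau_r$ and the interchange of the $t\to0$ limit with $\Ee^x$ and the time integral --- and, in parallel, the careful bookkeeping of the compensation of small versus large jumps so that the cut-off $\I_{(0,1)}(|y|)$ matches the L\'evy--Khintchine normalization of $\psi$.
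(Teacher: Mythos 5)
Your proposal is correct and follows essentially the same route as the paper: the paper likewise applies It\^o's formula for jump processes to $\eup_\xi(X_{t\wedge\tau_r}-x)$ with the L\'evy--It\^o decomposition of $L$ inserted into the SDE, computes the drift, diffusion and compensated jump contributions (your single frozen-coefficient operator $\mathcal{G}$ is just a repackaging of its termwise computation $I_1,I_2,I_3$), and lets $t\to 0$ to obtain $-\psi(\Phi^\top(x)\xi)$, handling the second part via Lemma~\ref{symb-52} and the symbol identity \eqref{feller-e16}. The only caveat is that the Feller property itself is not supplied by Lemma~\ref{symb-52}; as in the paper, it needs the continuity of the flow $x\mapsto X_t^x$ plus the fact that $\lim_{|x|\to\infty}|X_t^x|=\infty$ a.s., for which the paper defers to Protter, Schnurr and Kunita.
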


Theorem~\ref{symb-55} indicates that the formulae \eqref{symb-e32} and \eqref{symb-e36} may be used to associate symbols not only with Feller processes but with more general Markov semimartingales. This has been investigated by Schnurr~\cite{schnurr-phd} and \cite{schnurr12} who shows that the class of It\^o processes with jumps is essentially the largest class that can be described by symbols; see also \cite[Chapters 2.4--5]{schilling-lm}. This opens up the way to analyze rather general semimartingales using the symbol. Let us also point out that the boundedness of $\Phi$ is only needed to ensure that $X$ is a Feller process.
\begin{proof}
    Let $\tau = \tau_r$ be the first exit time for the process $X$ from the ball $B_r(x)$ centered at $x=X_0$. We use the L\'evy--It\^o decomposition \eqref{rm-e52} of $L$. From It\^o's formula for jump processes (see, e.g.\ Protter~\cite[Chapter II, Theorem 33]{protter04}) we get
    \begin{align*}
		\frac{1}{t} \Ee^x &\left(\eup_{\xi}(X_{t\wedge\tau}-x)-1\right)\\
		&= \frac{1}{t} \Ee^x \bigg[ \int_0^{t \wedge \tau} \iup\eup_{\xi}(X_{s-}-x)\xi \, dX_s
            - \frac{1}{2} \int_0^{t \wedge \tau} \eup_{\xi}(X_{s-}-x) \xi\cdot d[X,X]_s^\textsf{c} \xi \\
		&\qquad\qquad\qquad\mbox{} + \eup_{-\xi}(x)\sum_{s \leq \tau \wedge t} \big(\eup_{\xi}(X_s)-\eup_{\xi}(X_{s-})-\iup \eup_{\xi}(X_{s-})
            \xi\cdot \Delta X_s\big)\bigg] \\
		&=: I_1+I_2+I_3.
	\end{align*}
	We consider the three terms separately.
    \begin{align*}
		I_1
		&= \frac{1}{t} \Ee^x \int_0^{t\wedge\tau} \iup\eup_{\xi}(X_{s-}-x) \xi\, dX_s\\
		&= \frac{1}{t} \Ee^x \int_0^{t\wedge\tau} \iup\eup_{\xi}(X_{s-}-x) \xi\cdot\Phi(X_{s-})\,dL_s\\
        &= \frac{1}{t} \Ee^x \int_0^{t\wedge\tau} \iup\eup_{\xi}(X_{s-}-x) \xi\cdot\Phi(X_{s-}) \,d_s\!\left[ls + \int_{|y|\geq 1} y\,N_s(dy)\right]\\
		&=: I_{11} + I_{12}
	\end{align*}
	where we use that the diffusion part and the compensated small jumps of a L\'evy process are a martingale, cf.\ Theorem~\ref{rm-51}. Further,
    \begin{align*}
	I_3&+I_{12}\\
        &= \frac{1}{t} \Ee^x \int_0^{t\wedge\tau}\int_{y\neq 0} \eup_{\xi}(X_{s-}-x) \left[ \eup_{\xi}(\Phi(X_{s-})y)-1- \iup  \xi\cdot \Phi(X_{s-}) y \I_{(0,1)}(|y|) \right]\,d_sN_s(dy)\\
        &= \frac{1}{t} \Ee^x \int_0^{t \wedge \tau} \int_{y\neq 0} \eup_{\xi}(X_{s-}-x) \left[ \eup_{\xi}(\Phi(X_{s-}) y)-1-\iup  \xi \Phi(X_{s-})y \I_{(0,1)}(|y|)\right] \, \nu(dy) \, ds\\
		&\xrightarrow[t \to 0]{} \int_{y\neq 0} \big[\eup^{\iup \xi\cdot \Phi(x) y}-1-\iup  \xi\cdot \Phi(x)y \I_{(0,1)}(|y|)\big] \, \nu(dy).
	\end{align*}
    Here we use that $\nu(dy)\,ds$ is the compensator of $d_sN_s(dy)$, see Lemma~\ref{rm-e18}. This requires that the integrand is $\nu$-integrable, but this is ensured by the local boundedness of $\Phi(\cdot)$ and the fact that $\int_{y\neq 0}\min\{|y|^2,1\}\,\nu(dy)<\infty$. Moreover,
    \begin{align*}
        I_{11}
        = \frac{1}{t} \Ee^x \int_0^{t\wedge\tau} \iup\eup_{\xi}(X_{s-}-x) \xi\cdot\Phi(X_{s-}) l\,ds
        \xrightarrow[t\to 0]{} \iup\xi\cdot\Phi(x)l,
    \end{align*}
    and, finally, we observe that
    \begin{align*}
        [X,X]^{\textsf{c}}
        &= \Big[\int\Phi(X_{s-})\,dL_s,\: \int\Phi(X_{s-})\,dL_s\Big]^{\textsf{c}}\\
        &= \int \Phi(X_{s-})\,d[L,L]_s^\textsf{c}\,\Phi^\top(X_{s-})\\
        &= \int \Phi(X_{s-}) \,Q\, \Phi^\top(X_{s-})\, ds
        \:\in\:\real^{d\times d}
    \end{align*}
    which gives
    \begin{align*}
        I_{2}
        &= - \frac{1}{2t} \Ee^x \int_0^{t \wedge \tau} \eup_{\xi}(X_{s-}-x) \xi\cdot d[X,X]_s^\textsf{c} \xi \\
        &= - \frac{1}{2t} \Ee^x \int_0^{t \wedge \tau} \eup_{\xi}(X_{s-}-x) \xi\cdot\Phi(X_{s-})Q\Phi^\top(X_{s-})\xi\,ds\\
        &\xrightarrow[t\to 0]{} -\frac 12 \xi\cdot\Phi(x)Q\Phi^\top(x)\xi.
    \end{align*}
    This proves
    \begin{align*}
        q(x,\xi)
        &= -\iup l\cdot\Phi^\top(x)\xi + \frac 12 \xi\cdot \Phi(x) Q \Phi^\top(x)\xi\\
        &\qquad\mbox{} + \int_{y\neq 0} \big[1-\eup^{\iup y\cdot\Phi^\top(x)\xi}+\iup y\cdot\Phi^\top(x)\xi\I_{(0,1)}(|y|)\big]\,\nu(dy)\\
        &= \psi(\Phi^\top(x)\xi).
    \end{align*}
    For the second part of the theorem we use Lemma~\ref{symb-52} to see that $\cont_c^\infty(\rd)\subset\dom(A)$. The continuity of the flow $x\mapsto X_t^x$ (Protter \cite[Chapter V, Theorem 38]{protter04})---$X^x$ is the unique solution of the SDE with initial condition $X_0^x=x$---ensures that the semigroup $P_tf(x):=\Ee^x f(X_t)=\Ee f(X_t^x)$ maps $f\in\cont_\infty(\rd)$ to $\cont_b(\rd)$. In order to see that $P_tf\in\cont_\infty(\rd)$ we need that $\lim_{|x|\to\infty} |X_t^x|=\infty$ a.s. This requires some longer calculations, see e.g.\ Schnurr \cite[Theorem 2.49]{schnurr-phd} or Kunita \cite[Proof of Theorem 3.5, p.~353, line 13 from below]{kunita04} (for Brownian motion this argument is fully worked out in Schilling \& Partzsch \cite[Corollary 19.31]{schilling-bm}).
\end{proof}

\chapter{D\'enouement}\label{den}

It is well known that the characteristic exponent $\psi(\xi)$ of a L\'evy process $L=(L_t)_{t\geq 0}$ can be used to describe many probabilistic properties of the process. The key is the formula
\begin{equation}\label{den-e02}
    \Ee^x \eup^{\iup \xi\cdot (L_t-x)}
    =     \Ee \eup^{\iup \xi\cdot L_t}
    = \eup^{-t\psi(\xi)}
\end{equation}
which gives access to the Fourier transform of the transition function $\Pp^x(L_t\in dy) = \Pp(L_t+x\in dy)$. Although it is not any longer true that the symbol $q(x,\xi)$ of a Feller process $X=\Xt$ is the characteristic \emph{exponent}, we may interpret formulae like \eqref{symb-e32}
$$
		-q(x,\xi) = \lim_{r \to 0} \frac{\Ee^x \eup^{\iup(X_{\tau_r}-x)\cdot\xi}-1}{\Ee^x \tau_r}
$$
as infinitesimal versions of the relation \eqref{den-e02}. What is more, both $\psi(\xi)$ and $q(x,\xi)$ are the Fourier symbols of the generators of the processes. We have already used these facts to discuss the conservativeness of Feller processes (Theorem~\ref{feller-22}) and the semimartingale decomposition of Feller processes (Theorem~\ref{symb-35}).

It is indeed possible to investigate further path properties of a Feller process using its symbol $q(x,\xi)$. Below we will, mostly without proof, give some examples which are taken from B\"{o}ttcher, Schilling \& Wang \cite{schilling-lm}. Let us point out the two guiding principles.
\begin{quote}\itshape
$\blacktriangleright$
    For sample path properties, the symbol $q(x,\xi)$ of a Feller process assumes same role as the characteristic exponent $\psi(\xi)$ of a L\'evy process.

\noindent
$\blacktriangleright\blacktriangleright$
    A Feller process is `locally L\'evy', i.e.\ for short-time path properties the Feller process, started at $x_0$, behaves like the L\'evy process $(L_t+x_0)_{t\geq 0}$ with characteristic exponent $\psi(\xi):=q(x_0,\xi)$.
\end{quote}
The latter property is the reason why such Feller processes are often called \emphh{L\'evy-type processes}.
\index{L\'evy-type process}%
The model case is the stable-like
\index{stable-like process}%
process whose symbol is given by $q(x,\xi) = |\xi|^{\alpha(x)}$ where $\alpha:\rd\to(0,2)$ is sufficiently smooth\footnote{In dimension $d=1$ Lipschitz or even Dini continuity is enough (see Bass~\cite{bass88}), in higher dimensions we need something like $\cont^{5d+3}$-smoothness, cf.~Hoh~\cite{hoh00}. Meanwhile, K\"{u}hn \cite{kuehn-diss} established the existence for $d\geq 1$ with $\alpha(x)$ satisfying any H\"{o}lder condition.}. This process behaves locally, and for short times $t\ll 1$, like an $\alpha(x)$-stable process, only that the index now depends on the starting point $X_0=x$.

The key to many path properties are the following \emphh{maximal estimates}
which were first proved in \cite{schilling-ptrf}. The present proof is taken from \cite{schilling-lm}, the observation that we may use a random time $\tau$ instead of a fixed time $t$ is due to F.\ K\"{u}hn.
\begin{theorem}\label{den-11}
\index{Feller process!maximal estimate}%
    Let $(X_t)_{t \geq 0}$ be a Feller process with generator $A$, $\cont_c^{\infty}(\rd) \subset \dom(A)$ and symbol $q(x,\xi)$. If $\tau$ is an integrable stopping time, then
    \begin{equation}\label{den-e10}
		\Pp^x \left( \sup_{s \leq \tau} |X_s-x| > r\right)
        \leq c \,\Ee^x\tau \,\sup_{|y-x| \leq r} \sup_{|\xi| \leq r^{-1}} |q(y,\xi)|.
	\end{equation}
\end{theorem}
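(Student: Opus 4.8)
The plan is to prove the maximal estimate \eqref{den-e10} by constructing a suitable test function $f\in\cont_c^\infty(\rd)$ that localizes near $x$, applying Dynkin's formula \eqref{symb-e10} to control the probability of exiting the ball $B_r(x)$, and then estimating $Af$ pointwise through the pseudo differential representation \eqref{feller-e12}. First I would reduce to the case $x=0$ by translation (replacing the process $X$ by $X-x$ and noting that all quantities transform covariantly), and I would exploit scaling by setting up a fixed cutoff $\chi\in\cont_c^\infty(\rd)$ with $\I_{B_1(0)}\leq\chi\leq\I_{B_2(0)}$ (or, more conveniently, a function $u$ with $u\equiv 0$ on $B_1(0)$ and $u\geq 1$ off $B_2(0)$) and then rescaling it to $u_r(y):=u(y/r)$ so that it vanishes near $0$ but dominates $\I_{\{|y|>2r\}}$. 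The scaling is what will turn the $|\xi|\leq r^{-1}$ and $|y-x|\leq r$ ranges of the supremum in \eqref{den-e10} into the natural arguments of $q$.

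The core step is the following. Let $\sigma:=\tau_r=\inf\{s>0:|X_s|\geq r\}$ be the first exit time from $B_r(0)$. Applying Dynkin's formula \eqref{symb-e10} to $u_r$ and the stopping time $\tau\wedge\sigma$ (which is integrable since $\tau$ is), I would write
\begin{equation*}
    \Ee^0 u_r(X_{\tau\wedge\sigma}) - u_r(0)
    = \Ee^0\int_{[0,\tau\wedge\sigma)} Au_r(X_s)\,ds.
\end{equation*}
Since $u_r(0)=0$ and $u_r\geq\I_{\{|y|\geq 2r\}}\cdot$(const), the left-hand side bounds a constant multiple of $\Pp^0(|X_{\tau\wedge\sigma}|\geq 2r)$, and a standard path argument (using right-continuity of paths and that at the exit time $|X_\sigma|\geq r$, with an appropriate choice of $u$ so that $u_r\geq 1$ already at level $r$) lets me relate this to $\Pp^0(\sup_{s\leq\tau}|X_s|>r)$. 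For the integrand I only ever evaluate $Au_r(X_s)$ at times $s<\sigma$, i.e.\ where $|X_s|<r$, so I need the bound $\sup_{|y|\leq r}|Au_r(y)|$.

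The heart of the matter — and the step I expect to be the main obstacle — is estimating $Au_r(y)$ for $|y|\leq r$ in terms of $\sup_{|y|\leq r}\sup_{|\xi|\leq r^{-1}}|q(y,\xi)|$. Here I would use the pseudo differential representation $Au_r(y)=-\int q(y,\xi)\widehat{u_r}(\xi)\eup^{\iup y\cdot\xi}\,d\xi$, together with the scaling of the Fourier transform $\widehat{u_r}(\xi)=r^d\widehat{u}(r\xi)$, to get after the substitution $\rho=r\xi$
\begin{equation*}
    Au_r(y)
    = -\int q\bigl(y,r^{-1}\rho\bigr)\,\eup^{\iup r^{-1}y\cdot\rho}\,\widehat{u}(\rho)\,d\rho.
\end{equation*}
This is exactly the mechanism already used in the proof of Lemma~\ref{feller-17} (see \eqref{feller-e24}): splitting the $\rho$-integral into $|\rho|\leq 1$, where $|r^{-1}\rho|\leq r^{-1}$ feeds directly into the target supremum, and $|\rho|>1$, where the rapid decay of $\widehat u\in\Scal(\rd)$ plus the polynomial bound \eqref{feller-e19} $|q(y,r^{-1}\rho)|\leq\gamma(y)(1+r^{-2}|\rho|^2)$ combined with the subadditivity-type estimate $|q(y,r^{-1}\rho)|\leq(1+|\rho|^2)\,2\sup_{|\eta|\leq r^{-1}}|q(y,\eta)|$ (the analogue of Theorem~\ref{gen-05}/Lemma~\ref{feller-15} with scaling parameter $r^{-1}$) keeps the integral finite. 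The delicate point is to track that the constant $c$ absorbs only $\int(1+|\rho|^2)|\widehat u(\rho)|\,d\rho$ and the structural factor, leaving precisely $\sup_{|y|\leq r}\sup_{|\xi|\leq r^{-1}}|q(y,\xi)|$; getting the two supremum ranges to line up exactly with the claim, rather than with some larger ball, is where the scaling bookkeeping must be done carefully. Once $\sup_{|y|\leq r}|Au_r(y)|\leq c\sup_{|y|\leq r}\sup_{|\xi|\leq r^{-1}}|q(y,\xi)|$ is established, substituting back and bounding $\Ee^0\int_{[0,\tau\wedge\sigma)}(\cdots)\,ds\leq\Ee^0\tau\cdot\sup|Au_r|$ yields \eqref{den-e10}.
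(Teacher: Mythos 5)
Your main line of argument is essentially the paper's own proof: Schilling likewise applies Dynkin's formula \eqref{symb-e10} to a rescaled test function at the time $\tau\wedge\sigma_r^x$, observes that the integrand is only evaluated while $X_s\in\overline{B_r(x)}$, and converts $q(X_s,D)u_r^x(X_s)$ into the bound $c\,\sup_{|y-x|\leq r}\sup_{|\xi|\leq r^{-1}}|q(y,\xi)|$ by exactly the Fourier scaling $\widehat{u_r^x}(\xi)=r^d\eup^{-\iup x\cdot\xi}\widehat u(r\xi)$ together with the estimate $|q(y,\xi)|\leq 2\sup_{|\eta|\leq 1/r}|q(y,\eta)|\,(1+r^2|\xi|^2)$ from Lemma~\ref{feller-15}, the constant absorbing $\int(1+|\rho|^2)|\widehat u(\rho)|\,d\rho$; your worry about the two supremum ranges lining up is resolved precisely this way, with the subadditivity of $\sqrt{|q(y,\cdot)|}$ (Theorem~\ref{gen-05}) absorbing any factor-of-two mismatch of radii into $c$.

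One concrete caution about your parenthetical ``more convenient'' choice of $u$ (with $u\equiv 0$ on $B_1(0)$ and $u\geq 1$ off $B_2(0)$): as stated this fails twice. First, such a $u$ is not compactly supported, so $u_r\notin\cont_c^\infty(\rd)$; you then have no right to $u_r\in\dom(A)$, and $\widehat{u_r}$ does not exist as an integrable function, so the pseudo differential representation \eqref{feller-e12} on which your whole estimate rests is unavailable. Second, if you truncate $u$ to restore compact support (say $u\geq 1$ only on an annulus), a single large jump of $X$ can overshoot that annulus at the exit time, destroying the pointwise bound $u_r(X_{\tau\wedge\sigma_r})\geq \I_{\{\sigma_r\leq\tau\}}$. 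The paper's choice sidesteps both issues: take $u\in\cont_c^\infty(\rd)$, $0\leq u\leq 1$, $u(0)=1$, $\supp u\subset B_1(0)$, set $u_r^x(y)=u((y-x)/r)$ and use $\I_{\{\sigma_r^x\leq\tau\}}\leq 1-u_r^x(X_{\tau\wedge\sigma_r^x})$ --- wherever the process lands after leaving $\overline{B_r(x)}$, however far it jumps, $u_r^x$ vanishes there. Your first-mentioned option (the cutoff $\chi$ with $\I_{B_1(0)}\leq\chi\leq\I_{B_2(0)}$, used through $1-\chi_r$ and the exit time from $B_{2r}(x)$) is a sound variant of this and is the version you should keep; the rest of your proposal then goes through as written.
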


\begin{proof}
	Denote by $\sigma_r = \sigma_r^x$ the first exit time from the closed ball $\overline{B_r(x)}$. Clearly,
    $$
		\left\{\sigma_r^x<\tau\right\}
        \subset \left\{ \sup_{s \leq \tau} |X_s-x| > r \right\}
        \subset \left\{\sigma_r^x \leq \tau\right\}.
	$$
	Pick $u \in \cont_c^{\infty}(\rd)$, $0 \leq u \leq 1$, $u(0)=1$, $\supp u \subset B_1(0)$, and set
    $$
		u_r^x(y) := u \left( \frac{y-x}{r} \right).
	$$
	In particular, $u_r^x|_{B_r(x)^c} = 0$. Hence,
    $$
		\I_{\{\sigma_r^x \leq \tau\}} \leq 1- u_r^x(X_{\tau\wedge\sigma_r^x}).
	$$
	Now we can use \eqref{semi-e10} or \eqref{symb-e10} to get
    \begin{align*}
		\Pp ^x(\sigma_r^x \leq \tau)
		&\pomu{\eqref{feller-e19}}{\leq}{\text{L~\ref{feller-15}}}
            \Ee^x \left[1-u_r^x(X_{\tau\wedge\sigma_r^x})\right]\\
		&\pomu{\eqref{feller-e19}}{=}{\text{L~\ref{feller-15}}}
            \Ee^x \int_{[0,\tau \wedge \sigma_r^x)} q(X_s,D) u_r^x(X_s) \, ds \\
		&\pomu{\eqref{feller-e19}}{=}{\text{L~\ref{feller-15}}}
            \Ee^x \int_{[0,\tau \wedge \sigma_r^x)} \int \I_{B_r(x)}(X_s) \eup_{\xi}(X_{s}) q(X_s,\xi) \widehat{u}_r^x(\xi) \, d\xi\,ds\\
		&\pomu{\eqref{feller-e19}}{\leq}{\text{L~\ref{feller-15}}}
            \Ee^x \int_{[0,\tau \wedge \sigma_r^x)}\int \sup_{|y-x| \leq r} |q(y,\xi)| |\widehat{u}_r^x(\xi)| \, d\xi\,  ds\\
		&\pomu{\eqref{feller-e19}}{=}{\text{L~\ref{feller-15}}}
            \Ee^x\left[\tau \wedge \sigma_r^x\right] \int \sup_{|y-x| \leq r} |q(y,\xi)| |\widehat{u}_r^x(\xi)| \, d\xi \\
		&\omu{\eqref{feller-e19}}{\leq}{\text{L~\ref{feller-15}}}
            c \,\Ee^x\tau \sup_{|y-x| \leq r}\, \sup_{|\xi| \leq r^{-1}} |q(y,\xi)| \int (1+|\xi|^2) |\widehat{u}(\xi)| \, d\xi.
    \qedhere
	\end{align*}
\end{proof}

There is a also a probability estimate for $\sup_{s \leq t} |X_s-x| \leq r$, but this requires some \emphh{sector condition} for the symbol, that
\index{symbol!sector condition}%
is an estimate of the form
\begin{equation}\label{den-e12}
		|\Im q(x,\xi)|
        \leq \kappa \Re q(x,\xi), \qquad x,\xi \in \rd.
\end{equation}
One consequence of \eqref{den-e12} is that the drift (which is contained in the imaginary part of the symbol) is not dominant. This is a familiar assumption in the study of path properties of L\'evy processes, see e.g.\ Blumenthal \& Getoor \cite{blu-get61}; a typical example where this condition is violated are (L\'evy) symbols of the form $\psi(\xi) = \iup  \xi + |\xi|^{\frac{1}{2}}$. For a L\'evy process the sector condition on $\psi$ coincides with the sector condition for the generator and the associated non-symmetric Dirichlet form, see Jacob \cite[Volume 1, 4.7.32--33]{jacob-pseudo}.

With some more effort (see \cite[Theorem 5.5]{schilling-lm}), we may replace the sector condition by imposing conditions on the expression
$$
    \sup_{|y-x|\leq r}\frac{\Re q(x,\xi)}{|\xi|\,|\Im q(y,\xi)|}
    \quad\text{as $r\to\infty$}.
$$
\begin{theorem}[see \mbox{\cite[pp.\ 117--119]{schilling-lm}}]\label{den-13}
    Let $(X_t)_{t \geq 0}$ be a Feller process with infinitesimal generator $A$, $\cont_c^{\infty}(\rd) \subset \dom(A)$ and symbol $q(x,\xi)$ satisfying the sector condition \eqref{den-e12}. Then
	\begin{equation}\label{den-e14}
		\Pp^x \left( \sup_{s \leq t} |X_s-x| < r\right)
        \leq \frac{c_{\kappa,d}}{t \sup_{|\xi| \leq r^{-1}} \inf_{|y-x| \leq 2r} |q(y,\xi)|}.
	\end{equation}
\index{Feller process!maximal estimate}%
\end{theorem}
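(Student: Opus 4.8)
The plan is to establish a lower bound on the probability that the process *leaves* the ball $B_r(x)$ before time $t$, which is the complementary event to the one in \eqref{den-e14}, and then to invert. The natural tool is again Dynkin's formula \eqref{symb-e10} together with the pseudo differential representation \eqref{feller-e12} of the generator, applied to a cleverly chosen test function; this mirrors the structure of the proof of Theorem~\ref{den-11}, but now we must produce a bound from below rather than from above, which is precisely where the sector condition \eqref{den-e12} enters.

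First I would fix $x$ and $r$ and pick a smooth bump function $u\in\cont_c^\infty(\rd)$ with $0\leq u\leq 1$, $u(0)=1$ and $\supp u\subset B_1(0)$, and set $u_r^x(y):=u((y-x)/r)$ as in Theorem~\ref{den-11}. Writing $\sigma=\sigma_r^x$ for the first exit time from $\overline{B_r(x)}$, the event $\{\sup_{s\leq t}|X_s-x|<r\}\subset\{\sigma\geq t\}$, so on this event $X_{t\wedge\sigma}=X_t$ stays in the ball and $u_r^x(X_{t\wedge\sigma})$ need not vanish. Applying Dynkin's formula to $u_r^x$ and rearranging, one obtains an identity of the form
\begin{equation}\label{den-plan-e1}
    \Ee^x u_r^x(X_{t\wedge\sigma}) - 1
    = \Ee^x\int_{[0,t\wedge\sigma)} q(X_s,D)u_r^x(X_s)\,ds,
\end{equation}
where on the right we insert the Fourier representation $q(X_s,D)u_r^x(X_s)=\int \eup_\xi(X_s)q(X_s,\xi)\widehat{u_r^x}(\xi)\,d\xi$, using $\widehat{u_r^x}(\xi)=r^d\eup^{-\iup x\cdot\xi}\widehat u(r\xi)$. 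Since $u_r^x\leq \I_{B_r(x)}$ and $u_r^x(X_{t\wedge\sigma})\leq\I_{\{\sup_{s\leq t}|X_s-x|<r\}}$ on the relevant event, the left-hand side is controlled by $\Pp^x(\sup_{s\leq t}|X_s-x|<r)-1$, so I would aim to bound $|$right-hand side$|$ from below by something proportional to $t\,\sup_{|\xi|\leq r^{-1}}\inf_{|y-x|\leq 2r}|q(y,\xi)|$.

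The hard part will be extracting the claimed infimum-over-$y$, supremum-over-$\xi$ quantity from the oscillatory integral in \eqref{den-plan-e1}. The naive estimate only gives a supremum in both variables; to pass to $\inf_{|y-x|\leq 2r}$ one must exploit that the factor $\eup_\xi(X_s)$ contributes a genuine real part whose sign is controlled, and here the sector condition \eqref{den-e12} is essential: it guarantees that $\Re q$ dominates $|\Im q|$, so that the real part of $\eup_\xi(X_s)q(X_s,\xi)$ cannot be cancelled by the imaginary (drift) part. Concretely, \eqref{den-e12} lets one bound $|\Ee^x\int\cdots|$ below by a constant multiple (depending on $\kappa$ and $d$) of $\Ee^x(t\wedge\sigma)$ times the relevant $\inf$--$\sup$ expression, after using that $\widehat u$ is concentrated at frequencies $|\xi|\lesssim r^{-1}$. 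One then uses $\Ee^x(t\wedge\sigma)\geq t\,\Pp^x(\sigma\geq t)\geq t\,\Pp^x(\sup_{s\leq t}|X_s-x|<r)$ to close the estimate. Solving the resulting inequality for the probability and absorbing constants into $c_{\kappa,d}$ yields \eqref{den-e14}; the detailed frequency-localisation and sector-condition bookkeeping, which I would reference to \cite[pp.\ 117--119]{schilling-lm} rather than reproduce, is the main technical obstacle.
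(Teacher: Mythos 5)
You have the right skeleton, and it is indeed the skeleton of the proof in \cite[pp.~117--119]{schilling-lm} (which the notes do not reproduce): Dynkin's formula for the process stopped at the exit time $\sigma=\sigma_r^x$ from $\overline{B_r(x)}$, the sector condition to give the time-integrand a sign, and the closing chain $\Ee^x(t\wedge\sigma)\geq t\,\Pp^x(\sigma\geq t)\geq t\,\Pp^x\big(\sup_{s\leq t}|X_s-x|<r\big)$. But the device you hang it on --- the bump function $u_r^x$ --- cannot deliver the crucial lower bound, and this is a genuine gap, not deferred bookkeeping. Besides the sign slip in \eqref{den-plan-e1} (since $A=-q(\cdot,D)$, Dynkin gives $\Ee^x u_r^x(X_{t\wedge\sigma})-1=-\Ee^x\int_{[0,t\wedge\sigma)}q(X_s,D)u_r^x(X_s)\,ds$; also the inequality $u_r^x(X_{t\wedge\sigma})\leq\I_{\{\sup_{s\leq t}|X_s-x|<r\}}$ is false --- the process may sit near $\partial B_r(x)$ where $u_r^x$ is tiny --- and is anyway not needed, the left side only requires the trivial bound by $2$), the obstruction sits in the frequency integral. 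Writing $q(X_s,D)u_r^x(X_s)=\int r^d\,\widehat u(r\xi)\,q(X_s,\xi)\,\eup^{\iup(X_s-x)\cdot\xi}\,d\xi$, the sector condition \eqref{den-e12} yields $\Re\big(\eup^{\iup\theta}q\big)\geq(\cos\theta-\kappa|\sin\theta|)\Re q$, which is positive only while the phase $\theta=(X_s-x)\cdot\xi$ stays below roughly $\arctan(1/\kappa)$. For $s<\sigma$ you only control $|X_s-x|\leq r$, so all frequencies with $|\xi|$ large compared to $1/r$ contribute with uncontrolled sign; and since $u$ is compactly supported, $\widehat u$ cannot be supported in $\{|\xi|\leq 1/r\}$ (Paley--Wiener), so these bad frequencies are unavoidably present. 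Pointwise the same thing: $-Au_r^x(y)$ has a guaranteed sign (by the positive maximum principle) only at the maximum of $u_r^x$, not throughout the ball, so no choice of bump makes the integrand bounded below by $c_\kappa\inf_{|y-x|\leq 2r}\sup_{|\xi|\leq r^{-1}}|q(y,\xi)|$.

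The repair --- and what the cited proof actually does --- is to drop the bump and test with a \emph{single} frequency: as in the proofs of Theorems~\ref{feller-22} and~\ref{symb-31} (apply Corollary~\ref{feller-07} to $\chi_n\eup_\xi$ and let $n\to\infty$),
\begin{equation*}
    \Ee^x\,\eup^{\iup\xi\cdot(X_{t\wedge\sigma}-x)}-1
    = -\,\Ee^x\int_{[0,t\wedge\sigma)}\eup^{\iup\xi\cdot(X_s-x)}\,q(X_s,\xi)\,ds.
\end{equation*}
Fix $\xi$ with $|\xi|\leq\epsilon_\kappa/r$, where $\epsilon_\kappa$ is so small that $\cos\epsilon_\kappa-\kappa\sin\epsilon_\kappa\geq\frac 12$. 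Taking real parts, for $s<\sigma$ the phase satisfies $|(X_s-x)\cdot\xi|\leq\epsilon_\kappa$, so
$2\geq\frac 12\,\Ee^x(t\wedge\sigma)\inf_{|y-x|\leq 2r}\Re q(y,\xi)\geq\frac{1}{2(1+\kappa)}\,\Ee^x(t\wedge\sigma)\inf_{|y-x|\leq 2r}|q(y,\xi)|$,
using $|q|\leq(1+\kappa)\Re q$ from \eqref{den-e12}; your closing chain then gives \eqref{den-e14} for such $\xi$. To upgrade to $\sup_{|\xi|\leq r^{-1}}$, pick $\xi_0$ (nearly) realising the supremum and apply the estimate at $\epsilon_\kappa\xi_0$: the subadditivity of $\xi\mapsto\sqrt{|q(y,\xi)|}$ (argue as in Theorem~\ref{gen-05}, for each fixed $y$) gives $|q(y,\xi_0)|\leq c\,\epsilon_\kappa^{-2}|q(y,\epsilon_\kappa\xi_0)|$ pointwise in $y$, hence after the infimum over $y$, at the cost only of the constant $c_{\kappa,d}$. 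With this substitution of test function your outline becomes a complete proof.
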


The maximal estimates \eqref{den-e10} and \eqref{den-e14} are quite useful tools. With them we can estimate the mean exit time from balls ($X$ and $q(x,\xi)$ are as in Theorem~\ref{den-13}):
$$
    \frac{c}{\sup_{|\xi|\leq 1/r} \inf_{|y-x|\leq r} |q(y,\xi)|}
    \leq
    \Ee^x\sigma_r^x
    \leq
    \frac{c_\kappa}{\sup_{|\xi|\leq k^*/r} \inf_{|y-x|\leq r} |q(y,\xi)|}
$$
for all $x\in\rd$ and $r>0$ and with $k^* = \arccos\sqrt{2/3}\approx 0.615$.

\medskip
Recently, K\"{u}hn~\cite{kuehn} studied the existence of and estimates for generalized moments; a typical result is contained in the following theorem.
\begin{theorem}\label{den-21}
\index{Feller process!moments}%
    Let $X=\Xt$ be a Feller process with infinitesimal generator $(A,\dom(A))$ and $\cont_c^\infty(\rd)\subset\dom(A)$. Assume that the symbol $q(x,\xi)$, given by \eqref{feller-e14}, satisfies $q(x,0)=0$ and has $(l(x),Q(x),\nu(x,dy))$ as $x$-dependent L\'evy triplet. If $f:\rd\to[0,\infty)$ is \textup{(}comparable to\textup{)} a twice continuously differentiable submultiplicative function such that
    $$
        \sup_{x\in K} \int f(y)\,\nu(x,dy)<\infty \quad\text{for a compact set $K\subset\rd$},
    $$
    then the generalized moment $\sup_{x\in K}\sup_{s\leq t}\Ee^x f(X_{s \wedge \tau_K}-x) <\infty$ exists and
    $$
        \Ee^x f(X_{t\wedge\tau_K})\leq c f(x)\eup^{C(M_1+M_2) t};
    $$
    here $\tau_K = \inf\{t>0\,:\, X_t\notin K\}$ is the first exit time from $K$, $C=C_K$ is some absolute constant and
    $$
        M_1 = \sup_{x\in K}\Big(|l(x)|+|Q(x)|+\int_{y\neq 0} (|y|^2\wedge 1)\,\nu(x,dy)\Big),\quad
        M_2 = \sup_{x\in K} \int_{|y|\geq 1} f(y)\,\nu(x,dy).
    $$
    If $X$ has bounded coefficients \textup{(}Lemma~\ref{feller-15}\textup{)}, then $K=\rd$ is admissible.
\end{theorem}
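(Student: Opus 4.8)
The plan is to deduce the bound from a Gronwall argument applied to $h(t):=\Ee^x f(X_{t\wedge\tau_K})$, where the integral inequality feeding Gronwall comes from Dynkin's formula \eqref{symb-e10} together with a pointwise estimate $Af(y)\le C_K(M_1+M_2)\,f(y)$ on $\overline K$. First I would record two elementary consequences of submultiplicativity, $f(y+z)\le c_0 f(y)f(z)$: a nonnegative submultiplicative $f$ either vanishes identically (trivial) or is strictly positive everywhere, so $\inf_{\overline K}f>0$ on the compact set $\overline K$; and $f(w-x)\le c_0 f(w)f(-x)$, which will turn the estimate for $\Ee^x f(X_{s\wedge\tau_K})$ into the claimed estimate for $\Ee^x f(X_{s\wedge\tau_K}-x)$.

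The heart is the bound $Af(y)\le C_K(M_1+M_2)f(y)$ for $y\in\overline K$, read off the integro-differential representation \eqref{feller-e15} (its terms being finite at $y\in\overline K$ by the integrability hypotheses, and there is no killing term since $q(x,0)=0$). I would estimate its four pieces separately. On the compact $\overline K$ the continuous functions $f,\nabla f,\nabla^2 f$ are bounded and $\inf_{\overline K}f>0$, so $|\nabla f(y)|\le C_K f(y)$ and $\|\nabla^2 f(y)\|\le C_K f(y)$ there; with $|l(y)|,\|Q(y)\|\le M_1$ this controls the drift and diffusion terms. For the small jumps Taylor's formula gives $|f(y+z)-f(y)-\nabla f(y)\cdot z|\le C_K|z|^2 f(y)$ when $|z|<1$, so that part is $\le C_K f(y)\int_{0<|z|<1}(|z|^2\wedge1)\,\nu(y,dz)\le C_K M_1 f(y)$. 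For the large jumps submultiplicativity yields $\int_{|z|\ge1}[f(y+z)-f(y)]\nu(y,dz)\le c_0 f(y)\int_{|z|\ge1}f(z)\,\nu(y,dz)\le c_0 M_2 f(y)$, using $\sup_{x\in K}\int f\,d\nu<\infty$.

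Next I would make Dynkin's formula applicable although $f\notin\dom(A)$: choose a radially decreasing cutoff with $\chi_n(y)=\chi(y/n)\uparrow1$ and put $f_n:=f\chi_n\in\cont^2_c(\rd)\subset\dom(A)$ (the inclusion is step $2^\circ$ in the proof of Theorem~\ref{symb-35}). Corollary~\ref{feller-07} and optional stopping at $t\wedge\tau_K\le t$ give $\Ee^x f_n(X_{t\wedge\tau_K})=f_n(x)+\Ee^x\int_0^{t\wedge\tau_K}Af_n(X_s)\,ds$. For $s<\tau_K$ one has $X_s\in\overline K$, and for $n$ large $f_n\equiv f$ on a neighbourhood of $\overline K$, so $f_n,\nabla f_n,\nabla^2 f_n$ agree with those of $f$ at $X_s$; since moreover $f_n\le f$ everywhere, the large-jump part only decreases, whence $Af_n(X_s)\le Af(X_s)\le C_K(M_1+M_2)f(X_s)$. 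Thus $\Ee^x f_n(X_{t\wedge\tau_K})\le f(x)+C_K(M_1+M_2)\Ee^x\int_0^{t\wedge\tau_K}f(X_s)\,ds$, whose right-hand side is finite because $f(X_s)\le\sup_{\overline K}f$ for $s<\tau_K$; monotone convergence ($f_n\uparrow f$) then shows $h(t)<\infty$, $h$ is locally bounded, and, by Tonelli and $\Ee^x[f(X_s)\I_{\{s<\tau_K\}}]\le h(s)$,
\begin{equation*}
  h(t)\le f(x)+C_K(M_1+M_2)\int_0^t h(s)\,ds .
\end{equation*}
Gronwall's inequality gives $h(t)\le f(x)\,e^{C_K(M_1+M_2)t}$, and the first assertion follows from $f(X_{s\wedge\tau_K}-x)\le c_0 f(-x)f(X_{s\wedge\tau_K})$ and the boundedness of $f(\pm x)$ on $K$; the constant $c$ in the statement absorbs $c_0$ and the comparison constant in ``comparable to''.

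The main obstacle is the nonlocality of $A$ at the cutoff step: replacing $f$ by $f\chi_n$ changes $Af$ at points of $K$ through the jump kernel, so one cannot simply claim $Af_n=Af$ on $\overline K$. What rescues the argument is that the change is one-sided ($f_n\le f$ forces $Af_n\le$ the desired bound) and that the dominating tail $\int_{|z|\ge1}f(y+z)\,\nu(y,dz)$ is finite by hypothesis, so monotone convergence closes the estimate without circularity. For the final claim that $K=\rd$ is admissible under bounded coefficients (Lemma~\ref{feller-15}), one has $\tau_{\rd}=\infty$ and $M_1<\infty$; the only change is that the derivative bounds $|\nabla f|\le Cf$, $\|\nabla^2 f\|\le Cf$ can no longer be harvested from compactness and must instead be invoked as the global growth property of a $\cont^2$ submultiplicative $f$, after which the identical Gronwall argument yields $\Ee^x f(X_t)\le c\,f(x)e^{C(M_1+M_2)t}$ for every $x\in\rd$.
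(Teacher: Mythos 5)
The paper gives no proof of Theorem~\ref{den-21} --- it is quoted from K\"{u}hn \cite{kuehn} in a chapter that is explicitly ``mostly without proofs'' --- so your argument has to be judged on its own, and for the compact-$K$ assertions it is correct and is in fact the canonical route: the pointwise estimate $Af(y)\leq C_K(M_1+M_2)f(y)$ for $y\in K$ read off \eqref{feller-e15} (drift, diffusion and compensated small jumps via $\inf_K f>0$ and Taylor on the compact set $K+\overline{B_1(0)}$; large jumps via submultiplicativity and $M_2$), the cut-off $f_n=f\chi_n\in\cont_c^2(\rd)\subset\dom(A)$, Dynkin's formula with optional stopping at $t\wedge\tau_K$, monotone convergence, Tonelli and Gronwall. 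You also correctly identify and defuse the one real danger, the nonlocality of $A$ under truncation: the comparison is one-sided ($f_n\leq f$ everywhere and $f_n=f$ on a neighbourhood of $K+\overline{B_1(0)}$ for large $n$ gives $Af_n\leq Af$ on $K$, the right-hand side being finite by hypothesis even though $f\notin\dom(A)$), which is exactly what makes the monotone-convergence closure non-circular. One small reading point: the displayed hypothesis $\sup_{x\in K}\int f\,d\nu(x,\cdot)<\infty$, taken literally, forces $\nu(x,\cdot)$ to be finite (since $f\geq\inf_{\overline{B_1(0)}}f>0$ near the origin); it must be understood as the tail condition $M_2<\infty$, and your proof correctly uses only $M_2$.

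There is, however, a genuine flaw in your final step for $K=\rd$: you assert that the bounds $|\nabla f|\leq Cf$ and $\|\nabla^2 f\|\leq Cf$ hold globally as ``the global growth property of a $\cont^2$ submultiplicative $f$''. This is false. The function $f(x)=2+\sin\big(\eup^{|x|^2}\big)$ is smooth, takes values in $[1,3]$, hence is submultiplicative with constant $3$, yet $|\nabla f(x)|/f(x)$ is unbounded. The repair is standard and is precisely what the ``(comparable to)'' in the statement is for: mollify. Put $g:=f*\varphi$ with $0\leq\varphi\in\cont_c^\infty(\rd)$, $\int\varphi\,dx=1$. Submultiplicativity gives $c_1f\leq g\leq c_2f$ (use $f(x)\leq c_0f(x-y)f(y)$ for $y\in\supp\varphi$ for the lower bound), and
$$
    |\partial^\alpha g(x)|
    = \Big|\int f(x-y)\,\partial^\alpha\varphi(y)\,dy\Big|
    \leq c_0 f(x)\int f(-y)\,|\partial^\alpha\varphi(y)|\,dy
    \leq C_\alpha\, g(x),\quad |\alpha|\leq 2,
$$
globally. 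The large-jump term is still controlled by $M_2$ via $g(y+z)\leq c_2f(y+z)\leq c_2c_0f(y)f(z)\leq c'g(y)f(z)$. With $g$ in place of $f$ your Gronwall argument then runs verbatim on all of $\rd$ (bounded coefficients giving $M_1<\infty$), and comparability transfers the conclusion back to $f$ at the cost of the constant $c$.
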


There are also counterparts for the Blumenthal--Getoor and Pruitt indices. Below we give two representatives, for a full discussion we refer to \cite{schilling-lm}.
\index{Blumenthal--Getoor--Pruitt index}%
\begin{definition}\label{den-31}
    Let $q(x,\xi)$ be the symbol of a Feller process. Then
    \begin{align}
    \label{den-e32}
        \beta_\infty^x &:= \inf\left\{\lambda>0\::\: \lim_{|\xi|\to\infty} \frac{\sup_{|\eta|\leq |\xi|} \sup_{|y-x|\leq 1/|\xi|} |q(y,\eta)|}{|\xi|^\lambda} = 0\right\},\\
    \label{den-e34}
        \delta_\infty^x &:= \sup\left\{\lambda>0\::\: \lim_{|\xi|\to\infty} \frac{\inf_{|\eta|\leq |\xi|} \inf_{|y-x|\leq 1/|\xi|} |q(y,\eta)|}{|\xi|^\lambda} = \infty\right\}.
    \end{align}
\end{definition}
By definition, $0\leq\delta_\infty^x\leq\beta_\infty^x\leq 2$. For example, if $q(x,\xi)=|\xi|^{\alpha(x)}$ with a smooth exponent function $\alpha(x)$, then $\beta_\infty^x=\delta_\infty^x=\alpha(x)$; in general, however, we cannot expect that the two indices coincide.

As for L\'evy processes, these indices can be used to describe the path behaviour.
\begin{theorem}\label{den-33}
\index{Feller process!Hausdorff dimension}%
    Let $\Xt$ be a $d$-dimensional Feller process with the generator $A$ such that $\cont_c^\infty(\rd)\subset \dom(A)$ and symbol $q(x,\xi)$.
    For every bounded analytic set $E\subset[0,\infty)$, the Hausdorff dimension
    \begin{equation}\label{den-e36}
        \mathrm{dim}\,  \{X_t\,:\, t\in E\}
        \le \min\left\{d,\: \sup_{x\in\rd}\beta_\infty^x \, \mathrm{dim}\, E\right\}.
    \end{equation}
\end{theorem}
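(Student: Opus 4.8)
The plan is to prove two separate upper bounds and take their minimum. The bound $\mathrm{dim}\{X_t : t\in E\}\le d$ is trivial since the image lies in $\rd$, so the real work is to show $\mathrm{dim}\{X_t : t\in E\}\le \beta\,\mathrm{dim}\,E$, where $\beta := \sup_{x\in\rd}\beta_\infty^x$. Write $\gamma := \mathrm{dim}\,E$ and recall that it suffices to show, for every $\alpha>\beta\gamma$, that the Hausdorff content $\mathcal H^\alpha_\infty$ of the image vanishes almost surely (content zero is equivalent to measure zero, which forces $\mathrm{dim}\le\alpha$); letting $\alpha\downarrow\beta\gamma$ then finishes the argument. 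Since $E$ is bounded, say $E\subset[0,T]$, and $X$ has c\`adl\`ag paths, the trajectory $\{X_t : t\le T\}$ is a.s.\ bounded; I first localise by fixing $n\in\nat$ and working on the event $\{X_t\in B_n(0)\ \forall t\le T\}$ (equivalently, stopping at $\tau_n=\inf\{t:|X_t|\ge n\}$ and letting $n\to\infty$ at the end), so that all positions visited stay in the compact set $\overline{B_n(0)}$.

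Next I would choose the exponents. Pick $\lambda$ with $\max\{\beta,\alpha\}<\lambda<\alpha/\gamma$; this interval is non-empty precisely because $\alpha>\beta\gamma$ and (when $\gamma<1$) because $\alpha<\alpha/\gamma$. Set $\gamma'':=\alpha/\lambda$, so that $\gamma<\gamma''<1$. Since $\mathcal H^{\gamma''}(E)=0$, I cover $E$ by intervals $I_k=[t_k,t_k+\delta_k]$ with $\sum_k\delta_k^{\gamma''}<\epsilon$. Then $\{X_t : t\in E\}\subset\bigcup_k\overline{B_{R_k}(X_{t_k})}$ with $R_k:=\sup_{0\le s\le\delta_k}|X_{t_k+s}-X_{t_k}|$, whence $\mathcal H^\alpha_\infty(\{X_t:t\in E\})\le\sum_k(2R_k)^\alpha$. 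Conditioning on $\Fscr_{t_k}$, using the Markov property together with the maximal estimate of Theorem~\ref{den-11} applied to the integrable deterministic stopping time $\delta_k$, gives
\[
    \Pp\!\left(R_k>r\,\middle|\,\Fscr_{t_k}\right)\le c\,\delta_k\,H(X_{t_k},r),\qquad H(x,r):=\sup_{|y-x|\le r}\sup_{|\xi|\le r^{-1}}|q(y,\xi)|.
\]
The role of the index from Definition~\ref{den-31} is exactly that $H(x,r)=o(r^{-\lambda})$ as $r\to0$ whenever $\lambda>\beta_\infty^x$; on the localised event this should hold uniformly, $H(X_{t_k},r)\le C_n r^{-\lambda}$ for $r\le r_n$, because $X_{t_k}\in\overline{B_n(0)}$ and $\lambda>\beta\ge\sup_{|x|\le n}\beta_\infty^x$.

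With this in hand I would run the layer-cake estimate. Using $\Pp(R_k>r)\le\min\{1,\,c\,C_n\delta_k\,r^{-\lambda}\}$ and splitting $\Ee[R_k^\alpha]=\int_0^\infty\alpha r^{\alpha-1}\Pp(R_k>r)\,dr$ at the threshold $r_0=(cC_n\delta_k)^{1/\lambda}$ yields, since $\alpha<\lambda$, the bound $\Ee[R_k^\alpha]\le C\,\delta_k^{\alpha/\lambda}=C\,\delta_k^{\gamma''}$. Summing, $\Ee\big[\mathcal H^\alpha_\infty(\{X_t:t\in E\})\big]\le 2^\alpha C\sum_k\delta_k^{\gamma''}\le 2^\alpha C\epsilon$; as $\epsilon>0$ is arbitrary this forces $\mathcal H^\alpha_\infty(\{X_t:t\in E\})=0$ a.s.\ on the localised event. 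Letting $n\to\infty$ and then $\alpha\downarrow\beta\gamma$, and combining with the trivial bound $\le d$, completes the proof.

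I expect two places to be the main obstacles. The first and most serious is the uniformity of the estimate $H(x,r)\le C_n r^{-\lambda}$ over $x\in\overline{B_n(0)}$: the defining relation $H(x,r)=o(r^{-\lambda})$ is only pointwise in $x$, and upgrading it to a bound uniform in $x$ on a compact set is exactly where the local boundedness of the symbol (Corollary~\ref{feller-13}) and a compactness argument in the $x$-variable must be invoked. The second is the borderline case $\gamma=\mathrm{dim}\,E=1$, where the requirement $\lambda<\alpha/\gamma=\alpha$ clashes with the convergence condition $\alpha<\lambda$ in the layer-cake integral; this case has to be handled separately, either by approximation or by noting that the estimate then degrades to $\Ee[R_k^\alpha]\le C\delta_k$, which must be paired with a finer covering argument.
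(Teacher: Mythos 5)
The paper itself contains no proof of Theorem~\ref{den-33} --- it defers explicitly to \cite[Theorem 5.15]{schilling-lm} --- so the comparison is with the cited monograph's argument. Your overall scheme (trivial bound by $d$; interval coverings of $E$ with $\sum_k\delta_k^{\gamma''}<\epsilon$; the maximal estimate of Theorem~\ref{den-11} applied through the Markov property at the deterministic times $t_k$; a layer-cake bound $\Ee[R_k^\alpha]\lesssim\delta_k^{\alpha/\lambda}$; content zero $\Leftrightarrow$ measure zero) is indeed the standard route and matches the cited proof in spirit, and your exponent bookkeeping $\max\{\beta,\alpha\}<\lambda<\alpha/\gamma$, $\gamma''=\alpha/\lambda$ is correct when $\gamma=\mathrm{dim}\,E<1$. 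However, the two obstacles you flag are genuine gaps, and the fixes you sketch do not close them.

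First, the uniformity of $H(x,r)\le C_n r^{-\lambda}$ for $x\in\overline{B_n(0)}$ is not obtainable from ``local boundedness of the symbol plus a compactness argument''. Corollary~\ref{feller-13} gives $|q(y,\xi)|\le\gamma(y)(1+|\xi|^2)$ with locally bounded $\gamma$, i.e.\ exponent $2$, not $\lambda$. A finite cover of $\overline{B_n(0)}$ by balls $B_{\rho_i}(x_i)$ does not help either: for $x\in B_{\rho_i}(x_i)$ the $y$-ball of radius $r$ in the definition of $H(x,r)$ decouples completely from $x_i$ as soon as $r<|x-x_i|$, so there is no comparison of the form $H(x,r)\lesssim H(x_i,Cr)$ in the relevant regime $r\to 0$ (subadditivity of $\xi\mapsto\sqrt{|q(y,\xi)|}$ only controls enlargements of the $\xi$-ball, at a cost that blows up when $\rho_i\gg r$). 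Moreover $x\mapsto q(x,\xi)$ is in general only upper semicontinuous (Remark~\ref{feller-20}), so neither Dini-type nor Baire-category arguments apply to the family $r^\lambda H(\cdot,r)$: pointwise decay $H(x,r)=o(r^{-\lambda})$ with $x$-dependent thresholds simply does not self-improve to a uniform bound on compacts. In the literature this is resolved by building the supremum over $x$ into the decay estimate from the outset (a uniform version of the index, as in \cite{schilling-ptrf} and \cite[Chapter 5]{schilling-lm}), not by a soft compactness step; as written, this step of your proof is missing its key lemma.

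Second, the borderline case $\gamma=\mathrm{dim}\,E=1$ is not a removable technicality for your scheme: it contains the most classical instance $E=[0,T]$, i.e.\ the bound $\mathrm{dim}\,X([0,T])\le\min\{d,\beta\}$ for the whole range. Your fallback estimate $\Ee[R_k^\alpha\wedge(2n)^\alpha]\le C\delta_k$ (valid for $\alpha>\lambda$ after truncating the layer cake at the localization radius) only yields $\Ee\sum_k(2R_k)^\alpha\le C'\sum_k\delta_k$, and for coverings of an interval $\sum_k\delta_k$ is bounded \emph{below} by its length; you then obtain finite expected Hausdorff content, which is vacuous (the content of any bounded set is finite), rather than zero content. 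No refinement of the interval-covering bookkeeping repairs this, because the constraints $\alpha<\lambda$ (integrability of the tail) and $\alpha/\lambda>\gamma=1$ (availability of coverings with $\sum_k\delta_k^{\alpha/\lambda}$ small) are incompatible. The range case requires a genuinely different covering argument in the style of Blumenthal--Getoor and Pruitt: cover $\{X_t: t\le T\}$ by balls of radius $\epsilon$ centred at successively hit positions, and bound the expected number of such balls by $T$ divided by a lower bound on the expected exit time from an $\epsilon$-ball, the latter again coming from the maximal estimate \eqref{den-e10}. Until both of these points are supplied, the proposal is a correct blueprint for $\mathrm{dim}\,E<1$ modulo the uniformity lemma, but not a proof of the theorem as stated.
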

A proof can be found in \cite[Theorem 5.15]{schilling-lm}. It is instructive to observe that we have to take the supremum w.r.t.\ the space variable $x$, as we do not know how the process $X$ moves while we observe it during $t\in E$. This shows that we can only expect to get `exact' results if $t\to 0$. Here is such an example.
\begin{theorem}\label{den-35}
\index{Feller process!asymptotics as $t\to 0$}%
    Let $\Xt$ be a $d$-dimensional Feller process with symbol $q(x,\xi)$ satisfying the sector condition. Then, $\Pp^x$-a.s.
    \begin{align}
    \label{den-e38}
        &\lim_{t\to 0} \frac{\sup_{0\leq s\leq t}|X_s - x|}{t^{1/\lambda}} = 0
        \qquad\forall \lambda > \beta^x_\infty,\\
    \label{den-e40}
        &\lim_{t\to 0} \frac{\sup_{0\leq s\leq t}|X_s - x|}{t^{1/\lambda}} = \infty
        \qquad\forall \lambda < \delta^x_\infty.
    \end{align}
\end{theorem}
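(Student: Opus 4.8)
The plan is to prove both \eqref{den-e38} and \eqref{den-e40} by a Borel--Cantelli argument along the dyadic sequence $t_n := 2^{-n}$, followed by an interpolation step that fills in the intermediate times $t\in[t_{n+1},t_n]$. Since a fixed time is an integrable stopping time, the maximal estimates \eqref{den-e10} and \eqref{den-e14} apply with $\tau=t_n$ and $\Ee^x\tau=t_n$. Because the ratio $\sup_{s\leq t}|X_s-x|/t^{1/\lambda}$ is non-increasing in $\lambda$ for each fixed $t\leq 1$ (as $t^{1/\lambda}$ is non-decreasing in $\lambda$ when $t\leq 1$), it suffices to establish \eqref{den-e38} for a countable sequence $\lambda_m\downarrow\beta_\infty^x$ and \eqref{den-e40} for a countable sequence $\lambda_m\uparrow\delta_\infty^x$; likewise the thresholds $\epsilon$ and $M$ below will be taken along countable sequences, so that the exceptional null sets may be unioned.

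For the upper bound \eqref{den-e38} I would fix $\lambda>\beta_\infty^x$ and choose $\lambda'$ with $\beta_\infty^x<\lambda'<\lambda$. By the definition of $\beta_\infty^x$ (and the monotonicity of the defining quotient in the exponent), the numerator satisfies $\sup_{|y-x|\leq 1/R}\sup_{|\eta|\leq R}|q(y,\eta)|=o(R^{\lambda'})$ as $R\to\infty$. Applying \eqref{den-e10} with $\tau=t_n$ and $r=r_n:=\epsilon t_n^{1/\lambda}$, so that $R_n:=r_n^{-1}=\epsilon^{-1}t_n^{-1/\lambda}\to\infty$, bounds the probability by $c\,t_n\sup_{|y-x|\leq 1/R_n}\sup_{|\xi|\leq R_n}|q(y,\xi)|\leq c\,t_n\,o(R_n^{\lambda'})$. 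Since $t_nR_n^{\lambda'}=\epsilon^{-\lambda'}t_n^{1-\lambda'/\lambda}=\epsilon^{-\lambda'}2^{-n(1-\lambda'/\lambda)}$ and $1-\lambda'/\lambda>0$, these probabilities are summable in $n$; Borel--Cantelli then gives $\sup_{s\leq t_n}|X_s-x|\leq\epsilon t_n^{1/\lambda}$ for all large $n$, $\Pp^x$-a.s. Interpolating, for $t\in[t_{n+1},t_n]$ one has $\sup_{s\leq t}|X_s-x|\leq\sup_{s\leq t_n}|X_s-x|\leq\epsilon t_n^{1/\lambda}\leq 2^{1/\lambda}\epsilon\,t^{1/\lambda}$, and letting $\epsilon=1/k\downarrow 0$ yields \eqref{den-e38}.

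For the lower bound \eqref{den-e40} I would use \eqref{den-e14}, which is where the sector condition enters. Fix $\lambda<\delta_\infty^x$, pick $\lambda''$ with $\lambda<\lambda''<\delta_\infty^x$, so that $m(R):=\inf_{|\eta|\leq R}\inf_{|y-x|\leq 1/R}|q(y,\eta)|\geq R^{\lambda''}$ for large $R$. Applying \eqref{den-e14} with $r=r_n:=M t_n^{1/\lambda}$ bounds $\Pp^x(\sup_{s\leq t_n}|X_s-x|<r_n)$ by $c_{\kappa,d}/(t_n D(r_n))$, where $D(r):=\sup_{|\xi|\leq r^{-1}}\inf_{|y-x|\leq 2r}|q(y,\xi)|$. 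The delicate point is that this estimate uses the radius $2r$ and a decoupled $\sup$--$\inf$, whereas $\delta_\infty^x$ is defined through the coupled quantity $m(R)$. I would reconcile them by evaluating the outer supremum at a frequency $\xi^*$ with $|\xi^*|=\tilde R:=1/(2r)$, giving $D(r)\geq\inf_{|y-x|\leq 1/\tilde R}|q(y,\xi^*)|\geq m(\tilde R)\geq\tilde R^{\lambda''}=(2r)^{-\lambda''}$, the factor $2$ being absorbed into constants. Then $t_n D(r_n)\geq(2M)^{-\lambda''}t_n^{1-\lambda''/\lambda}$, and since $\lambda''/\lambda-1>0$ the bound $c_{\kappa,d}(2M)^{\lambda''}2^{-n(\lambda''/\lambda-1)}$ is summable; Borel--Cantelli gives $\sup_{s\leq t_n}|X_s-x|\geq M t_n^{1/\lambda}$ eventually. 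Interpolating as before yields $\sup_{s\leq t}|X_s-x|/t^{1/\lambda}\geq M2^{-1/\lambda}$ for small $t$, and letting $M=k\uparrow\infty$ gives \eqref{den-e40}.

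The main obstacle I anticipate is precisely the bookkeeping in the lower bound: matching the geometry of \eqref{den-e14} (balls of radius $2r$, decoupled $\sup_{|\xi|}$ and $\inf_{|y-x|}$) to the coupled normalization $|y-x|\leq 1/|\xi|$, $|\eta|\leq|\xi|$ used to define $\delta_\infty^x$, and checking that the factor-of-two mismatch in radii only perturbs constants and not exponents. The upper bound is comparatively routine once the intermediate exponent $\lambda'$ is inserted to produce genuine (rather than merely $o(1)$) summability. Throughout, care must be taken to carry out the limits $\epsilon\downarrow 0$, $M\uparrow\infty$ and $\lambda\to\beta_\infty^x,\,\delta_\infty^x$ along countable sequences, so that the almost-sure conclusions survive the union of the exceptional sets.
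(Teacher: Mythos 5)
Your proof is correct and follows precisely the route the paper indicates for Theorem~\ref{den-35} (the paper itself only sketches it, deferring to \cite[Theorem 5.16]{schilling-lm}): the maximal estimates \eqref{den-e10} and \eqref{den-e14} applied along the dyadic times $t_n=2^{-n}$, Borel--Cantelli, and monotone interpolation over $t\in[t_{n+1},t_n]$, with the sector condition entering only through \eqref{den-e14}. Your bookkeeping at the one delicate point is also right: choosing $|\xi^*|=1/(2r)$ so that the ball $\{|y-x|\leq 2r\}$ in \eqref{den-e14} matches the coupled normalization $|y-x|\leq 1/|\xi|$ in \eqref{den-e34} costs only a constant factor, and the countable exhaustion in $\lambda$, $\epsilon$ and $M$ (justified by the monotonicity of $t\mapsto t^{1/\lambda}$ for $t\leq 1$) is exactly what is needed to get the simultaneous almost-sure statement.
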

As one would expect, these results are proved using the maximal estimates \eqref{den-e10} and \eqref{den-e14} in conjunction with the Borel--Cantelli lemma, see \cite[Theorem 5.16]{schilling-lm}.

\medskip
If we are interested in the long-term behaviour, one could introduce indices `at zero', where we replace in Definition~\ref{den-31} the limit $|\xi|\to \infty$ by $|\xi|\to 0$, but we will always have to pay the price that we loose the influence of the starting point $X_0=x$, i.e.\ we will have to take the supremum or infimum for all $x\in\rd$.

\medskip
With the machinery we have developed here, one can also study further path properties, such as invariant measures, ergodicity, transience and recurrence etc. For this we refer to the monograph \cite{schilling-lm} as well as recent developments by Behme \& Schnurr \cite{behme-schnurr} and Sandri\'c \cite{sandric15,sandric14}.

\appendix
\renewcommand{\chaptername}{Appendix}

\chapter{Some classical results}\label{app}

In this appendix we collect some classical results from (or needed for) probability theory which are not always contained in a standard course.

\subsection*{The Cauchy--Abel functional equation}

Below we reproduce the standard proof for \emphh{continuous} functions which, however, works also for right-continuous (or monotone) functions.
\index{Cauchy--Abel functional equation}%
\begin{theorem}\label{app-31}
    Let $\phi:[0,\infty)\to\comp$ be a right-continuous function satisfying the functional equation $\phi(s+t)=\phi(s)\phi(t)$. Then $\phi(t)=\phi(1)^t$.
\end{theorem}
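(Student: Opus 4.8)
The plan is to solve the Cauchy--Abel functional equation $\phi(s+t)=\phi(s)\phi(t)$ on $[0,\infty)$ for a right-continuous $\phi$ by the classical route: first determine $\phi$ on the rationals from the functional equation alone, then extend to all reals by right-continuity. A small initial observation handles the degenerate case: setting $s=t$ gives $\phi(2t)=\phi(t)^2$, and more generally $\phi(t)=\phi(t/2)^2\geq 0$ when $\phi$ is real; in the complex case I first note that if $\phi(t_0)=0$ for some $t_0>0$, then $\phi(t)=\phi(t-t_0)\phi(t_0)=0$ for all $t\geq t_0$, and using $\phi(t_0)=\phi(t_0/2^n)^{2^n}$ together with right-continuity at $0$ forces either $\phi\equiv 0$ on $(0,\infty)$ (the case $\phi\equiv 0$, which is $\phi(1)^t$ with the convention $0^t$) or $\phi$ never vanishes. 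So I would dispose of the vanishing case first and thereafter assume $\phi(t)\neq 0$ for all $t$.

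First I would establish the value at rational points. From $\phi(s+t)=\phi(s)\phi(t)$ an easy induction gives $\phi(nt)=\phi(t)^n$ for all $n\in\nat$ and $t\geq 0$. Applying this with $t=1/n$ yields $\phi(1)=\phi(1/n)^n$, and with $t=m/n$ gives $\phi(m/n)=\phi(1/n)^m=\phi(1)^{m/n}$. Here the complex-power notation needs care, exactly the branch-of-logarithm issue flagged in Corollary~\ref{levy-15}: to make $\phi(1)^{q}$ unambiguous I would fix, once and for all, the continuous determination of $\log\phi$ on $[0,\infty)$ obtained from $\phi(t)=\phi(t/n)^n$ and the non-vanishing of $\phi$, so that writing $c:=\log\phi(1)$ the identity reads $\phi(q)=\eup^{cq}$ for every $q\in\mathds{Q}^+$. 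Also $\phi(0)=\phi(0)^2$ together with $\phi(0)\neq 0$ forces $\phi(0)=1=\eup^{c\cdot 0}$, anchoring the formula at $t=0$.

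Finally I would pass from rationals to reals. For arbitrary $t\geq 0$ choose a sequence of rationals $q_k\downarrow t$; right-continuity of $\phi$ gives $\phi(t)=\lim_k\phi(q_k)=\lim_k \eup^{cq_k}=\eup^{ct}$, where the last equality uses continuity of the exponential. This yields $\phi(t)=\eup^{ct}=\phi(1)^t$ for all $t\geq 0$, which is the claim.

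The main obstacle I expect is not the limiting argument but the bookkeeping around the complex logarithm: ensuring that ``$\phi(1)^t$'' is interpreted via the \emph{same} continuous branch for which the functional equation propagates multiplicatively, rather than the principal branch, so that the identities $\phi(m/n)=\phi(1/n)^m$ and $\phi(1)=\phi(1/n)^n$ compose without a spurious $2\pi\iup k$ mismatch. This is precisely why the hypothesis that $\phi$ be right-continuous (or monotone) is essential and why non-measurable Hamel-type solutions, as mentioned after Proposition~\ref{levy-11}, are excluded; once the non-vanishing and the coherent branch are secured, the rest is the routine rational-then-continuity extension.
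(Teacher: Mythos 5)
Your proposal is correct, and its skeleton is the same as the paper's: you dispose of the vanishing case exactly as the paper does (a zero at $t_0$ propagates forward via $\phi(t_0+t)=\phi(t_0)\phi(t)$ and backward via $\phi(t_0)=\phi(t_0/2^n)^{2^n}$, and right-continuity at $0$ then kills $\phi$ everywhere), and in the non-vanishing case you determine $\phi$ on the rationals and extend by right-continuity. The genuine difference is the handling of complex powers. The paper normalizes $f(t):=\phi(t)\phi(1)^{-t}$ and computes $\big[f\big(\tfrac 1n\big)\big]^k=\big[f\big(\tfrac 1n\big)\big]^{n\frac kn}=\big[f(1)\big]^{k/n}=1$; over $\comp$ the middle step invokes $(z^n)^{k/n}=z^k$, which is ambiguous for non-real $z$: from $f(\tfrac 1n)^n=f(1)=1$ one only learns that $f(\tfrac 1n)$ is an $n$-th root of unity, and the example $\phi(t)=\eup^{2\pi\iup t}$ (where, with the principal branch, $f=\phi\not\equiv 1$) shows that the conclusion $\phi(t)=\phi(1)^t$ holds only for the coherent branch. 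Your insistence on fixing a continuous determination of $\log\phi$ is therefore not pedantry but exactly what makes the argument airtight for $\comp$-valued $\phi$; it is legitimate because right-continuity plus non-vanishing upgrades to two-sided continuity ($\phi(0)=1$, and $\phi(t\pm h)=\phi(t)\phi(h)^{\pm 1}\to\phi(t)$ as $h\downarrow 0$), so a distinguished logarithm exists as in Corollary~\ref{levy-15}. The one detail to execute is the coherence of the branch across denominators; for instance, since $\phi(2^{-n})\to 1$, the principal logarithms eventually satisfy $\mathrm{Log}\,\phi(2^{-n})=2\,\mathrm{Log}\,\phi(2^{-n-1})$, so $c:=2^n\,\mathrm{Log}\,\phi(2^{-n})$ stabilizes, $\phi(k2^{-n})=\eup^{ck2^{-n}}$ on the dyadics, and right-continuity finishes the proof. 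In short: same route, but your branch bookkeeping buys validity for complex $\phi$, where the paper's normalization is clean only for positive $\phi$ or under the very convention you impose.
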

\begin{proof}
    Assume that $\phi(a)=0$ for some $a\geq 0$. Then we find for all $t\geq 0$
    $$
        \phi(a+t)=\phi(a)\phi(t)=0
        \implies
        \phi|_{[a,\infty)}\equiv 0.
    $$
    To the left of $a$ we find for all $n\in\nat$
    $$
        0 = \phi(a) = \big[\phi\big(\tfrac an\big)\big]^n
        \implies
        \phi\big(\tfrac an\big)=0.
    $$
    Since $\phi$ is right-continuous, we infer that $\phi|_{[0,\infty)}\equiv 0$, and $\phi(t)=\phi(1)^t$ holds.

    Now assume that $\phi(1)\neq 0$. Setting $f(t):=\phi(t)\phi(1)^{-t}$ we get
    $$
        f(s+t)=\phi(s+t)\phi(1)^{-(s+t)} = \phi(s)\phi(1)^{-s}\phi(t)\phi(1)^{-t} = f(s)f(t)
    $$
    as well as $f(1)=1$. Applying the functional equation $k$ times we conclude that
    $$
        f\big(\tfrac kn\big) = \big[f\big(\tfrac 1n\big)\big]^k\fa k,n\in\nat.
    $$
    The same calculation done backwards yields
    $$
        \big[f\big(\tfrac 1n\big)\big]^k
        = \big[f\big(\tfrac 1n\big)\big]^{n\frac{k}{n}}
        = \big[f\big(\tfrac nn\big)\big]^{\frac{k}{n}}
        = \big[f(1)\big]^{\frac{k}{n}}
        = 1.
    $$
    Hence, $f|_{\mathds{Q}_+}\equiv 1$. Since $\phi$, hence $f$, is right-continuous, we see that $f\equiv 1$ or, equivalently, $\phi(t)=[\phi(1)]^t$ for all $t>0$.
\end{proof}

\subsection*{Characteristic functions and moments}
\begin{theorem}[Even moments and characteristic functions]\label{app-21}
    Let $Y=(Y^{(1)}, \dots , Y^{(d)})$ be a random variable in $\rd$ and let $\chi(\xi)=\Ee\,\eup^{\iup \xi\cdot Y}$ be its characteristic function. Then $\Ee (|Y|^2)$ exists if, and only if, the second derivatives $\frac{\partial^2}{\partial\xi_k^2}\chi(0)$, $k=1,\dots, d$, exist and are finite. In this case all mixed second derivatives exist and
    \begin{gather}\label{app-e22}
    	\Ee Y^{(k)}
        = \frac{1}{\iup}\frac{\partial\chi(0)}{\partial \xi_k}
    \et
		\Ee(Y^{(k)} Y^{(l)})
        = - \frac{\partial^2\chi(0)}{\partial \xi_k \partial \xi_l}.
    \end{gather}
\end{theorem}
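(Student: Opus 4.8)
The plan is to prove both directions of the equivalence, together with the moment formulas \eqref{app-e22}, by a careful application of the differentiation lemma for parameter-dependent integrals combined with a real-variable symmetrisation trick to handle the converse. First I would treat the easy direction. Assume $\Ee(|Y|^2)<\infty$. Then $|Y^{(k)}|\le |Y|$ and $|Y^{(k)}Y^{(l)}|\le |Y|^2$ are integrable, so the integrand $\eup^{\iup\xi\cdot Y}$ may be differentiated twice under the integral sign: the dominating functions are $|Y^{(k)}|$ for the first derivatives and $|Y^{(k)}Y^{(l)}|$ for the second, both integrable by Cauchy--Schwarz. This gives
$$
    \frac{\partial\chi(\xi)}{\partial\xi_k} = \iup\,\Ee\big(Y^{(k)}\eup^{\iup\xi\cdot Y}\big),
    \qquad
    \frac{\partial^2\chi(\xi)}{\partial\xi_k\partial\xi_l} = -\Ee\big(Y^{(k)}Y^{(l)}\eup^{\iup\xi\cdot Y}\big),
$$
and evaluating at $\xi=0$ yields \eqref{app-e22} as well as the finiteness (and existence of all mixed second derivatives) claimed on the right-hand side of the equivalence.

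The substantial direction is the converse: from the existence and finiteness of the pure second derivatives $\frac{\partial^2}{\partial\xi_k^2}\chi(0)$ alone, deduce $\Ee(|Y|^2)<\infty$. Here I cannot differentiate under the integral yet — that is what I am trying to justify — so instead I would work coordinate by coordinate and use a second symmetric difference quotient. Fixing $k$ and writing $\chi_k(t):=\Ee\,\eup^{\iup t Y^{(k)}}$ for the characteristic function of the single coordinate $Y^{(k)}$ (obtained from $\chi$ by setting the other arguments to zero), the assumption says $\chi_k''(0)$ exists and is finite. The key real-variable identity is
$$
    \frac{2-\chi_k(t)-\chi_k(-t)}{t^2}
    = \int \frac{2-2\cos\big(t\,y^{(k)}\big)}{t^2}\,\Pp_{Y^{(k)}}(dy^{(k)})
    = \int \Big(\frac{\sin(t y^{(k)}/2)}{t/2}\Big)^2 \Pp_{Y^{(k)}}(dy^{(k)}),
$$
where the integrand is nonnegative. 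As $t\to0$ the left-hand side tends to $-\chi_k''(0)$, which is finite, while by Fatou's lemma the liminf of the right-hand side dominates $\int (y^{(k)})^2\,\Pp_{Y^{(k)}}(dy^{(k)}) = \Ee\big((Y^{(k)})^2\big)$. Hence each $\Ee\big((Y^{(k)})^2\big)<\infty$, and summing over $k=1,\dots,d$ gives $\Ee(|Y|^2)<\infty$.

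Once finiteness of $\Ee(|Y|^2)$ is secured, the first (easy) direction applies and delivers the existence of all mixed second derivatives together with the formulas \eqref{app-e22}, closing the equivalence. The main obstacle is precisely the converse implication: one must resist the temptation to differentiate under the integral and instead extract integrability from the finiteness of $\chi_k''(0)$ via the symmetric second difference, the nonnegative $(2-2\cos)/t^2$ kernel, and Fatou's lemma. A minor subtlety worth flagging is that the hypothesis only concerns the \emph{pure} second derivatives; the mixed ones are not assumed but follow \emph{a posteriori}, so the argument must route through $\Ee(|Y|^2)<\infty$ rather than attempting to bound mixed moments directly.
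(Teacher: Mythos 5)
Your proposal is correct and takes essentially the same route as the paper: the easy direction by the differentiation lemma, and the converse via the symmetric second difference of $\chi_k$, the nonnegative kernel $\frac{2-2\cos(ty)}{t^2}=\bigl(\frac{\sin(ty/2)}{t/2}\bigr)^2$, and Fatou's lemma, with the mixed derivatives recovered a posteriori from $\Ee(|Y|^2)<\infty$ (the paper bounds $\Ee|Y^{(k)}Y^{(l)}|\leq\Ee[(Y^{(k)})^2]+\Ee[(Y^{(l)})^2]$ exactly as your summation does). The only cosmetic difference is that the paper justifies the step you assert — that existence of $\chi_k''(0)$ forces the symmetric second difference quotient to converge to it — explicitly via l'Hospital's rule applied twice.
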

\begin{proof}
    In order to keep the notation simple, we consider only $d=1$. If $\Ee(Y^2)<\infty$, then the formulae \eqref{app-e22} are routine applications of the differentiation lemma for parameter-dependent integrals, see e.g.\ \cite[Theorem 11.5]{schilling-mims} or
    \cite[Satz 12.2]{schilling-mi}. Moreover, $\chi$ is twice continuously differentiable.

    Let us prove that $\Ee(Y^2)\leq -\chi''(0)$. An application of l'Hospital's rule gives
    \begin{align*}
        \chi''(0)
        &= \lim_{h\to 0}\frac 12\left(\frac{\chi'(2h)-\chi'(0)}{2h} + \frac{\chi'(0)-\chi'(-2h)}{2h}\right)\\
        &= \lim_{h\to 0} \frac{\chi'(2h)-\chi'(-2h)}{4h}\\
        &= \lim_{h\to 0} \frac{\chi(2h)-2\chi(0)+\chi(-2h)}{4h^2}\\
        &= \lim_{h\to 0} \Ee \left[\left(\frac{\eup^{\iup h Y}-\eup^{-\iup hY}}{2h}\right)^2\right]\\
        &= -\lim_{h\to 0} \Ee \left[\left(\frac{\sin hY}{h}\right)^2\right].
    \end{align*}
    From Fatou's lemma we get
    \begin{gather*}
        \chi''(0) \leq -\Ee\left[ \lim_{h\to 0} \left(\frac{\sin hY}{h}\right)^2\right] = -\Ee\big[Y^2\big].
    \end{gather*}
    In the multivariate case observe that $\Ee |Y^{(k)}Y^{(l)}| \leq \Ee \big[(Y^{(k)})^2\big] + \Ee \big[(Y^{(l)})^2\big]$.
\end{proof}

\subsection*{Vague and weak convergence of measures}
A sequence of locally finite\footnote{I.e.\ every compact set $K$ has finite measure.} Borel measures $(\mu_n)_{n\in\nat}$ on $\rd$ converges \emphh{vaguely} to a locally finite measure $\mu$ if
\index{vague convergence}%
\begin{equation}\label{app-e82}
    \lim_{n\to\infty}\int \phi\,d\mu_n = \int\phi\,d\mu
    \fa \phi\in \cont_c(\rd).
\end{equation}
Since the compactly supported continuous functions $\cont_c(\rd)$ are dense in the space of continuous functions vanishing at infinity $\cont_\infty(\rd) = \{\phi\in \cont(\rd)\,:\, \lim_{|x|\to\infty}\phi(x)=0\}$, we can replace in \eqref{app-e82} the set $\cont_c(\rd)$ with $\cont_\infty(\rd)$. The following theorem guarantees that a family of Borel measures is sequentially relatively compact\footnote{Note that compactness and sequential compactness need not coincide!} for the vague convergence.
\begin{theorem}\label{app-81}
    Let $(\mu_t)_{t\geq 0}$ be a family of measures on $\rd$ which is uniformly bounded, in the sense that $\sup_{t\geq 0}\mu_t(\rd)<\infty$. Then every sequence $(\mu_{t_n})_{n\in\nat}$ has a vaguely convergent subsequence.
\end{theorem}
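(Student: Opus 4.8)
The plan is to extract the subsequence by a diagonal argument applied to a countable dense set of test functions and then to identify the limit via the Riesz representation theorem. Throughout, write $M := \sup_{t\geq 0}\mu_t(\rd) < \infty$ for the uniform bound.

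First I would exploit the separability of the Banach space $\cont_\infty(\rd)$: fix a countable dense subset $\{\phi_k : k\in\nat\}\subset\cont_\infty(\rd)$. For a given sequence $(\mu_{t_n})_{n\in\nat}$ the scalars $\int\phi_1\,d\mu_{t_n}$ satisfy $|\int\phi_1\,d\mu_{t_n}|\leq\|\phi_1\|_\infty M$, hence are bounded and admit a convergent subsequence by Bolzano--Weierstrass. Repeating this for $\phi_2,\phi_3,\dots$ and passing to successive subsequences, the usual diagonal procedure produces a single subsequence $(\mu_{s_j})_{j\in\nat}$ along which $\lim_{j\to\infty}\int\phi_k\,d\mu_{s_j}$ exists for every $k\in\nat$.

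Next I would upgrade the convergence from the dense set to all of $\cont_\infty(\rd)$ by an $\epsilon/3$ argument. Given $\phi\in\cont_\infty(\rd)$ and $\epsilon>0$, choose $\phi_k$ with $\|\phi-\phi_k\|_\infty<\epsilon$; then for $i,j$ large enough,
$$
    \Big|\int\phi\,d\mu_{s_i} - \int\phi\,d\mu_{s_j}\Big|
    \leq 2M\epsilon + \Big|\int\phi_k\,d\mu_{s_i}-\int\phi_k\,d\mu_{s_j}\Big|,
$$
and the last term tends to $0$. Hence $\big(\int\phi\,d\mu_{s_j}\big)_j$ is Cauchy and the limit $L(\phi):=\lim_{j\to\infty}\int\phi\,d\mu_{s_j}$ exists. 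The functional $L$ is clearly linear, positive (each $\mu_{s_j}$ is a positive measure, so $\phi\geq 0$ forces $L(\phi)\geq 0$), and bounded with $|L(\phi)|\leq M\|\phi\|_\infty$.

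Finally I would apply the Riesz representation theorem to the positive bounded linear functional $L$ restricted to $\cont_c(\rd)$ (which determines $L$ on all of $\cont_\infty(\rd)$, as $\cont_c(\rd)$ is dense there) to obtain a finite positive Borel measure $\mu$ with $L(\phi)=\int\phi\,d\mu$; by construction $\int\phi\,d\mu_{s_j}\to\int\phi\,d\mu$ for all $\phi\in\cont_c(\rd)$, i.e.\ $\mu_{s_j}\to\mu$ vaguely. The main obstacle is the passage from the merely pointwise limits along the countable dense family to a genuine continuous functional that is representable by a measure: this is precisely where the uniform bound $M$ is indispensable, since it controls the approximation error in the $\epsilon/3$ estimate and guarantees $\mu(\rd)\leq M<\infty$, while the separability of $\cont_\infty(\rd)$ is what makes the diagonal extraction possible in the first place.
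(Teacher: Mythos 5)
Your proof is correct, and it is worth noting that the paper itself gives no proof of Theorem~\ref{app-81}: it defers to the cited literature (Malliavin, Chapter~III.6, and \emph{Ma{\ss} und Integral}, Chapter~25), where the result is established by essentially the argument you give. Your route is the standard one: separability of $\cont_\infty(\rd)$ plus the uniform mass bound $M$ allows a Bolzano--Weierstrass/diagonal extraction along a countable dense family, the $\epsilon/3$ estimate (where the factor $2M\epsilon$ correctly uses the uniform bound on both measures) upgrades this to convergence of $\int\phi\,d\mu_{s_j}$ for every $\phi\in\cont_\infty(\rd)$, and the Riesz--Markov representation of the resulting positive functional $L$ with $|L(\phi)|\leq M\|\phi\|_\infty$ identifies the limit as a positive measure with $\mu(\rd)\leq M$. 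In effect you have written out a hands-on proof of sequential weak-$*$ compactness (Banach--Alaoglu for the separable predual $\cont_\infty(\rd)$) rather than quoting it, which keeps the argument elementary; note also that your conclusion is slightly stronger than stated, since you obtain convergence against all of $\cont_\infty(\rd)$, not merely against $\cont_c(\rd)$ --- this is consistent with the paper's remark following \eqref{app-e82} that under such bounds the two test classes give the same notion, and it correctly allows for the possible loss of mass ($\mu(\rd)$ may be strictly smaller than $\liminf_n \mu_{t_n}(\rd)$), which is why the limit is only vague and not automatically weak in the sense of Theorem~\ref{app-83}.
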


If we test in \eqref{app-e82} against all bounded continuous functions $\phi\in \cont_b(\rd)$, we get \emphh{weak convergence} of the sequence $\mu_n\to\mu$. One has
\begin{theorem}\label{app-83}
    A sequence of measures $(\mu_n)_{n\in\nat}$ converges weakly to $\mu$ if, and only if, $\mu_n$ converges vaguely to $\mu$ and $\lim_{n\to\infty}\mu_n(\rd)=\mu(\rd)$ \textup{(}preservation of mass\textup{)}. In particular, weak and vague convergence coincide for sequences of probability measures.
\end{theorem}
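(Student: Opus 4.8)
The plan is to treat the two implications separately: the forward direction (weak $\Rightarrow$ vague plus mass preservation) is essentially free, and all the work sits in the converse, where a cut-off argument converts vague convergence into weak convergence by using mass preservation to rule out an escape of mass to infinity.

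First I would dispatch the forward direction. If $\mu_n\to\mu$ weakly, then since $\cont_c(\rd)\subset\cont_b(\rd)$ the defining relation $\int\phi\,d\mu_n\to\int\phi\,d\mu$ holds in particular for every $\phi\in\cont_c(\rd)$, which is exactly vague convergence; and testing against the bounded continuous function $\phi\equiv 1$ yields $\mu_n(\rd)\to\mu(\rd)$, i.e.\ preservation of mass. No estimate is needed here.

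For the converse, assume $\mu_n\to\mu$ vaguely and $\mu_n(\rd)\to\mu(\rd)$. Since the limit $\lim_n\mu_n(\rd)$ exists it is finite, so $\mu(\rd)<\infty$ and $\mu$ is a finite measure; this is the only place where I use that mass is not lost, and it is what makes the tails of $\mu$ small. Now fix $\phi\in\cont_b(\rd)$, put $C:=\|\phi\|_\infty$, and choose cut-off functions $\chi_R\in\cont_c(\rd)$ with $\I_{B_R(0)}\le\chi_R\le 1$. Writing $\phi=\chi_R\phi+(1-\chi_R)\phi$ and using the triangle inequality gives
$$
\left|\int\phi\,d\mu_n-\int\phi\,d\mu\right|
\le C\int(1-\chi_R)\,d\mu_n
+\left|\int\chi_R\phi\,d\mu_n-\int\chi_R\phi\,d\mu\right|
+C\int(1-\chi_R)\,d\mu.
$$
The middle term tends to $0$ as $n\to\infty$ for each fixed $R$, because $\chi_R\phi\in\cont_c(\rd)$ and $\mu_n\to\mu$ vaguely. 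The last term is $C$ times the tail $\int(1-\chi_R)\,d\mu\le C\,\mu(B_R(0)^c)$ of the finite measure $\mu$, so it can be made $<\varepsilon$ by choosing $R$ large. The first term is the crux: I would rewrite $\int(1-\chi_R)\,d\mu_n=\mu_n(\rd)-\int\chi_R\,d\mu_n$ and combine mass preservation with vague convergence applied to $\chi_R\in\cont_c(\rd)$ to conclude $\int(1-\chi_R)\,d\mu_n\to\mu(\rd)-\int\chi_R\,d\mu=\int(1-\chi_R)\,d\mu$; thus for $R$ fixed as above, the first term is eventually within $\varepsilon$ of the (small) tail of $\mu$. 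A routine $\varepsilon/3$-type bookkeeping then shows $\int\phi\,d\mu_n\to\int\phi\,d\mu$, i.e.\ weak convergence.

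The main obstacle, and the whole point of the hypothesis, is precisely the control of $\int(1-\chi_R)\,d\mu_n$ uniformly in $n$: without preservation of mass the sequence could shed mass near infinity, the tails $\int(1-\chi_R)\,d\mu_n$ would fail to be dominated by the tail of $\mu$, and vague convergence alone would not force weak convergence. Everything else is standard approximation. Finally, for the stated corollary I would note that if the $\mu_n$ are probability measures and $\mu_n\to\mu$ vaguely with $\mu$ again a probability measure, then $\mu_n(\rd)=1=\mu(\rd)$ for all $n$, so mass is automatically preserved; the equivalence just proved (together with the trivial implication weak $\Rightarrow$ vague) then shows that vague and weak convergence coincide in this setting.
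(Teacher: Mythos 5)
Your proof is correct, and it is the standard cut-off argument: the paper itself does not prove Theorem~\ref{app-83} but instead refers to Malliavin and to Schilling's textbook, where precisely this decomposition $\phi=\chi_R\phi+(1-\chi_R)\phi$ with mass preservation controlling the tails $\int(1-\chi_R)\,d\mu_n$ is used. Both directions and the probability-measure corollary are handled correctly (including the necessary assumption that the vague limit is itself a probability measure), so there is nothing to add.
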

Proofs and a full discussion of vague and weak convergence can be found in Malliavin \cite[Chapter~III.6]{malliavin95} or Schilling \cite[Chapter 25]{schilling-mi}.

For any finite measure $\mu$ on $\rd$ we denote by $\widecheck\mu(\xi):=\int_{\rd} \eup^{\iup \xi\cdot y}\,\mu(dy)$ its characteristic function.
\begin{theorem}[L\'evy's continuity theorem]\label{app-85}
\index{L\'evy's continuity theorem}%
    Let $(\mu_n)_{n\in\nat}$ be a sequence of finite measures on $\rd$. If $\mu_n\to\mu$ weakly, then the characteristic functions $\widecheck\mu_n(\xi)$ converge locally uniformly to $\widecheck\mu(\xi)$.

    Conversely, if the limit $\lim_{n\to\infty}\widecheck\mu_n(\xi)=\chi(\xi)$ exists for all $\xi\in\rd$ and defines a function $\chi$ which is continuous at $\xi=0$, then there exists a finite measure $\mu$ such that $\widecheck\mu(\xi)=\chi(\xi)$ and $\mu_n\to\mu$ weakly.
\end{theorem}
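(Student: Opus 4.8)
The plan is to prove the two implications separately; the forward direction is routine, while the converse rests on a tightness argument driven by the continuity of $\chi$ at the origin.

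For the forward direction, fix $\xi\in\rd$. Since $y\mapsto\eup^{\iup\xi\cdot y}$ is bounded and continuous, applying the definition of weak convergence to its real and imaginary parts gives $\widecheck\mu_n(\xi)\to\widecheck\mu(\xi)$ pointwise. To upgrade this to local uniform convergence I would show that the family $(\widecheck\mu_n)_{n\in\nat}$ is equicontinuous on compact sets: from
$$
    |\widecheck\mu_n(\xi)-\widecheck\mu_n(\eta)|\leq\int\big(|\xi-\eta|\,|y|\wedge 2\big)\,\mu_n(dy)
$$
and the fact that weak convergence forces $(\mu_n)$ to be uniformly bounded and tight, I can split the integral at $|y|=R$ and bound both pieces uniformly in $n$. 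Pointwise convergence together with equicontinuity on compacta then yields locally uniform convergence by an Arzel\`a--Ascoli argument.

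For the converse, first note $\mu_n(\rd)=\widecheck\mu_n(0)\to\chi(0)$, so $\sup_n\mu_n(\rd)<\infty$ and Theorem~\ref{app-81} makes $(\mu_n)$ sequentially vaguely relatively compact. The crux is to prove tightness. The key tool is the truncation estimate obtained by averaging the characteristic function near the origin: writing $e_k$ for the $k$-th standard basis vector and using Fubini together with the elementary bound $1-\tfrac{\sin r}{r}\geq\tfrac12$ for $|r|\geq 2$, one obtains for every finite measure $\mu$ and every $u>0$
$$
    \mu\{|y_k|\geq 2/u\}\leq\frac 1u\int_{-u}^{u}\big(\widecheck\mu(0)-\Re\widecheck\mu(\xi e_k)\big)\,d\xi.
$$
Applying this to $\mu_n$, the integrands $\widecheck\mu_n(0)-\Re\widecheck\mu_n(\xi e_k)$ are uniformly bounded and converge pointwise to $\chi(0)-\Re\chi(\xi e_k)$, so dominated convergence gives
$$
    \limsup_{n\to\infty}\mu_n\{|y_k|\geq 2/u\}\leq\frac 1u\int_{-u}^{u}\big(\chi(0)-\Re\chi(\xi e_k)\big)\,d\xi.
$$
Since $\chi$ is continuous at $0$, the right-hand side tends to $0$ as $u\to0$; choosing $u$ small makes the tail mass in each coordinate uniformly small for large $n$, and the finitely many remaining $\mu_n$ are tight individually. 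This establishes tightness.

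Finally I would combine the pieces. By tightness and vague relative compactness, every subsequence of $(\mu_n)$ has a further subsequence converging vaguely, hence---by Theorem~\ref{app-83}, since tightness prevents loss of mass---weakly, to some finite measure $\mu$. The forward direction then gives $\widecheck\mu=\lim\widecheck\mu_n=\chi$ along that subsequence, and since a finite measure is determined by its characteristic function, all subsequential weak limits coincide with a single $\mu$ satisfying $\widecheck\mu=\chi$; a standard subsequence argument upgrades this to $\mu_n\to\mu$ weakly. \emph{The main obstacle} is the tightness step: both establishing the averaging inequality and correctly exploiting the continuity of $\chi$ at the origin (rather than mere pointwise convergence) to obtain tail control that is uniform in $n$.
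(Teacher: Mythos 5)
Your proof is correct, but there is no proof in the paper to compare it with: Theorem~\ref{app-85} is stated in the appendix without proof, the text referring instead to Breiman, Bauer and Malliavin. Your argument is precisely the classical one found in those references. The forward direction is fine: pointwise convergence by testing weak convergence against the real and imaginary parts of $y\mapsto\eup^{\iup\xi\cdot y}$, and the equicontinuity estimate $|\widecheck\mu_n(\xi)-\widecheck\mu_n(\eta)|\leq\int\big(|\xi-\eta|\,|y|\wedge 2\big)\,\mu_n(dy)$ split at $|y|=R$ is exactly right; note that the tightness of a weakly convergent sequence, which you invoke here, deserves its own line (test against continuous cut-offs $\phi_R$ with $\I_{B_R(0)}\leq\phi_R\leq\I_{B_{2R}(0)}$, so that $\limsup_n\mu_n(B_{2R}(0)^c)\leq\int(1-\phi_R)\,d\mu$). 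Your truncation inequality is correct with the stated constants, since $\frac 1u\int_{-u}^{u}\big(\widecheck\mu(0)-\Re\widecheck\mu(\xi e_k)\big)\,d\xi = 2\int\big(1-\tfrac{\sin(uy_k)}{uy_k}\big)\,\mu(dy)$ and $1-\tfrac{\sin r}{r}\geq\tfrac 12$ for $|r|\geq 2$, and your use of dominated convergence plus continuity of $\chi$ at $0$ is the correct way to get tail control \emph{uniform in} $n$ — this is indeed the crux, as you say. Two small gaps to close in a final write-up: pass from the coordinate-wise tails to tightness in $\rd$ via $\{|y|\geq R\}\subset\bigcup_{k=1}^d\{|y_k|\geq R/\sqrt d\}$; and the statement that a finite measure is determined by its characteristic function (needed to identify all subsequential limits) is a separate standard fact (Fourier uniqueness/inversion) that should be cited, since the paper does not prove it either. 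With these remarks your argument is complete.
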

A proof in one dimension is contained in the monograph by Breiman \cite{breiman68}, $d$-di\-men\-sion\-al versions can be found in Bauer \cite[Chapter 23]{bauer96} and Malliavin \cite[Chapter IV.4]{malliavin95}.

\subsection*{Convergence in distribution}
By $\xrightarrow[\quad]{d}$ we denote convergence in distribution.
\begin{theorem}[Convergence of types]\label{app-03}
\index{convergence of types theorem}%
    Let $(Y_n)_{n\in\nat}$, $Y$ and $Y'$ be random variables and suppose that there are constants $a_n>0$, $c_n\in\real$ such that
    $$
        Y_n \xrightarrow[n\to\infty]{d}Y
        \et
        a_nY_n + c_n \xrightarrow[n\to\infty]{d}Y'.
    $$
    If $Y$ and $Y'$ are non-degenerate, then the limits $a=\lim\limits_{n\to\infty}a_n$ and $c=\lim\limits_{n\to\infty}c_n$ exist and $Y'\sim aY+c$.
\end{theorem}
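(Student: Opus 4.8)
The plan is to pass everything to characteristic functions and to lean on L\'evy's continuity theorem (Theorem~\ref{app-85}). Write $\chi_n(\xi):=\Ee\,\eup^{\iup\xi Y_n}$, $\chi(\xi):=\Ee\,\eup^{\iup\xi Y}$ and $\chi'(\xi):=\Ee\,\eup^{\iup\xi Y'}$. Since $Y_n\xrightarrow{d}Y$ and $a_nY_n+c_n\xrightarrow{d}Y'$, Theorem~\ref{app-85} turns both hypotheses into locally uniform convergence of characteristic functions,
\[
  \chi_n(\xi)\xrightarrow[n\to\infty]{}\chi(\xi)
  \et
  \eup^{\iup c_n\xi}\chi_n(a_n\xi)\xrightarrow[n\to\infty]{}\chi'(\xi),\quad\xi\in\real.
\]
Taking moduli eliminates the unknown phases, giving $|\chi_n(a_n\xi)|\to|\chi'(\xi)|$, so the first and crucial task is to pin down the scaling factors $a_n$ using only the modulus information together with the non-degeneracy of $Y$ and $Y'$.

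The key lemma I would prove is a one-sided boundedness statement: \emph{if $W_n\xrightarrow{d}W$ and $\alpha_nW_n+\beta_n\xrightarrow{d}W'$ with $\alpha_n>0$ and $W,W'$ non-degenerate, then $(\alpha_n)$ is bounded.} The tool is the averaging identity $\lim_{T\to\infty}\frac1{2T}\int_{-T}^{T}|\chi(s)|^2\,ds=\Pp(Y=Y'')=:p_Y$, where $Y''$ is an independent copy of $Y$; here $p_Y<1$ precisely because $Y$ is non-degenerate. Assuming $a_{n_k}\to\infty$ along a subsequence and substituting $s=a_{n_k}t$ gives
\[
  \frac1{2T}\int_{-T}^{T}|\chi_{n_k}(a_{n_k}t)|^2\,dt
  = \frac1{2a_{n_k}T}\int_{-a_{n_k}T}^{a_{n_k}T}|\chi_{n_k}(s)|^2\,ds .
\]
The left-hand side converges to $\frac1{2T}\int_{-T}^{T}|\chi'(t)|^2\,dt$, while the right-hand side (using $|\chi_{n_k}|^2\to|\chi|^2$ locally uniformly and $a_{n_k}\to\infty$) converges to $p_Y<1$; hence $\frac1{2T}\int_{-T}^{T}|\chi'|^2\,dt=p_Y$ for every $T$, and letting $T\to0$ forces $1=|\chi'(0)|^2=p_Y$, a contradiction. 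Thus $(a_n)$ is bounded above. To bound $a_n$ away from $0$ I would apply the same lemma to the inverted relation $Y_n=a_n^{-1}\big((a_nY_n+c_n)-c_n\big)$, i.e. with $(Z_n,Y')$ in the role of $(W_n,W)$, $\alpha_n=a_n^{-1}$, and limit $Y$; this yields that $a_n^{-1}$ is bounded. I expect this boundedness lemma, and in particular the interchange of limits in the averaging argument, to be the main obstacle.

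With $0<\inf_na_n\le\sup_na_n<\infty$ in hand, I would extract a subsequence $a_{n_k}\to a\in(0,\infty)$. Local uniform convergence of $\chi_n$ and continuity of $\chi$ give $\chi_{n_k}(a_{n_k}\xi)\to\chi(a\xi)$, so from the second display $\eup^{\iup c_{n_k}\xi}\chi_{n_k}(a_{n_k}\xi)\to\chi'(\xi)$ we obtain $|\chi'(\xi)|=|\chi(a\xi)|$, and on a neighbourhood of $0$ where $\chi(a\xi)\neq0$ also $\eup^{\iup c_{n_k}\xi}\to\chi'(\xi)/\chi(a\xi)$. Since the limit is continuous in $\xi$ near $0$ and equals $1$ at $\xi=0$, this forces $c_{n_k}\to c$ for some $c\in\real$, whence $\chi'(\xi)=\eup^{\iup c\xi}\chi(a\xi)$, i.e. $Y'\sim aY+c$.

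It remains to upgrade subsequential convergence to convergence of the full sequences. The limit $(a,c)$ is uniquely determined by $Y,Y'$: if $\eup^{\iup c\xi}\chi(a\xi)=\eup^{\iup c'\xi}\chi(a'\xi)$ for all $\xi$ with $a,a'>0$, then $|\chi(a\xi)|=|\chi(a'\xi)|$, and iterating $|\chi(\xi)|=|\chi((a/a')^n\xi)|\to|\chi(0)|=1$ (the same device as in the footnote of Lemma~\ref{exlp-33}) would contradict the non-degeneracy of $Y$ unless $a=a'$; then $\eup^{\iup c\xi}=\eup^{\iup c'\xi}$ near $0$ gives $c=c'$. Since every subsequence of $(a_n,c_n)$ thus has a further subsequence converging to the \emph{same} pair $(a,c)$, the whole sequences satisfy $a_n\to a$ and $c_n\to c$, and $Y'\sim aY+c$, completing the proof.
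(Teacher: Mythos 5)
Your global architecture matches the paper's proof almost step for step, but your route to the boundedness of $(a_n)$ is genuinely different, and that is exactly where the gap sits. The assertion that $\frac{1}{2a_{n_k}T}\int_{-a_{n_k}T}^{a_{n_k}T}|\chi_{n_k}(s)|^2\,ds$ converges to $p_Y$ ``using $|\chi_{n_k}|^2\to|\chi|^2$ locally uniformly and $a_{n_k}\to\infty$'' is not justified: the averaging window grows with $k$ while your uniform control is only on compacta, and the two cited facts do not imply the claimed convergence. Indeed, writing the average as $\Ee\bigl[\sin(S\widetilde Y_n)/(S\widetilde Y_n)\bigr]$ with $\widetilde Y_n:=Y_n-Y_n''$ and $S=a_nT$, take $Y\sim$ Bernoulli$(\tfrac 12)$ and $Y_n=Y+n^{-1/2}Z$, $Z\sim\Normal(0,1)$ independent: then $|\chi_n|^2\to|\chi|^2$ locally uniformly, yet with $S_k=n_k$ the average tends to $0\neq p_Y=\tfrac 12$ --- the liminf direction fails whenever $\widetilde Y_n$ carries mass at scales intermediate between $1/S_k$ and $1$. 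The repair is that you only need the upper bound: from $|\sin(Sy)/(Sy)|\leq \I_{[-\epsilon,\epsilon]}(y)+(S\epsilon)^{-1}$ and the portmanteau theorem (note $\widetilde Y_{n_k}\xrightarrow{d}\widetilde Y$, since $|\chi_{n_k}|^2\to|\chi|^2$), one gets $\limsup_k \Ee\bigl[\sin(S_k\widetilde Y_{n_k})/(S_k\widetilde Y_{n_k})\bigr]\leq \Pp(|\widetilde Y|\leq\epsilon)$, and $\epsilon\downarrow 0$ gives $p_Y<1$. Combined with your left-hand limit this yields $\frac 1{2T}\int_{-T}^T|\chi'(t)|^2\,dt\leq p_Y$ for every $T>0$, and letting $T\to 0$ still produces the contradiction $1\leq p_Y$.

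You should also know that the paper obtains both bounds on $(a_n)$ with no averaging at all: take any subsequential limit $a_{n(k)}\to a\in[0,\infty]$; if $a=0$, locally uniform convergence alone gives $|\chi_{Y'}(\xi)|=\lim_k|\chi_{n(k)}(a_{n(k)}\xi)|=|\chi_Y(0)|=1$, making $Y'$ degenerate, and $a=\infty$ is excluded by feeding the inverted relation $Y_n=a_n^{-1}\bigl((a_nY_n+c_n)-c_n\bigr)$ into the same two lines --- so your Wiener-type lemma, even once repaired, is avoidable. Everything else in your proposal coincides with the paper: uniqueness of $a$ via the iteration $|\chi_Y(\xi)|=|\chi_Y((a'/a)^N\xi)|\to 1$, extraction of $c$ from $\eup^{\iup c_{n_k}\xi}\to\chi_{Y'}(\xi)/\chi_Y(a\xi)$ near $\xi=0$, and the subsequence principle. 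One compression to flag there: ``this forces $c_{n_k}\to c$'' requires first ruling out $|c_{n_k}|\to\infty$ along a sub-subsequence; the paper does this by integrating, using $\bigl|\int_0^\delta \eup^{\iup t c_n\xi_0}\,dt\bigr|\leq 2/|c_n\cdot\xi_0|$ against the fact that dominated convergence makes $\int_0^\delta \chi_{Y'}(t\xi_0)/\chi_Y(ta\xi_0)\,dt$ nonzero for small $\delta$. With these two repairs your proof is complete.
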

\begin{proof}
    Write $\chi_Z(\xi):=\Ee \eup^{iup \xi Z}$ for the characteristic function of the random variable $Z$.

\medskip\noindent
    $1^\circ$\ \ By L\'evy's continuity theorem (Theorem~\ref{app-85}) convergence in distribution ensures that
    $$
        \chi_{a_nY_n+c_n}(\xi) = \eup^{\iup c_n\cdot\xi}\chi_{Y_n}(a_n\xi) \xrightarrow[n\to\infty]{\text{locally unif.}} \chi_{Y'}(\xi)
        \et
        \chi_{Y_n}(\xi) \xrightarrow[n\to\infty]{\text{locally unif.}} \chi_{Y}(\xi).
    $$
    Take some subsequence $(a_{n(k)})_{k\in\nat}\subset (a_n)_{n\in\nat}$ such that $\lim_{k\to\infty} a_{n(k)} = a\in [0,\infty]$.

\medskip\noindent
    $2^\circ$\ \ Claim: $a>0$. Assume, on the contrary, that $a=0$.
    $$
        |\chi_{a_{n(k)}Y_{n(k)}+c_{n(k)}}(\xi)| = |\chi_{Y_{n(k)}}(a_{n(k)}\xi)| \xrightarrow[k\to\infty]{} |\chi_{Y}(0)|=1.
    $$
    Thus, $|\chi_{Y'}|\equiv 1$ which means that $Y'$ would be degenerate, contradicting our assumption.

\medskip\noindent
    $3^\circ$\ \ Claim: $a<\infty$. If $a=\infty$, we use $Y_n = (a_n)^{-1} (Y_n - c_n)$ and the argument from step $1^\circ$ to reach the contradiction
    $$
        (a_{n(k)})^{-1} \xrightarrow[k\to\infty]{} a^{-1} > 0 \iff a<\infty.
    $$

\medskip\noindent
    $4^\circ$\ \ Claim: There exists a unique $a\in [0,\infty)$ such that $\lim_{n\to\infty}a_n = a$. Assume that there were two different subsequential limits $a_{n(k)}\to a$, $a_{m(k)}\to a'$ and $a\neq a'$. Then
    \begin{align*}
        |\chi_{a_{n(k)}Y_{n(k)}+c_{n(k)}}(\xi)| = |\chi_{a_{n(k)}Y_{n(k)}}(\xi)| &\xrightarrow[k\to\infty]{} \chi_Y(a\xi),\\
        |\chi_{a_{m(k)}Y_{m(k)}+c_{m(k)}}(\xi)| = |\chi_{a_{m(k)}Y_{m(k)}}(\xi)| &\xrightarrow[k\to\infty]{} \chi_Y(a'\xi).
    \end{align*}
    On the other hand, $\chi_Y(a\xi) = \chi_Y(a'\xi) = \chi_{Y'}(\xi)$. If $a'<a$, we get by iteration
    $$
        |\chi_Y(\xi)|
        = \big|\chi_Y\big(\tfrac{a'}{a}\xi\big)\big|
        = \dots
        = \big|\chi_Y\big(\big(\tfrac{a'}{a}\big)^N\xi\big)\big|
        \xrightarrow[N\to\infty]{} |\chi_Y(0)| = 1.
    $$
    Thus, $|\chi|\equiv 1$ and $Y$ is a.s.\ constant. Since $a, a'$ can be interchanged, we conclude that $a=a'$.

\medskip\noindent
    $5^\circ$\ \ We have
    $$
        \eup^{\iup c_n\cdot\xi}
        = \frac{\chi_{a_nY_n+c_n}(\xi)}{\chi_{a_nY_n}(\xi)}
        = \frac{\chi_{a_nY_n+c_n}(\xi)}{\chi_{Y_n}(a_n\xi)}
        \xrightarrow[n\to\infty]{} \frac{\chi_{Y'}(\xi)}{\chi_Y(a\xi)}.
    $$
    Since $\chi_Y$ is continuous and $\chi_Y(0)=1$, the limit $\lim_{n\to\infty}\eup^{\iup c_n\cdot\xi}$ exists for all $|\xi|\leq \delta$ and some small $\delta$. For $\xi = t\xi_0$ with $|\xi_0|=1$, we get
    \begin{align*}
        0
        < \left|\int_0^\delta \frac{\chi_{Y'}(t\xi_0)}{\chi_{Y}(ta\xi_0)}\,dt\right|
        = \left|\lim_{n\to\infty}\int_0^\delta \eup^{\iup t c_n\cdot\xi_0}\,dt\right|
        = \left|\lim_{n\to\infty}\frac{\eup^{\iup \delta c_n\cdot\xi_0}-1}{\iup c_n\cdot\xi_0}\right|
        \leq \liminf_{n\to\infty}\frac{2}{|c_n\cdot\xi_0|},
    \end{align*}
    and so $\limsup_{n\to\infty}|c_n|<\infty$; if there were two limit points $c\neq c'$, then $\eup^{\iup c\cdot\xi} = \eup^{\iup c'\cdot\xi}$ for all $|\xi|\leq\delta$. This gives $c=c'$, and we finally see that $c_n\to c$, $\eup^{\iup c_n\cdot\xi} \to \eup^{\iup c\cdot\xi}$, as well as
    \begin{gather*}
        \chi_{Y'}(\xi) = \eup^{\iup c\cdot\xi}\chi_Y(a\xi).
    \qedhere
    \end{gather*}
\end{proof}

A random variable is called \emphh{symmetric} if $Y\sim-Y$.
\begin{theorem}[Symmetrization inequality]\label{app-13}
    Let $Y_1, \dots, Y_n$ be independent symmetric random variables. Then the partial sum $S_n=Y_1+\dots+Y_n$ is again symmetric and
    \begin{align}
    \label{app-e12}
        \Pp(|Y_1+\dots+Y_n|>u) &\geq \tfrac 12\Pp\Big(\max_{1\leq k\leq n}|Y_k|>u\Big).
    \intertext{If the $Y_k$ are iid with $Y_1\sim\mu$, then}
    \label{app-e14}
        \Pp(|Y_1+\dots+Y_n|>u) &\geq \tfrac 12\big(1-\eup^{-n\Pp(|Y_1|> u)}\big).
    \end{align}
\end{theorem}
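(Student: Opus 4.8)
The plan is to prove the Symmetrization inequality (Theorem~\ref{app-13}) in three parts. First I would establish that $S_n = Y_1+\dots+Y_n$ is symmetric: since the $Y_k$ are independent and each $Y_k \sim -Y_k$, the vector $(Y_1,\dots,Y_n)$ has the same distribution as $(-Y_1,\dots,-Y_n)$, and applying the measurable map $(y_1,\dots,y_n)\mapsto y_1+\dots+y_n$ gives $S_n \sim -S_n$. This is routine and can be dispatched quickly.

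For the main inequality \eqref{app-e12} I would use a first-passage (stopping) argument. Define the event $T=\{\max_{1\le k\le n}|Y_k|>u\}$ and let $\kappa$ be the \emph{first} index $k$ with $|Y_k|>u$ (set $\kappa=\infty$ if no such index exists). The key idea is to compare the two events $\{S_n>u\text{-type behaviour}\}$ driven by the sign of the ``remaining'' sum. More precisely, conditionally on $\kappa=k$, the partial sum splits as $S_n = (Y_1+\dots+Y_k) + (Y_{k+1}+\dots+Y_n)$, and the tail block $R_k:=Y_{k+1}+\dots+Y_n$ is symmetric and independent of $(Y_1,\dots,Y_k)$. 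On the event $\{\kappa=k\}$ we have $|Y_k|>u$; by symmetry of $R_k$, at least one of the two sign choices of $R_k$ pushes $|S_n|$ beyond $u$. Formally I would argue that on $\{\kappa=k\}$, $\Pp(|S_n|>u \mid \mathcal F_k) \ge \tfrac12$ where $\mathcal F_k=\sigma(Y_1,\dots,Y_k)$, because $\{R_k\ge 0\}$ and $\{R_k\le 0\}$ each have probability at least $\tfrac12$ and on one of them $|S_n|\ge |(Y_1+\dots+Y_k)|\ge \dots$; the cleanest version uses that $R_k$ and $-R_k$ are identically distributed so that $\Pp(S_n>u\mid\mathcal F_k)+\Pp(S_n<-u\mid\mathcal F_k)\ge \Pp(|Y_1+\dots+Y_k+R_k|>u \text{ for some sign})\ge \tfrac12$. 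Summing over $k$ and using $\{\kappa=k\}\in\mathcal F_k$ gives $\Pp(|S_n|>u)\ge \tfrac12\Pp(T)$.

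For the iid corollary \eqref{app-e14} I would simply evaluate the right-hand side of \eqref{app-e12}. With $p:=\Pp(|Y_1|>u)$ and independence,
\[
    \Pp\Big(\max_{1\le k\le n}|Y_k|>u\Big)
    = 1-\Pp\Big(\max_{1\le k\le n}|Y_k|\le u\Big)
    = 1-(1-p)^n,
\]
and the elementary bound $(1-p)^n\le \eup^{-np}$ (from $1-p\le \eup^{-p}$) yields $1-(1-p)^n\ge 1-\eup^{-np}$. Combining with \eqref{app-e12} gives \eqref{app-e14}.

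The main obstacle I expect is making the sign/conditioning step rigorous: one must carefully introduce the first-passage index $\kappa$, check that $\{\kappa=k\}$ is $\mathcal F_k$-measurable, and exploit the conditional symmetry and independence of the tail block $R_k$ to extract the factor $\tfrac12$. The subtlety is that $|Y_k|>u$ alone does not force $|S_n|>u$; it is the symmetry of the independent remainder $R_k$ (together with the observation that for any real $a$ with $|a|>u$, at least one of $a+r$, $a-r$ has modulus exceeding $u$ for every $r$) that supplies the lower bound. Everything else—symmetry of $S_n$ and the iid computation—is standard.
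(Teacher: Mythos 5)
Your symmetry claim for $S_n$ and the iid computation for \eqref{app-e14} are correct and coincide with the paper's argument, but there is a genuine gap in the central step for \eqref{app-e12}: the claimed conditional bound $\Pp(|S_n|>u\mid\Fscr_k)\geq\tfrac12$ on $\{\kappa=k\}$ is false. Your split $S_n=(Y_1+\dots+Y_k)+R_k$ puts the big jump $Y_k$ inside the $\Fscr_k$-measurable head, and the head can cancel: knowing $|Y_k|>u$ does not give $|Y_1+\dots+Y_k|>u$, which is what your auxiliary lemma (``if $|a|>u$ then at least one of $a\pm r$ has modulus $>u$'') requires. Concretely, take $n=2$, $Y_1=\pm u$ and $Y_2=\pm\tfrac32 u$ with all signs independent and equally likely. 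Then $\kappa=2$ a.s., the tail $R_2$ is the empty sum, and on the event $\{Y_1=u,\;Y_2=-\tfrac32 u\}$ one has $S_2=-\tfrac u2$, so
$$
    \Pp\big(|S_2|>u\mid\Fscr_2\big)=\I_{\{|S_2|>u\}}=0<\tfrac 12
$$
there. (The theorem is tight in this example: $\Pp(|S_2|>u)=\tfrac12=\tfrac12\,\Pp(\max_k|Y_k|>u)$, so no eventwise conditional bound of the kind you propose can hold.) The root of the problem is that conditioning on $\Fscr_\kappa$ freezes the pre-jump variables, while the symmetry needed to extract the factor $\tfrac12$ lives precisely in those variables; the symmetry of the independent tail $R_k$ alone cannot compensate a cancelled head.

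The repair is to split off the big jump itself, $S_n=Y_\kappa+(S_n-Y_\kappa)$, and to show that the \emph{complementary} sum is conditionally symmetric given $Y_\kappa$ and $\{\kappa=k\}$. This does not follow from independence of the tail; it follows from the sign-flip representation $Y_j=\epsilon_j|Y_j|$ with iid Rademacher variables $\epsilon_j$ independent of the moduli: since $\{\kappa=k\}$ depends only on $(|Y_1|,\dots,|Y_n|)$, flipping all $\epsilon_j$ with $j\neq k$ preserves the conditional law while negating $S_n-Y_k$ and leaving $Y_k$ untouched, and then your pointwise lemma applies with $a=Y_k$. This is exactly the mechanism of the paper's proof, which takes $\tau$ to be the (first) index of the maximal $|Y_l|$ --- again a function of the moduli only --- sets $Y_{n,\tau}=S_n-Y_\tau$, observes that all four sign combinations $(\pm Y_\tau,\pm Y_{n,\tau})$ are equidistributed, and deduces $\Pp(S_n>u)\geq\tfrac12\Pp(Y_\tau>u)$, whence \eqref{app-e12} follows by the symmetry of $S_n$ and of $Y_\tau$.
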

\begin{proof}
    By independence, $S_n = Y_1+\dots+Y_n \sim -Y_1-\dots-Y_n = -S_n$.

    Let $\tau = \min\{1\leq k\leq n\,:\, |Y_k| = \max_{1\leq l\leq n}|Y_l|\}$ and set $Y_{n,\tau} = S_n - Y_\tau$. Then the four (counting all possible $\pm$ combinations) random variables $(\pm Y_\tau, \pm Y_{n,\tau})$ have the same law. Moreover,
    $$
        \Pp(Y_\tau > u)
        \leq \Pp(Y_\tau > u,\: Y_{n,\tau}\geq 0) + \Pp(Y_\tau > u,\: Y_{n,\tau}\leq 0)
        = 2\Pp(Y_\tau > u,\: Y_{n,\tau}\geq 0),
    $$
    and so
    $$
        \Pp(S_n > u)
        = \Pp(Y_\tau+Y_{n,\tau} > u)
        \geq \Pp(Y_\tau>u,\:Y_{n,\tau}\geq 0)
        \geq \tfrac 12\Pp(Y_\tau>u).
    $$
    By symmetry, this implies \eqref{app-e12}. In order to see \eqref{app-e14}, we use that the $Y_k$ are iid, hence
    $$
        \Pp(\max_{1\leq k\leq n}|Y_k|\leq u)
        = \Pp(|Y_1|\leq u)^n
        \leq \eup^{-n\Pp(|Y_1|>u)},
    $$
    along with the elementary inequality $1-p\leq \eup^{-p}$ for $0\leq p\leq 1$. This proves \eqref{app-e14}.
\end{proof}

\subsection*{The predictable $\sigma$-algebra}
Let $(\Omega,\Ascr,\Pp)$ be a probability space and $(\Fscr_t)_{t\geq 0}$ some filtration. A stochastic process $\Xt$ is called \emphh{adapted},
\index{adapted}%
if for every $t\geq 0$ the random variable $X_t$ is $\Fscr_t$ measurable.
\begin{definition}\label{app-61}
\index{predictable}%
    The \emphh{predictable $\sigma$-algebra} $\Pscr$ is the smallest $\sigma$-algebra on $\Omega\times (0,\infty)$ such that all left-continuous adapted stochastic processes $(\omega,t)\mapsto X_t(\omega)$ are measurable. A $\Pscr$ measurable process $X$ is called a \emphh{predictable process}.
\end{definition}
For a stopping time $\tau$ we denote by
\begin{equation}\label{app-e62}
    \rrbracket 0,\tau\rrbracket := \{(\omega,t) \::\: 0 < t\leq \tau(\omega)\}
    \et
    \rrbracket \tau,\infty\llbracket := \{(\omega,t) \::\: t>\tau(\omega)\}
\end{equation}
the left-open \emphh{stochastic intervals}.
\index{stochastic interval}%
The following characterization of the predictable $\sigma$-algebra is essentially from Jacod \& Shiryaev \cite[Theorem I.2.2]{jacod-shiryaev87}.
\begin{theorem}\label{app-63}
    The predictable $\sigma$-algebra $\Pscr$ is generated by any one of the following families of random sets
    \begin{enumerate}
    \item[\textup{a)}]
        $\rrbracket 0,\tau\rrbracket$ where 
        $\tau$ is any bounded stopping time;
    \item[\textup{b)}]
        $F_s\times (s,t]$ where 
        $F_s\in\Fscr_s$ and $0\leq s< t$.
    \end{enumerate}
\end{theorem}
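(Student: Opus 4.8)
The plan is to prove the statement by exhibiting a cycle of inclusions. Write $\Pscr_a$ for the $\sigma$-algebra on $\Omega\times(0,\infty)$ generated by the stochastic intervals $\rrbracket 0,\tau\rrbracket$ from~a) and $\Pscr_b$ for the one generated by the rectangles $F_s\times(s,t]$ from~b). Since all three $\sigma$-algebras live on $\Omega\times(0,\infty)$, it suffices to establish
\[
    \Pscr\subseteq\Pscr_b\subseteq\Pscr_a\subseteq\Pscr,
\]
which then forces equality throughout. Two of the links are routine and one carries the real content.

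First I would check $\Pscr_a\subseteq\Pscr$. Fix a bounded stopping time $\tau$ and consider the process $X_r(\omega):=\I_{\{r\le\tau(\omega)\}}$, $r>0$, whose graph is exactly $\rrbracket 0,\tau\rrbracket$. For each fixed $\omega$ the map $r\mapsto X_r(\omega)$ equals $1$ on $(0,\tau(\omega)]$ and $0$ afterwards, hence is left-continuous: the only downward jump sits at $r=\tau(\omega)$, where the left limit still equals $1$. Moreover $X$ is adapted, since $\{X_r=1\}=\{\tau\ge r\}=\{\tau<r\}^c\in\Fscr_r$ because $\tau$ is a stopping time. By the very definition of $\Pscr$ the process $X$ is $\Pscr$-measurable, so $\rrbracket 0,\tau\rrbracket=\{X=1\}\in\Pscr$.

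Next I would verify $\Pscr_b\subseteq\Pscr_a$ by realising each generator $F_s\times(s,t]$ as a set difference of two intervals of type~a). For $F_s\in\Fscr_s$ put $\tau:=t\,\I_{F_s}+s\,\I_{F_s^c}$. Checking $\{\tau\le u\}$ on the three ranges $u<s$, $s\le u<t$ and $u\ge t$ shows that $\tau$ is a bounded stopping time; the only non-trivial case is $s\le u<t$, where $\{\tau\le u\}=F_s^c\in\Fscr_s\subseteq\Fscr_u$. Using also the deterministic stopping time $s$, one computes $\rrbracket 0,\tau\rrbracket\setminus\rrbracket 0,s\rrbracket=F_s\times(s,t]$, so this rectangle lies in $\Pscr_a$; as such rectangles generate $\Pscr_b$, the inclusion follows. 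The index value $s=0$ is permitted in~b) and needs no separate treatment.

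The remaining inclusion $\Pscr\subseteq\Pscr_b$ is the heart of the matter, and I expect it to be the main obstacle. Here I must show that \emph{every} left-continuous adapted process $X$ is already $\Pscr_b$-measurable; the point is that left-continuity, not mere measurability, is what makes the approximation converge. The tool is the left-point dyadic approximation
\[
    X_r^{(n)}(\omega):=\sum_{k\ge 0} X_{k2^{-n}}(\omega)\,\I_{(k2^{-n},(k+1)2^{-n}]}(r).
\]
Each summand is $\Pscr_b$-measurable, since for a Borel set $B$ the set $\{X_{k2^{-n}}\in B\}\times(k2^{-n},(k+1)2^{-n}]$ is a generator of $\Pscr_b$ with $F_s=\{X_{k2^{-n}}\in B\}\in\Fscr_{k2^{-n}}=\Fscr_s$; hence $X^{(n)}$ is $\Pscr_b$-measurable. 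For fixed $(\omega,r)$ with $r>0$ the left endpoint of the dyadic interval containing $r$ increases strictly up to $r$, so left-continuity yields $X_r^{(n)}(\omega)\to X_r(\omega)$. A pointwise limit of $\Pscr_b$-measurable functions is $\Pscr_b$-measurable, whence $X$ is $\Pscr_b$-measurable. As this holds for every left-continuous adapted process and $\Pscr$ is the smallest $\sigma$-algebra with that property, $\Pscr\subseteq\Pscr_b$, closing the cycle; the bookkeeping of the initial interval is harmless because $s=0$ is an admissible index in~b).
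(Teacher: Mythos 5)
Your proposal is correct and follows essentially the same route as the paper's proof: the identical cycle of inclusions $\Pscr_b\subseteq\Pscr_a\subseteq\Pscr\subseteq\Pscr_b$, with the rectangle $F_s\times(s,t]$ realised as a difference of stochastic intervals built from an $F_s$-modified bounded stopping time, the indicator of $\rrbracket 0,\tau\rrbracket$ recognised as a left-continuous adapted process, and the dyadic left-endpoint sums approximating any left-continuous adapted process. The only cosmetic difference is that you use the single stopping time $\tau=t\,\I_{F_s}+s\,\I_{F_s^c}$ together with the deterministic time $s$, whereas the paper uses the pair $s_F=s\I_F+n\I_{F^c}$, $t_F=t\I_F+n\I_{F^c}$; both yield the same set difference.
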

\begin{proof}
    We write $\Pscr_a$ and $\Pscr_b$ for the $\sigma$-algebras generated by the families listed in a) and b), respectively.

\medskip\noindent
    $1^\circ$ \ Pick $0\leq s<t$, $F=F_s\in\Fscr_s$ and let $n>t$. Observe that $s_F := s\I_F + n\I_{F^c}$ is a bounded stopping time\footnote{Indeed,
    $
        \{s_F\leq t\}
        = \{s\leq t\}\cap F
        = \left\{
            \begin{aligned}
                \emptyset, &\quad s>t\\
                F, &\quad s\leq t
            \end{aligned}
        \right\}\in\Fscr_t
    $ for all $t\geq 0$.} and $F\times (s,t] = \rrbracket s_F,t_F\rrbracket$.
    Therefore, $\rrbracket s_F,t_F\rrbracket = \rrbracket0,t_F\rrbracket\setminus \rrbracket0,s_F\rrbracket \in\Pscr_a$, and we conclude that $\Pscr_b\subset\Pscr_a$.

\medskip\noindent
    $2^\circ$ \ Let 
    $\tau$ be a bounded stopping time. Since 
    $t\mapsto\I_{\rrbracket0,\tau\rrbracket}(\omega,t)$ 
    is adapted and left-continuous, we have $\Pscr_a\subset\Pscr$.

\medskip\noindent
    $3^\circ$ \ Let $X$ be an adapted and left-continuous process and define for every $n\in\nat$
    $$
        X^n_t:= 
        \sum_{k=0}^\infty X_{k2^{-n}}\I_{\rrbracket k2^{-n},\,(k+1)2^{-n}\rrbracket}(t).
    $$
    Obviously, $X^n = (X^n_t)_{t\geq 0}$ is $\Pscr_b$ measurable; because of the left-continuity of $t\mapsto X_t$, the limit $\lim_{n\to\infty} X_t^n = X_t$ exists, and we conclude that $X$ is $\Pscr_b$ measurable; consequently, $\Pscr\subset\Pscr_b$.
\end{proof}

\subsection*{The structure of translation invariant operators}
Let $\vartheta_x f(y):= f(y+x)$ be the translation operator and $\widetilde f(x):= f(-x)$. A linear operator $L : \cont_c^\infty(\rd) \to \cont(\rd)$ is called \emphh{translation invariant} if
\index{translation invariant}%
\begin{equation}\label{app-e92}
    \vartheta_x (Lf) = L(\vartheta_x f).
\end{equation}
    A \emphh{distribution} $\lambda$ is an element of the topological dual $(\cont_c^\infty(\rd))'$, i.e.\ a continuous linear functional $\lambda:\cont_c^\infty(\rd)\to\real$. The \emphh{convolution of a distribution} with a function $f\in\cont_c^\infty(\rd)$ is defined as
    $$
        f*\lambda(x) := \lambda(\vartheta_{-x}\widetilde f),\quad \lambda\in\big(\cont_c^\infty(\rd)\big)',\; f\in\cont_c^\infty(\rd),\; x\in\rd.
    $$
    If $\lambda=\mu$ is a measure, this formula generalizes the `usual' convolution
    $$f*\mu(x) = \int f(x-y)\,\mu(dy) = \int \widetilde f(y-x)\,\mu(dy) = \mu(\vartheta_{-x} \widetilde f).$$

\begin{theorem}\label{app-91}
    If $\lambda\in(\cont_c^\infty(\rd))'$ is a distribution, then $Lf(x):= f*\lambda(x)$ defines a translation invariant continuous linear map $L:\cont_c^\infty(\rd)\to\cont(\rd)$.

    Conversely, every translation-invariant continuous linear map $L:\cont_c^\infty(\rd)\to\cont(\rd)$ is of the form $Lf = f*\lambda$ for some unique distribution $\lambda\in (\cont_c^\infty(\rd))'$.
\end{theorem}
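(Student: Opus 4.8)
The plan is to prove both implications by reducing the operator to evaluation at the origin and exploiting the interplay between the reflection $\widetilde{f}$ and the translation $\vartheta_x f$. The only genuinely new ingredient beyond bookkeeping is a single algebraic identity relating these two operations, namely $\widetilde{\vartheta_{-x}\widetilde{f}} = \vartheta_x f$, from which everything follows once the translation invariance \eqref{app-e92} is available.

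For the first (easy) direction, suppose $\lambda\in(\cont_c^\infty(\rd))'$ and set $Lf(x):=f*\lambda(x)=\lambda(\vartheta_{-x}\widetilde{f})$. Linearity in $f$ is clear, since $f\mapsto\widetilde{f}$, $\vartheta_{-x}$, and $\lambda$ are all linear. To check translation invariance I would first verify $\widetilde{\vartheta_a f}=\vartheta_{-a}\widetilde{f}$ (a one-line substitution), whence $\vartheta_{-x}\widetilde{\vartheta_a f}=\vartheta_{-(x+a)}\widetilde{f}$; applying $\lambda$ gives $L(\vartheta_a f)(x)=Lf(x+a)=\vartheta_a(Lf)(x)$, which is exactly \eqref{app-e92}. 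For continuity I would use that $x\mapsto\vartheta_{-x}\widetilde{f}$ is continuous from $\rd$ into $\cont_c^\infty(\rd)$ (continuity of translation in the test-function topology); composing with the continuous functional $\lambda$ shows $x\mapsto Lf(x)$ is continuous, so $Lf\in\cont(\rd)$, and continuity of $L$ as a map into $\cont(\rd)$ then follows from the continuity of $\lambda$.

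For the converse, the candidate distribution is forced by evaluating the desired identity $Lf(x)=\lambda(\vartheta_{-x}\widetilde{f})$ at $x=0$, which reads $Lf(0)=\lambda(\widetilde{f})$. I therefore define $\lambda(g):=(L\widetilde{g})(0)$ for $g\in\cont_c^\infty(\rd)$; since $g\mapsto\widetilde{g}$ is continuous on $\cont_c^\infty(\rd)$, $L$ is continuous into $\cont(\rd)$, and evaluation at $0$ is continuous on $\cont(\rd)$, the functional $\lambda$ is a distribution. It then remains to verify $Lf=f*\lambda$. Using the key identity $\widetilde{\vartheta_{-x}\widetilde{f}}=\vartheta_x f$ (a direct substitution: $\widetilde{\vartheta_{-x}\widetilde{f}}(y)=\vartheta_{-x}\widetilde{f}(-y)=f(x+y)$) I obtain $\lambda(\vartheta_{-x}\widetilde{f})=(L\vartheta_x f)(0)=\vartheta_x(Lf)(0)=Lf(x)$, where the middle step is precisely \eqref{app-e92}. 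Uniqueness is immediate: if $f*\lambda_1=f*\lambda_2$ for all $f$, then evaluation at $x=0$ gives $\lambda_1(\widetilde{f})=\lambda_2(\widetilde{f})$ for all $f$, and since $f\mapsto\widetilde{f}$ is a bijection of $\cont_c^\infty(\rd)$, we conclude $\lambda_1=\lambda_2$.

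The computations above are all routine substitutions; the only point requiring real care is the functional-analytic bookkeeping around continuity. Specifically, one must fix the (inductive-limit) topology on $\cont_c^\infty(\rd)$ and the topology of locally uniform convergence on $\cont(\rd)$, and then justify both that $x\mapsto\vartheta_{-x}\widetilde{f}$ is continuous into $\cont_c^\infty(\rd)$ and that $g\mapsto(L\widetilde{g})(0)$ inherits continuity from $L$. I expect this to be the main (though not deep) obstacle, since the algebraic core of the theorem collapses to the single identity $\widetilde{\vartheta_{-x}\widetilde{f}}=\vartheta_x f$ together with \eqref{app-e92}.
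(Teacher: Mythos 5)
Your proposal is correct and follows essentially the same route as the paper's proof: the same verification of translation invariance via $\widetilde{\vartheta_x f}=\vartheta_{-x}\widetilde f$, the same forced definition $\lambda(g):=(L\widetilde g)(0)$ in the converse with the identical chain $(Lf)(x)=(\vartheta_x Lf)(0)=L(\vartheta_x f)(0)=\lambda(\vartheta_{-x}\widetilde f)$, and the same uniqueness argument. The one step you defer --- continuity of $L$ as an operator in the first direction --- is the only place the paper does more: it invokes the closed graph theorem, which reduces the check to the pointwise identity $g=f*\lambda$, whereas your bare assertion that this ``follows from the continuity of $\lambda$'' would, if done directly, require a seminorm estimate $|\lambda(g)|\leq C\sum_{|\alpha|\leq N}\|\partial^\alpha g\|_\infty$ valid on $\cont_c^\infty(K)$ for each compact $K$, applied uniformly over compact sets of evaluation points $x$ (continuity of $\lambda$ alone only gives pointwise, not locally uniform, convergence of $Lf_n$) --- precisely the bookkeeping you flagged as the remaining obstacle.
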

\begin{proof}
    Let $Lf = f*\lambda$. From the very definition of the convolution we get
    \begin{align*}
        (\vartheta_{-x}f)*\lambda
        = \lambda(\vartheta_{-x}\widetilde{\vartheta_{-x}f})
        &= \lambda(\vartheta_{-x} [f(x-\cdot)])\\
        &= \vartheta_{-x}\lambda(\vartheta_{-x} [f(-\,\cdot)])\\
        &= \vartheta_{-x} (f*\lambda).
    \end{align*}
    For any sequence $x_n\to x$ in $\rd$ and $f\in\cont_c^\infty(\rd)$ we know that $\vartheta_{-x_n}\widetilde{f} \to \vartheta_{-x}\widetilde{f}$ in $\cont_c^\infty(\rd)$. Since $\lambda$ is a continuous linear functional on $\cont_c^\infty(\rd)$, we conclude that
    $$
        \lim_{n\to\infty} Lf(x_n)
        = \lim_{n\to\infty} \lambda(\vartheta_{-x_n}\widetilde f)
        = \lambda(\vartheta_{-x}\widetilde f)
        = Lf(x)
    $$
    which shows that $Lf\in\cont(\rd)$. (With a similar argument based on difference quotients we could even show that $Lf\in\cont^\infty(\rd)$.)

    In order to prove the continuity of $L$, it is enough to show that $L:\cont_c^\infty(K)\to\cont(\rd)$ is continuous for every compact set $K\subset\rd$ (this is because of the definition of the topology in $\cont_c^\infty(\rd)$). We will use the closed graph theorem: Assume that $f_n\to f$ in $\cont_c^\infty(\rd)$ and $f_n*\lambda \to g$ in $\cont(\rd)$, then we have to show that $g=f*\lambda$.

    For every $x\in\rd$ we have $\vartheta_{-x}\widetilde f_n \to \vartheta_{-x}\widetilde f$ in $\cont_c^\infty(\rd)$, and so
    $$
        g(x)
        = \lim_{n\to\infty} (f_n*\lambda)(x)
        = \lim_{n\to\infty} \lambda(\vartheta_{-x}\widetilde f_n)
        = \lambda(\vartheta_{-x}\widetilde f)
        = (f*\lambda)(x).
    $$

    Assume now, that $L$ is translation invariant and continuous. Define $\lambda(f):= (L\widetilde f)(0)$. Since $L$ is linear and continuous, and $f\mapsto\widetilde f$ and the evaluation at $x=0$ are continuous operations, $\lambda$ is a continuous linear map on $\cont_c^\infty(\rd)$. Because of the translation invariance of $L$ we get
    \begin{align*}
        (Lf)(x)
        = (\vartheta_x Lf)(0)
        = L(\vartheta_x f)(0)
        = \lambda(\widetilde{\vartheta_x f})
        = \lambda(\vartheta_{-x}\widetilde f)
        = (f*\lambda)(x).
    \end{align*}
    If $\mu$ is a further distribution with $Lf(0) = f*\mu(0)$, we see
    $$
        (\mu-\lambda)(\widetilde f) = f*(\mu-\lambda)(0) = f*\mu(0) - f*\lambda(0) = 0
        \fa f\in\cont_c^\infty(\rd)
    $$
    which proves $\mu=\lambda$.
\end{proof}





\index{infinitesimal generator|see{generator}}%
\index{transition semigroup|see{semigroup}}%
\index{Feller~semigroup|see{\emph{also} Feller process}}%
\index{Poisson~process|see{\emph{also} compound -----}}%

\fancyhead[LO]{\sffamily Index}

\printindex


\begin{thebibliography}{99}
\fancyhead[LO]{\sffamily Bibliography}
\addcontentsline{toc}{chapter}{Bibliography}
\bibitem{aczel66}
    Acz\'el, J.: \emph{Lectures on Functional Equations and Their Applications}. Academic Press, New York (NY) 1966.

\bibitem{applebaum09}
    Applebaum, D.: \emph{L\'evy Processes and Stochastic Calculus}. Cambridge University Press, Cambridge 2009 (2nd ed).

\bibitem{barndorff01}
    Barndorff-Nielsen, O.E.\ et al.: \emph{L\'evy Processes. Theory and Applications}. Birk\-h\"{a}user, Boston (MA) 2001.

\bibitem{bass88}
    Bass, R.: Uniqueness in law for pure-jump Markov processes. \emph{Probab.\ Theor.\ Relat.\ Fields} \textbf{79} (1988) 271--287.

\bibitem{bauer96}
    Bauer, H.: \emph{Probability Theory}. De Gruyter, Berlin 1996.

\bibitem{bawly36}
    Bawly, G.M.: \"{U}ber einige Verallgemeinerungen der Grenzwerts\"{a}tze der Wahr\-schein\-lich\-keits\-rech\-nung. \emph{Mat.\ Sbornik} (\emph{R\'ec.\ Math.\ Moscou, N.S.}) \textbf{1} (1936) 917--930.

\bibitem{behme-schnurr}
    Behme, A., Schnurr, A.: A criterion for invariant measures of It\^{o} processes based on the symbol. \emph{Bernoulli} \textbf{21} (2015) 1697--1718.

\bibitem{blu-get61}
    Blumenthal, R.M., Getoor, R.K.: Sample functions of stochastic processes with stationary independent increments. \emph{J.\ Math.\ Mech.} \textbf{10} (1961) 493--516.


\bibitem{schilling-lm}
    B\"{o}ttcher, B., Schilling, R.L., Wang, J.: \emph{L\'evy-type Processes: Construction, Approximation and Sample Path Properties}. Lecture Notes in Mathematics \textbf{2099} (L\'evy Matters III), Springer, Cham 2014.

\bibitem{breiman68}
    Breiman, L.: \emph{Probability}. Addison--Wesley, Reading (MA) 1968 (several reprints by SIAM, Philadelphia).

\bibitem{bretagnolle73}
    Bretagnolle, J.L.: Processus \`a accroissements independants. In: Bretagnolle, J.L.\ et al.: \emph{\'Ecole d'\'Et\'e de Probabilit\'es: Processus Stochastiques}. Springer, Lecture Notes in Mathematics \textbf{307}, Berlin 1973, pp.~1--26.

\bibitem{cinlar75}
    \c{C}inlar, E.: \emph{Introduction to Stochastic Processes}. Prentice--Hall, Englewood Cliffs (NJ) 1975 (reprinted by Dover, Mineola).

\bibitem{courrege64}
    Courr\`ege, P.: G\'en\'erateur infinit\'esimal d'un semi-groupe de convolution sur $\real^n$, et formule de L\'evy--Khintchine. \emph{Bull.\ Sci.\ Math.} \textbf{88} (1964) 3--30.

\bibitem{courrege65}
    Courr\`ege, P.: Sur la forme int\'egro diff\'erentielle des op\'erateurs de $C_k^\infty$ dans $C$ satisfaisant au principe du maximum. In: \emph{S\'eminaire Brelot--Choquet--Deny. Th\'eorie du potentiel} \textbf{10} (1965/66) expos\'e 2, pp.\ 1--38.

\bibitem{definetti29}
    de Finetti, B.: Sulle funzioni ad incremento aleatorio. \emph{Rend.\ Accad.\ Lincei Ser.\ VI} \textbf{10} (1929) 163--168.

\bibitem{dieudonne69}
    Dieudonn\'e, J.: \emph{Foundations of Modern Analysis}. Academic Press, New York (NY) 1969.



\bibitem{eth-kur86}
    Ethier, S.N., Kurtz, T.G.: \emph{Markov Processes. Characterization and Convergence}. John Wiley \& Sons, New York (NY) 1986.

\bibitem{gik-sko69}
    Gikhman, I.I., Skorokhod, A.V.: \emph{Introduction to the Theory of Random Processes}. W.B.\ Saunders, Philadelphia (PA) 1969 (reprinted by Dover, Mineola 1996).

\bibitem{grimvall72}
    Grimvall, A.: A theorem on convergence to a L\'evy process. \emph{Math.\ Scand.} \textbf{30} (1972) 339--349.

\bibitem{herz64}
    Herz, C.S.: Th\'eorie \'el\'ementaire des distributions de Beurling. \emph{Publ.\ Math.\ Orsay} no.\ \textbf{5}, 2\`eme ann\'ee 1962/63, Paris 1964.

\bibitem{hoh00}
    Hoh, W.: Pseudo differential operators with negative definite symbols of variable order. \emph{Rev.\ Mat.\ Iberoamericana} \textbf{16} (2000) 219--241.

\bibitem{ikeda-watanabe89}
    Ikeda, N., Watanabe S.: \emph{Stochastic Differential Equations and Diffusion Processes}. North-Holland Publishing Co./Kodansha, Amsterdam and Tokyo 1989 (2nd ed).

\bibitem{ito42}
    It\^o, K.: On stochastic processes. I. (Infinitely divisible laws of probability). \emph{Japanese J.\ Math.} \textbf{XVIII} (1942) 261--302.

\bibitem{ito61}
    It\^o, K.: \emph{Lectures on Stochastic Processes}. Tata Institute of Fundamental Research, Bombay 1961 (reprinted by Springer, Berlin 1984). \newline \url{http://www.math.tifr.res.in/~publ/ln/tifr24.pdf}

\bibitem{ito93}
    It\^o, K.: Semigroups in probability theory. In: \emph{Functional Analysis and Related Topics}, Proceedings in Memory of K.\ Yosida (Kyoto 1991). Springer, Lecture Notes in Mathemtics \textbf{1540}, Berlin 1993, 69--83.

\bibitem{jacob-pseudo}
    Jacob, N.: \emph{Pseudo Differential Operators and Markov Processes \textup{(}3 volumes\textup{)}}. Imperial College Press, London 2001--2005.

\bibitem{jacod-shiryaev87}
    Jacod, J., Shiryaev, A.N.: \emph{Limit Theorems for Stochastic Processes}. Springer, Berlin 1987 (2nd ed Springer, Berlin 2003).

\bibitem{khintchine37}
    Khintchine, A.Ya.: A new derivation of a formula by P.\ L\'evy. \emph{Bull.\ Moscow State Univ.} \textbf{1} (1937) 1--5 (Russian; an English translation is contained in Appendix~2.1, pp.~44--49 of \cite{mai-rog10}).

\bibitem{khintchine37b}
    Khintchine, A.Ya.: Zur Theorie der unbeschr\"{a}nkt teilbaren Verteilungsgesetze. \emph{Mat.\ Sbornik} (\emph{R\'ec.\ Math.\ Moscou, N.S.}) \textbf{2} (1937) 79--119 (German; an English translation is contained in Appendix~3.3, pp.~79--125 of \cite{mai-rog10}).

\bibitem{kno-sch13}
    Knopova, V., Schilling, R.L.: A note on the existence of transition probability densities for L\'evy processes. \emph{Forum Math.} \textbf{25} (2013) 125--149.

\bibitem{kolmogorov32}
    Kolmogorov, A.N.: Sulla forma generale di un processo stocastico omogeneo (Un problema die Bruno de Finetti). \emph{Rend.\ Accad.\ Lincei Ser.\ VI} \textbf{15} (1932) 805--808 and 866--869 (an English translation is contained in: Shiryayev, A.N.~(ed.): \emph{Selected Works of A.N.\ Kolmogorov}. Kluwer, Dordrecht 1992, vol.\ 2, pp.121--127).

\bibitem{kuehn}
    K\"{u}hn, F.: Existence and estimates of moments for L\'evy-type processes. \emph{Stoch.\ Proc.\ Appl.} (in press). DOI: \url{10.1016/j.spa.2016.07.008}

\bibitem{kuehn-diss}
    K\"{u}hn, F.: \emph{Probability and Heat Kernel Estimates for L\'evy\textup{(}-Type\textup{)} Processes}.
    PhD Thesis, Technische Universit\"{a}t Dresden 2016.

\bibitem{kunita04}
    Kunita, H.: Stochastic differential equations based on L\'evy processes and stochastic flows of diffeomorphisms. In: Rao, M.M.\ (ed.): \emph{Real and Stochastic Analysis}. Birkh\"{a}user, Boston (MA) 2004, pp.\ 305--373.

\bibitem{kun-wat67}
    Kunita, H., Watanabe, S.: On square integrable martingales. \emph{Nagoya Math.\ J.} \textbf{30} (1967) 209--245.

\bibitem{kyprianou06}
    Kyprianou, A.: \emph{Introductory Lectures on Fluctuations and L\'evy Processes with Applications}. Springer, Berlin 2006.

\bibitem{levy34}
    L\'evy, P.: Sur les int\'egrales dont les \'elements sont des variables al\'eatoires in\-d\'ep\-en\-dantes. \emph{Ann.\ Sc.\ Norm.\ Sup.\ Pisa} \textbf{3} (1934) 337--366.

\bibitem{levy37}
    L\'evy, P.: \emph{Th\'eorie de l'addition des variables al\'eatoires}. Gauthier--Villars, Paris 1937.

\bibitem{mai-rog06}
    Mainardi, F., Rogosin, S.V.: The origin of infinitely divisible distributions: from de Finetti's problem to L\'evy--Khintchine formula. \emph{Math.\ Methods Economics Finance} \textbf{1} (2006) 37--55.

\bibitem{malliavin95}
    Malliavin, P.: \emph{Integration and Probability}. Springer, New York (NY) 1995.

\bibitem{pazy83}
    Pazy, A.: \emph{Semigroups of Linear Operators and Applications to Partial Differential Equations}. Springer, New York (NY) 1983.

\bibitem{prokhorov56}
    Prokhorov, Yu.V.: Convergence of random processes and limit theorems in probability theory. \emph{Theor.\ Probab.\ Appl.} \textbf{1} (1956) 157--214.

\bibitem{protter04}
    Protter, P.E.: \emph{Stochastic Integration and Differential Equations}. Springer, Berlin 2004 (2nd ed).

\bibitem{rev-yor05}
    Revuz, D., Yor, M.: \emph{Continuous Martingales and Brownian Motion}. Springer, Berlin 2005 (3rd printing of the 3rd ed).

\bibitem{mai-rog10}
    Rogosin, S.V., Mainardi, F.: \emph{The Legacy of A.Ya.\ Khintchine's Work in Probability Theory}. Cambridge Scientific Publishers, Cambridge 2010.

\bibitem{rosinski90}
    Rosi\'nski, J.: On series representations of infinitely divisible random vectors. \emph{Ann.\ Probab.} \textbf{18} (1990) 405--430.

\bibitem{rosinski01}
    Rosi\'nski, J.: Series representations of L\'evy processes from the perspective of point processes. In: Barndorff-Nielsen et al.\ \cite{barndorff01} (2001) 401--415.

\bibitem{sam-taq94}
    Samorodnitsky, G., Taqqu, M.S.: \emph{Stable Non-Gaussian Random Processes}. Chapman \& Hall, New York (NY) 1994.

\bibitem{sandric15}
    Sandri\'c, N.: On recurrence and transience of two-dimensional L\'{e}vy and L\'{e}vy-type processes. \emph{Stoch.\ Proc.\ Appl.} \textbf{126} (2016) 414--438.

\bibitem{sandric14}
    Sandri\'c, N.: Long-time behavior for a class of Feller processes. \emph{Trans.\ Am.\ Math.\ Soc.} \textbf{368} (2016) 1871--1910.

\bibitem{sato99}
    Sato, K.: \emph{L\'evy Processes and Infinitely Divisible Distributions}. Cambridge University Press, Cambridge 1999 (2nd ed 2013).

\bibitem{schilling-positivity}
    Schilling, R.L.: Conservativeness and extensions of Feller semigroups. \emph{Positivity} \textbf{2} (1998) 239--256.

\bibitem{schilling-ptrf}
    Schilling, R.L.: Growth and H\"{o}lder conditions for the sample paths of a Feller process. \emph{Probab.\ Theor.\ Relat.\ Fields} \textbf{112} (1998) 565--611.

\bibitem{schilling-mims}
    Schilling, R.L.: \emph{Measures, Integrals and Martingales}. Cambridge University Press, Cambridge 2011 (3rd printing).

\bibitem{schilling-mi}
    Schilling, R.L.: \emph{Ma{\ss} und Integral}. De Gruyter, Berlin 2015.

\bibitem{schilling-bm}
    Schilling, R.L., Partzsch, L.: \emph{Brownian Motion. An Introduction to Stochastic Processes}. De Gruyter, Berlin 2014 (2nd ed).

\bibitem{schilling-schnurr}
    Schilling, R.L., Schnurr, A.: The symbol associated with the solution of a stochastic differential equation. \emph{El.\ J.\ Probab.} \textbf{15} (2010) 1369--1393.

\bibitem{schnurr-phd}
    Schnurr, A.: \emph{The Symbol of a Markov Semimartingale}. PhD Thesis, Technische Universit\"{a}t Dresden 2009. Shaker-Verlag, Aachen 2009.

\bibitem{schnurr12}
    Schnurr, A.: On the semimartingale nature of Feller processes with killing. \emph{Stoch.\ Proc.\ Appl.} \textbf{122} (2012) 2758--2780.


\bibitem{stroock-varadhan97}
    Stroock, D.W., Varadhan, S.R.S.: \emph{Multidimensional Stochastic Processes}. Springer, Berlin 1997 (2nd corrected printing).

\bibitem{waldenfels61}
    von Waldenfels, W.: \emph{Eine Klasse station\"{a}rer Markowprozesse}. Kernforschungsanlage J\"{u}lich, Institut f\"{u}r Plasmaphysik, J\"{u}lich 1961.

\bibitem{waldenfels65}
    von Waldenfels, W.: Fast positive Operatoren. \emph{Z.\ Wahrscheinlichkeitstheorie verw.\ Geb.} \textbf{4} (1965) 159--174.

\end{thebibliography}
\end{document}